\definecolor{dark-red}{rgb}{0.5,0.15,0.15}
\definecolor{dark-blue}{rgb}{0.15,0.15,0.6}
\definecolor{dark-green}{rgb}{0.15,0.6,0.15}
\renewcommand*{\backref}[1]{}
\renewcommand*{\backrefalt}[4]{%
  \ifcase #1 %
No citations.
  \or
(cit. on p. #2).%
  \else
(cit on pp. #2).%
  \fi%
}
\definecolor{Blue1}{HTML}{7ce8ff}
\definecolor{Blue2}{HTML}{55d0ff}
\definecolor{Blue3}{HTML}{00acdf}
\definecolor{Blue4}{HTML}{0080bf}
\newtheorem{thmx}{Theorem}
\newtheorem{thm}{Theorem}[section]
\newtheorem{theorem}[thm]{Theorem}
\newtheorem{corollary}[thm]{Corollary}
\newtheorem{proposition}[thm]{Proposition}
\newtheorem{lemma}[thm]{Lemma}
\newtheorem*{theorem*}{Theorem}
\newtheorem*{conjecture*}{Conjecture}
\newtheorem*{mainthmI}{Integral main results}
\newtheorem*{mainthmQ}{Rational main results}
\theoremstyle{definition}
\newtheorem{definition}[thm]{Definition}
\newtheorem{example}[thm]{Example}
\newtheorem{notation}[thm]{Notation}
\newtheorem{hypothesis}[thm]{Hypothesis}
\theoremstyle{remark}
\newtheorem{remark}[thm]{Remark}
\newtheorem{recollection}[thm]{Recollection} 
\let\c@equation\c@thm
\numberwithin{equation}{section}
\newcommand{\noloc}{\;\mathord{:}\,}
\newcommand{\cpctRecollement}[3]{
\xymatrix@C=4em{{#1} \ar[r]|-{#2} & {#3}}
}
\newcommand{\recollement}[5]{
\xymatrix@C=4em{{#1} \ar@{<-}[r]|-{#2} & #3 \ar@{<-}[r]|-{#4} \ar@{<-}@<1.5ex>[l]^-{{#2}_!} \ar@{<-}@<-1.5ex>[l]_-{{#2}^*} & #5, \ar@{<-}@<1.5ex>[l]^-{{#4}!} \ar@{<-}@<-1.5ex>[l]_-{{#4}^*}
}}
\let\lim\relax
\DeclareMathOperator{\Cat}{Cat}
\DeclareMathOperator{\lim}{lim}
\DeclareMathOperator{\op}{op}
\DeclareMathOperator{\proj}{proj}
\DeclareMathOperator{\can}{can}
\newcommand{\F}{\mathbb{F}}
\newcommand{\N}{\mathbb{N}}
\newcommand{\Q}{\mathbb{Q}}
\newcommand{\R}{\mathbb{R}}
\newcommand{\Z}{\mathbb{Z}}
\DeclareMathOperator{\cA}{\mathcal{A}}
\newcommand{\cB}{\mathcal{B}}
\DeclareMathOperator{\cE}{\mathcal{E}}
\newcommand{\cU}{\mathcal{U}}
\newcommand{\fp}{\mathfrak{p}}
\DeclareMathOperator{\Top}{Top}
\DeclareMathOperator{\Spc}{Spc}
\DeclareMathOperator{\Spch}{Spc^\mathrm{h}}
\DeclareMathOperator{\Hom}{Hom}
\DeclareMathOperator{\End}{End}
\DeclareMathOperator{\colim}{colim}
\DeclareMathOperator{\Spec}{Spec}
\DeclareMathOperator{\Mod}{Mod}
\DeclareMathOperator{\Ch}{Ch}
\DeclareMathOperator{\fib}{fib}
\DeclareMathOperator{\Ho}{Ho}
\DeclareMathOperator{\Thick}{Thick}
\newcommand{\thickid}[1]{\Thick_{#1}^{\otimes}}
\DeclareMathOperator{\Ind}{Ind}
\DeclareMathOperator{\ind}{ind}
\DeclareMathOperator{\res}{res}
\DeclareMathOperator{\coind}{coind}
\DeclareMathOperator{\Fun}{Fun}
\DeclareMathOperator{\Ob}{Ob}
\DeclareMathOperator{\id}{id}
\DeclareMathOperator{\unit}{\mathbbm{1}}
\DeclareMathOperator{\Map}{Map}
\DeclareMathOperator{\supp}{supp}
\DeclareMathOperator{\Supp}{Supp}
\DeclareMathOperator{\supph}{supp^\mathrm{h}}
\DeclareMathOperator{\Supph}{Supp^\mathrm{h}}
\DeclareMathOperator{\Cosupp}{Cosupp}
\DeclareMathOperator{\CAlg}{CAlg}
\DeclareMathOperator{\inv}{inv}
\DeclareMathOperator{\Sub}{Sub}
\DeclareMathOperator{\cons}{con}
\DeclareMathOperator{\const}{const}
\DeclareMathOperator{\rank}{rank}
\DeclareMathOperator{\gen}{gen}
\DeclareMathOperator{\infl}{infl}
\newcommand{\fin}{\mathrm{fin}}
\newcommand{\qcopen}[1]{\overset{\circ}{\mathcal{K}}({#1})}
\newcommand{\Thc}[1]{\mathrm{Th}^c({#1})}
\newcommand{\Spectral}{\Top_{\mathrm{Spec}}}
\newcommand{\Prism}{\mathsf{Prism}}
\newcommand{\Ord}{\mathrm{Ord}}
\newcommand{\cbrank}{\rank^{\mathrm{CB}}}
\newcommand{\sfB}{\mathsf{B}}
\newcommand{\sfC}{\mathsf{C}}
\newcommand{\sfD}{\mathsf{D}}
\newcommand{\sfF}{\mathsf{F}}
\newcommand{\sfG}{\mathsf{G}}
\newcommand{\sfK}{\mathsf{K}}
\newcommand{\sfL}{\mathsf{L}}
\newcommand{\sfT}{\mathsf{T}}
\newcommand{\sfS}{\mathsf{S}}
\newcommand{\sfP}{\mathsf{P}}
\newcommand{\sfQ}{\mathsf{Q}}
\newcommand{\Sp}{\mathsf{Sp}}
\newcommand{\Sh}{\mathsf{Shv}}
\newcommand{\Spfausk}{\Sp^{\mathrm{orth}}}
\newcommand{\SpBarwick}{\Sp^{\mathrm{Mack}}}
\newcommand{\SpBH}{\Sp^{\mathrm{par}}}
\newcommand{\cSpfausk}{\Sp^{\mathrm{orth},\omega}}
\newcommand{\cont}{\mathrm{cont}}
\newcommand{\Ab}{\mathrm{Ab}}
\newcommand{\Weyl}{\mathrm{Weyl}}
\newcommand{\WeylSheaves}{\Shv^{\Weyl}}
\DeclareMathOperator{\Shv}{Shv}
\DeclareMathOperator{\incfunctor}{\mathrm{inc}}
\DeclareMathOperator{\weylfunctor}{\mathrm{Weyl}}
\newcommand{\lra}{\longrightarrow}
\colorlet{rainbowOne}{red}
\colorlet{rainbowTwo}{green}
\colorlet{rainbowThree}{blue}
\colorlet{rainbowFour}{Mulberry}
\title{Profinite equivariant spectra and their tensor-triangular geometry}
 \author{Scott Balchin}
 \address{Mathematical Sciences Research Centre, Queen's University Belfast, UK}
 \email{s.balchin@qub.ac.uk}
 \author{David Barnes}
 \address{Mathematical Sciences Research Centre, Queen's University Belfast, UK}
 \email{d.barnes@qub.ac.uk}
 \author{Tobias Barthel}
 \address{Max Planck Institute for Mathematics, Vivatsgasse 7, 53111 Bonn, Germany}
 \email{tbarthel@mpim-bonn.mpg.de}
\date{\today}
\subjclass[2020]{55P91; 18F20, 18F99, 20E18, 55P42, 55P62}
\begin{document}

\begin{abstract}
We study the tensor-triangular geometry of the category of equivariant $G$-spectra for $G$ a profinite group, $\mathsf{Sp}_G$. Our starting point is the construction of a ``continuous'' model for this category, which we show agrees with all other models in the literature. We describe the Balmer spectrum of finite $G$-spectra up to the ambiguity that is present in the finite group case; in particular, we obtain a thick subcategory theorem when $G$ is abelian. By verifying the bijectivity hypothesis for $\mathsf{Sp}_G$, we prove a nilpotence theorem for all profinite groups.

Our study then moves to the realm of rational $G$-equivariant spectra. By exploiting the continuity of our model, we construct an equivalence between the category of rational $G$-spectra and the algebraic model of the second author and Sugrue, which improves their result to the symmetric monoidal and $\infty$-categorical level. Furthermore, we prove that the telescope conjecture holds in this category. Finally, we characterize when the category of rational $G$-spectra is stratified, resulting in a classification of the localizing ideals in terms of conjugacy classes of subgroups. 

To facilitate these results, we develop some foundational aspects of pro-tt-geometry. For instance, we establish and use the continuity of the homological spectrum and introduce a notion of von Neumann regular tt-categories, of which rational $G$-spectra is an example.
\end{abstract}

\maketitle

\setcounter{tocdepth}{1}

\vspace{-1em}
\begin{figure}[h]
    \centering{}
    \includegraphics[scale=0.275]{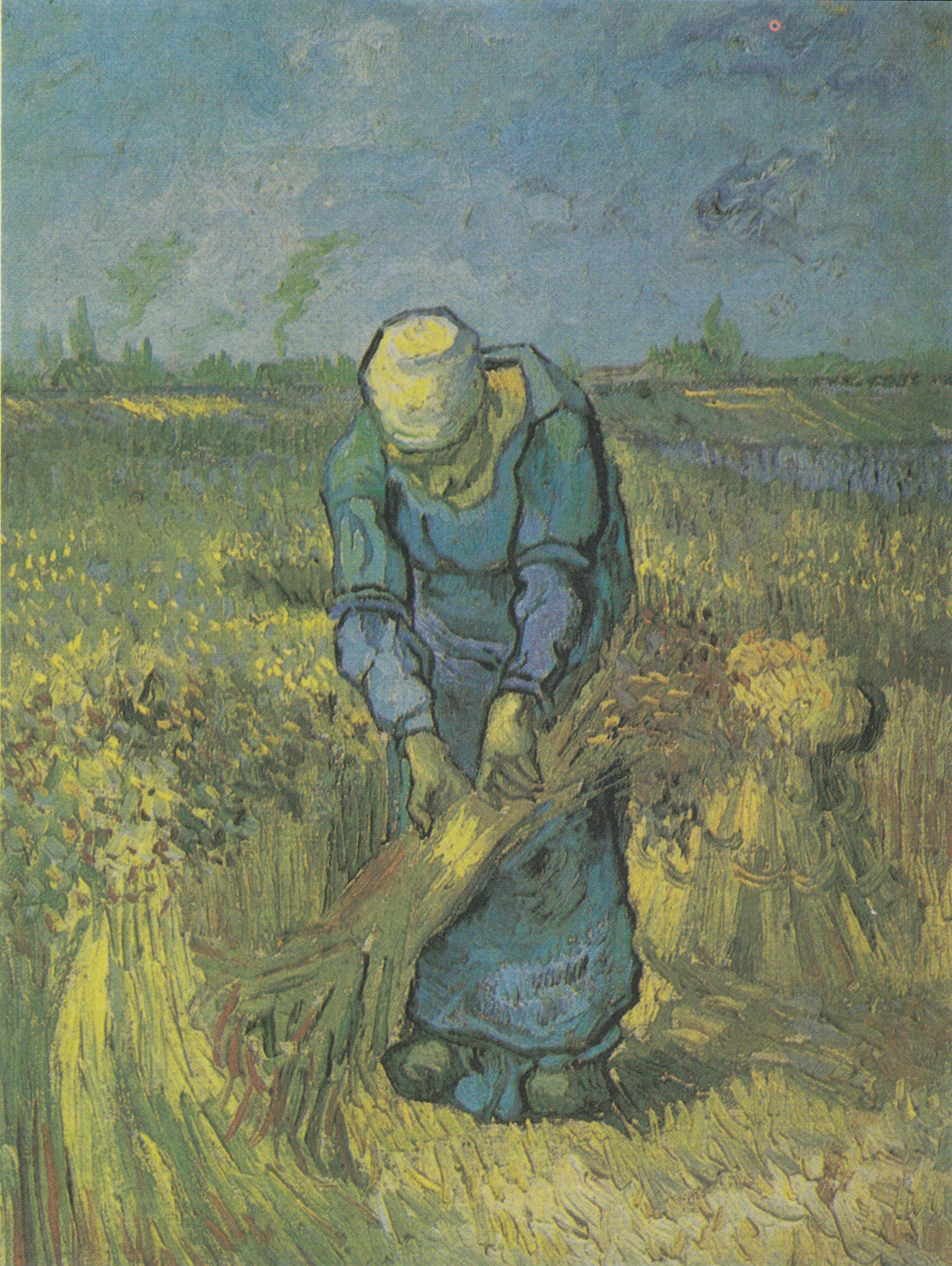}\caption*{\textit{A Peasant Woman Binding Sheaves (after Millet)}, van Gogh, 1889.}
\end{figure}
\vspace{-1em}

\newpage
\tableofcontents

\vspace{-3em}

\section*{Introduction}

The principal aim of this paper is to describe structural properties of the category of  $G$-equivariant cohomology theories where $G$ is a profinite group. That is, we study the category of genuine $G$-spectra, $\Sp_G$. We will also be interested in the category $\Sp_{G,\Q}$ of rational $G$-spectra, which corresponds to those cohomology theories which take rational values.  
Broadly speaking, the main goals of this paper are as follows:
\begin{itemize}[leftmargin=*]
    \item Formulate and construct a continuous model of $\Sp_G$, prove that it is equivalent to all existing models of profinite spectra in the literature, and to establish the fundamental properties of this category. This extends earlier work for the case of finite groups, and is analogous to the situation for compact Lie groups \cite{lmms_86,GreenleesMay1995_survey,mandell_may} or proper equivariant homotopy theory for not necessarily compact Lie groups \cite{DHLPS_proper}.
    \item Study the tensor-triangular geometry of $\Sp_G$, generalizing previous works for finite groups \cite{balmersanders,BHNNNS2019} and in analogy to the case of compact Lie groups \cite{BarthelGreenleesHausmann2020}. Highlights include a nilpotence theorem and a thick subcategory theorem, the latter relative to the ambiguity present for finite groups. While these results take a form familiar to experts, novel approaches are developed due to the open question of joint conservativity of the geometric fixed point functors in the profinite context.
    \item Determine the essential tensor-triangular features of the category of rational $G$-spectra in terms of the profinite group $G$, including stratification, the telescope conjecture, and the construction of an algebraic model, based on the development of abstract ``pro-tt-geometry''. 
    In particular, this establishes a strong form of the profinite counterpart of Greenlees's conjecture \cite{greenleesreport,BarnesKedziorek2022_survey} for compact Lie groups by extending earlier work of Barnes and Sugrue \cite{barnes_zp,sugrue_thesis,barnessugrue_spectra}.
\end{itemize}

\subsection*{Context}

Let $G$ be a finite group. Then the global structure of the category $\Sp_G$ of genuine equivariant spectra is, by now, well understood via the toolbox of tensor-triangular geometry. For example there is a nilpotence theorem \cite{Strickland2012pp,balmersanders}, and we know the Balmer spectrum as a set for all $G$, with the topology being described for abelian groups and extra-special 2-groups \cite{BHNNNS2019, KuhnLloyd2020pp}. The situation is even more clear when one moves to the rational setting (i.e., only considering those cohomology theories which take rational values). In this case, the Balmer spectrum is simply the discrete space whose points are conjugacy classes of subgroups: $\Sub(G)/G$. The simplicity of the category is reflected by the classification of localizing ideals of the category, i.e., the category is \emph{stratified}. Moreover, the discreteness of the spectrum allows us to decompose the category as a product which then leads to the development of an algebraic model \cite{greenlees_may, barnesfinite,wimmer_model}.

To capture more general actions, one would like to be more adventurous and move beyond the case of finite groups. One direction of generalization is to compact Lie groups. In this setting, by \cite{BarthelGreenleesHausmann2020}, the Balmer spectrum is once again known as a set, with topology determined up to the ambiguity left in the case of finite groups, and the equivariant nilpotence theorem holds. There has also been extensive work on the rationalization of this category. For example it is a conjecture of Greenlees that this category always admits an algebraic model \cite{greenleesreport} with the cases of 
$SO(2)$~\cite{greenleesrationals1,shipleys1,BGKSs1},
$O(2)$~\cite{greenleeso2,barneso2},
$SO(3)$~\cite{greenleesso3,kedziorekso3}, and
tori~\cite{greenleestorus,GStorus}
being resolved. Recent work of the first and third author with Greenlees has extensively studied the tensor-triangular geometry of this category and provided significant evidence towards this conjecture \cite{BalchinBarthelGreenlees2023pp}. 

This paper considers a different generalization of finite groups. While compact Lie groups might be seen as adding height to the finite case, we will consider \emph{profinite groups} which can intuitively be viewed as adding ``width'' to the finite case.

There are several reasons why a concrete understanding of the category of $G$-equivariant spectra for profinite $G$ is of interest. The most prominent of these reasons is that profinite groups are abundant, indeed, they are exactly the groups arising as Galois groups of infinite field extensions \cite{waterhouse}. Secondly, the category $\Sp_G$ for $G$ a profinite group is of use in the theory of motivic homotopy theory. Specifically, by considering categories of modules over a suitable ring object in the category $\Sp_G$ one can model \emph{Artin motives}, which have been the subject of recent work of Balmer--Gallauer \cite{BGperm1, BalmerGallauerPerm, BGperm2}.

Finally, the study of equivariant spectra for profinite $G$ can be seen as a dual to the \emph{proper} equivariant homotopy theory of Degrijse--Hausmann--L\"uck--Patchkoria--Schwede \cite{DHLPS_proper}. Proper stable equivariant homotopy theory is concerned with building a model structure on the category of $G$-equivariant spectra using the information at \emph{sub}groups which are well-behaved (in this case, compact subgroups). In the profinite case the model is built out of those \emph{quotient} groups which are well-behaved (in this case, finite quotients). In fact, there is also a common generalization of the profinite and the compact Lie group case, these are exactly the \emph{compact Hausdorff groups}.

In summary, we have the following families of groups:
\[
\xymatrix@R=3em{
\text{0-dimensional} \ar@{~>}[d] && \text{discrete} \ar[d]&\text{finite} \ar[l] \ar[r] \ar[d]& \fbox{\text{profinite}} \ar[d] \\
\text{higher-dimensional} && \text{Lie}  & \ar[l] \text{compact Lie} \ar[r] & \text{\parbox{0.75in}{\centering compact Hausdorff}}
}
\]
Given our understanding of the category $\Sp_G$ for compact Lie groups, alongside the developments in this paper for the profinite case, we are at a point where serious studies of the compact Hausdorff setting is possible, with techniques from both sides being required.

The way in which we will approach the study of the category $\Sp_G$ can be seen as a categorification of results of Dress. Dress proved in \cite{dress_notes} that, for $G$ a profinite group, constructions such as the space of closed subgroups and the Burnside ring are continuous in the sense that if $G = \lim_i G_i$, then $\Sub(G) \cong \lim_i \Sub(G_i)$ and $A(G) \cong \colim_i A(G_i)$. We will see that these results are a shadow of a categorical continuity result, that is, $\Sp_G \simeq \colim_i \Sp_{G_i}$, where the colimit is taken over inflations and has to be interpreted in an appropriate $\infty$-category of $\infty$-categories. 
 
While working with profinite groups allows us to bring to bear ``continuous'' constructions and methods that prove to be highly effective, it also offers new phenomena and difficulties when compared to the compact Lie group case. One source of complications for profinite groups is that it is unclear how to induct on closed subgroups or quotient groups to prove results such as the conservativity of geometric fixed points, thereby necessitating the introduction of new ideas.

\subsection*{Main results}

Tensor-triangular methods have proved effective in classification problems since its axiomatic inception by Balmer \cite{balmer_spectrum}. This general toolbox takes insight from earlier threads of research in commutative algebra, chromatic homotopy theory, and representation theory. 

Balmer's main classification theorem provides an abstract classification of the radical thick ideals of an essentially small tensor-triangulated subcategory via the Thomason subsets of a topological space: the \emph{Balmer spectrum}. Guided by classical work in commutative algebra and modular representation theory, Barthel--Heard--Sanders provide sufficient and necessary conditions for when Balmer's result can be extended to a classification of localizing ideals of some ambient rigidly-compactly generated tensor-triangulated category via the theory of \emph{stratification} \cite{BarthelHeardSanders2023}.  

A particularly fruitful ground for tensor-triangular techniques comes from considering various categories of equivariant spectra---this is the topic of \emph{equivariant tt-geometry}. This paper is a contribution to this subject, providing the first systematic study of the tt-geometry of spectra equipped with a genuine action by a profinite group $G$. The reader as this point may want to hold the example of $G = \Z_p$ in their mind.

A summary of the main properties that we will prove for this category are as follows:

\begin{mainthmI}
Let $G$ be a profinite group and $\Sp_G$ the category of equivariant $G$-spectra. 
\begin{itemize}
    \item The nilpotence theorem holds for $\Sp_G$ unconditionally. That is, the geometric functors $(K_p(n)_\ast \Phi_G^H)_{H,p,n}$ jointly detect $\otimes$-nilpotence of morphisms in $\Sp_G$ with compact source.
    \item The spectrum of the compacts $\Sp_G^\omega$ is known as a set in general, with topology determined up to the ambiguity left in the case of finite groups.
    \item If $G$ is moreover abelian, then there is a full classification of thick ideals of $\Sp_G^\omega$.
\end{itemize}
\end{mainthmI}

After our exploration into the category $\Sp_G$, we move to a more tame yet still rich category by only considering those spectra which represent cohomology theories taking values in rational vector spaces. That is, we localize $\Sp_G$ at the rational sphere spectrum. A synopsis of our results for the category $\Sp_{G,\Q}$ is:

\begin{mainthmQ}
Let $G$ be a profinite group. The category of rational $G$-spectra $\Sp_{G,\Q}$ is a tensor-triangulated category with Balmer spectrum 
\[
\Spc(\Sp_{G,\Q}^{\omega})\cong \Sub(G)/G.
\]
It always satisfies the telescope conjecture and it admits a symmetric monoidal algebraic model constructed from equivariant sheaves. Moreover, the following conditions are equivalent:
    \begin{enumerate}
        \item $\Sp_{G,\Q}$ is stratified;
        \item $\Sub(G)/G$ is countable;
        \item $A_{\Q}(G)$, the rational Burnside ring of $G$, is semi-Artinian.
    \end{enumerate}
If $G$ is abelian, then these conditions hold if and only if $G$ is isomorphic to $A\times \Z_{p_1} \times \cdots \times \Z_{p_r}$ for pairwise distinct primes $p_1,\ldots,p_r$ and $A$ a finite abelian group.
\end{mainthmQ} 

These structural results are proven by a combination of general statements on the tensor-triangular geometry of filtered colimits of tt-categories and results specific to the context of (rational) $G$-spectra for profinite $G$. The general theory is developed in \cref{part:profinitettgeom} of the paper. The major results of \cref{part:profinitettgeom} are best illustrated by their application to (rational) $G$-spectra, which we detail in the remainder of this introduction.

\subsection*{Profinite spectra}

\cref{part:integral} of the paper is concerned with the category of genuine $G$-equivariant spectra for a profinite group $G$. The first hurdle one must overcome is deciding what is meant by $G$-spectrum. There are several equivalent models for such a category (reviewed in \cref{app:equivariantmodels}), due to Fausk~\cite{Fausk2008}, Barwick~\cite{Barwick}, and Bachmann--Hoyois~\cite{bachmannhoyois_norms}.  However, none of these models are immediately amenable to the computational methods that we have at our disposal. 

Our model of choice takes a hint from a structural theorem of profinite groups themselves: any profinite group $G$ can be presented by a cofiltered limit $\lim_i G_i $ of a collection of finite groups $G_i$ (the $G_i$ can be taken to be the system $G/U_i$ as $U_i$ runs over the open normal subgroups of $G$). For a finite group $G_i$, we can consider the category $\Sp_{G_i}$ of genuine $G_i$ spectra \cite{mmss, mandell_may}. Working in the realm of $\infty$-categories, we can then assemble a filtered system $(\Sp_{G_i}, \mathrm{infl}^{G_i}_{G_j})$ and define the category of profinite equivariant spectra to be the colimit of this system. That is:
\begin{equation}\tag{$\star$}\label{eq:defofspec}
    \Sp_G \coloneqq \colim^\omega_i \Sp_{G_i}.
\end{equation}

We will see that defining our category in this \emph{continuous} fashion will prove advantageous to computational methods. However, having such a model is fruitless if it does not coincide with the existing models, so our first theorem asserts that this continuous model is indeed a model. We can then apply a continuity result of Gallauer to obtain an (abstract) handle on the spectrum:

\begin{thmx}[\cref{thm:gsp_contfausk}]\label{thmx:a}
Let $G$ be a profinite group. The category $\Sp_{G}$ is equivalent to Fausk's model of equivariant $G$-spectra for any profinite group $G$. Moreover, if $G = \lim_{i \in I}G_i$ with $G_i$ finite, then the Balmer spectrum is the limit of the spectra for the finite quotient groups:
\[
\Spc(\Sp_{G}^{\omega}) \cong \lim_{i}\Spc(\Sp_{G_i}^{\omega}).
\]
\end{thmx}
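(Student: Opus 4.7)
The plan has two parts: first, to compare $\Sp_G := \colim^\omega_i \Sp_{G_i}$ with Fausk's model $\Sp_G^{\mathrm{F}}$; second, to extract the description of the Balmer spectrum from this continuous presentation.

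For the comparison, I would exploit a universal property. Fausk's category $\Sp_G^{\mathrm{F}}$ carries, for each index $i$, a colimit-preserving symmetric monoidal inflation functor $\mathrm{infl}^{G}_{G_i} \colon \Sp_{G_i} \to \Sp_G^{\mathrm{F}}$, and these are compatible with the transition inflations $\mathrm{infl}^{G_i}_{G_j} \colon \Sp_{G_j} \to \Sp_{G_i}$ used to build $\Sp_G$ via (\ref{eq:defofspec}). The universal property of the filtered colimit in the $\infty$-category of presentably symmetric monoidal stable $\infty$-categories with colimit-preserving symmetric monoidal functors then yields a canonical comparison functor $F \colon \Sp_G \to \Sp_G^{\mathrm{F}}$. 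To show $F$ is an equivalence I would verify two things: first, that both sides are compactly generated by the orbits $\Sigma^\infty_+ G/H$ for $H$ an open subgroup of $G$, each of which is inflated from some finite quotient $G_i$, so the generators match; and second, that mapping spaces between such generators agree. On the colimit side, mapping spaces are filtered colimits of mapping spaces in the $\Sp_{G_i}$, and these match the Wirthmüller-type computation in Fausk's model once a cofinal system indexed by large enough $G_i$ is chosen. This suffices to conclude $F$ is an equivalence of presentably symmetric monoidal stable $\infty$-categories.

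For the Balmer spectrum, I would appeal to Gallauer's continuity theorem for the Balmer spectrum of filtered colimits of rigidly-compactly generated tt-categories. Since inflation preserves compact objects, passing to compacts in (\ref{eq:defofspec}) yields $\Sp_G^\omega \simeq \colim_i \Sp_{G_i}^\omega$ as tt-categories. Applying Gallauer's result, in the $\infty$-categorical formulation that is developed in \cref{part:profinitettgeom}, then produces a natural homeomorphism
\[
\Spc(\Sp_G^\omega) \cong \lim_i \Spc(\Sp_{G_i}^\omega),
\]
with each $\Spc(\Sp_{G_i}^\omega)$ being known from the finite group case.

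The main obstacle is the first step: Fausk's category is originally constructed via model-categorical/pro-spectrum machinery, and one must verify that, after passing to underlying $\infty$-categories, it indeed realizes the filtered colimit in the appropriate sense, rather than merely admitting a comparison functor. Concretely, the difficulty is to match mapping spaces between orbit generators at the $\infty$-categorical level, which is where compatibility of Fausk's localization/inflation machinery with the $\infty$-categorical colimit is needed; the comparison with the other models in the literature (Barwick, Bachmann--Hoyois) that is catalogued in \cref{app:equivariantmodels} provides the necessary flexibility. Once this identification is in place, the spectral statement is a formal consequence of continuity.
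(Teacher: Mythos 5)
Your proposal follows essentially the same route as the paper: build the comparison functor from the inflation maps via the universal property of the filtered colimit, verify that it matches generators and mapping spaces (the paper reduces this to a known isomorphism of homotopy groups from Barnes--Sugrue rather than a Wirthm\"uller argument, but the point is the same), and then invoke Gallauer's continuity theorem (\cref{prop:spccontinuity}) for the Balmer spectrum. The obstacle you identify in the last paragraph is precisely the content of the fully-faithfulness check, which the paper discharges by citing a computation of mapping groups between orbit generators in Fausk's model.
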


Of critical importance to the theory of equivariant stable homotopy theory is the existence of geometric fixed point functors, which allow one to separate out information at different subgroups, which is then governed by spectra with (naive) Weyl group action. Making use of our continuous model we construct geometric fixed points for profinite groups through a continuous extension of the usual geometric fixed points for finite groups (\cref{def:gfp}). We note that understanding geometric fixed points in this setting is a more subtle undertaking due to the fact that non-open subgroups may fail to be of finite index. 

We use these geometric fixed points to prove a generalization of the nilpotence theorem of Strickland \cite{Strickland2012pp} and Balmer--Sanders \cite{balmersanders} to the profinite realm, which strengthens these results even for finite groups by allowing the target to be not-necessarily compact. Of worthwhile note is our proof strategy which first relies on a computation of the \emph{homological spectrum} via a new continuity result (\cref{thm:spchcontinuity}) and using this to deduce the desired result. In contrast, usually it is the nilpotence theorem that allows for a computation of the homological spectrum.

\begin{thmx}[\cref{cor:nilpotence}]
    Let $G$ be a profinite group. Then the homological functors $(K_p(n)_\ast \Phi_G^H)_{H,p,n}$ jointly detect $\otimes$-nilpotence of morphisms in $\Sp_G$ with compact source.
\end{thmx}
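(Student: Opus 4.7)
The plan is to invert the usual flow of argument: rather than deducing a description of the homological spectrum from nilpotence detection, I would first compute $\Spch(\Sp_G^\omega)$ by continuity and then read off nilpotence detection as a formal consequence of Balmer's abstract theory. Recall that for any rigidly-compactly generated tt-category, the family of pure homological functors indexed by $\Spch$ jointly detects $\otimes$-nilpotence of morphisms with compact source. Hence it suffices to show that every homological prime of $\Sp_G^\omega$ is realized (up to the equivalence relation that identifies homological primes with the same kernel on compacts) by a functor of the form $K_p(n)_\ast \Phi_G^H$.

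The key input is the continuity theorem for the homological spectrum (\cref{thm:spchcontinuity}) applied to the continuity presentation $\Sp_G \simeq \colim_i \Sp_{G_i}$ from \eqref{eq:defofspec}. This yields
\[
\Spch(\Sp_G^\omega) \;\cong\; \lim_i \Spch(\Sp_{G_i}^\omega),
\]
with transition maps induced by inflation. For each finite quotient $G_i$, the Balmer--Sanders nilpotence theorem identifies $\Spch(\Sp_{G_i}^\omega)$ as the set of equivalence classes of functors $K_p(n)_\ast \Phi_{G_i}^{H}$ as $H$ ranges over subgroups of $G_i$, $p$ over primes, and $n \geq 0$. A point in the limit is thus a compatible family of such data, which I would package, via the continuous definition of geometric fixed points (\cref{def:gfp}), into a triple $(H,p,n)$ where $H$ is a closed subgroup of $G$, and argue that this point is represented by $K_p(n)_\ast \Phi_G^{H}$. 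Feeding the resulting identification back into Balmer's nilpotence criterion then gives the theorem, with the target automatically unrestricted because the criterion only requires the source to be compact.

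The main obstacle lies in the coherence step of the limit identification: one must show that a compatible family of homological primes $\{K_{p_i}(n_i)_\ast \Phi_{G_i}^{H_i}\}_i$ is forced to have constant $(p,n)$ along the filtered system and that the subgroups $H_i \leq G_i$ assemble (up to conjugacy) into a single closed subgroup $H = \lim_i H_i \leq G$, whose image under the structure maps recovers the $H_i$. This will follow from Dress-style continuity of $\Sub(-)/(-)$ together with the explicit description of the transition maps on $\Spch(\Sp_{G_i}^\omega)$ under inflation, which track exactly how Morava K-theoretic geometric fixed points restrict along quotient maps of finite groups. Once this coherence is in place, the identification of $\Spch(\Sp_G^\omega)$ with triples $(H,p,n)$ (modulo conjugacy) is immediate, and the nilpotence theorem drops out formally.
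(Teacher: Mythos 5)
Your proposal is correct and follows essentially the same strategy as the paper: it reverses the usual flow of the Balmer--Sanders argument by first computing $\Spch(\Sp_G^\omega)$ via the continuity theorem \cref{thm:spchcontinuity}, identifying the homological primes with triples $(H,p,n)$ using the finite-group nilpotence theorem plus Dress-style continuity of $\Sub(-)/(-)$, and then deducing nilpotence detection from Balmer's abstract homological nilpotence criterion. The paper organizes the ``coherence step'' you flag by routing it through the bijectivity hypothesis (\cref{prop:bijectivityhyp}) and the already-established description of $\Spc(\Sp_G^\omega)$ (\cref{prop:gsp_spectrumset}), rather than assembling $\Spch$ directly from the finite stages, but this is a bookkeeping difference rather than a genuinely different argument.
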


In \cref{thmx:a} we observed that it was possible to abstractly describe the Balmer spectrum of $\Sp_G$ using the Balmer spectra of the $\Sp_{G_i}$. However, even in the finite group case we do not have a complete understanding of the topology of the Balmer spectrum. Work of Balmer--Sanders \cite{balmersanders} and Barthel--Hausmann--Naumann--Nikolaus--Noel--Sanders \cite{BHNNNS2019} give us a complete understanding in the abelian case, and work of Kuhn--Lloyd gives us understanding in the case where $G$ is an extraspecial 2-group \cite{KuhnLloyd2020pp}. In all of these cases, the topology is described via the computation of \emph{blue shift numbers} $\beth_G(H,K, p, m)$ which determine inclusions of Balmer primes. We extend these results to the profinite context, showing that for any profinite group $G$, the topology on $\Spc(\Sp_{G}^{\omega})$ is explicitly determined by the Hausdorff topology on $\Sub(G)/G$ and the blueshift numbers $\beth_{G_i}(H,K, p, m)$ for the finite quotient groups $G_i$ of $G$; see \cref{ssec:gsp_prism} for a precise statement. Furthermore, we resolve the case of abelian profinite group completely:

\begin{thmx}[\cref{thm:prism_abelian}]\label{thmx:thmxc}
    Let $A$ be an abelian profinite group. For closed subgroups $H,K \leqslant A$, primes $p,q$, and $n,m \in \N\cup\{\infty\}$, the following conditions are equivalent:
        \begin{enumerate}
            \item there is an inclusion of prime tt-ideals $\sfP_A(K,p,n) \subseteq \sfP_A(H,q,m)$;
            \item $K$ is a subgroup of $H$, the quotient $H/K$ is a pro-$p$-group, and
                \[
                \begin{cases}
                    n \geqslant m + \rank_p(H/K) & \text{if } m = 1; \\
                    n \geqslant m + \rank_p(H/K) \text{ and } p=q & \text{if } m>1.
                \end{cases}
                \]
        \end{enumerate}
    Consequently, if $H/K$ is a pro-$p$ group, then $\beth_A(H,K,p,m) = \rank_p(H/K)$.
\end{thmx}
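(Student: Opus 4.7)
The plan is to leverage the continuity theorem (\cref{thmx:a}) to reduce the question to its known counterpart for finite abelian groups, which was resolved by Balmer--Sanders and Barthel--Hausmann--Naumann--Nikolaus--Noel--Sanders. Writing $A=\lim_U A/U$ as $U$ ranges over the cofiltered poset of open normal subgroups, the theorem identifies $\Spc(\Sp_A^\omega)\cong \lim_U \Spc(\Sp_{A/U}^\omega)$. My first step would be to identify how each prime $\sfP_A(K,p,n)$ sits under this homeomorphism. Using the construction of geometric fixed points in our continuous model (\cref{def:gfp}) together with the chromatic input at the Weyl-quotient spectra, I would verify that $\sfP_A(K,p,n)$ corresponds to the compatible system $(\sfP_{A/U}(KU/U,p,n))_U$, since inflation along $A\twoheadrightarrow A/U$ intertwines geometric fixed points at $K$ and at $KU/U$.

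Granting this compatibility, the inclusion relation in a cofiltered limit of spectral spaces with spectral transition maps is detected at each finite level. Hence $\sfP_A(K,p,n)\subseteq \sfP_A(H,q,m)$ if and only if $\sfP_{A/U}(KU/U,p,n)\subseteq \sfP_{A/U}(HU/U,q,m)$ for every open normal $U\leqslant A$. At each such finite level the BHNNNS classification for finite abelian groups applies verbatim and produces three conditions: $KU/U\leqslant HU/U$, the quotient $HU/KU\cong (HU/U)/(KU/U)$ is a $p$-group, and the numerical inequality involving $\beth_{A/U}(HU/U,KU/U,p,m)=\rank_p(HU/KU)$, with the extra constraint $p=q$ active exactly when $m>1$.

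The final, and most delicate, step is to show that these level-wise conditions repackage into the claimed conditions on $H,K\leqslant A$ at the profinite level. The statement $K\leqslant H \iff KU/U\leqslant HU/U$ for all $U$ is a standard property of closed subgroups of profinite groups. The equivalence ``$H/K$ is pro-$p$ $\iff$ every $HU/KU$ is a $p$-group'' follows from the identification $H/K\cong \lim_U HU/KU$ and the fact that being pro-$p$ is detected on finite continuous quotients. The crucial analytic point is the rank identity
\[
\rank_p(H/K)=\sup_U \rank_p(HU/KU),
\]
so that the uniform inequality $n\geqslant m+\rank_p(HU/KU)$ across all $U$ is equivalent to the single inequality $n\geqslant m+\rank_p(H/K)$. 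This holds because for a pro-$p$ abelian group $M$ the $p$-rank is computed by $\dim_{\F_p}(M/pM)$, which is continuous for cofiltered limits of surjections; when the supremum is infinite, the condition on both sides forces $n=\infty$. I anticipate that pinning down the precise correspondence between profinite primes and their compatible finite-level avatars (in particular confirming that the inflation pullback acts on the $(K,p,n)$-indexing exactly as claimed) is the main obstacle, while the rank comparison and the Galois-type subgroup translations are then formal consequences. The blueshift equality $\beth_A(H,K,p,m)=\rank_p(H/K)$ is immediate from the finite-level formula together with this supremum identity.
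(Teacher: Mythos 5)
Your proposal follows essentially the same strategy as the paper's proof: reduce via continuity (\cref{cor:gsp_contspectrum}, made precise by \cref{lem:gsp_inclusions}) to the finite quotients, apply BHNNNS at each finite level, observe that $p$-subnormality in a finite abelian group amounts to the quotient being a $p$-group, and then assemble the level-wise conditions using the continuity of the $p$-rank. The only cosmetic difference is that the paper computes $\rank_p$ via the Frattini quotient and cites \cite[Corollary 2.8.3]{ribes_zalesskii} for $P/\Phi(P)\cong\lim_i P_i/\Phi(P_i)$, whereas you phrase it via $M/pM$ — equivalent for abelian pro-$p$ groups — and the step you flagged as needing care (the compatibility $\sfP_A(K,p,n)\leftrightarrow(\sfP_{A_i}(K_i,p,n))_i$) is exactly what the paper establishes beforehand in \cref{prop:gsp_spectrumset} and \cref{lem:gsp_inclusions}.
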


In \cref{ex:prism_zp} we will explicitly describe how to undertake the computation of \cref{thmx:thmxc} in the case that $G = \Z_p$. The resulting schematic for the Balmer spectrum, along with the comparison map to the Zariski spectrum of the Burnside ring, is given in \cref{fig:speczp} below to whet the appetite of the curious reader.

\begin{figure}[h]
    \centering
\begin{equation*}
\label{eq:Spc(C_p)}%
\xy
{\ar@{->}_{\rho_{\Sp_{\Z_p}^\omega}} (-45,7.5)*{};(-45,-42.5)*{}};
(-45,10)*{\Spc(\Sp_{\Z_p}^\omega) = };
(-45,-45)*{\Spec(A(\Z_p)) = };
{\ar@{-} (-17,0)*{};(12,-15)*{}};
{\ar@{-} (-12,0)*{};(14.5,-15)*{}};
{\ar@{-} (-7,0)*{};(17,-15)*{}};
{\ar@{-} (-2,0)*{};(19.5,-15)*{}};
{\ar@{-} (5,0)*{};(27.5,-15)*{}};
{\ar@{-} (-12,0)*{};(12,-15)*{}};
{\ar@{-} (-7,0)*{};(12,-15)*{}};
{\ar@{-} (-2,0)*{};(12,-15)*{}};
{\ar@{-} (5,0)*{};(12,-15)*{}};
{\ar@{-} (-7,0)*{};(14.5,-15)*{}};
{\ar@{-} (-2,0)*{};(14.5,-15)*{}};
{\ar@{-} (5,0)*{};(14.5,-15)*{}};
{\ar@{-} (-2,0)*{};(17,-15)*{}};
{\ar@{-} (5,0)*{};(17,-15)*{}};
{\ar@{-} (5,0)*{};(19.5,-15)*{}};
{\ar@{-} (5,0)*{};(27.5,-15)*{}};
{\ar@{-} (35,0)*{};(12,-15)*{}};
{\ar@{-} (37.5,0)*{};(14.5,-15)*{}};
{\ar@{-} (40,0)*{};(17,-15)*{}};
{\ar@{-} (42.5,0)*{};(19.5,-15)*{}};
{\ar@{-} (49.5,0)*{};(27.5,-15)*{}};
{\ar@{-} (-12,5)*{};(-17,0)*{}};
{\ar@{-} (-12,10)*{};(-17,5)*{}};
{\ar@{-} (-12,15)*{};(-17,10)*{}};
{\ar@{-} (-12,19.5)*{};(-17,15)*{}};
{\ar@{-} (-12,25)*{};(-17,25)*{}};
  (-25,25)*{\scriptstyle \sfP(\Z_p,p,\infty)};
  (-25,0)*{\scriptstyle \sfP(\Z_p,p,2)};
  (-25,5)*{\scriptstyle \sfP(\Z_p,p,3)};
  (-25,10)*{\scriptstyle \sfP(\Z_p,p,4)};
  (-25,15)*{\scriptstyle \sfP(\Z_p,p,5)};
{\ar@{-} (-17,0)*{};(-17,5)*{}};
{\ar@{-} (-17,5)*{};(-17,10)*{}};
{\ar@{-} (-17,10)*{};(-17,15)*{}};
{\ar@{-} (-17,15)*{};(-17,19.5)*{}};
  {\ar@{-} (-7,5)*{};(-12,0)*{}};
{\ar@{-} (-7,10)*{};(-12,5)*{}};
{\ar@{-} (-7,15)*{};(-12,10)*{}};
{\ar@{-} (-7,19.5)*{};(-12,15)*{}};
{\ar@{-} (-7,25)*{};(-12,25)*{}};
{\ar@{-} (-2,5)*{};(-7,0)*{}};
{\ar@{-} (-2,10)*{};(-7,5)*{}};
{\ar@{-} (-2,15)*{};(-7,10)*{}};
{\ar@{-} (-2,19.5)*{};(-7,15)*{}};
{\ar@{-} (-2,25)*{};(-7,25)*{}};
  {\ar@{-} (-2,5)*{};(-12,0)*{}};
{\ar@{-} (-2,10)*{};(-12,5)*{}};
{\ar@{-} (-2,15)*{};(-12,10)*{}};
{\ar@{-} (-2,19.5)*{};(-12,15)*{}};
{\ar@{-} (-2,25)*{};(-12,25)*{}};
{\ar@{-} (-7,5)*{};(-17,0)*{}};
{\ar@{-} (-7,10)*{};(-17,5)*{}};
{\ar@{-} (-7,15)*{};(-17,10)*{}};
{\ar@{-} (-7,19.5)*{};(-17,15)*{}};
{\ar@{-} (-7,25)*{};(-17,25)*{}};
{\ar@{-} (-2,5)*{};(-17,0)*{}};
{\ar@{-} (-2,10)*{};(-17,5)*{}};
{\ar@{-} (-2,15)*{};(-17,10)*{}};
{\ar@{-} (-2,19.5)*{};(-17,15)*{}};
{\ar@{-} (-2,25)*{};(-17,25)*{}};
 (-17,0)*{\color{rainbowOne}\bullet};
 (-17,0)*{\circ};
  (-17,5)*{\color{rainbowOne}\bullet};
  (-17,5)*{\circ};
  (-17,10)*{\color{rainbowOne}\bullet};
  (-17,10)*{\circ};
  (-17,15)*{\color{rainbowOne}\bullet};
  (-17,15)*{\circ};
  (-17,23)*{\vdots};
  (-17,25)*{\color{rainbowOne}\bullet};
  (-17,25)*{\circ};
   (-6.5,31)*{\scriptstyle \sfP(p^k\Z_p,p,n)};
 (-6.5,27.5)*{\scriptscriptstyle (k \geqslant 0,\, n\geqslant 2)};
  {\ar@{-} (-1,1)*{};(-2,0)*{}};
{\ar@{-} (-1,6)*{};(-2,5)*{}};
{\ar@{-} (-1,11)*{};(-2,10)*{}};
{\ar@{-} (-1,16)*{};(-2,15)*{}};
{\ar@{-} (-1,25)*{};(-2,25)*{}};
{\ar@{-} (5,5)*{};(4,4)*{}};
{\ar@{-} (5,10)*{};(4,9)*{}};
{\ar@{-} (5,15)*{};(4,14)*{}};
{\ar@{-} (5,25)*{};(4,24)*{}};
{\ar@{-} (5,25)*{};(4,25)*{}};
{\ar@{-} (-12,0)*{};(-12,5)*{}};
{\ar@{-} (-12,5)*{};(-12,10)*{}};
{\ar@{-} (-12,10)*{};(-12,15)*{}};
{\ar@{-} (-12,15)*{};(-12,19.5)*{}};
 (-12,0)*{\color{rainbowTwo}\bullet};
 (-12,0)*{\circ};
  (-12,5)*{\color{rainbowTwo}\bullet};
  (-12,5)*{\circ};
  (-12,10)*{\color{rainbowTwo}\bullet};
  (-12,10)*{\circ};
  (-12,15)*{\color{rainbowTwo}\bullet};
  (-12,15)*{\circ};
  (-12,23)*{\vdots};
  (-12,25)*{\color{rainbowTwo}\bullet};
  (-12,25)*{\circ};
{\ar@{-} (-7,0)*{};(-7,5)*{}};
{\ar@{-} (-7,5)*{};(-7,10)*{}};
{\ar@{-} (-7,10)*{};(-7,15)*{}};
{\ar@{-} (-7,15)*{};(-7,19.5)*{}};
 (-7,0)*{\color{rainbowThree}\bullet};
 (-7,0)*{\circ};
  (-7,5)*{\color{blue}\bullet};
  (-7,5)*{\circ};
  (-7,10)*{\color{blue}\bullet};
  (-7,10)*{\circ};
  (-7,15)*{\color{blue}\bullet};
  (-7,15)*{\circ};
  (-7,23)*{\vdots};
  (-7,25)*{\color{blue}\bullet};
  (-7,25)*{\circ};
{\ar@{-} (-2,0)*{};(-2,5)*{}};
{\ar@{-} (-2,5)*{};(-2,10)*{}};
{\ar@{-} (-2,10)*{};(-2,15)*{}};
{\ar@{-} (-2,15)*{};(-2,19.5)*{}};
 (-2,0)*{\color{rainbowFour}\bullet};
 (-2,0)*{\circ};
  (-2,5)*{\color{rainbowFour}\bullet};
  (-2,5)*{\circ};
  (-2,10)*{\color{rainbowFour}\bullet};
  (-2,10)*{\circ};
  (-2,15)*{\color{rainbowFour}\bullet};
  (-2,15)*{\circ};
  (-2,23)*{\vdots};
  (-2,25)*{\color{rainbowFour}\bullet};
  (-2,25)*{\circ};
    (1.5,2.5)*{\hdots};
  (1.5,7.5)*{\hdots};
  (1.5,12.5)*{\hdots};
  (1.5,17.5)*{\hdots};
  (1.5,22.5)*{\hdots};
    (12.0,25)*{\scriptstyle \sfP(e,p,\infty)};
  (12.0,0)*{\scriptstyle \sfP(e,p,2)};
  (12.0,5)*{\scriptstyle \sfP(e,p,3)};
  (12.0,10)*{\scriptstyle \sfP(e,p,4)};
  (12.0,15)*{\scriptstyle \sfP(e,p,5)};
  {\ar@{-} (5,0)*{};(5,5)*{}};
{\ar@{-} (5,5)*{};(5,10)*{}};
{\ar@{-} (5,10)*{};(5,15)*{}};
{\ar@{-} (5,15)*{};(5,19.5)*{}};
 (5,0)*{\color{black}\bullet};
 (5,0)*{\circ};
  (5,5)*{\color{black}\bullet};
  (5,5)*{\circ};
  (5,10)*{\color{black}\bullet};
  (5,10)*{\circ};
  (5,15)*{\color{black}\bullet};
  (5,15)*{\circ};
  (5,23)*{\vdots};
  (5,25)*{\color{black}\bullet};
  (5,25)*{\circ};
 (42.5,31)*{\scriptstyle \sfP(p^k\Z_p,q,n)};
 (42.5,27.5)*{\scriptscriptstyle (k \geqslant 0,\,q\neq p,\, n\geqslant 2)};
{\ar@{-} (35,0)*{};(35,5)*{}};
{\ar@{-} (35,5)*{};(35,10)*{}};
{\ar@{-} (35,10)*{};(35,15)*{}};
{\ar@{-} (35,15)*{};(35,19.5)*{}};
 (35,0)*{\color{rainbowOne}\bullet};
 (35,0)*{\circ};
  (35,5)*{\color{rainbowOne}\bullet};
  (35,5)*{\circ};
  (35,10)*{\color{rainbowOne}\bullet};
  (35,10)*{\circ};
  (35,15)*{\color{rainbowOne}\bullet};
  (35,15)*{\circ};
  (35,23)*{\vdots};
  (35,25)*{\color{rainbowOne}\bullet};
  (35,25)*{\circ};
{\ar@{-} (37.5,0)*{};(37.5,5)*{}};
{\ar@{-} (37.5,5)*{};(37.5,10)*{}};
{\ar@{-} (37.5,10)*{};(37.5,15)*{}};
{\ar@{-} (37.5,15)*{};(37.5,19.5)*{}};
 (37.5,0)*{\color{rainbowTwo}\bullet};
 (37.5,0)*{\circ};
  (37.5,5)*{\color{rainbowTwo}\bullet};
  (37.5,5)*{\circ};
  (37.5,10)*{\color{rainbowTwo}\bullet};
  (37.5,10)*{\circ};
  (37.5,15)*{\color{rainbowTwo}\bullet};
  (37.5,15)*{\circ};
  (37.5,23)*{\vdots};
  (37.5,25)*{\color{rainbowTwo}\bullet};
  (37.5,25)*{\circ};
{\ar@{-} (40,0)*{};(40,5)*{}};
{\ar@{-} (40,5)*{};(40,10)*{}};
{\ar@{-} (40,10)*{};(40,15)*{}};
{\ar@{-} (40,15)*{};(40,19.5)*{}};
     (40,0)*{\color{blue}\bullet};
	 (40,0)*{\circ};
  (40,5)*{\color{blue}\bullet};
  (40,5)*{\circ};
  (40,10)*{\color{blue}\bullet};
  (40,10)*{\circ};
  (40,15)*{\color{blue}\bullet};
  (40,15)*{\circ};
  (40,23)*{\vdots};
  (40,25)*{\color{blue}\bullet};
  (40,25)*{\circ};
{\ar@{-} (42.5,0)*{};(42.5,5)*{}};
{\ar@{-} (42.5,5)*{};(42.5,10)*{}};
{\ar@{-} (42.5,10)*{};(42.5,15)*{}};
{\ar@{-} (42.5,15)*{};(42.5,19.5)*{}};
(42.5,0)*{\color{rainbowFour}\bullet};
(42.5,0)*{\circ};
  (42.5,5)*{\color{rainbowFour}\bullet};
  (42.5,5)*{\circ};
  (42.5,10)*{\color{rainbowFour}\bullet};
  (42.5,10)*{\circ};
  (42.5,15)*{\color{rainbowFour}\bullet};
  (42.5,15)*{\circ};
  (42.5,23)*{\vdots};
  (42.5,25)*{\color{rainbowFour}\bullet};
  (42.5,25)*{\circ};
  (46.25,2.5)*{\hdots};
  (46.25,7.5)*{\hdots};
  (46.25,12.5)*{\hdots};
  (46.25,17.5)*{\hdots};
  (46.25,22.5)*{\hdots};
{\ar@{-} (49.5,0)*{};(49.5,5)*{}};
{\ar@{-} (49.5,5)*{};(49.5,10)*{}};
{\ar@{-} (49.5,10)*{};(49.5,15)*{}};
{\ar@{-} (49.5,15)*{};(49.5,19.5)*{}};
 (49.5,0)*{\color{black}\bullet};
 (49.5,0)*{\circ};
  (49.5,5)*{\color{black}\bullet};
  (49.5,5)*{\circ};
  (49.5,10)*{\color{black}\bullet};
  (49.5,10)*{\circ};
  (49.5,15)*{\color{black}\bullet};
  (49.5,15)*{\circ};
  (49.5,23)*{\vdots};
  (49.5,25)*{\color{black}\bullet};
  (49.5,25)*{\circ};
   (12,-15)*{\color{rainbowOne}\bullet};
   (12,-15)*{\circ};
   (14.5,-15)*{\color{rainbowTwo}\bullet};
   (14.5,-15)*{\circ};
   (17,-15)*{\color{blue}\bullet};
   (17,-15)*{\circ};
   (19.5,-15)*{\color{rainbowFour}\bullet};
   (19.5,-15)*{\circ};
   (23.25,-15)*{\hdots};
   (27.5,-15)*{\color{black}\bullet};
   (27.5,-15)*{\circ};
  (19,-17.5)*{\scriptstyle \sfP(p^k\Z_p,0,1)};
  (19,-21.0)*{\scriptscriptstyle (k \geqslant 0)};
  (-6,-18)*{\underbrace{\ \hspace{20.5mm}}};
{\ar@{|->} (-6,-23)*{};(-6,-29)*{}};
{\ar@{-} (-6,-40)*{};(12,-50)*{}};
{\ar@{-} (-6,-40)*{};(14.5,-50)*{}};
{\ar@{-} (-6,-40)*{};(17,-50)*{}};
{\ar@{-} (-6,-40)*{};(19.5,-50)*{}};
{\ar@{-} (-6,-40)*{};(27.5,-50)*{}};
{\ar@{-} (35,-40)*{};(12,-50)*{}};
{\ar@{-} (37.5,-40)*{};(14.5,-50)*{}};
{\ar@{-} (40,-40)*{};(17,-50)*{}};
{\ar@{-} (42.5,-40)*{};(19.5,-50)*{}};
{\ar@{-} (49.5,-40)*{};(27.5,-50)*{}};
(-6,-40)*{\includegraphics[scale=0.003]{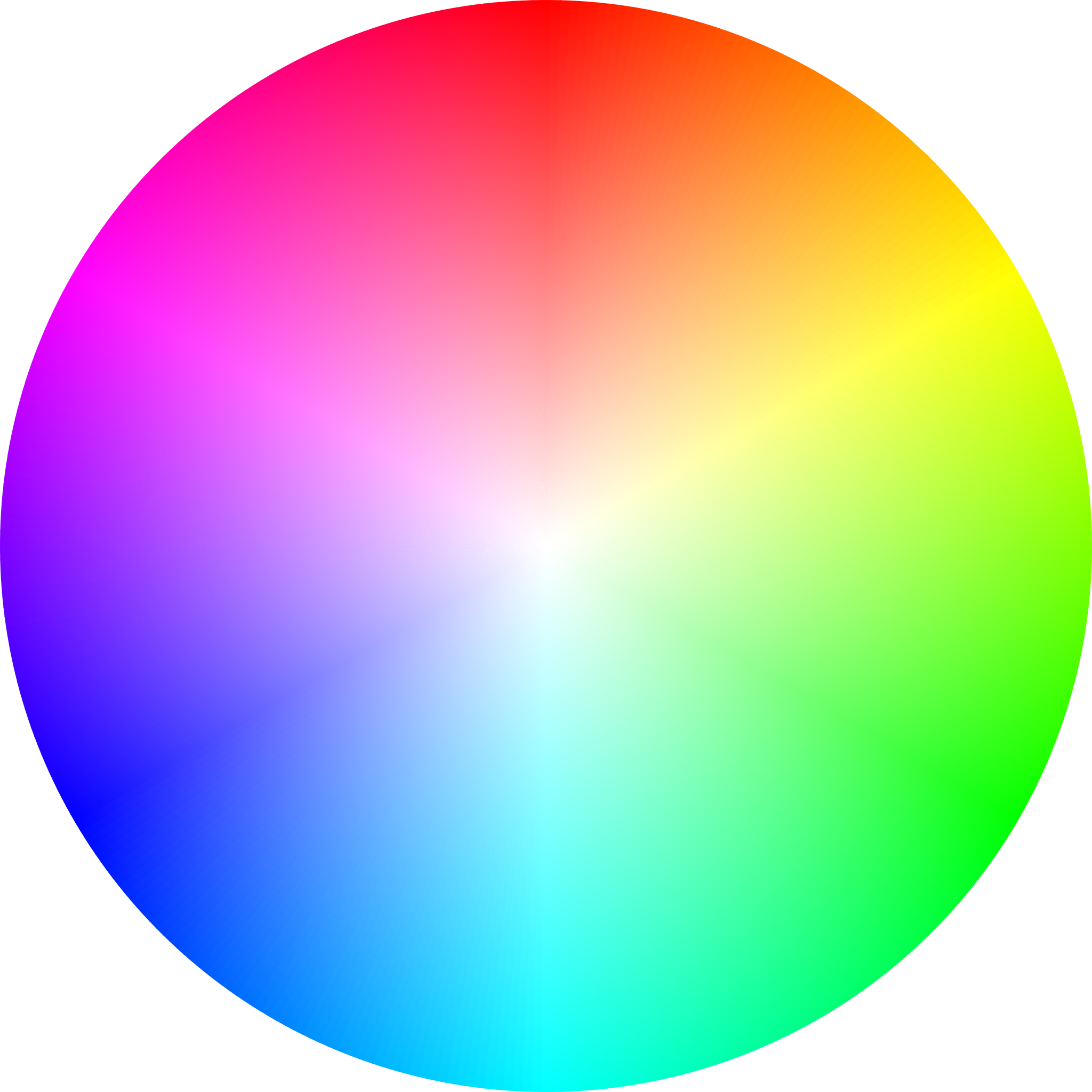}};
   (-6,-40)*{\Circle};
(12,-50)*{\color{rainbowOne}\bullet};
   (12,-50)*{\circ};
   (14.5,-50)*{\color{rainbowTwo}\bullet};
   (14.5,-50)*{\circ};
   (17,-50)*{\color{blue}\bullet};
   (17,-50)*{\circ};
   (19.5,-50)*{\color{rainbowFour}\bullet};
   (19.5,-50)*{\circ};
   (23.25,-50)*{\hdots};
   (27.5,-50)*{\color{black}\bullet};
   (27.5,-50)*{\circ};
(35,-40)*{\color{rainbowOne}\bullet};
   (35,-40)*{\circ};
   (37.5,-40)*{\color{rainbowTwo}\bullet};
   (37.5,-40)*{\circ};
   (40,-40)*{\color{blue}\bullet};
   (40,-40)*{\circ};
   (42.5,-40)*{\color{rainbowFour}\bullet};
   (42.5,-40)*{\circ};
   (46.25,-40)*{\hdots};
   (49.5,-40)*{\color{black}\bullet};
   (49.5,-40)*{\circ};
(43,-33.5)*{\scriptstyle \mathfrak{p}(p^k\Z_p,q)};
  (43,-37.0)*{\scriptscriptstyle (k \geqslant 0, q \neq p)};
(19,-52.5)*{\scriptstyle \mathfrak{p}(p^k\Z_p,0)};
  (19,-56.0)*{\scriptscriptstyle (k \geqslant 0)};
  (-7,-33.5)*{\scriptstyle \mathfrak{p}(p^k\Z_p,p) = \mathfrak{p}(p^{l}\Z_p,p)};
  (-7,-37.0)*{\scriptscriptstyle (k,l \geqslant 0)};
  {\ar@{.} (32,34)*{};(32,-43)*{}};
{\ar@{.} (53,34)*{};(53,-43)*{}};
{\ar@{.} (53,34)*{};(32,34)*{}};
{\ar@{.} (53,-43)*{};(32,-43)*{}};
  {\ar@{.} (31,35)*{};(31,-44)*{}};
{\ar@{.} (54,35)*{};(54,-44)*{}};
{\ar@{.} (54,35)*{};(31,35)*{}};
{\ar@{.} (54,-44)*{};(31,-44)*{}};
\endxy
\end{equation*}
\caption{A schematic for $\Spc(\Sp_{\Z_p}^\omega)$ with a comparison to the Zariski spectrum of the Burnside ring of $\Z_p$. There is a copy of the contents of the dotted box for each prime $q \neq p$, and these do not interact with one another. The dots in the upper half of the schematic correspond to tt-primes, and lines correspond to inclusions, with smaller primes going up and to the right on the page.}\label{fig:speczp}
\end{figure}

\subsection*{Rational profinite spectra}

In \cref{part:qgsp} we consider the category of rational $G$-spectra, $\Sp_{G,\Q}$. This is the full category of $G$-spectra that are local with respect to rational homology. 
Work of the second author in collaboration with Sugrue proves the existence of an algebraic model for the category $\Sp_{G,\Q}$ at the model category theory level. The model is defined in terms of  
chain complexes of rational $G$-equivariant sheaves over a $G$-space $X$, $\Ch (\Shv_{G,\Q}(X))$. 
The first results of \cref{part:qgsp} develop the necessary machinery to promote this to an equivalence at the level of \emph{symmetric monoidal} $\infty$-categories. An integral part of this passage is the existence of a suitable model category on the category of chain complexes of equivariant sheaves:

\begin{thmx}[\cref{cor:sheavescompgen}, \cref{thm:eqsheaves_continuity}]\label{thmx:sheaves}
Let $G$ be a profinite group which acts continuously on a profinite space $X$. There is a projective model structure on $\Ch (\Shv_{G,\Q}(X))$ which is finitely generated and monoidal. As such, the underlying symmetric monoidal $\infty$-category $\mathsf{D}(\Shv_{G,\Q}(X))$ is compactly generated. Moreover, if there is system of $G_i$-spaces $X_i$ such that $\lim_i G_i = G$ and $\lim_i X_i = X$ then the structure maps in the system induce a geometric equivalence 
\[
\colim_i^\omega \sfD(\Shv_{G_i , \Q}(X_i))  \xrightarrow[\quad]{\sim} \sfD(\Shv_{G,\Q}(X)).
\]
\end{thmx}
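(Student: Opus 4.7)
The plan is to tackle the three claims in order, using a projective model-structural approach to settle the first two, and then deducing continuity from a compact-object analysis driven by the fact that open/clopen data on a profinite system is inherited from finite stages.

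\textbf{Model structure and compact generation.} First I would realize $\Shv_{G,\Q}(X)$ as a Grothendieck abelian category with a small set of compact projective generators. When $X$ is a profinite $G$-space the site of $G$-invariant clopens is essentially small, and extension by zero gives generators of the form
\[
P_{U,H} \;\coloneqq\; j_{U,!}\,\Q[G/H],
\]
with $U\subseteq X$ a $G$-invariant clopen and $H\leqslant G$ an open subgroup. These are projective because $\Q[G/H]$ is a projective rational $G$-module (as $[G:H]<\infty$), and compact since $U$ is quasi-compact and $G/H$ is finite. The projective model structure on $\Ch(\Shv_{G,\Q}(X))$ is then built in the standard way with generating (trivial) cofibrations of the form $S^{n-1}\otimes P_{U,H}\to D^n\otimes P_{U,H}$ and $0\to D^n\otimes P_{U,H}$, whose sources are finitely presented, so the resulting model structure is finitely generated. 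Monoidality is verified via the pushout-product axiom reduced to generators using
\[
P_{U,H}\otimes P_{V,K}\;\cong\; j_{U\cap V,!}\bigl(\Q[G/H]\otimes\Q[G/K]\bigr),
\]
which decomposes into a finite sum of generators of the same form (after breaking $G/H\times G/K$ into $G$-orbits). Compact generation of the underlying symmetric monoidal $\infty$-category $\sfD(\Shv_{G,\Q}(X))$ then follows formally from finite generation of the monoidal model structure.

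\textbf{Continuity.} For each $i$ the maps $G\to G_i$ and $X\to X_i$ produce pullback functors $f_i^{\ast}\colon\Shv_{G_i,\Q}(X_i)\to\Shv_{G,\Q}(X)$ that are symmetric monoidal and exact and that send generators to generators (the pullback of $P_{U_i,H_i}$ is $P_{f_i^{-1}(U_i),f_i^{-1}(H_i)}$). After deriving and assembling, one obtains a symmetric monoidal, colimit-preserving, compact-object-preserving functor
\[
\Phi\;\colon\;\colim_i^{\omega}\sfD(\Shv_{G_i,\Q}(X_i))\;\lra\;\sfD(\Shv_{G,\Q}(X)),
\]
and it suffices to check $\Phi$ is an equivalence on compact objects. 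Essential surjectivity on the compact generators uses two standard continuity facts for profinite systems: every $G$-invariant clopen of $X=\lim_iX_i$ is the pullback of a $G_i$-invariant clopen from some $X_i$, and every open subgroup of $G=\lim_iG_i$ contains $\ker(G\to G_i)$ for some $i$. Together these realise every $P_{U,H}$ as the pullback of some $P_{U_i,H_i}$. Full faithfulness on compact generators reduces, by compactness of the sources, to the $\Ext$-continuity statement
\[
\Ext^{\ast}_{\Shv_{G,\Q}(X)}\bigl(f_i^{\ast}P_{U_i,H_i},f_j^{\ast}P_{V_j,K_j}\bigr)\;\cong\;\colim_{k}\Ext^{\ast}_{\Shv_{G_k,\Q}(X_k)}\bigl(f_{ki}^{\ast}P_{U_i,H_i},f_{kj}^{\ast}P_{V_j,K_j}\bigr),
\]
which in turn descends from an underlying abelian-categorical continuity $\Shv_{G,\Q}(X)\simeq\colim_i\Shv_{G_i,\Q}(X_i)$ established from the same clopen/open-subgroup continuity together with exactness of filtered colimits of sheaves.

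\textbf{Main obstacle.} The hard part is upgrading the abelian continuity to the derived $\Ext$-continuity displayed above. The delicate point is that projective resolutions of a generator $P_{U_i,H_i}$ must be chosen compatibly along the cofiltered system so that the pullback $f_{ki}^{\ast}$ of a resolution remains a resolution, allowing derived mapping spaces to commute with the filtered colimit. Exactness of $f_{ki}^{\ast}$ on the abelian side (since it is a pullback of sheaves of $\Q$-modules along a continuous map of profinite spaces and a restriction of $\Q$-linear representations) and the fact that each $P_{U,H}$ is already compact projective in $\Shv_{G,\Q}(X)$ make this achievable by resolving at a fixed finite stage and pulling back. Once this compatibility is in hand, the symmetric monoidal structure transports automatically, since both sides are compactly generated by the family $\{P_{U,H}\}$ and their tensor products stay within this class, so $\Phi$ is the desired equivalence of symmetric monoidal stable $\infty$-categories.
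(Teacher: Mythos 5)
Your proposal is essentially correct, and the first half (projective generators of the form $j_{U,!}\Q[G/H]$, the projective model structure on chain complexes built via the recognition theorem, pushout-product verification via decomposition of tensor products of generators) closely matches the paper's construction of the finitely generated monoidal model structure --- the paper's generators $\const_{G\cdot Y}(N)$ differ cosmetically (the paper induces up from the stabilizer $G_Y$ of a not-necessarily-invariant compact open $Y$, rather than restricting to $G$-invariant clopens, which gives a slightly more flexible generating set).

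For the continuity statement, however, your route is genuinely different from the paper's. You propose to verify the equivalence directly on the compact generators by establishing essential surjectivity (via the continuity of clopens and open subgroups in a profinite system) and full faithfulness via $\Ext$-continuity. The paper instead invokes Bergner's comparison between the localized section model structure and the $\infty$-categorical limit over right adjoints, and then verifies that the adjunction from the Barnes--Sugrue abelian equivalence (their Theorem 11.6) is a Quillen equivalence by checking that the derived unit and counit are quasi-isomorphisms --- this is where the rationality of coefficients enters crucially, via exactness of fixed points at open subgroups. Your route buys conceptual simplicity and avoids the scaffolding of limit model structures, but the trade-off is that you must carry out the generator-level $\Hom$-continuity yourself rather than quoting the existing abelian-level result.

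One small observation: your ``main obstacle'' paragraph overestimates the technical cost. Since the $P_{U,H}$ and their pullbacks are projective, the $\Ext$ groups in the displayed ``$\Ext$-continuity'' formula vanish in positive degrees, so no compatible choice of projective resolutions is required --- the statement collapses to a degree-$0$ isomorphism of $\Hom$ groups, which is exactly the abelian-categorical continuity (the Barnes--Sugrue result) you already flagged as the underlying input. So the derived level is handled for free by projectivity, and the real load-bearing lemma is the abelian continuity of the sheaf categories together with the facts that pullback is exact and carries generators to generators.
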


While the continuity statement of \cref{thmx:sheaves} mirrors our construction of profinite spectra in \eqref{eq:defofspec}, the proof of this result in-fact crucially uses rational coefficients. We suspect that the continuity of sheaves fails to be true in general if we instead take integral coefficients.

Using \cref{thmx:sheaves}, we focus on the case where $X=\Sub(G)$ and prove the existence of an algebraic model for $\Sp_{G,\Q}$ by restricting to the subcategory of $\mathsf{D}(\Shv_{G,\Q}(\Sub(G)))$ 
consisting of Weyl sheaves: those equivariant sheaves whose stalk at $H$ is $H$-fixed. This provides a \emph{monoidal} derived version of \cite{barnessugrue_spectra}. 
Again, the proof of this result takes advantage of the continuous description of the category in a fundamental way, extending the approach to algebraic models in the finite group case by Wimmer \cite{wimmer_model} to the profinite level:

\begin{thmx}[\cref{thm:sheafalgebraicmodel}]
Let $G$ be a profinite group. Then there is a natural symmetric monoidal equivalence 
\[
\xymatrix{\Sp_{G,\Q} \ar[r]^-{\sim} & \sfD (\Shv_{G,\Q}^{\Weyl}(\Sub(G)))}
\]
between rational $G$-spectra and the derived category of rational equivariant Weyl-$G$-sheaves on $\Sub(G)$.
\end{thmx}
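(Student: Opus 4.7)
The plan is to deduce the equivalence by passing to filtered colimits along the tower of finite quotients $G = \lim_i G_i$, using the rational version of the continuous model \eqref{eq:defofspec} together with the sheaf-continuity result of \cref{thmx:sheaves}. The strategy mirrors Wimmer's approach \cite{wimmer_model} in the finite-group case, promoted to the profinite level via continuity on both sides.

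First, I would establish the theorem for each finite quotient $G_i$ at the required level of structure: namely, a symmetric monoidal, stable, presentable $\infty$-categorical equivalence
\[
\Sp_{G_i,\Q} \xrightarrow{\ \sim\ } \sfD(\Shv_{G_i,\Q}^{\Weyl}(\Sub(G_i))).
\]
Wimmer's monoidal Quillen equivalence upgrades to this statement by passing to underlying $\infty$-categories of monoidal model categories, since both sides are obtained from combinatorial monoidal model structures.

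Second, I would upgrade these finite-group equivalences to a natural transformation of diagrams $I \to \CAlg(\PrL)$. On the left, the structure maps are the inflation functors $\Sp_{G_i,\Q} \to \Sp_{G_j,\Q}$ associated to quotients $G_j \twoheadrightarrow G_i$; on the right, they are the pullback-of-sheaves functors along $\Sub(G_j) \to \Sub(G_i)$ restricted to Weyl sheaves. Naturality reduces to a geometric-fixed-point calculation: inflation sends a $G_i$-spectrum with isotropy at $H \leqslant G_i$ to a $G_j$-spectrum with isotropy at the preimage of $H$ in $G_j$, matching the sheaf-theoretic pullback of a Weyl sheaf.

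Third, I would pass to the filtered colimit. Rationalizing \eqref{eq:defofspec} gives $\Sp_{G,\Q} \simeq \colim_i^\omega \Sp_{G_i,\Q}$, while applying \cref{thmx:sheaves} to $X = \Sub(G) = \lim_i \Sub(G_i)$ yields $\sfD(\Shv_{G,\Q}(\Sub(G))) \simeq \colim_i^\omega \sfD(\Shv_{G_i,\Q}(\Sub(G_i)))$. Combining the previous two steps, one obtains a symmetric monoidal equivalence between $\Sp_{G,\Q}$ and the colimit $\colim_i^\omega \sfD(\Shv_{G_i,\Q}^{\Weyl}(\Sub(G_i)))$, and the remaining task is to identify this colimit with $\sfD(\Shv_{G,\Q}^{\Weyl}(\Sub(G)))$ as a subcategory of $\sfD(\Shv_{G,\Q}(\Sub(G)))$.

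The main obstacle is precisely this last identification. The Weyl condition---stalk at $H$ being $H$-fixed---is a pointwise closed condition at every point of $\Sub(G)$, but stability under colimits in $\CAlg(\PrL)$ requires care. I would establish it by combining two continuity statements: (i) the stalk of a sheaf on $\Sub(G)$ at a closed subgroup $H$ is computed by the filtered colimit of stalks of the compatible system of sheaves $\cF_i$ on $\Sub(G_i)$ at the images of $H$; and (ii) any closed subgroup of $G$ is the intersection of the preimages of its images in the finite quotients $G_i$, so that the isotropy group governing the Weyl condition at $H$ is exhausted by those at the images of $H$ in the $G_i$. Together these imply that a sheaf on $\Sub(G)$ presented by $(\cF_i)$ is Weyl if and only if each $\cF_i$ is, giving the desired match of subcategories.
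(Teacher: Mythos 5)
Your proposal follows essentially the same strategy as the paper's proof: start from Wimmer's symmetric monoidal finite-group equivalence, exhibit compatibility with the towers of inflation functors and sheaf pullbacks by a stalkwise geometric-fixed-point computation (showing the squares commute), and pass to filtered colimits of compactly generated $\infty$-categories using continuity of both $\Sp_{G,\Q}$ and the sheaf side. The one place you deviate is the last step: the paper invokes a standalone Weyl-sheaf continuity proposition (proved as a Weyl-restricted version of the general sheaf continuity, built on Barnes--Sugrue's abelian result), whereas you propose to re-derive it by embedding $\colim_i^\omega\sfD(\Shv_{G_i,\Q}^{\Weyl}(\Sub(G_i)))$ and $\sfD(\Shv_{G,\Q}^{\Weyl}(\Sub(G)))$ into the full sheaf category $\sfD(\Shv_{G,\Q}(\Sub(G)))$ and checking stalkwise that the Weyl condition is preserved and reflected. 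Your stalkwise argument is sound in spirit---stalks in the colimit are filtered colimits of inflated stalks, and $H = \bigcap_i \pi_i^{-1}(H_i)$ so $H$-fixedness is exhausted at finite stages---but essential surjectivity onto all Weyl $G$-sheaves still requires an argument (most cleanly via the compact generators $\weylfunctor(\const_{G\cdot Y}(N))$, which come from finite stages, rather than by a stalkwise criterion alone).
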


Equipped with the algebraic model we then begin our exploration of the tensor-triangular geometry of $\Sp_{G,\Q}$. Firstly, combining \cref{thmx:a} with our understanding of the Balmer spectrum of $\Sp_{G_i,\Q}$ for $G_i$ a finite group along with results of Dress we can conclude that the Balmer spectrum of $\Sp_{G,\Q}$ can be described as the space $\Sub(G)/G$ equipped with the Hausdorff metric topology. This space is profinite, i.e., compact Hausdorff and totally disconnected. The next step is to understand various localizations of the category with respect to its Balmer primes.  To do so, in \cref{sec:ttstalks} and \cref{sec:zerodimtt} we develop foundational material about filtered colimits of tensor-triangulated categories, with a special emphasis on their local structure:

\begin{thmx}[\cref{cor:ttstalkformula}, \cref{cor:continuousttstalk}]
Suppose $\sfT$ is a rigidly-compactly generated tt-category and let $\sfP \in \Spc(\sfT^{\omega})$ be a point in its spectrum. 
    \begin{enumerate}
        \item The stalk of $\sfT$ at $\sfP$ can be computed as $\sfT_{\sfP} \simeq \colim_{U \ni \sfP}^\omega\sfT(U)$, where the colimit is taken over the quasi-compact opens in $\Spc(\sfT^{\omega})$ containing $\sfP$.
        \item If $\sfT = \colim_{i}^\omega\sfT_{i}$ is a filtered colimit of rigidly-compactly generated tt-categories and $\sfP_i \in \Spc(\sfT_i^\omega)$ is the image of $\sfP$ under the induced map on spectra, then we have
        \[
        \sfT_{\sfP} = (\colim_{i}^\omega\sfT_i)_{\sfP} \simeq \colim_{i}^\omega(\sfT_i)_{\sfP_i}.
        \]
    \end{enumerate}
\end{thmx}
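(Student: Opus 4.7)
The plan is to establish (1) first via a cofinality argument and then derive (2) from it by combining the continuity of the Balmer spectrum with compatibility of finite localizations under filtered colimits of tt-categories.

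For (1), the stalk $\sfT_{\sfP}$ is most naturally defined as a filtered colimit of the finite localizations $\sfT(U)$ taken over open neighborhoods $U$ of $\sfP$ whose complements are Thomason (so that $\sfT(U)$ is defined as a smashing localization). Since $\Spc(\sfT^{\omega})$ is a spectral space, the quasi-compact opens form a basis of the topology closed under finite intersection, and every open neighborhood of a point contains a quasi-compact one. Consequently, the sub-poset of quasi-compact opens containing $\sfP$ is cofinal in whatever indexing is used to define the stalk, and cofinality of filtered colimits yields the desired formula $\sfT_{\sfP} \simeq \colim_{U \ni \sfP}^{\omega}\sfT(U)$.

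For (2), I would combine (1) with the continuity of the Balmer spectrum (the abstract counterpart of \cref{thmx:a} developed in \cref{part:profinitettgeom}), which provides a homeomorphism $\Spc(\sfT^{\omega}) \cong \lim_i \Spc(\sfT_i^{\omega})$. Under this identification, every quasi-compact open $U$ containing $\sfP$ is the preimage of a quasi-compact open $U_i$ containing $\sfP_i$ at some finite stage of the limit system, and such preimages are cofinal among quasi-compact opens containing $\sfP$. The second, and main, ingredient is that finite localizations commute with filtered colimits of rigidly-compactly generated tt-categories: for a compatible choice of Thomason complements one has
\begin{equation*}
\sfT(f_i^{-1}U_i) \;\simeq\; \colim_{j \geq i}^{\omega}\sfT_j(p_j^{-1}U_i),
\end{equation*}
where $p_j$ denotes the transition map in the inverse system of spectra. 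With this in hand, an exchange of the two filtered colimits and reindexing yield $\sfT_{\sfP} \simeq \colim_i^{\omega}(\sfT_i)_{\sfP_i}$.

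The main obstacle is this structural compatibility of the assignment $U \mapsto \sfT(U)$ with filtered colimits of tt-categories. While this is morally a consequence of the universal property of smashing localizations, care is required to verify it within the precise $\infty$-category of rigidly-compactly generated tt-categories in which $\colim^{\omega}$ is computed---in particular, to match the Thomason complements on both sides through the continuity homeomorphism, and to exploit that $\colim^{\omega}$ is a colimit of compactly-generated $\infty$-categories along compact-preserving tensor functors, which is where smashing localizations behave functorially. Once this compatibility is established (as part of the general pro-tt-geometry developed in \cref{part:profinitettgeom}), both (1) and (2) follow by the essentially formal manipulations described above.
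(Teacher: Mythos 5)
Your argument for part (1) has a genuine gap. The paper's definition of the stalk is $\sfT_{\sfP} = \sfT/\langle\sfP\rangle \simeq \sfT(\gen(\sfP))$, and $\gen(\sfP)$ is essentially never itself quasi-compact open: it is the complement of a Thomason subset (equivalently, quasi-compact and generalization-closed), but typically not open. For instance, in $\Spc(\Dperf{\Z})\cong\Spec(\Z)$ with $\sfP$ corresponding to a nonzero prime $(p)$, one has $\gen(\sfP)=\{(0),(p)\}$, and there is no quasi-compact open inside it. So the quasi-compact opens containing $\sfP$ are \emph{not} cofinal, in the reverse-inclusion order used to form the colimit, among the complements of Thomason subsets containing $\sfP$. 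Moreover, an open set whose complement is Thomason is automatically quasi-compact, so the indexing poset you describe \emph{is already} the set of quasi-compact opens containing $\sfP$; the ``cofinality'' step is vacuous and the identification of this colimit with the Verdier quotient $\sfT/\langle\sfP\rangle$ is being asserted, not proved. The actual content --- supplied by the paper's proof of \cref{prop:localsectionsformula} --- is the support-theoretic identity $\sfP=\{t\in\sfT^{\omega}\mid\supp(t)\subseteq\gen(\sfP)^c\}=\bigcup_{\sfP\in U}\{t\mid\supp(t)\subseteq U^c\}$ together with the fact that a filtered colimit of Verdier sequences is again a Verdier sequence; your proposal does not touch either of these.

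For part (2) your conceptual outline is reasonable, and you correctly isolate the key ingredient, that finite (Verdier) localizations commute with filtered colimits of rigidly-compactly generated tt-categories --- this is exactly \cref{prop:finitelocalizations_continuity}. But you do not prove that ingredient, and your route also leans on part (1), which is not yet in place. The paper's own proof is shorter and independent of part (1): it applies \cref{prop:finitelocalizations_continuity} directly with the thick ideal taken to be $\sfP$ itself, identifying $\colim_i^{\omega}\sfT_i/\langle\sfP_i\rangle$ with $\sfT/\langle\sfP\rangle$ via a continuity-of-ideals argument, instead of exchanging two filtered colimits over quasi-compact opens.
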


Using this abstract theory, we are able to identify the stalks of $\Sp_{G,\Q}$ in the algebraic model. Here we show that each stalk has a particularly nice form, it is a \emph{tensor-triangular field} in the sense of \cite{BKSruminations}. Taking a leaf from the book of commutative algebra, we will say that a tt-category $\sfT$ is \emph{von Neumann regular} if every stalk of $\sfT$ is a tt-field.  We have:

\begin{thmx}[\cref{prop:qgsp_ttstalk}]
Let $G$ be a profinite group, $H \leqslant G$ a closed subgroup, and write $W_G(H) = N_G(H)/H$ for the corresponding Weyl group. Then the stalk of $\Sp_{G,\Q}$ at $H$ is given by
\[
(\Sp_{G,\Q})_H \simeq \sfD\Mod_{\Q}^{\delta}(W_G(H)),
\]
where $\sfD\Mod_{\Q}^{\delta}(W_G(H))$ is the category of discrete rational $W_G(H)$-modules (see \cref{def:discretemodules_general}). This category is a tt-field and as such $\Sp_{G,\Q}$ is von Neumann regular.
\end{thmx}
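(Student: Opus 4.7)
The strategy is to combine the continuity of tt-stalks from the preceding theorem with the continuity of the algebraic model to reduce to finite quotients, where rational equivariant theory splits completely. Writing $G = \lim_i G_i$ as a cofiltered limit of finite quotients, the combination of \cref{thmx:a} with rationalization---or equivalently \cref{thmx:sheaves} applied to the sheaf model---yields a geometric equivalence $\Sp_{G,\Q} \simeq \colim_i^\omega \Sp_{G_i,\Q}$. Let $H$ correspond to the point $\sfP \in \Spc(\Sp_{G,\Q}^\omega) \cong \Sub(G)/G$ and let $H_i$ denote the image of $H$ in $G_i$, so that $\sfP$ maps to $[H_i]$ under the induced map on spectra. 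Applying the continuous stalk formula then gives
\[
(\Sp_{G,\Q})_H \;\simeq\; \colim_i^\omega \bigl(\Sp_{G_i,\Q}\bigr)_{[H_i]}.
\]

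For each finite $G_i$, the rational category splits as a product $\Sp_{G_i,\Q} \simeq \prod_{[K]} \sfD\Mod_\Q(W_{G_i}(K))$ indexed by conjugacy classes of subgroups, by the classical work of Greenlees--May, Barnes, and Wimmer cited earlier; the stalk at $[H_i]$ is precisely the factor indexed by $[H_i]$. Along the structure maps $G_j \twoheadrightarrow G_i$, the Weyl groups $W_{G_i}(H_i)$ assemble into a compatible system whose inverse limit is $W_G(H)$ (a profinite incarnation of one of Dress's continuity results), and the induced transition functors between the stalks correspond to inflation along the surjections $W_{G_j}(H_j) \twoheadrightarrow W_{G_i}(H_i)$. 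By the defining description of discrete $\Q[W_G(H)]$-modules as the filtered colimit of $\Q[W_{G_i}(H_i)]$-modules along inflation (\cref{def:discretemodules_general}), this colimit is exactly $\sfD\Mod_\Q^\delta(W_G(H))$, establishing the claimed equivalence.

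Finally, Maschke's theorem makes each $\Q[W_{G_i}(H_i)]$ semisimple, so $\sfD\Mod_\Q(W_{G_i}(H_i))$ has a one-point Balmer spectrum and is a tt-field in the sense of \cite{BKSruminations}. This property is preserved under filtered geometric colimits of rigidly-compactly generated tt-categories, since the Balmer spectrum of the colimit is the inverse limit of points (hence a point) and semisimplicity of the heart is stable under filtered colimits; consequently $\sfD\Mod_\Q^\delta(W_G(H))$ is a tt-field, and von Neumann regularity of $\Sp_{G,\Q}$ follows by applying the computation at every closed subgroup. The principal technical obstacle I anticipate is the careful identification of the finite-level component functors with the inflation maps on Weyl group module categories: one must verify that the transition maps in the colimit $\colim_i \bigl(\Sp_{G_i,\Q}\bigr)_{[H_i]}$, arising a priori from profinite structure on $\Sp_G$, genuinely match the algebraic inflations $\sfD\Mod_\Q(W_{G_i}(H_i)) \to \sfD\Mod_\Q(W_{G_j}(H_j))$ under the product decomposition, so that the colimit assembles into the discrete-module category rather than some twisted variant.
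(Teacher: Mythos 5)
Your stalk computation follows essentially the same route as the paper's proof of \cref{prop:qgsp_ttstalk}: reduce to finite quotients via rational continuity and the tt-stalk continuity formula (\cref{cor:continuousttstalk}), invoke Wimmer's theorem (\cref{thm:finitegroups_algmodel}) at each finite level, use the continuity of Weyl groups (\cref{prop:profiniteweyl}), and identify the colimit of $\sfD\Mod_\Q(W_{G_i}(H_i))$ over inflation functors with $\sfD\Mod_\Q^\delta(W_G(H))$ via \cref{prop:discretemodules_continuity}. The ``principal technical obstacle'' you flag---matching the transition functors on stalks with algebraic inflations---is precisely what the paper handles by using the naturality of Wimmer's equivalence \eqref{eq:finitegroup_stalks} and \cref{def:ttstalkmap}, so your instinct about where the work lies is correct.

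The gap is in your tt-field argument. You assert that the tt-field property ``is preserved under filtered geometric colimits of rigidly-compactly generated tt-categories,'' justified by the observation that the Balmer spectrum of the colimit is a point and ``semisimplicity of the heart is stable under filtered colimits.'' Neither justification is adequate as stated. A one-point spectrum does not imply tt-field: $\sfD(k[\epsilon]/\epsilon^2)$ has a one-point spectrum and is not a tt-field (cf.~\cref{rem:lem:vNr_zerodim}). And the ``heart'' of a colimit in $\Pr^{L,\omega}$ is not canonically defined nor is it automatically the colimit of the hearts, so the semisimplicity claim needs a genuine argument, not a gesture. The paper instead proves \cref{prop:discretemodules_ttfields} directly, using the fact that every object of $\Mod_\Q^\delta(\Gamma)$ is both injective and projective (a result of Castellano--Weigel), which forces $\sfD\Mod_\Q^\delta(\Gamma)$ to be split-exact (giving F1), and then verifies F2 by a dimension/trace argument. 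A cleaner way to rescue the continuity-style argument you have in mind is sketched in \cref{rem:discretemodules_semisimple}: deduce HPS-style semisimplicity of $\sfD\Mod_\Q^\delta(\Gamma)$ from the finite case via \cref{lem:discretemodules_inflation} and \cref{prop:discretemodules_continuity} and \cite[Proposition 8.1.3]{HoveyPalmieriStrickland1997}, rather than asserting an unstated meta-theorem about closure of tt-fields under filtered colimits.
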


One key fact regarding tt-fields is that they are minimal in that they are generated by any non-zero object. As such, to resolve the classification of localizing ideals for $\Sp_{G,\Q}$ using the theory of stratification from \cite{BarthelHeardSanders2023} we are left only to determine when the local-to-global principle holds. That is, in a von Neumann regular tt-category, stratification holds if and only if the local-to-global principle holds. Our next result is somewhat surprising, in that it links the local-to-global principle for $\Sp_{G,\Q}$ to the condition of $\Sub(G)/G$ being countable. Using work of Gartside--Smith we are able to provide many examples (and counterexamples) of this behaviour:

\begin{thmx}[\cref{thm:scatterediffcountable}]
The category $\Sp_{G,\Q}$ is stratified if and only if the profinite space $\Sub(G)/G$ is countable. Furthermore, we have the following special cases:
    \begin{enumerate}
        \item If $G$ is abelian, then $\Sub(G)$ is countable if and only if $G$ is topologically isomorphic to $A\times \Z_{p_1} \times \cdots \times \Z_{p_r}$ for pairwise distinct primes $p_1,\ldots,p_r$ and $A$ a finite abelian group.
        \item If $G = \operatorname{SL}_n(\mathbb{Z}_p)$, then $\Sub(G)/G$ is countable if and only if $n \leqslant 2$.
    \end{enumerate}
\end{thmx}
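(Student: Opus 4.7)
The plan is to combine the von Neumann regularity of $\Sp_{G,\Q}$ established in the preceding result with a purely topological analysis of the profinite space $\Sub(G)/G$. Since every stalk of $\Sp_{G,\Q}$ is a tt-field, the general stratification theory reduces the classification of localizing ideals to verifying the local-to-global principle, so the main task is to characterize those profinite $G$ for which this principle holds.

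The key intermediate claim is that for a rigidly-compactly generated von Neumann regular tt-category whose Balmer spectrum $X$ is profinite, the local-to-global principle is equivalent to $X$ being scattered. I would prove this by a transfinite Cantor--Bendixson induction: at an isolated point $\sfP \in X$ the clopen decomposition $X = \{\sfP\} \sqcup (X\setminus\{\sfP\})$ induces, via the pro-tt-geometric gluing results of \cref{part:profinitettgeom}, a splitting of the ambient category, allowing one to peel off the tt-field factor at $\sfP$; the LGP then holds for the whole iff it holds on the residual closed subspace, and the process terminates precisely when $X$ is scattered. Since $\Sub(G)/G$ is a metrizable profinite space, scatteredness coincides with countability: any countable compact Hausdorff space is scattered by the Baire category theorem, and any scattered second countable compact Hausdorff space has countable Cantor--Bendixson rank with only countably many points removed at each stage. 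Finally, after Dress one identifies $A_{\Q}(G)$ with the ring of locally constant $\Q$-valued functions on $\Sub(G)/G$, and semi-Artinianness of such a ring is a standard reformulation of scatteredness of its maximal spectrum, giving the equivalence of all three conditions.

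For the abelian specialization I would use Pontryagin duality to identify $\Sub(G)$ with the subgroup lattice of the discrete torsion abelian group $\widehat{G}$. Countability of this lattice forces the structure $\widehat{G} \cong F \oplus \Z(p_1^\infty) \oplus \cdots \oplus \Z(p_r^\infty)$ with $F$ finite and the $p_i$ pairwise distinct: a single Pr\"ufer summand contributes only a countable chain, but two Pr\"ufer copies at the same prime, or an infinite elementary abelian summand, already produce $2^{\aleph_0}$ subgroups, and infinitely many nontrivial primary components yield uncountably many subsets. Dualizing yields the claimed form $G \cong A \times \Z_{p_1} \times \cdots \times \Z_{p_r}$. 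For $G = \operatorname{SL}_n(\Z_p)$ I would invoke the work of Gartside--Smith on countable subgroup lattices of compact $p$-adic analytic groups: their classification shows $\Sub(\operatorname{SL}_n(\Z_p))/\operatorname{SL}_n(\Z_p)$ is countable precisely when $n \leqslant 2$, relying on a detailed analysis of the analytic subgroup structure of $\operatorname{SL}_2(\Z_p)$ and, for $n \geqslant 3$, on the production of a continuum of non-conjugate closed subgroups via the rich subgroup structure of the first congruence kernel.

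The main obstacle is the first step: rigorously proving that, for a von Neumann regular tt-category with profinite Balmer spectrum, the local-to-global principle is equivalent to scatteredness. This requires combining the Cantor--Bendixson induction with the pro-tt-geometric stalk formulas and clopen gluing results of \cref{part:profinitettgeom}, and carefully checking that the successor step (where one peels off a tt-field factor) and the limit step (where one passes to a cofiltered intersection of closed subspaces) both preserve von Neumann regularity and propagate the LGP through the induction.
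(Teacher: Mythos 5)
Your overall strategy—reduce stratification to the local-to-global principle via von Neumann regularity, and then relate the LGP to scatteredness and countability of $\Sub(G)/G$—matches the paper's architecture. However, there are two genuine gaps and one misattribution.

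\textbf{The metrizability of $\Sub(G)/G$ is not a free observation.} You assert that $\Sub(G)/G$ is a metrizable profinite space, and on that basis conclude that scatteredness and countability coincide. But $\Sub(G)/G$ is \emph{not} metrizable for a general profinite group: metrizability of a profinite space is equivalent to second countability, and this fails whenever $G$ has uncountably many open subgroups (e.g., $G = \prod_{\text{uncountable}}\Z/p$). The paper explicitly notes that ``scattered iff countable'' is false for general profinite spaces—the Tychonoff plank is a scattered, non-countably-based profinite space. The actual content of \cref{thm:scatterediffcountable} is precisely that if $\Sub(G)/G$ has an isolated point, then $\Sub(G)/G$ is countably based. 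This requires group-theoretic input: the paper argues via \cref{lem:isothenopen} and \cref{lem:perfection} that an isolated conjugacy class corresponds to an open subgroup with finitely many conjugates, so that $\Sub(G)$ is not perfect; Gartside--Smith then force $G$ to be finitely generated and hence countably based. Without this step your argument has a hole: scatteredness does not topologically imply second countability.

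\textbf{The abstract ``LGP $\iff$ scattered'' claim needs the comparison to the Burnside ring.} You propose to prove directly that, for a von Neumann regular tt-category with profinite spectrum, the LGP is equivalent to scatteredness by a Cantor--Bendixson induction, and you correctly flag the limit-ordinal step as the main obstacle. But the harder implication (LGP $\implies$ scattered) does not follow by ``peeling off tt-field factors''; that induction, if completed, only gives scattered $\implies$ LGP, which is already Stevenson's result and does not require von Neumann regularity. For the converse one must produce a failure of LGP when the spectrum contains a perfect closed subset, and no argument for this in the general tt-setting is supplied. The paper sidesteps this by exhibiting a fully faithful geometric embedding $\sfD(A_{\Q}(G)) \hookrightarrow \Sp_{G,\Q}$ (the Morita embedding), descending the LGP along it, and invoking Stevenson's theorem for derived categories of absolutely flat commutative rings, where the counterexample construction is available. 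You would need some such reduction (or a new argument) to close this gap.

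\textbf{The $\operatorname{SL}_n(\Z_p)$ case is not in Gartside--Smith.} Their results classify when $\Sub(G)$ is countable, not $\Sub(G)/G$, and indeed $\Sub(\operatorname{SL}_2(\Z_p))$ is uncountable (there is a continuum of closed subgroups), so their classification returns a negative answer for $n=2$. The result that $\Sub(\operatorname{SL}_2(\Z_p))/\operatorname{SL}_2(\Z_p)$ is nevertheless countable—one of the more striking examples in the paper—requires the separate analysis (due to Cornulier) of conjugacy classes of closed subgroups via their Lie algebras. Your abelian argument via Pontryagin duality is a legitimate and likely equivalent route to \cref{cor:abeliangroups}, and is fine as stated.
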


Since $\Sp_{G,\Q}$ has a profinite spectrum, it follows formally from \cite[Theorem 9.11]{BarthelHeardSanders2023} that the telescope conjecture holds in $\Sp_{G,\Q}$ whenever it is stratified, as characterized in the previous theorem. However, we can do significantly better and in fact prove that that the telescope conjecture is true for $\Sp_{G,\Q}$ unconditionally. This relies on a new abstract criterion for the telescope conjecture based on the homological spectrum, established as \cref{prop:zerodim_telescopeconjecture}, and uses much of the structural properties we prove about $\Sp_{G,\Q}$ as well as its algebraic model. 

\begin{thmx}[\cref{thm:qgsp_tc}]
    The telescope conjecture holds in $\Sp_{G,\Q}$ for any profinite group $G$.
\end{thmx}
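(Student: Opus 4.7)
The plan is to apply the abstract criterion \cref{prop:zerodim_telescopeconjecture}, which is designed to deliver the telescope conjecture for a rigidly-compactly generated tt-category with a zero-dimensional Balmer spectrum \emph{without} assuming stratification. This is essential here, because by the stratification theorem above $\Sp_{G,\Q}$ fails to be stratified whenever $\Sub(G)/G$ is uncountable, so the general stratification-plus-profinite-spectrum route via \cite[Theorem 9.11]{BarthelHeardSanders2023} is not available in full generality.

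The hypotheses of \cref{prop:zerodim_telescopeconjecture} to be verified are, roughly: (i) the Balmer spectrum is profinite, (ii) the category is von Neumann regular, and (iii) the comparison map from the homological spectrum to the Balmer spectrum is well-behaved (typically bijective). For $\Sp_{G,\Q}$ the first is handled by $\Spc(\Sp_{G,\Q}^{\omega}) \cong \Sub(G)/G$ together with Dress's continuity of $\Sub(-)$. The second is \cref{prop:qgsp_ttstalk}, which identifies each stalk $(\Sp_{G,\Q})_H \simeq \sfD\Mod_{\Q}^{\delta}(W_G(H))$ as a tt-field; in particular, the telescope conjecture holds trivially stalkwise since a tt-field admits no nontrivial smashing localizations.

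For the third ingredient I would invoke the continuous presentation $\Sp_{G,\Q} \simeq \colim_{i}^{\omega} \Sp_{G_i,\Q}$ together with the continuity of the homological spectrum from \cref{thm:spchcontinuity}. In the finite group case the rational Balmer spectrum is the finite discrete space $\Sub(G_i)/G_i$, and the comparison $\Spch \to \Spc$ is a bijection; taking cofiltered limits of these bijections of profinite spaces, the comparison map for $\Sp_{G,\Q}$ is also a bijection. Feeding this, together with von Neumann regularity and the algebraic description of the stalks, into \cref{prop:zerodim_telescopeconjecture} then yields the telescope conjecture for $\Sp_{G,\Q}$.

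The main obstacle, I expect, is the careful passage from stalk-local to global in the presence of smashing localizations: while compact objects and homological spectra transport cleanly along $\colim_{i}^{\omega}$, smashing ideals in general do not, and the delicate point is to show that every smashing ideal of $\Sp_{G,\Q}$ is controlled by the zero-dimensional topology of $\Sub(G)/G$ via the stalk data. This is precisely the conceptual content of \cref{prop:zerodim_telescopeconjecture}, converting the vacuous stalkwise telescope conjecture for tt-fields into its global truth; the only genuinely new input for $\Sp_{G,\Q}$ is checking the bijectivity of the comparison of spectra for finite quotients, which should be a direct computation using the algebraic model.
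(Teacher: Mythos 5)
Your proposal correctly identifies the abstract criterion \cref{prop:zerodim_telescopeconjecture} as the engine and correctly identifies the bijectivity hypothesis as one of its inputs, verified by continuity of both spectra from the finite-quotient case. However, you misstate the second hypothesis of \cref{prop:zerodim_telescopeconjecture}: it does not ask for von Neumann regularity, it asks that the \emph{homological support has the detection property}, i.e.\ $\Supph(t) = \varnothing \implies t = 0$ for all $t\in\sfT$, not just compact $t$. These are genuinely different conditions. Von Neumann regularity is a property of the tt-stalks, while the detection property is a global condition, and the implication ``stalks are tt-fields $\implies$ homological support detects zero objects'' is precisely what can fail when the local-to-global principle fails — which, as you yourself note, it does whenever $\Sub(G)/G$ is uncountable. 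So as written the argument rests on a hypothesis that is not in the proposition, and replacing it with the correct one is where the remaining work lies.

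The paper closes this gap in \cref{lem:qgsp_supportidentifications}: it identifies $\Supp = \Supph = $ geometric isotropy and shows all three have the detection property. The essential new input there is \cref{lem:qgsp_detection}, joint conservativity of the rational geometric fixed point functors $(\Phi^H_{\Q})_{H}$ on all of $\Sp_{G,\Q}$, which is \emph{not} formal and is proved by passing through the algebraic model $\sfD(\Shv^{\Weyl}_{G,\Q}(\Sub(G)))$ and the stalk functors of its model structure. Your remark about ``the algebraic description of the stalks'' gestures at this but does not articulate that joint conservativity is the actual content needed, nor that it must hold for arbitrary objects, not just compact ones. Finally, the framing ``converting the vacuous stalkwise telescope conjecture for tt-fields into its global truth'' describes Hrbek's alternative stalkwise criterion (see \cref{rem:hrbek}), not what \cref{prop:zerodim_telescopeconjecture} does; the latter argues directly with the tensor-product formula and detection property on the support of the idempotent triangle of a smashing localization, and even Hrbek's criterion still requires the bijectivity hypothesis together with (a form of) the detection property, so this route does not let you bypass the detection argument either.
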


This result may be viewed as a multi-generator generalization of work on the telescope conjecture for derived categories of rings of small weak global dimension by Stevenson \cite{stevenson_absolutelyflatrings} and Bazzoni--\v{S}t'ov\'{\i}\v{c}ek \cite{BS_telescope}; in fact, our proof gives a different approach to their result for commutative von Neumann rings.

\subsection*{Conventions}

\begin{itemize}
    \item Unless stated otherwise $\sfT, \sfS$ will denote rigidly-compactly generated tensor-triangulated (tt-)categories with an enhancement given by a symmetric monoidal $\infty$-category. We write $\sfT^{\omega}$ for the full subcategory of $\sfT$ on its compact objects.
    \item For a collection of objects $\cE \subseteq \sfT$ we write $\langle \cE \rangle$ for the smallest localizing ideal containing $\cE$, and likewise for the Serre analogue in an abelian setting.
    \item For a space $X$ and a subset $U \subseteq X$ we write $U^c = X\setminus U$ for the complement.
    \item $\Sub(G)$ is the space of closed subgroups of a profinite group $G$, equipped with the Hausdorff metric topology. $\Sub(G)/G$ is the quotient of $\Sub(G)$ by the continuous conjugation action, and we equip it with the quotient topology.
    \item We write $\Hom$ for the usual hom between objects, and $\underline{\hom}$ for the internal hom.
    \item If $\mathcal{M}$ is a Quillen model category, we write $\mathsf{N}(\mathcal{M})$ for the associated $\infty$-category.
\end{itemize}

\subsection*{Acknowledgements }

We are grateful to Markus Hausmann, Drew Heard, Beren Sanders, and Greg Stevenson for conversations related to the results of this project, and thank Drew Heard and Martin Gallauer for useful comments on a preliminary draft of this paper. 
SB and TB were supported by the European Research Council (ERC) under Horizon Europe (grant No.~101042990). SB, DB, and TB would like to thank the Max Planck Institute for Mathematics for its hospitality. The authors would also like to thank the Hausdorff Research Institute for Mathematics for its hospitality and support during the trimester program `Spectral Methods in Algebra, Geometry, and Topology', funded by the Deutsche Forschungsgemeinschaft under Germany's Excellence Strategy – EXC-2047/1 – 390685813.

\newpage
\part{Pro-tensor-triangular geometry}\label{part:profinitettgeom}

The first part of this paper is concerned with formulating abstract results for tensor-triangulated (tt-)categories whose Balmer spectrum is zero-dimensional, with a particular focus on \emph{pro-tt-geometry}, which is the study of colimits of compatible systems of tt-categories in a suitable $\infty$-category of $\infty$-categories.

To do so, we will begin in \cref{sec:ttgeom} by recalling the necessary theory from tt-geometry, including that of (co)stratification and the telescope conjecture.  In \cref{sec:ttcontinuity} we then move towards our first pro-tt result by proving a continuity statement for the homological spectrum (after introducing the necessary definitions), mirroring the analogous result for the Balmer spectrum. In \cref{sec:ttstalks} we undertake an investigation of \emph{tt-stalks}. Finally in \cref{sec:zerodimtt} we combine all of this theory to build a collection of tools which are applicable to zero-dimensional tt-categories.

\section{Tensor-triangular geometry and the theory of stratification}\label{sec:ttgeom}

In this preliminary section we will recall the main constructions from tt-geometry that we will require from~\cite{balmer_spectrum,balmerfavi_idempotents}, along with the abstract theory of (co)stratification of tt-categories from~\cite{BarthelHeardSanders2023, BCHS_cosupport}. 

\begin{hypothesis}\label{hyp:hyp}
Throughout, $(\sfT, \otimes, \unit)$ will denote a rigidly-compactly generated tt-category. The full subcategory $\sfT^\omega$ of compact objects in $\sfT$ forms an essentially small tt-category in which all objects are rigid. We will always assume that $\sfT$ is the homotopy category of a  compactly generated $\infty$-category, which via abuse of notation we will also denote by $\sfT$. In this setting, the categories $\sfT^\omega$ and $\sfT$ determine each other. Indeed, forming the ind-category $\Ind(\sfT^\omega)$ provides an equivalence $\Ind(\sfT^\omega) \simeq \sfT$. This observation forms the basis of an equivalence of $\infty$-categories
\[\xymatrix@C=4em{
\Cat_\infty^{\natural} \ar@<1ex>[r]^-{\Ind} \ar@{}[r]|-{\sim}& \ar@<1ex>[l]^-{(-)^\omega} \Pr^{L,\omega}
}\]
between essentially small $\infty$-categories which are idempotent complete, and compactly generated $\infty$-categories~\cite[Proposition 5.5.7.8]{htt}. Here, a functor in $\Pr^{L,\omega}$ is taken to be both colimit preserving and compact object preserving. This equivalence is compatible with the adjectives stable and monoidal. We will return to these observations in \cref{sec:ttcontinuity}.

A \emph{geometric functor} is a tt-functor $f^\ast \colon \sfT \to \sfS$ between rigidly-compactly generated tt-categories that preserves direct sums. Under these conditions $f^\ast$ possesses a right adjoint $f_\ast$ which has a further right adjoint $f^{!}$ \cite{bds_wirth}. That is, we have a triple of adjoints
\[
\xymatrix@C=4em{
\sfT \ar@<1.25ex>[r]^{f^\ast} \ar@<-1.25ex>[r]_{f^{!}}& \sfS \ar[l]|{f_\ast} \rlap{.}
}
\]
A tt-functor which is an equivalence will also be called a \emph{geometric equivalence}.
\end{hypothesis}

\subsection{The spectrum}

Let $\sfT$ be as in \cref{hyp:hyp}. Associated to $\sfT^\omega$ we have the \emph{Balmer spectrum} of prime thick ideals, which we denote $\Spc(\sfT^\omega)$, along with a universal support theory $\supp$ which assigned to any object $t \in \sfT^\omega$ the set
    \begin{equation}\label{eq:littlesupport}
        \supp(t) = \{ \sfP \in \Spc(\sfT^\omega) \mid t \not\in \sfP \}.
    \end{equation}
The collection of subsets $\supp(t) \subseteq \Spc(\sfT^{\omega})$ forms the basis of closed sets of a topology on $\Spc(\sfT^\omega)$. This topology makes $ \Spc(\sfT^\omega) $ into a spectral space, i.e., a sober topological space for which the quasi-compact opens are closed under finite intersection and form a basis for the topology. The main result of \cite{balmer_spectrum} is that this topological space solves the classification problem for thick ideals; namely there is a bijection
    \begin{equation}\label{eq:suppttclassification}
        \supp \colon \{\text{thick ideals of }  \sfT^\omega \} \xrightarrow[\quad \quad]{\cong} \{\text{Thomason subsets of } \Spc(\sfT^\omega)\}
    \end{equation}
defined as $\supp(\mathsf{I}) = \bigcup_{t \in \mathsf{I}} \supp(t)$. Here a \emph{Thomason subset} is a subset of the form $\bigcup_{\lambda} Y_\lambda$ with each $Y_\lambda$ closed with quasi-compact complement.

\begin{remark}\label{rem:comparisonmaps}
    In \cite{balmer_3spectra}, Balmer constructs a \emph{comparison map} between the spectrum of an essentially small tt-category $\sfK$ and the Zariski spectrum of the degree 0 endomorphisms of its monoidal unit:
        \[
            \xymatrix{\rho_{\sfK}\colon \Spc(\sfK) \ar[r] & \Spec(\End_{\sfK}^0(\unit)).}
        \]
    In favorable situations this map is a homeomorphism, as for example when $\sfK$ is the derived category of perfect complexes over a commutative ring (by a theorem of Thomason \cite{thomasonclassification}).
\end{remark}

Under the equivalence \eqref{eq:suppttclassification}, any Thomason subset $Y$ corresponds bijectively to a thick ideal of $\sfT^\omega$. We denote this assignment as $Y \mapsto \sfT_Y^\omega := \supp^{-1}(Y)$, which extends to a localization sequence of categories
\begin{equation}\label{eqn:finiitelocncompact}
         \xymatrix{\sfT_{Y}^\omega \ar[r] & \sfT^\omega \ar[r]^-{\lambda_{Y^c}} & \sfT(Y^c)^\omega.}
\end{equation}
In particular, for any such $Y$ we obtain a triangle in $\sfT$
    \[
        e_Y  \to \unit \to f_Y
    \]
    with $f_Y \coloneqq  \lambda_{Y^c}(\unit) \in \sfT(Y^c)$ and $e_Y \in \sfT_{Y}$. Intuitively, one should see the role of $e_Y$ as detecting those objects ``supported on $Y$'' and $f_Y$ as detecting those objects ``supported away from $Y$''. Let us record some useful properties of the $\otimes$-idempotents $e_Y$ and $f_Y$ which provide a guiding intuition for their behaviour, see {\cite[Lemma 1.27]{BarthelHeardSanders2023}}.

\begin{lemma}\label{prop:bhs-strat-idempotents}
Let $\sfT$ be a rigidly-compactly generated tt-category. For any two Thomason subsets $Y_1, Y_2 \subseteq \Spc(\sfT^\omega)$ we have
\begin{itemize}
\item $e_{Y_1} \otimes f_{Y_2} = 0$ if and only if $Y_1 \subseteq Y_2$;
\item $e_{Y_1} \otimes e_{Y_2} = e_{Y_1 \cap Y_2}$;
\item $f_{Y_1} \otimes f_{Y_2} = f_{Y_1 \cup Y_2}$.
\end{itemize}
\end{lemma}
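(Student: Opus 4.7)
The plan is to exploit the fact that the pair $(e_Y,f_Y)$ arises from a smashing Bousfield localization $\lambda_{Y^c}\colon \sfT\to\sfT(Y^c)$ whose essential image is the $\sfT_Y$-local objects. Concretely, $(-)\otimes f_Y$ implements this localization, so an object lies in $\sfT(Y^c)$ if and only if it is annihilated by $\sfT_Y^\omega$, and the triangle $e_Y\to\unit\to f_Y$ is characterized up to canonical equivalence by the requirements that $e_Y\in\sfT_Y = \langle \sfT_Y^\omega\rangle$ and that the cofiber is $\sfT_Y$-local. All three claims then follow from basic compatibilities between such smashing localizations and finite meets and joins of Thomason subsets.

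To prove (3), I observe that $(-)\otimes f_{Y_1}\otimes f_{Y_2}$ is a composite Bousfield localization whose essential image is $\sfT(Y_1^c)\cap \sfT(Y_2^c)$, consisting of objects annihilated by both $\sfT_{Y_1}^\omega$ and $\sfT_{Y_2}^\omega$, hence by the thick $\otimes$-ideal they jointly generate. Balmer's classification \eqref{eq:suppttclassification}, combined with the additivity of support under unions, identifies this thick ideal with $\sfT_{Y_1\cup Y_2}^\omega$, so the essential image equals $\sfT((Y_1\cup Y_2)^c)$, and uniqueness of the monoidal unit of a smashing localization gives $f_{Y_1}\otimes f_{Y_2}\simeq f_{Y_1\cup Y_2}$. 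For (1), one direction is immediate: if $Y_1\subseteq Y_2$ then $e_{Y_1}\in \sfT_{Y_1}\subseteq\sfT_{Y_2}$, and $f_{Y_2}$ annihilates $\sfT_{Y_2}$. Conversely, if $e_{Y_1}\otimes f_{Y_2}=0$, then for every $t\in\sfT_{Y_1}^\omega$ one has $t\simeq t\otimes e_{Y_1}$ (tensoring the triangle for $Y_1$ with $t$ and using $t\otimes f_{Y_1}=0$), whence $t\otimes f_{Y_2}=0$, i.e., $\supp(t)\subseteq Y_2$; taking the union over all such $t$ yields $Y_1\subseteq Y_2$.

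For (2), I set $\tilde e:=e_{Y_1}\otimes e_{Y_2}$, which is itself idempotent by idempotence of each factor. Tensoring the triangle $e_{Y_1\cap Y_2}\to \unit\to f_{Y_1\cap Y_2}$ with $\tilde e$ and applying (1) (which gives $e_{Y_1\cap Y_2}\otimes e_{Y_i}\simeq e_{Y_1\cap Y_2}$, since $Y_1\cap Y_2\subseteq Y_i$) produces a cofiber sequence $e_{Y_1\cap Y_2}\to \tilde e\to f_{Y_1\cap Y_2}\otimes \tilde e$; it therefore suffices to show that the third term vanishes, i.e., $\sfT_{Y_1}\cap\sfT_{Y_2}\subseteq \sfT_{Y_1\cap Y_2}$ (the reverse containment being obvious). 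The cleanest route is to reduce to the case of closed Thomason subsets $V$ with quasi-compact complement by writing each $Y_i$ as a filtered union $Y_i=\bigcup_\alpha V^{(i)}_\alpha$. In the closed case $\sfT_V^\omega$ is generated by a single compact object whose support is $V$, and a direct Koszul-type calculation yields $e_{V_1}\otimes e_{V_2}\simeq e_{V_1\cap V_2}$; passage to the filtered colimit, using that $\otimes$ commutes with filtered colimits and that $e_Y\simeq \colim_\alpha e_{V_\alpha}$, extends the identity to arbitrary Thomason subsets.

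The main obstacle is precisely this identification $\sfT_{Y_1}\cap\sfT_{Y_2}=\sfT_{Y_1\cap Y_2}$ at the level of full localizing ideals, rather than just on compacts where it is immediate from supports. It is here that the rigid-compact generation of $\sfT$ becomes essential: it enables the explicit description of $e_V$ via compact Koszul-type objects for closed Thomasons and the bootstrap to general $Y$ via a continuity argument compatible with binary intersections.
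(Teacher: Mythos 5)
Your arguments for (1) and (3) are correct and follow the standard lines: (1) forward because $e_{Y_1}\in\sfT_{Y_1}\subseteq\sfT_{Y_2}=\ker(-\otimes f_{Y_2})$, (1) backward by the support characterization of the compacts in $\sfT_{Y_1}^\omega$, and (3) by identifying $-\otimes f_{Y_1}\otimes f_{Y_2}$ as a smashing localization whose local objects are those killed by $\sfT_{Y_1}^\omega$ and $\sfT_{Y_2}^\omega$, hence by the thick ideal they generate, which is $\sfT_{Y_1\cup Y_2}^\omega$ by the Balmer classification. (Note, incidentally, that the paper does not prove this lemma itself — it cites it to [BHS23, Lemma~1.27] — so there is no internal proof to compare against.)

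The gap is in (2). You correctly reduce to showing $f_{Y_1\cap Y_2}\otimes e_{Y_1}\otimes e_{Y_2}=0$ (equivalently $\sfT_{Y_1}\cap\sfT_{Y_2}\subseteq\sfT_{Y_1\cap Y_2}$), and you correctly identify this as the crux, but the proposed ``direct Koszul-type calculation'' in the Thomason-closed case is asserted, not given, and it is not a routine computation: the compact generator $k$ of $\sfT_V^\omega$ for a Thomason closed $V$ is an arbitrary compact with $\supp(k)=V$, not a Koszul object, and the colocalization $e_V\otimes-$ has no evident tensor-multiplicative formula that would yield $e_{V_1}\otimes e_{V_2}\simeq e_{V_1\cap V_2}$ on the nose. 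The continuity input $e_Y\simeq\colim_\alpha e_{V_\alpha}$ is also nontrivial (compare \cref{cor:objectwisestalk}). A cleaner way to close your reduction, which avoids both issues: apply the finite localization $\lambda\colon\sfT\to\sfT((Y_1\cap Y_2)^c)$. This geometric functor carries $e_{Y_i}$ to the idempotent $e_{Y_i'}$ for the Thomason subset $Y_i'=Y_i\cap(Y_1\cap Y_2)^c$ of the spectrum of the localized category, and $Y_1'\cap Y_2'=\varnothing$. For disjoint Thomason subsets any two compacts $t_i\in\sfT_{Y_i'}^\omega$ satisfy $\supp(t_1\otimes t_2)=\varnothing$, hence $t_1\otimes t_2=0$; passing to the generated localizing ideals gives $\sfT_{Y_1'}\otimes\sfT_{Y_2'}=0$, so in particular $e_{Y_1'}\otimes e_{Y_2'}=0$, which is precisely $\lambda(e_{Y_1}\otimes e_{Y_2})=f_{Y_1\cap Y_2}\otimes e_{Y_1}\otimes e_{Y_2}=0$.
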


\begin{remark}\label{rem:naturalinU}
Let $U = Y^c$ and $V = Z^c$. Then the above construction of the finite localization $\lambda_U$ is natural in $U$, in the sense that there is a commutative diagram
\begin{equation}\label{eq:ttrestriction}\begin{gathered}
    \xymatrix{& \sfT^\omega \ar[dl]_{\lambda_V} \ar[rd]^{\lambda_U} \\
    \sfT(V)^\omega \ar[rr]_-{\res^V_U \coloneqq -\otimes f_{U^c}} & & \sfT(U)^\omega}
    \end{gathered}
\end{equation}
for any inclusion $U \subseteq V$.
 \end{remark}

In \cref{def:support} we will use this naturality to extend the support theory of compact objects to all objects of $\sfT$. For now, we recall some point-set topological facts that we will need for this purpose. 

We say that a point $\sfP \in \Spc(\sfT^\omega)$ is \emph{weakly visible} if it can be written as $\{\sfP\} = Y \cap Z^c$ where $Y, Z \subseteq \Spc(\sfT^\omega)$ are Thomason subsets. 

\begin{definition}
    Let $X$ be a spectral space. If all points of $X$ are weakly visible then we say that $X$ is \emph{weakly Noetherian}.
\end{definition}

Weakly Noetherian spaces are, as the name suggests, a generalization of Noetherian spectral spaces. We will also require an intermediate class of spectral spaces that we will introduce shortly, after recalling some facts regarding the generalization closure of subsets in a spectral space. 

\begin{remark}\label{rem:gen}
    Let $X$ be a spectral space. Recall that the \emph{generalization closure} of subset $S \subseteq X$ is defined as $\gen(S) := \{x \in X\mid \overline{\{x\}}\cap S \neq \varnothing\}$, i.e., the set of elements that generalize an element in $S$. By \cite[Theorem 4.1.5(iii)]{book_spectralspaces}, 
     \begin{equation}\label{eqn:gen}
        S \text{ is quasi-compact } \iff  \gen(S) = \bigcap_{S \subseteq U} U,
     \end{equation}
where $U$ runs through the quasi-compact open subsets of $X$ containing $S$. We will make repeated use of this formula. 

A closely related fact worth recording is the equivalence of the following three conditions on a subset $V \subseteq X$:
        \begin{enumerate}
            \item $V$ is quasi-compact and generalization closed;
            \item $V$ is the complement of a Thomason subset in $X$;
            \item $V = \bigcap_{U\supseteq V} U$, where the intersection over all quasi-compact opens in $X$ containing $V$.
        \end{enumerate}
    In particular, it follows from this equivalence that the intersection of arbitrary quasi-compact opens is quasi-compact.
    
    For reference, we include a sketch of the argument. Using \eqref{eqn:gen} along with the observation that intersections of opens are closed under generalization, we obtain the equivalence of (1) and (3). The equivalence of (2) and (3) follows from the identity $X \setminus (\bigcap_{U\supseteq V}  U) = \bigcup_{U\supseteq V} (X \setminus U)$ which recovers the description of Thomason subsets when the $U$ are quasi-compact open as in our hypothesis. 
\end{remark}

\begin{definition}\label{def:genNoeth}
    Let $X$ be a spectral space. If for each $x \in X$ the subspace $\mathrm{gen}(x)$ is Noetherian, then we say that $X$ is \emph{generically Noetherian}.
\end{definition}

We have already claimed that weakly Noetherian spaces are a generalization of Noetherian spaces. The generically Noetherian spaces form an intermediate class between these two notions:

\begin{lemma}[{\cite[Lemma 9.9]{BarthelHeardSanders2023}}]\label{lem:genimpweak}
    For $X$ spectral, we have the following implications:
        \[
            X \text{ Noetherian } \implies X \text{ generically Noetherian } \implies X \text{ weakly Noetherian}.
        \]
\end{lemma}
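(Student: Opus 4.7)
The plan is to handle the two implications separately. The first is essentially a tautology: in a Noetherian space every subspace is Noetherian in the induced topology, so in particular $\gen(x)$ is Noetherian for each $x \in X$, giving the first implication.

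For the second implication, we fix $x \in X$ and aim to produce Thomason subsets $Y, Z \subseteq X$ with $\{x\} = Y \cap Z^c$. The first step is to take $Z^c := \gen(x)$. Since the singleton $\{x\}$ is quasi-compact, formula \eqref{eqn:gen} of \cref{rem:gen} identifies $\gen(x)$ with $\bigcap_{x \in U} U$, where $U$ runs over quasi-compact opens of $X$ containing $x$; the equivalence (1) $\iff$ (2) of \cref{rem:gen} then guarantees that $\gen(x)$ is the complement of a Thomason subset, which we call $Z$.

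It remains to construct a Thomason set $Y$ in $X$ with $Y \cap \gen(x) = \{x\}$. The subspace $\gen(x)$ is sober because $X$ is, and the unique point of $X$ both generalizing and specializing $x$ is $x$ itself, so $\{x\}$ is closed in $\gen(x)$ and hence $\gen(x)\setminus \{x\}$ is open there. By the hypothesis that $\gen(x)$ is Noetherian, this open is quasi-compact in the subspace topology. Writing $\gen(x)\setminus\{x\} = \bigcup_{i}(U_i \cap \gen(x))$ for quasi-compact opens $U_i \subseteq X$, we pass to a finite subcover, whose union $U \subseteq X$ is quasi-compact and open and does not contain $x$. Then $Y := U^c$ is closed with quasi-compact complement, hence Thomason, and satisfies $Y \cap \gen(x) = \gen(x) \setminus (U \cap \gen(x)) = \{x\}$ by construction.

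The only step requiring real care is the passage from the intrinsic Noetherian topology of $\gen(x)$ back to the ambient spectral space $X$: we use that the quasi-compact opens of $X$ restrict to a basis of $\gen(x)$, which is standard because $\gen(x)$ is itself an intersection of such opens. This bridge is what allows the Noetherian hypothesis on the generic fibre to produce Thomason data globally; beyond this bookkeeping no genuine obstacle is anticipated.
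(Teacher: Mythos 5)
Your proof is correct. The paper itself does not reproduce a proof of this lemma but cites it from \cite[Lemma 9.9]{BarthelHeardSanders2023}; your argument is the standard one there, and all the steps check out: the first implication is immediate since subspaces of Noetherian spaces are Noetherian; for the second, taking $Z^c = \gen(x)$ (which is the complement of a Thomason subset via \cref{rem:gen}), observing that $\{x\}$ is closed in the $T_0$ space $\gen(x)$, extracting a finite subcover of the quasi-compact open $\gen(x)\setminus\{x\}$ by quasi-compact opens of $X$, and setting $Y$ to be the complement of their union yields the required decomposition $\{x\} = Y \cap Z^c$. One small remark: you invoke sobriety of $\gen(x)$ to get $\{x\}$ closed, but $T_0$ already suffices (and is cheaper to cite), and the statement that quasi-compact opens of $X$ restrict to a basis of $\gen(x)$ holds for any subspace and needs no appeal to $\gen(x)$ being an intersection of opens.
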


\subsection{Stratification and costratification in tensor-triangular geometry}\label{ssec:ttstratification}

Returning to abstract tt-geometry, suppose that $\sfP \in \Spc(\sfT^\omega)$ is weakly visible, witnessed by Thomason subsets $Y,Z$ with $\{\sfP\} = Y \cap Z^c$. Then we can construct a $\otimes$-idempotent $g({\sfP})  := e_Y \otimes {f_Z} $. We note that the construction of $g({\sfP})$ is not sensitive to the choice of witness Thomason subsets \cite[Corollary 7.5]{balmerfavi_idempotents}. We are now in a position to define support for all objects of $\sfT$.

\begin{definition}\label{def:support}
Let $\sfT$ be a rigidly-compactly generated tt-category with $\Spc(\sfT^\omega)$ weakly Noetherian. The \emph{support} of an object $t \in \sfT$ is defined as
\[
\operatorname{Supp} (t) = \{ \sfP \in \Spc(\sfT^\omega) \mid g({\sfP})  \otimes t \neq 0 \}.
\] 
\end{definition}
The support of \cref{def:support} extends that of \eqref{eq:littlesupport} in that if $t \in \sfT^\omega$ then $\supp(t) = \Supp(t)$.

Recall that we write $\langle \cE \rangle$ for the smallest localizing ideal generated by a collection $\cE$ objects in $\sfT$. Associated to the $\otimes$-idempotent $g({\sfP})$ is the corresponding category $\langle g({\sfP}) \rangle$ of those objects supported exactly at the prime $\sfP$. For notational clarity we will write $\Gamma_{\sfP} \sfT := \langle g({\sfP}) \rangle$. Balmer's classification result tells us that the support theory of compact objects can be used to detect all thick ideals of $\sfT^\omega$ via the Thomason subsets. One might wonder if the large support theory defined above is also of use in classification results. Under the assumption of a weakly Noetherian spectrum, the support theory provides us with a surjection
\begin{equation}\label{eq:bigsupp}
\operatorname{Supp} \colon \{ \text{localizing ideals of } \sfT \} \xrightarrow[\quad \quad]{}  \{ \text{subsets of } \Spc(\sfT^\omega) \}.
\end{equation}
The theory of \emph{stratification} is concerned with when this map is moreover an injection, providing a classification of localizing ideals via subsets of the Balmer spectrum.

\begin{definition}\label{defn:minimal}
Let $\sfT$ be a rigidly-compactly generated tt-category. If the map \eqref{eq:bigsupp} is a bijection then we say that $\sfT$ is \emph{stratified}.
\end{definition}

Unlike the classification of thick ideals, it is not the case that $\sfT$ will always be stratified and that we have a classification of the localizing ideals. Instead, it is a property of the tt-category. Work of the third author along with Heard and Sanders addresses the problem of when stratification holds as we shall now discuss~\cite{BarthelHeardSanders2023}. We first of all recall the definition of the local-to-global principle and minimality for a localizing ideal. 

\begin{definition}\label{defn:local2global}
Let $\sfT$ be a rigidly-compactly generated tt-category with $\Spc(\sfT^\omega)$ weakly Noetherian. We say that $\sfT$ \emph{satisfies the local-to-global principle} if
\[
\langle t \rangle = \langle t \otimes g({\sfP}) \mid \sfP \in \operatorname{Supp}(t) \rangle
\]
for every $t \in \sfT$.
\end{definition}

\begin{definition}\label{defn:strat}
Let $\sfT$ be a rigidly-compactly generated tt-category with $\Spc(\sfT^\omega)$ weakly Noetherian. Let $\sfP \in \Spc(\sfT^\omega)$. We say that $\Gamma_{\sfP} \sfT$ is a \emph{minimal localizing ideal} if there are no nonzero proper localizing ideals contained in it.
\end{definition}

\begin{proposition}[\cite{BarthelHeardSanders2023}]\label{prop:bhs-strat}
Let $\sfT$ be a rigidly-compactly generated tt-category with $\Spc(\sfT^\omega)$ weakly Noetherian. Then the following are equivalent:
\begin{enumerate}
\item $\sfT$ satisfies the local-to-global principle and for each $\sfP \in \Spc(\sfT^\omega)$, $\Gamma_{\sfP} \sfT$ is a minimal localizing ideal of $\sfT$.
\item $\sfT$ is stratified.
\end{enumerate}
\end{proposition}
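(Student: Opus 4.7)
The plan is to prove the two implications separately, with the key technical input being the support computation for the idempotents. Specifically, I would first record that $\Supp(g(\sfP)) = \{\sfP\}$ for every weakly visible $\sfP$, and more generally $\Supp(t \otimes g(\sfP)) = \Supp(t) \cap \{\sfP\}$. The first is immediate from \cref{prop:bhs-strat-idempotents}: $g(\sfP) \otimes g(\sfP) = g(\sfP)$ is nonzero, and if $\sfQ \neq \sfP$ then any Thomason witnesses for $\sfP$ and $\sfQ$ can be combined using the formulas $e_{Y_1} \otimes e_{Y_2} = e_{Y_1 \cap Y_2}$ and $f_{Y_1} \otimes f_{Y_2} = f_{Y_1 \cup Y_2}$ to produce witnesses certifying $g(\sfP) \otimes g(\sfQ) = 0$. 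In particular $t \otimes g(\sfP) \in \Gamma_{\sfP}\sfT$ is nonzero precisely when $\sfP \in \Supp(t)$.

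For (2) $\Rightarrow$ (1), assume $\Supp$ is a bijection. The local-to-global principle follows by observing that $\sfL' := \langle t \otimes g(\sfP) \mid \sfP \in \Supp(t)\rangle \subseteq \langle t \rangle$ has $\Supp(\sfL') = \Supp(t)$ by the computation above, so injectivity of $\Supp$ forces $\sfL' = \langle t\rangle$. Minimality of $\Gamma_{\sfP}\sfT$ is equally quick: any nonzero localizing ideal $\sfL \subseteq \Gamma_{\sfP}\sfT$ has nonempty support contained in $\Supp(\Gamma_{\sfP}\sfT) = \{\sfP\}$, hence $\Supp(\sfL) = \Supp(\Gamma_{\sfP}\sfT)$, and injectivity yields $\sfL = \Gamma_{\sfP}\sfT$.

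For the converse (1) $\Rightarrow$ (2), surjectivity of $\Supp$ is formal: given $W \subseteq \Spc(\sfT^\omega)$, the ideal $\langle g(\sfP) \mid \sfP \in W\rangle$ has support $W$ by the basic computation. For injectivity, suppose $\sfL_1, \sfL_2$ are localizing ideals with common support $W$. The local-to-global principle rewrites each as $\sfL_i = \langle t \otimes g(\sfP) \mid t \in \sfL_i,\, \sfP \in \Supp(t)\rangle$. For $\sfP \in W$, any nonzero generator $t \otimes g(\sfP)$ lies in $\Gamma_{\sfP}\sfT$ and, by minimality, its own localizing ideal exhausts $\Gamma_{\sfP}\sfT$. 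Collecting terms yields $\sfL_i = \langle \Gamma_{\sfP}\sfT \mid \sfP \in W\rangle$ independently of $i$, so $\sfL_1 = \sfL_2$.

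The main obstacle is the bookkeeping in the injectivity step of the converse: one must ensure that the pieces $\Gamma_{\sfP}\sfT$ for $\sfP \in W$ genuinely \emph{regenerate} each $\sfL_i$ rather than merely being contained in it. This is exactly the combined content of the two hypotheses, local-to-global producing the decomposition into local pieces and minimality identifying each piece with all of $\Gamma_{\sfP}\sfT$. The weakly Noetherian assumption on $\Spc(\sfT^\omega)$ is used only to guarantee that the idempotents $g(\sfP)$, and hence the support theory of \cref{def:support}, are defined at every point.
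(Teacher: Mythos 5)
The paper itself cites this result from \cite{BarthelHeardSanders2023} without reproducing a proof, so there is no in-paper argument to compare against. Your proof is correct and follows the standard route of BHS: you establish $\Supp(g(\sfP)) = \{\sfP\}$ and $\Supp(t\otimes g(\sfP)) = \Supp(t)\cap\{\sfP\}$ from \cref{prop:bhs-strat-idempotents}, deduce (2)$\Rightarrow$(1) from injectivity of $\Supp$, and deduce (1)$\Rightarrow$(2) by decomposing any localizing ideal as $\langle \Gamma_{\sfP}\sfT \mid \sfP \in \Supp(\sfL)\rangle$ via the local-to-global principle plus minimality; the only step left implicit is the elementary fact that $\Supp(\langle\cE\rangle)=\bigcup_{x\in\cE}\Supp(x)$ for any family $\cE$ (which follows since $\{s:\Supp(s)\subseteq W\}$ is always a localizing ideal), and flagging this would tighten the presentation slightly.
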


As such, to establish the classification of localizing ideals by the theory of support, it suffices to check minimality at all points and the local-to-global principle. 

We now recall a further property that a tt-category may or may not possess. 

\begin{definition}\label{def:tc}
Let $\sfT$ be a rigidly-compactly generated tt-category. Then $\sfT$ satisfies the \emph{telescope conjecture} if every smashing ideal of $\sfT$ is generated by a collection of compact objects. Here, a localizing ideal $\sfL \subseteq \sfT$ is said to be \emph{smashing} if the right adjoint to the associated quotient functor $\sfT \to \sfT/\sfL$ preserves direct sums. 
\end{definition}
 
\begin{proposition}[{\cite[Theorem 9.11]{BarthelHeardSanders2023}}]\label{prop:tc}
Let $\sfT$ be a stratified rigidly-compactly generated tt-category such that the Balmer spectrum $\Spc(\sfT^\omega)$ is generically Noetherian in the sense of \cref{def:genNoeth}. Then $\sfT$ satisfies the telescope conjecture.
\end{proposition}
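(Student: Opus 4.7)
The plan is to reduce the telescope conjecture to a purely topological question about specialization-closed subsets of $\Spc(\sfT^\omega)$, using stratification as the dictionary between localizing ideals and subsets of the spectrum. Fix a smashing ideal $\sfL \subseteq \sfT$ with compact part $\sfL^\omega := \sfL \cap \sfT^\omega$; the goal is to show $\sfL = \langle \sfL^\omega \rangle$. By stratification, both sides are detected by their supports via \eqref{eq:bigsupp}, and $\Supp(\langle \sfL^\omega \rangle) = \supp(\sfL^\omega)$ is a Thomason subset in view of \eqref{eq:suppttclassification}. Writing $W := \Supp(\sfL)$, the desired equality becomes the equality of subsets $W = \supp(\sfL^\omega)$; as the inclusion $\supp(\sfL^\omega) \subseteq W$ is tautological, the task reduces to showing that $W$ is itself a Thomason subset of $\Spc(\sfT^\omega)$.

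The next step is to establish that $W$ is specialization-closed. Presenting $\sfL$ via an idempotent triangle $e_\sfL \to \unit \to f_\sfL$ and recognizing that $W = \{\sfP : g(\sfP) \otimes e_\sfL \neq 0\}$, I would use the multiplicative identities of \cref{prop:bhs-strat-idempotents} to show that if $\sfP$ specializes to $\sfQ$ then Thomason witnesses for $\sfQ$ dominate those for $\sfP$, so that nonvanishing of $g(\sfP) \otimes e_\sfL$ forces nonvanishing of $g(\sfQ) \otimes e_\sfL$; thus membership in $W$ propagates along specialization. The geometric input is then the key topological lemma: in a generically Noetherian spectral space, every specialization-closed subset is Thomason. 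Granting this, $W$ is Thomason, hence by stratification it coincides with $\supp(\sfL^\omega)$, proving that $\sfL$ is generated by its compacts.

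The main obstacle will be this topological lemma. To prove it, I would fix $\sfP \in W$ and show that $\overline{\{\sfP\}}$ has quasi-compact open complement by applying the characterization \eqref{eqn:gen} of generalization closures to $\gen(\sfP)$ and exploiting its Noetherianity (an intermediate hypothesis already stronger than the weak Noetherianity needed to define $\Supp$, as recorded in \cref{lem:genimpweak}). The subtle point is to ensure that quasi-compactness survives the union $W = \bigcup_{\sfP \in W} \overline{\{\sfP\}}$; I expect to handle this by a cofinality argument reducing to a suitable set of generic points of $W$ inside each Noetherian $\gen(\sfP)$, after which the standard description of Thomason subsets as unions of closed sets with quasi-compact open complement finishes the proof. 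This topological step is the heart of the argument; once it is in hand, the translation back through stratification is formal.
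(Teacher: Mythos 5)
You correctly isolate the right intermediate goal: under stratification, showing that a smashing ideal $\sfL$ is compactly generated reduces to showing that $W \coloneqq \Supp(\sfL)$ is a Thomason subset. The trouble is that the topological lemma you lean on — that in a generically Noetherian spectral space every specialization-closed subset is Thomason — is false, and as you note yourself it is the heart of the argument, so this is a fatal gap.

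Any infinite profinite space $X$ (for instance $X = \N^{*}$, the one-point compactification of $\N$, or the Cantor set) is a counterexample. Such an $X$ is spectral, zero-dimensional, hence Hausdorff, so the specialization order is trivial and \emph{every} subset of $X$ is specialization-closed; and $X$ is generically Noetherian since $\gen(x) = \{x\}$ for all $x$. But the Thomason subsets of $X$ are exactly the open ones (\cref{rem:openisThom}), so your lemma would force every subset to be open and hence $X$ to be a finite discrete space. Thus the lemma fails precisely in the regime of greatest interest here: $\Spc(\Sp_{G,\Q}^{\omega}) \cong \Sub(G)/G$ is typically an infinite profinite space, yet \cref{prop:tc} is exactly what one wants to apply in that situation (cf.\ \cref{rem:telescope}). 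A secondary concern is the preliminary step where you use \cref{prop:bhs-strat-idempotents} to show $W$ is specialization-closed: for a general localizing ideal this can fail, and for a smashing one the specialization-closedness is essentially what the theorem is about, so this step would also need a real argument exploiting the smashing hypothesis. Indeed, the paper's own related criterion \cref{prop:zerodim_telescopeconjecture} proceeds quite differently, using Stevenson's non-formal result that $\Supp(f)$ is \emph{closed} for the smashing idempotent $f = L\unit$, plus the tensor-product formula to get $\Supp(e) = \Supp(f)^c$ \emph{open}; openness (not specialization-closedness) is then what becomes Thomason in the zero-dimensional case. Note also that \cref{prop:tc} is not proved in this paper — it is cited directly from \cite[Theorem~9.11]{BarthelHeardSanders2023} — so any argument must engage with the generically Noetherian hypothesis in a way your outline does not.
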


There is a dual theory to that of stratification which is concerned with the  classification of \emph{colocalizing coideals} of $\sfT$, that is, those thick subcategories $\mathsf{C}$ of $\sfT$ which are moreover closed under products and having the property that $\underline{\hom}(t,c) \in \mathsf{C}$ for all $t \in \sfT$ and for all $c \in \mathsf{C}$. To end this subsection we will briefly recall the necessary ingredients and formalism from~\cite{BCHS_cosupport}. The first thing that we require is a definition of tt-cosupport. 

\begin{definition}\label{def:cosupport}
Let $\sfT$ be a rigidly-compactly generated tt-category with $\Spc(\sfT^\omega)$ weakly Noetherian. The \emph{cosupport} of an object $t \in \sfT$ is defined as
\[
\operatorname{Cosupp} (t) = \{ \sfP \in \Spc(\sfT^\omega) \mid \underline{\hom}(g({\sfP}), t) \neq 0 \}.
\] 
\end{definition}

Analogous to the  story for stratification, under the weakly Noetherian assumption we once again obtain a surjection
\begin{equation}\label{eq:bigcosupp}
\operatorname{Cosupp} \colon \{ \text{colocalizing coideals of } \sfT \} \xrightarrow[\quad \quad]{}  \{ \text{subsets of } \Spc(\sfT^\omega) \},
\end{equation}
and we ask when this surjection is actually a bijection. When this holds, we say that $\sfT$ is \emph{costratified}. There are clear dual notions of the local-to-global principle and minimality in this setting. However, in place of  $\Gamma_{\sfP}\sfT$ one is lead to consider the category $\Lambda^{\sfP} \sfT$ which is the colocalizing coideal consisting of those objects $t \in \sfT$ such that $\underline{\hom}(g(\sfP),t) \simeq t$.

\begin{remark}\label{rem:stalkcostalkduality}
For any weakly visible prime $\sfP$ of $\Spc(\sfT^\omega)$ there is a symmetric monoidal equivalence of categories $\Lambda^{\sfP} \sfT \cong \Gamma_{\sfP} \sfT$~\cite[Remark 5.11]{BCHS_cosupport}.
\end{remark}

All in all, this allows us to state one of the main results of~\cite{BCHS_cosupport} which dual to \cref{prop:bhs-strat}.

\begin{proposition}[{\cite[Theorem 7.7]{BCHS_cosupport}}]\label{prop:bhs-costrat}
Let $\sfT$ be a rigidly-compactly generated tt-category with $\Spc(\sfT^\omega)$ weakly Noetherian. Then the following are equivalent:
\begin{enumerate}
\item $\sfT$ satisfies the colocal-to-global principle and, for each $\sfP \in \Spc(\sfT^\omega)$, $\Lambda^{\sfP} \sfT$ is a minimal colocalizing coideal of $\sfT$.
\item $\sfT$ is costratified.
\end{enumerate}
\end{proposition}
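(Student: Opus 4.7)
The plan is to mirror the proof of the stratification analogue \cref{prop:bhs-strat}, systematically dualizing each ingredient: the tensor action $-\otimes g(\sfP)$ is replaced by the cotensor $\underline{\hom}(g(\sfP),-)$, localizing ideals become colocalizing coideals, and the tt-stalks $\Gamma_{\sfP}\sfT$ are traded for the costalks $\Lambda^{\sfP}\sfT$. The three tools that drive the duality are (a) the hom-tensor adjunction $\underline{\hom}(x\otimes y,z)\cong \underline{\hom}(x,\underline{\hom}(y,z))$, (b) the characterization $\Cosupp(t)=\{\sfP\mid \underline{\hom}(g(\sfP),t)\neq 0\}$, and (c) the defining closure property of a colocalizing coideal $\mathsf{C}$, namely $\underline{\hom}(x,c)\in \mathsf{C}$ for every $x\in \sfT$ and $c\in\mathsf{C}$.

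For the implication $(2)\Rightarrow(1)$, assume $\sfT$ is costratified. If $\Lambda^{\sfP}\sfT$ admitted a proper nonzero colocalizing sub-coideal $\mathsf{C}$, then every $c\in\mathsf{C}$ would satisfy $\underline{\hom}(g(\sfP),c)\simeq c$ and, by $g(\sfQ)\otimes g(\sfP)=0$ for $\sfQ\neq \sfP$ (\cref{prop:bhs-strat-idempotents}) together with (a), $\underline{\hom}(g(\sfQ),c)=0$. Consequently $\Cosupp(\mathsf{C})=\{\sfP\}=\Cosupp(\Lambda^{\sfP}\sfT)$, violating injectivity of $\Cosupp$. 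For the colocal-to-global principle, form the colocalizing coideal $\mathsf{C}(t):=\Coloc(\underline{\hom}(g(\sfP),t)\mid \sfP\in\Cosupp(t))$; one verifies using (b) that $\Cosupp(\mathsf{C}(t))=\Cosupp(t)=\Cosupp(\Coloc(t))$, whereupon bijectivity of $\Cosupp$ forces $\mathsf{C}(t)=\Coloc(t)$.

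For $(1)\Rightarrow(2)$, let $\mathsf{C}_1,\mathsf{C}_2$ be colocalizing coideals with $\Cosupp(\mathsf{C}_1)=\Cosupp(\mathsf{C}_2)$. Fix $c_1\in\mathsf{C}_1$; the colocal-to-global principle rewrites $\Coloc(c_1)$ as the colocalizing coideal generated by the slices $\underline{\hom}(g(\sfP),c_1)\in\Lambda^{\sfP}\sfT$ for $\sfP\in\Cosupp(c_1)$. For each such $\sfP$, pick $c_2\in\mathsf{C}_2$ with $\sfP\in\Cosupp(c_2)$; then $\underline{\hom}(g(\sfP),c_2)$ is a nonzero object of $\Lambda^{\sfP}\sfT$, so minimality gives $\Lambda^{\sfP}\sfT=\Coloc(\underline{\hom}(g(\sfP),c_2))\subseteq\mathsf{C}_2$, the final inclusion by (c). In particular $\underline{\hom}(g(\sfP),c_1)\in\mathsf{C}_2$, so $c_1\in\mathsf{C}_2$, and symmetry yields $\mathsf{C}_1=\mathsf{C}_2$.

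The main technical obstacle is establishing the basic cosupport lemmas used tacitly above: that $\Cosupp(\Coloc(c))\subseteq\Cosupp(c)$, that $\underline{\hom}(g(\sfP),t)$ always lies in $\Lambda^{\sfP}\sfT$, and that cosupport detects zero in the sense of (b). These are the precise duals of standard support-theoretic facts established in~\cite{BCHS_cosupport}; once in place, the argument above is essentially formal.
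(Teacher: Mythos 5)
The paper does not prove this result; it is cited verbatim from \cite[Theorem 7.7]{BCHS_cosupport}, so there is no in-paper proof to compare against. Your argument is correct: it is the precise formal dual of the stratification argument for \cref{prop:bhs-strat}, with the key ingredients being idempotence of $g(\sfP)$, the identity $\underline{\hom}(g(\sfQ),\underline{\hom}(g(\sfP),-))\simeq\underline{\hom}(g(\sfQ)\otimes g(\sfP),-)$ (which vanishes for $\sfP\neq\sfQ$), and the closure of colocalizing coideals under $\underline{\hom}(x,-)$, and this is the route the cited reference takes as well. The only caveat, which you correctly flag, is that the formal dualization relies on a handful of basic cosupport lemmas (notably that $\Cosupp$ of a coideal generated by a family equals the union of the cosupports of the generators, and that cosupport detects zero under costratification); these are established in \cite{BCHS_cosupport} and your sketch of how to reduce to them is sound.
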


There are several relations between the notions appearing in stratification and costratification which we will find of use in our main application.

\begin{proposition}[{\cite[Theorem 6.4]{BCHS_cosupport}}]\label{prop:l2giffcol2g}
Let $\sfT$ be a rigidly-compactly generated tt-category with $\Spc(\sfT^\omega)$ weakly Noetherian. Then the following are equivalent:
\begin{enumerate}
\item $\sfT$ satisfies the local-to-global principle.
\item $\sfT$ satisfies the colocal-to-global principle.
\item Cosupport detects trivial objects in $\sfT$.
\end{enumerate}
\end{proposition}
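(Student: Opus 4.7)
The plan is to establish the cycle (1) $\Rightarrow$ (3) $\Rightarrow$ (2) $\Rightarrow$ (1), shuttling between support- and cosupport-type statements by means of the tensor-hom adjunction together with the symmetric monoidal equivalence $\Gamma_\sfP \sfT \simeq \Lambda^\sfP \sfT$ recorded in \cref{rem:stalkcostalkduality}.

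For (1) $\Rightarrow$ (3), suppose the local-to-global principle holds and let $t \in \sfT$ satisfy $\Cosupp(t) = \varnothing$, so that $\underline{\hom}(g(\sfP), t) = 0$ for every weakly visible prime $\sfP$. Consider
\[
\mathsf{M} \coloneqq \{x \in \sfT \mid \Hom_\sfT(x, t) = 0\},
\]
which is a localizing subcategory of $\sfT$ since $\Hom_\sfT(-, t)$ sends colimits to limits. The tensor-hom adjunction gives
\[
\Hom_\sfT(x \otimes g(\sfP), t) \cong \Hom_\sfT(x, \underline{\hom}(g(\sfP), t)) = 0
\]
for every $x \in \sfT$ and every weakly visible $\sfP$, so $\mathsf{M}$ contains the localizing ideal generated by $\{g(\sfP)\}_\sfP$. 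Applying (1) to $t$ itself then yields $t \in \langle t \otimes g(\sfP) \mid \sfP \in \Supp(t)\rangle \subseteq \mathsf{M}$, which forces $\id_t = 0$ and hence $t = 0$.

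For (3) $\Rightarrow$ (2), I would dualize the argument relating support-detection to the local-to-global principle from \cite{BarthelHeardSanders2023}. Fix $t \in \sfT$ and form the colocalizing coideal $\Lambda_t$ generated by the costalks $\{\underline{\hom}(g(\sfP), t) \mid \sfP \in \Cosupp(t)\}$, together with the canonical map $t \to E t$ realizing the colocal approximation of $t$. The crux is to verify that the fiber $F$ of $t \to E t$ has empty cosupport: this uses that $\underline{\hom}(g(\sfP), -)$ factors through $\Lambda^\sfP \sfT$ together with the defining property of $\Lambda_t$ and the $\otimes$-idempotent identities of \cref{prop:bhs-strat-idempotents}. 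Hypothesis (3) then forces $F = 0$, placing $t \in \Lambda_t$ and establishing the colocal-to-global principle.

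Finally, for (2) $\Rightarrow$ (1), I would transport the (1) $\Rightarrow$ (3) argument through the equivalence $\Gamma_\sfP \sfT \simeq \Lambda^\sfP \sfT$ of \cref{rem:stalkcostalkduality}, which interchanges the roles of $g(\sfP) \otimes -$ and $\underline{\hom}(g(\sfP), -)$. Under this duality, (2) implies that support detects trivial objects in $\sfT$, from which the local-to-global principle follows by the standard argument of \cite{BarthelHeardSanders2023}. The technical heart of the proof is the (3) $\Rightarrow$ (2) step: showing that the fiber $F$ really has empty cosupport requires careful compatibility of $\underline{\hom}$-computations with the colocal filtration, and it is here that the duality between $\otimes$-ideals and $\underline{\hom}$-coideals plays a decisive role; the remaining implications are then essentially formal consequences of adjunction and the stalk/costalk equivalence.
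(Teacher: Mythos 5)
The paper does not prove this statement; it is imported verbatim from \cite[Theorem 6.4]{BCHS_cosupport}, so there is no internal argument to compare against. Judged on its own, your implication $(1) \Rightarrow (3)$ is correct: $\mathsf{M} = \{x \mid \Hom_\sfT(x,t)=0\}$ is a localizing subcategory, the adjunction computation shows $x \otimes g(\sfP) \in \mathsf{M}$ for all $x$ and $\sfP$, so $\mathsf{M}$ contains the localizing \emph{ideal} generated by the $g(\sfP)$, and the local-to-global principle then lands $t$ in $\mathsf{M}$, forcing $\id_t = 0$. A genuinely dual version of this same argument (taking $\{y \mid \Hom_\sfT(t,y)=0\}$, which contains every $\underline{\hom}(s, \underline{\hom}(g(\sfP),t))$ by the adjunction together with $g(\sfP)\otimes t = 0$) shows that the colocal-to-global principle implies that \emph{support} detects trivial objects — this is what you reach for in your last step, though note that this is an argument-level dualization rather than a consequence of the local equivalence $\Gamma_\sfP\sfT\simeq\Lambda^\sfP\sfT$, which pertains to a single prime and does not globally interchange $g(\sfP)\otimes-$ with $\underline{\hom}(g(\sfP),-)$.

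The two remaining arrows are where the content lies, and both are gaps. In $(3)\Rightarrow(2)$, you form the fiber $F$ of $t\to Et$ and assert $\Cosupp(F)=\varnothing$; but comparing $\underline{\hom}(g(\sfQ),t)$ with $\underline{\hom}(g(\sfQ),Et)$ is not a formal consequence of $Et\in\Lambda_t$, because the colocalization adjunction need not commute with $\underline{\hom}(g(\sfQ),-)$ — this is exactly the nontrivial step and it is left unverified. In $(2)\Rightarrow(1)$, you close the cycle by citing \cite{BarthelHeardSanders2023} for ``support detects trivial objects $\Rightarrow$ local-to-global principle.'' That reference establishes the \emph{converse} implication; the direction you need is not part of their toolkit, and it is precisely an implication of the same difficulty as the one being proved here. (Concretely, the obvious Bousfield-triangle argument needs $g(\sfQ)\otimes Lt \simeq L(g(\sfQ)\otimes t)$ for the finite localization $L$ away from $\langle t\otimes g(\sfP)\rangle_\sfP$, but this localization is not smashing in general and $g(\sfQ)$ is not rigid, so the detection property does not straightforwardly yield the local-to-global principle.) As written, the cycle does not close.
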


\begin{proposition}[{\cite[Theorem 7.19]{BCHS_cosupport}}]\label{prop:costractimpstrat}
Let $\sfT$ be a rigidly-compactly generated tt-category with $\Spc(\sfT^\omega)$ weakly Noetherian. If $\sfT$ is costratified then it is also stratified.
\end{proposition}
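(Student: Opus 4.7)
The plan is to chain together the three structural results already recorded in the excerpt, with the equivalence of \cref{rem:stalkcostalkduality} providing the crucial bridge. Assume that $\sfT$ is costratified. By \cref{prop:bhs-costrat}, this assumption splits into two pieces of data: first, $\sfT$ satisfies the colocal-to-global principle, and second, $\Lambda^{\sfP}\sfT$ is a minimal colocalizing coideal of $\sfT$ for every $\sfP \in \Spc(\sfT^\omega)$. The first piece is immediately upgraded to the local-to-global principle by invoking \cref{prop:l2giffcol2g}. According to \cref{prop:bhs-strat}, it remains only to promote minimality of the colocalizing coideals $\Lambda^{\sfP}\sfT$ to minimality of the localizing ideals $\Gamma_{\sfP}\sfT$.

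To carry out this promotion, I would fix $\sfP \in \Spc(\sfT^\omega)$ and let $\sfL \subseteq \Gamma_{\sfP}\sfT$ be a nonzero localizing ideal of $\sfT$. The goal is to show $\sfL = \Gamma_{\sfP}\sfT$. Pick a nonzero object $y \in \sfL$; since $y$ lies in $\Gamma_{\sfP}\sfT$, the symmetric monoidal equivalence $\Gamma_{\sfP}\sfT \simeq \Lambda^{\sfP}\sfT$ of \cref{rem:stalkcostalkduality}, implemented by $\underline{\hom}(g(\sfP),-)$ with inverse $g(\sfP)\otimes -$, sends $y$ to a nonzero object $y' \in \Lambda^{\sfP}\sfT$. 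Since $\Lambda^{\sfP}\sfT$ is a minimal colocalizing coideal of $\sfT$, the colocalizing coideal generated by $y'$ inside $\sfT$ recovers all of $\Lambda^{\sfP}\sfT$. The key observation is that, because the equivalence is symmetric monoidal and $\Gamma_{\sfP}\sfT$, $\Lambda^{\sfP}\sfT$ absorb the respective tensor and internal-hom actions of $\sfT$, localizing ideals of $\sfT$ inside $\Gamma_{\sfP}\sfT$ and colocalizing coideals of $\sfT$ inside $\Lambda^{\sfP}\sfT$ are matched by the equivalence; concretely, applying $g(\sfP)\otimes -$ to the colocalizing coideal generated by $y'$ yields the localizing ideal generated by $y$, forcing $\sfL = \Gamma_{\sfP}\sfT$.

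With the local-to-global principle in hand and minimality of $\Gamma_{\sfP}\sfT$ established at every $\sfP$, \cref{prop:bhs-strat} delivers the desired conclusion that $\sfT$ is stratified. The main obstacle is the identification at the level of subcategories asserted in the previous paragraph: one must verify that the equivalence of \cref{rem:stalkcostalkduality} genuinely intertwines the operations of generating a localizing ideal inside $\Gamma_{\sfP}\sfT$ with generating a colocalizing coideal inside $\Lambda^{\sfP}\sfT$. This is where a careful check that $g(\sfP) \otimes -$ and $\underline{\hom}(g(\sfP),-)$ exchange the two kinds of closure conditions is required, using both that $g(\sfP)$ is idempotent and that it is a monoidal unit for the local category $\Gamma_{\sfP}\sfT \simeq \Lambda^{\sfP}\sfT$. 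All other steps amount to packaging the cited propositions.
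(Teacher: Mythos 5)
Your overall strategy — peel off the local-to-global principle with \cref{prop:l2giffcol2g} and then try to transfer minimality from $\Lambda^{\sfP}\sfT$ to $\Gamma_{\sfP}\sfT$ — starts on solid ground, and you correctly flag the transfer step as the weak point, but that step has a genuine gap. The equivalence $g(\sfP)\otimes -\colon \Lambda^{\sfP}\sfT \rightleftarrows \Gamma_{\sfP}\sfT : \underline{\hom}(g(\sfP),-)$ does \emph{not} intertwine the two closure conditions in the way you need. As an equivalence it preserves limits and colimits computed \emph{internally} to $\Lambda^{\sfP}\sfT$ and $\Gamma_{\sfP}\sfT$; but a localizing ideal of $\sfT$ contained in $\Gamma_{\sfP}\sfT$ must be closed under coproducts computed in $\sfT$ (which agree with the internal ones for $\Gamma_{\sfP}\sfT$), while a colocalizing coideal of $\sfT$ contained in $\Lambda^{\sfP}\sfT$ must be closed under products computed in $\sfT$ (the internal ones for $\Lambda^{\sfP}\sfT$). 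The equivalence sends internal colimits to internal colimits and internal limits to internal limits; it does not convert $\sfT$-coproducts on one side into $\sfT$-products on the other. Concretely, if $\sfC'$ is a colocalizing coideal of $\sfT$ contained in $\Lambda^{\sfP}\sfT$ and $c_i = g(\sfP)\otimes c_i'$ with $c_i'\in\sfC'$, then $\coprod_i c_i = g(\sfP)\otimes\coprod_i c_i'$, and $\coprod_i c_i'$ need not lie in $\sfC'$ (colocalizing coideals are closed under products, not coproducts). So $g(\sfP)\otimes\sfC'$ is generally not closed under $\sfT$-coproducts and is not a localizing ideal; in particular the identity ``$g(\sfP)\otimes(\text{coideal gen.\ by }y') = \langle y\rangle$'' is not a formal consequence of the symmetric monoidal equivalence and is, in effect, exactly the statement you are trying to prove.

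The proof in \cite[Theorem 7.19]{BCHS_cosupport} does not proceed by matching $\Gamma_{\sfP}\sfT$ and $\Lambda^{\sfP}\sfT$ locally. Roughly, given $0\neq y\in\Gamma_{\sfP}\sfT$, one passes to the right orthogonal $\langle y\rangle^\perp$, which is a colocalizing coideal of $\sfT$, identifies its cosupport via the interplay between support and cosupport established earlier in that paper, and then invokes the costratification hypothesis \emph{globally} — i.e., the classification of \emph{all} colocalizing coideals by cosupport, not merely cominimality of $\Lambda^{\sfP}\sfT$ — to pin down $\langle y\rangle^\perp$ and hence $\langle y\rangle$. The first half of your argument (upgrading the colocal-to-global principle via \cref{prop:l2giffcol2g} and organizing the problem via \cref{prop:bhs-costrat} and \cref{prop:bhs-strat}) is correct and consistent with the cited proof, but the minimality transfer needs a genuinely different mechanism than the one you sketch.
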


The categories $\Gamma_{\sfP} \sfT$ and $\Lambda^{\sfP}\sfT$ that we encounter in our application to profinite $G$-spectra will have a particularly nice form. They are \emph{tt-fields} in the sense of~\cite{BKSruminations} as we now recall.

\begin{definition}\label{defn:ttfield}
A \emph{tt-field} is a rigidly-compactly generated tt-category $\sfF$ which satisfies the following two properties:
    \begin{enumerate}[label=(F\arabic*)]
        \item every object in $\sfF$ is a coproduct of compact objects in $\sfF$;
        \item every non-zero object in $\sfF$ is $\otimes$-faithful (i.e., if $t \otimes \alpha = 0$ for $0 \neq t \in \sfF$ and $\alpha$ some morphism in $\sfF$, then $\alpha=0$).
    \end{enumerate}  
\end{definition}

The advantage of having a tt-field is that stratification and costratification are immediate as we now record.

\begin{proposition}[{\cite[Corollary 8.9]{BCHS_cosupport}}]\label{prop:ttfieldstrat}
    Any tt-field $\sfF$ is stratified and costratified with spectrum $\Spc(\sfF^\omega) = \{(0)\}$.
\end{proposition}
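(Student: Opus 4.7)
The plan is to first show that the Balmer spectrum collapses to a single point, after which both stratification and costratification reduce to formal applications of \cref{prop:bhs-strat} and \cref{prop:bhs-costrat}. For the spectrum: axiom (F2) gives that the zero ideal $(0) \subseteq \sfF^\omega$ is prime, since for nonzero compacts $a, b$ the identity $a \otimes \id_b = \id_{a \otimes b}$ together with the faithfulness of $a \otimes (-)$ prevents the tensor product from being zero. To see that $(0)$ is the \emph{only} prime, it suffices to prove that every nonzero compact $c \in \sfF^\omega$ generates $\sfF^\omega$ as a thick tensor ideal. I would approach this by working with the coevaluation $\eta \colon \unit \to c \otimes c^\vee$ and evaluation $\epsilon \colon c^\vee \otimes c \to \unit$ coming from rigidity of $c$, and combining the zigzag identities with (F1) and (F2) to locate $\unit$ inside $\thickid{c}$.

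Once $\Spc(\sfF^\omega) = \{\sfP\}$ with $\sfP = (0)$ is in hand, I would realize $\{\sfP\}$ as $Y \cap Z^c$ with $Y = \Spc(\sfF^\omega)$ and $Z = \varnothing$, so that $g(\sfP) = \unit$ and therefore $\Gamma_\sfP \sfF = \sfF$. The local-to-global principle of \cref{defn:local2global} then reduces to the tautology $\langle t \rangle = \langle t \otimes \unit\rangle$, and the colocal-to-global principle follows from \cref{prop:l2giffcol2g}. For minimality of $\Gamma_\sfP \sfF = \sfF$ as a localizing ideal I would take any nonzero localizing ideal $\sfL \subseteq \sfF$, pick a nonzero $x \in \sfL$, use (F1) to extract a nonzero compact summand $c \in \sfL$, and invoke the spectrum computation to place $\unit \in \thickid{c} \subseteq \sfL$. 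By \cref{rem:stalkcostalkduality} there is a symmetric monoidal equivalence $\Lambda^\sfP \sfF \simeq \Gamma_\sfP \sfF$, which transports the minimal localizing ideal structure on $\sfF$ to the minimal colocalizing coideal structure required by \cref{prop:bhs-costrat}. Applying \cref{prop:bhs-strat} and \cref{prop:bhs-costrat} then delivers both stratification and costratification.

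The main obstacle is the subtle step of showing that $\unit$ lies in the thick tensor ideal generated by any nonzero compact object of $\sfF$. Both axioms of a tt-field enter this step in an essential way: (F2) rules out zero divisors and makes tensoring with $c$ faithful on morphisms, while (F1) lets us reduce questions about arbitrary objects of a localizing ideal to the thick ideal generated by a compact one. However, extracting $\unit$ itself from the rigidity data for $c$ is the only non-formal ingredient in the argument; everything else is a direct application of the abstract framework recalled in \cref{ssec:ttstratification}.
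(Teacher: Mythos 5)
The overall architecture of your argument — compute the spectrum, observe $g(\sfP) = \unit$ and $\Gamma_\sfP\sfF = \sfF$, get the local-to-global principle for free, verify minimality, and then appeal to \cref{prop:bhs-strat} and \cref{prop:bhs-costrat} — is the natural one and agrees with how one deduces the statement from the abstract framework. However, there are two genuine gaps.

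The first, which you flag yourself, is the assertion that $\unit$ lies in the thick tensor ideal generated by any nonzero compact $c$. Your proposed route via the zigzag identities alone does not close: the zigzag identity shows $\eta \colon \unit \to c \otimes c^\vee$ is nonzero (since $c \otimes \eta$ is split mono), and that $c$ is a retract of $c \otimes c^\vee \otimes c$, but neither of these places $\unit$ in $\Thick^\otimes(c)$; for that one would need $\eta$ itself to be split mono, which is not forced by rigidity. The missing ingredient is Balmer's $\otimes$-idempotent machinery: set $Y = \supp(\Thick^\otimes(c))$ and consider the right idempotent $f_Y$. Since $c$ is supported on $Y$, one has $c \otimes f_Y = 0$. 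Applying (F2) to the morphism $\id_{f_Y}$ — i.e., $c \otimes \id_{f_Y} = \id_{c\otimes f_Y} = 0$ with $c \neq 0$ — forces $f_Y = 0$, hence $e_Y \simeq \unit$, hence $\unit \in \Loc^\otimes(\Thick^\otimes(c))$, and by Neeman--Thomason $\Thick^\otimes(c) = \sfF^\omega$. (Alternatively, one can combine the nonvanishing of $\eta$ with the vanishing $f_Y \otimes \eta = 0$ and invoke (F2) that way.) Either variant uses the idempotent $f_Y$ in an essential way; the rigidity data alone does not suffice. Note also that (F1) plays no role in this step; it is (F2) plus the smashing localization that carries the argument.

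The second gap is the passage from minimality of $\Gamma_\sfP\sfF$ to cominimality of $\Lambda^\sfP\sfF$. The symmetric monoidal equivalence $\Lambda^\sfP\sfF \simeq \Gamma_\sfP\sfF$ of \cref{rem:stalkcostalkduality} is an equivalence of abstract tt-categories, whereas being a minimal localizing ideal of $\sfF$ and being a minimal colocalizing coideal of $\sfF$ are properties of the respective inclusions into $\sfF$, not intrinsic properties; the equivalence does not automatically transport one to the other. Here both subcategories coincide with $\sfF$, so what one must actually show is that $\sfF$ has no proper nonzero colocalizing coideals. One can start as before: any nonzero colocalizing coideal $\sfC$ contains a nonzero compact $c$ by (F1) and retract closure, and $\sfC \cap \sfF^\omega$ is a thick $\otimes$-ideal (using $\underline{\hom}(t,-) = t^\vee \otimes (-)$ for compact $t$), hence equals $\sfF^\omega$. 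This gives $\unit \in \sfC$, but one still needs to show every coproduct of compacts lies in $\sfC$ — which follows because, in a tt-field, such a coproduct is pure-injective and is therefore a retract of the corresponding product, and products of compacts lie in $\sfC$ by assumption. That pure-injectivity input is a substantive point about tt-fields (cf.~\cite[Section 5]{BKSruminations}), not something one gets by transport of structure, and is what \cite[Theorem 18.4]{BCHS_cosupport} is packaging.
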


\subsection{Presheaves of tensor-triangulated categories and their stalks}

Let $\sfT$ be a rigidly-compactly generated tt-category. Following Balmer, we recall (a big variant of) the structure presheaf of tt-categories on $X=\Spc(\sfT^{\omega})$ constructed in \cite{balmer_spectrum}.

In \eqref{eqn:finiitelocncompact} we saw how the selection of a Thomason subset $Y \subseteq \Spc(\sfT^\omega)$ with complement $U = Y^c$ provided a localization sequence of categories. Applying the ind-construction, we obtain the localization sequence for $\sfT$ itself:
\begin{equation}\label{eq:finitelocalizationsequence}
    \xymatrix{T_{U^c} \ar[r] & \sfT \ar[r]^-{\lambda_U} & \sfT(U).}
\end{equation}
Recalling the naturality of these localizations (cf., \cref{rem:naturalinU}) we are led to the following construction:

\begin{definition}[Balmer]\label{def:ttstructuresheaf}
Let $\sfT$ be a rigidly-compactly generated tt-category with spectrum $X = \Spc(\sfT^{\omega})$ whose collection of quasi-compact opens is denoted $\qcopen{X}$. The \emph{(tt-)structure presheaf} $\mathcal{O}_{\sfT}$ associated to $\sfT$ is the presheaf of tt-categories 
\[
\xymatrix{\mathcal{O}_{\sfT}\colon \qcopen{X}^{\op} \ar[r] &  \{\text{tt-categories}\}, \quad U \mapsto \mathcal{O}_{\sfT}(U) \coloneqq \sfT(U),}
\] 
with restriction maps as in \eqref{eq:ttrestriction}. 
\end{definition}

\begin{remark}
    From a support-theoretic perspective, it is natural to define the tt-structure presheaf $\sfT(-)$ of $\sfT$ on arbitrary complements of Thomason subsets of $X = \Spc(\sfT^{\omega})$. The above presheaf can be extended uniquely to a presheaf with domain $\Thc{X}$, the poset of complements of Thomason subsets of $X$. Explicitly, we will show in \cref{prop:localsectionsformula} how $T(V)$ for $V$ the complement of a Thomason subset of $X$ is uniquely determined by the local sections over quasi-compact opens containing it.
    
    Another variation is conceivable: one might wish to define $\sfT(-)$ on all open subsets of $X$ as opposed to only the quasi-compact opens. Since $X$ is spectral, its topology admits a basis of quasi-compact opens, which can be used to show that $\sfT(-)$ extends uniquely from the quasi-compact opens to arbitrary open subsets of $X$.
\end{remark}

\begin{remark}
    In fact, working with a suitable enrichment, the pair $(\Spc(\sfT^{\omega}), \mathcal{O}_{\sfT})$ forms a locally 2-ringed space, whose construction is part of forthcoming work \cite{AokiBarthelChedalavadaSchlankStevenson2023ip}. In particular, $\mathcal{O}_{\sfT}$ satisfies descent on $\Spc(\sfT^{\omega})$ and therefore is a sheaf of tt-categories on the spectrum of $\sfT$ (in a suitable sense). For the purposes of the present paper, the presheaf structure will satisfy. 
\end{remark}

Any theory of sheaves with coefficients in a category with filtered colimits affords a theory of stalks. Developing this notion for the structure presheaf of $\sfT$ will be the topic of \cref{sec:ttstalks}.

\section{Continuity of the homological spectrum}\label{sec:ttcontinuity}

The goal of this section is to establish a continuity property of the homological spectrum of a tt-category. Along the way, we recall some material on limits and colimits of tt-categories that will be instrumental throughout this paper.

\subsection{Limits and colimits of $\infty$-categories}\label{ssec:limcoliminfty}

Before we delve into the result promised in the title of this section, we first need to revisit the construction of limits and colimits of $\infty$-categories. In particular we will eventually require some dexterity moving between various categories of $\infty$-categories. The material that we discuss here is taken from \cite{htt,ha} and also neatly summarised in \cite[\S 15]{bachmannhoyois_norms}.

We fix the following notation choices, some of which we have already encountered:

\begin{itemize}
    \item $\widehat{\Cat}_\infty$ -- the category of (not necessarily small) $\infty$-categories.
    \item $\Cat_\infty$ -- the category of essentially small $\infty$-categories.
    \item $\Pr^L$ -- the category of presentable $\infty$-categories and left adjoints between them.
    \item $\Pr^R$ -- the category of presentable $\infty$-categories and right adjoints between them.
    \item $\Pr^{L,\omega}$ -- the category of compactly generated presentable $\infty$-categories and compact object preserving left adjoints between them.
    \item $\Cat_{\infty}^\natural$ -- the category of idempotent complete essentially small $\infty$-categories.
\end{itemize}
It is clear that all of these categories have suitable stable and monoidal variants, but we will not carry around this subscript here. 

The first observation that we make is that there is an equivalence of categories $\Pr^R \simeq (\Pr^L)^{\op}$ obtained by passing to adjoints. By \cite[Propositions 5.5.3.13 and 5.5.3.18]{htt}, both $\Pr^L$ and $\Pr^R$ admit limits and the forgetful functors $\Pr^L \to \widehat{\Cat}_\infty$ and $\Pr^R \to \widehat{\Cat}_\infty$ preserve them. We summarise this diagrammatically as follows:

\begin{equation}\label{eq:prlprr}
\begin{gathered}
    \xymatrix@C=5em{
(\Pr^L)^{\op} \simeq \Pr^R \ar[r]^-{\text{limit}}_-{\text{preserving}} & \widehat{\Cat}_{\infty}
\rlap{.}}
\end{gathered}
\end{equation}
This implies that colimits of a diagram in $\Pr^L$ are computed by first passing to right adjoints and then taking the limit of the resulting diagram in $\Pr^R$, which may be computed in $\widehat{\Cat}_{\infty}$. 

Moving to the other categories of interest, we remind the reader (see \cref{hyp:hyp}) that there is an equivalence of categories between $\Cat^\natural_\infty$ and $\Pr^{L,\omega}$ furnished by taking idempotent completion in one direction and taking compact objects in the other. By \cite[Proposition 5.5.7.6]{htt} the forgetful functor $\Pr^{L,\omega} \to \Pr^L$ preserves colimits (the stable version is the content of \cite[Theorem 1.1.4.4]{ha}). The forgetful functor $\Cat^\natural_\infty \to \Cat_\infty$ does not preserves arbitrary colimits, but it does preserve filtered colimits by \cite[Lemma 7.3.5.10]{ha}. The forgetful functor $\Pr^L \to \widehat{\Cat}_\infty$ does not preserve colimits in general. In summary, we have the following diagram of relations:
\begin{equation}\label{eq:colimitcomparisons}
\begin{gathered}
\xymatrix@C=5em{
\Pr^{L,\omega} \ar[r]^{\text{colimit}}_{\text{preserving}} \ar@<1ex>[d] \ar@{<-}@<-1ex>[d] \ar@{}[d]|{\simeq}& \Pr^L \ar[r]^{\text{not colimit}}_{\text{preserving}}  & \widehat{\Cat}_\infty \\
\Cat_{\infty}^\natural \ar[r]^{\text{filt. colimit}}_{\text{preserving}}  & \Cat_\infty 
\rlap{,} }
\end{gathered}
\end{equation}
in which the horizontal functors are the forgetful ones.

\subsection{Filtered colimits of tt-categories and their spectrum}\label{ssec:colimits}

In this section we will provide a construction via which many examples, including our example of interest, can be obtained.  For now, we shall work in $\Cat_\infty^{\natural}$. We will begin by explicitly describing filtered colimits in the category $\Cat_\infty$ which is sufficient for our purposes by \eqref{eq:colimitcomparisons}.

To this end, suppose that $F \colon I \to \Cat_\infty$ is a filtered diagram of $\infty$-categories with colimit $\mathcal{C} = \colim_i F(i)$ and write $\mathcal{C}_i = F(i)$. We wish to describe the objects of $\mathcal{C}$ along with the mapping spaces between any two objects.

The collection of objects can be computed in the category of sets. That is, 
\[
\Ob(\mathcal{C}) = (\coprod_I \Ob(\mathcal{C}_i))/\sim,
\]
where $c_{i_0} \sim d_{i_1}$ if and only if there exists some $i_0 \xrightarrow{\theta_0} j \xleftarrow{\theta_1} i_1$ in $I$ with $F(\theta_0)(c_{i_0}) \cong F(\theta_1)(d_{i_1})$. 

In particular, the objects of $\mathcal{C}$ all occur in some $\mathcal{C}_i$. Therefore, using the fact that $I$ is filtered, for any $c \in \mathcal{C}_i$ we can coherently pick a system of representatives $c_i \in \mathcal{C}_i$ for all $i \in I$.

\begin{lemma}[{\cite[Lemma 3.12]{bss}}]\label{lem:colimitofmaps}
Suppose $F \colon I \to \Cat_\infty$ is a filtered diagram of $\infty$-categories with colimit $\mathcal{C}$. For two objects $c,d \in \mathcal{C}$ represented by $(c_i)$ and $(d_i)$ there is a canonical equivalence
\begin{equation}\label{eq:colimitmap}
    \colim_i \Map_{\mathcal{C}_i}(c_i,d_i) \xrightarrow{\quad\sim\quad}  \Map_{\mathcal{C}}(c,d),
\end{equation}
induced by the transition maps in the diagram $F$. In particular by taking homotopy there are isomorphisms $\colim_i \pi_\ast \Map_{\mathcal{C}_i} (c_i,d_i) \xrightarrow{\, \cong \,} \pi_\ast \Map_{\mathcal{C}}(c,d)$ where the filtered colimit is taken in the category of sets.
\end{lemma}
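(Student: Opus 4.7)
The strategy is to model filtered colimits of $\infty$-categories concretely in simplicial sets, where mapping spaces admit a combinatorial description that manifestly commutes with filtered colimits. First, I would replace each $\cC_i$ by a quasi-category and invoke the fact that filtered colimits in $\Cat_\infty$ agree with filtered colimits of the underlying simplicial sets: the inner horn filling conditions defining a quasi-category are detected by maps out of finite simplicial sets, so the property is stable under filtered colimits in $\mathrm{sSet}$. Consequently $\cC$ may be computed as the filtered colimit of the $\cC_i$ as simplicial sets, and each structure map $\cC_i \to \cC$ is the canonical one into this colimit.

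Second, I would use the pinched mapping space model: for a quasi-category $\cD$ with vertices $x, y$, the simplicial set $\Hom^L_{\cD}(x,y)$ defined as the pullback of $\cD^{\Delta^1} \to \cD \times \cD$ along $(x,y) \colon \ast \to \cD \times \cD$ is a Kan complex that models $\Map_{\cD}(x,y)$. The assignment $\cD \mapsto \Hom^L_{\cD}(x,y)$ is built from a finite limit and the cotensor with the finite simplicial set $\Delta^1$, and hence commutes with filtered colimits of simplicial sets in which $x$ and $y$ already appear as vertices.

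Third, to conclude, filteredness of $I$ provides some $i_0 \in I$ such that both $c$ and $d$ admit lifts $c_{i_0}, d_{i_0} \in \cC_{i_0}$; restricting to the cofinal subdiagram $I_{\geq i_0}$ does not change the colimit, and applying the commutation from the previous step yields
\[
\Hom^L_{\cC}(c,d) \;\cong\; \colim_{i \geq i_0} \Hom^L_{\cC_i}(c_i, d_i)
\]
as simplicial sets. Passing to the underlying homotopy types gives the claimed equivalence of mapping spaces, and the assertion about $\pi_*$ then follows because filtered colimits of Kan complexes compute homotopy groups at the level of sets (finite simplices have compact image).

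The main obstacle is a coherence question: the system $(c_i)$ is canonical only up to the equivalence relation defining $\Ob(\cC)$, and one must check that the resulting colimit of mapping spaces is independent of the choice of representatives and fits into a canonical equivalence compatible with the transition maps of the diagram $F$. This is resolved by the cofinality reduction combined with the observation that any two valid systems of representatives differ by a filtered zigzag that induces an identification of the corresponding mapping-space diagrams, so that the resulting colimits are canonically identified.
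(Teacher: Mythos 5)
Your strategy is sound and the conclusion follows, but the justification of the opening reduction contains a genuine gap. You assert that filtered colimits in $\Cat_\infty$ are computed by filtered colimits of the underlying simplicial sets, and offer as justification that the inner-horn lifting conditions are detected against finite simplicial sets. That observation only shows that the resulting simplicial set is again a quasi-category, i.e.\ a \emph{fibrant object} of the Joyal model structure. It does not show that this simplicial set has the universal property of the colimit in the $\infty$-category $\Cat_\infty$, which is the localization of $\mathrm{sSet}$ at the Joyal equivalences. For that one needs to know, in addition, that the $1$-categorical filtered colimit computes the homotopy colimit in $\mathrm{sSet}_{\mathrm{Joyal}}$ --- equivalently, that Joyal weak equivalences are closed under filtered colimits (then one runs the usual ``cofibrantly replace the diagram and compare colimits'' argument, using that all objects are cofibrant). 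This closure property is true, and it is, for example, a general feature of Cisinski model structures, but it is not a formal consequence of the inner-horn remark; it needs its own argument or citation. Somewhat amusingly, the cleanest self-contained route to it uses the $\Hom^L$ computation that you carry out afterwards: once you know the strict colimit is a quasi-category whose mapping simplicial sets are filtered colimits of the $\Hom^L_{\cC_i}(c_i,d_i)$, you can deduce that a levelwise Joyal equivalence of filtered diagrams induces a fully faithful and essentially surjective functor on strict colimits, hence a Joyal equivalence. As written, however, your dependency is backwards: you use the identification of $\cC$ with the strict colimit to set up the $\Hom^L$ computation, rather than using the $\Hom^L$ computation to establish that identification.

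An approach that sidesteps these subtleties entirely is to present $\Cat_\infty$ by complete Segal spaces. The fully faithful embedding $\Cat_\infty \hookrightarrow \Fun(\Delta^{\op},\mathcal{S})$ has image closed under filtered colimits, because both the Segal condition and the completeness condition are finite-limit conditions and filtered colimits commute with finite limits in the $\infty$-topos $\mathcal{S}$; hence filtered colimits in $\Cat_\infty$ are computed levelwise in $\Fun(\Delta^{\op},\mathcal{S})$. The mapping space from $c$ to $d$ is then the fiber of $X_1 \to X_0 \times X_0$, again a finite limit, so it commutes with filtered colimits. This gives the lemma directly without any comparison of strict and homotopy colimits in a model category of simplicial sets. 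Your remaining steps --- cotensoring by $\Delta^1$ and finite limits commute with filtered colimits in $\mathrm{sSet}$, filtered colimits of Kan complexes compute homotopy colimits, cofinality of $I_{\geq i_0}$ --- are correct. The closing coherence paragraph correctly identifies the issue about choice of representatives, but ``filtered zigzag'' is vague; it is cleaner to note that any two representing systems agree after restricting to a cofinal subcategory of $I$ up to a compatible natural equivalence, which identifies the induced maps on mapping-space colimits.
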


Our first application of the above discussion is a description of unions of thick ideals.

\begin{lemma}\label{lem:unionascolimit}
Let $\sfK$ be an essentially small tt-category and write $X = \Spc(\sfK)$. Suppose that $(U_i)_{i \in I}$ is a directed system of quasi-compact open subsets of $X$ and consider the induced system of thick ideals and inclusions, $(\{t \in \sfK\mid \supp(t) \subseteq U_i\})_{i\in I}$. Then we have
\[
\colim_{i}\{t \in \sfK\mid \supp(t) \subseteq U_i^c\} = \bigcup_{i \in I}\{t \in \sfK\mid \supp(t) \subseteq U_i^c\},
\]
as full subcategories of $\sfK$, where the union is taken as a full subcategory in $\sfK$.
\end{lemma}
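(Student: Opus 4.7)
The plan is to construct a canonical comparison functor $\Phi\colon \colim_i \sfK_{U_i^c}\to \bigcup_{i\in I}\sfK_{U_i^c}$, where $\sfK_{U_i^c}\coloneqq\{t\in\sfK\mid \supp(t)\subseteq U_i^c\}$, and then verify essential surjectivity and full faithfulness separately. Each $\sfK_{U_i^c}$ is a full subcategory of $\sfK$, so every transition map in the diagram $(\sfK_{U_i^c})_{i\in I}$ is a fully faithful embedding and factors as $\sfK_{U_i^c}\hookrightarrow \bigcup_{j\in I}\sfK_{U_j^c}\hookrightarrow \sfK$; the compatibility of these inclusions gives the universal map $\Phi$. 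Since filtered colimits in $\Cat_\infty^\natural$ can be computed in $\Cat_\infty$ by \eqref{eq:colimitcomparisons}, we can apply the explicit description from \cref{ssec:limcoliminfty} to analyse $\Phi$.

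For essential surjectivity, any $t\in \bigcup_i\sfK_{U_i^c}$ lies in some $\sfK_{U_{i_0}^c}$, and the image of $t$ in the colimit manifestly maps to $t$ under $\Phi$. For full faithfulness, pick objects $s,t$ of the colimit and coherent systems of representatives $(s_i),(t_i)$ with $s_i,t_i\in \sfK_{U_i^c}$. Applying \cref{lem:colimitofmaps} and using that the inclusions $\sfK_{U_i^c}\hookrightarrow \sfK$ are fully faithful, we obtain
\[
\Map_{\colim_i \sfK_{U_i^c}}(s,t)\;\simeq\;\colim_i\Map_{\sfK_{U_i^c}}(s_i,t_i)\;\simeq\;\colim_i \Map_\sfK(s_i,t_i).
\]
The transition maps of this last colimit system are identities on mapping spaces (again because each $\sfK_{U_i^c}\hookrightarrow \sfK$ is fully faithful and the $(s_i),(t_i)$ are chosen coherently), so the colimit collapses to the constant value $\Map_\sfK(s,t)$, which equals the mapping space in $\bigcup_i \sfK_{U_i^c}$ considered as a full subcategory of $\sfK$. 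This establishes that $\Phi$ is an equivalence.

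There is no real obstacle here; the lemma is essentially a formal consequence of the slogan that a filtered colimit of fully faithful embeddings into a common ambient category is the union of the images. The only point requiring care is the coherent choice of representatives $(s_i),(t_i)$, which is handled by the standard description of filtered colimits recalled in \cref{ssec:colimits}: using that $I$ is filtered, any object of the colimit admits a representative in $\sfK_{U_i^c}$ for all sufficiently large $i$, and two such representatives become equal in $\sfK$ after passing further along the system.
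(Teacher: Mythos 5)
Your proof is correct and takes essentially the same route as the paper: both rely on the explicit description of a filtered colimit of $\infty$-categories (objects modulo eventual agreement, mapping spaces via \cref{lem:colimitofmaps}) together with the observation that the transition functors are fully faithful inclusions, so the colimit of mapping spaces is eventually constant. Your presentation — packaging this as a comparison functor $\Phi$ and verifying essential surjectivity and full faithfulness separately — is a mild repackaging of the same argument, not a genuinely different approach.
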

\begin{proof}
We begin by showing that the objects coincide. Unpacking the above description of the objects of the filtered colimit we see that the objects of the left-hand side are given by
\[
    (\coprod_I \{t \in \sfK\mid \supp(t) \subseteq U_i^c\}) / \sim,
\]
where $c_{i_0} \sim d_{i_1}$ if and only if there exists some $i_0 \xrightarrow{\theta_0} j \xleftarrow{\theta_1} i_1$ with $F(\theta_0)(c_{i_0}) \cong F(\theta_1)(d_{i_1})$. However, $F(\theta_i)$ is the canonical inclusion functor, so we deduce that the objects in the filtered colimits coincide with those in the union. It remains to check that the morphism spaces coincide as well. However, this follows again from fully faithfulness of the transition functors in conjunction with \cref{lem:colimitofmaps}.
\end{proof}

The following result of Gallauer is a fundamental continuity result for the Balmer spectrum.

\begin{proposition}[{\cite[Proposition 8.2]{gallauer_filteredmodules}}]\label{prop:spccontinuity}
Let $(\sfK_i, f_{ij} \colon \sfK_i \to \sfK_j)$ be a filtered diagram of essentially small tt-categories. Then there is a canonical homeomorphism
\[
\Spc(\colim_i \sfK_i)  \xrightarrow{\sim} \lim_i \Spc(\sfK_i).
\]
\end{proposition}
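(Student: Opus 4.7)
The plan is to construct the canonical comparison map, show it is a bijection by exploiting the explicit description of filtered colimits in $\Cat_\infty$ recalled in \cref{ssec:colimits}, and then upgrade the bijection to a homeomorphism by checking that basic closed sets match on both sides. Write $\sfK := \colim_i \sfK_i$ with structure functors $\iota_i \colon \sfK_i \to \sfK$. Functoriality of the Balmer spectrum produces continuous maps $\Spc(\sfK) \to \Spc(\sfK_i)$, which assemble (using $\iota_j \circ f_{ij} \simeq \iota_i$) into the canonical continuous comparison
\[
    \varphi \colon \Spc(\sfK) \longrightarrow \lim_i \Spc(\sfK_i).
\]

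For injectivity I would use that every object $t \in \sfK$ is represented by some $t_i \in \sfK_i$, so that membership $t \in \sfP$ for $\sfP \in \Spc(\sfK)$ is equivalent to $t_i \in \iota_i^{-1}(\sfP)$ for any representative; hence $\sfP$ is determined by the tuple $(\iota_i^{-1}(\sfP))_i = \varphi(\sfP)$. For surjectivity, given a compatible system $(\sfP_i)_i$ with $f_{ij}^{-1}(\sfP_j)=\sfP_i$, I would define
\[
    \sfP := \{\, t \in \sfK \mid t \simeq \iota_i(t_i) \text{ for some } i \text{ and some } t_i \in \sfP_i \,\},
\]
and verify well-definedness from the compatibility condition together with the explicit form of the colimit. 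The key point is then that $\sfP$ is a prime thick ideal of $\sfK$: any finite tt-diagram (distinguished triangle, direct summand, or tensor product) in $\sfK$ can be lifted, by filteredness and \cref{lem:colimitofmaps}, to the same diagram in a single $\sfK_i$, where one applies the analogous property of $\sfP_i$.

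To upgrade $\varphi$ to a homeomorphism, I would compute basic closed sets. For $t \in \sfK$ with representative $t_i \in \sfK_i$, the description of membership in a prime given above yields
\[
    \supp(t) \;=\; \varphi^{-1}\bigl(\pi_i^{-1}(\supp(t_i))\bigr),
\]
where $\pi_i \colon \lim_j \Spc(\sfK_j) \to \Spc(\sfK_i)$ is the projection. Since $\varphi$ is a bijection, this identity shows that $\varphi$ carries the basic closed set $\supp(t)$ onto $\pi_i^{-1}(\supp(t_i))$, and conversely every generator of the closed-set topology on $\lim_i \Spc(\sfK_i)$ is of this form (since any object of $\sfK$ comes from some $\sfK_i$). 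As these basic closed sets form a subbasis on both sides, $\varphi$ is both continuous and open, hence a homeomorphism.

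The main obstacle is the surjectivity step: constructing a prime thick ideal of $\sfK$ from a compatible system $(\sfP_i)_i$ really requires verifying that the naive set-theoretic candidate is closed under the structural operations of $\sfK$. This is where \cref{lem:unionascolimit} and \cref{lem:colimitofmaps} do the work, by ensuring that finite diagrams in the colimit descend to a common stage, so that the ideal and primality conditions for $\sfP$ can be reduced to the corresponding conditions already known to hold for the individual $\sfP_i$.
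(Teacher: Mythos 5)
Your proposal is correct and follows essentially the same route as the proof the paper actually spells out for the analogous statement (the continuity of the homological spectrum, \cref{thm:spchcontinuity}), which the authors explicitly say mirrors Gallauer's argument for \cref{prop:spccontinuity}: define $\varphi$ by pulling back primes, prove injectivity by lifting a separating object to a finite stage, prove surjectivity by taking the ``exists'' candidate $\sfP$ from a compatible tuple and verifying the ideal/thickness/primality conditions by lifting finite diagrams to a common stage via filteredness and \cref{lem:colimitofmaps}, then match the subbasis of closed sets $\supp(t) \leftrightarrow \pi_i^{-1}(\supp(t_i))$ to upgrade to a homeomorphism. The one thing worth making fully explicit is the well-definedness you flag: one should check that the ``exists some representative in $\sfP_i$'' and ``all representatives lie in $\sfP_i$'' descriptions of $\sfP$ coincide (using $f_{ij}^{-1}(\sfP_j)=\sfP_i$), since this is what guarantees $\iota_i^{-1}(\sfP) = \sfP_i$ and hence that $\varphi(\sfP)$ recovers the given tuple.
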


\begin{corollary}\label{cor:continuous_comparisonmaps}
    Let $\sfK = \colim_{i}\sfK_i$ be a filtered colimit of essentially small tt-categories. Then the corresponding comparison maps (\cref{rem:comparisonmaps}) fit into a commutative diagram
        \[
            \xymatrix{\Spc(\sfK) \ar[r]^-{\sim} \ar[d]_{\rho_{\sfK}} & \lim_i \Spc(\sfK_i) \ar[d]^{\lim_i \rho_{\sfK_i}} \\
            \Spec(\End_{\sfK}^0(\unit)) \ar[r]^-{\sim} & \lim_i \Spec(\End_{\sfK_i}^0(\unit)).}
        \]
\end{corollary}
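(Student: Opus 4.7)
The plan is to assemble the square from three independent pieces: the top horizontal homeomorphism is Gallauer's continuity theorem (\cref{prop:spccontinuity}); the bottom horizontal homeomorphism will come from computing unit endomorphism rings under filtered colimits of tt-categories; and commutativity will follow from naturality of Balmer's comparison map with respect to tt-functors.

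For the bottom arrow, first I would observe that the transition functors in the system $(\sfK_i)$ are tt-functors and hence preserve the unit object, so \cref{lem:colimitofmaps} applied to $c = d = \unit$ produces a canonical isomorphism $\colim_i \End_{\sfK_i}^0(\unit) \cong \End_\sfK^0(\unit)$ of abelian groups. The ring structure on $\End^0(\unit)$ is induced by the symmetric monoidal product, which the structure functors preserve, so this upgrades to an isomorphism of commutative rings. Applying $\Spec$, together with the classical fact that $\Spec$ converts filtered colimits of commutative rings into cofiltered limits of spectral spaces, produces the desired homeomorphism on the bottom.

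For commutativity, the comparison map $\rho_{(-)}$ is natural along tt-functors by its very construction in \cite{balmer_3spectra}. Applied to each canonical tt-functor $\sfK_i \to \sfK$, this naturality gives a commutative square
\[
\xymatrix@C=3em{
\Spc(\sfK) \ar[r]^-{\rho_\sfK} \ar[d] & \Spec(\End_\sfK^0(\unit)) \ar[d] \\
\Spc(\sfK_i) \ar[r]^-{\rho_{\sfK_i}} & \Spec(\End_{\sfK_i}^0(\unit))
}
\]
for every $i \in I$. Taking the limit over $I$ and invoking the identifications from the first two steps assembles the square in the statement. No serious obstacle is anticipated: the argument reduces to naturality of $\rho$ and the standard compatibility of $\Spec$ with filtered colimits of rings.
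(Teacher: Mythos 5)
Your proof is correct and follows essentially the same route as the paper: Gallauer's continuity for the top map, naturality of Balmer's comparison map for commutativity of the square, and the identification $\colim_i \End_{\sfK_i}^0(\unit) \cong \End_{\sfK}^0(\unit)$ followed by $\Spec$ converting filtered colimits of rings to cofiltered limits of spaces for the bottom map. The only cosmetic difference is that you obtain the ring isomorphism directly from \cref{lem:colimitofmaps} applied to $c=d=\unit$ and then observe the ring structure is respected, whereas the paper phrases this as the functor $\End_{(-)}(\unit)$ to commutative ring spectra preserving filtered colimits; these are the same argument in two guises.
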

\begin{proof}
    The existence of the commutative square follows from the naturality of the comparison maps, which was established in \cite[Corollary 5.6(b)]{balmer_3spectra}. The top horizontal map is a homeomorphism by \cref{prop:spccontinuity}, so it remains to verify that the same is true for the bottom horizontal map. We first claim that the canonical ring homomorphism
        \[
            \xymatrix{\End_{\sfK}^0(\unit) \ar[r] & \colim_i\End_{\sfK_i}^0(\unit)}
        \]
    is an isomorphism. Indeed, the functor $\End_{(-)}(\unit)$ from essentially small tt-categories to commutative ring spectra preserves filtered colimits, and so does the functor $\pi_0$. The claim thus follows from the presentation of $\sfK$ as a filtered colimit. Since $\Spec$ converts a filtered colimit of commutative rings into a cofiltered limit of topological spaces (\cite[12.5.2]{book_spectralspaces}), the bottom horizontal map in the above square is a homeomorphism.
\end{proof}

\begin{remark}\label{rem:compactcolim}
    Following \cite[\S 3.3]{bss} and \cite[Appendix A]{heuts_goodwillie}, we define the colimit of a filtered diagram $(\sfT_i, f_{ij} \colon \sfT_i \to \sfT_j)$ of compactly generated tt-categories as the ind-completion of the corresponding system of essentially small tt-categories. We will denote the colimit as $\colim^\omega_i \sfT_i$ to record the fact that we are taking the colimit in $\Pr^{L,\omega}$.
\end{remark}

\begin{proposition}\label{prop:continuity_of_ideals}
    Let $(\sfT_i,f_{ij} \colon \sfT_i \to \sfT_j)_{i \in I}$ be a filtered system of rigidly-compactly generated tt-categories and geometric functors with filtered colimit $\sfT$. Suppose that $\sfC_i \subseteq \sfT_i^\omega$ forms a compatible filtered system of subcategories, that is, we have commuting squares
    \[
    \xymatrix{
    \sfC_i \ar[d]_{f_{ij}} \ar@{^(->}[r] & \sfT_i \ar[d]^{f_{ij}}  \\
    \sfC_j \ar@{^(->}[r] & \sfT_j \rlap{.}
    }
    \]
    Then there is an equivalence of tt-ideals in $\sfT$, where $f_i \colon \sfT_i \to \sfT$:
            \begin{equation}\label{eq:continuousP_prop}
             \colim_{i  }^{\omega}\langle \sfC_i\rangle = \bigcup_{i \in I}\langle f_i\sfC_i \rangle.
        \end{equation}
\end{proposition}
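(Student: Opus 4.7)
Plan: I will verify the equality on compact objects first and then pass to ind-completions. Since each $\sfT_i$ is rigidly-compactly generated, the localizing ideal $\langle \sfC_i\rangle \subseteq \sfT_i$ is compactly generated with compacts given by the thick $\otimes$-ideal $\sfK_i := \Thick^\otimes_{\sfT_i^\omega}(\sfC_i)$. The compatibility hypothesis makes $(\sfK_i)_{i \in I}$ into a filtered diagram of essentially small tt-categories, and via the equivalence $\Pr^{L,\omega} \simeq \Cat_\infty^\natural$ recalled in \cref{ssec:limcoliminfty} we obtain $\colim_i^\omega \langle \sfC_i\rangle \simeq \Ind(\sfK)$, where $\sfK := \colim_i \sfK_i$ is computed in $\Cat_\infty^\natural$ (equivalently, in $\Cat_\infty$, by \cite[Lemma 7.3.5.10]{ha}). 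Analogously, the right-hand side, interpreted as the smallest localizing ideal containing $\bigcup_i f_i\sfC_i$, is a compactly generated localizing ideal of $\sfT$ whose compacts form the thick $\otimes$-ideal $\Thick^\otimes_{\sfT^\omega}(\bigcup_i f_i \sfC_i) = \bigcup_i \Thick^\otimes_{\sfT^\omega}(f_i \sfC_i)$.

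The natural tt-functor $\iota^\omega \colon \sfK \to \sfT^\omega = \colim_i \sfT_i^\omega$ is fully faithful by \cref{lem:colimitofmaps}: mapping spaces in a filtered colimit of $\infty$-categories are computed as filtered colimits of mapping spaces, and each inclusion $\sfK_i \hookrightarrow \sfT_i^\omega$ is fully faithful. It then suffices to identify the essential image $\bigcup_i f_i(\sfK_i)$ with $\Thick^\otimes_{\sfT^\omega}(\bigcup_i f_i \sfC_i)$. The containment $\bigcup_i f_i(\sfK_i) \subseteq \Thick^\otimes_{\sfT^\omega}(\bigcup_i f_i \sfC_i)$ is clear since each $f_i$ is a tt-functor. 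For the reverse, I will verify that $\bigcup_i f_i(\sfK_i)$ is itself a thick $\otimes$-ideal of $\sfT^\omega$ containing each $f_i \sfC_i$. Closure under the thick operations uses filteredness, as any finite diagram with vertices in $\bigcup_i f_i(\sfK_i)$ can be lifted to a single $f_j(\sfK_j)$, which is thick in $\sfT_j^\omega$. Closure under tensoring with a compact $t \in \sfT^\omega$ exploits the presentation $\sfT^\omega = \colim_i \sfT_i^\omega$: given $s \in \sfK_j$, write $t \simeq f_k(t')$ for some $t' \in \sfT_k^\omega$ with $k \geq j$ large enough, so that $t \otimes f_j(s) \simeq f_k(t' \otimes f_{jk}(s)) \in f_k(\sfK_k)$, since $\sfK_k$ is a tt-ideal of $\sfT_k^\omega$ and $f_{jk}(\sfK_j) \subseteq \sfK_k$ by compatibility.

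Ind-extending the fully faithful embedding $\iota^\omega$, whose essential image is a thick $\otimes$-ideal of $\sfT^\omega$, yields a fully faithful, colimit-preserving functor $\iota \colon \Ind(\sfK) \hookrightarrow \Ind(\sfT^\omega) = \sfT$ whose essential image is the smallest localizing subcategory of $\sfT$ containing the image of $\sfK$. Because the image of $\sfK$ is already stable under $\otimes$ with $\sfT^\omega$, this essential image is a localizing $\otimes$-ideal of $\sfT$; since it contains $\bigcup_i f_i \sfC_i$ and is contained in any localizing ideal with that property, it equals $\langle \bigcup_i f_i \sfC_i\rangle = \bigcup_i \langle f_i \sfC_i\rangle$, as claimed. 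The crux of the argument lies in the closure check of the second paragraph---recognizing that the directed union $\bigcup_i f_i(\sfK_i)$ is \emph{already} closed under tensor products with arbitrary compacts of $\sfT$, without needing further enlargement, a property that relies crucially on the filtered-colimit description of $\sfT^\omega$.
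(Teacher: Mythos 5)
Your proof is correct and follows essentially the same strategy as the paper's: reduce to compact objects via the $\Pr^{L,\omega}\simeq\Cat_\infty^\natural$ equivalence, use the filtered-colimit description of mapping spaces (\cref{lem:colimitofmaps}) to see the colimit embeds fully faithfully into $\sfT^\omega$, and show that its essential image $\bigcup_i f_i(\sfK_i)$ is a thick $\otimes$-ideal of $\sfT^\omega$ that is generated by $\{f_i\sfC_i\}$, then pass to ind-completions. (One small slip in phrasing: you write that ``$f_j(\sfK_j)$ \ldots is thick in $\sfT_j^\omega$''---it is $\sfK_j$ that is thick in $\sfT_j^\omega$, and the point is that after lifting a finite diagram to a single stage $j$, you verify closure inside $\sfK_j$ and then push forward along the exact functor $f_j$---but this does not affect the argument.)
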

\begin{proof}
    Since all objects in $\sfC_i$ are compact in $\sfT_i$, it suffices to restrict attention to the full subcategories of compact objects throughout. Let us write $\sfK_i = \sfT_i^{\omega}$ and $\sfK = \sfT^{\omega}$, and $\thickid{\sfK}(\cE)$ for the thick ideal of $\sfK$ generated by a collection of objects $\cE \subseteq \sfK$. It follows from the description of mapping spaces in filtered colimits of $\infty$-categories in \cref{lem:colimitofmaps} that $\colim_{i \in I}\thickid{\sfK_i}(\sfC_i)$ can naturally be identified via $(f_i)$ with a full subcategory of $\sfK$, while $\bigcup_{i \in I}\thickid{\sfK}(f_i\sfC_i)$ is so by definition. 
    
    In fact, we claim that both subcategories are thick ideals in $\sfK$. The union is closed under tensoring with objects in $\sfK$ by construction, so it remains to show it is also thick. Consider $X,Y \in \bigcup\thickid{\sfK}(f_i\sfC_i)$ and assume first that $X = f_i(X_i)$ and $Y = f_j(Y_j)$ for some $X_i \in \sfC_i$ and $Y_i \in \sfC_j$. Since $I$ is filtered, we can find a common refinement $k \in I$ with $i < k$ and $j <k$ and verify the thickness in $\sfC_j$. The general case then follows from a thick subcategory argument. For the analogous statement for the colimit, we use that any object in $\colim_{i \in I}\thickid{\sfK_i}(\sfC_i)$ as well as any object in $\sfK$ comes from a finite stage of the diagram to verify the desired properties in a common refinement. Applying the geometric transition functors then gives the claim. 

    In order to finish the proof, we observe that both sides of \eqref{eq:continuousP_prop} are generated, as thick ideals inside $\sfK$, by the collection of objects $\{f_i\sfC_i\mid i \in I\}$. Indeed, this holds for $\bigcup\thickid{\sfK}(f_i\sfC_i)$ by construction, while the result for the colimit follows again by the description of objects in a filtered colimit given above and the fact that the transition functors are geometric.
\end{proof}

\subsection{Recollections on the homological spectrum and its support}\label{ssec:hspc+hsupp}

In this section, we will discuss the homological spectrum and the homological support for tt-categories, whose roots can be traced back to \cite{BKSruminations} and whose theory was developed further in \cite{balmer_homological,balmer_nilpotence,BalmerCameron2021,BarthelHeardSanders2023a}. To do so we first of all need to introduce the Freyd envelope $\mathcal{A}(\sfT^\omega)$:

\begin{definition}
Let $\sfT$ be a tt-category and denote by $\mathrm{Mod}\text{-}\sfT^\omega$ the category of additive functors $(\sfT^\omega)^{\op} \to \Ab$ equipped with the Day convolution symmetric monoidal structure. Denote by $h$ the restricted Yoneda embedding $\sfT \to \mathrm{Mod}\text{-}\sfT^\omega$. The \emph{Freyd envelope of $\sfT$}, denoted $\mathcal{A}^{\mathrm{fp}}(\sfT^\omega)$, is the tensor-abelian subcategory of $\mathrm{Mod}\text{-}\sfT^\omega$ spanned by the finitely presented objects; that is, those functors $F$ which admit presentations
\[
h(t) \longrightarrow h(s) \longrightarrow F(-) \longrightarrow 0
\]
for objects $s,t \in \sfT^\omega$.
\end{definition}

There is also a symmetric monoidal Yoneda embedding on $\sfT^{\omega}$ defined by $h(t) = \Hom_{\sfT}(-,t)|_{\sfT^\omega}$, fitting into the following commutative diagram
\[
\xymatrix{
\sfT^\omega \ar@{^(->}[d] \ar@{^(->}[r]^-{h} & \ar@{^(->}[d]  \mathcal{A}^{\mathrm{fp}}(\sfT^\omega) \\
\sfT \ar[r]_-h & \mathrm{Mod}\text{-}\sfT^\omega \rlap{.}
 }
\]

We recall that a \emph{Serre subcategory} of an abelian category $\mathcal{A}$ is a non-empty full subcategory $\mathcal{C} \subseteq \mathcal{A}$ which is closed under subobjects, quotients, and extensions.

\begin{definition}
    Let $\sfT$ be a tt-category. The \emph{homological spectrum}, $\Spch(\sfT^\omega)$, consists of all maximal Serre ideals of $\mathcal{A}^{\mathrm{fp}}(\sfT^\omega)$. Associated to any object $t \in \sfT^\omega$ is the \emph{homological support} 
    \[
    \supp^\mathrm{h}(t) = \{\mathcal{B} \in \Spch(\sfT^\omega) \mid h(t) \not\in \mathcal{B} \}.
    \]
    We will endow $\Spch(\sfT^\omega)$ with the topology generated by $\supp^\mathrm{h}(t)$ for $t$ ranging through $\sfT^{\omega}$.
\end{definition}

The above defined homological support theory can in fact be obtained by pulling back the usual tt-support. For this, we require information regarding the comparison between homological and tt-primes. One can verify that the pair $(\Spch(\sfT^\omega),\supp^{\mathrm{h}}(-))$ is a support data in the sense of \cite[Definition 3.1]{balmer_spectrum}, and as such there exists a canonical continuous surjection
\begin{equation}\label{eq:balmer_map}
    \phi_{\sfT} \colon \Spch(\sfT^\omega) \twoheadrightarrow \Spc(\sfT^\omega)
\end{equation}
defined as $\mathcal{B} \mapsto h^{-1}(\mathcal{B})$.  In particular, for any $t \in \sfT$, we have
\begin{equation}\label{eq:homologicalsupport}
\supp^{\mathrm{h}}(t) = \phi_{\sfT}^{-1}(\supp(t)).
\end{equation}

Just as we have a notion of big support $\Supp(t)$ for all $t \in \sfT$ (\cref{def:support}) that extends the triangular support $\supp$, we similarly have a big version of the homological support. One noteworthy difference is that the `big' triangular support $\Supp$ is constructed under the additional assumption that $\Spc(\sfT^c)$ is weakly Noetherian, while the extension of the homological support works unconditionally. To construct it, note that every Serre ideal of $\mathcal{A}^{\mathrm{fp}}(\sfT^\omega)$ generates a localizing Serre ideal $\langle \mathcal{B} \rangle$ of $\Mod\text{-} \sfT^\omega$. There is a corresponding pure-injective object $E_{\mathcal{B}} \in \sfT$ such that $\langle \mathcal{B} \rangle = \ker(h(E_\mathcal{B}) \otimes -)$.

\begin{definition}\label{def:homological_support}
    Let $\sfT$ be a rigidly-compactly generated tt-category. The \emph{(big) homological support} of an object $t \in \sfT$ is defined as
    \[
    \Supph = \{ \mathcal{B} \in \Spch(\sfT^\omega) \mid  \underline{\hom}(t, E_\mathcal{B}) \neq 0\}.
    \]
\end{definition}

This big homological support $\Supph(-)$ extends the small homological support $\supph(-)$ in the sense that for any compact object $t \in \sfT^\omega$ we have $\Supph(t) = \supph(t)$. One advantage of the homological support over the tensor-triangular support is that it always satisfies the \emph{tensor-product formula} by \cite[Theorem 1.2]{balmer_homological}: Let $\sfT$ be a rigidly-compactly generated tt-category. Then $\Supph(-)$ satisfies the tensor-product formula, that is, 
    \begin{equation}\label{eq:homological_tensor}
        \Supph(t \otimes t')  = \Supph(t) \cap \Supph(t').
    \end{equation}
Using the comparison map $\phi_\sfT$ from \eqref{eq:balmer_map}, one can show that there is always an inclusion 
    \begin{equation}\label{eq:suppcomparisoninclusion}
        \phi_{\sfT}\Supph(t) \subseteq \Supp(t),
    \end{equation}
see \cite[Proposition 3.10]{BarthelHeardSanders2023a}. We warn, however, that even in the case that $\phi_{\sfT}$ is a bijection this inclusion need not be an equality. This can happens if $\Supph(t)$ fails to detect zero objects; a concrete example is given in \cite[Example 5.5]{BarthelHeardSanders2023a}.

As observed by Balmer \cite[Remark 5.15]{balmer_nilpotence}, in all known cases where the homological spectrum has been identified, the comparison map $\phi_{\sfT}$ is not just a surjection, but it also an injection, that is, the homological and Balmer spectra coincide. For example, it is a bijection for $\sfT$ the 
derived category of a quasi-compact quasi-separated scheme \cite[Corollary 5.11]{balmer_nilpotence} or for the $G$-equivariant stable homotopy category for $G$ a compact Lie group \cite[Corollary 5.10]{balmer_nilpotence}. There are also general results in this direction. For example, the results of the third  author with Heard and Sanders tell us that $\phi_{\sfT}$ is a bijection whenever $\sfT$ is stratified \cite[Theorem B]{BarthelHeardSanders2023a}. Moreover, an equivalent nilpotence-type criterion for local tt-categories is established in \cite[Theorem A.1]{balmer_homological}. We provide a name for this bijectivity phenomenon, which is also known as Balmer's \emph{nerves of steel conjecture}, returning to it at the end of this section:

\begin{definition}\label{def:bijhyp}
    Let $\sfT$ be a tt-category. Then $\sfT$ is said to satisfy the \emph{bijectivity hypothesis} if the canonical surjection $\phi_{\sfT} \colon \Spch(\sfT^\omega) \twoheadrightarrow \Spc(\sfT^\omega)$ is a bijection. 
\end{definition}

\begin{remark}
    In the case that $\sfT$ satisfies the bijectivity hypothesis, then \cite[Theorem A]{BarthelHeardSanders2023a} further tells us that $\phi_{\sfT}$ is in fact a homeomorphism.
\end{remark}

\subsection{Continuity for the homological spectrum}\label{ssec:homological_continuity}

We now shift our attention to proving the announced continuity result for the homological spectrum. As such, we first recall results regarding the functoriality of the homological spectrum.

To this end, fix a geometric functor $f^\ast \colon \sfT \to \sfS$ between rigidly-compactly generated tt-categories with right adjoint $f_\ast$. By the discussion in \cite[\S 5.1]{balmer_homological}, building off work of Krause \cite{Krause2000}, there is an induced adjunction $F^\ast \dashv F_\ast $ with both adjoints being exact and $F^\ast$ symmetric monoidal and such that the following diagram commutes:
\begin{equation}\label{eq:homologicalsquare}
\begin{gathered}
    \xymatrix{
        \sfT \ar[r]^-{h_\sfT} \ar@<-0.75ex>[d]_{f^\ast} \ar@<0.75ex>@{<-}[d]^{f_\ast} & \mathrm{Mod}\text{-}\sfT^\omega  \ar@<-0.75ex>[d]_{F^\ast} \ar@<0.75ex>@{<-}[d]^{F_\ast} \\
        \sfS \ar[r]_-{h_\sfS} & \mathrm{Mod}\text{-}\sfS^\omega \rlap{.}
    }
\end{gathered}
\end{equation}

\begin{lemma}\label{lem:spchcont}
Let $f^\ast \colon \sfT \to \sfS$ be a geometric functor. Then the map
    \begin{align*}
    \varphi^{\mathrm{h}} \coloneqq \Spch(f^\ast) \colon \Spch(\sfS^\omega) &\longrightarrow \Spch(\sfT^\omega) \\
    \mathcal{C} & \longmapsto (F^\ast)^{-1}(\mathcal{C})
    \end{align*}
is continuous.
\end{lemma}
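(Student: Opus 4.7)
The plan is to verify continuity by computing preimages of the generating open subsets. Recall that the topology on $\Spch(\sfT^\omega)$ is generated by the sets $\supph(t) = \{\mathcal{B} \in \Spch(\sfT^\omega) \mid h_\sfT(t) \notin \mathcal{B}\}$ as $t$ ranges over $\sfT^\omega$. Thus it suffices to check that, for every $t \in \sfT^\omega$, the preimage $(\varphi^{\mathrm{h}})^{-1}(\supph(t))$ is open in $\Spch(\sfS^\omega)$.

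Unwinding the definition of $\varphi^{\mathrm{h}}$, a point $\mathcal{C} \in \Spch(\sfS^\omega)$ lies in $(\varphi^{\mathrm{h}})^{-1}(\supph(t))$ if and only if $h_\sfT(t) \notin (F^\ast)^{-1}(\mathcal{C})$, i.e., $F^\ast(h_\sfT(t)) \notin \mathcal{C}$. Using the commuting square \eqref{eq:homologicalsquare}, we can rewrite $F^\ast(h_\sfT(t)) = h_\sfS(f^\ast(t))$, so
\[
(\varphi^{\mathrm{h}})^{-1}(\supph_\sfT(t)) = \{\mathcal{C} \in \Spch(\sfS^\omega) \mid h_\sfS(f^\ast(t)) \notin \mathcal{C}\}.
\]

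To recognize the right-hand side as a basic open of $\Spch(\sfS^\omega)$, we need $f^\ast(t) \in \sfS^\omega$. This is where rigidity enters: as $f^\ast$ is a geometric functor, its right adjoint $f_\ast$ admits a further right adjoint $f^!$ (cf.\ \cref{hyp:hyp}), so $f_\ast$ preserves colimits and consequently $f^\ast$ preserves compact objects. Hence $f^\ast(t) \in \sfS^\omega$ and
\[
(\varphi^{\mathrm{h}})^{-1}(\supph_\sfT(t)) = \supph_\sfS(f^\ast(t)),
\]
which is open by the definition of the topology on $\Spch(\sfS^\omega)$. This establishes continuity. There is no genuine obstacle here; the only point requiring care is confirming compactness of $f^\ast(t)$ via the adjoint triple, and verifying that the Yoneda images transform correctly under $F^\ast$, both of which are immediate from the setup.
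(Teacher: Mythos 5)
Your argument is correct, and it takes a genuinely more direct route than the paper. You work entirely at the level of the Freyd envelopes: unwinding the definition of $\varphi^{\mathrm{h}}$ and then using the commuting square \eqref{eq:homologicalsquare} to identify $F^\ast(h_\sfT(t)) = h_\sfS(f^\ast t)$ gives the clean identity $(\varphi^{\mathrm{h}})^{-1}(\supph_\sfT(t)) = \supph_\sfS(f^\ast t)$ directly, with the only auxiliary point being compact-preservation of $f^\ast$ (which you correctly read off the adjoint triple). The paper instead routes through the comparison maps $\phi_{\sfT}, \phi_{\sfS}$ to the Balmer spectrum: it invokes the commutative square \eqref{eq:differentspc} from \cite[Theorem 5.10]{balmer_homological}, the relation $\supph = \phi^{-1} \circ \supp$ from \eqref{eq:homologicalsupport}, and functoriality of $\Spc$ to arrive at the same identity. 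Your version avoids the extra diagram and the detour through the triangular spectrum, which is a genuine simplification; the paper's version has the advantage of making explicit the compatibility with $\Spc$, which is reused later in the paper. One small slip worth flagging: in Balmer's convention (and as the paper's own proof states), the sets $\supph(t)$ are the basic \emph{closed} sets of $\Spch$, not basic opens as you say; but since a map is continuous iff preimages of closed sets are closed, this terminology error does not affect the validity of your conclusion.
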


\begin{proof}
    That the map $\varphi^{\mathrm{h}}$ is well-defined is the subject of \cite[Theorem 5.10]{balmer_homological}, where it is also proved that it fits into the following commutative diagram
        \begin{equation}\label{eq:differentspc}
        \begin{gathered}
    \xymatrix@C=4em{
        \Spch(\sfS^\omega) \ar[r]^{\varphi^{\mathrm{h}}} \ar@{->>}[d]_{\phi_{\sfS}} & \Spch(\sfT^\omega) \ar@{->>}[d]^{\phi_{\sfT}} \\
        \Spc(\sfS^\omega) \ar[r]_{\varphi} & \Spc(\sfT^\omega) \rlap{ .}
    }
    \end{gathered}
    \end{equation}

    Recall that the basic closed sets of $\Spch(\sfT^\omega)$ are given by the homological supports $\supp^\mathrm{h}(t)$ for $t \in \sfT^\omega$. As such it suffices to show that $(\varphi^{\mathrm{h}})^{-1} \supp^\mathrm{h}(t)$ is closed.  Using \eqref{eq:homologicalsupport}, \eqref{eq:differentspc}, and the functoriality of the Balmer spectrum we obtain the following
    \begin{align*}
       (\varphi^{\mathrm{h}})^{-1} \supp^\mathrm{h}(t) &= (\varphi^{\mathrm{h}})^{-1} \phi^{-1}_{\sfT}(\supp(t)) \\
        &= \phi_{\sfS}^{-1} \varphi^{-1}(\supp(t)) \\
        &= \phi^{-1}_{\sfS}(\supp(f^\ast t))\\
        &= \supp^{\mathrm{h}}(f^\ast t)
    \end{align*}
    which is a basic closed of $\Spch(\sfS^\omega)$ as required.
\end{proof}

In order to mirror the constructions of \cref{ssec:colimits} we will change our focus to essentially small tt-categories for the time being. Let us collect the following auxiliary result which will be required in the proof of the continuity of the homological spectrum. The first one tells us that the Freyd envelope construction commutes with filtered colimits. 

\begin{proposition}\label{prop:Freydcont}
    Let $(\sfK_i, f_{ij} \colon \sfK_i \to \sfK_j)$ be a filtered diagram of essentially small tt-categories. Then the functors $f_{ij}$ induce a symmetric monoidal equivalence
    \[
    \colim_i \mathcal{A}^{\mathrm{fp}}(\sfK_i) \xrightarrow[\quad]{\sim} \mathcal{A}^{\mathrm{fp}}(\colim_i \sfK_i).
    \]
\end{proposition}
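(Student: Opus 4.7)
The plan is to construct a natural symmetric monoidal comparison functor $F\colon \colim_i \mathcal{A}^{\mathrm{fp}}(\sfK_i) \to \mathcal{A}^{\mathrm{fp}}(\sfK)$, where $\sfK \coloneqq \colim_i \sfK_i$, and then to verify it is an equivalence by leveraging the description of objects and morphisms in a filtered colimit of $\infty$-categories from \cref{lem:colimitofmaps}. For the construction, I would invoke the 2-functoriality of the Freyd envelope with respect to tt-functors: any tt-functor $f \colon \sfC \to \sfD$ extends uniquely, up to unique natural isomorphism, to a right exact symmetric monoidal functor $\mathcal{A}^{\mathrm{fp}}(f)$ sending a presentation $h(t) \xrightarrow{\alpha} h(s) \to G \to 0$ to the cokernel of $h(ft) \xrightarrow{h(f\alpha)} h(fs)$ (this is essentially Freyd's original universal property, with the symmetric monoidal enhancement coming from compatibility of Day convolution with symmetric monoidal extensions). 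Applying this to the structure maps $\iota_i \colon \sfK_i \to \sfK$ yields a cocone of right exact symmetric monoidal functors, whence the desired functor $F$ by the universal property of the colimit. The symmetric monoidality of $F$ will then follow automatically from the construction.

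Essential surjectivity of $F$ is the easier half. Any $G \in \mathcal{A}^{\mathrm{fp}}(\sfK)$ admits a presentation $h(t) \xrightarrow{\alpha} h(s) \to G \to 0$ with $s,t \in \sfK$. By the explicit description of objects of $\colim_i \sfK_i$ recalled at the start of \cref{ssec:colimits} together with \cref{lem:colimitofmaps}, both $s,t$ and the morphism $\alpha$ descend to some common finite stage $\sfK_i$; the cokernel formed there is then sent by $F$, up to canonical isomorphism, to $G$.

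For full faithfulness, fix $G_1, G_2 \in \colim_i \mathcal{A}^{\mathrm{fp}}(\sfK_i)$. Using filteredness, we may assume both admit presentations $h(t_l) \xrightarrow{\alpha_l} h(s_l) \to G_l \to 0$ with $s_l, t_l$ and $\alpha_l$ all living in a common stage $\sfK_i$. Morphisms between finitely presented functors in either side are represented by compatible pairs of morphisms $s_1 \to s_2$ and $t_1 \to t_2$ between presentations, modulo the equivalence relation identifying two such pairs when their difference factors through a homotopy. This packages $\Hom(G_1,G_2)$ as a quotient of an equalizer built from the abelian groups $\Hom(s_1,s_2)$, $\Hom(t_1,t_2)$, and $\Hom(t_1,s_2)$. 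By \cref{lem:colimitofmaps}, each of these hom-groups in $\sfK$ is the filtered colimit of the corresponding hom-groups in $\sfK_j$ for $j \geqslant i$. Since finite limits and finite colimits in $\Ab$ commute with filtered colimits, the same quotient-of-an-equalizer can be computed after first passing to the $\sfK_j$ and then colimiting, giving the desired bijection with $\Hom_{\colim_j \mathcal{A}^{\mathrm{fp}}(\sfK_j)}(G_1,G_2)$.

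The main obstacle is simply the bookkeeping in the fully-faithful step: one needs to recall the concrete description of morphisms in the Freyd envelope as pairs of maps of presentations modulo homotopy and verify that the equivalence relation is preserved across the filtered system. Once this is repackaged as a finite-limit-plus-quotient calculation in $\Ab$, the commutation with filtered colimits is automatic and the proof concludes.
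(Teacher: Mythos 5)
Your proof is correct but takes a genuinely different route from the paper's. The paper argues via the universal property of the Freyd envelope---it is initial among homological functors out of a tt-category---and shows that $\colim_i \mathcal{A}^{\mathrm{fp}}(\sfK_i)$ inherits this universal property, checking uniqueness of the factorization by a cofinality argument; no explicit description of morphisms in the Freyd envelope is invoked. You instead construct the comparison functor by $2$-functoriality and verify essential surjectivity and full faithfulness by hand, relying on \cref{lem:colimitofmaps} together with the expression of $\Hom_{\mathcal{A}^{\mathrm{fp}}(\sfK)}(G_1,G_2)$ as a finite limit/colimit of hom-groups between the terms of chosen presentations, so that commutation of finite (co)limits with filtered colimits in $\Ab$ finishes the job. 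Both strategies ultimately exploit the same two facts (all objects and morphisms in a filtered colimit of categories arise at a finite stage, and the restricted Yoneda embedding controls the Freyd envelope), but the paper's route is characterization-free and shorter, while yours pins down exactly where the finite-presentability hypothesis enters. One minor bookkeeping slip in your full-faithfulness step: you omit the group $\Hom(s_1,t_2)$, which is where the chain homotopies live. The finite diagram in question involves all four of $\Hom(s_1,s_2)$, $\Hom(t_1,t_2)$, $\Hom(t_1,s_2)$, and $\Hom(s_1,t_2)$; one clean packaging is that $\Hom_{\mathcal{A}^{\mathrm{fp}}(\sfK)}(G_1,G_2)$ is the kernel of the induced map $\operatorname{coker}\bigl(\Hom(s_1,t_2)\to\Hom(s_1,s_2)\bigr)\to\operatorname{coker}\bigl(\Hom(t_1,t_2)\to\Hom(t_1,s_2)\bigr)$. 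This does not affect the validity of the conclusion.
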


\begin{proof}
    As noted in the paragraph before \eqref{eq:homologicalsquare}, the canonical comparison map $\colim_i \mathcal{A}^{\mathrm{fp}}(\sfK_i) \to \mathcal{A}^{\mathrm{fp}}(\colim_i \sfK_i)$ is compatible with the symmetric monoidal structure, so it remains to show that it is an additive equivalence. The Freyd envelope enjoys the following universal property \cite{freyd}: it is initial among all homological functors $F \colon \sfK \to \mathsf{A}$, i.e., for any such functor there is a unique factorization
    \begin{equation}\label{eq:freyduniversal}
    \begin{gathered}
    \xymatrix{
    \sfK \ar[rr]^{F}  \ar[dr]_h && \mathsf{A} \\
    & \mathcal{A}^{\mathrm{fp}}(\sfK) \ar@{-->}[ur]_{\exists !}
    }
    \end{gathered}
    \end{equation}
    with $\mathcal{A}^{\mathrm{fp}}(\sfK) \to \mathsf{A}$ unique up to canonical equivalence.
    
    We show that $\colim_i \mathcal{A}^{\mathrm{fp}}(\sfK_i)$ has the universal property of \eqref{eq:freyduniversal}. To that end, suppose that $F \colon \colim_i \sfK_i \to \mathsf{A}$ is a homological functor.  For each $i \in I$ we obtain a functor $F_i \colon \sfK_i \to \mathsf{A}$ which factors by the universal property as $\sfK_i \to \mathcal{A}^{\mathrm{fp}}(\sfK_i) \to \mathsf{A}$. Using the compatibility of \eqref{eq:homologicalsquare} we can take colimits to obtain a  factorization
    \[
    \xymatrix{
    \colim_i \sfK_i \ar[rr]^{F}  \ar[dr] && \mathsf{A} \\
    & \colim_i \mathcal{A}^{\mathrm{fp}}(\sfK_i) \ar[ur]_{\alpha}
    }
    \]
    such that $\alpha$ is exact, and $\colim_i \sfK_i \to \colim_i \mathcal{A}^{\mathrm{fp}}(\sfK_i)$ is homological. It remains to show that $\alpha$ is unique up-to canonical isomorphism among all factorizations of the given functor $F$. Suppose not, that is there exists some some exact $\beta \colon \colim_i \mathcal{A}^{\mathrm{fp}}(\sfK_i) \to \mathsf{A}$ factorizing the given functor $F$ such that $\alpha \neq \beta$. By construction of maps out of colimits it follows that on restriction to some $j$ there are maps $\alpha_j, \beta_j \colon \mathcal{A}^{\mathrm{fp}}(\mathsf{K}_j) \to \mathsf{A}$ with $\alpha_j \neq \beta_j$. However, this is a contradiction as $\mathcal{A}^{\mathrm{fp}}(\mathsf{K}_j)$ satisfies the universal property of \eqref{eq:freyduniversal}.

    We conclude that $\colim_i \mathcal{A}^{\mathrm{fp}}(\sfK_i)$ satisfies the required universal property and is therefore equivalent to $ \mathcal{A}^{\mathrm{fp}}(\colim_i \sfK_i)$. 
\end{proof}

\begin{lemma}\label{lem:pullbackofserre}
    Let $(\sfK_i, f_{ij} \colon \sfK_i \to \sfK_j)$ be a filtered diagram of essentially small tt-categories and $\mathcal{B}$ be a Serre ideal (not necessarily maximal) in $\colim_i  \mathcal{A}^{\mathrm{fp}}(\sfK_i)$. Then $F_j^{-1}(\mathcal{B})$ is a Serre ideal in $\mathcal{A}^{\mathrm{fp}}(\sfK_j)$ where $F_j \colon \mathcal{A}^{\mathrm{fp}}(\sfK_j) \to \colim_i \mathcal{A}^{\mathrm{fp}}(\sfK_i)$ is the canonical functor.
\end{lemma}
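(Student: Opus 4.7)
The proof is essentially a formal consequence of the fact that the canonical comparison functor $F_j \colon \mathcal{A}^{\mathrm{fp}}(\sfK_j) \to \colim_i \mathcal{A}^{\mathrm{fp}}(\sfK_i)$ is exact and symmetric monoidal. First I would verify this exactness and monoidality: as recalled before diagram \eqref{eq:homologicalsquare}, the tt-functor $f_{ji} \colon \sfK_j \to \sfK_i$ induces an exact symmetric monoidal functor $F_{ji} \colon \mathcal{A}^{\mathrm{fp}}(\sfK_j) \to \mathcal{A}^{\mathrm{fp}}(\sfK_i)$ for each $i$ receiving a map from $j$. The functor $F_j$ is obtained by composing with the colimit coprojection, and filtered colimits of exact functors between abelian categories are exact (in the relevant 1-category of abelian categories, or alternatively by invoking \cref{prop:Freydcont}, which identifies the colimit with $\mathcal{A}^{\mathrm{fp}}(\colim_i\sfK_i)$ and presents $F_j$ as a Freyd envelope of a tt-functor).

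Having established these properties of $F_j$, the four closure conditions that define a tensor-Serre ideal are each immediate. For closure under subobjects and quotients, if $Y \in F_j^{-1}(\mathcal{B})$ and $X \hookrightarrow Y \twoheadrightarrow Z$ is a short exact sequence in $\mathcal{A}^{\mathrm{fp}}(\sfK_j)$, then applying the exact functor $F_j$ yields a short exact sequence $F_j X \hookrightarrow F_j Y \twoheadrightarrow F_j Z$ with $F_j Y \in \mathcal{B}$; since $\mathcal{B}$ is a Serre subcategory, both $F_j X$ and $F_j Z$ lie in $\mathcal{B}$, whence $X, Z \in F_j^{-1}(\mathcal{B})$. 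For closure under extensions, the analogous argument applied to a short exact sequence with $X, Z \in F_j^{-1}(\mathcal{B})$ gives $F_j Y \in \mathcal{B}$ by the Serre property of $\mathcal{B}$. Finally, for the tensor-ideal condition, given $X \in F_j^{-1}(\mathcal{B})$ and an arbitrary $Y \in \mathcal{A}^{\mathrm{fp}}(\sfK_j)$, the symmetric monoidality of $F_j$ yields $F_j(X \otimes Y) \cong F_j X \otimes F_j Y$, which lies in $\mathcal{B}$ because $F_j X \in \mathcal{B}$ and $\mathcal{B}$ is an ideal.

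There is no genuine obstacle here; the only point requiring care is the claim that the canonical functor out of the filtered colimit of abelian categories is exact, which can be traded for the direct observation that a sequence in $\colim_i \mathcal{A}^{\mathrm{fp}}(\sfK_i)$ is exact if and only if it is the image of an exact sequence at some finite stage of the diagram (using \cref{lem:colimitofmaps} and filteredness to test kernels and cokernels). With this in hand, the lemma reduces to the purely categorical principle that the preimage of a Serre (tensor-)ideal under an exact symmetric monoidal functor is again a Serre (tensor-)ideal.
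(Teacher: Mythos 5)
Your proof is correct and follows essentially the same route as the paper's: both deduce the lemma from the observation that $F_j$ is exact and symmetric monoidal, so the preimage of a Serre $\otimes$-ideal is again a Serre $\otimes$-ideal. The only cosmetic differences are that you spell out all three closure conditions (subobjects, quotients, extensions) whereas the paper verifies closure under extensions and leaves the other direction implicit, and that you articulate why $F_j$ is exact (via the Freyd-envelope identification of \cref{prop:Freydcont} or the filtered-colimit argument) whereas the paper simply invokes the exactness without comment.
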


\begin{proof}
    We need show that $F_j^{-1}(\mathcal{B}) = \{x \in \mathcal{A}^{\mathrm{fp}}(\sfK_j) \mid F_j(x) \in \mathcal{B} \}$ is both an ideal and a Serre subcategory. For the former, let $x \in F_j^{-1}(\mathcal{B})$ and $a \in \mathcal{A}^{\mathrm{fp}}(\sfK_j)$. Then $F_j(x \otimes a) = F_j(x) \otimes F_j(a)$ using that $F_j$ is monoidal. It is clear that the latter lies in $\mathcal{B}$ as $\mathcal{B}$ is assumed to be an ideal.
    
    Similarly, assume that $x \to a \to y$ is an exact sequence in $\mathcal{A}^{\mathrm{fp}}(\sfK_j)$ with $x,y \in F_j^{-1}(\mathcal{B})$. Applying the exact functor $F_j$ we obtain an exact sequence $F_j(x) \to F_j(a) \to F_j(y)$ in $\colim_i \mathcal{A}^{\mathrm{fp}}(\sfK_i)$. By assumption $F_j(x)$ and $F_j(y)$ lie in the Serre subcategory $\mathcal{B}$, so $F_j(x) \in \mathcal{B}$ as required.
\end{proof}

We now state and prove our main theorem of this section. We highlight that the proof of \cref{thm:spchcontinuity} follows a similar argument to that of Gallauer's continuity argument for the Balmer spectrum, using the continuity of the Freyd envelope which is afforded to us by \cref{prop:Freydcont}.

\begin{theorem}\label{thm:spchcontinuity}
Let $(\sfK_i, f_{ij} \colon \sfK_i \to \sfK_j)$ be a filtered diagram of essentially small tt-categories. Then there is a canonical homeomorphism
\[
\tau \colon \Spch(\colim_i \sfK_i)  \xrightarrow{\sim} \lim_i \Spch(\sfK_i).
\]
\end{theorem}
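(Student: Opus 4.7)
The plan is to follow the blueprint of Gallauer's continuity argument for the Balmer spectrum (\cref{prop:spccontinuity}), with the role of the identity $\colim_i \sfK_i = \colim_i \sfK_i$ replaced by the symmetric monoidal equivalence $\mathcal{A}^{\mathrm{fp}}(\colim_i \sfK_i) \simeq \colim_i \mathcal{A}^{\mathrm{fp}}(\sfK_i)$ of \cref{prop:Freydcont}. Each structure tt-functor $f_j \colon \sfK_j \to \colim_i \sfK_i$ induces an exact symmetric monoidal functor on Freyd envelopes as in~\eqref{eq:homologicalsquare} and hence, by the essentially-small analogue of \cref{lem:spchcont} (applicable e.g.\ via ind-completion), a continuous pullback map $\Spch(\colim_i \sfK_i) \to \Spch(\sfK_j)$ sending a maximal Serre ideal $\mathcal{B}$ to $F_j^{-1}\mathcal{B}$. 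These maps are compatible with the $\Spch(f_{ij})$ by functoriality, so by the universal property of cofiltered limits in $\Top$ they assemble into the desired continuous map $\tau$.

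The bulk of the argument is showing that $\tau$ is bijective. Write $\mathcal{A} = \mathcal{A}^{\mathrm{fp}}(\colim_i \sfK_i)$, $\mathcal{A}_i = \mathcal{A}^{\mathrm{fp}}(\sfK_i)$, and $F_i \colon \mathcal{A}_i \to \mathcal{A}$ for the canonical functors. By \cref{prop:Freydcont}, every object of $\mathcal{A}$ is isomorphic to some $F_i(x_i)$ and every morphism between such objects is represented at a common finite stage. Injectivity is then immediate: if $F_i^{-1}\mathcal{B} = F_i^{-1}\mathcal{B}'$ for every $i$, then $\mathcal{B} = \mathcal{B}'$, since both are closed under isomorphism. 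For surjectivity, take $(\mathcal{B}_i)_{i\in I} \in \lim_i \Spch(\mathcal{A}_i)$; the compatibility relation $\mathcal{B}_i = F_{ij}^{-1}\mathcal{B}_j$ is equivalent to the pair of conditions $F_{ij}(\mathcal{B}_i) \subseteq \mathcal{B}_j$ and $F_{ij}^{-1}\mathcal{B}_j \subseteq \mathcal{B}_i$ for all $i \leqslant j$. I would define $\mathcal{B} \subseteq \mathcal{A}$ as the full replete subcategory spanned by the objects isomorphic to some $F_i(x_i)$ with $x_i \in \mathcal{B}_i$, and then verify the Serre ideal axioms for $\mathcal{B}$ by lifting tensor products and short exact sequences to a common stage $k \in I$ (as in the proof of \cref{lem:unionascolimit}) and invoking the Serre ideal property of $\mathcal{B}_k$. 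Maximality of $\mathcal{B}$ then follows because any Serre ideal $\mathcal{C} \supsetneq \mathcal{B}$ must contain some $F_i(x_i)$ with $x_i \notin \mathcal{B}_i$; then $F_i^{-1}\mathcal{C}$ is a Serre ideal of $\mathcal{A}_i$ strictly larger than $\mathcal{B}_i$, hence equals $\mathcal{A}_i$ by maximality of $\mathcal{B}_i$, which puts $\unit_{\mathcal{A}} = F_i(\unit_{\mathcal{A}_i}) \in \mathcal{C}$ and forces $\mathcal{C} = \mathcal{A}$. The identity $F_i^{-1}\mathcal{B} = \mathcal{B}_i$ is then checked by passing to a common refinement, using the second half of the compatibility condition.

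Finally, $\tau$ is upgraded to a homeomorphism by matching basic closed sets: any $x \in \colim_i \sfK_i$ is represented at some stage $j$ by an object $x_j \in \sfK_j$, and unwinding the definitions shows that $\tau$ restricts to a bijection between $\supph(x) \subseteq \Spch(\colim_i \sfK_i)$ and $\pi_j^{-1}\supph(x_j) \subseteq \lim_i \Spch(\sfK_i)$, where $\pi_j$ denotes the projection to the $j$-th factor. Since both topologies are generated by precisely these two families of basic closed sets, the continuous bijection $\tau$ is in fact a homeomorphism. The main technical subtlety I would need to handle carefully lies in verifying the Serre ideal axioms for the candidate $\mathcal{B}$ in the surjectivity argument, which depends on the exactness of filtered colimits of abelian categories together with the descent of finite diagrams to a common finite stage.
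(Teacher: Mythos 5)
Your proposal is correct and follows essentially the same route as the paper's own proof: both use \cref{prop:Freydcont} to identify the Freyd envelope of the colimit, define $\tau$ via pullback of maximal Serre ideals along the structure functors, prove bijectivity by directly exhibiting a preimage Serre ideal in the colimit and checking its Serre-, ideal-, and maximality-properties by lifting to a finite stage, and upgrade to a homeomorphism by matching the basic (co)supports. The only presentational difference worth noting is that the paper spells out the well-definedness of the candidate $\mathcal{B}$ by introducing the auxiliary set $\mathcal{B}'$ of objects \emph{all} of whose representatives lie in the $\mathcal{B}_i$'s and verifying $\mathcal{B}=\mathcal{B}'$ before checking $F_i^{-1}\mathcal{B}=\mathcal{B}_i$ and maximality; you correctly flag that this check is needed (``passing to a common refinement, using the second half of the compatibility condition'') but phrase it as if it could come after the maximality argument, whereas strictly one should carry it out first, since the maximality argument already uses the fact that $F_i(x_i)\notin\mathcal{B}$ forces $x_i\notin\mathcal{B}_i$.
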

\begin{proof}
Write $\sfK = \colim_i \sfK_i$, so that $\mathcal{A}^{\mathrm{fp}}(\sfK) = \mathcal{A}^{\mathrm{fp}}(\colim_i \sfK_i) \simeq \colim_i\mathcal{A}^{\mathrm{fp}}(\sfK_i) $ by \cref{prop:Freydcont}. Appealing to \cref{lem:pullbackofserre}, given a maximal Serre ideal $\mathcal{B} \in \Spch (\sfK)$, we obtain a compatible system of maximal Serre ideals $\mathcal{B}_i \coloneqq F_i^{-1}(\mathcal{B}) \in \Spch(\sfK_i)$ for $i$ ranging through $I$. The map $\tau$ in the statement of the theorem is then determined by the assignment
    \[
        \tau\colon \mathcal{B} \mapsto (\mathcal{B}_i)_i.
    \]
We begin with the proof that $\tau$ is injective. Suppose that $\mathcal{B} \neq \mathcal{C} \in \Spch(\colim_i \sfK_i)$. Then we can pick some $x \in \mathcal{B} \setminus \mathcal{C}$. As $I$ is filtered, we can find some $i \in I$ such that $x_i \in \mathcal{A}^{\mathrm{fp}}(\mathcal{\sfK}_i)$ with $F_i(x_i) = x$.  But then $x_i \in F_i^{-1}( \mathcal{B}) \setminus F_i^{-1}(\mathcal{C})$ which implies that $\tau(\mathcal{B}) \neq \tau(\mathcal{C})$ as required.

We now shift our focus to proving surjectivity. Pick some $(\mathcal{B}_i)_I \in \lim_i \Spch(\mathsf{K}_i)$. Associated to this element we define the following subsets of $\mathcal{A}(\sfK)$:
    \begin{align*}
    \mathcal{B} & \coloneqq \{ b \in \mathcal{A}^{\mathrm{fp}}(\sfK) \mid \exists i \in I,  b_i \in \mathcal{B}_i: b \cong F_i(b_i) \} , \\
\mathcal{B}' & \coloneqq \{ b \in \mathcal{A}^{\mathrm{fp}}(\sfK) \mid \forall i \in I , b_i \in \mathcal{A}(\sfK_i) : b \cong F_i(b_i) \Rightarrow b_i \in \mathcal{B}_i \}.
\end{align*}

We begin by showing that the sets $\mathcal{B}$ and $\mathcal{B}'$ are equal. If $b \in \mathcal{B}'$, we may choose $i \in I$ and $b_i \in \mathcal{A}^{\mathrm{fp}}(\sfK_i)$ such that $b \cong F_i(b_i)$. By construction of $\mathcal{B}'$ we have that $b_i \in \mathcal{B}_i$ and therefore $b \in \mathcal{B}$. Conversely, let $b \in \mathcal{B}$, so $b \cong F_i(b_i)$ with $b_i \in \mathcal{B}_i$ for some $i \in I$. Suppose that we are given $b'_j \in \mathcal{A}^{\mathrm{fp}}(\sfK_j)$ such that $b \cong F_j(b_j')$. As $I$ is filtered we can find $k \in I$ along with maps $u_i \colon i \to k$ and $u_j \colon j \to k$ in $I$. We then have $F_k F_{ik} (b_i) \cong F_i(b_i) \cong b \cong F_j (b_j') \cong F_k F_{jk} (b_j')$. By the description of objects in a filtered colimit, we can find some $u \colon k \to l$ in $I$ such that $F_{il}(b_i) \cong F_{kl}F_{ik} (b_i) \cong F_{kl}F_{jk} (b_j') \cong F_{jl}(b_j')$. Since $F_{il}(b_i) \in F_l^{-1}(b) \in  \mathcal{B}_l$, it follows that $F_{jl}(b_j') \in \mathcal{B}_l$ as well. This implies that $b_j' \in \mathcal{B}_j$ as required.

The observation that $\mathcal{B} =\mathcal{B}'$ is useful as it allows us to conclude that $F_i^{-1}(\mathcal{B})$ is $\mathcal{B}_i$ for all $i \in I$. Indeed, if we write $\pi_i \colon \lim_i \Spch(\sfK_i) \to \Spch(\sfK_i)$ for the canonical projection then we have $\pi_i \tau = F_i^{-1}$ so that
\begin{equation}\label{eqn:pullbackofB}
    \pi_i \tau(\mathcal{B}) = F_i^{-1}(\mathcal{B}) = F_i^{-1}(\mathcal{B}') = \mathcal{B}_i.
\end{equation}

We now wish to show that $\mathcal{B}$ is a maximal Serre ideal. That is, we have to show that:
\begin{enumerate}[label=(\arabic*)]
    \item\label{item:serre} $\mathcal{B}$ is a Serre subcategory;
    \item\label{item:ideal} $\mathcal{B}$ is an ideal;
    \item\label{item:maximal} $\mathcal{B}$ is maximal.
\end{enumerate}

For \cref{item:serre}, let $E \colon a \to b \to c$ be an exact sequence $\mathcal{A}^{\mathrm{fp}}(\sfK)$ with $a, c$ in $\mathcal{B}$. By assumption there exists some $i \in I$ and an exact sequence $E_i \colon a_i \to b_i \to c_i$ in $\mathcal{A}^{\mathrm{fp}}(\sfK_i)$ such that $F_i (E_i) \cong E$. Therefore, we must have $a_i, c_i \in \mathcal{B}_i$, and hence $b_i \in \mathcal{B}_i$. Then we have $b \cong F_i(b_i) \in \mathcal{B}$ as required. A similar argument of pulling back to a finite stage $\mathcal{A}^{\mathrm{fp}}(\sfK_i)$ provides a proof that $\mathcal{B}$ is an ideal, proving \cref{item:ideal}

Finally, for \cref{item:maximal} we must show that $\mathcal{B}$ is a \emph{maximal} Serre ideal. Pick $x \in \mathcal{A}^{\mathrm{fp}}(\sfK) \setminus \mathcal{B}$ and define $\mathcal{C} = \langle \mathcal{B}, x \rangle$ to be the smallest Serre ideal containing both $\mathcal{B}$ and $x$. There exists some $i \in I$ and $x_i \in \mathcal{A}^{\mathrm{fp}}(\sfK_i)$ with $F_i(x_i) \cong x$. We may then consider the Serre ideal $\mathcal{C}_i = \langle \mathcal{B}_i, x_i\rangle \subseteq \mathcal{A}^{\mathrm{fp}}(\sfK_i)$. As $\mathcal{B}_i$ is maximal and by construction $x_i \not\in \mathcal{B}_i$, we have that $\mathcal{C}_i  = \mathcal{A}^{\mathrm{fp}}(\sfK_i)$. Observe that $\mathcal{A}^{\mathrm{fp}}(\sfK_i) = \langle \mathcal{B}_i, x_i\rangle = \langle  F_i^{-1}(\mathcal{B}),x_i\rangle \subseteq F_i^{-1}(\langle \mathcal{B}, x \rangle)$, where the second equality follows from \eqref{eqn:pullbackofB}. Therefore, we have $F^{-1}_i(\mathcal{C}) = \mathcal{A}^{\mathrm{fp}}(\sfK_i)$. A similar cofinality argument as before then shows that $\mathcal{C} = \mathcal{A}^{\mathrm{fp}}(\sfK)$, hence $\mathcal{B}$ is maximal as required.

We have now concluded the proof that $\tau$ is a bijection. By \cref{lem:spchcont} we know that $\tau$ is continuous, so it now suffices to prove that it is an open map. Recall that a basis for the topology of $\Spch(\colim_i \sfK_i)$ is given by the complements of the homological support: $U(x) \coloneqq \Spch(\colim_i \sfK_i) \setminus \supp^{\mathrm{h}}(x)$ where $x$ runs through the objects of $\colim_i \sfK_i$. 

Fix such an $x$, say $x \cong F_i(x_i) $ for some $x_i \in \sfK_i$. We need show that $\tau(U(x)) = \pi^{-1}_i(U(x_i))$. Let $\mathcal{B} \in U(x)$, which implies that $F_i(x_i) \cong x \in \mathcal{B}$, equivalently that $x_i \in F_i^{-1}(\mathcal{B}) = \pi_i \tau (\mathcal{B})$. That is, $\tau (\mathcal{B}) \in \pi^{-1}_i(U(x_i))$. In the other direction, suppose that $(\mathcal{B}_i)_i \in \pi_i^{-1}(U(x_i))$, that is, $x_i \in \mathcal{B}_i$. Then by the proof of surjectivity of $\tau$ we have $(\mathcal{B}_i)_i = \tau(\mathcal{B})$ with $x \in \mathcal{B}$. That is, $(\mathcal{B}_i)_i \in \tau(U(x))$ as required.
\end{proof}

The following corollary is then a consequence of \cref{thm:spchcontinuity} combined with \cref{prop:spccontinuity}.

\begin{corollary}\label{cor:bijhyp_continuity}
    Let $(\sfK_i, f_{ij} \colon \sfK_i \to \sfK_j)$ be a filtered diagram of essentially small tt-categories. If each $\sfK_i$ satisfies the bijectivity hypothesis then so does $\colim_i \sfK_i$.
\end{corollary}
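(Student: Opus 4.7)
The plan is to reduce the claim to the two continuity results already established in this section, namely Proposition~\ref{prop:spccontinuity} for the Balmer spectrum and Theorem~\ref{thm:spchcontinuity} for the homological spectrum, and then exploit the naturality of Balmer's comparison map $\phi$. Write $\sfK = \colim_i \sfK_i$ and let $f_i \colon \sfK_i \to \sfK$ and $f_{ij} \colon \sfK_i \to \sfK_j$ denote the structure functors of the diagram.

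First, I would observe that the commutative square~\eqref{eq:differentspc} appearing in the proof of Lemma~\ref{lem:spchcont}, applied to each $f_{ij}$ and each $f_i$, shows that the family $\{\phi_{\sfK_i}\}_{i \in I}$ is compatible with the cofiltered systems on both sides and that $\phi_{\sfK}$ is compatible with the canonical projections. In particular, we obtain an induced map $\lim_i \phi_{\sfK_i} \colon \lim_i \Spch(\sfK_i) \to \lim_i \Spc(\sfK_i)$, and the following diagram commutes in $\Top$:
\[
\xymatrix@C=3em{
\Spch(\sfK) \ar[r]^-{\tau}_-{\sim} \ar@{->>}[d]_{\phi_{\sfK}} & \lim_i \Spch(\sfK_i) \ar[d]^{\lim_i \phi_{\sfK_i}} \\
\Spc(\sfK) \ar[r]^-{\sim} & \lim_i \Spc(\sfK_i)\rlap{,}
}
\]
where the top map is the homeomorphism of Theorem~\ref{thm:spchcontinuity} and the bottom map is the homeomorphism of Proposition~\ref{prop:spccontinuity}.

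Second, under the hypothesis that each $\phi_{\sfK_i}$ is a bijection, the right vertical map $\lim_i \phi_{\sfK_i}$ is a bijection as well: the forgetful functor $\Top \to \Set$ preserves limits, and a cofiltered limit of bijections of sets is a bijection (since the limit functor preserves both monomorphisms and epimorphisms in $\Set$). The commutativity of the square, combined with the fact that the horizontal maps are bijections, then forces $\phi_{\sfK}$ to be a bijection, which is the desired conclusion.

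I expect no genuine obstacle here. The only point requiring any care is the naturality of $\phi$ with respect to arbitrary geometric functors, but this is precisely the content of the diagram~\eqref{eq:differentspc} used in the proof of Lemma~\ref{lem:spchcont}; the rest is a formal diagram chase combined with the two continuity results.
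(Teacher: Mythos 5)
Your argument reproduces the paper's intended proof, which it states follows simply from combining \cref{thm:spchcontinuity} with \cref{prop:spccontinuity}: you have correctly filled in the naturality square via \eqref{eq:differentspc} and the limit comparison.

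One point in your justification needs correcting. You assert parenthetically that the limit functor preserves epimorphisms in $\Set$; this is false in general — a cofiltered limit of surjections need not be surjective (the standard failure when the Mittag--Leffler condition does not hold). Fortunately your conclusion does not rely on this: the family $\{\phi_{\sfK_i}\}_i$ is a natural transformation between two $I^{\op}$-indexed diagrams that is a pointwise isomorphism, hence an isomorphism of diagrams, and $\lim$, being a functor, sends it to an isomorphism. Equivalently, the inverse maps $\{\phi_{\sfK_i}^{-1}\}_i$ also form a compatible system, and $\lim_i \phi_{\sfK_i}^{-1}$ furnishes a two-sided inverse to $\lim_i \phi_{\sfK_i}$. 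With that small repair, the square closes and forces $\phi_{\sfK}$ to be a bijection exactly as you claim.
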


\section{The geometry of tensor-triangular stalks}\label{sec:ttstalks}

In \cref{def:ttstructuresheaf} we introduced the structure presheaf of a rigidly-compactly generated tt-category. In this section we will take this point of view in earnest and discuss the theory of stalks of tt-categories. While we are afforded the theory of stalks via this sheaf point of view, there is also a natural construction of a ``stalk'' of a tt-category at a prime tt-ideal as the quotient $\sfT / \langle \sfP \rangle$. A key result of this section is that these two notions of stalks coincide. The remainder of the section is then dedicated to studying the behaviour of stalks under colimit constructions.

\subsection{Basic properties of tt-stalks}\label{subsec:basicttstalk}

We begin by introducing the notion of a tt-stalk at a prime tt-ideal before comparing it to the sheaf theoretic definition while also providing some guiding examples along the way.

\begin{definition}\label{def:ttstalks}
Let $\sfT$ be a rigidly-compactly generated tt-category. The \emph{tt-stalk} of $\sfT$ at $\sfP \in \Spc(\sfT^{\omega})$ is defined as the quotient $\sfT_{\sfP} = \sfT/\langle \sfP \rangle$. The \emph{stalk} of an object $t \in \sfT$ at a point $\sfP$ is defined as the image of $t$ under the corresponding finite localization $t_{\sfP}\in \sfT_{\sfP}$.
\end{definition}

Recall that $\gen(\sfP)$ denotes the generalization of the element $\sfP \in \Spc(\sfT^{\omega})$. As explained in \cite[Definition 1.25]{BarthelHeardSanders2023}, the stalk of $\sfT$ can be identified in terms of local sections of the tt-presheaf, as follows:
\begin{equation}\label{eq:ttstalklocalsection}
    \sfT_{\sfP} = \sfT/\langle \sfP \rangle \simeq \sfT(\gen(\sfP)).
\end{equation}
In other words, passage to stalks $(-)_{\sfP}\colon\sfT \to \sfT_{\sfP}$ coincides with the finite localization functor away from the subset $\gen(\sfP)^c \subset X=\Spc(\sfT^{\omega})$. This subsets admits a useful description as 
\begin{equation}\label{eq:suppP}
    \supp(\sfP) = \gen(\sfP)^c  = \textstyle\bigcup_{\sfP \in U \in \qcopen{X}} U^c,
\end{equation}
where the union is over all quasi-compact opens in $X$ which contain $\sfP$. Here, the second one can be seen on complements: Indeed, in any spectral space $X$, we have $\gen(x) = \bigcap_{x\in U \in \qcopen{X}} U$ for $x \in X$, see \cref{rem:gen}. 

\begin{remark}\label{rem:stalkambiguity}
    In the literature, there are competing notions of stalks for arbitrary tt-categories, referring for instance to $\Gamma_{\sfP}\sfT$ which in general behaves more like a formal neighborhood around $\sfP$. To make this point explicit, let us include an example illustrating the phenomena that arise:
\end{remark}

\begin{example}
Let $\sfT = \sfD(\Z)$ be the derived category of abelian groups, a rigidly-compactly generated tt-category with $\Spc(\sfT^{\omega}) \cong \Spec(\Z)$. Under this identification, the stalk of $\sfT$ at $p$ is then given by $\sfT_p \simeq \sfD(\Z_{(p)})$, whereas $\Gamma_{p}\sfT$ is equivalent to the derived category of $p$-power torsion abelian groups. 
\end{example}

Stalks are natural in geometric functors: let $f=f^*\colon \sfS \to \sfT$ be a geometric functor and write $\varphi\colon \Spc(\sfT^{\omega}) \to \Spc(\sfS^{\omega})$ for the induced map on spectra. For any $\sfP \in \Spc(\sfT^{\omega})$, there is a commutative square
\[
\xymatrix{\sfS \ar[r]^-{f} \ar[d]_{\lambda_{\varphi(\sfP)}} & \sfT \ar[d]^{\lambda_{\gen(\sfP)}} \\ 
\sfS/\langle \varphi(\sfP) \rangle \ar@{-->}[r] & \sfT/\langle \sfP \rangle.}
\]
Indeed, we have $f(\langle \varphi(\sfP) \rangle) = f(\langle f^{-1}(\sfP) \rangle) \subseteq \langle \sfP \rangle$, so the composite $\lambda_{\gen(\sfP)} \circ f$ factors canonically (and uniquely) through the localization $\sfS/\langle \varphi(\sfP) \rangle$.

\begin{definition}\label{def:ttstalkmap}
With notation as above, $f\colon \sfS \to \sfT$ induces a map on stalks defined as
\[
f_{\sfP}\colon \sfS_{\varphi(\sfP)} = \sfS/\langle \varphi(\sfP) \rangle \longrightarrow \sfT/\langle \sfP \rangle = \sfT_{\sfP}.
\]
\end{definition}
Note that, by construction, $f_{\sfP}\colon \sfS_{\varphi(\sfP)} \to \sfT_{\sfP}$ is a geometric functor.

\begin{remark}\label{rem:smallttstalks}
    The construction of the structure presheaf in \cref{def:ttstructuresheaf} as well as of stalks and morphisms between them transposes without effort to the setting of essentially small tt-categories, keeping in mind appropriate idempotent completion whenever necessary. The latter is the setting chosen in Balmer's writings \cite{balmer_spectrum,balmer_3spectra,balmer_icm}, and we refer to these papers for comparison.
\end{remark}

\begin{proposition}\label{prop:localsectionsformula}
Let $\sfT$ be a rigidly-compactly generated tt-category and write $X = \Spc(\sfT^{\omega})$ for its spectrum. For any Thomason subset $V^c \subseteq X$ with complement $V$, the category of local sections of the tt-presheaf of $\sfT$ over $V$ are given by
    \[
        \xymatrix{\underset{V \subseteq U \in \qcopen{X}}{\colim^{\omega}}\sfT(U) \ar[r]^-{\simeq} & \sfT(V),}
    \]
where the colimit is taken over the restriction functors \eqref{eq:ttrestriction} and is indexed on quasi-compact open subsets of $X$ containing $V$.
\end{proposition}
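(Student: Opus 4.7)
The plan is to reduce the statement to an equality of thick ideals in $\sfT^{\omega}$ and then to establish that equality via a compactness argument in the constructible topology. First I would observe that the restriction functors $\sfT(U)\to\sfT(V)$ from \eqref{eq:ttrestriction} assemble into a cocone, so there is a canonical comparison $\Phi\colon \colim^{\omega}_{V\subseteq U}\sfT(U)\to \sfT(V)$ in $\Pr^{L,\omega}$. By the equivalence $\Cat_\infty^{\natural}\simeq \Pr^{L,\omega}$ recalled in \eqref{eq:colimitcomparisons} (and the fact that $\Cat_\infty^{\natural}\to\Cat_\infty$ preserves filtered colimits, so that the $\Pr^{L,\omega}$-colimit is the idempotent completion of the $\Cat_\infty$-colimit), it suffices to check that the induced functor of compact objects
\[
\Psi\colon \colim_{V\subseteq U}\bigl(\sfT^{\omega}/\sfT^{\omega}_{U^c}\bigr)\longrightarrow \sfT^{\omega}/\sfT^{\omega}_{V^c}
\]
is an equivalence after idempotent completion.

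Next I would compute the source of $\Psi$. Expressing each Verdier quotient as a pushout $\sfT^{\omega}\amalg_{\sfT^{\omega}_{U^c}}0$ in the stable setting, and using that colimits commute with colimits together with \cref{lem:unionascolimit}, identifies the source of $\Psi$ with $\sfT^{\omega}/\bigl(\bigcup_{V\subseteq U}\sfT^{\omega}_{U^c}\bigr)$. The problem thus reduces to showing the equality of thick ideals
\[
\bigcup_{V\subseteq U \in \qcopen{X}}\sfT^{\omega}_{U^c}\;=\;\sfT^{\omega}_{V^c}
\]
inside $\sfT^{\omega}$. The inclusion $\subseteq$ is immediate from $U^c\subseteq V^c$ and the monotonicity of Balmer's classification \eqref{eq:suppttclassification}.

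The reverse inclusion is the main obstacle and is where the filteredness of the indexing system enters essentially. Given $t\in \sfT^{\omega}$ with $\supp(t)\subseteq V^c$, I must produce a single $U$ in the system with $\supp(t)\subseteq U^c$, or equivalently with $U\subseteq W\coloneqq \supp(t)^c$. My plan is to pass to the constructible topology: $X^{\cons}$ is compact Hausdorff (profinite, as $X$ is spectral), every quasi-compact open of $X$ is clopen in $X^{\cons}$, and the closed-with-quasi-compact-complement set $\supp(t)$ is $X^{\cons}$-compact. Invoking the identity $V=\bigcap_{V\subseteq U}U$ from \cref{rem:gen} together with the hypothesis $\supp(t)\cap V=\varnothing$, the cofiltered collection of $X^{\cons}$-compact subsets $\bigl(\supp(t)\cap U\bigr)_{V\subseteq U}$ has empty total intersection; since $X^{\cons}$ is Hausdorff and the indexing system is directed under reverse inclusion, some single $\supp(t)\cap U$ must already be empty. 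This yields $\supp(t)\subseteq U^c$, hence $t\in \sfT^{\omega}_{U^c}$, completing the proof.
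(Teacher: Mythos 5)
Your proposal is correct and runs along the same spine as the paper's argument: both reduce, after passing to compact objects, to the single identity of thick ideals
\[
\bigcup_{V\subseteq U\in\qcopen{X}}\sfT^{\omega}_{U^c} \;=\; \sfT^{\omega}_{V^c},
\]
and then conclude by taking Verdier quotients and ind-completing. There are two points of divergence worth noting. For the passage from this ideal identity to the statement about quotients, the paper cites Blumberg--Gepner--Tabuada (filtered colimits of Verdier sequences are Verdier), whereas you re-derive essentially the same thing by writing each quotient as a cofiber in $\Cat_\infty^{\natural}$ and using that cofibers commute with filtered colimits; that is fine and self-contained. For the ideal identity itself, the paper observes that both sides are thick ideals and computes that they have the same support, $V^c=\bigcup_U U^c$, then invokes Balmer's classification; you instead argue element-wise, invoking a compactness argument in the constructible topology for the nontrivial inclusion $\supseteq$. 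That argument is correct, but it is overkill here: given $t\in\sfT^{\omega}$ with $\supp(t)\subseteq V^c$, the witness is simply $U=\supp(t)^c$. Since $t$ is compact, $\supp(t)$ is Thomason closed, so $\supp(t)^c$ is quasi-compact open; the hypothesis gives $V\subseteq\supp(t)^c$, so this $U$ lies in the indexing family; and $\supp(t)\subseteq U^c$ is then a tautology. The pro-compactness technique you deploy becomes genuinely necessary when the indexing family is not all quasi-compact opens containing $V$ (compare the more delicate argument required in \cref{prop:finitelocalizations_continuity}), but in the present setting the direct choice does the job with no topology at all.
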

\begin{proof}
    For simplicity, we write the indexing diagram as $U \supseteq V$, implicitly assuming that the $U$s range through quasi-compact opens of $X$. We first claim that there are equalities of full subcategories in $\sfT$:
        \begin{equation}\label{eq:thickP}
            \{t \in \sfT^{\omega} \mid \supp(t) \subseteq V^c\} = \bigcup_{U \supseteq V}\{t \in \sfT^{\omega} \mid \supp(t) \subseteq U^c\} = \underset{U \supseteq V}{\colim}\{t \in \sfT^{\omega} \mid \supp(t) \subseteq U^c\}.
        \end{equation}
    The second equality is an instance of \cref{lem:unionascolimit}, so it remains to prove the first one. Since each of the subcategories $\{t \in \sfT^{\omega} \mid \supp(t) \subseteq U^c\}$ is a thick  ideal, namely the preimage of $U^c$ under the function $\supp$ from \eqref{eq:suppttclassification}, so is the union in \eqref{eq:thickP}. It follows that we can prove the desired equality by showing both sides have the same support. The Thomason subset $V^c$ is a union of subsets with quasi-compact open complement, $V^c = \bigcup_{U \supseteq V}U^c$, cf.~\cref{rem:gen}. By the basic properties of the support function, the relevant supports are then given by
    \[
    \supp\{t \in \sfT^{\omega} \mid \supp(t) \subseteq V^c\} = V^c = \bigcup_{U \supseteq V}U^c = \supp \bigcup_{U \supseteq V}\{t \in \sfT^{\omega} \mid \supp(t) \subseteq U^c\}.
    \]
    This implies the claim.
    
    Now recall the Verdier sequence \eqref{eqn:finiitelocncompact} which determines $\sfT(U)^{\omega}$. By naturality of finite localizations, these form a filtered system of Verdier sequences indexed on $\{U \supseteq V\}$. It then follows from \cite[Lemma 5.25]{BlumbergGepnerTabuada2013} that the filtered colimit over this system,
    \[
    \xymatrix{\underset{U \supseteq V}{\colim}\{t \in \sfT^{\omega} \mid \supp(t) \subseteq U^c\} \ar[r] & \sfT^{\omega} \ar[r] & \underset{U \supseteq V}{\colim}\sfT(U)^{\omega}}
    \]
    is again a Verdier sequence. Upon passage to ind-categories, we thus obtain a commutative diagram 
    \[
    \xymatrix{\langle\{t\in \sfT^{\omega}\mid \supp(t) \subseteq V^c\}\rangle \ar[r] & \sfT \ar[r] & \sfT(V) \\
    \underset{U \supseteq V}{\colim^{\omega}}\langle\{t \in \sfT^{\omega} \mid \supp(t) \subseteq U^c\}\rangle \ar[r] \ar[u]^{\simeq} & \sfT \ar[r] \ar@{=}[u] & \underset{U \supseteq V}{\colim^{\omega}}\sfT(U) \ar@{-->}[u]_{\simeq}}
    \]
    in which the two rows are localization sequences. The left vertical functor is a geometric equivalence by \eqref{eq:thickP}, hence so is the right vertical functor. 
\end{proof}

\begin{corollary}\label{cor:ttstalkformula}
    With notation as in \cref{prop:localsectionsformula}, for any $\sfP \in X$, the tt-stalk of $\sfT$ at $\sfP$ is given by
    \begin{equation}\label{eq:ttstalkformula}
        \xymatrix{\underset{\sfP \in U \in \qcopen{X}}{\colim^{\omega}}\sfT(U) \ar[r]^-{\simeq} & \sfT_{\sfP}.}
    \end{equation}
\end{corollary}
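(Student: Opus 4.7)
The plan is to deduce this corollary directly from \cref{prop:localsectionsformula} by specializing to the subset $V = \gen(\sfP)$. First I would invoke the identification \eqref{eq:ttstalklocalsection}, which already tells us that $\sfT_{\sfP} \simeq \sfT(\gen(\sfP))$. So the content of the corollary is really just to compute $\sfT(\gen(\sfP))$ via the colimit formula of \cref{prop:localsectionsformula}, and then to reindex the diagram.

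To apply \cref{prop:localsectionsformula} with $V = \gen(\sfP)$, I need to verify that $\gen(\sfP)^c$ is a Thomason subset. This is exactly the content of \eqref{eq:suppP}: we have $\gen(\sfP)^c = \supp(\sfP) = \bigcup_{\sfP \in U \in \qcopen{X}} U^c$, which exhibits it as a union of closed sets with quasi-compact open complement. Hence \cref{prop:localsectionsformula} yields a geometric equivalence
\[
\underset{\gen(\sfP) \subseteq U \in \qcopen{X}}{\colim^\omega} \sfT(U) \xrightarrow{\ \simeq \ } \sfT(\gen(\sfP)) \simeq \sfT_{\sfP}.
\]

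The final step is to observe that the indexing categories $\{U \in \qcopen{X}\mid \gen(\sfP) \subseteq U\}$ and $\{U \in \qcopen{X}\mid \sfP \in U\}$ agree. The inclusion from left to right is immediate since $\sfP \in \gen(\sfP)$. For the converse, if $U$ is open and $\sfP \in U$, then every generalization of $\sfP$ lies in $U$ as well, because open subsets of a spectral space are closed under generalization; equivalently, this follows from the formula $\gen(\sfP) = \bigcap_{\sfP \in U \in \qcopen{X}} U$ recalled in \cref{rem:gen}. Substituting this reindexing into the colimit completes the proof. There is no real obstacle here; everything is essentially bookkeeping once \cref{prop:localsectionsformula} is in hand.
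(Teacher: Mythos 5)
Your proposal is correct and follows the same route as the paper: specialize \cref{prop:localsectionsformula} to $V = \gen(\sfP)$ using \eqref{eq:ttstalklocalsection}, then reindex via the fact that quasi-compact opens are closed under generalization. You additionally spell out why $\gen(\sfP)^c$ is Thomason via \eqref{eq:suppP}, which the paper leaves implicit, but the argument is the same.
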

\begin{proof}
     Recall from \eqref{eq:ttstalklocalsection} that $\sfT_{\sfP} = \sfT(\gen(\sfP))$. The description \eqref{eq:ttstalkformula} of the tt-stalk at $\sfP$ then follows from the special case $V = \gen(\sfP)$ of \cref{prop:localsectionsformula}, by observing that any quasi-compact open $U$ in $X$ is closed under generalization, so we have $\gen(\sfP) \subseteq U$ if and only if $\sfP \in U$.
\end{proof}

\begin{remark}\label{rem:ttstalkgeometric}
    Undoubtedly, this result has been known to some tt-experts at least in spirit. In fact, Balmer verifies in \cite[Lemma 6.3]{balmer_3spectra} that the graded homomorphism groups in $\sfT_{\sfP}$ have the desired colimit description and then states the triangulated version of \cref{cor:ttstalkformula} in \cite[Theorem 25(b)]{balmer_icm}. In any case, this equivalence vindicates the choice of referring to $\sfT_{\sfP}$ as the tt-geometric stalk of $\sfT$ at $\sfP$, as it categorifies the usual definition of stalk in algebraic geometry. 
\end{remark}

\begin{corollary}\label{cor:objectwisestalk}
    With notation as in \cref{cor:ttstalkformula}, the stalk of any object $t \in \sfT$ at $\sfP$ is 
    \[
    \underset{\sfP \in U \in \qcopen{X}}{\colim} t \otimes f_{U^c} \simeq t_{\sfP}.
    \]
\end{corollary}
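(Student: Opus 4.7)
The plan is to establish this as the object-level consequence of the category-level equivalence in \cref{cor:ttstalkformula}, whose verification reduces to a single concrete computation. I would trace how a fixed $t \in \sfT$ is transported across the equivalence $\sfT_{\sfP} \simeq \colim^{\omega}_{\sfP \in U \in \qcopen{X}} \sfT(U)$. For each quasi-compact open $U \ni \sfP$, the restriction functor $\lambda_U \colon \sfT \to \sfT(U)$ sends $t$ to $\lambda_U(t) = t \otimes f_{U^c}$; cf.~\cref{rem:naturalinU} and \eqref{eq:ttrestriction}. These form a compatible system in the colimit diagram, since for $U_2 \subseteq U_1$ both containing $\sfP$, the transition functor $\sfT(U_1) \to \sfT(U_2)$ carries $t \otimes f_{U_1^c}$ to $t \otimes f_{U_1^c} \otimes f_{U_2^c} \simeq t \otimes f_{U_2^c}$ using \cref{prop:bhs-strat-idempotents}. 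Composing with the further localization $\sfT(U) \to \sfT_{\sfP}$ then exhibits each such representative as mapping to the common object $t_{\sfP} = \lambda_{\sfP}(t)$.

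The main computational step is the identification $\lambda_{\sfP}(f_{U^c}) \simeq \unit_{\sfT_{\sfP}}$ for each $U \ni \sfP$. To establish it, I would note that any compact object $c$ with $\supp(c) \subseteq U^c$ satisfies $\sfP \notin \supp(c)$, hence $c \in \sfP$; consequently the thick ideal $\{c \in \sfT^{\omega} : \supp(c) \subseteq U^c\}$ is contained in the prime $\sfP$, and therefore $e_{U^c} \in \langle \sfP \rangle = \ker(\lambda_{\sfP})$. Applying $\lambda_{\sfP}$ to the defining triangle $e_{U^c} \to \unit \to f_{U^c}$ then yields $\unit_{\sfT_{\sfP}} \xrightarrow{\simeq} \lambda_{\sfP}(f_{U^c})$, as claimed. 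Tensoring with $t_{\sfP}$ gives $\lambda_{\sfP}(t \otimes f_{U^c}) \simeq t_{\sfP}$, and by the naturality of the triangle $e_{(-)} \to \unit \to f_{(-)}$ in $U$, the induced maps $\lambda_{\sfP}(t \otimes f_{U_1^c}) \to \lambda_{\sfP}(t \otimes f_{U_2^c})$ become identities under these identifications.

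Thus the filtered diagram $U \mapsto t \otimes f_{U^c}$, pushed down to $\sfT_{\sfP}$, is essentially constant at $t_{\sfP}$, and its filtered colimit is manifestly $t_{\sfP}$. The principal point to handle with care is the naturality of the identifications $\lambda_{\sfP}(f_{U^c}) \simeq \unit_{\sfT_{\sfP}}$ in $U$, which promotes the pointwise equivalences to a compatibility of transition maps; once this is in hand, everything else is a formal consequence of \cref{cor:ttstalkformula} and the preservation of colimits by the symmetric monoidal localization $\lambda_{\sfP}$.
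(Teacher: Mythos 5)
Your computation of $\lambda_\sfP(t\otimes f_{U^c})\simeq t_\sfP$ and the resulting constancy of the pushed-down diagram are both correct, but the conclusion you draw from them is too quick. What that constancy gives you, using that $\lambda_\sfP$ preserves colimits, is the identity
\[
\lambda_\sfP\bigl(\underset{U\ni\sfP}{\colim}\ t\otimes f_{U^c}\bigr)\simeq t_\sfP,
\]
i.e., the colimit in $\sfT$ agrees with $t_\sfP$ \emph{after applying} $\lambda_\sfP$. The corollary, however, asserts that the colimit agrees with $t_\sfP=t\otimes f_{\gen(\sfP)^c}$ as an object of $\sfT$ — equivalently, that $\colim_U t\otimes f_{U^c}$ already lies in the essential image of the fully faithful right adjoint of $\lambda_\sfP$. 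This is not automatic: each term $t\otimes f_{U^c}$ is $\lambda_U$-local but is not $\lambda_\sfP$-local (since $t\otimes f_{U^c}\otimes f_{\gen(\sfP)^c}\simeq t_\sfP\not\simeq t\otimes f_{U^c}$ in general), so the filtered colimit can a priori fail to be $\lambda_\sfP$-local.

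Closing this gap is precisely what requires the actual work. Writing $Y=\colim_U t\otimes f_{U^c}$, the localization triangle $e_{\gen(\sfP)^c}\otimes Y\to Y\to t_\sfP$ shows that the desired equivalence is tantamount to $e_{\gen(\sfP)^c}\otimes Y=0$, and tracing through the defining triangles of the idempotents reduces this to showing that the canonical map $\colim_U e_{U^c}\to e_{\gen(\sfP)^c}$ (equivalently $\colim_U f_{U^c}\to f_{\gen(\sfP)^c}$) is an equivalence, after which the general case follows by tensoring with $t$. This identity of idempotents is exactly the single computational step in the paper's direct proof, carried out by a support comparison via \eqref{eq:suppP} $(\gen(\sfP)^c=\bigcup_{U\ni\sfP}U^c)$. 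The paper even remarks that one \emph{could} formally deduce the corollary from \cref{cor:ttstalkformula} — the route you attempt — but elects to compute with idempotents directly precisely to avoid this subtlety about the ambient category in which the colimit is formed.
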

\begin{proof}
    One could formally deduce this result from \cref{cor:ttstalkformula}, but we include a short direct argument. By definition and keeping in mind \eqref{eq:ttstalklocalsection}, the stalk of $t$ at $\sfP$ is $t_P = t \otimes f_{\gen(\sfP)^c}$. Therefore, it is enough to prove that the canonical map $\colim_{U \ni \sfP}f_{U^c} \to f_{\gen(\sfP)^c}$ is an equivalence. Equivalently, we can show that the canonical map of the corresponding left idempotents
    \[
    \xymatrix{\colim_{U \ni \sfP}e_{U^c} \ar[r] & e_{\gen(\sfP)^c}}
    \]
    is an equivalence. This reduces again to checking both sides have equal support, and we conclude by invoking \eqref{eq:suppP}. 
\end{proof}

\subsection{Continuity of finite localizations and stalks}\label{ssec:finitelocalizations_continuity}

For the remainder of this section, let $(\sfT_i,f_{ij} \colon \sfT_i \to \sfT_j)_{i \in I}$ be a filtered system of rigidly-compactly generated tt-categories and geometric functors. We denote the rigidly-compactly generated colimit of this system by $\sfT$ and write $f_i\colon \sfT \to \sfT_i$ for the canonical geometric functor, with induced map on spectra $\varphi_{i}\colon \Spc(\sfT^{\omega}) \to \Spc(\sfT_i^{\omega})$.

\begin{proposition}\label{prop:finitelocalizations_continuity}
    Suppose $\sfC \subseteq \sfT^{\omega}$ is a thick ideal and let $\sfC_i = f_i^{-1}(\sfC) \subseteq \sfT_i^{\omega}$ be the preimage in $\sfT_i^{\omega}$. Then there is a canonical geometric equivalence
        \[
            \xymatrix{\colim_{i}^{\omega}\sfT_i/\langle\sfC_i\rangle \ar[r]^-{\sim} & \sfT/\langle\sfC\rangle,}
        \]
    where the colimit is taken by the geometric functors induced by $f_{ij}$ on finite localizations. 
\end{proposition}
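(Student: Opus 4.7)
The plan is to mirror the argument used in \cref{prop:localsectionsformula}, which establishes an analogous statement for Thomason subsets: reduce to essentially small tt-categories via compact objects, exploit that filtered colimits of Verdier sequences remain Verdier sequences, and then ind-complete. Since $\sfC$ is a thick ideal in $\sfT^{\omega}$, the generated localizing ideal $\langle\sfC\rangle \subseteq \sfT$ is compactly generated and hence smashing. By Neeman's localization theorem, the compact part $(\sfT/\langle\sfC\rangle)^{\omega}$ then identifies with the idempotent completion of $\sfT^{\omega}/\sfC$, and similarly for each $\sfT_i$. Since colimits in $\Pr^{L,\omega}$ are computed by ind-completing the corresponding filtered colimit in $\Cat_{\infty}^{\natural}$ (cf.~\cref{rem:compactcolim}), it suffices to establish, after idempotent completion, an equivalence
\[
\colim_i \sfT_i^{\omega}/\sfC_i \xrightarrow{\ \sim\ } \sfT^{\omega}/\sfC
\]
in $\Cat_{\infty}$.

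To that end, the transition functors satisfy $f_{ij}(\sfC_i) \subseteq \sfC_j$, since $f_j\circ f_{ij} \simeq f_i$ forces preimages under $f_i$ into preimages under $f_j$. Thus the Verdier sequences
\[
\sfC_i \longrightarrow \sfT_i^{\omega} \longrightarrow \sfT_i^{\omega}/\sfC_i
\]
assemble into a filtered diagram, and by \cite[Lemma~5.25]{BlumbergGepnerTabuada2013} its colimit remains a Verdier sequence, yielding
\[
\colim_i \sfC_i \longrightarrow \sfT^{\omega} \longrightarrow \colim_i \sfT_i^{\omega}/\sfC_i,
\]
where we have used that $\colim_i \sfT_i^{\omega} \simeq \sfT^{\omega}$ by definition of the filtered colimit.

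The main step is to identify $\colim_i \sfC_i$ with $\sfC$ as a full subcategory of $\sfT^{\omega}$. By \cref{lem:colimitofmaps}, the canonical functor $\colim_i \sfC_i \to \sfT^{\omega}$ is fully faithful with essential image $\bigcup_i f_i(\sfC_i)$. The inclusion $\bigcup_i f_i(\sfC_i) \subseteq \sfC$ is immediate from the definition $\sfC_i = f_i^{-1}(\sfC)$. Conversely, any $t \in \sfC \subseteq \sfT^{\omega}$ is, by the description of objects in a filtered colimit, of the form $t \simeq f_i(t_i)$ for some $i \in I$ and $t_i \in \sfT_i^{\omega}$; since $f_i(t_i) \in \sfC$, necessarily $t_i \in f_i^{-1}(\sfC) = \sfC_i$, so $t \in f_i(\sfC_i)$. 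This gives a Verdier sequence $\sfC \to \sfT^{\omega} \to \colim_i \sfT_i^{\omega}/\sfC_i$, which uniquely characterizes the cofiber as $\sfT^{\omega}/\sfC$; ind-completion then delivers the desired geometric equivalence. The main obstacle is the bookkeeping around compatibility of idempotent completions, Verdier quotients, and filtered colimits in the various categories of $\infty$-categories, but all of these are handled by the structural results already collected in \cref{ssec:limcoliminfty,ssec:colimits}, with no new input required beyond what the proof of \cref{prop:localsectionsformula} already demonstrates.
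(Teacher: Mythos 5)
Your proof is correct, and it follows the same broad template as the paper's argument, which in turn is modeled on the proof of \cref{prop:localsectionsformula}: reduce to compacts, appeal to the fact that filtered colimits of Verdier sequences in $\Cat_\infty$ remain Verdier sequences \cite[Lemma~5.25]{BlumbergGepnerTabuada2013}, and ind-complete at the end. The one place you diverge is in how the key identity is established. You prove directly that the fully faithful functor $\colim_i \sfC_i \to \sfT^{\omega}$ has essential image exactly $\sfC$, using the description of objects in a filtered colimit. The paper instead works at the level of localizing ideals, reducing to the chain of equalities $\colim_{i}^{\omega}\langle \sfC_i\rangle = \bigcup_{i}\langle f_i\sfC_i \rangle =  \langle \sfC \rangle$, invoking \cref{prop:continuity_of_ideals} for the first equality and a support-theoretic comparison (via Balmer's classification of thick ideals) for the second. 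The underlying insight --- any $t \in \sfC$ lifts to some $t_i \in \sfT_i^{\omega}$, and such a lift automatically lies in $\sfC_i = f_i^{-1}(\sfC)$ --- is the same in both cases, but your formulation is marginally more self-contained in avoiding the detour through support theory; the trade-off is that the paper's version uses the general-purpose \cref{prop:continuity_of_ideals}, which it needs elsewhere. One small point worth making explicit, which you handle correctly but the paper leaves implicit: the compatibility $f_{ij}(\sfC_i) \subseteq \sfC_j$, needed for the $\sfC_i$ to form a filtered system, follows from $f_j \circ f_{ij} \simeq f_i$ and the definition of the $\sfC_i$ as preimages.
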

\begin{proof}
    As in the proof of \cref{prop:localsectionsformula}, it suffices to show 
        \begin{equation}\label{eq:continuousP}
             \colim_{i}^{\omega}\langle \sfC_i\rangle = \bigcup_{i}\langle f_i\sfC_i \rangle =  \langle \sfC \rangle ,
        \end{equation}
    because this is equivalent to the equivalence of the corresponding local categories by passing to Verdier quotients. For the first equality, we employ \cref{prop:continuity_of_ideals}, so it remains to verify the second equality. Working now intrinsic to $\sfT$, it suffices to check the following support-theoretic identity:
        \[
            \bigcup_{i}\supp(f_i\sfC_i) = \supp(\sfC).
        \]
    For any $i\in I$, we have $f_if_i^{-1}(\sfC) \subseteq \sfC$, so $\bigcup_i\supp(f_i\sfC_i) \subseteq \supp(\sfC)$. For the reverse inclusion, consider some $x \in \sfC$. By construction of $\sfT^{\omega}$ as a filtered colimit, there exists an $i \in I$ and some $x_i \in \sfT_i^{\omega}$ such that $f_i(x_i) \simeq x$, i.e., $x_i \in \sfC_i$. Therefore, $x \in f_i\sfC_i$ and thus $\supp(x) \subseteq \bigcup_i\supp(f_i\sfC_i)$. Varying over all $x \in \sfC$ then establishes the claim.
\end{proof}

As an application, we deduce the continuity of tt-stalks, i.e., their compatibility with passage to filtered colimits.

\begin{corollary}\label{cor:continuousttstalk}
    Let $\sfP \in \Spc(\sfT^{\omega})$ and write $\sfP_i = \varphi_i(\sfP)$. Then there is a canonical geometric equivalence
    \[
    \xymatrix{\colim_{i}^{\omega}(\sfT_i)_{\sfP_i} \ar[r]^-{\sim} & \sfT_{\sfP},}
    \]
    where the transition maps in the colimit are the induced maps on stalks as in \cref{def:ttstalkmap}.
\end{corollary}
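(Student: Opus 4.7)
My plan is to deduce this as a direct consequence of \cref{prop:finitelocalizations_continuity} applied to the special case where the thick ideal is the prime $\sfP$ itself. Since prime thick ideals are in particular thick ideals of $\sfT^{\omega}$, \cref{prop:finitelocalizations_continuity} applies with $\sfC = \sfP$. By the very construction of the map on spectra induced by a geometric functor, the preimage $\sfC_i = f_i^{-1}(\sfP)$ coincides with $\varphi_i(\sfP) = \sfP_i$. Substituting this identification and unpacking the notation $\sfT_{\sfP} = \sfT/\langle \sfP \rangle$ from \cref{def:ttstalks}, the conclusion of \cref{prop:finitelocalizations_continuity} becomes the desired geometric equivalence
\[
\colim_i^{\omega}(\sfT_i)_{\sfP_i} \xrightarrow{\ \sim\ } \sfT_{\sfP}.
\]

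The only point that warrants further attention is to verify that the transition maps in the colimit on the left-hand side, which in \cref{prop:finitelocalizations_continuity} are those induced by $f_{ij}$ on Verdier quotients, agree with the stalk maps $(f_{ij})_{\sfP_j}$ of \cref{def:ttstalkmap}. But this is a consequence of the universal property characterizing the latter: $(f_{ij})_{\sfP_j}$ is the unique geometric functor $(\sfT_i)_{\sfP_i} \to (\sfT_j)_{\sfP_j}$ making the square
\[
\xymatrix@C=3em{\sfT_i \ar[r]^-{f_{ij}} \ar[d] & \sfT_j \ar[d] \\ (\sfT_i)_{\sfP_i} \ar@{-->}[r] & (\sfT_j)_{\sfP_j}}
\]
commute, and the functor on localizations produced by \cref{prop:finitelocalizations_continuity} enjoys exactly the same property. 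A last naturality check confirms that the transition maps are compatible with the system of finite localizations used to build the colimit.

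The argument is essentially formal given \cref{prop:finitelocalizations_continuity}, so I do not anticipate any substantial obstacle; the only mild subtlety is the bookkeeping needed to match the conventions between \cref{def:ttstalkmap} and the way the transition functors are produced in \cref{prop:finitelocalizations_continuity}, and this is resolved by the uniqueness in the universal property of the Verdier quotient.
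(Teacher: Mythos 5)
Your proof is correct and matches the paper's own proof essentially verbatim: apply \cref{prop:finitelocalizations_continuity} with $\sfC = \sfP$, identify $f_i^{-1}(\sfP) = \varphi_i(\sfP) = \sfP_i$ by the definition of the induced map on spectra, and unfold the definition $\sfT_\sfP = \sfT/\langle\sfP\rangle$. The additional paragraph verifying that the transition functors agree with the stalk maps of \cref{def:ttstalkmap} is a point the paper leaves implicit, but your appeal to the uniqueness in the universal property of Verdier quotients is the right justification.
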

\begin{proof}
    Indeed, this is the special case of \cref{prop:finitelocalizations_continuity} when $\sfC = \sfP$ is a prime tt-ideal, using the description of tt-stalks given in \eqref{eq:ttstalklocalsection}. 
\end{proof}

\begin{remark}\label{lem:gencontinuity}
    Let $(X_i)_{i \in I}$ be a filtered inverse system of spectral spaces with limit $X$ and write $\phi_i\colon X \to X_i$ for the canonical maps. The point-set topological content of the proof of \cref{prop:finitelocalizations_continuity} is the fact that a basis of opens for $X$ is given by the collection
        \[
            \mathcal{B}  = \{\phi_i^{-1}(B_i) \mid i \in I \text{ and } B_i \in \mathcal{B}_i\},
        \]
    where $\mathcal{B}_i$ is the set of quasi-compact opens in $X_i$. 
    
    Translated to tt-geometry and passing to the inverse topology, any Thomason subset $Z \subseteq \Spc(\sfT^c)$ can be written as $Z = \bigcup_{\alpha}Z_{\alpha}$ for $Z_{\alpha} = \varphi_{\alpha}^{-1}(C_{\alpha})$ for some $\alpha \in I$ and $C_{\alpha} \subseteq \Spc(\sfT_{\alpha}^{\omega})$ Thomason closed. For the support of the tt-stalk of $\sfT$ at $\sfP$, there is then the explicit formula
    \[
    \gen(x) = \bigcap_{i \in I}\bigcap_{U_i \ni \sfP_i}\varphi_i^{-1}U_i,
    \]
    where the inner intersection is indexed on quasi-compact opens in $\Spc(\sfT_i^{\omega})$ containing $\sfP_i$.
\end{remark}

\section{Zero-dimensional tensor-triangular geometry}\label{sec:zerodimtt}

While we have seen that the Balmer spectrum of an arbitrary essentially small tt-category is a spectral space, in one of our applications of interest, that is, rational $G$-equivariant spectra for $G$ a profinite group we will see that the Balmer spectrum is a particularly nice topological space. It is a spectral space which is  zero-dimensional. We recall that the \emph{(Krull) dimension} of a topological space $X$ is defined to be the supremum of the lengths of chains of irreducible closed subsets of $X$.

In this section we will explore some of the theory that is afforded to us in the case when the Balmer spectrum of $\sfT^\omega$ is zero-dimensional, where $\sfT$ is rigidly-compactly generated.

\subsection{Profinite spaces}\label{ssec:profinitespaces}

We begin with a quick reminder on profinite spaces. Recall that those spectral spaces which are moreover zero-dimensional are necessarily Hausdorff. These spaces coincide with the class of \emph{profinite spaces}:

\begin{definition}\label{def:profinite}
A topological space $X$ is said to be \emph{profinite} if it is Hausdorff, quasi-compact, and totally disconnected. 
\end{definition}

The collection of profinite spaces and continuous maps between them assembles into a category $\Top_{\mathrm{Pro}}$. We will record here that limits of profinite spaces are computed in the underlying category $\Top$ \cite[Corollary 2.3.8]{book_spectralspaces}.

One way to characterize profinite spaces among all spectral spaces is as fixed points of an involution on the category of spectral spaces, given by the inverse topology: 

\begin{definition}\label{def:inversetopology}
For any spectral space $X$ there is another topology on the underlying set of $X$, the \emph{inverse topology}, whose open sets are given by the Thomason subsets of $X$. We denote this space $X_{\inv}$. This construction lifts to an involution $\inv \colon \Spectral \to \Spectral$. 
\end{definition}

\begin{lemma}\label{lem:charofpro}
Let $X$ be a spectral space. Then the following are equivalent:
	\begin{enumerate}
        \item $X$ is zero-dimensional;
	\item $X$ is Hausdorff;
	\item $X$ is profinite;
	\item $X = X_{\inv}$.
	\end{enumerate}
\end{lemma}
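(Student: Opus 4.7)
The plan is to establish the equivalence by proving the cyclic chain $(3) \Rightarrow (1) \Rightarrow (2) \Rightarrow (3)$ among the ``intrinsic'' conditions on $X$, and then handling the inverse-topology condition $(4)$ separately. Conditions $(1)$--$(3)$ are essentially classical characterizations of Stone spaces, while $(4)$ is tied to them via the patch topology and Hochster duality.

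First I would handle the easy part $(3) \Rightarrow (1) \Rightarrow (2) \Rightarrow (3)$. For $(3) \Rightarrow (1)$, a compact Hausdorff totally disconnected space admits a basis of clopens, so $X$ is zero-dimensional. For $(1) \Rightarrow (2)$, I would combine zero-dimensionality with the T0 axiom (which holds for any spectral space, being in particular sober) to obtain Hausdorff. The step $(2) \Rightarrow (3)$ is the one where the spectral hypothesis does real work: in a Hausdorff space every quasi-compact subset is closed, so the basis of quasi-compact opens supplied by the spectral structure is actually a basis of clopens; hence $X$ is totally disconnected, and quasi-compactness being built into the definition of spectral, $X$ is profinite.

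For the remaining equivalence with $(4)$, the right tool is the patch (constructible) topology $X_{\mathrm{con}}$ of the spectral space $X$, which is always profinite and is the coarsest topology refining both the given topology and the inverse topology $X_{\inv}$. For $(3) \Rightarrow (4)$: in a profinite $X$ the specialization order is trivial, so by \cref{rem:gen} the generalization-closed quasi-compact subsets are exactly the quasi-compact subsets, and in compact Hausdorff spaces these coincide with the closed sets; consequently the Thomason subsets coincide with the closed subsets of $X$, and the inverse topology agrees with the original. For $(4) \Rightarrow (3)$: the inclusions $X, X_{\inv} \hookrightarrow X_{\mathrm{con}}$ collapse under the hypothesis $X = X_{\inv}$ to $X = X_{\mathrm{con}}$, so $X$ inherits profiniteness from its patch topology.

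The main conceptual obstacle is the argument for $(4)$, which hinges on knowing that the patch topology of any spectral space is profinite and sits as the join of the two topologies; once this standard fact is accepted, the equivalence follows cleanly. Everything else is straightforward point-set topology in the spectral setting.
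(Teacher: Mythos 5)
The paper itself states this lemma as a standard recollection and does not supply a proof, so there is no ``paper's proof'' to compare against; what follows is a direct evaluation of your argument.

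Most of your chain is fine, and your handling of $(4)$ via the identity $X_{\cons} = X \vee X_{\inv}$ (the constructible topology being the join of the given topology and the inverse topology) is exactly the right move: $(4)\Rightarrow(3)$ because $X = X_{\inv}$ forces $X = X_{\cons}$, which is always profinite; and $(3)\Rightarrow(4)$ because in a profinite space the specialization order is trivial, so a set is Thomason-closed precisely when it is quasi-compact, which in a compact Hausdorff space means closed. Your $(2)\Rightarrow(3)$ argument (quasi-compact opens become clopen once the space is Hausdorff) is also correct.

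The gap is in your treatment of condition $(1)$. You interpret ``zero-dimensional'' in the point-set sense of having a basis of clopen sets -- your $(3)\Rightarrow(1)$ step says exactly this -- but the paper fixes the \emph{Krull} convention at the start of \cref{sec:zerodimtt}: the dimension of a topological space is the supremum of lengths of chains of irreducible closed subsets (and this is the notion used consistently in \cref{def:ttdimension}, the proof of \cref{lem:vNr_zerodim}, \cref{lem:weaklynoeth}, etc.). For a sober space, Krull dimension $0$ is equivalent to the specialization order being trivial, i.e.\ $T_1$. With that reading, your step $(1)\Rightarrow(2)$ -- ``combine zero-dimensionality with $T_0$ to obtain Hausdorff'' -- does not go through: $T_1$ alone does not imply Hausdorff, and the implication ``$T_1$ spectral $\Rightarrow$ Hausdorff'' is precisely the nontrivial content of the lemma that needs an argument using the spectral structure. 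The standard proof uses compactness of the constructible topology: if $U$ is quasi-compact open and $y \in \overline{U}\setminus U$, then the directed family $\{W \cap U : y\in W \text{ qc open}\}$ of nonempty constructible-closed subsets of $U$ has, by compactness of $U$ in $X_{\cons}$, a point $z$ in its intersection; then $z \in U$, $z\neq y$, and $z$ lies in every quasi-compact open containing $y$, so $z\rightsquigarrow y$ is a nontrivial specialization, contradicting $T_1$. Hence quasi-compact opens are clopen and $X$ is Hausdorff. (Equivalently, one can argue via Priestley duality that trivial specialization forces $X = X_{\cons}$.) Your proof as written establishes the equivalence of $(2)$, $(3)$, $(4)$, and the \emph{point-set} version of $(1)$, but needs this extra step to connect to the paper's Krull-dimension condition.
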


We will also require a further characterization of the category of profinite spaces, justifying the terminology.

\begin{lemma}\label{lem:proflim}
A topological space $X$ is profinite if and only if it is is a limit of a pro-object in the category of finite sets. That is, it is homeomorphic to a limit of a cofiltered system of finite discrete spaces.
\end{lemma}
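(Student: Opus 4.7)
The plan is to prove the two directions separately, with the nontrivial content being the presentation of an arbitrary profinite space as a limit.

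For the ``if'' direction, suppose $X \cong \lim_i X_i$ for a cofiltered diagram of finite discrete spaces. Each $X_i$ is trivially profinite, and the paper has already recorded that limits of profinite spaces are computed in $\Top$. So it suffices to observe that arbitrary products and equalizers in $\Top$ preserve the three properties defining profiniteness: products of Hausdorff (resp.\ totally disconnected) spaces are Hausdorff (resp.\ totally disconnected), products of compact spaces are compact by Tychonoff, and equalizers are closed subspaces, inheriting all three properties.

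For the ``only if'' direction, let $X$ be profinite. I would let $I$ be the poset of finite clopen partitions of $X$ ordered by refinement, and for each $\mathcal{P} \in I$ consider the finite discrete quotient $X/\mathcal{P}$. The poset $I$ is cofiltered: any two partitions $\mathcal{P}, \mathcal{Q} \in I$ have the common refinement $\{P \cap Q \mid P \in \mathcal{P}, Q \in \mathcal{Q}, P\cap Q \neq \varnothing\}$, which is again a finite clopen partition. The quotient maps $X \to X/\mathcal{P}$ are compatible with refinement and assemble into a continuous comparison map
\[
\phi\colon X \longrightarrow \lim_{\mathcal{P} \in I} X/\mathcal{P}.
\]
It remains to show $\phi$ is a homeomorphism. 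Since $X$ is compact and the target is Hausdorff (by the ``if'' direction), any continuous bijection will automatically be a homeomorphism, so I only need bijectivity.

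For surjectivity, $\phi(X)$ is compact, hence closed in the limit; since $\phi(X)$ surjects onto each finite quotient $X/\mathcal{P}$, it meets every basic open subset of the limit and is therefore dense, so $\phi(X)$ equals the whole limit. For injectivity, given $x \neq y$ in $X$, the main step is to produce a clopen $U \subseteq X$ with $x \in U$ and $y \notin U$, because then $\{U, X \setminus U\}$ is an object of $I$ on which $\phi$ already separates $x$ from $y$. This clopen separation is the heart of the argument and constitutes the main obstacle: it relies on the classical fact that in a compact Hausdorff totally disconnected space the connected component and the quasi-component of a point both equal the singleton $\{x\}$, so the intersection of all clopens containing $x$ is $\{x\}$ and compactness then provides a single clopen neighbourhood of $x$ missing $y$. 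Equivalently, one can invoke that compact Hausdorff totally disconnected spaces are zero-dimensional in the classical sense, i.e., admit a basis of clopens. Either way, clopen separation gives injectivity and finishes the proof.
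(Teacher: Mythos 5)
Your proof is correct, and it is the standard argument for this classical result. The paper itself gives no proof of \cref{lem:proflim}; it records the statement as a well-known fact (and \cref{lem:charofpro} immediately above is similarly stated without proof), with the surrounding discussion referring to \cite{book_spectralspaces} for this circle of ideas. Both directions of your argument are sound: the ``if'' direction by checking that Hausdorffness, compactness, and total disconnectedness are preserved by products and closed subspaces, and the ``only if'' direction by forming the cofiltered poset of finite clopen partitions, producing the comparison map, and deducing it is a homeomorphism from compactness of the source, Hausdorffness of the target, surjectivity via density of a compact (hence closed) image, and injectivity via the clopen-separation property of compact Hausdorff totally disconnected spaces. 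The latter is exactly the pivotal point, and you correctly identify it as relying on the classical fact that such spaces are zero-dimensional in the covering-dimension sense, i.e.\ quasi-components equal components equal singletons; this is where all the point-set topology is concentrated. Since the paper omits the proof entirely, there is no alternative route to compare against, but yours is the argument one would expect to find in a textbook treatment.
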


In \cref{sec:ttgeom}, we have seen the importance of pointset topological properties of the Balmer spectrum when constructing $\otimes$-idempotents. The following results will explore the richness of the pointset topology of profinite spaces for tt-geometry.

\begin{remark}\label{rem:openisThom}
    Let $X$ be a profinite space. Then by \cref{lem:charofpro}(4) we conclude that a subset $S \subseteq X$ is Thomason if and only if it is open.
\end{remark}

\begin{remark}\label{rem:complementisthom}
Let $X$ be a profinite space. Then for any $x \in X$, we have that $\{x\}^\mathrm{c} := X \setminus \{x\}$ is open, and therefore Thomason by \cref{rem:openisThom}. In particular every $x \in X$ is the complement of a Thomason subset. We will make repeated use of this fact throughout the remainder of this paper.

More generally, if $X$ is an arbitrary spectral space, then $Y = \{x\}^c$ is Thomason if and only if $\gen(x) = \{x\}$. Indeed, $Y$ is Thomason if and only if it is inversely open. Unwrapping definitions, this asks that $Y$ is open in $X_{\cons}$ and specialization closed in $X$. The first condition is always satisfied because $X_{\cons}$ is Hausdorff so that $\{x\} \subseteq X_{\cons}$ is closed, and as such we conclude that $Y$ is Thomason if and only if it is specialization closed, if and only if $\{x\}$ is generically closed.
\end{remark}

\begin{lemma}\label{lem:weaklynoeth}
Let $X$ be a profinite space. Then $X$ is generically Noetherian, and as such weakly Noetherian. In particular, for every $x \in X$ we isolate $\{x\} = X \cap \{x\}$.
\end{lemma}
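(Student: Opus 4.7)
The plan is essentially to unpack the Hausdorff description of profinite spaces from \cref{lem:charofpro} and assemble it with \cref{rem:complementisthom} and \cref{lem:genimpweak}. Both claims collapse once one observes that in a Hausdorff space every singleton is closed, so generalization becomes trivial.

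First I would prove generic Noetherianity. By \cref{lem:charofpro}, $X$ is Hausdorff, hence $T_1$, so $\overline{\{y\}} = \{y\}$ for every $y \in X$. Thus for any $x \in X$ the generalization set is
\[
\gen(x) = \{y \in X \mid x \in \overline{\{y\}}\} = \{y \in X \mid x = y\} = \{x\},
\]
a one-point topological space, which is trivially Noetherian. By \cref{def:genNoeth}, $X$ is generically Noetherian, and by \cref{lem:genimpweak} it is therefore also weakly Noetherian.

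To exhibit the weak visibility of $x$ explicitly, I take as witnesses the Thomason subsets $Y = X$ (whose complement $\varnothing$ is trivially quasi-compact open, so $X$ itself is closed with quasi-compact complement) and $Z = \{x\}^{c}$, which is Thomason by \cref{rem:complementisthom}. These yield
\[
Y \cap Z^{c} = X \cap \{x\} = \{x\},
\]
which is the desired isolation formula and simultaneously gives a direct proof that $x$ is weakly visible, bypassing the detour through \cref{lem:genimpweak} if one wishes.

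There is essentially no obstacle: the result is a direct consequence of the characterizations of profinite spaces collected in \cref{lem:charofpro} together with the pointset observation \cref{rem:complementisthom} already made for profinite spaces. The only thing worth flagging is that the Hausdorff condition is doing all the work — it simultaneously forces $\gen(x)$ to collapse to $\{x\}$ (yielding the Noetherian property of every generalization set) and makes $\{x\}^{c}$ open, hence Thomason in this setting.
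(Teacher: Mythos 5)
Your proof is correct and follows the same route as the paper's: observe that Hausdorffness forces $\gen(x) = \{x\}$, which is trivially Noetherian, and then invoke \cref{lem:genimpweak} together with the Thomason-ness of $\{x\}^c$ from \cref{rem:complementisthom} for the explicit isolation formula. You spell out the $T_1$ argument for $\gen(x) = \{x\}$, which the paper leaves implicit, but the substance is identical.
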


\begin{proof}
Let $x \in X$. Then we have that $\mathrm{gen}(x) = \{x\}$ is a Noetherian subspace. The claim regarding $X$ being weakly Noetherian then follows from \cref{lem:genimpweak} with the explicit description of $\{x\}$ as a weakly visible subset being facilitated by \cref{rem:complementisthom}.
\end{proof}

\begin{remark}
A profinite space is Noetherian if and only if it is a finite discrete space.
\end{remark}

The final ingredient that we will need from pointset topology for our study is the \emph{Cantor--Bendixson} filtration on a profinite space. In \cref{sec:qgsp_lgp} this will play a key role in resolving stratification for our categories of interest. 

\begin{definition}\label{defn:cbrank}
Let $X$ be a topological space and write $\delta X$ for the set of non-isolated points of $X$. The set $\delta X$ is called the \emph{Cantor--Bendixson derivative} of $X$. For $\lambda \in \Ord$ we define $\delta^{\lambda} X$ as a subset of $X$ via transfinite recursion on $\lambda$:
    \begin{itemize}
        \item $\delta^0 X = X$;
        \item $\delta^{\lambda + 1} X = \delta \delta^{\lambda} X$;
        \item If $\lambda$ is a limit ordinal then $\delta^{\lambda} X = \cap_{\nu < \lambda} \delta^{\nu} X$.
    \end{itemize}
Moreover, we set $\delta^{\infty} X = \bigcap_{\lambda \in \Ord}\delta^{\lambda} X$. For a point $x \in X$ we then define the \emph{Cantor--Bendixson} rank of $x$ as follows:
\[
\cbrank_X(x) = 
\begin{cases}
    \infty & \text{if } x \in \delta^{\infty}X; \\
    \sup\{\lambda \in \Ord \mid x \in \delta^{\lambda}X\} & \text{otherwise}.
\end{cases}
\]
If the ambient space is clear from context, we also write $\cbrank(x)$ for $\cbrank_X(x)$. Finally, the \emph{Cantor--Bendixson rank} of a non-empty topological space $X$, denoted $\cbrank(X)$, is the supremum of all $\lambda \in \Ord$ with $\delta^{\lambda}X \neq \varnothing$ or $\infty$ if the supremum does not exist.
\end{definition}

\begin{remark}\label{rem:BDscattered}
We note that, by construction, having Cantor--Bendixson rank is equivalent to the space $X$ being \emph{scattered}. That is, every non-empty closed subset $S \subseteq X$ contains an isolated point with respect to the subspace topology. In general, the Cantor--Bendixson process terminates in the \emph{perfect hull} of $X$, which is a perfect space. We remind the reader that a space $X$ is said to be \emph{perfect} if it contains no isolated singletons.
\end{remark}

We conclude with an observation regarding when certain compact spaces are scattered by considering the size of their minimal basis. For $X$ a topological space, write $w(X)$ for its weight (i.e., the cardinality of the minimal sized basis of $X$). A space $X$ is \emph{countably based} if $w(X)$ is countable. 

\begin{proposition}[{\cite[Theorem 8.2.3]{book_spectralspaces}}]\label{prop:countbased}
    Let $X$ be an infinite compact space. Then the following are equivalent:
    \begin{enumerate}
        \item $X$ is countable.
        \item $X$ is scattered and is countably based.
    \end{enumerate}
\end{proposition}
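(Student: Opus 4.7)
The plan is to establish the equivalence via a Cantor--Bendixson argument (\cref{defn:cbrank}) together with the Baire category theorem, in the natural setting where ``compact'' means compact Hausdorff (as it is for profinite spaces, the only case used in the paper). The ``infinite'' hypothesis merely excludes the trivially true finite case. Both directions exploit the interplay between the descending Cantor--Bendixson filtration and countability.

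For $(1) \Rightarrow (2)$, I would first show $X$ is scattered by a Baire category argument: any non-empty closed subspace $S \subseteq X$ is itself a countable compact Hausdorff space, hence a Baire space; if $S$ had no isolated points, the presentation $S = \bigcup_{s \in S}\{s\}$ would realise $S$ as a countable union of closed nowhere-dense sets, contradicting Baire's theorem. To upgrade this to countably based, the key intermediate fact is that a scattered compact Hausdorff space is zero-dimensional, so every point has a clopen neighbourhood base. Proceeding by transfinite recursion on the Cantor--Bendixson rank $\alpha$ of $X$ (which is a countable ordinal, since at each stage $\delta^{\lambda}X \setminus \delta^{\lambda+1}X$ is a non-empty discrete subspace of the countable space $X$), one selects at each stage a countable collection of clopen sets witnessing the local neighbourhood structure of the countably many points in that layer; the union over the countably many stages yields a countable base.

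For $(2) \Rightarrow (1)$, I would use that a countably based space is hereditarily Lindel\"of. Since $X$ is scattered, the Cantor--Bendixson filtration is a strictly descending chain of closed subsets terminating at $\varnothing$; hereditary Lindel\"of-ness forces its length to be a countable ordinal $\alpha$, because a strictly descending $\omega_1$-chain of closed subsets would produce a non-Lindel\"of open cover of a subspace. Each layer $\delta^{\lambda}X \setminus \delta^{\lambda+1}X$ is a discrete subspace of the hereditarily Lindel\"of space $X$, hence countable. Writing $X = \bigsqcup_{\lambda < \alpha}(\delta^{\lambda}X \setminus \delta^{\lambda+1}X)$ presents $X$ as a countable union of countable sets, which is countable.

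The main obstacle is the ``countably based'' half of $(1) \Rightarrow (2)$, which requires genuinely transferring pointwise countability into a global countable base; this is where one must invoke zero-dimensionality of scattered compact Hausdorff spaces and carefully control the transfinite construction so that the base does not blow up. The opposite direction is a comparatively clean transfinite counting argument once hereditary Lindel\"of-ness is in hand, and the scattered-ness half of $(1) \Rightarrow (2)$ is essentially an application of Baire's theorem.
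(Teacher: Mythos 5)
The paper offers no proof of this proposition; it is cited verbatim to \cite[Theorem 8.2.3]{book_spectralspaces}, so there is nothing internal to compare your argument against, and the evaluation is purely of your sketch. The direction $(2)\Rightarrow(1)$ is correct: second countable implies hereditarily Lindel\"of, the strictly descending Cantor--Bendixson chain therefore has countable length, and each layer is a discrete Lindel\"of subspace, hence countable. Likewise, the Baire-category argument that a countable compact Hausdorff space is scattered is fine.

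The gap, which you flagged yourself, is in the ``countably based'' half of $(1)\Rightarrow(2)$; the transfinite recursion as described does not produce a base. Picking a clopen set for each point ``witnessing'' that it is isolated in its Cantor--Bendixson layer does not yield a neighbourhood base at that point: already for $X=\N^*$, a single clopen $V_\infty$ with $V_\infty\cap\delta^1 X=\{\infty\}$ cannot be refined using the isolated singletons $\{n\}$ to obtain the shrinking tails $\{n,n+1,\dots,\infty\}$, so the recursion supplies nothing at the limit point. What you are silently invoking is first countability, which would need an independent argument. The cleanest repair dispenses with the Cantor--Bendixson filtration entirely: for each ordered pair $x\neq y$ in the countable set $X$, use regularity to choose an open $W_{xy}$ with $x\in W_{xy}$ and $y\notin\overline{W_{xy}}$; given open $U\ni x$, compactness of $X\setminus U$ yields finitely many $y_1,\dots,y_n$ with $\bigcap_i\overline{W_{xy_i}}\subseteq U$, so the countable family $\{W_{xy}\}$ is a subbase and $X$ is second countable. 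This also renders the intermediate appeal to zero-dimensionality of scattered compact Hausdorff spaces unnecessary (it becomes a corollary rather than an input).
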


This becomes particularly useful in conjunction with \cref{prop:scatteredimpl2g} below.

\subsection{Zero-dimensional tensor-triangulated categories}\label{ssec:zerodimttcats}

Now that we have an understanding of those spectral spaces which are zero-dimensional, in this subsection we will begin our exploration of those tt-categories whose Balmer spectrum is zero-dimensional. While covering the case of rational $G$-spectra for $G$ a profinite group, there are many other naturally occurring examples of interest. First, let us define the concept of \emph{dimension} for a tt-category before specializing to the zero-dimensional case.

\begin{definition}\label{def:ttdimension}
    We define the \emph{(Krull--Balmer) dimension} $\dim(\sfK)$ of an (essentially small) tt-category $\sfK$ as the dimension of its spectrum $\Spc(\sfK)$. If $\sfT$ is a big tt-category, we set $\dim(\sfT) = \dim(\sfT^{\omega})$, and say that $\sfT$ is $\dim(\sfT)$-dimensional.
\end{definition}

Our focus in this paper is on zero-dimensional rigidly-compactly generated tt-categories $\sfT$. We collect a couple of examples.

\begin{example}\label{ex:rings}
    Let $R$ be a commutative ring and write $\sfD(R)$ for its derived category. Thomason proved in \cite{thomasonclassification} that $\Spc(\sfD(R)^{\omega})$ is homeomorphic to $\Spec(R)$; in fact, this homeomorphism is realized by Balmer's comparison map \cite{balmer_3spectra}. Consequently, we obtain an equality
    \[
    \dim(R) = \dim(\sfD(R)).
    \]
    In the zero-dimensional case, profinite spaces can be realized naturally as the spectrum of a commutative ring and hence of a tt-category: If $X$ is a profinite space, then the ring of locally constant functions $C(X,\Q)$ has $\Spec (C(X,\Q)) \cong X$. In general, Hochster \cite{hochster} proved that any spectral space appears as the spectrum of a commutative ring, but this cannot be made functorial (see \cite[Proposition 3]{hochster} or \cite[12.6.2]{book_spectralspaces}).
\end{example}

\begin{example}\label{ex:sheaves}
    If $X$ is any spectral space, then the derived category $\sfD(\Sh_{\Q}(X))$ of sheaves of rational vector spaces on $X$ is a compactly generated tt-category under the pointwise tensor-product and has spectrum of compacts\footnote{As a caveat, note that we do not claim that $\sfD(\Sh_{\Q}(X))$ is rigidly-compactly generated; however, the full subcategory of compact objects does form a tt-category.} homeomorphic to the constructible topology on $X$, see \cite{Aoki2023}. In other words, $\sfD(\Sh_{\Q}(X))$ is zero-dimensional.
\end{example}

For a zero-dimensional example from homotopy theory and the main tt-category of interest in this paper, see \cref{sec:qgsp_lgp}. In passing, we mention the following useful observation:

\begin{remark}\label{lem:zerodim_basicclosed}
Let $\sfT$ be a zero-dimensional tt-category. For a subset $C \subseteq \Spc(\sfT^{\omega})$, the following statements are equivalent:
    \begin{enumerate}
        \item $C$ is quasi-compact open;
        \item $C$ is closed and open;
        \item $C = \supp(x)$ for some $x \in \sfT^{\omega}$.
    \end{enumerate}
Indeed, the subsets of $\Spc(\sfT^\omega)$ of the form $\supp(x)$ for $x$ compact are precisely the \emph{Thomason closed} subsets, i.e., those that have quasi-compact open complement; see for example \cite[Remark 1.14]{BarthelHeardSanders2023}. The result then follows from \cite[Corollary 1.3.15]{book_spectralspaces}.
\end{remark}

The zero-dimensional assumption on the spectrum allows for a tight control over the local tt-geometry of $\sfT$, which we explore here and then further in the next subsection. We say that a point $x$ in a spectral space $X$ is \emph{generically closed} if $\gen(x) = \{x\}$, i.e., if it is minimal with respect to the specialization order. 

\begin{proposition}\label{prop:stalkcostalk}
Suppose $\sfT$ is a rigidly-compactly generated tt-category and let $\sfP \in \Spc(\sfT^{\omega})$ be a generically closed point. The functor $\Gamma_{\sfP}$ is then the finite localization away from $\{\sfP\}^c \subseteq \Spc(\sfT^{\omega})$ and has kernel $\langle\sfP \rangle$. Moreover, there are equalities 
\[
\sfT_{\sfP} = \Gamma_{\sfP}\sfT = \Lambda^{\sfP}\sfT
\]
of localizing ideals in $\sfT$. In particular, if $\sfT$ is zero-dimensional then this holds for all points of the spectrum.
\end{proposition}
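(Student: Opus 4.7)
The plan is to leverage the generic closedness of $\sfP$ to exhibit it as an especially simple weakly visible point in the sense of Balmer--Favi, which will force the abstract support-theoretic description of $\Gamma_{\sfP}$ to collapse onto a finite localization.

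First, by \cref{rem:complementisthom}, the hypothesis $\gen(\sfP) = \{\sfP\}$ is equivalent to $Z \coloneqq \{\sfP\}^c$ being a Thomason subset of $X = \Spc(\sfT^{\omega})$. Taking $Y = X$ as a second (trivially) Thomason subset, with $e_Y = \unit$ and $f_Y = 0$, I would write $\{\sfP\} = Y \cap Z^c$ to certify that $\sfP$ is weakly visible and that the Balmer--Favi idempotent simplifies to $g(\sfP) = e_Y \otimes f_Z = f_{\{\sfP\}^c}$. The next step is to identify the functor $\Gamma_{\sfP} = -\otimes g(\sfP) = -\otimes f_{\{\sfP\}^c}$ with the finite localization $\lambda_{\{\sfP\}}\colon \sfT \to \sfT(\{\sfP\})$ away from $\{\sfP\}^c$. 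On kernels, the compact objects supported on $\{\sfP\}^c$ are precisely those contained in the prime $\sfP$, so by Balmer's classification (\cref{eq:suppttclassification}) the kernel of the finite localization is $\langle \sfP\rangle$. Consequently, the Verdier quotient $\sfT/\langle \sfP\rangle = \sfT_{\sfP}$ agrees with $\sfT(\{\sfP\}) = \sfT(\gen(\sfP))$, as already recorded in \eqref{eq:ttstalklocalsection}.

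For the asserted equalities of localizing ideals in $\sfT$, I would use that $f_{\{\sfP\}^c}$ is a smashing unital idempotent: its essential image under $-\otimes f_{\{\sfP\}^c}$, identified with the localizing subcategory $\sfT(\{\sfP\})$ via the fully faithful right adjoint, coincides with the localizing ideal it generates. This yields $\Gamma_{\sfP}\sfT = \langle g(\sfP)\rangle = \sfT_{\sfP}$ as localizing ideals. The remaining equality $\Lambda^{\sfP}\sfT = \Gamma_{\sfP}\sfT$ follows immediately from the symmetric monoidal equivalence recorded in \cref{rem:stalkcostalkduality}, which applies to any weakly visible point. Finally, if $\sfT$ is zero-dimensional then $\Spc(\sfT^{\omega})$ is profinite by \cref{lem:charofpro}, hence Hausdorff, so every singleton is generically closed and the argument applies uniformly to every point in the spectrum.

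The chief subtlety is the identification of the localizing ideal $\langle f_{\{\sfP\}^c}\rangle$ with the essential image $\sfT(\{\sfP\})$: one must invoke that a smashing unital idempotent generates its essential image as a \emph{localizing ideal}, not merely sitting inside it. Once this standard feature of smashing localizations is in hand, the remainder of the proof is a direct bookkeeping translation between the support-theoretic and localization-theoretic descriptions of the tt-stalk.
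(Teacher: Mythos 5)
Your first half tracks the paper's own argument: you identify $\{\sfP\}^c$ as Thomason via \cref{rem:complementisthom}, simplify the Balmer--Favi idempotent to $g(\sfP) = f_{\{\sfP\}^c}$, recognize $\Gamma_{\sfP}$ as the finite localization, and read off the kernel $\langle\sfP\rangle$ from Balmer's classification. All of that is correct and matches the paper step for step, modulo the usual identification of the Verdier quotient $\sfT_\sfP$ with the subcategory of local objects via the fully faithful right adjoint.

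The final step, however, has a genuine gap. You cite \cref{rem:stalkcostalkduality} to conclude $\Lambda^{\sfP}\sfT = \Gamma_{\sfP}\sfT$, remarking that this "applies to any weakly visible point." But that remark only records a \emph{symmetric monoidal equivalence of categories} $\Lambda^{\sfP}\sfT \cong \Gamma_{\sfP}\sfT$, not an equality of subcategories of $\sfT$ --- and the proposition asks for the latter ("equalities of localizing ideals in $\sfT$"). For a generic weakly visible prime these two subcategories are genuinely different: in $\sfD(\Z)$ with $\sfP$ corresponding to $(p)$ (weakly visible but not generically closed), $\Gamma_{\sfP}\sfD(\Z)$ is the derived category of $p$-power torsion modules while $\Lambda^{\sfP}\sfD(\Z)$ is the derived category of derived $p$-complete modules; these are abstractly equivalent (Greenlees--May duality) but are distinct full subcategories. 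The very fact that your invocation would apply to all weakly visible primes, whereas the proposition's conclusion is false beyond the generically closed ones, is the signal that this step cannot be correct. The additional input that the generic closedness buys is precisely that $g(\sfP)$ collapses to the \emph{smashing} idempotent $f_{\{\sfP\}^c}$, so that $\Gamma_{\sfP} = f_{\{\sfP\}^c} \otimes -$ and $\Lambda^{\sfP} = \underline{\hom}(f_{\{\sfP\}^c},-)$. The paper then invokes abstract local duality \cite[Theorem 2.21]{BarthelHeardValenzuela2018} to conclude that for any Thomason $Y$ the endofunctors $f_Y \otimes -$ and $\underline{\hom}(f_Y,-)$ have the same essential image in $\sfT$, which is what actually upgrades the equivalence to an equality. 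Your proof needs this (or an equivalent local-duality statement) in place of \cref{rem:stalkcostalkduality}.
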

\begin{proof}
    Let $X = \Spc(\sfT^{\omega})$. From \cref{rem:complementisthom} and our assumption we have that $\{\sfP\}^c$ is Thomason and as such we can  write $\{\sfP\}$ as $X \cap (\{\sfP\}^c)^c$, an intersection of a Thomason subset and the complement of a Thomason subset. By definition, we thus obtain equivalences
    \begin{equation}\label{eq:stalkidempotents}
        g({\sfP}) \simeq e_{X} \otimes f_{\{\sfP\}^c} \simeq f_{\{\sfP\}^c}.
    \end{equation}
    In particular, this proves that $\Gamma_{\sfP} = g({\sfP}) \otimes -$ identifies with the finite localization away from $\{\sfP\}^c$. This also implies that we may determine the kernel of $\Gamma_{\sfP}$ via support:
    \[
    \Supp (\langle \sfP\rangle ) = \gen(\sfP)^c = \{\sfP\}^c,
    \]
    where the first equality is \eqref{eq:suppP}, while the second equality uses again that $\{\sfP\}$ is generically closed. It follows that the kernel of $\Gamma_{\sfP}$ is $\langle\sfP\rangle$ and that $\sfT_{\sfP} = \Gamma_{\sfP}\sfT$. 
    
    In order to identify the colocalization, first note that \eqref{eq:stalkidempotents} dualizes to an equivalence 
    \[
    \Lambda^{\sfP} = \Hom(g({\sfP}),-) \simeq \hom(f_{\{\sfP\}^c},-).
    \]
    By abstract local duality theory \cite[Theorem 2.21]{BarthelHeardValenzuela2018}, the endofunctors $f_Y\otimes -$ and $\hom(f_Y,-)$ have the same essential image in $\sfT$ for any Thomason $Y \subseteq \Spc(\sfT^{\omega})$, hence $\Gamma_{\sfP}\sfT = \Lambda^{\sfP}\sfT$.
\end{proof}

\begin{remark}\label{rem:unambiguous}
    This result enables us to talk \emph{unambiguously} about the stalks of objects in $\sfT$ as well as of $\sfT$ itself at least whenever $\sfT$ is zero-dimensional; cf.~the discussion in \cref{rem:stalkambiguity}.
\end{remark}

Once we have some understanding of the local structure of $\sfT$, we turn towards the question if and how $\sfT$ can be assembled from its tt-stalks. As explained in \cref{ssec:ttstratification}, this is governed by the local-to-global principle. We recall the following result of Stevenson \cite{stevenson_localglobal} that we will return to in our example of interest in \cref{sec:qgsp_lgp}.

\begin{proposition}[Stevenson]\label{prop:scatteredimpl2g}
Let $\sfT$ be a zero-dimensional rigidly-compactly generated tt-category. If $\Spc(\sfT^{\omega})$ is scattered in the sense of \cref{rem:BDscattered}, then $\sfT$ satisfies the local-to-global principle.
\end{proposition}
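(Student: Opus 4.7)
The target assertion is that for every $t \in \sfT$, we have $t \in \sfL_t := \langle t \otimes g(\sfP) \mid \sfP \in \Supp(t)\rangle$; the reverse containment is automatic. The plan is to exploit the scatteredness hypothesis to build $t$ out of pieces concentrated at a single prime via a transfinite filtration of the spectrum. Set $X = \Spc(\sfT^{\omega})$, and for each ordinal $\lambda$ let $Y_\lambda = \delta^{\lambda} X$, a closed subset of $X$, and $U_\lambda = X \setminus Y_\lambda$. Because $X$ is profinite, $U_\lambda$ is open and therefore Thomason by \cref{rem:openisThom}, so it gives rise to a smashing idempotent $e_{U_\lambda}$. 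By scatteredness there exists an ordinal $\alpha$ with $Y_\alpha = \varnothing$, hence $e_{U_\alpha} \simeq \unit$.

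The core of the argument is a transfinite induction showing $t \otimes e_{U_\lambda} \in \sfL_t$ for all $\lambda \leq \alpha$; specialising at $\lambda = \alpha$ then yields $t \simeq t \otimes e_{U_\alpha} \in \sfL_t$, as required. The base case is $t \otimes e_{U_0} = 0$. For a limit ordinal $\lambda$, we have $U_\lambda = \bigcup_{\mu < \lambda} U_\mu$, and since the formation of smashing idempotents from unions of Thomason subsets is compatible with filtered colimits (as $\sfL_t$ is localizing), the step follows from the induction hypothesis applied to each $\mu < \lambda$. For the successor step, I would use the cofibre sequence
\[
t \otimes e_{U_\lambda} \longrightarrow t \otimes e_{U_{\lambda+1}} \longrightarrow t \otimes e_{U_{\lambda+1}} \otimes f_{U_\lambda}
\]
and show that the right-hand term lies in $\sfL_t$.

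The crux of the argument — and the main obstacle — lies in analysing this cofibre. By definition of $\delta$, the set $Y_\lambda \setminus Y_{\lambda+1} = U_{\lambda+1} \setminus U_\lambda$ consists exactly of the points of $Y_\lambda$ that are \emph{isolated in the subspace topology on $Y_\lambda$}. For each such $\sfP$, pick an open $V_\sfP \subseteq X$ with $V_\sfP \cap Y_\lambda = \{\sfP\}$; then $\{\sfP\} = V_\sfP \cap Y_\lambda$ exhibits $\sfP$ as weakly visible with $g(\sfP) \simeq e_{V_\sfP} \otimes f_{U_\lambda}$. The indexing set $Y_\lambda \setminus Y_{\lambda+1}$, equipped with its subspace topology in $X$, is therefore discrete, and I would use the profinite structure of $X$ to argue that $e_{U_{\lambda+1}} \otimes f_{U_\lambda}$ decomposes as the coproduct $\bigoplus_{\sfP} g(\sfP)$ indexed by isolated $\sfP \in Y_\lambda$, making use of the compatibilities of smashing idempotents under disjoint unions (\cref{prop:bhs-strat-idempotents}) and of the fact that we may refine $U_{\lambda+1}$ by a pairwise disjoint clopen cover approximating this discrete subspace. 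Tensoring with $t$ then expresses $t \otimes e_{U_{\lambda+1}} \otimes f_{U_\lambda}$ as a coproduct of the generators $t \otimes g(\sfP)$ of $\sfL_t$, and combining this with $t \otimes e_{U_\lambda} \in \sfL_t$ via the cofibre sequence above closes the induction. The delicate point, which requires the scattered-profinite hypothesis rather than merely weak noetherianness, is precisely this splitting of the successor cofibre as a coproduct, since an arbitrary open cover of $U_{\lambda+1} \setminus U_\lambda$ by the $V_\sfP$ need not be disjoint in $X$.
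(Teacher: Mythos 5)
The paper cites Stevenson for this proposition and records no proof of its own, so there is nothing internal to match against. Your transfinite induction on the Cantor--Bendixson filtration is exactly what Stevenson's dimension-function machinery specialises to in this setting, and modulo one imprecision the argument is correct. The place to sharpen is the successor step: a global pairwise disjoint clopen refinement of $U_{\lambda+1}$ that isolates all of $D_\lambda := Y_\lambda \setminus Y_{\lambda+1}$ need not exist, and the argument does not need it. Instead, write $e_{U_{\lambda+1}} = \colim_{V} e_V$ as a filtered colimit over quasi-compact opens $V \subseteq U_{\lambda+1}$ (equivalently clopens, since $X$ is profinite). For each such $V$ the set $V \cap Y_\lambda$ is a closed discrete subset of the compact space $V$, hence finite, and a clopen partition of $V$ isolating these finitely many points together with the relations of \cref{prop:bhs-strat-idempotents} yields $e_V \otimes f_{U_\lambda} \simeq \bigoplus_{\sfP \in V\cap Y_\lambda} g(\sfP)$. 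Therefore $t\otimes e_{U_{\lambda+1}}\otimes f_{U_\lambda}$ is a filtered colimit of finite coproducts of the $t\otimes g(\sfP)$, each of which lies in $\sfL_t$; since $\sfL_t$ is localizing, the colimit lies there as well, and your cofibre sequence closes the induction. Your proposed global identification $e_{U_{\lambda+1}}\otimes f_{U_\lambda}\simeq\bigoplus_{\sfP\in D_\lambda}g(\sfP)$ does in fact follow a posteriori, because the transition maps in the filtered system are split inclusions, but it is the filtered-colimit formulation that actually carries the load of the proof.
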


For later use, we conclude this subsection with another consequence of zero-dimensionality:

\begin{remark}\label{rem:telescope}
Any profinite space is generically Noetherian in the sense of \cref{def:genNoeth}. Therefore, by \cref{prop:tc}, the telescope conjecture holds in any stratified zero-dimensional rigidly-compactly generated tt-category. A more general criterion is the subject of the next subsection.
\end{remark}

\subsection{The telescope conjecture}\label{ssecLzerodimtt_telescopeconjecture}

In this subsection, we prove a criterion for when the telescope conjecture (\cref{def:tc}) holds in a zero-dimensional tt-category $\sfT$, based on the homological spectrum and its support function (see \cref{def:homological_support}). We will begin with a lemma on the support of smashing ideals; here, we will write $\sfL^\perp$ for the \emph{right-orthogonal} of a given localizing ideal $\sfL \subseteq \sfT$, i.e., the full subcategory of $\sfT$ containing all objects $t \in \sfT$ with $\Hom(\sfL,t) = 0$.

\begin{lemma}\label{lem:smashingcomplements}
    Let $\sfT$ be a rigidly-compactly generated tt-category such that $\sfT$ satisfies the bijectivity hypothesis (\cref{def:bijhyp}). Then for any smashing ideal $\sfS \subseteq \sfT$ we have $\Supp(\sfS)^c = \Supp(\sfS^\perp)$.
\end{lemma}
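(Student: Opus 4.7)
The plan is to decompose the problem via the defining triangle of the smashing ideal. Let $e_\sfS \to \unit \to f_\sfS$ denote the cofiber sequence of $\otimes$-idempotents associated to $\sfS$, so that $\sfS = \langle e_\sfS \rangle$, $\sfS^\perp = \langle f_\sfS \rangle$ as localizing ideals, and $e_\sfS \otimes f_\sfS \simeq 0$. Applying the identity $\Supp(\langle t \rangle) = \Supp(t)$ (e.g., \cite[Theorem~3.22]{BarthelHeardSanders2023}), the claim reduces to showing that $\Supp(e_\sfS)$ and $\Supp(f_\sfS)$ form complementary subsets of $\Spc(\sfT^{\omega})$.

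The covering inclusion $\Spc(\sfT^{\omega}) = \Supp(e_\sfS) \cup \Supp(f_\sfS)$ is immediate: tensoring the defining triangle with the exact functor $g(\sfP) \otimes -$ and using that $\Supp(g(\sfP)) = \{\sfP\}$ is nonempty forces at least one of $g(\sfP) \otimes e_\sfS$ or $g(\sfP) \otimes f_\sfS$ to be nonzero.

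The disjointness $\Supp(e_\sfS) \cap \Supp(f_\sfS) = \varnothing$ is where the bijectivity hypothesis enters the argument essentially. I plan to lift the question to the homological spectrum, where the tensor-product formula \eqref{eq:homological_tensor} holds unconditionally. From $e_\sfS \otimes f_\sfS \simeq 0$ one obtains $\Supph(e_\sfS) \cap \Supph(f_\sfS) = \varnothing$, while the triangle argument combined with $\Supph(\unit) = \Spch(\sfT^{\omega})$ (valid since $\unit$ is compact and $h(\unit)$ is not contained in any maximal Serre $\otimes$-ideal) yields the cover $\Supph(e_\sfS) \cup \Supph(f_\sfS) = \Spch(\sfT^{\omega})$. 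Thus these two subsets partition $\Spch(\sfT^{\omega})$, and the bijectivity hypothesis transports this partition along $\phi_\sfT$ to a partition of $\Spc(\sfT^{\omega})$. A hypothetical overlap $\sfP \in \Supp(e_\sfS) \cap \Supp(f_\sfS)$ would, via the orthogonality $\Hom_\sfT(\sfS^\perp,\sfS) = 0$, split the triangle $g(\sfP)\otimes e_\sfS \to g(\sfP) \to g(\sfP)\otimes f_\sfS$ into a direct sum $g(\sfP) \simeq (g(\sfP)\otimes e_\sfS) \oplus (g(\sfP)\otimes f_\sfS)$ with both summands nonzero; computing their homological supports via the tensor-product formula and the identification $\Supph(g(\sfP)) = \{\phi_\sfT^{-1}(\sfP)\}$ — which follows under bijectivity by writing $g(\sfP) = e_Y \otimes f_Z$ for Thomason $Y,Z$ with $\{\sfP\} = Y \cap Z^c$ — forces one of the summands to have empty homological support while being nonzero, against the detection properties of $\Supph$ on such idempotent components.

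The main obstacle is precisely this last implication, since the inclusion $\phi_\sfT\Supph(t) \subseteq \Supp(t)$ from \eqref{eq:suppcomparisoninclusion} can in principle be strict even under the bijectivity hypothesis. Its resolution requires leveraging the specific structure of $\otimes$-idempotents arising from smashing ideals, together with the vanishing-detection properties of the pure-injectives $E_{\mathcal{B}_\sfP}$ that the bijectivity hypothesis affords.
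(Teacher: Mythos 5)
Your opening moves match the paper's: reduce to showing $\Supp(e_\sfS)$ and $\Supp(f_\sfS)$ partition $\Spc(\sfT^{\omega})$, obtain the cover from the triangle $e_\sfS \to \unit \to f_\sfS$, and for disjointness pass to the homological spectrum, where $\Supph(e_\sfS) \cap \Supph(f_\sfS) = \varnothing$ follows unconditionally from the tensor-product formula \eqref{eq:homological_tensor}.

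The genuine gap is in your final step, where you assert an orthogonality $\Hom_\sfT(\sfS^\perp,\sfS) = 0$ and use it to split the triangle $g(\sfP)\otimes e_\sfS \to g(\sfP) \to g(\sfP)\otimes f_\sfS$. That orthogonality is false: by definition $\sfS^\perp$ is the \emph{right}-orthogonal, so $\Hom_\sfT(\sfS,\sfS^\perp)=0$, but $\Hom_\sfT(\sfS^\perp,\sfS)$ has no reason to vanish. The connecting map $g(\sfP)\otimes f_\sfS \to \Sigma g(\sfP)\otimes e_\sfS$ goes precisely from a local object to an acyclic one, which is the direction in which nothing vanishes; already in $\sfD(\Z)$ with the finite localization at a prime $p$, one has $\Ext^1_\Z(\Z[1/p],\Z)\neq 0$ and the triangle $e\to\unit\to f$ does not split. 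Your proposed splitting therefore cannot be produced, and the contradiction you want never materializes. You do correctly identify the underlying difficulty --- the comparison inclusion $\phi_\sfT\Supph(t)\subseteq \Supp(t)$ of \eqref{eq:suppcomparisoninclusion} points the wrong way for transferring disjointness from $\Supph$ to $\Supp$, and can be strict even under bijectivity --- but neither the splitting argument nor the closing appeal to unnamed ``detection properties of $\Supph$ on idempotent components'' closes that gap. The paper's proof at this step simply invokes the injectivity of $\phi_\sfT$ together with \eqref{eq:suppcomparisoninclusion}; compare with the proof of \cref{prop:zerodim_telescopeconjecture}, which \emph{adds} the detection hypothesis for $\Supph$ specifically so as to upgrade that inclusion to an equality via \cite[Corollary 3.12]{BarthelHeardSanders2023a}. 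That upgrade is the mechanism your argument is reaching for but does not supply.
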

\begin{proof}
    Consider the smashing localization $L$ of $\sfT$ with kernel $\sfS$, and write $f=L{\unit}$ and $e = \fib(\unit \to f)$ for the associated left and right idempotents, respectively. We will show that $\Supp(e)$ is the complement of $\Supp(f)$ in $\Spc(\sfT^{\omega})$, which then implies the claim. On the one hand, $e \otimes f = 0$, so 
        \[
        \varnothing = \Supph(e \otimes f) = \Supph(e) \cap \Supph(f),
        \]
    using Balmer's tensor product formula for the homological support (see \eqref{eq:homological_tensor}). Since the comparison map $\phi_{\sfT}$ from the homological to the triangular spectrum is injective by the bijectivity hypothesis for $\sfT$, using \eqref{eq:suppcomparisoninclusion} we deduce that $\varnothing = \Supp(e) \cap \Supp(f)$ as well. On the other hand, the triangle $e \to \unit \to f$ implies that $\Supp(e) \cup \Supp(f)$ contains $\Supp(\unit) = \Spc(\sfT^{\omega})$, so $\Supp(e) \cup \Supp(f) = \Supp(\unit) = \Spc(\sfT^c)$.
\end{proof}

\begin{proposition}\label{prop:zerodim_telescopeconjecture}
    Let $\sfT$ be a zero-dimensional rigidly-compactly generated tt-category such that
        \begin{enumerate}
            \item $\sfT$ satisfies the bijectivity hypothesis (\cref{def:bijhyp}); 
            \item and the homological support has the detection property, i.e., $\Supph(t) = \varnothing$ implies $t= 0$ for any $t \in \sfT$.
        \end{enumerate}
    Then the telescope conjecture holds in $\sfT$.
\end{proposition}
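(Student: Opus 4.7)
The plan is to show any smashing ideal $\sfS \subseteq \sfT$ is generated by its compact objects. Write $e \to \unit \to f$ for the idempotent triangle, so $\sfS = \langle e \rangle$. First, I would apply the tensor-product formula \eqref{eq:homological_tensor} to the vanishings $e \otimes f = 0$ and to the partition $\Supph(e) \sqcup \Supph(f) = X$ provided by \cref{lem:smashingcomplements} under the bijectivity hypothesis (identifying $X = \Spc(\sfT^{\omega}) \cong \Spch(\sfT^{\omega})$ and combining this with $\Supph(\unit) \supseteq \Supph(e)\cup \Supph(f)$ from the triangle). The homological detection hypothesis then upgrades this to the characterization
\[
\sfS = \{t \in \sfT \mid \Supph(t) \subseteq \Supph(e)\}, \qquad \sfS^{\perp} = \{t \in \sfT \mid \Supph(t) \subseteq \Supph(f)\}.
\]

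The key step is to show $\Supph(e)$ is Thomason in $X$. Since $\sfT$ is zero-dimensional, $X$ is profinite, so Thomason subsets coincide with open subsets (\cref{rem:openisThom}) and clopen subsets coincide with supports of compact objects (\cref{lem:zerodim_basicclosed}). To establish openness, I would fix $\sfP \in \Supph(e)$ and pass to the tt-stalk $\sfT_{\sfP}$ of \cref{cor:ttstalkformula}, whose spectrum is the singleton $\{\sfP\}$ by \cref{prop:stalkcostalk}. Assuming the hypotheses descend to the stalk, the partition $\{\sfP\} = \Supph(e_{\sfP}) \sqcup \Supph(f_{\sfP})$ collapses to a single side; since $e_{\sfP} \neq 0$, detection forces $f_{\sfP} = 0$, equivalently $e_{\sfP} \simeq \unit_{\sfT_{\sfP}}$. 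Using the filtered colimit description of stalks of objects via finite localizations over quasi-compact open neighborhoods of $\sfP$ (\cref{cor:objectwisestalk}), this stalkwise vanishing lifts to some clopen neighborhood of $\sfP$ of the form $\supp(c)$ (for a compact $c\in \sfT^{\omega}$ provided by \cref{lem:zerodim_basicclosed}) on which $f \otimes c \simeq 0$. In particular $c \in \sfS$ and $\sfP \in \supp(c) \subseteq \Supph(e)$. Varying $\sfP$ exhibits $\Supph(e) = \bigcup_{c \in \sfS \cap \sfT^{\omega}} \supp(c)$ as a union of clopens, hence as a Thomason subset.

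Finally, Balmer's classification \eqref{eq:suppttclassification} assigns to the Thomason subset $\Supph(e)$ the thick ideal $\sfS\cap \sfT^{\omega}$, whose support equals $\Supph(e)$ by construction. Invoking the characterization of $\sfS$ from the first paragraph, we conclude $\sfS = \langle \sfS \cap \sfT^{\omega}\rangle$, which is the telescope conjecture. The main technical obstacle I expect is the descent of the bijectivity hypothesis and the homological detection property from $\sfT$ to each stalk $\sfT_{\sfP}$; this is a continuity-type statement in the spirit of \cref{thm:spchcontinuity} and \cref{cor:continuousttstalk}, and may require either a direct verification using \cref{cor:bijhyp_continuity} for the stalks' presentation as filtered colimits of finite localizations, or a detour avoiding stalks by lifting the vanishing $f \otimes c = 0$ directly from compact approximations in $\sfT$, exploiting that compacts detect $\Supph$ under the ambient hypotheses.
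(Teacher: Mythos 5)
Your first paragraph (establishing the tensor-product formula and detection for $\Supp$) and your closing paragraph (comparing $\sfS$ to the finite localization on $\Supph(e)$ once the latter is Thomason) both track the paper's proof. The gap is in the second paragraph, where you try to show $\Supph(e)$ is open. Under the tensor-product formula and detection property already in place, the condition $f\otimes c = 0$ for a compact $c$ is \emph{equivalent} to $\supp(c)\subseteq \Supp(e)$; so producing such a $c$ with $\sfP\in\supp(c)$ is exactly the assertion that $\sfP$ is an interior point of $\Supp(e)$, which is what you are trying to prove. The filtered colimit description of \cref{cor:objectwisestalk} does not rescue this: vanishing of a filtered colimit forces vanishing at a finite stage only for compact objects, and $f$ is compact precisely when the smashing localization is already finite. (Also, the ``descent of hypotheses to the stalk'' you flag at the end is a red herring: $f_{\sfP}=0$ follows directly from $\sfP\in\Supp(e)=\Supp(f)^{c}$ and the identification $g(\sfP)\otimes f = f_{\sfP}$ from \cref{prop:stalkcostalk}, with no stalkwise hypotheses required.)

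The paper fills the gap you are circling with a genuine structural input from Stevenson, \cite[Proposition 2.14]{stevenson_absolutelyflatrings}: for any smashing localization $L$, the support $\Supp(L\unit)$ is closed in $\Spc(\sfT^{\omega})$. This uses the ring structure of $L\unit$ and the smashing property directly and is not a formal support-theoretic consequence of the proposition's hypotheses---it cannot be reproduced by the kind of stalkwise lifting you propose, nor by the ``compact approximations'' detour you gesture at, which runs into the same circularity. Once openness of $\Supp(e)$ is secured, the paper compares $L$ to the finite localization $L^{\fin}$ on $\Supp(e)$ and shows $\cofib(L^{\fin}\unit\to L\unit) = 0$ by a support calculation, which is a more hands-on version of your third paragraph.
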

\begin{proof}
    As $\sfT$ satisfies the bijectivity hypothesis the comparison map between the tensor-triangular and homological spectra is a bijection, which we may take as the identity. By the assumption that $\Supph$ has the detection property, we can apply \cite[Corollary 3.12]{BarthelHeardSanders2023a} 
    to conclude that the homological and the tensor-triangular support coincide and that both have the detection property. Therefore, as $\Supph$ satisfies the tensor-product formula by \eqref{eq:homological_tensor}, so does $\Supp$. That is, for all $s,t \in \sfT$ we have $\Supp(s \otimes t) = \Supp(s) \cap \Supp(t)$. 
    
    Now, consider a smashing localization $L$ of $\sfT$ with corresponding triangle of idempotents $e \to \unit \to f=L\unit$. By \cref{lem:smashingcomplements} we know that $\Supp(e)$ is the complement of $\Supp(f)$ in $\Spc(\sfT^{\omega})$. Moreover, \cite[Proposition 2.14]{stevenson_absolutelyflatrings} shows that  $\Supp(f)$ is closed in $\Spc(\sfT^\omega)$, so $\Supp(e) = \Supp(\ker(L))$ is open, hence Thomason in $\Spc(\sfT^\omega)$.

    Consider the finite localization $L^\fin$ corresponding to the Thomason subset $\Supp(e)$ under Balmer's classification theorem. By construction, this functor has the property that $\ker(L^\fin) \subseteq \ker(L)$ and $\Supp(\ker(L^\fin)) = \Supp(\ker(L))$, thus $\Supp(L^\fin{\unit}) = \Supp(f)$ as well. We now wish to show that $L^\fin \simeq L$ which finishes the proof as $L$ was an arbitrary smashing localization.

    By construction there is a natural transformation $L^{\fin} \to L$, whose cofiber evaluated on $\unit$ we denote by $c$. We have $\Supp(c) \subseteq \Supp(f) \cup \Supp(L^\fin{\unit})$. As $\Supp(L^\fin{\unit}) = \Supp(f)$, we conclude that $\Supp(c) \subseteq \Supp(f)$. Now, tensoring with $f$ yields another triangle $f \otimes L^{\fin}{\unit} \to f \otimes f \to f \otimes c$. Since $f$ is idempotent we have $f \otimes f \simeq f$. Moreover, $L\circ L^{\fin} \simeq L$ because $\ker(L^{\fin}) \subseteq \ker(L)$, so the canonical map $L^{\fin}{\unit} \otimes f \to f\otimes f$ is an equivalence. It follows that $c \otimes f \simeq 0$. Taking supports and using the tensor-product formula we see that $\varnothing = \Supp(f) \cap \Supp(c)$. Therefore, $\Supp(c) \subseteq \Supp(f)^c$. However, combining this with the aforementioned observation that $\Supp(c) \subseteq \Supp(f)$, we conclude that $\Supp(c) = \varnothing$. As  support has the detection property, this is equivalent to $c = 0$. That is, $L^{\fin}{\unit} \simeq f$ and hence $L^{\fin} \simeq L$ as required.
\end{proof}

\begin{example}
    If $R$ is a commutative von Neumann ring (see the discussion at the start of \cref{ssec:vNr_ttcats}), then $\sfT = \sfD(R)$ satisfies the conditions of \cref{prop:zerodim_telescopeconjecture} by the results of \cite[Section 4]{stevenson_absolutelyflatrings}. Therefore, the telescope conjecture holds for $\sfD(R)$, as was also proven in \cite{BS_telescope} using different techniques, generalizing \cite[Theorem 4.25]{stevenson_absolutelyflatrings}.
\end{example}

\begin{remark}
    In fact, the same argument as used in the proof of \cref{prop:zerodim_telescopeconjecture} strengthens \cite[Theorem 9.11 and Proposition 9.12]{BarthelHeardSanders2023} for arbitrary rigidly-compactly generated tt-categories $\sfT$, by replacing the stratification assumption there with the condition that $\sfT$ has the tensor-product formula for support. 
\end{remark}

\subsection{Von Neumann regular tt-categories}\label{ssec:vNr_ttcats}

Among the zero-dimensional commutative rings, a special role is played by the reduced ones. These rings are the \emph{von Neumann regular rings} and can be characterized in a variety of ways, see for example \cite[Proposition 3.2]{stevenson_absolutelyflatrings}. The most relevant of these characterizations is that a commutative ring $R$ is von Neumann regular if and only if all of its localizations $R_{\fp}$ at prime tt-ideals $\fp \in \Spec(R)$ are fields.

We introduce and study a categorification of this notion in the setting of tt-categories:

\begin{definition}\label{def:vNr_ttcats}
    A rigidly-compactly generated tt-category $\sfT$ is called \emph{von Neumann regular} if the localizations $\sfT_{\sfP}$ are tt-fields for all $\sfP \in \Spc(\sfT^{\omega})$.
\end{definition}

\begin{remark}\label{rem:vNr_categorification}
Viewing a rigidly-compactly generated tt-category as a categorified commutative ring (aka.~\emph{commutative 2-ring}), \cref{def:vNr_ttcats} is a natural categorification of the concept of von Neumann regularity from commutative algebra. The next example provides further justification for our choice of terminology.
\end{remark}

\begin{example}\label{ex:vNr_categorification}
Let $R$ be a von Neumann regular commutative ring, then its derived category $\sfD(R)$ is von Neumann regular in the sense of \cref{def:vNr_ttcats}. To see this, we identify $\Spc(\sfD(R)^{\omega})$ with the Zariski spectrum $\Spec(R)$. Under this identification, the finite localization $\sfD(R) \to \sfD(R)_{\sfP}$ sends $R$ to the localization $R_{\fp}$ at the prime ideal $\fp$ corresponding to $\sfP \in \Spc(\sfD(R)^{\omega})$, see \cite[Lemma 4.2]{stevenson_absolutelyflatrings}. It follows that $\sfD(R)_{\sfP} \simeq \sfD(R_{\fp})$. Since $R$ is von Neumann regular, $R_{\fp}$ is a field, hence $\sfD(R_{\fp})$ is a tt-field, as claimed.
\end{example}

\begin{lemma}\label{lem:vNr_zerodim}
    If $\sfT$ is a von Neumann regular tt-category, then it is zero-dimensional. 
\end{lemma}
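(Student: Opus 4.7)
The plan is to reduce zero-dimensionality of $X := \Spc(\sfT^{\omega})$ to the fact that the spectrum of every tt-stalk is a single point. By hypothesis each stalk $\sfT_{\sfP}$ is a tt-field, so \cref{prop:ttfieldstrat} gives $\Spc(\sfT_{\sfP}^{\omega}) = \{(0)\}$ for every $\sfP \in X$. The crux will therefore be to match this one-point spectrum with a meaningful subset of $X$ controlling the specialization behaviour at $\sfP$.

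The identification I would establish is a homeomorphism
\[
\Spc(\sfT_{\sfP}^{\omega}) \xrightarrow{\ \sim\ } \gen(\sfP) \subseteq X.
\]
To prove it, I would start from the presentation $\sfT_{\sfP} \simeq \sfT(\gen(\sfP))$ recorded in \eqref{eq:ttstalklocalsection}, invoke \cref{prop:localsectionsformula} to rewrite the stalk as the filtered colimit $\colim^{\omega}_{U \supseteq \gen(\sfP)} \sfT(U)$ over quasi-compact opens $U$ containing $\gen(\sfP)$, and then combine the standard identification $\Spc(\sfT(U)^{\omega}) \cong U$ for quasi-compact opens $U$ with Gallauer's continuity result \cref{prop:spccontinuity}. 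This chain of identifications yields
\[
\Spc(\sfT_{\sfP}^{\omega}) \cong \lim_{U \supseteq \gen(\sfP)} U = \bigcap_{U \supseteq \gen(\sfP)} U = \gen(\sfP),
\]
where the last equality is the content of \cref{rem:gen}, using that $\gen(\sfP)$ is quasi-compact and generalization-closed (being the complement of the Thomason subset $\supp(\sfP)$).

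Putting these two facts together, $\gen(\sfP) = \{\sfP\}$ for every $\sfP \in X$, so the specialization order on $X$ is trivial and $X$ is $T_1$. Being spectral, $X$ is then Hausdorff and hence zero-dimensional by \cref{lem:charofpro}. The only substantive step in this plan is the identification of $\Spc(\sfT_{\sfP}^{\omega})$ with $\gen(\sfP)$; everything else is formal from the tt-field characterization of the spectrum and the topology of spectral spaces.
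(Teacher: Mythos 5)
Your proof is correct and follows the same overall strategy as the paper: identify $\Spc(\sfT_{\sfP}^{\omega})$ with $\gen(\sfP)$, invoke the one-point spectrum of a tt-field, and conclude that the specialization order on $\Spc(\sfT^{\omega})$ is trivial. The one place you diverge is the justification of the homeomorphism $\Spc(\sfT_{\sfP}^{\omega}) \cong \gen(\sfP)$: the paper simply cites it from the literature (essentially a standard property of finite localizations), whereas you rederive it inside the paper's own framework by combining the stalk formula of \cref{cor:ttstalkformula} with Gallauer's continuity of $\Spc$ and the identity $\bigcap_{U \ni \sfP} U = \gen(\sfP)$ from \cref{rem:gen}. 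Your route is longer but self-contained; it also illustrates nicely how the continuity machinery of the paper reproduces the classical local picture. One small remark on the conclusion: rather than routing through $T_1 \Rightarrow$ Hausdorff $\Rightarrow$ zero-dimensional, it is even more immediate that triviality of the specialization order forces all chains of irreducible closed subsets to have length zero, which gives zero-dimensionality directly from \cref{def:ttdimension}.
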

\begin{proof}
    On spectra, the finite localization $\sfT \to \sfT_{\sfP}$ induces the canonical inclusion $\Spc(\sfT_{\sfP}^{\omega}) \cong \gen(\sfP) \subseteq \Spc(\sfT^{\omega})$, see for instance \cite[Definition 1.25]{BarthelHeardSanders2023}. Since $\sfT_{\sfP}$ is a tt-field by assumption, its spectrum is a point (\cite[Proposition 5.15]{BKSruminations}), hence $\gen(\sfP) = \{\sfP\}$. Varying over all $\sfP \in \Spc(\sfT^{\omega})$, this implies that the spectrum of $\sfT$ is zero-dimensional. 
\end{proof}

\begin{remark}\label{rem:lem:vNr_zerodim}
    The converse to \cref{lem:vNr_zerodim} fails. For example, let $k$ be a field, then $\sfD(k[\epsilon]/\epsilon^2)$ is zero-dimensional but not von Neumann regular. 
\end{remark}

\begin{proposition}\label{prop:vNr_stratification}
For a von Neumann regular tt-category $\sfT$, the following conditions are equivalent: 
    \begin{enumerate}
        \item $\sfT$ is stratified;
        \item $\sfT$ is costratified;
        \item $\sfT$ satisfies the local-to-global principle.
    \end{enumerate}
\end{proposition}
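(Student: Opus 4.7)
The plan is to combine the stalk identifications available in the zero-dimensional setting with the stratification and costratification criteria from \cref{prop:bhs-strat}, \cref{prop:bhs-costrat}, and \cref{prop:l2giffcol2g}. First, \cref{lem:vNr_zerodim} gives that $\sfT$ is zero-dimensional, so $\Spc(\sfT^{\omega})$ is profinite and hence weakly Noetherian by \cref{lem:weaklynoeth}; moreover, every point $\sfP$ is generically closed. \cref{prop:stalkcostalk} then supplies the identifications $\sfT_{\sfP} = \Gamma_{\sfP}\sfT = \Lambda^{\sfP}\sfT$ for each $\sfP \in \Spc(\sfT^{\omega})$, where the equality of the Verdier quotient with the localizing ideal of $\sfT$ stems from the smashing nature of the finite localization at the generically closed point $\sfP$.

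Next, I would show that $\Gamma_{\sfP}\sfT$ is a minimal localizing ideal and that $\Lambda^{\sfP}\sfT$ is a minimal colocalizing coideal. Because $\Gamma_{\sfP}\sfT \hookrightarrow \sfT$ is a localizing ideal of $\sfT$, any non-zero localizing ideal of $\sfT$ contained in $\Gamma_{\sfP}\sfT$ is the same as a non-zero localizing ideal of $\Gamma_{\sfP}\sfT \simeq \sfT_{\sfP}$. By von Neumann regularity, $\sfT_{\sfP}$ is a tt-field, and \cref{prop:ttfieldstrat} then forces any such non-zero subcategory to be all of $\Gamma_{\sfP}\sfT$. The dual argument, using that a tt-field is also costratified by \cref{prop:ttfieldstrat} and that $\Lambda^{\sfP}\sfT \simeq \Gamma_{\sfP}\sfT$ by \cref{rem:stalkcostalkduality} or the identification above, yields the minimality of $\Lambda^{\sfP}\sfT$ as a colocalizing coideal.

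With pointwise minimality automatic, I would then assemble the three-way equivalence. The equivalence of (1) and (3) is immediate from \cref{prop:bhs-strat}, which decomposes stratification into the conjunction of the local-to-global principle and pointwise minimality of the $\Gamma_{\sfP}\sfT$. For the equivalence of (2) and (3), \cref{prop:bhs-costrat} reduces costratification to the colocal-to-global principle plus pointwise minimality of the $\Lambda^{\sfP}\sfT$, and \cref{prop:l2giffcol2g} identifies the colocal-to-global with the local-to-global principle.

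I do not anticipate a serious obstacle: von Neumann regularity is designed precisely so that the minimality halves of the BHS criteria come for free, isolating the local-to-global principle as the sole remaining input. The one mildly delicate point is verifying that the tt-field structure on the quotient $\sfT_{\sfP}$ transports to its incarnation as the localizing ideal $\Gamma_{\sfP}\sfT \subseteq \sfT$ in a way compatible with the notion of subcategories appearing in \cref{defn:strat} and \cref{defn:minimal}; this is ensured by the smashing character of stalk localization in the zero-dimensional case, which presents $\Gamma_{\sfP}\sfT$ as the essential image of the right adjoint to $\sfT \to \sfT_{\sfP}$ with tensor unit $g(\sfP) = f_{\{\sfP\}^c}$.
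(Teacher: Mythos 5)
Your proof is correct and follows essentially the same route as the paper: reduce to zero-dimensionality via \cref{lem:vNr_zerodim}, identify $\sfT_{\sfP} = \Gamma_{\sfP}\sfT = \Lambda^{\sfP}\sfT$ via \cref{prop:stalkcostalk}, observe that the tt-field hypothesis makes these minimal as localizing ideals and colocalizing coideals, and then invoke \cref{prop:bhs-strat}, \cref{prop:bhs-costrat}, and \cref{prop:l2giffcol2g} to reduce everything to the local-to-global principle. The only cosmetic difference is that the paper cites the minimality of a tt-field inside an ambient category directly, whereas you derive it from \cref{prop:ttfieldstrat}; your extra paragraph spelling out why a localizing $\sfT$-ideal inside $\Gamma_{\sfP}\sfT$ coincides with a localizing $\sfT_{\sfP}$-ideal is a reasonable bit of diligence that the paper leaves implicit.
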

\begin{proof}
    By \cref{lem:vNr_zerodim}, $\sfT$ is zero-dimensional, so \cref{prop:stalkcostalk} provides identifications 
    \[
    \sfT_{\sfP} = \Gamma_{\sfP}\sfT = \Lambda^{\sfP}\sfT 
    \]
    for any $\sfP \in \Spc(\sfT^{\omega})$.  Since these subcategories are tt-fields by hypothesis on $\sfT$, they are minimal both as a localizing ideal and as a colocalizing ideal of $\sfT$, see \cite[Theorem 18.4]{BCHS_cosupport}. It follows that $\sfT$ is stratified (resp., costratified) if and only if $\sfT$ satisfies the local-to-global principle (resp., the colocal-to-global principle). But the local-to-global principle is equivalent to the colocal-to-global principle for any rigidly-compactly generated tt-categories by \cite[Theorem 6.4]{BCHS_cosupport}, so we obtain the equivalence of the three conditions of the proposition. 
\end{proof}

\begin{example}\label{ex:absoluteflat}
Continuing \cref{ex:vNr_categorification}, we  deduce from \cref{prop:vNr_stratification} and \cref{prop:scatteredimpl2g}  that $\sfD(R)$ is stratified (equivalently: costratified) for a von Neumann regular ring $R$ if $\Spec(R)$ is scattered. However, a stronger result holds: In \cite{stevenson_absolutelyflatrings, stevenson_localglobal}, Stevenson proves that stratification for $\sfD(R)$ is in fact equivalent to $\Spec(R)$ being scattered, and shows that this is the case if and only if $R$ is Artinian. In summary, the following are equivalent for a von Neumann regular ring $R$:
    \begin{enumerate}
        \item $D(R)$ is stratified;
        \item $D(R)$ is costratified;
        \item $D(R)$ satisfies the local-to-global principle;
        \item $\Spec(R)$ is scattered;
        \item $R$ is semi-Artinian.
    \end{enumerate}
We will return to this example in \cref{sec:qgsp_lgp}.
\end{example}

\newpage
 
\part{Profinite equivariant spectra}\label{part:integral}

We now turn our attention to the main example of interest in this paper, namely the category $\Sp_G$ of $G$-spectra for a profinite group $G$. After a preliminary section (\cref{sec:profinitegroups}) on profinite groups, in \cref{sec:equivariantspectra} we construct our continuous model of $\Sp_G$, compare it to Fausk's model based on orthogonal spectra, and describe various change of group functors in this context. The geometric fixed point functors will then be used in \cref{sec:gsp_tt} to determine the Balmer spectrum of compact $G$-spectra, up to the ambiguity present in the case of finite groups. \cref{part:integral} then culminates in \cref{sec:gsp_nilpotence} in proofs of the bijectivity hypothesis and the nilpotence theorem for $\Sp_G$.

\section{Profinite groups}\label{sec:profinitegroups}

After a quick review of some of the basic notions and constructions for profinite groups, we will study pro-$p$-subnormal groups and verify the continuity of Weyl groups in this section. For a more thorough introduction to the theory, we refer the interested reader to \cite{wilson,ribes_zalesskii}.

\subsection{Basic constructions}

A \emph{profinite group} $G$ is a group object in the category of profinite spaces. Equally, we may describe a profinite group as a limit of an object in the pro-category of finite groups, i.e., as a cofiltered limit of finite groups. 

By a \emph{subgroup} of a profinite group we will always mean a closed subgroup. Special privilege is given to the open normal subgroups, which are of finite index in $G$. Consequently, if $G$ is an abstract profinite group, then one can obtain a presentation of $G$ as a cofiltered limit by considering the system $\lim_i G/U_i$ as $U_i$ runs through the collection of open normal subgroups in $G$. We will often write $G_i := G/U_i$ for this collection of finite quotient groups. If each $G_i$ has some property $\mathcal{C}$ then we shall say that $G$ is a \emph{pro-$\mathcal{C}$-group}. Particular instances of this will be the notions of \emph{pro-$p$-groups} and \emph{pro-cyclic groups}.

The above constructed cofiltered system interacts well with closed subgroups of $G$. Indeed if $H \leqslant G$ is a closed subgroup and $G = \lim_i G_i$, then
\begin{equation}\label{eq:limofsubgroups}
H \cong \lim_i H_i \text{ with } H_i = \pi_i H = H U_i/ U_i \cong H/(H \cap U_i),
\end{equation}
where $\pi_i \colon G \to G_i$ is the projection map to the quotient \cite[Corollary 1.1.8]{ribes_zalesskii}. We write $\Sub(G)$ for the space of closed subgroups of $G$, equipped with the Hausdorff metric topology inherited from $G$.

Write $m(F)$ for the order of a finite group $F$. For $G$, a profinite group, the \emph{order of $G$}, $m(G)$, is the supernatural number defined as
\begin{equation}\label{eq:order}
    m(G) = \mathrm{lcm}(m(G/U)\mid U\leqslant_o G)
\end{equation}
as $U$ runs over the collection of open normal subgroups of $\Gamma$. 

Another useful construction for a given profinite group $G$ and a given closed subgroup $H$ is the \emph{core} of $H$ in $G$ which is the largest normal subgroup of $G$ contained in $H$, i.e.,
    \[
    H_G = \bigcap_{g \in G} H^g.
    \]
One can verify that if $H$ happens to moreover be an open subgroup, then  $H_G$ is an open normal subgroup of $G$ contained in $H$.

During the exploration of our main application we will require the notion of \emph{pro-$p$-ranks} of pro-$p$-groups to discuss the phenomena of \emph{blueshift} (cf., \cref{ssec:gsp_blueshift}). Before we define this, we recall the extension of the notion of Frattini subgroups to profinite groups:

\begin{recollection}\label{rec:frattini}
    The \emph{Frattini subgroup} of a profinite group $G$ is defined as the intersection of all maximal open subgroups,  $\Phi(G) = \bigcap_{U \leqslant_{o,\text{max}} G} U$. It is a normal closed subgroup of $G$ which consists precisely of the nongenerators of $G$, i.e., those elements that can be omitted from every generating set of $G$. 
\end{recollection}

\begin{definition}\label{def:prank}
    Let $P$ be a pro-$p$ group for a given prime $p$. We define the \emph{pro-$p$-rank} of $P$ as
        \[
        \rank_p(P) \coloneqq \log_p(m(P/\Phi(P))) \in \N \cup \{\infty\},
        \]
    the base-$p$ logarithm of the order of the quotient of $P$ by its Frattini subgroup $\Phi(P)$.
\end{definition}

\subsection{Subnormal subgroups}

We dedicate this subsection to exploring when subgroups $H$ of a profinite group $G$ have a specific $p$-subnormality condition when restricted to the finite quotient groups $G_i$. We set the stage with the following notation and definition.

\begin{notation}\label{nota:pnormal}
    Let $G$ be a profinite group and $p$ a prime number. A closed subgroup $N$ in $G$ is said to be \emph{$p$-normal}, in symbols $N \vartriangleleft_p G$, if $N$ is normal in $G$ and $G/N$ is a pro-$p$-group.\footnote{Beware that this choice of notation differs from other sources such as \cite{balmersanders}, where $\vartriangleleft_p$ instead requires the quotient to be of order $p$.}
\end{notation}

\begin{definition}
    Let $G$ be a finite group. A subgroup $H$ is said to be \emph{$p$-subnormal in $G$} if there exists a finite chain
    \begin{equation}\label{eq:subnormalchain}
H = L_k \vartriangleleft_p L_{k-1}\vartriangleleft_p \cdots \vartriangleleft_p L_1 = G
    \end{equation}
    such that $L_{i+1}$ is normal in $L_{i}$ with quotient a finite $p$-group.
\end{definition}

\begin{remark}
    Assume that $H$ is $p$-subnormal in $G$. By refining the chain \eqref{eq:subnormalchain} if necessary, we can assume that each quotient $L_{i+1}/L_i$ is a cyclic $p$-group. Indeed, let $G$ be a group with normal subgroup $N$ such that $G/N = Q$ is a $p$-group of order $p^k$. Then $Q$ contains a normal subgroup $Q'$ of order $p^{k-1}$ and we can form the following commutative diagram:
    \[
    \xymatrix{
    1 \ar[r] & N \ar[r] \ar@{=}[d] & H \ar@{}[dr]|<<<<{\lrcorner} \ar[d] \ar[r] & Q' \ar[r] \ar[d] & 1 \\
    1 \ar[r] & N \ar[r] & G \ar[r] \ar[d] & Q \ar[r] \ar[d] & 1 \\
    && \Z/p \ar@{=}[r] &\Z/p
    }
    \]
    where $H$ is defined to be the pullback. As $Q'$ is normal in $Q$, and using the fact that pullbacks of normal subgroups are once again normal, we see that $H$ is normal in $G$ and moreover that the quotient is $\Z/p$. We can then apply the same procedure to $H$ and iteratively  obtain a chain of length $n$ between $N$ and $G$ where the quotients are all $\Z/p$. 
\end{remark}

\begin{remark}\label{rem:pnormal_intersection}  
    Suppose $N_1,N_2 \vartriangleleft_p G$. Then the diagonal map exhibits $G/(N_1 \cap N_2)$ as a closed subgroup of $G/N_1 \times G/N_2$, hence a pro-$p$-group as well. In other words, $N_1 \cap N_2 \vartriangleleft_p G$, so the family $\{N \vartriangleleft_p G\}$ is filtered, hence also filtered from below in the sense of \cite[Page 24]{ribes_zalesskii}.
\end{remark}

\begin{definition}\label{def:pperfect}
    Let $G$ be  profinite group and fix a prime $p$; then we define
    \[
    \mathrm{O}^p(G) \coloneqq \bigcap_{N \vartriangleleft_p G} N,
    \]
    where the intersection is indexed on all $p$-normal subgroups $N$ of $G$.
\end{definition}

\begin{remark}\label{rem:pperfectquotient}
    Note that the quotient $G/\mathrm{O}^p(G)$ is a cofiltered limit of pro-$p$-groups, hence a pro-$p$-group itself. 
\end{remark}

\begin{lemma}\label{lem:pperfect_continuity}
    Fix a prime $p$. Let $G$ be a profinite group and $G \twoheadrightarrow	Q$ a finite quotient group, and write $\pi\colon \Sub(G) \to \Sub(Q)$ for the induced map on closed subgroups. Then
    \[
    \pi(\mathrm{O}^p(G)) = \mathrm{O}^p(Q).
    \]
    In particular, if $G = \lim_{i}G_i$ is presented as a cofiltered limit of a cofinal system of finite quotient groups, then $\mathrm{O}^p(G) \cong \lim_{i}\mathrm{O}^p(G_i)$.
\end{lemma}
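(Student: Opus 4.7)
The plan is to establish the first identity $\pi(\mathrm{O}^p(G)) = \mathrm{O}^p(Q)$ by proving both inclusions, and then to deduce the limit formula from it.

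For the forward inclusion $\pi(\mathrm{O}^p(G)) \subseteq \mathrm{O}^p(Q)$: given any $N' \vartriangleleft_p Q$, the preimage $\pi^{-1}(N')$ is a closed normal subgroup of $G$ with quotient $G/\pi^{-1}(N') \cong Q/N'$, which is a finite $p$-group and in particular pro-$p$. Hence $\pi^{-1}(N') \vartriangleleft_p G$ contains $\mathrm{O}^p(G)$; applying $\pi$ yields $\pi(\mathrm{O}^p(G)) \subseteq N'$, and intersecting over all such $N'$ gives the claim.

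For the reverse inclusion, I will use surjectivity of $\pi$ in the opposite direction: for any $N \vartriangleleft_p G$, the image $\pi(N)$ is a closed normal subgroup of $Q$, and $Q/\pi(N)$ is a quotient of $G/N$ and hence pro-$p$. So $\pi(N) \vartriangleleft_p Q$, giving $\mathrm{O}^p(Q) \subseteq \pi(N)$. Taking the intersection over all $N$ yields
\[
\mathrm{O}^p(Q) \subseteq \bigcap_{N \vartriangleleft_p G}\pi(N),
\]
and the main technical point is to identify the right-hand side with $\pi(\bigcap_N N) = \pi(\mathrm{O}^p(G))$. For this I will invoke \cref{rem:pnormal_intersection}, which ensures that the family $\{N \vartriangleleft_p G\}$ is filtered from below, together with the standard compactness fact that a continuous map between compact Hausdorff spaces commutes with cofiltered intersections of non-empty closed subsets.

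For the limit formula, write the projections as $\pi_i\colon G \twoheadrightarrow G_i$ with kernels $U_i = \ker(\pi_i)$. The inclusion $\mathrm{O}^p(G) \subseteq \lim_i \mathrm{O}^p(G_i)$ is immediate from the first part. For the reverse inclusion, the key observation is that any $N \vartriangleleft_p G$ equals the intersection of the \emph{open} $p$-normal subgroups of $G$ containing it: since $G/N$ is pro-$p$, it is the inverse limit of its finite $p$-group quotients, which correspond precisely to such subgroups. Given $x \in \lim_i \mathrm{O}^p(G_i)$ and any open $V \vartriangleleft_p G$, by cofinality of the $G_i$ there exists $i$ with $U_i \subseteq V$; then $V/U_i \vartriangleleft_p G_i$ contains $\mathrm{O}^p(G_i) \ni \pi_i(x)$, which forces $x \in V$. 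Intersecting over the open $p$-normal subgroups containing a fixed $N$ then shows $x \in N$, hence $x \in \mathrm{O}^p(G)$.
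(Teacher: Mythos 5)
Your proof is correct and rests on exactly the same two ingredients the paper uses: the correspondence between $p$-normal subgroups of $G$ and of $Q$ via $\pi$ and $\pi^{-1}$, and the filtered-intersection lemma for continuous maps of compact Hausdorff spaces (which the paper cites as Proposition 2.1.4(b) of Ribes--Zalesskii). The only organizational differences are minor: where you split the first identity into two inclusion arguments, the paper first establishes that $\{\pi(N)\mid N\vartriangleleft_p G\}$ and $\{M \vartriangleleft_p Q\}$ are literally the same family and then computes a single chain of equalities (so the ``forward inclusion'' is absorbed without a separate step); and for the limit formula, the paper simply invokes the general fact \eqref{eq:limofsubgroups} that any closed subgroup $H$ of $G=\lim_i G_i$ satisfies $H\cong\lim_i \pi_i(H)$, applied to $H=\mathrm{O}^p(G)$, whereas you re-derive the hard inclusion of that fact from scratch via the observation that $\mathrm{O}^p(G)$ is the intersection of the \emph{open} $p$-normal subgroups. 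Your hands-on version is correct, but since the paper already records the relevant continuity of closed subgroups, citing it is shorter; the one genuine economy in your argument is that the forward inclusion $\pi(\mathrm{O}^p(G))\subseteq\mathrm{O}^p(Q)$ avoids the compactness lemma entirely, using only that preimages commute with intersections.
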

\begin{proof}
    First observe that, if $N \vartriangleleft_p G$, then $\pi(N) \vartriangleleft_p Q$ as well. Conversely, we claim that we can lift any $M \vartriangleleft_p Q$ to a normal subgroup $N$ of $G$ with $p$-group quotient and such that $\pi(N) = M$. Indeed, set $N = \pi^{-1}(M)$. This is a normal subgroup of $G$, so it suffices to show that the quotient is a $p$-group. Consider the following commutative diagram with exact rows:
        \[
        \xymatrix{1 \ar[r]  & N \ar[r] \ar@{->>}[d] & G \ar[r] \ar@{->>}[d] & G/N \ar[r] \ar@{-->>}[d] &  1 \\
        1 \ar[r] & M \ar[r] & Q \ar[r] & Q/M \ar[r] & 1}
        \]
    The induced map on the right is surjective, and we will show by a diagram chase that it is also injective. To this end, let $x \in G/N$ that maps to the trivial element in $Q/M$. Consider a lift $g \in G$. The image $q$ of $g$ in $Q$ maps to the trivial element in $Q/M$, so there exists a lift of $q$ to $M$, call it $m$. Since $N$ is the pullback of the left square, we can find a (unique) element $n \in N$ that maps to $g$, hence witnessing the triviality of $x$. Therefore, $G/N \cong Q/M$ is a (finite) $p$-group and, by construction, $\pi(N) = M$. This verifies the claim. As a consequence, we see that the two families $\{\pi(N)\mid N \vartriangleleft_p G\}$ and $\{M \vartriangleleft_p Q\}$ coincide.

    We can use this to make the following computation:
        \[
        \pi(\mathrm{O}^p(G)) = \pi\left (\bigcap_{N \vartriangleleft_p G} N \right )= \bigcap_{N \vartriangleleft_p G} \pi(N) = \bigcap_{M \vartriangleleft_p Q} M = \mathrm{O}^p(Q).
        \]
    The first and last equality hold by definition, the second one uses \cite[Proposition 2.1.4(b)]{ribes_zalesskii}, which applies by \cref{rem:pnormal_intersection}, and the third one follows from the preliminary observations made above. Consequently, there are isomorphisms
        \[
        \mathrm{O}^p(G) \cong \lim_{i}\pi_i(\mathrm{O}^p(G)) \cong \lim_{i}\mathrm{O}^p(G_i),
        \]
    where $\pi_i \colon \Sub(G) \to \Sub(G_i)$ denotes the map induced by the quotient map $G \to G_i$.
\end{proof}

\begin{proposition}\label{prop:pperfect}
    Let $H$ be a closed subgroup of a profinite group $G$ and let $p$ be a prime number. The following statements are equivalent:
        \begin{enumerate}
            \item $\mathrm{O}^p(H) = \mathrm{O}^p(G)$;
            \item $\mathrm{O}^p(H_i) = \mathrm{O}^p(G_i)$ for all $i \in I$;
            \item $\mathrm{O}^p(G) \leqslant H$;
            \item $\mathrm{O}^p(G_i) \leqslant H_i$ for all $i \in I$;
            \item $H_i$ is a $p$-subnormal subgroup of $G_i$ for all $i \in I$.
        \end{enumerate}
\end{proposition}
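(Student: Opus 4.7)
The plan is to combine the continuity result of \cref{lem:pperfect_continuity} with the classical finite-group theory of $p$-subnormality. The continuity immediately reduces statements (1)--(4) to their finite-quotient analogues, so the substantive work takes place at the level of the finite quotient groups $G_i$ and $H_i$.

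First I would use \cref{lem:pperfect_continuity} applied to both $G$ and to $H$, noting via \eqref{eq:limofsubgroups} that the $H_i = HU_i/U_i$ form a cofinal system of finite quotients of $H$. This provides isomorphisms $\mathrm{O}^p(G) \cong \lim_i \mathrm{O}^p(G_i)$ and $\mathrm{O}^p(H) \cong \lim_i \mathrm{O}^p(H_i)$, and, at the finite level, equalities $\pi_i(\mathrm{O}^p(G)) = \mathrm{O}^p(G_i)$ and similarly for $H$. Applying $\pi_i$ and passing to the limit then gives (1) $\Leftrightarrow$ (2) and (3) $\Leftrightarrow$ (4). It therefore suffices to establish the chain (2) $\Leftrightarrow$ (4) $\Leftrightarrow$ (5) at the level of each finite quotient.

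Next I would prove the finite-group equivalence (2) $\Leftrightarrow$ (4). The forward direction is immediate since $\mathrm{O}^p(H_i) \leqslant H_i$. Conversely, if $\mathrm{O}^p(G_i) \leqslant H_i$, then $\mathrm{O}^p(G_i)$ is normal in $H_i$ with $H_i/\mathrm{O}^p(G_i)$ a subgroup of the $p$-group $G_i/\mathrm{O}^p(G_i)$; hence $\mathrm{O}^p(G_i) \vartriangleleft_p H_i$, giving $\mathrm{O}^p(H_i) \leqslant \mathrm{O}^p(G_i)$. The reverse inclusion uses the classical description of $\mathrm{O}^p(G_i)$ in a finite group as the subgroup generated by elements of order coprime to $p$: every such $p'$-generator lies in $H_i$ and remains a $p'$-element of $H_i$, so $\mathrm{O}^p(G_i) \leqslant \mathrm{O}^p(H_i)$.

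Finally, for (4) $\Leftrightarrow$ (5), the implication (5) $\Rightarrow$ (4) proceeds by induction on the length of a $p$-subnormal chain $H_i = L_k \vartriangleleft_p \cdots \vartriangleleft_p L_1 = G_i$: at each step, the inductive hypothesis together with the equivalence (2) $\Leftrightarrow$ (4) applied to $L_{j-1} \leqslant G_i$ yields $\mathrm{O}^p(G_i) = \mathrm{O}^p(L_{j-1}) \leqslant L_j$, since $L_j \vartriangleleft_p L_{j-1}$. Conversely, if $\mathrm{O}^p(G_i) \leqslant H_i$, then $H_i/\mathrm{O}^p(G_i)$ sits inside the finite $p$-group $G_i/\mathrm{O}^p(G_i)$; invoking the standard fact that every subgroup of a finite nilpotent group is subnormal produces a subnormal chain from $H_i/\mathrm{O}^p(G_i)$ up to $G_i/\mathrm{O}^p(G_i)$, whose consecutive quotients are automatically $p$-groups. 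Lifting through the quotient by $\mathrm{O}^p(G_i)$ delivers the desired $p$-subnormal chain from $H_i$ to $G_i$. The only real conceptual input beyond \cref{lem:pperfect_continuity} is the nilpotency of finite $p$-groups; the rest is bookkeeping between profinite and finite levels, which is the main place to be careful.
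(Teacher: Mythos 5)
Your proof is correct and reaches the same conclusion as the paper, but handles the finite-group step differently. The paper's argument is identical to yours in its reduction strategy — it uses \cref{lem:pperfect_continuity} to establish (1) $\Leftrightarrow$ (2) and (3) $\Leftrightarrow$ (4) exactly as you do — but then simply cites \cite[Lemma 3.3]{balmersanders} for the equivalence of (2), (4), and (5) at the level of finite groups. You instead prove those finite-group facts from scratch, using the characterization of $\mathrm{O}^p(G_i)$ as the subgroup generated by $p'$-elements for (2) $\Leftrightarrow$ (4), and the nilpotence of finite $p$-groups (every subgroup subnormal) together with an induction along a $p$-subnormal chain for (4) $\Leftrightarrow$ (5). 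Both arguments are sound; yours makes the proposition self-contained at the cost of a paragraph of classical finite group theory, while the paper's keeps the proof short by deferring to the reference it is generalizing anyway. One small point worth flagging: your appeal to \cref{lem:pperfect_continuity} for the group $H$ relies implicitly on the fact that $(H_i)_{i\in I} = (HU_i/U_i)_{i\in I}$ is a \emph{cofinal} system of finite quotients of $H$; this is true (given any open normal $V \leqslant H$, cofinality of $(U_i)$ in $G$ produces some $U_i$ with $U_i \cap H \leqslant V$), and the paper makes the same tacit assumption, but it deserves a line if you want the argument to be airtight.
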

\begin{proof}
    If $\mathrm{O}^p(H) = \mathrm{O}^p(G)$, then the first part of \cref{lem:pperfect_continuity} implies $\mathrm{O}^p(H_i) = \mathrm{O}^p(G_i)$ for all $i \in I$. For the converse, we deduce from the second part of the same lemma, applied respectively to $G$ and $H$, that
    \[
    \mathrm{O}^p(H) \cong \lim_{i}\mathrm{O}^p(H_i) \cong \lim_{i}\mathrm{O}^p(G_i) \cong \mathrm{O}^p(G).
    \]
    This shows that $(1)$ is equivalent to $(2)$. A similar argument proves that $(3)$ holds if and only if $(4)$ is true, because $\lim_{i \in I}$ preserves inclusions of closed subgroups. Finally, the equivalence of $(2)$, $(4)$, and $(5)$ is proven in \cite[Lemma 3.3]{balmersanders}.
\end{proof}

\begin{notation}\label{nota:propsubnormal}
    A closed subgroup $H \leqslant G$ is said to be \emph{pro-$p$-subnormal} if it satisfies the equivalent conditions of \cref{prop:pperfect}.  
\end{notation}

\subsection{Spaces of closed subgroups}
We will now study the space $\Sub(G)$ of (closed) subgroups of a profinite group $G$ in more detail. Important for us will be the following description of a neighbourhood basis of a closed subgroup $H \leqslant G$:
    \begin{equation}\label{eq:nbhdbasis}
    (\cB(H,N) \coloneqq \{K \in \Sub(G) \mid NK = NH\})_{N},
    \end{equation}
where $N$ ranges through the open normal subgroups of $G$. The group $G$ acts on $\Sub(G)$ via conjugation, providing a quotient map $q \colon \Sub(G) \to \Sub(G)/G$.

\begin{remark}\label{rem:quotientisopen}
    We record for later that the map $q \colon \Sub(G) \to \Sub(G)/G$ is an open topological map. Indeed, a sufficient condition for $q$ to be an open map is the following: if for every $U \subseteq \Sub(G)$ open, $q^{-1}(q(U)) \subseteq \Sub(G)$ is open. In our situation $q^{-1}(q(U)) = \cup_{g \in G} U^g$ is clearly open. As such, we can obtain an explicit basis for $\Sub(G)/G$ by pushing forward the basis of $\Sub(G)$. In particular we have that $(q (\cB(H,N)) = \{q(K) \mid K \in \Sub(G)\colon NK = NH\})_N$ is a neighbourhood basis for $q(H)$ in $\Sub(G)/G$. 
\end{remark}

Although we now have an understanding of the space $\Sub(G)$ and its associated quotient space $\Sub(G)/G$, its construction does not yet fit into our continuous mindset. Given that $G$ is built from its finite quotient groups $G_i \coloneqq G/U_i$ where $U_i$ is an open normal subgroup, one may ask if the space $\Sub(G)$ (and $\Sub(G)/G$) admit a similar description. We note that the projection maps $\pi_{G/U_i} \colon G \to G/U_i$ give rise to projection maps $\pi_i \colon \Sub(G) \to \Sub(G/U_i)$ which send $H$ to $HU_i/U_i \cong H/(H \cap U_i)$. These maps are compatible with conjugation in the sense that the following diagram commutes:
    \begin{equation}\label{eq:squareofspaces}
    \begin{gathered}
        \xymatrix{
        \Sub(G) \ar[r] \ar[d] & \Sub(G/U_i) \ar[d] \\
        \Sub(G)/G \ar[r] & \Sub(G/U_i)/(G/U_i).
        }
    \end{gathered}
    \end{equation}

The following result of Dress gives the desired continuous description of the space of subgroups.

\begin{proposition}[\cite{dress_notes}]\label{prop:profinitesubg}
Let $G = \lim_i G/U_i$ be a profinite group, where the $U_i$ run through a cofiltered system of open normal subgroups of $G$. Then there are natural homeomorphisms
\[
\Sub(G) \cong \lim_i \Sub(G/U_i) \qquad \text{and} \qquad \Sub(G)/G \cong \lim_i\Sub(G/U_i)/(G/U_i).
\]
where $\Sub(G/U_i)$ (and $\Sub(G/U_i)/(G/U_i)$) are equipped with the discrete topology. In particular, both $\Sub(G)$ and $\Sub(G)/G$ are profinite spaces. 
\end{proposition}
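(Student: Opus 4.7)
The plan is to verify the two homeomorphisms separately, establishing in each case a continuous bijection between a compact space and a Hausdorff space, and then invoking the standard compact-to-Hausdorff argument. The key technical input will be a cofinality/compactness argument to lift data from the finite quotients back to $G$.

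First I would address the statement on $\Sub(G)$. The projection $\pi_i \colon \Sub(G) \to \Sub(G/U_i)$ sending $H \mapsto HU_i/U_i$ is continuous when the target has the discrete topology, because for a subgroup $K_i \leqslant G/U_i$ the preimage $\pi_i^{-1}\{K_i\}$ is precisely $\cB(H,U_i)$ for any $H$ with $\pi_i(H) = K_i$, which is open in the Hausdorff-metric topology by \eqref{eq:nbhdbasis}. These maps are compatible with the cofiltered transition maps, so they assemble into a continuous map $\Psi\colon \Sub(G) \to \lim_i \Sub(G/U_i)$. Injectivity of $\Psi$ is immediate from \eqref{eq:limofsubgroups}: if $H \neq K$ then, say, some $h \in H \setminus K$ can be separated from the closed set $K$ by a coset $hU_i$, giving $h \notin KU_i$ and hence $\pi_i(H) \neq \pi_i(K)$. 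For surjectivity, given a compatible system $(K_i) \in \lim_i \Sub(G/U_i)$, set $K := \bigcap_i \pi_{G/U_i}^{-1}(K_i)$, a closed subgroup of $G$; compatibility of the $K_i$ together with \eqref{eq:limofsubgroups} applied to $K$ then yields $\pi_i(K) = K_i$ for every $i$. Finally, $\Sub(G)$ is quasi-compact (as a closed subspace of $2^{G}$ in the Hausdorff metric topology, $G$ being compact) and $\lim_i \Sub(G/U_i)$ is Hausdorff as an inverse limit of finite discrete spaces, so $\Psi$ is a homeomorphism.

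For the quotient statement, the commutative square \eqref{eq:squareofspaces} shows that $\Psi$ is equivariant for the conjugation actions, so it descends to a continuous map $\overline{\Psi}\colon \Sub(G)/G \to \lim_i \Sub(G/U_i)/(G/U_i)$. Surjectivity of $\overline{\Psi}$ is inherited from that of $\Psi$. The content lies in injectivity: assume $H,K \in \Sub(G)$ satisfy $\pi_i(H) \sim_{G/U_i} \pi_i(K)$ for all $i$. Consider
\begin{equation*}
C_i := \{\, gU_i \in G/U_i \;:\; (gU_i)\pi_i(H)(gU_i)^{-1} = \pi_i(K) \,\},
\end{equation*}
a nonempty finite subset of the discrete space $G/U_i$. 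The transition map $G/U_j \to G/U_i$ sends $C_j$ into $C_i$, so $(C_i)_i$ is an inverse system of nonempty compact spaces, and hence has nonempty limit by Tychonoff/Mittag-Leffler. A point of this limit gives an element $g \in G$ with $\pi_i(gHg^{-1}) = \pi_i(K)$ for all $i$, and by the injectivity of $\Psi$ already established we conclude $gHg^{-1} = K$, so $[H] = [K]$ in $\Sub(G)/G$. Once again $\Sub(G)/G$ is quasi-compact (continuous image of $\Sub(G)$) and $\lim_i \Sub(G/U_i)/(G/U_i)$ is Hausdorff, so $\overline{\Psi}$ is a homeomorphism. The final clause then follows because inverse limits of profinite (indeed, finite discrete) spaces are profinite by \cref{lem:proflim}.

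The main obstacle is the injectivity of $\overline{\Psi}$: naively, the quotient functor $(-)/G$ does not commute with cofiltered limits, so one must replace a purely formal manipulation with the compactness argument producing a \emph{coherent} conjugator $g \in G$ from a family of conjugators $g_iU_i \in G/U_i$. Everything else is either a direct unpacking of \eqref{eq:limofsubgroups} and \eqref{eq:nbhdbasis} or an application of the standard fact that a continuous bijection from a quasi-compact space to a Hausdorff space is a homeomorphism.
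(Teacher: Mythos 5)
The paper attributes this result to Dress and does not give a proof of its own, so there is no in-paper argument to compare against. Your proof is correct, and it is the natural one: the first homeomorphism follows from the fact that the neighbourhood basis $\cB(H,N)$ exhibits the topology on $\Sub(G)$ as the initial topology for the projections $\pi_i$, together with a compact-to-Hausdorff argument; the quotient statement requires exactly the Mittag-Leffler/compactness device you give for producing a coherent conjugator, since passage to quotients does not commute with cofiltered limits in general.

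One small point worth tightening: in the surjectivity of $\Psi$, the assertion ``compatibility of the $K_i$ together with \eqref{eq:limofsubgroups} applied to $K$ yields $\pi_i(K)=K_i$'' is a little too quick. Formula \eqref{eq:limofsubgroups} gives $K \cong \lim_i \pi_i(K)$, not the desired identification of images. The inclusion $\pi_i(K)\subseteq K_i$ is immediate, but the reverse inclusion requires the same compactness mechanism you later make explicit for $\overline{\Psi}$: for fixed $k_i\in K_i$, the sets $A_j := \pi_{G/U_j}^{-1}(K_j)\cap \pi_{G/U_i}^{-1}(k_i)$ (over $j$ with $U_j\leqslant U_i$) form a decreasing cofiltered system of nonempty closed subsets of the compact group $G$, so their intersection is nonempty, producing a lift $k\in K$ of $k_i$. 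Equivalently, one observes that $\bigcap_i\pi_{G/U_i}^{-1}(K_i) = \lim_i K_i$ as a subset of $G=\lim_i G/U_i$ and invokes the standard fact that a cofiltered limit of nonempty finite sets with surjective transition maps surjects onto each stage. Since you clearly know and use this technique in the $\overline\Psi$ half, this is a matter of exposition rather than a real gap.
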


The next lemma provides a feeling for how the open subgroups behave with respect to the topology. In particular, we see that the $\Sub(H)$ as $H$ runs through the open subgroups yield a neighbourhood basis for the identity element.

\begin{lemma}[{\cite[4.3.1]{sugrue_thesis}}]
If $H \leqslant G$ is open, then $\Sub(H) \subseteq \Sub(G)$ is clopen. In particular $\Sub(H) \subseteq \Sub(G)$ is quasi-compact and open in $\Sub(G)$.
\end{lemma}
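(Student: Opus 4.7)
The plan is to realize $\Sub(H)$ as the preimage of a subset of a finite discrete space under the continuous projection coming from the continuity description of $\Sub(G)$ in \cref{prop:profinitesubg}, at which point everything becomes a formal consequence.

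First, I would observe that since $H$ is open in $G$, it has finite index, and the core $H_G = \bigcap_{g\in G} H^g$ is then an intersection of finitely many open subgroups, hence still open (and normal) in $G$. Fix any open normal subgroup $U \vartriangleleft_o G$ with $U \leqslant H$, and let $\pi_U \colon \Sub(G) \to \Sub(G/U)$ denote the induced continuous map to the (finite, discrete) space $\Sub(G/U)$ provided by \cref{prop:profinitesubg}.

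The heart of the argument is then the identity
\[
\Sub(H) = \pi_U^{-1}(\Sub(H/U)).
\]
The forward inclusion follows because $U \leqslant H$ forces $KU/U \leqslant H/U$ whenever $K \leqslant H$. The reverse inclusion also exploits $U \leqslant H$: if $K \in \Sub(G)$ satisfies $KU/U \leqslant H/U$, then $K \leqslant KU \leqslant H$ since $H$ contains $U$ and is pulled back from $H/U \leqslant G/U$. Because $\Sub(G/U)$ is a finite discrete space, the subset $\Sub(H/U)$ is automatically clopen, and continuity of $\pi_U$ promotes this to $\Sub(H)$ being clopen in $\Sub(G)$. The final sentence is then immediate: $\Sub(G)$ is profinite by \cref{prop:profinitesubg}, hence quasi-compact, and closed subspaces of quasi-compact spaces are quasi-compact.

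I do not expect a real obstacle here: the only mildly nontrivial input is the existence of an open normal subgroup of $G$ contained in $H$, which is a standard feature of profinite groups (via the core construction), and the key translation $\Sub(H) = \pi_U^{-1}(\Sub(H/U))$ is built into the very definition of the projection maps in the cofiltered system presenting $\Sub(G)$.
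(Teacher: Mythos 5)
Your proof is correct. The paper itself does not prove this lemma—it cites Sugrue's thesis—but your argument via the inverse limit presentation $\Sub(G) \cong \lim_i \Sub(G/U_i)$ is precisely the expected one: choosing an open normal $U \leqslant H$ (available via the core, or simply because open normal subgroups form a neighborhood basis of the identity), verifying the clean identity $\Sub(H) = \pi_U^{-1}(\Sub(H/U))$ using $U \leqslant H$, and then pulling back a clopen subset of the finite discrete space $\Sub(G/U)$ along the continuous projection $\pi_U$.
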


\begin{example}
    It is high time for a guiding example in the world of profinite groups. Consider the cofiltered limit of the groups $\Z/{p^i}$ for $i \in \N$. Then this cofiltered limit realises the $p$-adic integers which we will denote $\Z_p$. The order of this group in the sense of \eqref{eq:order} is the supernatural number $m(\Z_p) = p^\infty$.
    
    The closed subgroups of $\Z_p$ are the $p^i \Z_p$ with $i \in \N$ and the identity element $e = p^\infty \Z_p$. All of these subgroups are open except for the trivial subgroup. It follows that the space $\Sub(\Z_p)$ can be identified with the one-point compactificaton of the integers, $\N^*$, which here should be thought of as the colimit of the intervals $[0,i]$. The canonical projection map $\pi_i \colon \Sub(\Z_p) \to \Sub(\Z/p^i)$ is then given by:
    \[
    \begin{cases}
        \pi_i(p^k\Z_p) = p^k\Z/p^i & 0 \leqslant k < i; \\
        \pi_i(p^k\Z_p) =e & \text{else}.
    \end{cases}
    \]
\end{example}

\begin{example}\label{ex:cantorspec}
    We provide a second guiding example. Fix a prime $p$ and consider the group $G = \prod_{\N} \Z/p$, the \emph{elementary abelian $p$-group of rank $|\N|$}. This is indeed a profinite group and can be constructed as a cofiltered limit of $\prod_i \Z/p$ for $i \in \N$.

    It is clear that the open subgroups of $G$ are classified by those sequences $\{a_0, a_1, \dots , a_k , \dots\}$ where only finitely many of the $a_i$ are zero. As each copy of $\Z/p$ contributes an interval $\{0,1\}$ from $\Sub(\Z/p)$, we see that $\Sub(G) \cong \{0,1\}^\N$, the Cantor set.
\end{example}

\subsection{Continuity of profinite Weyl groups}\label{ssec:weylgroups}

We finish this section with results on the continuity of normalizers and Weyl groups. Recall that for $H$ a subgroup of a group $G$, the \emph{Weyl group} $W_G(H)$ is the subgroup $N_G(H)/H$. We also recall that for $H$ a closed subgroup of $G$, the projection of $H$ in a finite quotient group $G_i \coloneqq G/U_i$ is the subgroup $H_i \coloneqq HU_i/U_i = H/(H \cap U_i)$. The goal of this section is the prove that the construction of the normalizer and the Weyl group are continuous. To do so, we require some preparatory lemmas.

\begin{lemma}\label{lem:normaliserofquotient}
    Let $G$ be a profinite group. If $K$ is an open subgroup of $G$, and $U$ is an open normal subgroup of $G$ contained in $K$, then the projection map $\pi \colon G \to G/U$ restricts to an isomorphism $N_G(K)/U \xrightarrow{\sim} N_{G/U}(K/U)$.
\end{lemma}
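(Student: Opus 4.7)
The plan is to argue by direct verification, with the inclusion $U \leqslant K$ doing the critical work. I begin by observing that $U$ is normal in $G$ (hence in $N_G(K)$), and since $U \leqslant K \leqslant N_G(K)$, the quotient $N_G(K)/U$ is a well-defined subgroup of $G/U$. For any $g \in N_G(K)$ one has
\[
\pi(g)(K/U)\pi(g)^{-1} = \pi(gKg^{-1}) = K/U,
\]
so $\pi$ restricts to a homomorphism $N_G(K) \to N_{G/U}(K/U)$ with kernel $U$, and this descends to an injective homomorphism $\bar\pi \colon N_G(K)/U \to N_{G/U}(K/U)$.

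For surjectivity, I would take $x \in N_{G/U}(K/U)$ and pick a lift $g \in G$ with $\pi(g) = x$. Translating the normalizer condition $x(K/U)x^{-1} = K/U$ back to $G$ gives $gKg^{-1}U = KU$. The hypothesis $U \leqslant K$ yields $KU = K$, so $gKg^{-1} \subseteq K$. Since $K$ is open, $[G:K]$ is finite, and the conjugate subgroup $gKg^{-1}$ has the same finite index; an inclusion of subgroups of equal finite index must be an equality. Hence $gKg^{-1} = K$, i.e., $g \in N_G(K)$, so $x$ lies in the image of $\bar\pi$.

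Because $U$ is open, $G/U$ is finite, so both sides of the claimed isomorphism are finite discrete groups and $\bar\pi$ is automatically a homeomorphism. The only genuinely load-bearing step is the identity $KU = K$ extracted from $U \leqslant K$; without this hypothesis the surjectivity argument would break, which is why it is the sole substantive obstacle in what is otherwise a standard correspondence-theorem calculation.
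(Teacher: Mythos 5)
Your proof is correct and follows essentially the same route as the paper's: check well-definedness, identify the kernel as $U$, and then chase surjectivity via a lift. The one place where you differ is in finishing off surjectivity: both proofs arrive at $gKg^{-1} \subseteq K$, and the paper then asserts $g \in N_G(K)$ rather tersely, whereas you supply the missing justification explicitly via the observation that $K$ is open of finite index, so an inclusion of conjugates forces equality. (An alternative way to close this gap, without invoking finite index, is to apply the same containment argument to $h^{-1} = g^{-1}U \in N_{G/U}(K/U)$, which gives the reverse inclusion $K \subseteq gKg^{-1}$ by symmetry.) Your version is, if anything, slightly more careful than the printed one. The final remark about the map being a homeomorphism is a harmless extra, since both sides are finite discrete; the statement only asks for a group isomorphism.
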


\begin{proof}
    We begin by showing that the map is well defined. Pick some element $g \in N_g(K)$,and consider $\pi(g) = gU \in G/U$. For any $k \in K$, we see that $(gU)(kU)(g^{-1}U) = (gkg^{-1} )U = k' U$ for some $k' \in K$. As such $\pi$ restricts to a map $\pi \colon N_G(K) \to N_{G/U}(K/U)$. Because $U \leqslant K$ by assumption, the kernel of this map is precisely $U$. Therefore, $\pi$ induces an injective map $\pi \colon N_G(K)/U \to N_{G/U}(K/U)$, and it remains to prove surjectivity. To this end, pick some element $h \in N_{G/U}(K/U)$. We have $h = gU$ for some $g \in G$ and for $k \in K$ we have that $(gU)(kU)(g^{-1}U) = k'U$ for some $k' \in K$. In particular, we see that $gkg^{-1} = k' u \in K$ for some $u \in U$. Thus, $g \in N_G(K)$ and $\pi(g) =h \in N_{G/U}(K/U)$.
\end{proof}

\begin{lemma}\label{lem:limitisintersection}
    Let $(G_i)_i$ be a system of profinite groups where all maps are inclusions. Then $\lim_i G_i = \cap_i G_i$. 
\end{lemma}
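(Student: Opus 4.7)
The plan is to produce a natural continuous group isomorphism $\Psi \colon \lim_i G_i \xrightarrow{\sim} \bigcap_i G_i$ and then verify it is a homeomorphism. First, recall that the limit in the category of profinite groups is computed as a closed subgroup of the product $\prod_i G_i$, consisting of compatible tuples $(g_i)_i$ satisfying $f_{ij}(g_j) = g_i$ for every structure map $f_{ij}\colon G_j \to G_i$. Since every $f_{ij}$ is by hypothesis an inclusion, one may view all of the $G_i$ as sitting inside any common member of the system (for instance, using cofilteredness, inside any $G_i$ lying above both $G_j$ and $G_k$; in practice in our applications the whole system sits inside a fixed ambient profinite group $G$). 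The compatibility condition then forces all coordinates of $(g_i)$ to coincide with a single element $g$, and this $g$ necessarily lies in every $G_i$, hence in $\bigcap_i G_i$. Conversely, any $g \in \bigcap_i G_i$ yields the constant compatible tuple $(g)_i \in \lim_i G_i$. Setting $\Psi(g_i)_i = g$ gives a group bijection with the stated inverse.

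It remains to check that $\Psi$ is a homeomorphism. The topology on $\lim_i G_i$ is generated by the subbasic opens $\pi_j^{-1}(U_j)$, where $\pi_j\colon \lim_i G_i \to G_j$ is the canonical projection and $U_j \subseteq G_j$ is open. Tracking through the bijection $\Psi$, each such subbasic open corresponds to $U_j \cap \bigcap_i G_i$, which is exactly a subbasic open set for the subspace topology on $\bigcap_i G_i \subseteq G_j$. These subspace topologies from different $G_j$ agree because the inclusions $\bigcap_i G_i \hookrightarrow G_j$ are compatible with the inclusions $G_j \hookrightarrow G_i$, so the induced topologies coincide. Hence $\Psi$ is a continuous bijection of compact Hausdorff spaces (each $G_i$ is compact Hausdorff and the limit is closed in the product), and therefore a homeomorphism. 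No serious obstacle is expected; the only delicate point is making sure the intersection is interpreted inside a common ambient group, which is automatic under the cofilteredness of the indexing system together with the inclusion hypothesis.
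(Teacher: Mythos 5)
Your proof is correct. The paper's proof is a two-line categorical observation: the forgetful functor from profinite groups to sets preserves (cofiltered) limits, and the limit of a decreasing system of sets is the intersection; the identification of topologies is left implicit (it follows because limits of profinite spaces are computed in $\Top$, as the paper records earlier in the same section). Your argument unpacks exactly these categorical facts into an explicit check: you describe the limit concretely as compatible tuples in the product, observe that when all transition maps are inclusions the compatibility forces the tuple to be constant, and then verify by hand that the resulting bijection matches up subspace topologies. So the mathematical content is the same; the difference is one of packaging. What the paper's phrasing buys is brevity. What your version buys is that it makes visible the one genuinely delicate point, which you rightly flag at the end: the symbol $\bigcap_i G_i$ only has meaning once the $G_i$ are all realized inside a common ambient profinite group, which is automatic here from cofilteredness together with the hypothesis that the structure maps are inclusions (in the application in the paper, they all sit inside a fixed $G$). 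Your explicit homeomorphism check via subbasic opens is also a reasonable way to close the argument, though the quicker observation that a continuous bijection of compact Hausdorff spaces is a homeomorphism (which you also invoke) already suffices.
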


\begin{proof}
    This follows from the fact that the forgetful functor from profinite groups to sets preserves limits together with the statement that the limit of a decreasing sequence of sets is the intersection.
\end{proof}

\begin{lemma}[{\cite[Proposition 2.2.4]{ribes_zalesskii}}]\label{lem:limitisexact}
    Let
    \[
    (\xymatrix{1 \ar[r] & Q_i \ar[r] & G_i \ar[r] & K_i \ar[r] & 1})_{i \in I}
    \]
    be a cofiltered system of exact sequences of profinite groups. Then the limit of this sequence is once again exact.
\end{lemma}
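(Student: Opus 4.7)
The plan is to split the proof into (a) the left-exact part, which is formal, and (b) surjectivity of the induced map $\lim_i G_i \to \lim_i K_i$, which requires a compactness argument specific to profinite groups.

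First I would dispatch left exactness: since the forgetful functor from profinite groups to sets preserves all limits (as already invoked in \cref{lem:limitisintersection}), taking limits commutes with kernels. In particular, the sequence $1 \to \lim_i Q_i \to \lim_i G_i \to \lim_i K_i$ is exact at the first two spots.

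The substance lies in surjectivity of $\pi \colon \lim_i G_i \to \lim_i K_i$. Given a compatible family $(k_i)_{i \in I} \in \lim_i K_i$, I would set $F_i \coloneqq \pi_i^{-1}(k_i) \subseteq G_i$, where $\pi_i \colon G_i \twoheadrightarrow K_i$ is the given surjection. By exactness of the $i$-th row each $F_i$ is non-empty, and being a coset of the closed subgroup $Q_i$ inside the compact Hausdorff group $G_i$, it is also a compact Hausdorff space. The commutativity of the given squares of short exact sequences ensures that, for every morphism $j \to i$ in $I$, the transition map $G_j \to G_i$ restricts to a continuous map $F_j \to F_i$. Thus $(F_i)_{i \in I}$ assembles into a cofiltered inverse system of non-empty compact Hausdorff spaces, and any element of $\lim_i F_i \subseteq \lim_i G_i$ lifts $(k_i)$ as required.

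The main point, rather than a genuine obstacle, is then to know that this limit is non-empty. I would realize $\lim_i F_i$ as a closed subset of the compact Hausdorff product $\prod_i F_i$ (invoking Tychonoff) cut out by the compatibility conditions $x_i = f_{ji}(x_j)$, and verify the finite intersection property directly: any finite compatible tuple can be extended using cofilteredness of $I$ to a compatible family on an upward-closed cofinal subset, which witnesses non-emptiness of each finite intersection. This standard compactness argument closes the proof.
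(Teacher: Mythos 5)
The paper does not prove this lemma; it simply cites \cite[Proposition 2.2.4]{ribes_zalesskii}. Your argument is correct and is the standard proof of that cited result: left exactness is formal because limits commute with kernels and with the underlying-set functor, and surjectivity follows from the compactness argument you outline, applied to the non-empty compact Hausdorff fibers $F_i = \pi_i^{-1}(k_i)$.

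The one thing worth tightening is your last sentence. For the finite intersection property you do not need, and do not actually produce, a compatible family on an ``upward-closed cofinal subset.'' What is really happening is: any finite collection of compatibility constraints $C_{\alpha_1},\dots,C_{\alpha_n} \subseteq \prod_i F_i$ involves only finitely many objects of $I$; cofilteredness of $I$ gives a cone vertex $j$ over this finite subdiagram. Choose any $x_j \in F_j$, push it forward along the cone maps to define the coordinates at the finitely many relevant $i$, and fill in all remaining coordinates arbitrarily from the non-empty $F_i$. This produces a point of $\bigcap_l C_{\alpha_l}$, which is all that the finite intersection property requires before invoking compactness of $\prod_i F_i$.
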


\begin{proposition}\label{prop:normalisercontinuous}
    Let $G$ be a profinite group with closed subgroup $H$. Then the construction of normalizers is continuous. That is, there is an isomorphism
    \[
    N_G(H) \cong \lim_i N_{G_i}(H_i),
    \]
    where $G_i$ runs over the finite quotient groups of $G$ and the maps of the system are restrictions of the quotient maps 
    defining $G$ as the limit of the $G_i$. 
\end{proposition}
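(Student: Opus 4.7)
The plan is to reduce the computation of $N_G(H)$ for a closed subgroup $H$ to that of the normalizers $N_G(HU_i)$ of the \emph{open} overgroups $HU_i$, since the latter are directly accessible via \cref{lem:normaliserofquotient}. The identification $N_G(H) \cong \lim_i N_{G_i}(H_i)$ will then follow by combining exactness of cofiltered limits (\cref{lem:limitisexact}) with a standard intersection argument.

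First I would observe that for each $i$, the subgroup $HU_i \leqslant G$ is open (being a union of $U_i$-cosets) and contains the open normal subgroup $U_i$, so \cref{lem:normaliserofquotient} applies and yields an isomorphism
\[
N_G(HU_i)/U_i \xrightarrow{\sim} N_{G/U_i}(HU_i/U_i) = N_{G_i}(H_i).
\]
These isomorphisms are compatible with the transition maps in the system: for $U_i \subseteq U_j$, any $g \in N_G(HU_i)$ satisfies $gHU_jg^{-1} = gHU_iU_jg^{-1} = HU_i \cdot U_j = HU_j$, using normality of $U_j$, so the quotient maps $G_i \to G_j$ restrict to compatible maps $N_{G_i}(H_i) \to N_{G_j}(H_j)$. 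Packaged together, this yields a cofiltered system of short exact sequences
\[
1 \to U_i \to N_G(HU_i) \to N_{G_i}(H_i) \to 1.
\]
By \cref{lem:limitisexact} the limit remains exact, and since $\lim_i U_i = \bigcap_i U_i = \{e\}$ for a defining system of open normal subgroups, I obtain an isomorphism $\lim_i N_G(HU_i) \xrightarrow{\sim} \lim_i N_{G_i}(H_i)$. Applying \cref{lem:limitisintersection} to the left-hand side (whose transition maps are inclusions) further identifies it with $\bigcap_i N_G(HU_i)$.

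It remains to verify the intrinsic identity $N_G(H) = \bigcap_i N_G(HU_i)$ inside $G$. The inclusion $\subseteq$ is immediate: if $g \in N_G(H)$, then $gHU_ig^{-1} = (gHg^{-1})U_i = HU_i$ by normality of $U_i$. For the reverse inclusion, I would first record that $H = \bigcap_i HU_i$; this follows because $HU_i$ is the preimage in $G$ of $H_i \leqslant G_i$ under the projection, and $H \cong \lim_i H_i$ by \eqref{eq:limofsubgroups} is precisely the set of elements of $G$ whose image in each $G_i$ lies in $H_i$. Consequently, any $g$ normalizing every $HU_i$ satisfies $gHg^{-1} = \bigcap_i g(HU_i)g^{-1} = \bigcap_i HU_i = H$, so $g \in N_G(H)$. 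Combining this identity with the previous paragraph gives the desired isomorphism. The argument is essentially formal given the earlier lemmas; the only subtlety is keeping track of the compatibility of transition maps and the fact that all objects involved remain closed subgroups of $G$, ensuring that the exactness and intersection results apply in the profinite category.
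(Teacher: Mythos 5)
Your proof is correct and follows essentially the same strategy as the paper's: set up the cofiltered system of short exact sequences $1 \to U_i \to N_G(HU_i) \to N_{G_i}(H_i) \to 1$, apply \cref{lem:limitisexact} and the triviality of $\bigcap_i U_i$, identify $\lim_i N_G(HU_i)$ with $\bigcap_i N_G(HU_i)$ via \cref{lem:limitisintersection}, and then show $N_G(H) = \bigcap_i N_G(HU_i)$ using $H = \bigcap_i HU_i$. The only differences are cosmetic (you apply \cref{lem:normaliserofquotient} at the start rather than at the end) or additions of detail the paper leaves implicit (explicit verification that the transition maps are well-defined, and a justification of $H = \bigcap_i HU_i$ via \eqref{eq:limofsubgroups}, both of which are welcome clarifications).
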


\begin{proof}
    Consider the system of exact sequences
    \[
    (\xymatrix{1 \ar[r] & U_i \ar[r] & N_G(HU_i) \ar[r] & N_G (HU_i)/U_i \ar[r] & 1})_{i \in I}.
    \]
    By \cref{lem:limitisexact}, taking the limits of this system gives another exact sequence
    \[
    \xymatrix{1 \ar[r] & \lim_i U_i \ar[r] & \lim_i N_G(HU_i) \ar[r] & \lim_i N_G (HU_i)/U_i \ar[r] & 1.}
    \]
    By \cref{lem:limitisintersection} we can identify the first term as the intersection of all the 
    open normal subgroups $U_i$, which is the trivial subgroup $\{e\}$ by \cite[Corollary 1.2.4]{wilson}. For the middle term, first observe that $N_GH$ is a subgroup of $N_G(HU_i)$ as the $U_i$ are normal, hence $N_G(H)$ is contained in $\cap_i N_G(HU_i)$. Likewise $N_G(HU_i)$ is a subgroup of $N_G(HU_j)$ whenever $U_i \leqslant U_i$, so an application of \cref{lem:limitisintersection} yields that $\bigcap_i N_G(HU_i) \cong \lim_i N_G(HU_i)$. Conversely, consider some $g$ which normalizes each $HU_i$. Since
    \[
    H = \bigcap_i (HU_i) = \bigcap_i g(HU_i)g^{-1} = g(\bigcap_i HU_i) g^{-1} = gHg^{-1}
    \]
    we conclude that $g$ normalizes $H$ itself, so $g \in N_G(H)$. In summary, we get that
    \[
        N_G(H) = \bigcap_i N_G(HU_i) \cong \lim_i N_G(HU_i).
    \]
    Applying \cref{lem:normaliserofquotient} on the last term, we have identified the exact sequence as
    \[
    \xymatrix{1 \ar[r] & \{e\} \ar[r] & N_G(H) \ar[r] & \lim_i N_{G_i} (HU_i/U_i) \ar[r] & 1}
    \]
    As such, it follows that the quotient maps induce an isomorphism
    \[
    N_G(H) \cong \lim_i N_{G_i} (HU_i/U_i) = \lim_i N_{G_i} (H_i)
    \]
    as required.
\end{proof}

Now that we have seen that the normalizers behave continuously, we are able to conclude that Weyl groups also behave continuously.

\begin{proposition}\label{prop:profiniteweyl}
    Let $G$ be a profinite group with closed subgroup $H$. Then the construction of Weyl group is continuous. That is, there is an isomorphism
    \[
    W_G(H) \cong \lim_i W_{G_i}(H_i),
    \]
    where $G_i$ runs over the finite quotient groups of $G$.
\end{proposition}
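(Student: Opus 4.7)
The plan is to assemble the Weyl group as the quotient of the normalizer by the subgroup at each finite stage, and then deduce the continuity statement by invoking the exactness of cofiltered limits of profinite groups from \cref{lem:limitisexact} together with the continuity of normalizers established in \cref{prop:normalisercontinuous}.

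First, I would verify that the assignments $i \mapsto H_i$, $i \mapsto N_{G_i}(H_i)$, and $i \mapsto W_{G_i}(H_i)$ each assemble into cofiltered systems with structure maps induced by the quotient maps $G_j \twoheadrightarrow G_i$ whenever $U_j \leqslant U_i$. For $H_i = HU_i/U_i$ this is immediate, while for the normalizer it is a consequence of \cref{lem:normaliserofquotient} applied at each finite stage. Since each $H_i$ is normal in $N_{G_i}(H_i)$ by definition, we obtain a cofiltered system of short exact sequences of finite (and therefore profinite) groups
\[
\bigl(\xymatrix{1 \ar[r] & H_i \ar[r] & N_{G_i}(H_i) \ar[r] & W_{G_i}(H_i) \ar[r] & 1}\bigr)_{i \in I}.
\]

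Next, I would apply \cref{lem:limitisexact} to this cofiltered system to obtain an exact sequence of profinite groups
\[
\xymatrix{1 \ar[r] & \lim_i H_i \ar[r] & \lim_i N_{G_i}(H_i) \ar[r] & \lim_i W_{G_i}(H_i) \ar[r] & 1.}
\]
By \eqref{eq:limofsubgroups} the first term is canonically isomorphic to $H$, and by \cref{prop:normalisercontinuous} the second term is canonically isomorphic to $N_G(H)$. Both identifications are induced by the very same projection maps defining $G$ as the limit of the $G_i$, so they are compatible with the inclusion $H \hookrightarrow N_G(H)$ in the evident way.

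The proof concludes by extracting the cokernel: the exact sequence above identifies $\lim_i W_{G_i}(H_i)$ with $N_G(H)/H = W_G(H)$, and the isomorphism is the one induced by the quotient maps, as required. There is no real obstacle here beyond bookkeeping, since all the heavy lifting has already been done: the exactness of cofiltered limits of profinite groups handles the passage from the exact sequence at each finite stage to the limit, and the continuity of normalizers identifies the middle term of the limiting sequence.
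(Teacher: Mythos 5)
Your proof follows the paper's argument almost exactly: form the cofiltered system of short exact sequences $1 \to H_i \to N_{G_i}(H_i) \to W_{G_i}(H_i) \to 1$, take the limit using \cref{lem:limitisexact}, and identify $\lim_i H_i$ and $\lim_i N_{G_i}(H_i)$ via \eqref{eq:limofsubgroups} and \cref{prop:normalisercontinuous}. One small inaccuracy: \cref{lem:normaliserofquotient} does not directly furnish the transition maps $N_{G_j}(H_j) \to N_{G_i}(H_i)$ (its hypothesis $U \leqslant K$ would translate to $U_i \leqslant HU_j$, which can fail); rather, one checks directly that the quotient map $G_j \twoheadrightarrow G_i$ carries normalizers to normalizers because $H_j$ surjects onto $H_i$.
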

\begin{proof}
    Consider the system of short exact sequences of finite groups
        \[
        (\xymatrix{1 \ar[r] & H_i \ar[r] & N_{G_i}(H_i) \ar[r] & W_{G_i}(H_i) \ar[r] & 1})_{i \in I},
        \]
    with transition maps as defined above. The limit of this diagram is an exact sequence by \cref{lem:limitisexact}. In light of \eqref{eq:limofsubgroups}  and \cref{prop:normalisercontinuous}, we thus obtain a short exact sequence 
        \[
        \xymatrix{1 \ar[r] & H \ar[r] & N_{G}(H) \ar[r] & \lim_i W_{G_i}(H_i) \ar[r] & 1,}
        \]
    which proves that $W_G H \cong \lim_i W_{G_i}(H_i)$ as required.
\end{proof}

\section{Stable equivariant homotopy theory for a profinite group}\label{sec:equivariantspectra}

The goal of this section is to introduce a convenient model for the category of (genuine) equivariant $G$-spectra for $G$ a profinite group and to construct base-change functors for it. The key idea is that a (finite) $G$-spectrum should be ``continuous'' in the sense that it a filtered colimit of $G_i$-spectra for $G_i$ the finite quotient groups of $G$. We make this intuition precise in \cref{ssec:gsp}, our approach being based on and compared to Fausk's work \cite{Fausk2008}. We refer to \cref{app:equivariantmodels} for a more thorough discussion and a comparison with other models of the equivariant stable homotopy theory of profinite groups. Our ``continuous model'' results in a computation of the Balmer spectrum of the tt-category of compact equivariant $G$-spectra for all profinite $G$ in terms of the Balmer spectra for finite groups. In \cref{ssec:gfp}, we then apply continuity to construct geometric fixed point functors for profinite groups and to establish their characteristic properties.

\subsection{The category of equivariant $G$-spectra for profinite $G$}\label{ssec:gsp}

Let $G$ be a profinite group, fixed throughout this section and presented as a cofiltered limit of finite quotient groups, $G = \lim_{i\in I}G_i$. For example, we may take $G_i = G/U_i$ for $(U_i)_{i \in I}$ a cofinal system of open normal subgroups of $G$. Given a finite group $F$, we write $\Sp_{F}$ for the symmetric monoidal stable $\infty$-category of genuine $F$-equivariant spectra. Moreover, for our profinite group $G$, we denote by 
\begin{equation}\label{eq:gsp_inftyfausk}
    \Spfausk_{G} \coloneqq \mathsf{N}(G\Sp)
\end{equation}
the symmetric monoidal stable $\infty$-category underlying Fausk's monoidal model category, as recalled in \cref{app:equivariantmodels}. The transition maps in the system $(G_i)_{i\in I}$ induce compatible restriction functors on the categories of $G_i$-spectra: For any map $f_{ij}\colon G_i \to G_j$, there are geometric functors
\begin{equation}\label{eq:gsp_restriction}
\xymatrix{f_{ij}^* \colon \Sp_{G_j} \to \Sp_{G_i}}
\end{equation}
given by restriction the $G_j$-action to an $G_i$-action along $f_{ij}$. We note that these restriction functors are naturally coherent with respect to the system $(G_i)_{i\in I}$, in the expected way. When $f_{ij}$ is surjective, $f_{ij}^*$ is usually called \emph{inflation} and denoted $\infl = \infl_{G_j}^{G_i}$, while for an inclusion of subgroups, the induced functor is called \emph{restriction} and denoted $\res=\res_{G_j}^{G_i}$.

\begin{definition}\label{def:gsp_contmodel}
For a profinite group $G$, we define the symmetric monoidal stable $\infty$-category of \emph{continuous $G$-spectra} as the filtered colimit over the geometric functors \eqref{eq:gsp_restriction}:
\[
\Sp_G^{\cont} \coloneqq \colim_{i}^{\omega}\Sp_{G_i}.
\]
Here, the filtered colimit is taken in the $\infty$-category of rigidly-compactly generated tt-categories, as studied in \cref{ssec:colimits}.
\end{definition}

\begin{remark}\label{rem:gsp_limitmodel}
    Suppose $f\colon F \to Q$ is an epimorphism between finite groups with kernel $N$. The right adjoint $\Sp_{F} \to \Sp_{Q}$ to inflation $f^*$ is then given by the (categorical) $N$-fixed points. Following \cref{ssec:limcoliminfty}, we may describe the colimit in \cref{def:gsp_contmodel} equivalently as the limit
        \[
            \Sp_G^{\cont} \simeq \lim_i^{\Pr^R}\Sp_{G_i}
        \]
    in $\Pr^R$ taken over the corresponding fixed point functors. This limit can also be computed in $\widehat{\Cat}_{\infty}$, see the discussion around \eqref{eq:prlprr}, so any $G$-spectrum is given by a system of $G_i$-spectra $X_i$ such that $X_i^{\ker (f_{ij})} \simeq X_j$ compatibly. This limit formula, however, has the disadvantage over the continuous description above in that it does not interact well with the symmetric monoidal structures present.
\end{remark}

By construction, $\Sp_{G}^{\cont} = (\Sp_{G}^{\cont},\otimes,S_G^0)$ is a rigidly-compactly generated tt-category with monoidal structure $\otimes$ and unit the \emph{$G$-equivariant sphere spectrum} $S_G^0 = \colim_i S_{G_i}^0$. If $F$ is a finite group, then the suspension spectra $F/E_+$ of the transitive $F$-sets forms a set of compact generators for $\Sp_{F}$. It then follows from the description of filtered colimits in \cref{ssec:colimits} that $(G/U_+)_U$ is a set of a compact generators for $\Sp_{G}^{\cont}$, where $U$ ranges through the open normal subgroups of $G$. Finally, we note that for finite $G$, the system $(G_i)_i$ has a final object, so $\Sp_G^{\cont} = \Sp_G$ in this case. 

\begin{theorem}\label{thm:gsp_contfausk}
Let $G$ be a profinite group. The inflation functors induce a geometric equivalence
\[
\xymatrix{\Sp_G^{\cont} \ar[r]^-{\sim} & \Spfausk_{G}.}
\]
\end{theorem}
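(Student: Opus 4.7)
The plan is to prove this by exhibiting the comparison as a geometric equivalence between two rigidly-compactly generated tt-categories with compatible sets of compact generators, namely the suspension spectra of the orbits $G/U_+$ as $U$ ranges through the open (normal) subgroups of $G$.

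First, I would construct the comparison functor. For each $i$, Fausk's construction provides a symmetric monoidal left adjoint inflation functor $\infl_{G_i}^G\colon \Sp_{G_i} \to \Spfausk_G$ which preserves compact objects, and these are coherent with respect to the transition inflations in the diagram $(\Sp_{G_i},f_{ij}^*)$ defining $\Sp_G^{\cont}$. By the universal property of the colimit in $\Pr^{L,\omega}$ (see \cref{ssec:limcoliminfty}, and the fact that the structure functors in the colimit preserve compact objects), this system assembles into a canonical geometric functor
\[
\Phi\colon \Sp_G^{\cont} = \colim_i^{\omega}\Sp_{G_i} \longrightarrow \Spfausk_G.
\]

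Second, I would verify that $\Phi$ sends a set of compact generators of the source onto a set of compact generators of the target. Any open subgroup $V \leqslant G$ is the preimage of a subgroup $V_i \leqslant G_i$ for $i$ sufficiently deep (by \eqref{eq:limofsubgroups}), and $\Phi$ carries the compact object $G_i/V_{i,+}$ to $G/V_+$. As the orbit spectra at open subgroups are known to generate $\Spfausk_G$ (this is the content of Fausk's construction and is recorded in \cref{app:equivariantmodels}), and since the $G_i/V_{i,+}$ form a set of compact generators of $\Sp_G^{\cont}$ by the description of filtered colimits in \cref{ssec:colimits}, it remains to establish that $\Phi$ is fully faithful on these generators.

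Third, for fully faithfulness one compares mapping spaces. On the source, \cref{lem:colimitofmaps} gives
\[
\Map_{\Sp_G^{\cont}}(G/U_+, G/V_+) \simeq \colim_i \Map_{\Sp_{G_i}}(G_i/U_i, G_i/V_i),
\]
while on the target the analogous continuity statement in $\Spfausk_G$ is exactly the content of Fausk's Bousfield-type construction: equivariant mapping spectra between orbits at open subgroups are modeled levelwise by their finite-stage counterparts. The required identification is therefore a continuity statement for $G$-equivariant maps of profinite orbit spectra, a proof of which is carried out in \cref{app:equivariantmodels}.

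The main obstacle will be precisely this mapping space computation on the Fausk side, since it requires unpacking the model-categorical definition and verifying that the completion/localization built into Fausk's construction does not alter homotopy classes of maps between the distinguished compact generators. Once this is in hand, standard arguments (\cite[Proposition 5.5.7.8]{htt}) upgrade fully faithful on compact generators plus essential surjectivity onto a compact generating class to a geometric equivalence $\Phi$, concluding the proof.
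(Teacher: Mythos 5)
Your approach is essentially identical to the paper's: construct the comparison from the universal property of the filtered colimit of the inflations, observe that the candidate generators match up, and reduce to full faithfulness on those generators via the filtered-colimit description of mapping spaces.

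The one substantive issue is where you deposit the burden of proof for the crucial isomorphism
\[
\colim_i\pi_*\Hom_{\Sp_{G_i}}(G/V_+,G/W_+)\;\cong\;\pi_*\Hom_{\Spfausk_G}(G/V_+,G/W_+).
\]
You point to \cref{app:equivariantmodels} for ``a continuity statement for $G$-equivariant maps of profinite orbit spectra,'' but the appendix contains no such computation; indeed \cref{prop:equiv_of_models_profinite} in the appendix \emph{invokes} \cref{thm:gsp_contfausk} in its proof, so routing the argument through the appendix would be circular. You correctly identify that unpacking Fausk's model-categorical construction (and checking the built-in localization does not change maps between the distinguished orbit generators) is where the content lives, but this verification must be cited from, or reproduced following, \cite[Corollary 3.10]{barnessugrue_spectra}, which is exactly what the paper does. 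Once that reference is supplied in place of the self-referential pointer, the argument closes.
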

\begin{proof}
    We begin with the construction of the comparison functor between the two models. Because both categories are rigidly-compactly generated, it suffices to perform this construction on the level of compact objects; the result then follows by passing to the associated ind-categories. For any finite quotient group $G \to G_i$, there is an inflation functor $\infl\colon \Sp_{G_i}^{\omega} \to \cSpfausk_G$; for example, this functor may be produced as the underlying functor of symmetric monoidal $\infty$-categories of a monoidal Quillen functor. Varying over the system $(G_i)_{i \in I}$ of quotient groups of $G$, we obtain an induced functor
    \[
    \xymatrix{\psi\colon \Sp_G^{\cont,\omega} \simeq \colim_{i}\Sp_{G_i}^{\omega} \ar[r] & \cSpfausk_G}
    \]
    of symmetric monoidal $\infty$-categories. 
    
    Let $\sfG = (G/U_+\mid U \triangleleft_o G)$ be the full subcategory of $\Sp_G^{\cont,\omega}$ on the set of generators exhibited above. In order to show that $\psi$ is an equivalence, we need to verify the following two claims:
        \begin{enumerate}
            \item $\psi(\sfG)$ forms a set of generators for $\cSpfausk_G$.
            \item $\psi$ is fully faithful on $\sfG$.
        \end{enumerate}
    In fact, both claims can be checked on the level of triangulated categories, so we may tacitly reduce to the corresponding homotopy categories. Claim (1) is then a direct consequence of the construction of Fausk's model category, see \cref{thm:fausk}. Keeping in mind the formula \eqref{eq:colimitmap} for the mapping spaces in a filtered colimit of $\infty$-categories, the second claim translates to the statement that, for open normal subgroups $V,W$ in $G$, the functor $\psi$ induces an isomorphism
        \[
        \colim_i\pi_*\Hom_{\Sp_{G_i}}(G/V_+,G/W_+) \cong \pi_*\Hom_{\Spfausk_G}(G/V_+,G/W_+),
        \]
    where $G_i$ runs over those finite quotient groups of $G$ which stabilize both $G/V$ and $G/W$, i.e., $G_i = G/U_i$ with $U_i \leqslant V \cap W$. 
    The desired isomorphism was established \cite[Corollary 3.10]{barnessugrue_spectra}, so we conclude that $\psi$ is an equivalence of tt-categories. 
\end{proof}

The previous theorem justifies the following choice of:

\begin{notation}\label{nota:gsp}
    For $G$ a profinite group, we write $\Sp_G$ for the rigidly-compactly generated tt-category of \emph{(genuine) $G$-equivariant spectra}, modelled by either of the two equivalent categories $\Sp_{G}^{\cont}$ or $\Spfausk_G$.
\end{notation}

Given a group homomorphism $f_{ij}\colon G_i \to G_j$ in the system $(G_i)_{i\in I}$, the corresponding inflation functor $\infl_{G_j}^{G_i}$ induces a continuous map on Balmer spectra which we denote by
\begin{equation}\label{eq:gsp_iota}
    \xymatrix{\iota_{ij}\colon \Spc(\Sp_{G_i}^{\omega}) \ar[r] & \Spc(\Sp_{G_j}^{\omega}); & \iota_G\colon \Spc(\Sp_G^{\omega}) \ar[r] & \lim_{i \in I}\Spc(\Sp_{G_i}^{\omega}).}
\end{equation}
The map $\iota_G$ displayed on the right is then the induced map on limits. Since the colimit diagram~$I$ is filtered, by the continuity of the spectrum (\cref{prop:spccontinuity}), we obtain:

\begin{corollary}\label{cor:gsp_contspectrum}
    Let $G$ be a profinite group, then inflation induces a homeomorphism
    \[
    \xymatrix{\iota_G\colon\Spc(\Sp_{G}^{\omega}) \ar[r]^-{\cong} & \lim_{i\in I}\Spc(\Sp_{G_i}^{\omega}).}
    \]
\end{corollary}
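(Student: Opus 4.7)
The plan is to combine the continuous description of $\Sp_G$ from \cref{thm:gsp_contfausk} with Gallauer's continuity theorem for the Balmer spectrum (\cref{prop:spccontinuity}); the statement is then essentially a bookkeeping exercise, exactly as foreshadowed by the sentence immediately preceding the corollary.

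First, I would invoke \cref{thm:gsp_contfausk} to identify $\Sp_G \simeq \Sp_G^{\cont} = \colim_i^\omega \Sp_{G_i}$ as rigidly-compactly generated tt-categories, where by \cref{rem:compactcolim} this colimit in $\Pr^{L,\omega}$ is defined as the ind-completion of the filtered colimit of the compact parts. Under the equivalence $(-)^\omega\colon \Pr^{L,\omega} \xrightarrow{\,\simeq\,} \Cat_\infty^\natural$ this passes to an equivalence $\Sp_G^\omega \simeq \colim_i \Sp_{G_i}^\omega$ of essentially small idempotent-complete tt-categories; and since the forgetful functor $\Cat_\infty^\natural \to \Cat_\infty$ preserves filtered colimits (see the discussion around \eqref{eq:colimitcomparisons} in \cref{ssec:limcoliminfty}), the same colimit is also computed in $\Cat_\infty$. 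This places us squarely in the hypotheses of \cref{prop:spccontinuity}.

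Applying that proposition to the filtered diagram $(\Sp_{G_i}^\omega, f_{ij}^\ast)_{i \in I}$ of transition inflations from \eqref{eq:gsp_restriction}, I would obtain a canonical homeomorphism
\[
\Spc(\Sp_G^\omega) \cong \Spc\bigl(\colim_i \Sp_{G_i}^\omega\bigr) \xrightarrow{\,\sim\,} \lim_i \Spc(\Sp_{G_i}^\omega).
\]
By naturality of $\Spc$, the transition maps in the cofiltered limit are precisely the maps $\iota_{ij}$ of \eqref{eq:gsp_iota} induced by inflation, and unwinding the construction of $\iota_G$ given there confirms that the composite homeomorphism above coincides with $\iota_G$. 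There is no genuine obstacle: the entire argument reduces to stringing together two already-established results. The only mild point requiring care is the passage from the colimit in $\Pr^{L,\omega}$ of \cref{def:gsp_contmodel} to the colimit of essentially small tt-categories appearing in \cref{prop:spccontinuity}, which is handled by the equivalences recalled in \cref{ssec:limcoliminfty}.
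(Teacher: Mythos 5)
Your proposal is correct and follows precisely the route the paper intends: identify $\Sp_G^\omega \simeq \colim_i \Sp_{G_i}^\omega$ via the continuous model of \cref{thm:gsp_contfausk} (equivalently \cref{def:gsp_contmodel}) and then apply Gallauer's continuity theorem \cref{prop:spccontinuity}. The paper treats this as immediate from the preceding discussion, and your additional care about the passage between $\Pr^{L,\omega}$, $\Cat_\infty^\natural$, and $\Cat_\infty$ is exactly the bookkeeping the paper leaves implicit.
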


In \cref{ssec:gsp_ttprimes}, we will give a more explicit description of the points of $\Spc(\Sp_{G}^{\omega})$.

\subsection{Geometric fixed points for profinite groups}\label{ssec:gfp}

We continue to exploit the continuity of $\Sp_G$, now in order to construct geometric fixed point functors for profinite groups and establish a characterization through their basic properties. We freely refer to \cite{lmms_86, mandell04} as references for geometric fixed points for finite groups; see also the $\infty$-categorical treatment in \cite{MathewNaumannNoel2017}. A different, but ultimately equivalent approach to the construction of geometric fixed point functors for profinite groups has been given by Bachmann and Hoyois in \cite[Section 9]{bachmannhoyois_norms}.

Recall our conventions that $G$ is a profinite group presented as $G = \lim_{i\in I}G_i$ with $I$ cofiltered and $G_i=G/U_i$ finite for all $i \in I$. Let $H$ be a closed subgroup of $G$ and consider its image $H_i = HU_i/U_i$ under the quotient map $\Sub(G) \to \Sub(G_i)$ induced by $G \to G_i$. Note that $H \cong \lim_i H_i$ by \eqref{eq:limofsubgroups}. The system of subgroups $(H_i)_{i \in I}$ is compatible with $(G_i)_{i\in I}$ in the sense that, given a surjective homomorphism $G_i \to G_j$, there is a commutative diagram 
\[
\begin{tikzcd}
    G_i = G/U_i \arrow[d, shift left=4,two heads,swap,"f_{ij}"] & HU_i/U_i \cong H/U_i \cap H = H_i \arrow[l,hook] \ar[d, shift right = 16,two heads,"f_{ij}"] \\
    G_j = G/U_j & HU_j/U_j \cong H/U_j \cap H = H_j. \arrow[l,hook]
\end{tikzcd}
\]
Passing to the corresponding restriction and inflation functors yields the commutative square which forms the left part of the following diagram:
\begin{equation}\label{eq:gfp_finitecompatibility}
\begin{tikzcd}[column sep=1.5cm]
    \Sp_{G_i} \arrow[r,swap,"\res"] \arrow[rr, bend left=20,"\Phi_{G_i}^{H_i}"] & \Sp_{H_i} \arrow[r,swap,"\Phi_{H_i}^{H_i}"] & \Sp  \\
    \Sp_{G_j} \arrow[r,"\res"]\arrow[rr, bend right=20,swap, "\Phi_{G_j}^{H_j}"] \arrow[u,"\infl"] & \Sp_{H_j} \arrow[r,"\Phi_{H_j}^{H_j}"] \arrow[u,"\infl"] & \Sp. \arrow[u,swap,"="]
\end{tikzcd}
\end{equation}

The right square of the diagram, displaying the interaction of the \emph{geometric fixed points} $\Phi_{H_i}^{H_i}$ and $\Phi_{H_j}^{H_j}$ with inflation, commutes as well, see for example \cite[Section 2(K)]{balmersanders}. Finally, the top and the bottom triangles commute by construction of the absolute geometric fixed point functors as a composite of restriction followed by geometric fixed points.

Observe that all functors in \eqref{eq:gfp_finitecompatibility} are geometric. This allows us to take the compactly generated filtered colimit of \eqref{eq:gfp_finitecompatibility} over $I$, resulting in the bottom half of the following diagram:
\begin{equation}\label{eq:gfp_profinitecompatibility}
\begin{tikzcd}[column sep=1.5cm]
    \Sp_G \arrow[r,dashed,"\res"] \arrow[rr,dashed, bend left=20,"\Phi_G^H"] & \Sp_H \arrow[r,dashed,"\Phi_H^H"] & \Sp  \\
    \colim_{i}^{\omega}\Sp_{G_i} \arrow[r,"\colim^{\omega}\res"]\arrow[rr, bend right=20,swap, "\colim^{\omega}\Phi_{G_i}^{H_i}"] \arrow[u,"\infl","\cong"'] & \colim_{i}^{\omega}\Sp_{H_i} \arrow[r,"\colim^{\omega}\Phi_{H_i}^{H_i}"] \arrow[u,"\infl","\cong"'] & \Sp. \arrow[u,swap,"="]
\end{tikzcd}
\end{equation}
The vertical geometric equivalences are obtained from \cref{thm:gsp_contfausk} (or by construction, \cref{def:gsp_contmodel}, respectively), so we can define the top part of the diagram \eqref{eq:gfp_profinitecompatibility} so that it becomes commutative.

\begin{definition}\label{def:gfp}
    Let $H$ be a closed subgroup of $G$ and write $(H_i)_{i \in I}$ for the associated cofiltered system comprised of subgroups $H_i$ of $G_i$. Upon identifying $\Sp_G$ with $\colim_{i}^{\omega}\Sp_{G_i}$ via inflation, and similarly for $\Sp_H$, we define:
        \begin{enumerate}
            \item the \emph{restriction} from $G$ to $H$ as $\res_{H}^G\coloneqq \colim_{i}^{\omega}\res_{H_i}^{G_i}$;
            \item the \emph{geometric fixed points} at $H$ as $\Phi_{H}^H \coloneqq \colim_{i}^{\omega}\Phi_{H_i}^{H_i}$;
            \item and the \emph{(absolute) geometric fixed point functor} at $H$ as $\Phi_G^H \coloneqq \Phi_{H}^H\circ \res_{H}^G$.
        \end{enumerate}
    If the ambient profinite group $G$ is clear from context, we may also omit the corresponding subscript on these functors. 
\end{definition}

In summary, the geometric fixed points for profinite groups are extended via continuity from their finite counterparts; as such, they inherit their excellent formal properties. The next proposition isolates their two key characteristics:

\begin{proposition}\label{prop:gfp_properties}
    The geometric fixed point functors $(\Phi^H)_{H \in \Sub(G)}$ of \cref{def:gfp} are uniquely determined by the following two properties:   
        \begin{enumerate}
            \item the functor $\Phi^H\colon \Sp_G \to \Sp$ is geometric;
            \item for every open normal subgroup $U \leqslant G$ there is a natural equivalence of geometric functors
            \[
            \xymatrix{\Phi^H \circ \infl_{G/U}^G \simeq \Phi_{G/U}^{HU/U}\colon \Sp_{G/U} \ar[r] & \Sp.}
            \]
        \end{enumerate}
\end{proposition}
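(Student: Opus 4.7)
The plan is to first verify that the functors $\Phi^H$ constructed in \cref{def:gfp} satisfy the two properties, and then to deduce uniqueness from the universal property of the compactly generated filtered colimit presentation of $\Sp_G$.

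For property (1), observe that by construction $\Phi^H = \Phi_H^H \circ \res_H^G$ is a composite of two functors, each of which is defined as a filtered colimit in $\Pr^{L,\omega}$ (applied to the symmetric monoidal stable variant) of geometric functors between rigidly-compactly generated tt-categories. Since the forgetful functor $\Pr^{L,\omega} \to \Pr^L$ preserves colimits (see \eqref{eq:colimitcomparisons}), such a colimit is again a geometric functor; thus $\Phi^H$ is geometric. For property (2), note that if $U = U_j$ appears in our chosen cofinal system of open normal subgroups, then the inflation $\infl_{G/U}^G\colon \Sp_{G/U} \to \Sp_G$ identifies, under the equivalence $\Sp_G \simeq \colim_i^\omega \Sp_{G_i}$, with the canonical structure functor into the colimit at stage $j$. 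Precomposing $\Phi^H = \colim_i^\omega \Phi_{G_i}^{H_i}$ with this structure functor recovers $\Phi_{G/U}^{HU/U}$ by the defining universal property of the colimit; a standard cofinality argument extends this to an arbitrary open normal subgroup $U$ of $G$.

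For uniqueness, suppose $\Psi\colon \Sp_G \to \Sp$ is any geometric functor satisfying (2). Under the equivalence $\Sp_G \simeq \colim_i^\omega \Sp_{G_i}$ of \cref{thm:gsp_contfausk}, a geometric (i.e.\ symmetric monoidal, colimit- and compact-object-preserving exact) functor out of $\Sp_G$ is, by the universal property of the colimit in the symmetric monoidal variant of $\Pr^{L,\omega}$, the same datum as a compatible system of geometric functors $(\Psi_i\colon \Sp_{G_i} \to \Sp)_{i\in I}$, meaning $\Psi_j \circ \infl_{G_j}^{G_i} \simeq \Psi_i$ coherently for every $f_{ij}\colon G_i \to G_j$ in the system. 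Property (2) forces $\Psi_i \simeq \Phi_{G_i}^{H_i}$ for each $i$, and the compatibility of this system is exactly the commutativity of the right square of \eqref{eq:gfp_finitecompatibility}. Hence $\Psi \simeq \colim_i^\omega \Phi_{G_i}^{H_i} \circ \res$, which is $\Phi^H$ by \cref{def:gfp}.

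The main point requiring care is the passage between the three variants of $\infty$-categories of $\infty$-categories discussed in \cref{ssec:limcoliminfty}: one must ensure that the universal property used in the uniqueness argument is applied in the symmetric monoidal compactly generated setting, so that the induced functor is automatically geometric and not merely exact. This is handled by the monoidal enhancement of the equivalence $\Cat_\infty^\natural \simeq \Pr^{L,\omega}$, together with the fact that the forgetful functor to $\Pr^L$ preserves colimits; this same point underlies the whole continuity philosophy of the paper and has already been exploited in \cref{def:gsp_contmodel} and \cref{thm:gsp_contfausk}.
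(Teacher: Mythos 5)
Your proof is correct and follows essentially the same path as the paper's: property~(1) is verified by observing that $\Phi^H$ is built from filtered colimits of geometric functors, property~(2) is read off from the diagrams~\eqref{eq:gfp_finitecompatibility} and~\eqref{eq:gfp_profinitecompatibility} (which you phrase as the structure maps of the colimit), and uniqueness is deduced from the universal property of $\Sp_G \simeq \colim_i^{\omega}\Sp_{G_i}$. The only point worth noting is that the paper's proof is considerably terser on the uniqueness step — it simply says ``unwinding the construction'' determines $\Phi^H$ — whereas you explicitly unpack the universal property of the colimit as $\lim_i$ of functor categories; your version is more careful but does quietly gloss over the fact that specifying a compatible system of equivalences $\Psi_i \simeq \Phi_{G_i}^{H_i}$ (not just their existence at each stage) is what pins down $\Psi$, a coherence issue that the paper's statement of property (2) leaves implicit as well.
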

\begin{proof}
    By its construction as a colimit taken in rigidly-compactly generated tt-categories, the geometric fixed points are obtained and thus determined as ind-extensions from their restriction to compact objects. Since both constituents in the composite
    \[
    \xymatrixcolsep{4pc}{
    \xymatrix{\colim_{i}\Sp_{G_i}^{\omega} \ar[r]^-{\colim_i \res_{H_i}^{G_i}} & \colim_{i} \Sp_{H_i}^{\omega} \ar[r]^-{\colim_{i}\Phi_{H_i}^{H_i}} & \Sp^{\omega}}}
    \]
    are filtered colimits of tt-functors, so is $\Phi_{H}^G\colon \Sp_{G}^{\omega} \to \Sp^{\omega}$. This verifies the first property. 
    
    To verify (2), by a cofinality argument, we may take $G_i = G/U$. The diagrams \eqref{eq:gfp_finitecompatibility} and \eqref{eq:gfp_profinitecompatibility} provide a commutative diagram
    \[
    \begin{tikzcd}[column sep=1.5cm]
    \Sp_{G} \arrow[r,swap,"\res_H^G"] \arrow[rr, bend left=20,"\Phi_{G}^{H}"] & \Sp_{H} \arrow[r,swap,"\Phi_{H}^{H}"] & \Sp  \\
    \Sp_{G_i} \arrow[r,"\res_{H_i}^{G_i}"]\arrow[rr, bend right=20,swap, "\Phi_{G_i}^{H_i}"] \arrow[u,"\infl_{G_i}^{G}"] & \Sp_{H_i} \arrow[r,"\Phi_{H_i}^{H_i}"] \arrow[u,"\infl_{H_i}^{H}"] & \Sp \arrow[u,swap,"="]
    \end{tikzcd}
    \]
    which establishes the second property. 
    
    Finally, unwinding the construction of $\Sp_{G}^{\omega}$ as a filtered colimit over inflation functors, we see that these two properties determine the geometric fixed points uniquely. 
\end{proof}

\begin{corollary}\label{cor:gfp_unstable}
    Let $H$ be a closed subgroup of $G$. Let $X$ be a finite $G$-CW complex which is fixed by some $G_i$ and write $X_+$ for its $G$-suspension spectrum. Then we have $\Phi^H(X_+) \simeq (X^{H_i})_+$.
\end{corollary}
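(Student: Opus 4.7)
The plan is to reduce directly to the finite group case via the characterization of $\Phi^H$ in Proposition~\ref{prop:gfp_properties}. The hypothesis that $X$ is ``fixed by some $G_i$'' should be read as saying that the $G$-action on $X$ factors through the quotient $G \twoheadrightarrow G_i$; in particular, $U_i$ acts trivially, so $X$ arises as the inflation of a finite $G_i$-CW complex, which by abuse of notation we continue to denote $X$. Correspondingly, the $G$-suspension spectrum of $X$ is obtained by inflation from the $G_i$-suspension spectrum, giving an equivalence $X_+ \simeq \infl_{G_i}^G(X_+)$ in $\Sp_G$.

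Next, I would apply property (2) of Proposition~\ref{prop:gfp_properties} (with a cofinality argument to pass from $G/U$ for $U$ open normal to the specific quotient $G_i$, which is harmless since the system $(U_i)_i$ is cofinal among open normal subgroups). This yields a natural equivalence
\[
\Phi_G^H(X_+) \;\simeq\; \Phi_G^H\bigl(\infl_{G_i}^G X_+\bigr) \;\simeq\; \Phi_{G_i}^{H_i}(X_+),
\]
where on the right we view $X$ as a finite $G_i$-CW complex and $H_i = HU_i/U_i$ is the image of $H$ in $G_i$.

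Finally, one invokes the classical formula for geometric fixed points of suspension spectra of finite $G_i$-CW complexes: $\Phi_{G_i}^{H_i}(X_+) \simeq (X^{H_i})_+$ in $\Sp$ (see, e.g., \cite{lmms_86,mandell04}). Combining the two equivalences gives the desired identification $\Phi^H(X_+) \simeq (X^{H_i})_+$. There is no real obstacle here: the continuity built into Definition~\ref{def:gfp} is precisely what is needed to transfer the classical finite-group identity to the profinite setting, and the only mild point to be careful about is ensuring that the inflation along $G \twoheadrightarrow G_i$ commutes with the formation of suspension spectra, which is immediate from the construction of $\Sp_G$ as $\colim_i^\omega \Sp_{G_i}$.
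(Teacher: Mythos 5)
Your proof is correct and takes exactly the same route as the paper: identify $X_+$ with the inflation of a $G_i$-suspension spectrum, apply Proposition~\ref{prop:gfp_properties}(2) to reduce to $\Phi_{G_i}^{H_i}$, and then quote the classical identity for finite groups. The additional remarks on cofinality and on inflation commuting with suspension spectra are accurate but elaborate what the paper leaves implicit.
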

\begin{proof}
    If $G_i$ fixes $X$, then $X_+$ can be viewed as the inflation from $G_i$ to $G$ of the corresponding suspension spectrum in $\Sp_{G_i}$. The claim then follows from \cref{prop:gfp_properties}(2) together with the corresponding property of geometric fixed points for finite groups. 
\end{proof}

\begin{remark}\label{rem:gfp_conjugacy}
    Suppose $H \sim_{G} K$ are two $G$-conjugate closed subgroups of $G$. As in \cite[Section 2(L)]{balmersanders}, one observes that the corresponding inner automorphism of $G$ induces a natural equivalence $\Phi_{G}^{H} \simeq \Phi_{G}^{K}$. This uses that $K \simeq_G H$ implies $K_i \simeq_{G_i} H_i$ for all $i \in I$, see \cref{prop:profinitesubg}.
\end{remark}

\begin{remark}\label{rem:gfp_coherence}
    The reader might wonder how coherent the constructions of this section are. There are two answers: Firstly, one might restrict oneself to working up to homotopy, i.e., at the level of tt-categories. In this case, the diagrams are rendered commutative by the corresponding compatibilities for finite groups. Secondly, in order to obtain commutativity at the level of $\infty$-categories requires to either lift the basic constructions for finite groups to left Quillen functors as in \cite[Chapter V]{mandell_may}, see \cite[Section 6]{MathewNaumannNoel2017}, or to use the parametrized approach taken by Bachmann--Hoyois \cite{bachmannhoyois_norms}. In \cref{app:equivariantmodels}, we compare the different models for the stable equivariant homotopy theory of profinite groups, thereby reconciling both points of view.
\end{remark}

\section{The tensor-triangular geometry of \texorpdfstring{$G$}{G}-spectra}\label{sec:gsp_tt}

Now that we have introduced our category of interest, namely $\Sp_G$ for $G$ a profinite group, it is time to apply the theory of \cref{part:profinitettgeom} to it. In particular, in this section we will describe the Balmer spectrum of this category, providing an explicit computation in the case that $G = \Z_p$.

\subsection{Equivariant prime tt-ideals}\label{ssec:gsp_ttprimes}

Let $G$ be a profinite group, presented as $G = \lim_{i}G_i$ for a cofiltered system $(G_i)_i$ of finite quotient groups of $G$. In \cref{cor:gsp_contspectrum}, we identified the spectrum of $\Sp_G^{\omega}$ as the inverse limit of the spectra of $\Sp_{G_i}^{\omega}$ over $I$. Our next goal is to give a more explicit description of the points of $\Spc(\Sp_{G}^{\omega})$ utilizing the geometric fixed point functors constructed in \cref{ssec:gfp}. We begin by briefly recalling the non-equivariant case:

\begin{recollection}\label{rec:sp_ttprimes}
    Let $K(n)=K_p(n)$ be the $n$-th Morava $K$-theory spectrum at the prime $p$ and height $n$; by convention, $K_p(0) \coloneqq H\Q$ for all $p$, while $K_p(\infty) \coloneqq H\F_p$. By the work of Devinatz, Hopkins, and Smith \cite{nilpotence1,nilpotence2} as expressed tt-geometrically by Balmer \cite[Theorem 9.1]{balmer_3spectra}, the points of $\Spc(\Sp^{\omega})$ are given precisely by the prime tt-ideals
    \[
    \sfP(p,n) \coloneqq \{x\in \Sp^{\omega}\mid K_p(n-1) \otimes x = 0\},
    \]
    where $p$ ranges through prime numbers and $n$ is an positive integer or $\infty$. Here, note that $\sfP(p,1)$ is the full subcategory of torsion finite spectra for any prime $p$, hence $\sfP(p,1) = \sfP(q,1)$ for all primes $p$ and $q$. The topology on $\Spc(\Sp^{\omega})$ turns out to be determined by the inclusions among the prime tt-ideals, and we have
    \[
    \sfP(p,n) \subseteq \sfP(q,m) \iff 
        \begin{cases}
            n\geqslant m & \text{if } m=1; \\
            n\geqslant m \text{ and } p=q & \text{if } m>1.
        \end{cases}
    \]
\end{recollection}

The geometric fixed point functors (\cref{ssec:gfp}) induce maps on spectra, which we denote by:
\begin{equation}\label{eq:gsp_varphi}
    \xymatrix{\varphi_{G}^{H}\colon \Spc(\Sp^{\omega}) \ar[r] & \Spc(\Sp_{G}^{\omega}); & \varphi\colon \Sub(G)/G \times \Spc(\Sp^{\omega}) \ar[r]^-{\varphi_{G}^{(-)}} & \Spc(\Sp_{G}^{\omega}).}
\end{equation}
Here, the notation $\varphi_{G}^{(-)}$ indicates that for any closed subgroup of $H$ in $G$, the $H$-component of the map $\varphi$ is given by $\varphi_G^H = \Spc(\Phi_G^H)$. Because $\Phi_G^H$ depends only on the $G$-conjugacy class of $H$ (see \cref{rem:gfp_conjugacy}), this map is well-defined. Now since the geometric fixed point functors are geometric, we obtain a candidate collection of equivariant prime tt-ideals from their non-equivariant counterparts via pullback:

\begin{definition}\label{def:gsp_primett}
    For every closed subgroup $H$ in $G$, any prime number $p$, and any  $1 \leqslant n \leqslant \infty$, we define a point in $\Spc(\Sp_G^{\omega})$ by
        \[
        \sfP_G(H,p,n) \coloneqq \varphi_{G}^H(\sfP(p,n)) = \{x \in \Sp_G^{\omega} \mid K_p(n-1)_*(\Phi_G^H(x))=0\}.
        \]
\end{definition}

The next result, extending theorems of Strickland and Balmer--Sanders \cite[Theorems 4.9 and 4.14]{balmersanders} from the case of finite groups to profinite groups, states that the prime tt-ideals of \cref{def:gsp_primett} capture all points of $\Spc(\Sp_G^{\omega})$.

\begin{proposition}\label{prop:gsp_spectrumset}
    Let $G$ be a profinite group and continue to use the notation introduced in \eqref{eq:gsp_varphi}. Then the geometric fixed point functors induce a bijection
    \[
    \xymatrix{{\Sub(G)/G} \times \Spc(\Sp^{\omega}) \ar[r]_-{\simeq}^-{\varphi} & \Spc(\Sp_{G}^{\omega}),\quad (H,\sfP(p,n)) \mapsto \sfP_G(H,p,n).}
    \]
    In particular, all prime tt-ideals of $\Sp_G$ are of the form $\sfP_G(H,p,n)$ as defined in \cref{def:gsp_primett}, and the following two conditions are equivalent for any two of them:
        \begin{enumerate}
            \item $\sfP_G(H,p,n) = \sfP_G(K,q,m)$;
            \item $H$ is $G$-conjugate to $K$, $n=m$, and, if $m>1$, then $p=q$.
        \end{enumerate}
\end{proposition}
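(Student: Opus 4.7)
The plan is to reduce the statement to the finite group case via the continuity results established earlier. By \cref{cor:gsp_contspectrum}, inflation identifies $\Spc(\Sp_G^{\omega})$ with the inverse limit $\lim_i \Spc(\Sp_{G_i}^{\omega})$ along the maps $\iota_{ij}$. Meanwhile, \cref{prop:profinitesubg} provides the identification $\Sub(G)/G \cong \lim_i \Sub(G_i)/G_i$. Since products with the fixed factor $\Spc(\Sp^{\omega})$ commute with limits of sets, we obtain a bijection $\Sub(G)/G \times \Spc(\Sp^{\omega}) \cong \lim_i \bigl(\Sub(G_i)/G_i \times \Spc(\Sp^{\omega})\bigr)$.

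The first step is to check that the candidate map $\varphi$ is compatible with these two limit presentations. Concretely, for each quotient $G \twoheadrightarrow G_i$ with kernel $U_i$, one needs the square
\[
\xymatrix{
\Sub(G)/G \times \Spc(\Sp^{\omega}) \ar[r]^-{\varphi} \ar[d]_{\pi_i \times \id} & \Spc(\Sp_{G}^{\omega}) \ar[d]^{\iota_i} \\
\Sub(G_i)/G_i \times \Spc(\Sp^{\omega}) \ar[r]^-{\varphi_{G_i}} & \Spc(\Sp_{G_i}^{\omega})
}
\]
to commute. Unwinding definitions, $\iota_i \circ \varphi_G^H$ is the map induced on spectra by the composite $\Phi_G^H \circ \infl_{G_i}^G$, and \cref{prop:gfp_properties}(2) provides a natural equivalence $\Phi_G^H \circ \infl_{G_i}^G \simeq \Phi_{G_i}^{H_i}$. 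Hence the square commutes.

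The second step is to assemble these compatible bijections. For each $i$, the finite group case \cite[Theorems 4.9 and 4.14]{balmersanders} asserts that $\varphi_{G_i}$ is a bijection with the stated description of fibers, i.e., $\sfP_{G_i}(H_i,p,n) = \sfP_{G_i}(K_i,q,m)$ iff $H_i \sim_{G_i} K_i$, $n=m$, and (when $m>1$) $p=q$. Taking the inverse limit over $i$ of these bijections and combining with the identifications of the first paragraph, we deduce that $\varphi$ is a bijection. The equivalence criterion (1)$\iff$(2) then drops out: $H$ is $G$-conjugate to $K$ if and only if $H_i \sim_{G_i} K_i$ for all $i$, by the continuity of $\Sub(-)/(-)$.

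The main obstacle is purely conceptual rather than technical: ensuring that geometric fixed points have been set up so that the compatibility diagrams with inflation commute coherently across the cofiltered system. This is precisely the content of \cref{prop:gfp_properties}(2), which encodes the continuous construction of $\Phi_G^H$ from \cref{def:gfp}. With that compatibility in hand, the proposition is formal: it is the assertion that a cofiltered limit of bijections of sets is a bijection, applied to the finite group spectrum computations.
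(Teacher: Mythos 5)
Your proof is correct and follows essentially the same route as the paper: reduce via the continuity homeomorphisms of \cref{cor:gsp_contspectrum} and \cref{prop:profinitesubg}, verify compatibility of $\varphi$ with inflation using \cref{prop:gfp_properties}(2), and invoke the finite-group case of Balmer--Sanders to conclude that a cofiltered limit of bijections is a bijection. The only cosmetic difference is that the paper packages the compatibility check into a single large commutative diagram rather than a per-$i$ square, but the underlying argument is identical.
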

\begin{proof}
    Keeping in mind the notation introduced in \eqref{eq:gsp_varphi}, we begin with the construction of the following diagram and then verify that it is in fact commutative: 
    \begin{equation}\label{eq:gsp_spectrumset}
        \begin{gathered}
        \xymatrix@C=2em{
            &  \Sub(G)/G \times \Spc(\Sp^{\omega})    \ar[rr]^-{\varphi_G} \ar[d]_{\can_G} && \ar[d]^{\iota_G} \Spc(\Sp_G^{\omega})  \ar `r[r] `[dd]^{\iota_{G_i}} [dd] & \\
            & \lim_{i}\Sub(G_i)/G_i \times \Spc(\Sp^{\omega}) \ar[rr]^-{\lim_i\varphi_{G_i}} \ar[d]_{\proj_i} && \lim_{i}\Spc(\Sp_{G_i}^{\omega}) \ar[d]^{\proj_i}& \\
            &\ar@{<-} `l[l] `[uu]^{\pi_i \times \id} [uu]  \Sub(G_i)/G_i \times \Spc(\Sp^{\omega}) \ar[rr]_-{\varphi_{G_i}} && \Spc(\Sp_{G_i}^{\omega})&
        }
        \end{gathered}
    \end{equation}
    Consider first the outer part of the diagram, which we produce for any finite quotient group $G_i$ of $G$. Recall that $\iota_{G_i}$ is the map induced on spectra by the inflation functor $\infl_{G_i}^G$, see \eqref{eq:gsp_iota}, and similar for $\iota_G$. Fixing a closed subgroup $H$ of $G$, the relation $\Phi^H \circ \infl_{G_i}^G \simeq \Phi_{G_i}^{H_i}$ established in \cref{prop:gfp_properties}(2) implies that $\iota_{G_i} \circ \varphi_G^H \cong \varphi_{G_i}^{H_i} = \varphi_{G_i}^{\pi_iH}$. Letting $H$ vary in $\Sub(G)/G$ then gives the commutativity of the outer rectangle. 
    
    Since this diagram is compatible with the maps in the given system $(G_i)_{i\in I}$, as explained on the level of geometric functors in \cref{ssec:gfp}, we obtain the inner part of the diagram from the universal property of limits. Here, the maps labelled $\proj_i$ denote the projections on the $i$-th part of the limit diagram, while $\can_G$ is the canonical map. This concludes the construction of \eqref{eq:gsp_spectrumset} as a commutative diagram.
    
    Let us now focus on the upper square in \eqref{eq:gsp_spectrumset}. By \cite[Theorems 4.9 and 4.14]{balmersanders}, the map $\varphi_{G_i}$ is a bijection for any finite group $G_i$, hence so is the middle horizontal map. The left vertical map $\can_G$ is a homeomorphism by \cref{prop:profinitesubg}, and we showed in \cref{cor:gsp_contspectrum} that $\iota_G$ is a homeomorphism. Therefore, $\varphi_G$ is bijective, as desired. 
    
    Finally, unpacking the bijectivity of $\varphi_G$ then gives the statement about prime tt-ideals: surjectivity corresponds to all tt-primes being of the form $\sfP_G(H,p,n)$, while injectivity translates into the characterization of when two such prime tt-ideals coincide, keeping in mind the classification of tt-primes in $\Sp^{\omega}$, see \cref{rec:sp_ttprimes}.
\end{proof}

\subsection{The Balmer spectrum, revisited}\label{ssec:gsp_prism}

The geometric fixed point functors allow us to turn the  homeomorphism of \cref{cor:gsp_contspectrum} into a more explicit presentation of the Balmer spectrum of $\Sp_{G,\Q}$. We begin with some point-set topological preliminaries. 

\begin{recollection}\label{rec:priestleyspaces}
    Let $X$ be a spectral topological space. Naturally associated to $X$ are two pieces of data on its underlying set: 
        \begin{itemize}
            \item the constructible topology on $X$, forming a profinite space $X_{\cons}$;
            \item the specialization order $\rightsquigarrow$ of $X$, giving rise to a poset $(X,\rightsquigarrow)$.
        \end{itemize}
    In fact, $X$ is uniquely determined by the pair $(X_{\cons},\rightsquigarrow)$; the latter has the structure of a \emph{Priestley space}, i.e., an ordered profinite space satisfying a certain separation axiom. This observation can be promoted to an isomorphism between the category of spectral spaces and spectral maps on one side, and the category of Priestley spaces and monotone continuous maps on the other. We refer to interested reader to \cite[Section 1]{book_spectralspaces} for a detailed review of this circle of ideas.
    
    The Balmer spectrum $\Spc(\sfK)$ of any tt-category $\sfK$ is a spectral space, so we may consider its associated Priestley space $(\Spc(\sfK)_{\cons},\rightsquigarrow)$. In this case, the specialization order $\rightsquigarrow$ corresponds precisely to inclusion among the prime tt-ideals of $\sfK$:
        \[
        \sfP \rightsquigarrow \sfQ \iff \sfQ \subseteq  \sfP.
        \]
    Especially in situations where the topology of $\Spc(\sfK)$ is complicated, its Priestley space provides a convenient presentation that separated the combinatorial features from the topological ones. Following \cite{BalchinBarthelGreenlees2023pp}, we refer to the Priestley space of $\Spc(\sfK)$ also as the \emph{prism} $\Prism(\sfK)$ of $\sfK$.
\end{recollection}

\begin{example}\label{ex:sp_prism}
    For any prime $p$, the prism of the category $\Sp_{p}^{\omega}$ of compact $p$-local spectra is $(\N^*,\leqslant)$, where $\N^* = \N \cup \{\infty\}$ denotes the one-point compactification of the natural numbers. The identification is induced by $\sfP(p,n) \mapsto n$. The integral case readily reduces to the $p$-local one, by gluing along the common point $\sfP(0,1)$; cf.~\cref{rec:sp_ttprimes}.
\end{example}

In order to describe the prism of $\Sp_{G}^{\omega}$ for profinite groups $G$, we first need to determine the constructible topology on its spectrum. The next lemma describes the answer in terms of the topological space $\Sub(G)/G$ and the constructible topology in the non-equivariant case, given in \cref{ex:sp_prism}.

\begin{lemma}\label{ssec:gsp_cons}
    For any profinite group $G$, the geometric fixed points induce a homeomorphism 
    \[
    \xymatrix{{\Sub(G)/G} \times \Spc(\Sp^{\omega})_{\cons} \ar[r]_-{\cong}^-{\varphi} & \Spc(\Sp_{G}^{\omega})_{\cons},\quad (H,\sfP(p,n)) \mapsto \sfP_G(H,p,n).}
    \]
\end{lemma}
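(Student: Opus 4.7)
The plan is to reduce the lemma to the corresponding finite-group case by exploiting the continuity results established so far. By \cref{prop:gsp_spectrumset} the map $\varphi$ is already a bijection, so only the topological statement requires work.

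First I would rewrite both sides as cofiltered limits indexed on the system $(G_i)_{i\in I}$. Passage to the constructible topology is a functor from spectral spaces and spectral maps to profinite spaces that preserves cofiltered limits (see, e.g., \cite[Theorem~1.4.8]{book_spectralspaces}). Applied to \cref{cor:gsp_contspectrum}, whose homeomorphism is induced by the spectral maps $\iota_{ij}$ coming from inflation, this yields $\Spc(\Sp_G^{\omega})_{\cons}\cong \lim_{i\in I}\Spc(\Sp_{G_i}^{\omega})_{\cons}$. On the source side, \cref{prop:profinitesubg} identifies $\Sub(G)/G$ with $\lim_{i}\Sub(G_i)/G_i$, and since finite products commute with cofiltered limits in $\Top$, I can present $\Sub(G)/G \times \Spc(\Sp^{\omega})_{\cons}$ as the cofiltered limit of the products $\Sub(G_i)/G_i \times \Spc(\Sp^{\omega})_{\cons}$. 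The commutative diagram~\eqref{eq:gsp_spectrumset} then exhibits $\varphi$ as the cofiltered limit of the maps $\varphi_{G_i}$, reducing the problem to the case where $G$ is finite.

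For a finite group $F$, the map $\varphi_F$ is induced by the geometric functors $\Phi_F^H$ for $H \in \Sub(F)/F$, hence it is spectral and in particular continuous for the constructible topologies. The target $\Spc(\Sp_F^{\omega})_{\cons}$ is profinite, and the source is the product of the finite discrete space $\Sub(F)/F$ with the profinite space $\Spc(\Sp^{\omega})_{\cons}$ (cf.~\cref{ex:sp_prism}), which is again profinite. Combined with the bijectivity of $\varphi_F$ coming from \cite[Theorems 4.9 and 4.14]{balmersanders}, this makes $\varphi_F$ a continuous bijection between compact Hausdorff spaces and therefore a homeomorphism. Taking the cofiltered limit over $I$ then completes the proof.

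The main obstacle is the mild bookkeeping needed to verify that \eqref{eq:gsp_spectrumset} really presents $\varphi$ as the cofiltered limit of the $\varphi_{G_i}$ under the two continuity identifications above; beyond that, the argument is an assembly of standard facts about spectral spaces and the Balmer--Sanders classification in the finite-group case.
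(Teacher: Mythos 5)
Your proposal is correct and takes essentially the same approach as the paper: first prove the finite-group case as a bijective spectral map (hence a continuous bijection between profinite spaces, hence a homeomorphism), then reduce the profinite case to this by writing both sides as cofiltered limits and using that products, $\Sub(-)/(-)$, $\Spc(-)$, and the passage to the constructible topology all commute with cofiltered limits along the diagram~\eqref{eq:gsp_spectrumset}. The only cosmetic difference is that the paper justifies the spectrality of $\varphi_F$ by explicitly rewriting the source as a finite coproduct of copies of $\Spc(\Sp^{\omega})$, a point you leave implicit.
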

\begin{proof}
    We first prove the claim in the case of finite groups, say for one of the finite quotient groups $G_i$ of $G$. Note that the space $\Sub(G_i)/G_i$ is discrete. For any subgroup $H_i$ of $G_i$, the geometric fixed point functor $\Phi_{G_i}^{H_i}$ induces a spectral map $\varphi_{G_i}^{H_i}\colon \Spc(\Sp^{\omega}) \to \Spc(\Sp_{G_i}^{\omega})$. Since finite coproducts in the category of spectral spaces are computed in topological spaces and using the finite group case of \cref{prop:gsp_spectrumset}, these maps assemble into a bijective spectral map $\varphi_{G_i}\colon\bigsqcup_{H_i \in \Sub(G_i)/G_i}\Spc(\Sp^{\omega}) \to \Spc(\Sp_{G_i}^{\omega})$. Rewriting the domain as a product and passing to the constructible topology, we thus obtain a continuous bijection
        \[
        \xymatrix{\varphi_{G_i,\cons}\colon \Sub(G_i)/G_i\times \Spc(\Sp^{\omega})_{\cons} \ar[r] & \Spc(\Sp_{G_i}^{\omega})_{\cons}}
        \]
    between profinite spaces. Therefore, $\varphi_{G_i,\cons}$ is a homeomorphism.
    
    The maps $\varphi_{G_i}$ are natural in the system $(G_i)_i$, see the discussion surrounding \eqref{eq:gsp_spectrumset}, providing a commutative square of profinite spaces and continuous maps
        \[
        \xymatrixcolsep{4pc}{
        \xymatrix{\Sub(G)/G\times \Spc(\Sp^{\omega})_{\cons} \ar[r]^-{\varphi_{G,\cons}} \ar[d] & \Spc(\Sp_{G}^{\omega})_{\cons} \ar[d] \\
        \lim_{i}(\Sub(G_i)/G_i\times \Spc(\Sp^{\omega})_{\cons}) \ar[r]_-{\cong}^-{\lim_i(\varphi_{G_i,\cons})} & \lim_{i}(\Spc(\Sp_{G_i}^{\omega})_{\cons}).}}
        \]
    The bottom horizontal map is a homeomorphism as a consequence of the claim for finite groups. The left vertical map is a homeomorphism because limits commute with products and using \cref{prop:profinitesubg}. For the right vertical map, we need the fact that limits of spectral spaces commute with passage to the constructible topology, which are computed in topological spaces. This combined with \Cref{cor:gsp_contspectrum} gives homeomorphisms
        \[
        \Spc(\Sp_{G}^{\omega})_{\cons} \cong (\lim_{i} \Spc(\Sp_{G_i}^{\omega}))_{\cons} \cong \lim_{i}(\Spc(\Sp_{G_i}^{\omega})_{\cons}),
        \]
    so the right vertical map is a homeomorphism as well. The commutativity of the diagram then implies that $\varphi_{G,\cons}$ is a homeomorphism, as desired.
\end{proof}

\begin{lemma}\label{lem:gsp_inclusions}
    Let $G$ be a finite group with a cofinal system $(G_i)_{i \in I}$ of finite quotient groups.
    Consider two equivariant tt-primes $\sfP_G(H,p,n)$ and $\sfP_G(K,q,m)$, and write $H_i$ and $K_i$ for the images of $H$ and $K$ in $G_i$. Then
        \[
        \sfP_G(K,p,n) \subseteq \sfP_G(H,q,m) \iff \sfP_{G_i}(K_i,p,n) \subseteq \sfP_{G_i}(H_i,q,m) \text{ for all } i \in I. 
        \]
\end{lemma}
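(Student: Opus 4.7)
The plan is to recast the inclusions of prime tt-ideals as specialization relations in the Balmer spectrum and then apply the continuity statement of \cref{cor:gsp_contspectrum}. Recall that $\sfP \subseteq \sfQ$ is equivalent to $\sfP \in \overline{\{\sfQ\}}$, and that this relation is preserved by continuous maps. Accordingly, the forward implication will follow from the functoriality of $\sfP_G(L,r,s)$ under inflation, while the reverse implication will exploit the explicit basis for the topology of a cofiltered limit of spectral spaces.

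For the forward direction, each inflation functor $\infl_{G_i}^G$ induces a spectral map $\iota_{G_i}\colon \Spc(\Sp_G^\omega) \to \Spc(\Sp_{G_i}^\omega)$. By \cref{prop:gfp_properties}(2), the compatibility $\Phi^L \circ \infl_{G_i}^G \simeq \Phi_{G_i}^{L_i}$ implies that $\iota_{G_i}(\sfP_G(L,r,s)) = \sfP_{G_i}(L_i,r,s)$ for every closed subgroup $L \leqslant G$ (this identity is already used in the proof of \cref{prop:gsp_spectrumset}). Since $\iota_{G_i}$ preserves closures of points, applying it to $\sfP_G(K,p,n) \subseteq \sfP_G(H,q,m)$ yields $\sfP_{G_i}(K_i,p,n) \subseteq \sfP_{G_i}(H_i,q,m)$ for every $i \in I$.

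For the converse, the plan is to invoke the homeomorphism $\iota_G\colon \Spc(\Sp_G^\omega) \xrightarrow{\cong} \lim_i \Spc(\Sp_{G_i}^\omega)$ from \cref{cor:gsp_contspectrum}, under which a basis for the topology of the limit is given by the sets $\iota_{G_i}^{-1}(U_i)$ for $U_i \subseteq \Spc(\Sp_{G_i}^\omega)$ quasi-compact open, as recorded in \cref{lem:gencontinuity}. To establish $\sfP_G(K,p,n) \subseteq \sfP_G(H,q,m)$ it suffices to verify that every basic open $V = \iota_{G_i}^{-1}(U_i)$ containing $\sfP_G(K,p,n)$ also contains $\sfP_G(H,q,m)$. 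Any such $V$ satisfies $\sfP_{G_i}(K_i,p,n) = \iota_{G_i}(\sfP_G(K,p,n)) \in U_i$; by the assumed inclusion at stage $i$ together with the open characterization of specialization, $U_i$ then also contains $\sfP_{G_i}(H_i,q,m) = \iota_{G_i}(\sfP_G(H,q,m))$, so $\sfP_G(H,q,m) \in V$, as desired.

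There is no substantive obstacle here: the statement is essentially the observation that the specialization order on a cofiltered limit of spectral spaces is determined pointwise, combined with the identification of tt-primes via geometric fixed points supplied by \cref{prop:gfp_properties}. The only mild care needed is to reduce the inclusion check to basic opens of the limit topology, which is unproblematic since specialization is a purely topological notion.
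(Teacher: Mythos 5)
Your proof is correct and takes essentially the same route as the paper: both translate the inclusion of prime tt-ideals into the specialization order on the Balmer spectrum, apply the homeomorphism $\Spc(\Sp_G^\omega) \cong \lim_i \Spc(\Sp_{G_i}^\omega)$ from \cref{cor:gsp_contspectrum}, and use the formula $\iota_{G_i}(\sfP_G(L,r,s)) = \sfP_{G_i}(L_i,r,s)$ from \cref{prop:gsp_spectrumset}. The only difference is stylistic: where the paper cites \cite[Section 2.3.9]{book_spectralspaces} for the fact that specialization in a cofiltered limit of spectral spaces is detected componentwise, you prove this fact directly by reducing to the basic opens $\iota_{G_i}^{-1}(U_i)$ from \cref{lem:gencontinuity} and using that a point $x$ specializes to $y$ precisely when every open containing $x$ contains $y$. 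This inlining is a clean and elementary unfolding of the cited general fact, but not a conceptually different argument.
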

\begin{proof}
    The spectral space $X=\Spc(\Sp_G^{\omega})$ is homeomorphic to the limit of the spectral spaces $X_i = \Spc(\Sp_{G_i}^{\omega})$ over the filtered system $I$, see \cref{cor:gsp_contspectrum}. By the discussion in \cite[Section 2.3.9]{book_spectralspaces}, the specialization order on $\Spc(\Sp_G^{\omega})$ is computed componentwise, i.e., $x \rightsquigarrow y$ in $X$ if and only if $x_i \rightsquigarrow y_i$ in $X_i$ for all $i \in I$. In the Balmer spectrum, the specialization relation between points corresponds precisely to the (reversed) inclusion relation among the prime tt-ideals (\cref{rec:priestleyspaces}), so we conclude by the parametrization of \cref{prop:gsp_spectrumset} along with the formula $\iota_{G_i}(\sfP_G(H,p,n)) = \sfP_{G_i}(H_i,p,n)$ obtained from the commutative diagram \eqref{eq:gsp_spectrumset}.
\end{proof}

In the remainder of this section, we implicitly use the bijection $\varphi_G$ of \cref{prop:gsp_spectrumset} to identify the underlying set of $\Spc(\Sp_G^{\omega})$ with $\Sub(G)/G \times \Spc(\Sp^{\omega})$.

\begin{theorem}\label{thm:prism}
    Let $G$ be a profinite group. The Priestley space of $\Spc(\Sp_G^{\omega})$ is given by 
        \[
        \Prism(\Sp_{G}^{\omega}) = ({\Sub(G)/G} \times \Spc(\Sp^{\omega})_{\cons},\supseteq),
        \]
    where all the inclusions between equivariant tt-primes are of the form $\sfP_G(K,p,n) \subseteq \sfP_G(H,p,m)$ for $K$ conjugate in $G$ to a pro-$p$-subnormal (in the sense of \cref{nota:propsubnormal}) subgroup of $H$ and $n \geqslant m + \beth_G(H,K,p,m)$ for some $\beth_G(H,K,p,m) \in \N \cup\{\infty\}$. In particular, a subset in $\Spc(\Sp_G^{\omega})$ is closed (resp., Thomason closed) if and only if it is closed (resp., clopen) in ${\Sub(G)/G} \times \Spc(\Sp^{\omega})_{\cons}$ and down-closed under inclusion.
\end{theorem}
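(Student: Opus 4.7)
The proof plan proceeds in four stages, following the decomposition of a spectral space into its constructible topology and specialization order as recalled in \cref{rec:priestleyspaces}.

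\medskip

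\emph{Stage 1 (constructible topology).} The identification of the underlying profinite space of $\Prism(\Sp_G^{\omega})$ with $\Sub(G)/G \times \Spc(\Sp^{\omega})_{\cons}$ is already contained in \cref{ssec:gsp_cons}. By Priestley duality, it remains to determine the specialization order, which by \cref{rec:priestleyspaces} translates to describing when one equivariant tt-prime contains another.

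\medskip

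\emph{Stage 2 (reduction to the finite case).} By \cref{lem:gsp_inclusions}, we have $\sfP_G(K,p,n) \subseteq \sfP_G(H,q,m)$ if and only if $\sfP_{G_i}(K_i,p,n) \subseteq \sfP_{G_i}(H_i,q,m)$ for all $i \in I$. The finite group case is settled by \cite[Theorems 4.9 and 4.14]{balmersanders} together with \cite{BHNNNS2019}: the level-$i$ inclusion holds precisely when $K_i$ is $G_i$-conjugate to a $p$-subnormal subgroup of $H_i$, $p=q$ when $m>1$ (with the $m=1$ case collapsing all primes by \cref{rec:sp_ttprimes}), and $n \geqslant m + \beth_{G_i}(H_i,K_i,p,m)$. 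The blueshift condition globalizes trivially by defining $\beth_G(H,K,p,m) \coloneqq \sup_{i \in I}\beth_{G_i}(H_i,K_i,p,m) \in \N \cup \{\infty\}$, since $n \geqslant m + \beth_{G_i}$ for all $i$ is equivalent to $n \geqslant m + \sup_i \beth_{G_i}$.

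\medskip

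\emph{Stage 3 (globalization of the subgroup condition).} This is the main obstacle. The $p$-subnormal condition at each level assembles to pro-$p$-subnormality by direct application of \cref{prop:pperfect}. The delicate point is lifting the level-wise conjugacy: if for each $i \in I$ there exists $g_i \in G_i$ such that $g_iK_ig_i^{-1}$ is a $p$-subnormal subgroup of $H_i$, one must produce a single $g \in G$ with $gKg^{-1}$ pro-$p$-subnormal in $H$. The plan is a profinite compactness argument. For each $i$, consider the set
\[
X_i = \{K' \leqslant H_i \mid K' \text{ is } p\text{-subnormal in } H_i \text{ and } K' \sim_{G_i} K_i\};
\]
this is a nonempty finite set by assumption, and the projections $G_i \twoheadrightarrow G_j$ induce maps $X_i \to X_j$ compatible with the cofiltered structure on $I$ (using that images of $p$-subnormal subgroups are $p$-subnormal and that conjugacy descends under quotients). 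The cofiltered limit of nonempty finite sets is nonempty, so we may select $(K'_i) \in \lim_i X_i$. Setting $K' \coloneqq \lim_i K'_i \leqslant H$ produces a pro-$p$-subnormal subgroup of $H$ by \cref{prop:pperfect}, with $K'_i \sim_{G_i} K_i$ for each $i$. A second compactness argument, applied to the finite sets $Y_i = \{g_i \in G_i / C_{G_i}(K_i) \mid g_i K_i g_i^{-1} = K'_i\}$, produces the desired $g \in G$. The converse implication is immediate from the compatibility of conjugation and taking images under quotients together with \cref{prop:pperfect}.

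\medskip

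\emph{Stage 4 (topology).} Given the Priestley presentation, the description of closed and Thomason closed subsets is a standard consequence of Priestley--Stone duality for spectral spaces (\cref{rec:priestleyspaces}): closed subsets of the spectral space correspond to subsets of the underlying Priestley space which are closed in the constructible topology and down-closed in the specialization order, while Thomason closed subsets (closed with quasi-compact open complement) correspond precisely to down-closed clopen subsets in the constructible topology. This finishes the proof.
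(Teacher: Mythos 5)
Your proof follows the same four-stage decomposition as the paper: identification of the constructible topology via \cref{ssec:gsp_cons}, reduction of the specialization order to the finite quotients via \cref{lem:gsp_inclusions}, globalization of the subgroup and blueshift conditions, and Priestley duality for the description of closed and Thomason closed subsets. The one place where you diverge materially is Stage~3: the paper dispatches the conjugacy-lifting step in a single sentence by invoking \cref{prop:profinitesubg}, whereas you make the underlying compactness argument (a cofiltered limit of nonempty finite sets is nonempty) explicit through the sets $X_i$. This is a worthwhile elaboration, since it is not a priori automatic that the level-wise witnesses $K_i'$ can be chosen to form a compatible system yielding a $K' \leqslant H$ whose image in every $H_i$ is $p$-subnormal; your argument supplies exactly the missing step, and then \cref{prop:pperfect} applies as stated.

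Two minor corrections. First, in the definition of $Y_i$, the quotient should be by $N_{G_i}(K_i)$ rather than $C_{G_i}(K_i)$: the set of $g_i$ with $g_iK_ig_i^{-1} = K_i'$ is a left coset of the normalizer, not the centralizer. Second, that entire auxiliary compactness argument is unnecessary. Once you have $K' = \lim_i K_i' \leqslant H$ with $K_i' \sim_{G_i} K_i$ for all $i$, the two elements $(K)_G$ and $(K')_G$ of $\Sub(G)/G$ have the same image in each $\Sub(G_i)/G_i$, and the injectivity of $\Sub(G)/G \to \lim_i \Sub(G_i)/G_i$ furnished by \cref{prop:profinitesubg} already forces $K \sim_G K'$; there is no need to produce a conjugating element explicitly.
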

\begin{proof}
    Keeping in mind \cref{rec:priestleyspaces}, the identification of the Priestley space of $\Spc(\Sp_G^{\omega})$ follows from \cref{ssec:gsp_cons}. The characterization of closed (resp.,~Thomason closed) subsets is then an instance of the general relation between spectral spaces and their Priestley spaces, as stated for example in \cite[Theorem 1.5.11]{book_spectralspaces}. It remains to establish the necessary conditions for the containment between equivariant tt-primes.
    
    To this end, we will appeal to \cref{lem:gsp_inclusions} to reduce to the analogous conditions for finite groups summarized in \cite[Theorem 1.4]{balmersanders}: Consider an inclusion among equivariant tt-primes, say $\sfP_G(K,p,n) \subseteq \sfP_G(H,q,m)$ for closed subgroups $H,K$, primes $p,q$, and $m,n\in \N\cup\{\infty\}$. By \cref{lem:gsp_inclusions}, this is equivalent to the statement that $\sfP_{G_i}(K_i,p,n) \subseteq \sfP_{G_i}(H_i,q,m)$ for all $i \in I$. For a given finite group $G_i$, such an inclusion is possible if and only if the following conditions are satisfied:
        \begin{enumerate}
            \item $p=q$ if $m>1$;
            \item $n \geqslant m + \beth_{G_i}(H_i,K_i,p,m)$ for some integer $\beth_{G_i}(H_i,K_i,p,m) \geqslant 0$;
            \item $K_i$ is $G_i$-conjugate to a $p$-subnormal subgroup of $H_i$. 
        \end{enumerate}
    Note that, if $m=1$, then $\sfP_G(H,q,1) = \sfP_G(H,p,1)$ by convention, so we may consider $m>1$ and take $p=q$. Setting 
    \begin{equation}\label{eq:pro_blueshiftnumbers}
        \beth_G(H,K,p,m) \coloneqq \sup_{i \in I}\big\{\beth_{G_i}(H_i,K_i,p,m)\big\} \in \N \cup \{\infty\}
    \end{equation}
    implies $n \geqslant m + \beth_G(H,K,p,m)$. It remains to deduce that $K$ is $G$-conjugate to a $p$-subnormal subgroup of $H$. But this follows from (3): \cref{prop:profinitesubg} shows that $K$ is $G$-conjugate to a subgroup $K'$ of $H$, and $K'$ is then pro-$p$-subnormal in $H$ by \cref{prop:pperfect}.
\end{proof}

\begin{figure}[h]
    \centering
    \begin{tikzpicture}[yscale=0.75, darkstyle/.style={circle,draw,fill=gray!70, ,minimum size=5, inner sep=0pt}]
        \foreach \x in {0,...,1}
            \foreach \y in {0,...,3} 
                \node [darkstyle]  (\x\y) at (2*\x,\y) {};
        \node at ($(00)+(0,-0.5)$) {$H$};
        \node at ($(00)+(1,-0.5)$) {$\supseteq$};
        \node at ($(10)+(0,-0.5)$) {$K$};

        \node[align=right,text width=30] at ($(00)+(-1,0)$) {$m$};
        \node[align=right,text width=30] at ($(01)+(-1,0)$) {$m+1$};
        \node[align=right,text width=30] at ($(03)+(-1,0)$) {$n$};
        \foreach \x in {0,...,1}
            \foreach \y [count=\yi] in {0,...,2}  
              \draw (\x\y)--(\x\yi);

        \draw[fill=white, white] (-0.25,1.5) rectangle (2.25,2.5);
        \draw[Blue4] (00) -- (13);
        \node at ($(00)+(0,2.15)$) {$\vdots$};
        \node at ($(10)+(0,2.15)$) {$\vdots$};
        \draw [decorate, decoration = {brace,amplitude=15pt}] (2.2,3) --  (2.2,0);
        \node at ($(10)+(2,1.5)$) {$\beth_G(H,K,p,m)$};
    \end{tikzpicture}
    \caption{A schematic illustrating of the nature of blueshift.}
\end{figure}
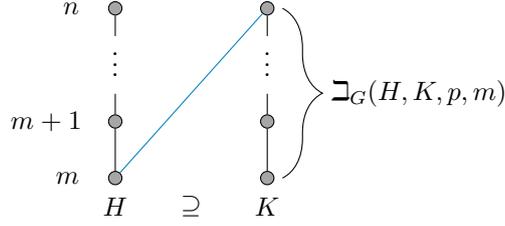

\subsection{Blueshift for profinite groups}\label{ssec:gsp_blueshift}

The previous theorem shows that the topology on $\Spc(\Sp_G^{\omega})$ is determined by the profinite space ${\Sub(G)/G} \times \Spc(\Sp^{\omega})_{\cons}$ together with the numbers $\beth_{G_i}(H_i,K_i,p,m)$ for a cofinal system $(G_i)$ of finite quotient groups of $G$. Following \cite{BHNNNS2019}, we refer to both $\beth_G(H,K,p,m)$ as well as their analogues for finite groups as the \emph{blueshift number} of the tuple $(G,H,K,p,m)$.  

\begin{remark}\label{rem:gspspc_sota}
    We will first review what is known about the blueshift numbers for a \emph{finite} group $G$. As shown in \cite{balmersanders}, we can reduce to the case that $H = G$ is a finite $p$-group and work $p$-locally, for a fixed prime $p$; we will henceforth omit the $p$-locality from the notation. Let $\Phi(G)$ be the Frattini subgroup of $G$, see \cref{rec:frattini}. If $K$ is a $p$-subnormal subgroup of $G$, define the \emph{lower blueshift number} $\beth_{G}^-(K)$ of the pair $(K,G)$ as the $p$-rank of $G/(K\Phi(G))$, and the \emph{upper blueshift number} $\beth_{G}^+(K)$ as the minimal length of a $p$-subnormal chain from $K$ to $G$. Combining the main results of \cite{balmersanders} and \cite{BHNNNS2019}, we obtain inequalities
        \begin{equation}\label{eq:blusehift_inequalities}
        \beth_{G}^-(K) \leqslant \beth_G(G,K,p,m) \leqslant \beth_{G}^+(K),
        \end{equation}
    as explained in \cite[Section 5]{KuhnLloyd2020pp}. When $G$ is finite abelian, $\beth_{G}^-(K) = \beth_{G}^+(K)$ for all subgroups $K$, so this determines the blueshift numbers and hence the topology of the Balmer spectrum of $\Sp_G^{\omega}$ completely in this case. Kuhn and Lloyd (\cite{KuhnLloyd2020pp}) then establish further improvements for certain families of non-abelian groups, proving in particular that the $\beth_G(G,K,p,m)$ coincide with the upper blueshift numbers for extraspecial 2-groups $G$. In other words, they resolve the problem of computing the topology on $\Spc(\Sp_G^{\omega})$ in these cases. 
\end{remark}

\Cref{thm:prism} reduces the computation of the topology on $\Spc(\Sp_G^{\omega})$ to understanding the inclusions $\sfP_G(K,p,n) \subseteq \sfP_G(H,p,m)$ for $K$ a pro-$p$-subnormal subgroup of $H$. As in \cite[Proposition 6.11]{balmersanders}, it suffices to consider the case of pro-$p$-groups:

\begin{lemma}\label{lem:gspspc_propreduction}
    Let $G$ be a profinite group, let $H, K \in \Sub(G)$, and consider a prime $p$ and $n,m \in \N \cup \{\infty\}$. The following two statements are equivalent:
        \begin{enumerate}
            \item there is an inclusion $\sfP_G(K,p,n) \subseteq \sfP_G(H,p,m)$;
            \item $K$ is $G$-conjugate to a pro-$p$-subnormal subgroup $K' \leqslant H$ and 
            \[
            \sfP_{H/\mathrm{O}^p(H)}(K'/\mathrm{O}^p(H),p,n) \subseteq \sfP_{H/\mathrm{O}^p(H)}(H/\mathrm{O}^p(H),p,m).
            \]
        \end{enumerate}
    Note that the quotient $K'/\mathrm{O}^p(H)$ is meaningful, thanks to \cref{prop:pperfect}.
\end{lemma}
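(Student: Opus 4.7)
The plan is to reduce to the finite group case via the continuity result of \cref{lem:gsp_inclusions} and then appeal to \cite[Proposition 6.11]{balmersanders}, using the continuity of $\mathrm{O}^p(-)$ from \cref{lem:pperfect_continuity} to glue the conclusions back together. Throughout I fix a cofinal system $(G_i)_{i\in I}$ of finite quotients of $G$ and write $H_i, K_i$ for the images of $H, K$ in $G_i$.

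For the implication (1) $\Rightarrow$ (2), observe first that \cref{thm:prism} immediately guarantees the existence of a $G$-conjugate $K'$ of $K$ which is pro-$p$-subnormal in $H$. By \cref{rem:gfp_conjugacy} the tt-primes $\sfP_G(K,p,n)$ and $\sfP_G(K',p,n)$ agree, so we may replace $K$ by $K'$ and assume $K \leqslant H$ is pro-$p$-subnormal from the outset. By \cref{lem:gsp_inclusions} the containment at the profinite level is equivalent to containments $\sfP_{G_i}(K_i,p,n) \subseteq \sfP_{G_i}(H_i,p,m)$ at every finite stage. \cref{prop:pperfect} tells us that $K_i$ is $p$-subnormal in $H_i$ for every $i$, so we may apply \cite[Proposition 6.11]{balmersanders} in each $G_i$ to conclude that
\[
\sfP_{H_i/\mathrm{O}^p(H_i)}\bigl(K_i/\mathrm{O}^p(H_i),p,n\bigr) \subseteq \sfP_{H_i/\mathrm{O}^p(H_i)}\bigl(H_i/\mathrm{O}^p(H_i),p,m\bigr)
\]
holds for all $i \in I$. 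Now \cref{lem:pperfect_continuity} yields isomorphisms $H/\mathrm{O}^p(H) \cong \lim_i H_i/\mathrm{O}^p(H_i)$ and similarly $K/\mathrm{O}^p(H) \cong \lim_i K_i/\mathrm{O}^p(H_i)$, realising $H/\mathrm{O}^p(H)$ as a profinite group (indeed a pro-$p$-group by \cref{rem:pperfectquotient}) with this system of finite quotients. Applying \cref{lem:gsp_inclusions} a second time, now to the group $H/\mathrm{O}^p(H)$, assembles the finite-stage containments into the required inclusion of tt-primes.

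The implication (2) $\Rightarrow$ (1) proceeds by reversing the same chain of equivalences. After replacing $K$ with its conjugate $K'$, \cref{lem:gsp_inclusions} applied to $H/\mathrm{O}^p(H)$ (together with the continuity identification above) transports the hypothesised inclusion into a family of inclusions at each $H_i/\mathrm{O}^p(H_i)$; the converse direction of \cite[Proposition 6.11]{balmersanders} then produces $\sfP_{G_i}(K_i,p,n) \subseteq \sfP_{G_i}(H_i,p,m)$ for every $i$; and a final invocation of \cref{lem:gsp_inclusions} at $G$ yields the desired containment $\sfP_G(K',p,n) \subseteq \sfP_G(H,p,m)$, which is $\sfP_G(K,p,n) \subseteq \sfP_G(H,p,m)$ by \cref{rem:gfp_conjugacy}.

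The only genuine subtlety is the bookkeeping needed to ensure that the passage between profinite and finite data is compatible across all the different groups involved: $G$, $H$, $K$, as well as $H/\mathrm{O}^p(H)$ and its subgroup $K/\mathrm{O}^p(H)$. This is handled uniformly by \cref{lem:pperfect_continuity} together with \eqref{eq:limofsubgroups}, since the latter guarantees that images of closed subgroups under the quotient maps fit into compatible cofiltered systems whose limits recover the original subgroups. Once this is in place, everything else is a direct translation of the finite-group theorem.
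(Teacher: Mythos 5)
Your proof is correct and follows the same strategy as the paper's: the paper's proof is a single sentence citing exactly the three ingredients you use — \cref{lem:gsp_inclusions}, the continuity of $\mathrm{O}^p(-)$ from \cref{lem:pperfect_continuity}, and \cite[Proposition 6.11]{balmersanders} — so your write-up is essentially a careful elaboration of that reduction. Invoking \cref{thm:prism} to produce the conjugate $K'$ at the profinite level up front (rather than trying to assemble it from finite-stage data) is a tidy way to handle the existence bookkeeping.
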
 
\begin{proof}
    By \cref{lem:gsp_inclusions} and the continuity of $\mathrm{O}^p(-)$ established in \cref{lem:pperfect_continuity}, this reduces to the case of finite $G$, which was proven in \cite[Proposition 6.11]{balmersanders}. 
\end{proof}

\begin{remark}\label{rem:burnsidering}
    Let $G=\lim_iG_i$ be a profinite group. The \emph{Burnside ring} of $G$ is defined as the endomorphism ring $A(G) =\pi_0\End_{\Sp_{G}}(S_{G})$. As observed in the proof of \cref{cor:continuous_comparisonmaps}, the inflation functors induce an isomorphism $\colim_iA(G_i) \cong A(G)$, so that we obtain a homeomorphism
        \[
            \xymatrix{\Spec(A(G)) \ar[r]^-{\sim} & \lim_i\Spec(A(G_i)).}
        \]
    Dress \cite{dress1969} computed the spectrum of $A(G)$ for any finite group $G$, which thus determines the spectrum of the Burnside ring for any profinite group as well. Indeed, every prime ideal of $A(G)$ has the form $\frak p(H,p)$ for $p$ a prime or $\frak p(H,0)$, for $H$ ranging over the closed subgroup in $G$, up to the following relations:
        \begin{enumerate}
            \item $\frak p(H,0) \subseteq \frak p(K,0)$ iff $H$ is conjugate to $K$ in $G$;
            \item $\frak p(H,p) \subseteq \frak p(K,q)$ only if $p=q$ and $\frak p(H,p) = \frak p(K,q)$;
            \item $\frak p(H,0) \subseteq \frak p(K,p)$ iff $\frak p(H,p) = \frak p(K,p)$ iff $\mathrm{O}^p(H)$ is conjugate to $\mathrm{O}^p(K)$ in $G$;
            \item $\frak p(H,0) \subset \frak p(H,p)$ and $\frak p(H,p) \nsubseteq \frak p(K,0)$.
        \end{enumerate}
    In light of \cref{cor:continuous_comparisonmaps}, we can also describe the inclusion preserving comparison map along the lines of \cite[Proposition 6.7]{balmersanders}:
        \[
            \xymatrix{\Spc(\Sp_G^{\omega}) \ar[r]^-{\rho_G} & \Spec(A(G))},\quad \rho_G(\sfP_G(H,p,m)) = 
                \begin{cases}
                    \frak p(H,0) & \text{if } m=1 \\
                    \frak p(H,p) & \text{if } m>1.
                \end{cases}
        \]
\end{remark}

The next result determines the inclusions among equivariant tt-primes and hence the topology on $\Spc(\Sp_G^{\omega})$ completely for arbitrary \emph{abelian} profinite groups.

\begin{theorem}\label{thm:prism_abelian}
    Let $A$ be an abelian profinite group. For closed subgroups $H,K \leqslant A$, primes $p,q$, and $n,m \in \N\cup\{\infty\}$, the following conditions are equivalent:
        \begin{enumerate}
            \item there is an inclusion $\sfP_A(K,p,n) \subseteq \sfP_A(H,q,m)$;
            \item $K$ is a subgroup of $H$, the quotient $H/K$ is a pro-$p$-group, and
                \[
                \begin{cases}
                    n \geqslant m + \rank_p(H/K) & \text{if } m = 1; \\
                    n \geqslant m + \rank_p(H/K) \text{ and } p=q & \text{if } m>1.
                \end{cases}
                \]
        \end{enumerate}
    Consequently, if $H/K$ is a pro-$p$ group, then $\beth_A(H,K,p,m) = \rank_p(H/K)$.
\end{theorem}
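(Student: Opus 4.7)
The plan is to combine the reduction of inclusions to finite quotients (\cref{lem:gsp_inclusions}) with the known classification of inclusions of equivariant tt-primes for finite abelian groups from \cite{balmersanders,BHNNNS2019}, and then to carefully interpret each of the resulting conditions continuously. Write $A = \lim_i A_i$ for a cofinal system of finite (abelian) quotient groups, and let $H_i, K_i$ denote the images of $H, K$ under $A \twoheadrightarrow A_i$. By \cref{lem:gsp_inclusions}, the inclusion $\sfP_A(K,p,n) \subseteq \sfP_A(H,q,m)$ is equivalent to the system of inclusions $\sfP_{A_i}(K_i,p,n) \subseteq \sfP_{A_i}(H_i,q,m)$ indexed by $i \in I$. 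For each finite abelian $A_i$, the main result of \cite{BHNNNS2019} (together with \cref{lem:gspspc_propreduction}) tells us that the latter inclusion is equivalent to: $K_i \leqslant H_i$, the finite quotient $H_i/K_i$ is a $p$-group, the condition ``$p=q$ if $m>1$'' holds, and $n \geqslant m + \rank_p(H_i/K_i)$.

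The task is then to assemble these pointwise conditions into the desired continuous statement. The condition ``$K_i \leqslant H_i$ for all $i$'' is equivalent to $K \leqslant H$, thanks to \cref{prop:profinitesubg} and \eqref{eq:limofsubgroups}. The condition that each $H_i/K_i$ is a finite $p$-group is equivalent to $H/K$ being a pro-$p$-group, since $H/K \cong \lim_i H_i/K_i$ with surjective transition maps. The remaining numerical condition holds for all $i$ if and only if $n \geqslant m + \sup_{i} \rank_p(H_i/K_i)$, so it remains to verify continuity of the $p$-rank in the form
\[
\rank_p(H/K) \;=\; \sup_{i} \rank_p(H_i/K_i).
\]

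To establish this formula, set $Q = H/K$, an abelian pro-$p$-group presented as $Q = \lim_i Q_i$ for $Q_i = H_i/K_i$ with surjective transition maps. For abelian pro-$p$-groups the Frattini subgroup coincides with the closure of $Q^p$, and surjectivity of the transition maps yields $\Phi(Q) = \lim_i \Phi(Q_i)$; taking quotients of the corresponding short exact sequences in the category of profinite groups (where inverse limits are exact along surjective systems, by an application of \cref{lem:limitisexact}) gives $Q/\Phi(Q) \cong \lim_i Q_i/\Phi(Q_i)$. Passing to orders in the sense of \eqref{eq:order} and taking base-$p$ logarithms produces the desired identity $\rank_p(Q) = \sup_i \rank_p(Q_i)$. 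Feeding this back into the preceding paragraph gives the equivalence of $(1)$ and $(2)$, and the explicit formula $\beth_A(H,K,p,m) = \rank_p(H/K)$ follows from the definition \eqref{eq:pro_blueshiftnumbers} combined with the finite abelian case.

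The main obstacle is the last step: ensuring that both the Frattini subgroup construction and the $p$-rank behave continuously in the filtered limit. This is essentially a bookkeeping issue for abelian pro-$p$-groups thanks to the simple description $\Phi(Q) = \overline{Q^p}$ and the exactness properties of cofiltered limits of finite groups, but one must be careful about the case where the ranks $\rank_p(Q_i)$ fail to stabilize, so that the supremum is infinite; here one simply interprets the inequality $n \geqslant m + \infty$ as forcing $n = \infty$, and the reduction to finite quotients still applies coherently.
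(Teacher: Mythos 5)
Your proposal is correct and follows essentially the same route as the paper: reduce to finite quotients via \cref{lem:gsp_inclusions}, apply the finite abelian classification from \cite{balmersanders,BHNNNS2019}, and then translate each of the three pointwise conditions ($K_i\leqslant H_i$; $H_i/K_i$ a $p$-group; the rank inequality) into a single continuous statement. The one substantive place where you diverge is in establishing
\[
\rank_p(H/K)=\sup_{i}\rank_p(H_i/K_i).
\]
The paper obtains $P/\Phi(P)\cong\lim_i P_i/\Phi(P_i)$ by directly citing a result of Ribes--Zalesskii on continuity of the Frattini subgroup, whereas you derive it from scratch for abelian pro-$p$-groups via $\Phi(Q)=\overline{Q^p}$: first $\pi_i(\overline{Q^p})=Q_i^p=\Phi(Q_i)$ by compactness and surjectivity, so $\Phi(Q)=\lim_i\Phi(Q_i)$ as a closed subgroup determined by its images, and then exactness of surjective inverse limits of finite groups (\cref{lem:limitisexact}) gives the quotient identification. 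This is a valid, slightly more self-contained argument that exploits the abelian hypothesis to avoid the general Frattini-continuity citation; the trade-off is that the paper's citation works for arbitrary profinite groups, which makes the structure of the reduction more visible (the abelian hypothesis enters only via $\beth^-=\beth^+$). One small stylistic point: at the step where you invoke \cref{lem:gspspc_propreduction} for each finite $A_i$, it would be cleaner to cite the underlying finite-group statement \cite[Proposition 6.11]{balmersanders} directly, since \cref{lem:gspspc_propreduction} is itself a profinite generalization of that result.
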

\begin{proof}
    This essentially follows from \cref{thm:prism} and the main theorem of \cite{BHNNNS2019}, as discussed in \cref{rem:gspspc_sota}; let us spell out the details. As before in the proof of \cref{thm:prism}, we can deal with the edge case $m=1$ separately, thereby reducing to $p=q$. By \cref{lem:gsp_inclusions}, there is an inclusion $\sfP_A(K,p,n) \subseteq \sfP_A(H,p,m)$ if and only if $\sfP_{A_i}(K_i,p,n) \subseteq \sfP_{A_i}(H_i,p,m)$ for some cofinal system $(A_i)_{i \in I}$ of finite quotient groups of $A$. By \cite[Proposition 6.11]{balmersanders} and \cite[Corollary 6.11]{BHNNNS2019}, this happens if and only if for all $i\in I$:
        \begin{enumerate}
            \item $K_i$ is $p$-subnormal in $H_i$; and
            \item $n \geqslant m + \rank_p(H_i/K_i)$.
        \end{enumerate}
    Because $A$ is abelian, $K_i$ is $p$-subnormal in $H_i$ if and only if $H_i/K_i$ is a finite abelian $p$-group: Indeed, we can lift any filtration of $H_i/K_i$ with cyclic quotients to a $p$-subnormal chain of $K_i$ in $H_i$. Therefore, Condition (1) for all $i \in I$ is equivalent to the statement that $H/K$ is an abelian pro-$p$-group. Let us write $P = H/K$ and $P_i$ for the image of $P$ in $\Sub(A_i)$. By \cite[Corollary 2.8.3]{ribes_zalesskii}, we have $P/\Phi(P) \cong \lim_{i}P_i/\Phi(P_i)$, hence $\#(P/\Phi(P)) = \mathrm{lcm}_{i \in I}\{\#(P_i/\Phi(P_i))\}$. This implies that $\rank_p(P) = \sup_{i \in I}\{\rank_p(P_i)\}$. In other words, we have shown that Condition (2) for all $i\in I$ is equivalent to $n \geqslant m + \rank_p(H/K)$, which concludes the proof. 
\end{proof}

\begin{remark}\label{rem:prism_abelian}
    \Cref{thm:prism_abelian} is a profinite analogue of the main theorem of \cite{BHNNNS2019} for finite abelian groups and of \cite{BarthelGreenleesHausmann2020} for abelian compact Lie groups. 
\end{remark}

We conclude this subsection with an example that showcases the general phenomenology of the topology on $\Spc(\Sp_G^{\omega})$.

\begin{example}\label{ex:prism_zp}
    Consider the example $G=\Z_p$. Any closed subgroup of $\Z_p$ is of the form $p^k\Z_p$ for some $k \in \N$ or $e$, the trivial group. This induces a bijection $\Sub(\Z_p) \to \N \cup \{\infty\}$, which in fact furnishes a homeomorphism $\Sub(\Z_p) \cong \N^*$. Since the most interesting information of the spectrum of $\Sp_{\Z_p}^{\omega}$ is concentrated at the prime $p$, let us work $p$-locally and omit the prime $p$ from the notation. Given two subgroups $H$ and $K$ in $\Z_p$, say $K \leqslant H$, and $n,m \in \N \cup \{\infty\}$, we have the following characterization of the possible inclusions among the corresponding tt-primes:
    \[
    \sfP_{\Z_p}(K,n) \subseteq \sfP_{\Z_p}(H,m) \iff
        \begin{cases}
            n \geqslant m & \text{if } [H:K] \text{ is finite}; \\
            n \geqslant m+1 & \text{otherwise}.
        \end{cases}
    \]
    In \cref{fig:speczp} in the introduction, we display a schematic picture for the Balmer spectrum of compact $\Z_p$ equivariant spectra.
\end{example}

\section{The nilpotence theorem for profinite groups}\label{sec:gsp_nilpotence}

The final goal of this part is to prove a version of the nilpotence theorem for the category of equivariant $G$-spectra. Despite the continuity of $\Sp_G$, this does not appear to be formal. As in the case of finite groups (or even compact Lie groups), one would like to reduce to the non-equivariant nilpotence theorem of Devinatz, Hopkins, and Smith, but usually this is achieved via the joint conservativity of the geometric fixed point functors. In the profinite case, this tool is not readily available, essentially because it is not clear how to induct on closed subgroups. Instead, we will first compute the homological spectrum of $\Sp_G$ and then deduce from it our nilpotence theorem, thereby reversing the approach taken in \cite{balmer_nilpotence}.

\subsection{The bijectivity hypothesis}\label{ssec:bijectivityhyp}

The next result verifies the bijectivity hypothesis (\cref{def:bijhyp}) for $\Sp_G$, extending a previous theorem by Balmer from finite to profinite groups. It will be the key ingredient in our proof of the profinite nilpotence theorem below.

\begin{proposition}\label{prop:bijectivityhyp}
    The bijectivity hypothesis holds for $\Sp_G$ for any profinite group $G$, i.e., the comparison map
        \[
        \xymatrix{\phi_G\colon \Spch(\Sp_G^{\omega}) \ar[r]^-{\cong} & \Spc(\Sp_{G}^{\omega})}
        \]
    is a homeomorphism.
\end{proposition}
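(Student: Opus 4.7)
The strategy is to leverage the continuous presentation $\Sp_G^\omega \simeq \colim_i \Sp_{G_i}^\omega$ from \cref{def:gsp_contmodel} and reduce the problem to the (already established) case of finite groups. The key technical tool is the continuity of the homological spectrum, proved as \cref{thm:spchcontinuity}, together with its consequence for the bijectivity hypothesis recorded in \cref{cor:bijhyp_continuity}.

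More precisely, I would first recall that for any finite group $F$, the comparison map $\phi_F\colon \Spch(\Sp_F^\omega) \to \Spc(\Sp_F^\omega)$ is a bijection; this is a theorem of Balmer (\cite[Corollary 5.10]{balmer_nilpotence}), which in fact applies to compact Lie groups and hence to all finite $G_i$ appearing in our cofiltered presentation $G = \lim_i G_i$. In particular, each category $\Sp_{G_i}^\omega$ satisfies the bijectivity hypothesis. Applying \cref{cor:bijhyp_continuity} to the filtered system of essentially small tt-categories $(\Sp_{G_i}^\omega, \infl_{G_j}^{G_i})$ then immediately yields that the colimit $\Sp_G^\omega$ also satisfies the bijectivity hypothesis, so $\phi_G$ is a bijection.

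It remains to upgrade this bijection to a homeomorphism. For this I would invoke the remark following \cref{def:bijhyp}, which records that bijectivity of $\phi_{\sfT}$ automatically promotes to a homeomorphism via \cite[Theorem A]{BarthelHeardSanders2023a}. Alternatively, one can argue directly: the continuous bijection $\phi_G$ sits in a commutative square with the continuous bijections $\phi_{G_i}$ (using \cref{lem:spchcont} and naturality), and both $\Spch(\Sp_G^\omega) \cong \lim_i \Spch(\Sp_{G_i}^\omega)$ (\cref{thm:spchcontinuity}) and $\Spc(\Sp_G^\omega) \cong \lim_i \Spc(\Sp_{G_i}^\omega)$ (\cref{cor:gsp_contspectrum}) are spectral; since each $\phi_{G_i}$ is a homeomorphism of spectral spaces, so is their limit, and hence $\phi_G$ is as well.

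The main obstacle I would anticipate is essentially bookkeeping: namely verifying that the transition functors in the system $(\Sp_{G_i}^\omega, \infl_{G_j}^{G_i})$ are compatible with the homological versions $F_i^*$ so that \cref{cor:bijhyp_continuity} applies cleanly, and that the naturality of the comparison map $\phi_{(-)}$ under the inflation functors is well-defined on the nose. These points are, however, already subsumed by the functoriality statements of \cref{lem:spchcont} and by \cref{prop:Freydcont}. Once these are in place, the proof is a direct application of continuity, making the profinite case entirely formal given the finite case.
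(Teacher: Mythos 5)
Your proposal is correct and follows essentially the same route as the paper: both reduce to Balmer's bijectivity result for finite groups via the continuity of the homological spectrum (\cref{thm:spchcontinuity}) and of the Balmer spectrum (\cref{cor:gsp_contspectrum}), then conclude that $\phi_G$ is a homeomorphism from the resulting commutative square of continuous maps between (co)filtered (co)limits. The only cosmetic difference is that you invoke the packaged \cref{cor:bijhyp_continuity} directly, whereas the paper unwinds the same argument into an explicit commutative diagram (which it also uses afterwards to deduce \cref{cor:gsp_hspectrumset}); your second, "direct" argument for upgrading bijection to homeomorphism is literally the one in the paper.
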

\begin{proof}
    Let us first construct an auxiliary commutative diagram. The two rows are obtained by applying the homological and triangular spectrum functor to the geometric fixed points and the inflation functors, respectively. The vertical maps are all given by Balmer's (surjective) comparison map \eqref{eq:balmer_map}, whose naturality gives the commutativity of the diagram. 
    \begin{equation}\label{eq:spectracomparison}
        \begin{gathered}
            \xymatrix{
            \Sub(G)/G \times \Spch(\Sp^{\omega}) \ar[r]^-{\varphi^h} \ar[d]_-{\cong}^-{\sqcup \phi} & \Spch(\Sp_{G}^{\omega}) \ar[r]^-{\iota^h} \ar@{->>}[d]^-{\phi_G} & \lim_{i}\Spch(\Sp_{G_i}^{\omega}) \ar[d]_-{\cong}^-{\lim_i\phi_{G_i}} \\
            \Sub(G)/G \times \Spc(\Sp^{\omega}) \ar[r]_-{\simeq}^-{\varphi} & \Spc(\Sp_{G}^{\omega}) \ar[r]_-{\cong}^-{\iota} & \lim_{i}\Spc(\Sp_{G_i}^{\omega})
            } 
        \end{gathered}
    \end{equation}
    The two outer vertical maps are homeomorphisms by \cite[Corollary 5.10]{balmer_nilpotence}. We have seen that the map $\iota$ is a homeomorphism in \cref{cor:gsp_contspectrum}, while \cref{prop:gsp_spectrumset} shows that $\varphi$ is a continuous bijection.
    
    Now consider the right square of the diagram \eqref{eq:spectracomparison}. In light of \cref{thm:gsp_contfausk}, we can apply \cref{thm:spchcontinuity} to see that $\iota^h$ is a homeomorphism. By the commutativity of the square, we deduce that $\phi_G$ is a homeomorphism as well.
\end{proof}

As a consequence, the commutativity of the left square in \eqref{eq:spectracomparison} gives us the homological analogue of \cref{prop:gsp_spectrumset}:

\begin{corollary}\label{cor:gsp_hspectrumset}
    The geometric fixed point functors induce a continuous bijection
        \[
        \xymatrix{{\Sub(G)/G} \times \Spch(\Sp^{\omega}) \ar[r]_-{\simeq}^-{\varphi^h} & \Spch(\Sp_{G}^{\omega}).}
        \]
\end{corollary}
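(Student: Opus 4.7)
The plan is to read off the corollary directly from the commutative diagram \eqref{eq:spectracomparison} that was already constructed in the proof of \cref{prop:bijectivityhyp}. Indeed, that proof establishes commutativity of both squares by naturality of Balmer's comparison map, and the map $\varphi^h$ appearing as the top-left horizontal arrow is precisely the map on homological spectra induced componentwise by the geometric fixed point functors $\Phi_G^H$ (for $H$ ranging over $\Sub(G)/G$). So the content of the corollary is to transfer the known bijectivity and continuity properties of the surrounding maps in the left square to $\varphi^h$.

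To that end, I will collect the three ingredients. First, the left vertical map $\mathrm{id} \sqcup \phi$ is a homeomorphism: by the absolute case of the bijectivity hypothesis, i.e., \cite[Corollary 5.10]{balmer_nilpotence} applied to $\sfT = \Sp$, the non-equivariant comparison map $\phi\colon \Spch(\Sp^\omega) \to \Spc(\Sp^\omega)$ is a homeomorphism, and smashing with the identity on $\Sub(G)/G$ preserves this. Second, the bottom arrow $\varphi$ is a continuous bijection by \cref{prop:gsp_spectrumset}. Third, the right vertical arrow $\phi_G$ is a homeomorphism by the just-proved \cref{prop:bijectivityhyp}, so in particular $\phi_G^{-1}$ is continuous.

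Combining these, commutativity of the left square yields the factorization
\[
\varphi^h \;=\; \phi_G^{-1} \circ \varphi \circ (\mathrm{id} \sqcup \phi),
\]
which exhibits $\varphi^h$ as the composite of two homeomorphisms with one continuous bijection. Hence $\varphi^h$ is itself a continuous bijection, as claimed. There is no genuine obstacle here: all the serious work has already been carried out in the proof of \cref{prop:bijectivityhyp} (in particular, the continuity theorem \cref{thm:spchcontinuity} for the homological spectrum, used to show that $\iota^h$ is a homeomorphism, is what gives us control on $\phi_G$). The only thing worth emphasising is that we do not claim $\varphi^h$ is a homeomorphism; as in the tt-geometric case \cref{prop:gsp_spectrumset}, one does not expect the topology on the homological spectrum to be the product topology in general.
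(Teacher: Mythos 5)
Your proposal is correct and is essentially identical to the paper's argument: the paper also derives the corollary immediately from the commutativity of the left square in \eqref{eq:spectracomparison}, using that $\sqcup\phi$ and $\phi_G$ are homeomorphisms and that $\varphi$ is a continuous bijection by \cref{prop:gsp_spectrumset}. Your remark at the end that one should not expect a homeomorphism is a sensible caveat, consistent with the statement only claiming a continuous bijection.
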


\subsection{The nilpotence theorem}\label{ssec:nilpotence}

It was shown in \cite{BCHS2023bpp} that the conclusion of \cref{cor:gsp_hspectrumset} is equivalent to the statement that the family $(\Phi_G^H)_{H\in \Sub(G)/G}$ of geometric fixed point functors detect weak ring objects in $\Sp_G$. Here, a \emph{weak ring} is an object $R \in \Sp_G$ together with a map $\eta\colon S_G^0 \to R$ such that $\eta \otimes R$ splits, and detection means that 
    \begin{equation}\label{eq:profinitenilfaithful}
        \Phi_G^H(R) = 0 \text{ for all } H \in \Sub(G)/G \iff R=0. 
    \end{equation}
Suppose $F$ is a compact $G$-spectrum. If $\Phi_G^H(F) = 0$ for all $H$, then also $\Phi_G^H(F \otimes \underline{\hom}(F,S_G^0)) = \Phi_G^H(\underline{\hom}(F,F)) = 0$ for all $H$, hence $\underline{\hom}(F,F) = 0$ by \eqref{eq:profinitenilfaithful} because it is a ring object. Since $F$ can be written as a retract of $F \otimes \underline{\hom}(F,F)$, this implies $F \simeq 0$. We conclude that the geometric fixed point functors are jointly conservative on $\Sp_G^{\omega}$.

The equivalence of \eqref{eq:profinitenilfaithful} is an equivariant formulation of the nilpotence theorem for $\Sp_G$ for profinite groups $G$, and provides a partial conservativity result for the geometric fixed points. In combination with the non-equivariant nilpotence theorem \cite{nilpotence1,nilpotence2}, we see that the collection $(K_p(n)_*\Phi_G^H)_{H,p,n}$ of homological functors detects weak ring objects in $\Sp_G$, where the indexing set ranges through closed subgroups $H$, primes $p$, and $n \in \N\cup\{\infty\}$. The equivalence \eqref{eq:profinitenilfaithful} is also a consequence of the following, stronger version of the nilpotence theorem for $\Sp_G$ applied to the map $\eta$.

\begin{theorem}\label{thm:nilpotence}
    Let $G$ be a profinite group, then the homological functors $(K_p(n)_*\Phi_G^H)_{H,p,n}$ jointly detect $\otimes$-nilpotence of morphisms in $\Sp_G$ with compact source.
\end{theorem}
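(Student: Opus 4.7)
The strategy is to combine the bijectivity hypothesis for $\Sp_G$ established in \cref{prop:bijectivityhyp} with Balmer's homological approach to nilpotence \cite{balmer_homological,balmer_nilpotence}, and then to identify the resulting detection functors through the non-equivariant nilpotence theorem of Devinatz--Hopkins--Smith.

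First, since $\Sp_G$ satisfies the bijectivity hypothesis by \cref{prop:bijectivityhyp}, Balmer's homological nilpotence criterion applies: a morphism $f\colon X \to Y$ with compact source in $\Sp_G$ is $\otimes$-nilpotent if and only if, for every $\mathcal{B} \in \Spch(\Sp_G^{\omega})$, the image $\bar h_{\mathcal{B}}(f)$ vanishes, where $\bar h_{\mathcal{B}}\colon \Sp_G \to \mathcal{A}^{\mathrm{fp}}(\Sp_G^{\omega})/\mathcal{B}$ denotes the composite of the restricted Yoneda embedding with the Gabriel quotient at $\mathcal{B}$. This reduces the theorem to identifying each $\bar h_{\mathcal{B}}$ in the equivariant setting with a concrete detection functor.

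Next, use \cref{cor:gsp_hspectrumset} to parametrize the homological spectrum: every $\mathcal{B} \in \Spch(\Sp_G^{\omega})$ has the form $\mathcal{B} = (F^*)^{-1}(\mathcal{C})$ for some $H \in \Sub(G)/G$ and some $\mathcal{C} \in \Spch(\Sp^{\omega})$, where $F^*$ is the functor on Freyd envelopes induced by the geometric functor $\Phi_G^H$. The commutative diagram \eqref{eq:homologicalsquare} applied to $\Phi_G^H$, together with the naturality of the Gabriel quotient, yields a canonical equivalence $\bar h_{\mathcal{B}} \cong \bar h_{\mathcal{C}} \circ \Phi_G^H$. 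The criterion therefore becomes: $\bar h_{\mathcal{C}}(\Phi_G^H(f)) = 0$ for all admissible $(H,\mathcal{C})$. Because $\Phi_G^H$ is geometric (\cref{prop:gfp_properties}), $\Phi_G^H(f)$ is a morphism in $\Sp$ with compact source.

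Finally, by the Devinatz--Hopkins--Smith nilpotence theorem \cite{nilpotence1,nilpotence2} together with Balmer's identification of $\Spch(\Sp^{\omega})$ (see \cite[Corollary~5.10]{balmer_nilpotence}), the homological prime $\mathcal{C} = \mathcal{C}(p,n)$ detects the same compact-source morphisms as Morava $K$-theory: $\bar h_{\mathcal{C}(p,n)}(g) = 0$ if and only if $K_p(n)_*(g) = 0$. Concatenating the three equivalences above yields the conclusion that $f$ is $\otimes$-nilpotent if and only if $K_p(n)_*\Phi_G^H(f) = 0$ for all triples $(H,p,n)$. The main obstacle is the second step, in which the detection functor attached to the equivariant homological prime $\varphi^h(H,\mathcal{C})$ must be identified, up to canonical natural isomorphism, with the composite $\bar h_{\mathcal{C}}\circ \Phi_G^H$. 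This requires carefully tracking how maximal Serre ideals and their Gabriel quotients transform under the induced functor $F^*$ on Freyd envelopes, building on the continuity arguments developed in the proof of \cref{thm:spchcontinuity} and the explicit description of $\varphi^h$ provided by \cref{cor:gsp_hspectrumset}.
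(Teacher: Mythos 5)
Your overall strategy is the same as the paper's: exploit the bijectivity hypothesis (\cref{prop:bijectivityhyp}) together with Balmer's homological nilpotence criterion, then parametrize the homological primes by triples $(H,p,n)$ via the geometric fixed point functors. However, you organize the key identification differently, and the paper's version is noticeably more economical. You propose a two-stage factorization: first identify $\bar h_{\mathcal B}$ with (a faithful functor after) $\bar h_{\mathcal C}\circ\Phi_G^H$ using \eqref{eq:homologicalsquare}, and then separately identify $\bar h_{\mathcal C(p,n)}$ with $K_p(n)_*$-homology non-equivariantly. You honestly flag the first step as the main technical obstacle, and you are right that it requires tracking how Serre ideals and Gabriel quotients transport along $F^*$. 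The paper avoids both stages at once by applying the Balmer--Cameron lemma (\cite[Lemma 2.2]{BalmerCameron2021}) directly to the \emph{composite} homological functor $F = K_p(n)_*\Phi_G^H\colon \Sp_G \to \sfD(K_p(n)_*)$, which already lands in a tt-field. This yields in one stroke that $\ker(\hat F)$ is generated by a single homological prime $\sfB$ and that $F$ factors as a faithful exact monoidal functor $\overline{F}$ after $\bar h_{\sfB}$. Faithfulness of $\overline{F}$ then immediately converts $K_p(n)_*\Phi_G^H(\alpha)=0$ into $\bar h_{\sfB}(\alpha)=0$, so the hypothesis of Balmer's \cite[Corollary 4.7(a)]{balmer_nilpotence} is met once bijectivity guarantees that varying $(H,p,n)$ exhausts $\Spch(\Sp_G^\omega)$. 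Your route would work with the additional bookkeeping you anticipate, but the paper's bundling of the two geometric functors into one application of the lemma removes the need for it.
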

\begin{proof}
    Suppose $\alpha\colon x \to t$ is a morphism in $\Sp_G$ with $x$ compact. We have to show that $\alpha$ satisfies $\alpha^{\otimes m} = 0$ for some $m \geqslant 1$ if $K_p(n)_*\Phi_G^H(\alpha) = 0$ for all $H \in \Sub(G)$, primes $p$, and $n \in \N \cup\{\infty\}$. 
    
    The proof requires some preparation. Fix a triple $(H,p,n)$ and consider the corresponding homological functor $F\coloneqq K_p(n)_*\Phi_G^H\colon \Sp_{G} \to \sfD(K(n)_*)$, taking values in the tt-field of graded $K_p(n)_*$-modules. By the universal property of $\cA(\Sp_G^{\omega}) = \Mod\text{-}\Sp_G^{\omega}$, the functor $F$ extends over the restricted Yoneda embedding $h\colon \Sp_G \to \cA(\Sp_G^{\omega})$ to a functor $\hat{F}\colon \cA(\Sp_G^{\omega}) \to \sfD(K_p(n)_*)$. By \cite[Lemma 2.2]{BalmerCameron2021} (summarizing results from \cite{balmer_homological} and \cite{BKSruminations}), the kernel of $\hat{F}$ is equal to the localizing ideal $\langle \sfB \rangle$ generated by a homological prime $\sfB \in \Spch(\Sp_{G}^{\omega})$. Therefore, we obtain a factorization $\overline{F}$ of $\hat{F}$ through the Gabriel quotient of $\cA(\Sp_G^{\omega})$ by $\langle \sfB \rangle$, as depicted in the commutative diagram below. By definition, $\overline{h}_{\sfB} = q_{\sfB}\circ h$ is the homological residue field at $\sfB$.
    \begin{equation}\label{eq:homologicalresiduefunctors}
        \begin{gathered}
            \begin{tikzcd}[column sep=1.5cm]
            \Sp_G^{\omega} \arrow[r,"h"] \arrow[dr,"F = K_p(n)_*\Phi_G^H"'] \arrow[rr, bend left=15,swap, "\overline{h}_{\sfB}"'] & \cA(\Sp_G^{\omega}) \arrow[r,"q_{\sfB}"] \arrow[d,"\hat{F}"'] & \cA(\Sp_G^{\omega})/\langle \sfB \rangle \arrow[dl,dashed,"\overline{F}"] \\
            & \sfD(K_p(n)_*). 
            \end{tikzcd}
        \end{gathered}
    \end{equation}
    We claim that $\sfB$ corresponds under the comparison map $\phi\colon \Spc^h(\Sp_G^{\omega}) \to \Spc(\Sp_G^{\omega})$ to the equivariant tt-prime $\sfP_G(H,p,n)$. Indeed, for $x \in \Sp_G^{\omega}$, we have equivalences
        \begin{align*}
            h(x) \in \sfB = \ker(\hat{F})\cap\cA^{\mathrm{fp}}(\Sp_G^{\omega}) & \iff 0 = \hat{F}h(x) \simeq K_p(n)_*\Phi_G^H(x) \\
            & \iff x \in \sfP_G(H,p,n),
        \end{align*}
    i.e., $\phi(\sfB) = \sfP_G(H,p,n)$. Since $\phi$ is a bijection by \cref{prop:bijectivityhyp}, varying $(H,p,n)$ exhausts all homological primes of $\Sp_{G}^{\omega}$. 

    Let us return to our given map $\alpha$ satisfying $K_p(n)_*\Phi_G^H(\alpha) = \overline{F}\circ\overline{h}_{\sfB}(\alpha) = 0$ for all $(H,p,n)$. The same lemma as cited above shows that $\overline{F}$ is monoidal, exact, and faithful, so $\overline{h}_{\sfB}(\alpha) = 0$ for all $\sfB \in \Spch(\Sp_{G}^{\omega})$. We are thus in the situation of Balmer's homological nilpotence theorem, specifically in the form of \cite[Corollary 4.7(a)]{balmer_nilpotence}. This implies the tensor nilpotence of $\alpha$.
\end{proof}

\begin{corollary}\label{cor:nilpotence}
    Let $G$ be a profinite group, then the geometric functors $(\Phi_G^H)_{H}$ jointly detect $\otimes$-nilpotence of morphisms in $\Sp_G$ with compact source.
\end{corollary}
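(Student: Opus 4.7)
The plan is to derive Corollary \ref{cor:nilpotence} directly from Theorem \ref{thm:nilpotence}, which has just been proved. Concretely, if $\alpha\colon x \to t$ is a morphism in $\Sp_G$ with $x$ compact and $\Phi_G^H(\alpha) = 0$ for every closed subgroup $H \leqslant G$, then applying any additional functor out of $\Sp$ yields zero; in particular $K_p(n)_* \Phi_G^H(\alpha) = 0$ for every triple $(H,p,n)$ with $H \in \Sub(G)$, $p$ prime, and $n \in \N \cup \{\infty\}$.

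Applying Theorem \ref{thm:nilpotence} to $\alpha$ then yields $\alpha^{\otimes m} = 0$ for some $m \geqslant 1$, which is precisely the conclusion that the family $(\Phi_G^H)_H$ jointly detects $\otimes$-nilpotence of morphisms with compact source. Since the geometric fixed point functors only depend on the $G$-conjugacy class of $H$ (\cref{rem:gfp_conjugacy}), we may equivalently index the family over $\Sub(G)/G$.

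The only subtlety worth noting, though not really an obstacle, is the direction of the implication: we are \emph{weakening} the detecting family, not strengthening it. Passing from the homological detection statement to the geometric one is immediate because any $(K_p(n)_* \Phi_G^H)$-measurement factors through $\Phi_G^H$; by contrast, deducing the homological detection statement from a hypothetical geometric one would require the much harder non-equivariant nilpotence theorem of Devinatz--Hopkins--Smith, which is already built into Theorem \ref{thm:nilpotence} via the computation of the homological spectrum in the proof. In other words, all of the real work has already been done, and this corollary is a routine consequence.
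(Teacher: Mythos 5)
Your argument is correct and supplies exactly the routine derivation the paper leaves implicit: since $\Phi_G^H(\alpha)=0$ forces $K_p(n)_*\Phi_G^H(\alpha)=0$ for every $(p,n)$, the hypothesis of Corollary~\ref{cor:nilpotence} implies that of Theorem~\ref{thm:nilpotence}, and the conclusions coincide. Your closing remark on the asymmetry is fine as a heuristic, though note that recovering Theorem~\ref{thm:nilpotence} from this corollary plus Devinatz--Hopkins--Smith alone would still run into the problem of controlling the non-equivariant nilpotence exponents uniformly over the infinitely many closed subgroups.
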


\begin{remark}
    \Cref{thm:nilpotence} generalizes the equivariant nilpotence theorem of Strickland \cite{Strickland2012pp} and Balmer--Sanders \cite[Theorem 4.15]{balmersanders} from finite groups to profinite groups, and also removes the compactness hypothesis on the domain of the map present in their result. It can be thought of as the profinite analogue of the equivariant nilpotence theorem for compact Lie groups established in \cite[Theorem 3.19]{BarthelGreenleesHausmann2020}. However, as remarked at the beginning of this subsection, our proof here differs significantly from these previous ones, which ultimately relied on induction over closed subgroups of the given finite or compact Lie group. This tool does not seem to be available in the context of profinite groups, which is why we reversed the conventional approach and first identified the homological with the triangular spectrum, as opposed to deducing this identification from the nilpotence theorem. 
\end{remark}

We conclude this section with another consequence of the proof of the nilpotence theorem, namely the identification of homological support with geometric isotropy in $\Sp_G$.

\begin{corollary}\label{cor:gsp_hsupport}
    For any profinite group $G$ and $t \in \Sp_G$, we have
        \[
            \Supph(t) = \{(H,(n,p)) \in {\Sub(G)/G} \times \Spch(\Sp^{\omega})\mid K_p(n)_*\Phi_G^H(t) \neq 0\}.
        \]
\end{corollary}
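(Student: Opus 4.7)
The plan is to combine the parameterization of homological primes from \cref{cor:gsp_hspectrumset} with the factorization of the chromatic geometric fixed point functors established in the proof of \cref{thm:nilpotence}. By \cref{cor:gsp_hspectrumset}, every $\sfB \in \Spch(\Sp_G^{\omega})$ is of the form $\varphi^h(H,(p,n))$ for some $H \in \Sub(G)/G$ and $(p,n) \in \Spch(\Sp^{\omega})$. The proof of \cref{thm:nilpotence} then exhibits a commutative diagram
\[
\xymatrix{
\Sp_G \ar[r]^-{h} \ar@/^1.2pc/[rr]^-{\bar{h}_{\sfB}} \ar@/_1.8pc/[drr]_-{K_p(n)_*\Phi_G^H} & \mathcal{A}(\Sp_G^{\omega}) \ar[r]^-{q_{\sfB}} & \mathcal{A}(\Sp_G^{\omega})/\langle \sfB\rangle \ar[d]^-{\bar{F}_{\sfB}} \\
 & & \sfD(K_p(n)_*),
}
\]
in which $\bar{F}_{\sfB}$ is exact, monoidal, and faithful, since $\mathcal{A}(\Sp_G^{\omega})/\langle \sfB\rangle$ maps into a tt-field via this construction, cf.~\cite[Lemma 2.2]{BalmerCameron2021}. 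Consequently, $\bar{h}_{\sfB}(t) = 0$ if and only if $K_p(n)_*\Phi_G^H(t) = 0$ for any $t \in \Sp_G$, not just compact ones.

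The second ingredient is the identification
\[
\Supph(t) = \{\sfB \in \Spch(\Sp_G^{\omega}) \mid \bar{h}_{\sfB}(t) \neq 0 \}.
\]
Unwinding the definition, $\sfB \in \Supph(t)$ means $\underline{\hom}(t,E_{\sfB}) \neq 0$; since $E_{\sfB}$ is pure-injective and by the defining property $\langle \sfB\rangle = \ker(h(E_{\sfB})\otimes-)$, this is equivalent to $h(t) \notin \langle \sfB \rangle$, which is in turn equivalent to the non-vanishing of the image of $h(t)$ in the Gabriel quotient $\mathcal{A}(\Sp_G^{\omega})/\langle \sfB\rangle$, i.e., to $\bar{h}_{\sfB}(t) \neq 0$. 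This chain of equivalences is standard in the theory of the homological spectrum and can be extracted from \cite[Section 3]{BarthelHeardSanders2023a} together with \cite[Section 3]{balmer_homological}.

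Combining the two ingredients yields the desired chain of equivalences
\[
\sfB \in \Supph(t) \iff \bar{h}_{\sfB}(t) \neq 0 \iff K_p(n)_*\Phi_G^H(t) \neq 0,
\]
and inserting the bijective parameterization $\varphi^h$ from \cref{cor:gsp_hspectrumset} produces the formula in the statement. The only non-routine ingredient is the residue-field characterization of $\Supph$, which is not entirely formal because the pure-injective $E_{\sfB}$ is only implicitly described; however, all the work has effectively been done in \cite{BarthelHeardSanders2023a,balmer_homological}, so the main task is assembly rather than a genuinely new argument.
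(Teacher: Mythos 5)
Your proof is correct and follows essentially the same route as the paper: parametrize homological primes via the continuity bijection from \cref{cor:gsp_hspectrumset}, use the factorization diagram from the proof of \cref{thm:nilpotence} together with the faithfulness/conservativity of $\overline{F}$ to get $\overline{h}_{\sfB}(t) = 0 \iff K_p(n)_*\Phi_G^H(t) = 0$, and then invoke the residue-field description of homological support from \cite[Proposition 4.2]{balmer_homological}. The paper's proof simply cites that proposition directly rather than unwinding the pure-injective $E_{\sfB}$ by hand, but the logical content is the same.
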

\begin{proof}
    Consider an object $t \in \Sp_G$ and let $\sfB \in \Spc^h(\Sp_G^{\omega})$ be a homological prime, corresponding under the bijection  of \cref{cor:gsp_hspectrumset} to a tuple $(H,(n,p))$. Using diagram \eqref{eq:homologicalresiduefunctors} together with the aforementioned fact that $\overline{F}$ is faithful and therefore conservative (\cite[Lemma 2.2]{BalmerCameron2021}), we obtain the following equivalences:
        \[
            \overline{h}_{\sfB}(t) = 0 \iff \overline{F} \circ \overline{h}_{\sfB}(t) = 0 \iff K_p(n)_*\Phi_G^H(t) = 0.
        \]
    The claim thus follows from the alternative description (\cite[Proposition 4.2]{balmer_homological}) of the homological support of $t$ as the set of those homological primes $\sfB$ for which $\overline{h}_{\sfB}(t) \neq 0$. 
\end{proof}

\newpage
\part{Rational profinite equivariant spectra}\label{part:qgsp}

In this final part, we embark on an in-depth analysis of the structure of the category of \emph{rational} $G$-equivariant spectra.
The advantage here is the existence of an algebraic model given in terms of equivariant sheaves. This algebraic model is a symmetric monoidal and $\infty$-categorical lift of the one of Barnes--Sugrue \cite{barnessugrue_spectra}, and relies on a good derived category of $G$-equivariant sheaves on profinite spaces. The construction of this derived category is the subject of \cref{sec:equivariantsheaves}, while the construction of the algebraic model is contained in \cref{sec:qgsp_algmodel}.

The reduced complication from discarding chromatic heights allows much finer control of the category and we are able to study tensor-triangular phenomena such as stratification and the telescope conjecture. This is discussed in \cref{sec:min}. Finally in \cref{sec:qgsp_lgp} we fully resolve the question of stratification through an analysis of the local-to-global principle using observations regarding the topology of the space of closed subgroups up to conjugacy.

\section{Equivariant sheaves on profinite spaces}\label{sec:equivariantsheaves}

Suppose $G$ is a profinite group and let $R$ be a commutative ring. In this section we introduce the category of $G$-equivariant sheaves of $R$-modules over a profinite space and prove it 
is a Grothendieck abelian category with enough injectives. Setting $R=\Q$ we show that the category has a set of 
enough (finite) projectives and use this to construct a projective model structure on the category of chain complexes
of rational $G$-sheaves, where the weak equivalence are the quasi-isomorphisms. This model structure is stable, monoidal, finitely generated (in the sense of \cite[Section 7.4]{hov99}), and it agrees non-equivariantly with Hovey's flat model structure from \cite[Section 3]{hov01sheaves}. As such the $\infty$-category associated to this model structure provides us with a derived category of rational $G$-equivariant sheaves. These results are the subject of \cref{prop:equivariantsheafmodel,prop:monoidalsheafmodel,prop:gsheafhocompact}, as summarized in \cref{cor:sheavescompgen}.

With these properties established, in \cref{thm:eqsheaves_continuity} we prove a continuity result for equivariant sheaves analogous to \cref{thm:gsp_contfausk}. In \cref{sec:qgsp_algmodel} we will use this to  extend the monoidal algebraic model of Wimmer for a finite group to the profinite case in \cref{thm:finitegroups_algmodel} and compare it to the algebraic model of the second author and Sugrue \cite{barnessugrue_spectra}.

\begin{remark}\label{rem:eqsheavesalternative}
    Our approach to equivariant sheaves is motivated by our desire to connect the material of this section and the next to the perspective taken by Barnes and Sugrue \cite{barnessugrue_gsheaves,barnessugrue_weylsheaves}. However, other (less hands-on but more general) formulations are possible, such as within the framework of condensed mathematics developed by Clausen and Scholze; this will be the topic of forthcoming work by the third author with Volpe.
\end{remark}

\subsection{The abelian category of \texorpdfstring{$G$}{G}-equivariant sheaves}\label{ssec:eqsheaves_abelian}

There are several equivalent definitions of $G$-equivariant sheaves, 
see Bernstein and Lunts \cite{Bernstein_Lunts}, Scheiderer \cite[Section 8]{Scheiderer} or \cite[Section 2]{barnessugrue_gsheaves}.
We have chosen the following formulation via a sheaf space as it is most amenable to our requirements:

\begin{definition}\label{defn:eqsheaf}
Let $R$ be a commutative ring and $X$ a $G$-space. A \emph{$G$-equivariant sheaf of $R$ modules over $X$} is a map $p \colon E \to X$ such that:
\begin{enumerate}
\item  $p$ is a $G$-equivariant map $p \colon E \to X$ of spaces with continuous $G$-actions;
\item  $(E,p)$ is a sheaf space of $R$-modules;
\item  $g \colon p^{-1} (x) \to p^{-1} (g x)$ 
is a map of $R$-modules for every $x\in X$ and $g\in G$.
\end{enumerate}
A map $f \colon (E,p) \to (E',p')$ of $G$-sheaves of $R$-modules over $X$ is a 
$G$-equivariant map $f \colon E \to E'$ such that $p'f=p$ 
and $f_x \colon E_x \to E'_x$ is a map of $R$-modules for each $x \in X$.  We denote the resulting category by $\Shv_{G, R}(X)$.
\end{definition}

For $U$ an open subset of $X$, we define $E(U) \coloneqq \Gamma(U,E)$ to be the set of continuous, but not-necessarily equivariant, \emph{sections} of $U$ with respect to $p$.  The \emph{stalk} at $x \in X$ is defined as 
\[
E_x \coloneqq p^{-1}(x) \cong \colim_{U \ni x}E(U),
\]
where the colimit is taken over the open subsets of $X$ containing $x$.

The stalk over any point, as a subspace of the sheaf space, is a discrete space by the local homeomorphism assumption in (2). By Ribes and Zalesskii
\cite[Lemma 5.3.1]{ribes_zalesskii}, the stabiliser of any element of a stalk  is an open subgroup of $G$. 
For this section, we will use the fact that an $R$-module $M$, equipped with the discrete topology, 
has a continuous action of a profinite group $G$ if and only if $M$ is the colimit (increasing union) of its
fixed point submodules $M^H$ as $H$ runs over the open subgroups (or, equivalently, open normal subgroups) of $G$. Such modules are called \emph{discrete} $G$-modules. We examine discrete $G$-modules in more detail in \cref{ssec:discretemodules}. 

Stalks of $G$-equivariant sheaves will play a fundamental role in this part, especially when the $G$-space $X$ is profinite, as it will be in our main application of interest. For example, Lemma \cref{lem:epicompact} is well-known non-equivariantly, however the last two of its equivalent statements rely upon the base space being profinite. 

There is a forgetful functor from $G$-equivariant sheaves of $R$ modules over $X$ to the category of sheaves
of $R$-modules over $X$, given by forgetting the $G$-actions on the sheaf space and base space.  
As one may expect, this functor has many good properties. 

\begin{lemma}\label{lem:sheafabelianbasics}
Let $X$ be a profinite $G$-space. The category of $G$-equivariant sheaves of $R$-modules over $X$, $\Shv_{G, R}(X)$, is abelian
and has all small limits and colimits.

The forgetful functor from $\Shv_{G, R}(X)$ to the category of sheaves of $R$-modules over $X$
is faithful and commutes with taking finite limits and arbitrary colimits.
Furthermore, the forgetful functor commutes with taking sections or stalks of a $G$-equivariant sheaf.
\end{lemma}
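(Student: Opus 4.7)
The plan is to reduce every assertion to the corresponding known fact for (non-equivariant) sheaves of $R$-modules on $X$ by transporting the $G$-action through each construction via the appropriate universal property. Faithfulness of the forgetful functor to $\Shv_R(X)$ is immediate from \cref{defn:eqsheaf}: a morphism of $G$-equivariant sheaves is a morphism of the underlying sheaves satisfying an extra $G$-equivariance condition, so equality of two such morphisms can be tested after forgetting the $G$-action.

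For limits and colimits, I would start with a small diagram $F\colon I \to \Shv_{G,R}(X)$ and form the underlying (co)limit $E$ in $\Shv_R(X)$ via standard sheaf-theoretic constructions. The $G$-action on the diagram then endows $E$, through the universal property, with a compatible abstract $G$-action over $X$; the substantive point is to check that this action is continuous on the sheaf space $E$. For finite limits this is essentially formal, as they are computed stalkwise inside a fibered product of the input sheaf spaces and the action is inherited componentwise. For colimits, I would decompose into coproducts (disjoint unions of sheaf spaces, on which the action is manifestly continuous) and coequalisers, the latter obtained by sheafifying a presheaf coequaliser; sheafification is naturally $G$-equivariant, so the action descends. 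Equivalently, any colimit may be written as a filtered colimit of finite colimits, and filtered colimits of sheaves of $R$-modules are computed stalkwise; on each stalk, one thus forms a filtered colimit of discrete $G$-modules (in the sense recalled before the lemma), which remains a discrete $G$-module because the class of open subgroups of $G$ is closed under the relevant constructions. Standard étalé-space arguments then promote this stalkwise continuity to continuity at the level of the sheaf space. By construction the forgetful functor preserves finite limits and arbitrary colimits, and $\Shv_{G,R}(X)$ thus inherits all small limits and colimits.

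To conclude that $\Shv_{G,R}(X)$ is abelian, observe that it is $R$-linear and additive with the pointwise $R$-module structure on sections, kernels and cokernels exist by the preceding step, and the canonical map from coimage to image is an isomorphism because it is so in $\Shv_R(X)$ and the forgetful functor is faithful and conservative on this comparison. The commutation of sections and stalks with the forgetful functor is then essentially tautological: both $\Gamma(U,E)$ and $E_x$ are defined intrinsically from the sheaf space $p\colon E \to X$, which is unchanged upon forgetting the $G$-action. The principal obstacle throughout is verifying continuity of the induced $G$-action on sheaf spaces for colimits; I would handle this cleanly by reducing to stalks and exploiting that stalks of $G$-equivariant sheaves over a profinite $G$-space are discrete $G$-modules, a property stable under filtered colimits.
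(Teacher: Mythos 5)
Your proposal is correct in outline, but it takes a genuinely different route from the paper: the paper simply cites \cite[Theorem~C and Constructions~6.2--6.3]{barnessugrue_gsheaves} for both the abelian-(co)completeness statement and the preservation properties of the forgetful functor, whereas you sketch a self-contained argument by transporting $G$-actions through the non-equivariant constructions. The faithfulness and the statements about sections and stalks are handled identically in both (immediate from the definitions), and your treatment of the abelian axioms via faithfulness of the forgetful functor is sound.

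Two remarks on the technical core of your sketch, which is exactly the content the paper outsources to the cited Constructions. First, a small imprecision: the stalk $E_x$ is a discrete $G_x$-module (for the open stabilizer $G_x$), not a discrete $G$-module, unless $x$ is $G$-fixed; your closing argument should be phrased for stalkwise $G_x$-actions. Second, and more substantively, the step ``standard \'etal\'e-space arguments promote stalkwise continuity to continuity on the sheaf space'' is doing real work. For coproducts it is manifest, and for filtered colimits one can use that the sheaf space of the colimit is the topological colimit of sheaf spaces so the induced action is continuous by the universal property of the final topology. But for coequalizers you must verify that sheafification of a $G$-equivariant presheaf yields a continuously $G$-acting \'etal\'e space; this uses the local sub-equivariance property of sections (cf.\ \cite[Corollary~A]{barnessugrue_gsheaves}, already invoked later in the paper), which in turn rests on \cite[Lemma~5.3.1]{ribes_zalesskii}. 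Spelled out carefully, this is precisely what Barnes--Sugrue's Constructions 6.2--6.3 establish, so your argument reconstructs the cited route rather than bypassing it; either path is acceptable, but yours needs those two details filled in to be complete.
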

\begin{proof}
    By \cite[Theorem C]{barnessugrue_gsheaves},  $\Shv_{G, R}(X)$ is an abelian category with all small limits and colimits. 
    The forgetful functor is faithful as maps of $G$-sheaves are in particular maps of the sheaves of $R$-modules. 
    The statement about limits and colimits follows from their constructions, see \cite[Constructions 6.2 and 6.3]{barnessugrue_gsheaves}.

    The last statement is a consequence of the defining sections of a $G$-equivariant sheaf 
    to be the not-necessarily equivariant sections. 
\end{proof}

\begin{lemma}\label{lem:epicompact}
Let $f \colon E \to F$ be a map of $G$-equivariant sheaves of abelian groups over a $G$-equivariant profinite space $X$.
The following are equivalent:
\begin{enumerate}
\item $f$ is an epimorphism.
\item $f_x$ is surjective for each $x \in X$.
\item $f(Y)$ is surjective for each compact open $Y \subseteq X$.
\end{enumerate}
Additionally, if $R=\Q$, then these conditions are equivalent to:
\begin{enumerate}
\item[(4)] $f(Y)^{G_Y \cap N}$ is surjective for each open normal subgroup $N$ of $G$ and each compact open $Y \subseteq X$.
\end{enumerate}
Moreover, for any $R$,  $f$ is an isomorphism of sheaves if and only if $f_x$ is an isomorphism for each $x \in X$.
\end{lemma}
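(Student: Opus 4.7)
The plan is to prove the chain $(1) \Leftrightarrow (2) \Leftrightarrow (3)$, then handle $(3) \Leftrightarrow (4)$ separately using rationality, and finally deduce the isomorphism statement.

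The equivalence $(1) \Leftrightarrow (2)$ together with the isomorphism characterisation is essentially formal from \cref{lem:sheafabelianbasics}: the forgetful functor to $\Shv_R(X)$ is faithful, commutes with finite limits and arbitrary colimits, and in particular with stalks. Since epimorphisms (resp.\ monomorphisms, isomorphisms) in the non-equivariant sheaf category are detected stalkwise, the same holds in $\Shv_{G,R}(X)$.

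For $(3) \Rightarrow (2)$, I would use that, since $X$ is profinite, the compact opens form a basis of neighbourhoods of any $x \in X$, whence $F_x = \colim_{Y \ni x} F(Y)$ with $Y$ ranging through compact open neighbourhoods; surjectivity on such sections therefore passes to surjectivity on stalks. For $(2) \Rightarrow (3)$, fix a compact open $Y$ and a section $s \in F(Y)$. Stalkwise surjectivity together with the definition of sections furnishes, for each $y \in Y$, a compact open neighbourhood $V_y \subseteq Y$ and $t_y \in E(V_y)$ with $f(V_y)(t_y) = s|_{V_y}$. Compactness of $Y$ allows a finite subcover, and since $X$ is profinite this can be refined to a \emph{disjoint} clopen partition $Y = Y_1 \sqcup \cdots \sqcup Y_m$ such that each $Y_k$ lies inside some $V_{y_{j(k)}}$. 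Disjointness makes the sheaf-theoretic gluing of the restrictions $t_{y_{j(k)}}|_{Y_k}$ trivial, producing the desired $t \in E(Y)$.

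For the rational equivalence $(3) \Leftrightarrow (4)$, both directions will rely on the following key technical fact: the stabiliser in $G_Y$ of any section $t \in E(Y)$ is open (and hence, since $G_Y$ is profinite, of finite index). This follows from the continuity of the $G$-action on the sheaf space $E$ together with compactness of $Y$ and the local homeomorphism structure of $E \to X$; in particular, open subgroups of $G_Y$ always contain some $G_Y \cap N$ for $N \trianglelefteq_o G$, because the latter form a neighbourhood basis of the identity in $G_Y$. Granting this, $(4) \Rightarrow (3)$ is immediate: any $s \in F(Y)$ is fixed by some $G_Y \cap N$, and $(4)$ applied to this $N$ lifts it. For $(3) \Rightarrow (4)$, given $s \in F(Y)^{G_Y \cap N}$ and any lift $t \in E(Y)$ from $(3)$, the orbit $H\cdot t$ for $H = G_Y \cap N$ is finite by the openness of its stabiliser in $H$, so the rational average
\[
\tilde t = \frac{1}{|H/\mathrm{Stab}_H(t)|}\sum_{[h]\in H/\mathrm{Stab}_H(t)} h \cdot t
\]
is a well-defined $H$-invariant element of $E(Y)$ with $f(Y)(\tilde t) = s$ (using that $s$ itself is $H$-fixed, so the averaging commutes with $f$).

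The main obstacle is verifying the openness of stabilisers of sections: this is the one place where the profinite structure of both $G$ and $X$ genuinely interact, and it is also exactly what forces the role of rational coefficients in $(4)$ via Maschke-style averaging. It is also worth noting that rationality is used \emph{only} in $(3) \Rightarrow (4)$; the rest of the argument goes through over any commutative ring $R$, which matches the placement of the $R=\Q$ hypothesis in the statement.
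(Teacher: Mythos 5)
Your proof is correct and follows essentially the same route as the paper's. The only substantive difference is at $(3) \Rightarrow (4)$: where the paper asserts exactness of $(-)^{G_Y \cap N}$ on rational discrete modules, you spell out the underlying Maschke-style averaging that proves it; both arguments hinge on the same open-stabilizer fact (that each section over a compact open is fixed by $G_Y \cap N$ for some open normal $N$), which the paper imports from an external reference and you sketch directly.
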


\begin{proof}
Non-equivariantly, the first two items are equivalent, see Tennison \cite[Theorem 3.4.8]{tenn}.
We want to apply that proof to the equivariant case. This is done by noting the following facts about the forgetful functor:
it commutes with finite limits and colimits, including those defining kernels, cokernels and images by \cref{lem:sheafabelianbasics}; 
it preserves epimorphisms, as these are characterized in terms of cokernels;
and it reflects epimorphisms as it is faithful. 

The third item implies the second as filtered colimits preserve surjections. 
By \cite[Corollary A]{barnessugrue_gsheaves}, 
we have a natural isomorphism 
\[
E(Y) \cong \colim_{i} E(Y)^{G_Y \cap N_i} 
\] 
as $N_i$ runs over the open normal 
subgroups of $G$. That is, $E(Y)$ is a discrete $G_Y$-module in the sense of \cref{def:discretemodules_general}. 
Hence, the fourth item always implies the third. 
The converse requires that we work rationally, as then taking fixed points at open subgroups is an exact functor.

It remains to prove that a stalkwise surjection is a surjection on the compact open sections.  Let $f$ be a stalkwise surjection and $Y$ a compact open subset of $X$.  Let $t \in F(Y)$ be a section and for each $x \in Y$, choose an $s_x \in E_x$ which maps to $t_x$. Choose a representative section $s_{Y,x} \in E(Y)$ for each $s_x$ using extension-by-zero to extend a section over some compact open $V$ to all of $Y$ wherever needed. As $f(s_{Y,x})$ agrees with $t$ at $x$, they agree on some compact open subset $V_x$. Varying over $x \in Y$, these cover the compact open subset $Y$, hence we may choose a finite subcover $V_1, \dots, V_n$, with corresponding section $s_1, \dots, s_n$. 

Define $Y_1=V_1$, $Y_2=V_2 \setminus Y_1$, $Y_3=V_3 \setminus (Y_1 \cup Y_2)$ and continue through to $Y_n$, so that $Y$ is the disjoint union of the $Y_i$. Restrict each section $s_i$ to $Y_i$ and extend-by-zero to $Y$, giving $s_i'$. Define $s = s_1' + \cdots + s_n'$. We see that $f(s)=t$ as they agree on each $Y_i$, where $f(s)$ takes value $f(s_i)$. 

For the statement on isomorphisms, we can detect monomorphisms of equivariant sheaves via the forgetful functor, hence the results of \cite[Subsection 3.3]{tenn} imply that a map of sheaves is a monomorphism if and only it is an injection on each stalk. Adding this fact to the earlier equivalences proves that isomorphisms of sheaves are characterized as stalkwise isomorphisms.
\end{proof}

It is high time for some examples of $G$-equivariant sheaves, the first of which is the equivariant analogue of a constant sheaf:

\begin{example}\label{ex:constantsheaf}
The simplest example of an equivariant sheaf of $R$-modules is the \emph{constant sheaf} over $X$ at an $R$-module $M$, given by the projection onto the base space $M \times X \to X$. Note that the local homeomorphism condition implies that all stalks are discrete, so that we must give $M$ the discrete topology. We will denote the constant sheaf at $M$ as $C(X,M)$.
\end{example}

\begin{remark}\label{rmk:extensionexact}
    Working non-equivariantly, recall the restriction and extension-by-zero functors of sheaves, such as in Tennison \cite[Section 3.8]{tenn}. 
    We first focus on the case where  $Y$ is a clopen subset of $X$.
    Let $E$ be a sheaf over $X$, the restriction functor at $E$ is given by taking the pullback of $E$ over the inclusion $Y \to X$.
    The group of sections of the restriction of $E$ are given by $E(U)$, for $U$ an open subset of $Y$.

    The extension-by-zero of a sheaf $F$ over $Y$ is a sheaf over $X$ whose sheaf space is given by $F \coprod ((X \setminus Y) \times \{ 0 \})$.
    The sections of this sheaf are given by $F(V \cap Y)$ for $V$ an open subset of $X$. 
    By our assumption of $Y$ being clopen, the restriction and extension-by-zero functors are both left and right adjoint to each other by \cite[Lemma 3.8.7 and Remark 3.8.13]{tenn}. 

    The definitions of these functors extend readily to the equivariant setting. Moreover, the construction generalizes to any closed subset of $X$ and the formulas for sections and the statement on adjoints remain true, see \cite[Section 7]{barnessugrue_gsheaves}.
\end{remark}

In the setting of non-equivariant sheaves, a \emph{skyscraper sheaf} is a sheaf with exactly one non-zero stalk. Equivariantly, the situation is more complicated: for $g \in G$ and $x \in X$,  the stalk $E_x$ at $x$ must be isomorphic to the stalk $E_{gx}$ at $gx$. That is, instead of having one non-zero stalk, we have one non-zero orbit of stalks. The equivariant generalization of skyscraper sheaves is constructed in \cite[Example 8.4]{barnessugrue_gsheaves}; we briefly recap it here:

\begin{example}\label{ex:skycraper} 
Choose some $x \in X$ and let $G_x$ be the corresponding stabiliser subgroup of $G$.
Choose an $R$-module $M$ with a discrete action of $G_x$. Construct the space
$G \times_{G_x} M$, thought of as a sheaf over the orbit of $x$. Extending by zero gives a 
$G$-sheaf of $R$-modules over $X$ that we call an \emph{equivariant skyscraper sheaf}. 
\end{example}

We wish to show that $\Shv_{G,R}(X)$ is a Grothendieck abelian category with enough injectives. If $M$ is an injective discrete $G_x$-object in $R$ modules, then the resulting skyscraper sheaf is injective by \cite[Proposition 9.1]{barnessugrue_gsheaves}. These injective skyscraper sheaves provide enough injectives:

\begin{lemma}[{\cite[Theorem D]{barnessugrue_gsheaves}}]
Let $R$ be a commutative ring and $X$ a profinite space. 
Then the category $\Shv_{G, R}(X)$ 
has enough injectives. The class of injective equivariant skyscraper sheaves 
give enough injectives.
\end{lemma}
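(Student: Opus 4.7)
The plan is to construct, for each $E \in \Shv_{G,R}(X)$, a monomorphism into a product of injective equivariant skyscraper sheaves, thereby realizing these sheaves as a class of enough injectives. First I would fix a set $S \subseteq X$ of representatives for the $G$-orbits. For each $x \in S$, the stalk $E_x$ is a discrete $G_x$-$R$-module, since the stabilizer $G_x$ is an open subgroup of $G$ (cf.~the discussion following \cref{defn:eqsheaf}). Using that the category of discrete $G_x$-modules in $R$-modules has enough injectives---a classical fact descending from the corresponding statement for ordinary $R[G_x]$-modules via coinduction along passage to the discrete subcategory---I would embed $E_x \hookrightarrow I_x$ into an injective discrete $G_x$-$R$-module.

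Next I would produce a canonical morphism $\eta_x : E \to \mathrm{Sky}_{[x]}(I_x)$ via the adjunction $\mathrm{Stalk}_x \dashv \mathrm{Sky}_{[x]}$ between equivariant sheaves on $X$ and discrete $G_x$-modules. This adjunction factors as the composite of two pieces: the closed-inclusion adjunction $i^* \dashv i_*$ for the closed orbit $Y := G\cdot x \subseteq X$ (where $i_*$ is the extension-by-zero functor of \cref{rmk:extensionexact}), followed by the equivalence between equivariant sheaves on $Y \simeq G/G_x$ and discrete $G_x$-modules. Under this equivalence, $i^* E$ is sent to $E_x$, and $\mathrm{Sky}_{[x]}(I_x)$ is precisely the image of $I_x$ via the inverse equivalence pushed forward by $i_*$, as in \cref{ex:skycraper}. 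The unit of the composite adjunction at $E$, postcomposed with $\mathrm{Sky}_{[x]}(E_x \hookrightarrow I_x)$, defines $\eta_x$.

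Finally I would form the product $I := \prod_{x \in S}\mathrm{Sky}_{[x]}(I_x)$ in $\Shv_{G,R}(X)$; this exists by \cref{lem:sheafabelianbasics} and is injective as a product of injective objects in an abelian category (using the injectivity of the $\mathrm{Sky}_{[x]}(I_x)$ recalled in the paragraph preceding this lemma). The universal property of the product assembles the $\eta_x$ into $\eta : E \to I$, whose kernel coincides with $\bigcap_{x \in S}\ker(\eta_x)$ as a subsheaf of $E$. To verify that $\eta$ is a monomorphism, I would check that this kernel has trivial stalks, invoking the last clause of \cref{lem:epicompact}: for any $y \in X$ lying in the orbit of $x_0 \in S$, the $y$-stalk of $\eta_{x_0}$ identifies (after translation by some $g \in G$ with $gy = x_0$) with the injection $E_{x_0} \hookrightarrow I_{x_0}$, forcing $\ker(\eta_{x_0})_y = 0$ and hence $\ker(\eta)_y = 0$. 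The main technical hurdle will be the careful setup of the orbit-stabilizer equivalence between $\Shv_{G,R}(G/G_x)$ and discrete $G_x$-modules, which is required both to define the skyscraper adjunction rigorously and to track the $G$-translations needed to identify the stalks of $\eta_x$ with the chosen injection on $E_x$.
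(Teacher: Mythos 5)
Your proof is correct and follows the standard route: embed each orbit-representative's stalk into an injective discrete $G_x$-module, assemble the resulting maps into a product of equivariant skyscraper sheaves via the stalk--skyscraper adjunction (the right adjoint $i_*$ for the closed orbit inclusion composed with the orbit--stabilizer equivalence), and verify injectivity stalkwise. The paper does not prove this lemma itself but cites it from the reference; the surrounding text (the cited fact that injective discrete $G_x$-modules give injective skyscraper sheaves) already signals that your argument is precisely what that reference carries out. One small terminological slip worth flagging: the right adjoint used to manufacture injectives in $\Mod_R^\delta(G_x)$ from injective $R[G_x]$-modules is the discretization functor $(-)^{\delta} = \colim_{U\leqslant_o G_x}(-)^U$, not coinduction; alternatively, since $\Mod_R^\delta(G_x)$ is a Grothendieck abelian category, enough injectives come for free.
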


Next, we will prove the existence of a family of generators (in the Grothendieck sense) for $\Shv_{G,R}(X)$. To do so we will construct left adjoints from categories of $R$-modules with group actions into the category of equivariant sheaves. The construction is made in terms of constant sheaves, extension-by-zero, and change of groups functors. 

\begin{definition}\label{def:sheafinduction}
Let $X$ be a profinite $G$-space and $Y$ a compact open subspace of $G$. Let $G_Y$ be the set of those $g \in G$ such that $gY \subseteq Y$. Note that, by \cite[Lemma A.1]{barnessugrue_gsheaves}, $G_Y$ is an open subgroup of $G$ as $Y$ is compact. Hence $G_Y$ has finite index in $G$. There is an adjoint pair
\[
\ind \colon 
\Shv_{G_Y, R} (Y)
\rightleftarrows 
\Shv_{G, R} (X)
\noloc 
\res
\] 
where the right adjoint, $\res$, is the composite of the forgetful functor (changing the group)
$\Shv_{G, R} (X) \to \Shv_{G_Y, R} (X)$ and  the restriction functor (changing the space)
$\Shv_{G_Y, R} (X) \to \Shv_{G_Y, R} (Y)$.

The left adjoint $\ind$ is the composite of the two left adjoints. Explicitly, one applies the extension-by-zero functor $\Shv_{G_Y, R} (Y) \to \Shv_{G_Y, R} (X)$ and then applies the induction functor 
$G \times_{G_Y} -$ to the sheaf space; this defines a functor.
\end{definition}

We use the functor $\ind$ to create a generalized version of the constant sheaves of \cref{ex:constantsheaf} 
by starting from a constant sheaf over a compact open subspace $Y$ of $X$ and applying $\ind$. 
We define a set of such sheaves in \cref{def:freeconstantsheaf} and use the $(\ind,\res)$ adjunction in \cref{lem:freesheafadjunction,lem:grothengen} to construct a set of finitely generated generators for $\Shv_{G, R} (X)$.
Recall that an object $A$ of a cocomplete abelian category $\mathcal{A}$ is said to be \emph{finitely generated} if for any  
filtered system $B \colon I \to \mathcal{A}$ whose transition maps are monomorphisms, the following canonical map is an isomorphism
\[
\colim_i \Hom(\const_{G \cdot Y}(N),  A_i) \longrightarrow \Hom(\const_{G \cdot Y}(N), \colim_i A_i).
\]

\begin{definition}\label{def:freeconstantsheaf}
    Let $Y$ be a compact open subset of a profinite $G$-space $X$, $G_Y$  the stabilizer of $Y$, and $N$ an open normal subgroup of $G$. Define $\const_{Y}(N) \in \Shv_{G_Y, R}(Y)$ to be the constant $G_Y$-sheaf at $R[G_Y/(G_Y \cap N)]$ over the base space $Y$ and $\const_{G \cdot Y}(N) \in \Shv_{G, R}(X)$ to be $\ind( \const_{Y}(N))$. 
\end{definition}

By tracing through the adjunctions (including the constant sheaf--global sections adjunction)
we see that $\const_{G \cdot Y}(N)$ represents taking $G_Y \cap N$-fixed points of the sections at $Y$:

\begin{lemma}\label{lem:freesheafadjunction}
Let $Y$ be a compact open subset of a profinite $G$-space $X$, $G_Y$  the stabilizer of $Y$, and $N$ an open normal subgroup of $G$.
The functor $\Hom(\const_{G \cdot Y}(N), -)$ from $G$-equivariant sheaves over $X$ to abelian groups is isomorphic to the functor which sends a $G$-equivariant sheaf $E$ to 
 $E(Y)^{G_Y \cap N}$.
\end{lemma}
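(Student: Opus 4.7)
The plan is to verify the lemma by composing three elementary adjunctions and then identifying the resulting Hom set with the prescribed fixed-point module. Since $\const_{G\cdot Y}(N)$ is defined as $\ind(\const_Y(N))$, and $\ind$ is left adjoint to the restriction functor $\res\colon \Shv_{G,R}(X)\to \Shv_{G_Y,R}(Y)$ introduced in \cref{def:sheafinduction}, the first step is to apply this adjunction to obtain
\[
\Hom_{\Shv_{G,R}(X)}(\const_{G\cdot Y}(N),E)\;\cong\;\Hom_{\Shv_{G_Y,R}(Y)}\bigl(\const_Y(N),\res E\bigr).
\]
Since $Y$ is $G_Y$-stable, the underlying sheaf of $\res E$ is simply the restriction of $E$ to $Y$, and its sections $E(Y)$ carry a canonical $R[G_Y]$-module structure.

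Next I would verify the constant sheaf--global sections adjunction in the $G_Y$-equivariant setting: for any discrete $R[G_Y]$-module $M$, the constant $G_Y$-sheaf at $M$ over $Y$ represents the functor $F\mapsto \Hom_{R[G_Y]}(M,F(Y))$. Non-equivariantly this is standard \cite[Proposition 3.8.5]{tenn}, and in the equivariant case one simply checks, using the explicit description of sections and the $G_Y$-action, that the natural bijection refines to an isomorphism compatible with the $G_Y$-actions on both sides. Applying this to $M=R[G_Y/(G_Y\cap N)]$ yields
\[
\Hom_{\Shv_{G_Y,R}(Y)}\bigl(\const_Y(N),\res E\bigr)\;\cong\;\Hom_{R[G_Y]}\bigl(R[G_Y/(G_Y\cap N)],E(Y)\bigr).
\]

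The final step is purely algebraic: the permutation module $R[G_Y/(G_Y\cap N)]$ is freely generated over $R[G_Y]$ by the coset of the identity, which is fixed by $G_Y\cap N$ and whose stabilizer is exactly $G_Y\cap N$. Consequently an $R[G_Y]$-linear map out of it is determined by, and corresponds bijectively to, the image of this generator, which can be any $(G_Y\cap N)$-fixed element of the target. Hence
\[
\Hom_{R[G_Y]}\bigl(R[G_Y/(G_Y\cap N)],E(Y)\bigr)\;\cong\;E(Y)^{G_Y\cap N},
\]
and composing the three isomorphisms gives the desired natural identification. None of the steps present a serious obstacle; the most subtle point is confirming that the restriction of $E$ from $X$ to $Y$ really has sections $E(Y)$ as a $G_Y$-module, but this is immediate from the fact that $Y$ is $G_Y$-stable and the definition of $\res$ as pullback along the inclusion $Y\hookrightarrow X$ combined with forgetting to the subgroup $G_Y$.
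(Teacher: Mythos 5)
Your proof is correct and follows exactly the route the paper indicates: the paper's argument is the single sentence preceding the lemma ("By tracing through the adjunctions (including the constant sheaf--global sections adjunction)\ldots"), and you have written out precisely those adjunctions. One tiny wording slip: you say $R[G_Y/(G_Y\cap N)]$ is \emph{freely} generated over $R[G_Y]$ by the trivial coset, but of course it is merely cyclically generated by an element with stabilizer $G_Y\cap N$; your very next sentence shows you know this, and the resulting identification with $E(Y)^{G_Y\cap N}$ is the intended one.
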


\begin{lemma}\label{lem:grothengen}
Let $R$ be a commutative ring and $X$ a profinite space. 
Then the set
\begin{equation}
\{ \const_{G \cdot Y}(N) \mid N \leqslant G \text{ open normal and } Y \subseteq X \text{ compact open} \}
\end{equation}
provides a family of finitely generated generators for $\Shv_{G,R}(X)$. 
\end{lemma}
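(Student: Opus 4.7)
The plan is to exploit the representability formula of \cref{lem:freesheafadjunction}, $\Hom(\const_{G \cdot Y}(N), -) \cong (-)(Y)^{G_Y \cap N}$, reducing both assertions to tractable statements about sections and fixed points.

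For the generating property, it suffices by standard Grothendieck theory to verify that the proposed collection of representing functors is jointly conservative on $\Shv_{G,R}(X)$. Suppose $E \in \Shv_{G,R}(X)$ has $E(Y)^{G_Y \cap N} = 0$ for every compact open $Y \subseteq X$ and every open normal subgroup $N$ of $G$. Since $E(Y)$ is a discrete $G_Y$-module by \cite[Corollary A]{barnessugrue_gsheaves}, we have $E(Y) = \bigcup_{N} E(Y)^{G_Y \cap N} = 0$ for every compact open $Y$. As $X$ is profinite, compact opens form a neighborhood basis at each point, so all stalks of $E$ vanish, whence $E = 0$ by the final clause of \cref{lem:epicompact}.

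For finite generation, consider a filtered system $A \colon I \to \Shv_{G,R}(X)$ with monic transitions; I must show that the canonical map
\[
\colim_{i} A_i(Y)^{G_Y \cap N} \longrightarrow (\colim_{i} A_i)(Y)^{G_Y \cap N}
\]
is an isomorphism. Using \cref{lem:sheafabelianbasics} to transfer the colimit to the underlying category of sheaves, this splits into two commutativities. First, for the quasi-compact open $Y$, sections of a filtered colimit of sheaves with monic transitions agree with the filtered colimit of sections; the standard argument being that any local section of the colimit sheaf over a compact open assembles from finitely many pieces of data coming from a single $A_j$, with agreement on overlaps guaranteed by the monicness of the transition maps. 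Second, $(-)^{G_Y \cap N}$ commutes with filtered colimits on discrete $G_Y$-modules: any representative $a_j$ of a $(G_Y \cap N)$-fixed element of the colimit is itself fixed by some open subgroup $U$ of $G_Y \cap N$ of finite index, after which one passes to a common upper bound in $I$ across the finitely many cosets of $U$ in $G_Y \cap N$ to make $a_j$ itself $(G_Y \cap N)$-invariant at some stage.

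The principal technical hurdle is the first of these two commutativities: identifying sections of a filtered colimit of sheaves over a compact open sectionwise. Although classical in the non-equivariant setting, it crucially uses the quasi-compactness of $Y$ together with the monicity of the transition maps to avoid sheafification issues, and it is precisely the profiniteness of $X$ that guarantees a plentiful supply of compact opens for this argument to apply.
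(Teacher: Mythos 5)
The finite-generation half of your argument tracks the paper's in substance, splitting $E\mapsto E(Y)^{G_Y\cap N}$ into sections over the compact open $Y$ followed by $(G_Y\cap N)$-fixed points. One small misattribution: agreement of local sections on overlaps comes from passing to a late enough stage of the filtered system, using quasi-compactness of $Y$ to reduce to finitely many overlaps, not from monicity of the transition maps; indeed \cite[Lemma 6.29.1]{stacks-project}, which the paper invokes for this step, requires no such hypothesis.

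The generating half has a genuine gap. You reduce the claim to joint conservativity on objects: $\Hom(\const_{G\cdot Y}(N),E)=0$ for all $(Y,N)$ implies $E=0$. But conservativity of a family does not imply that it generates an abelian category. Concretely, in $\Mod(\Z/4)$ the module $\Z/2$ is conservative --- if $M[2]=0$, then for any $m$ one has $2m\in M[2]=0$ and hence $m\in M[2]=0$, so $M=0$ --- yet it is not a generator, since every $\Z/4$-linear map $\Z/2\to\Z/4$ lands in $2\Z/4$, so $\Z/4$ is not a quotient of a coproduct of copies of $\Z/2$. The implication does hold when the candidate generators are projective (lift along epimorphisms), but for $\const_{G\cdot Y}(N)$ projectivity is special to $R=\Q$ (see \cref{prop:sheafprojective}) while the lemma must hold for arbitrary commutative $R$. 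Your computation does contain the needed ingredient: every germ $e\in E_x$ is represented by some $s\in E(Y)^{G_Y\cap N}$, which via \cref{lem:freesheafadjunction} gives a map $\const_{G\cdot Y}(N)\to E$ with $e$ in the image of the induced map on stalks. The correct deduction from this is that the canonical map from a coproduct of such generators onto $E$ is a stalkwise epimorphism, hence an epimorphism, which is exactly the generating property --- this is how the paper argues. The conservativity detour does not deliver it.
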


\begin{proof}
Let $E$ be a $G$-sheaf of $R$-modules over $X$ and  
choose some germ $e \in E_x$. 
By \cite[Corollary A]{barnessugrue_gsheaves},
there is a section $s \in E(Y)$ representing $e$ which is 
fixed by ${G_Y \cap N}$, for some open normal subgroup $N$ of $G$. 
That is, the set of sections $E(Y)$ has a discrete action of $G_Y$ 
in the sense of \cref{def:discretemodules_general}.
By \cref{lem:freesheafadjunction}, the map $R \to E(Y)^{G_Y \cap N}$ induced by sending the unit to $s$ induces a map of sheaves
$\const_{G \cdot Y}(N) \to E$. This map has $e$ in its image. 

It remains to show that the objects $\const_{G \cdot Y}(N)$ are finitely generated.
By \cref{lem:freesheafadjunction}, it suffices to show that the functor which sends a $G$-equivariant sheaf $E$ to 
the abelian group $E(Y)^{G_Y \cap N}$ commutes with filtered colimits of monomorphisms. 
By \cite[Lemma 6.29.1]{stacks-project}, taking sections at a compact open subset $Y$ commutes with filtered colimits.
Finally, fixed points commute with filtered colimits of monomorphisms. 
\end{proof}

Equipped with our family of generators, we are now able to prove the aforementioned result regarding $\Shv_{G,R}(X)$ being a Grothendieck abelian category.

\begin{theorem}\label{thm:gshvgrothendieck}
Let $R$ be a commutative ring and $X$ a profinite $G$-space. 
Then $\Shv_{G, R}(X)$ is a Grothendieck abelian category. 
\end{theorem}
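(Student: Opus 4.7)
The plan is to verify the three defining properties of a Grothendieck abelian category: that $\Shv_{G,R}(X)$ is abelian with all small colimits, has a generator, and satisfies axiom AB5 (filtered colimits are exact). The first two ingredients are already at hand. The abelian structure together with small limits and colimits is supplied by \cref{lem:sheafabelianbasics}. For the generator, \cref{lem:grothengen} exhibits the set $\{\const_{G\cdot Y}(N)\}$ of finitely generated generators, so taking a coproduct over this set produces a single generator. Thus the only real content is AB5.

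To prove AB5, I would reduce to the non-equivariant setting via the forgetful functor
\[
U\colon \Shv_{G,R}(X) \longrightarrow \Shv_R(X).
\]
By \cref{lem:sheafabelianbasics}, $U$ is faithful and commutes with finite limits and arbitrary colimits; in particular it preserves kernels, cokernels, and hence images, and reflects exactness of sequences. Given a filtered system of short exact sequences in $\Shv_{G,R}(X)$, applying $U$ yields a filtered system of short exact sequences of sheaves of $R$-modules on $X$. Since $\Shv_R(X)$ is a classical example of a Grothendieck abelian category, its filtered colimits are exact, so the colimit of the transported system is exact in $\Shv_R(X)$. By faithfulness of $U$ and its compatibility with colimits and finite limits, the original colimit was already exact in $\Shv_{G,R}(X)$, establishing AB5.

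A stalkwise variant of the same argument is equally available and perhaps more intuitive: by \cref{lem:epicompact} monomorphisms and epimorphisms of equivariant sheaves on $X$ are detected on stalks, and by \cref{lem:sheafabelianbasics} the stalk functor factors through $U$ and commutes with filtered colimits. On stalks a filtered colimit of exact sequences becomes a filtered colimit of exact sequences of $R$-modules with discrete $G_x$-action, which is manifestly exact. I do not anticipate any obstacle here: all of the needed structural input---stalkwise detection, compatibility of the forgetful functor with limits and colimits, and the existence of a set of generators---has been assembled in the preceding lemmas, so the argument is essentially a recombination of known facts.
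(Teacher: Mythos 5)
Your proof is correct and follows essentially the same approach as the paper: both reduce AB5 to the non-equivariant case via the faithful, exact forgetful functor, the only cosmetic difference being that the paper then passes all the way to a stalkwise criterion (extending Tennison's Theorem 3.6.5) rather than directly citing that $\Shv_R(X)$ is Grothendieck, which is precisely the "stalkwise variant" you also mention. The remaining ingredients --- abelian-with-colimits from \cref{lem:sheafabelianbasics} and generators from \cref{lem:grothengen} --- are invoked identically.
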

\begin{proof}
The category is abelian and possesses arbitrary coproducts by \cref{lem:sheafabelianbasics}.
We claim that filtered colimits preserve exact sequences. 
We follow the  method used at the start of the proof of 
\cref{lem:epicompact} and note as the forgetful functor preserves kernels and cokernels
it preserves monomorphisms and epimorphisms. As the forgetful functor is also faithful, 
it preserves and reflects exact sequences. 
Hence, we may extend \cite[Theorem 3.6.5]{tenn} to the equivariant setting and see that 
a sequence of $G$-sheaves is exact if and only if each induced sequence on stalks is exact.
As filtered colimits preserve stalks and exact sequences of $R$-modules, the claim holds. 
The final condition is to prove that there is a family of generators, which is the content of \cref{lem:grothengen}.  
\end{proof}

\subsection{The derived category of \texorpdfstring{$G$}{G}-equivariant sheaves}\label{ssec:eqsheaves_derived}
In the previous section, we proved for an arbitrary commutative ring $R$ that the category $\Shv_{G,R}(X)$ of $G$-equivariant sheaves over a profinite $G$-space $X$ is a Grothendieck abelian category. In this section we will restrict out attention to $R=\Q$ and construct the derived category of $\Shv_{G,\Q}(X)$ via model categorical techniques.

Explicitly, we will consider the category $\Ch (\Shv_{G, \Q}(X))$, with $X$ a profinite $G$-space, and equip it with a model structure which is finitely generated and monoidal, and as such the resulting $\infty$-category is a compactly generated stable symmetric monoidal $\infty$-category.

For the $\infty$-category of the model structure that we will construct to be rightly called the derived category, we will require the weak equivalences to be the quasi-isomorphisms, that is those maps $f \colon E \to F$ in $\Ch (\Shv_{G,\Q}(X))$ such that we have an isomorphism of sheaves for each homology sheaf. In practice, we will work with several definitions of quasi-isomorphisms whose equivalence is afforded to us by requiring that the space $X$ is profinite:

\begin{proposition}\label{prop:sectionsandstalks}
    Let $X$ be a profinite $G$-space and $f \colon E \to F$ a map in $\Ch (\Shv_{G,\Q}(X))$. Then the following are equivalent:
\begin{enumerate}
\item the map $f$ is a quasi-isomorphism;
\item the map  $f_x$ is a homology isomorphism for each $x \in X$;
\item the map $f(Y)$ is a homology isomorphism for each compact open $Y \subseteq X$;
\item the map  $f(Y)^{G_Y \cap N}$ is a homology isomorphism for each open normal subgroup $N$ of $G$ and each compact open $Y \subseteq X$.
\end{enumerate}
\end{proposition}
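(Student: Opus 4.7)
The plan is to mirror the structure of \cref{lem:epicompact}, establishing the chain of equivalences in the order $(1) \Leftrightarrow (2)$, $(3) \Leftrightarrow (2)$, and $(3) \Leftrightarrow (4)$. Throughout, the key point will be that, working over $\Q$ and with a profinite base space, three operations are exact on $\Shv_{G,\Q}(X)$ and hence commute with passage to homology: taking stalks, taking sections over a compact open, and taking fixed points under an open subgroup. Together with the detection property from the final statement of \cref{lem:epicompact}, these exactness statements will yield all the desired implications.

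For $(1) \Leftrightarrow (2)$, the homology sheaf $H_n(E)$ is by construction the sheafification of $U \mapsto H_n(E(U))$. Since taking stalks is a filtered colimit, hence exact, it commutes with $H_n$, giving $H_n(E)_x \cong H_n(E_x)$. By \cref{lem:epicompact}, $H_n(f)$ is an isomorphism of sheaves precisely when $H_n(f)_x$ is an isomorphism for every $x \in X$, so $f$ is a quasi-isomorphism precisely when each $f_x$ is. For $(3) \Rightarrow (2)$, I would use that $X$ profinite has a basis of compact opens, so for each $x \in X$ the compact open neighbourhoods form a cofinal subsystem of all open neighbourhoods; hence $E_x = \colim_{Y \ni x} E(Y)$ over compact opens $Y$, and homology of chain complexes of $\Q$-modules commutes with filtered colimits. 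Finally, $(3) \Leftrightarrow (4)$ will follow from \cite[Corollary A]{barnessugrue_gsheaves}: $E(Y)$ is a discrete $G_Y$-module, so $E(Y) = \colim_N E(Y)^{G_Y \cap N}$ as $N$ ranges through the open normal subgroups of $G$, yielding $(4) \Rightarrow (3)$ by commutation of homology with filtered colimits; conversely, since $G_Y/(G_Y \cap N)$ is finite and we work rationally, $(-)^{G_Y \cap N}$ is an exact functor on $\Q[G_Y]$-modules and thus commutes with $H_n$.

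The main obstacle is the implication $(2) \Rightarrow (3)$, for which I would establish the stronger assertion that, for any compact open $Y \subseteq X$, the section functor $E \mapsto E(Y)$ is exact on $\Shv_{G,\Q}(X)$. By \cref{lem:sheafabelianbasics}, the forgetful functor to non-equivariant sheaves is exact and preserves sections, which reduces the claim to the non-equivariant case. Since $Y$ is clopen in $X$, restriction to $Y$ is exact by \cref{rmk:extensionexact} (it is both a left and a right adjoint), reducing further to the exactness of $\Gamma(Y, -)$ on sheaves over the profinite space $Y$. This in turn is the classical fact that higher sheaf cohomology vanishes on profinite spaces: any open cover of $Y$ admits a finite refinement by pairwise disjoint compact opens, so \v{C}ech cohomology vanishes in positive degrees and agrees with sheaf cohomology as $Y$ is paracompact. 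Exactness of sections over $Y$ then gives $H_n(E)(Y) \cong H_n(E(Y))$, and combining this with $(2)$ and the stalk-to-sheaf detection from \cref{lem:epicompact}, we conclude that $H_n(f(Y)) = H_n(f)(Y)$ is an isomorphism for all $n$, completing the argument.
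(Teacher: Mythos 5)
Your proof is correct, and the core ingredients match the paper's: $(1)\Leftrightarrow(2)$ via homology commuting with the filtered colimit defining stalks together with the detection statement of \cref{lem:epicompact}; $(3)\Rightarrow(2)$ by writing $E_x$ as a filtered colimit of $E(Y)$ over compact opens $Y \ni x$; and $(3)\Leftrightarrow(4)$ from the discrete-module decomposition $E(Y)\cong\colim_N E(Y)^{G_Y\cap N}$ plus rational exactness of open fixed points.

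Where you diverge is in how you close the circle. The paper's chain is $(1)\Rightarrow(4)\Rightarrow(3)\Rightarrow(2)$: it establishes exactness of the composite functor $E\mapsto E(Y)^{G_Y\cap N}$ in one go, deducing left exactness from its being a right adjoint (\cref{lem:grothengen}/\cref{lem:freesheafadjunction}) and right exactness from \cref{lem:epicompact}(3) together with the rational exactness of open fixed points. You instead prove $(2)\Rightarrow(3)$ directly by showing the plain section functor $\Gamma(Y,-)$ is exact, reducing to the non-equivariant case via \cref{lem:sheafabelianbasics} and then invoking the vanishing of higher sheaf cohomology on profinite spaces (any open cover of a compact zero-dimensional Hausdorff space refines to a finite disjoint clopen cover, so \v{C}ech cohomology vanishes in positive degree and agrees with sheaf cohomology by paracompactness). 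This is a genuinely different, more classical-topological justification of the same exactness fact; it is somewhat heavier than what is needed (the paper gets by with the weaker statement that $\Gamma(Y,-)$ preserves epimorphisms, which is exactly \cref{lem:epicompact}(3)), but it is correct and makes the geometric content more visible. You then handle the fixed points as a separate step $(3)\Leftrightarrow(4)$, which cleanly isolates where rationality enters. Both routes are sound; the paper's is marginally more economical in its citations.
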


\begin{proof}
The first two points are equivalent by 
\cref{lem:epicompact} and the fact that taking homology commutes with filtered colimits, such as that defining stalks.

We claim the first condition implies the fourth. 
By \cref{lem:grothengen}, the functor sending an equivariant sheaf $E$ to the $R[G_Y/(G_Y \cap N)]$-module $E(Y)^{G_Y \cap N}$ 
is a right adjoint, hence it is  left exact. 
By \cref{lem:epicompact} evaluating at $Y$ preserves epimorphisms, as does taking fixed points for open subgroups when working rationally. 
We conclude that $E \mapsto E(Y)^{G_Y \cap N}$ 
is an exact functor, so the claim holds.

We claim the fourth condition implies the third. 
By \cite[Corollary A]{barnessugrue_gsheaves}, 
we have an isomorphism of chain complexes
\[
E(Y) \cong \colim_{i} E(Y)^{G_Y \cap N_i} 
\] 
as $N_i$ runs over the open normal 
subgroups of $G$. That is, $E(Y)$ is a chain complex of $G_Y$-modules 
that are discrete in the sense of \cref{def:discretemodules_general}. 
The analogous isomorphism
for $F$ holds and $f$ induces a commuting
square between $E(Y)$, $F(Y)$, and their colimit expressions. 
As homology commutes with these filtered colimits the claim holds. 

The third condition implies the second by writing 
$E_x$ as the filtered colimit of $E(Y)$
as $Y$ runs over those open compact $Y$ containing $x$ and commuting homology past this filtered colimit.
\end{proof}

With \cref{prop:sectionsandstalks} in hand we are now able to begin the construction of the desired model structure. The construction is given in terms of the generating sheaves $\const_{G \cdot Y}(N)$ from \cref{def:freeconstantsheaf} along with the commonly-used
algebraic ``sphere-to-disc'' and ``zero-to-disc'' cofibrations from the projective model structure on chain complexes. In fact, we shall see that our construction coincides with  the projective model structure on 
chain complexes of rational equivariant sheaves. Non-equivariantly, this model structure retrieves Hovey's flat model structure from \cite[Section 3]{hov01sheaves}. 

\begin{proposition}\label{prop:equivariantsheafmodel} 
Let $G$ be a profinite group and $X$ a profinite $G$-space. 
Then the category $\Ch (\Shv_{G,\Q}(X))$ admits a model structure where the:
\begin{itemize}
    \item weak equivalences are the quasi-isomorphisms;
    \item fibrations are the stalkwise epimorphisms. 
\end{itemize}
Moreover, the model structure is cofibrantly generated with generating sets 
\begin{align*}
I &= \{ (S^{n-1}\Q \to D^n \Q) \otimes \const_{G \cdot Y}(N) \mid n \in \Z, \ Y \subseteq X
\textnormal{ open compact}, N \trianglelefteqslant G \textnormal{ open}   \}, \\
J &= \{ (0 \to D^n \Q) \otimes \const_{G \cdot Y}(N) \mid n \in \Z, \ Y \subseteq X
\textnormal{ open compact}, N \trianglelefteqslant G \textnormal{ open}   \}, 
\end{align*} 
where $S^{n}\Q$ is the chain complex with $\Q$ is position $n$ and 0 elsewhere, and $D^{n}\Q$ is the chain complex with $\Q$ in positions $n$ and $n-1$ connected by the identity map, and $0$ elsewhere.
\end{proposition}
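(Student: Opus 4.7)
The plan is to recognize the stated structure as an instance of the standard projective model structure on chain complexes in a Grothendieck abelian category with a set of finitely generated projective generators, and then deduce it by a direct application of Kan's recognition theorem.

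The first step will be to observe that each generator $\const_{G\cdot Y}(N)$ is in fact a \emph{finitely generated projective} object of $\Shv_{G,\Q}(X)$. By \cref{lem:freesheafadjunction} the functor $\Hom(\const_{G\cdot Y}(N),-)$ is isomorphic to $E \mapsto E(Y)^{G_Y\cap N}$. Being a right adjoint this is left exact, and it is right exact on rational equivariant sheaves by \cref{lem:epicompact}(4), where the rational hypothesis is used precisely so that taking $G_Y\cap N$-fixed points of discrete $G_Y$-modules is exact. Finite generation has been verified in \cref{lem:grothengen}. Thus the family $\{\const_{G\cdot Y}(N)\}$ is a set of small projective generators of the Grothendieck abelian category $\Shv_{G,\Q}(X)$.

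The next step is to translate the lifting properties against $I$ and $J$ into termwise conditions via the adjunction $(S^{n-1}\Q \to D^n\Q)\otimes (-) \dashv \Hom(-,[\,-\,]_{n-1}\text{-like shift})$. Concretely, a map $f\colon E\to F$ lies in $I\text{-inj}$ (resp.~$J\text{-inj}$) if and only if, for every compact open $Y \subseteq X$ and every open normal $N \trianglelefteqslant G$, the induced chain map of rational vector spaces
\[
f(Y)^{G_Y\cap N}\colon E(Y)^{G_Y\cap N}\longrightarrow F(Y)^{G_Y\cap N}
\]
is a trivial fibration (resp.~fibration) in the standard projective model structure on $\Ch(\Q)$. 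Combining this with \cref{prop:sectionsandstalks} and \cref{lem:epicompact} yields
\[
I\text{-inj} = \{\text{stalkwise epi}\}\cap\{\text{quasi-iso}\},
\qquad
J\text{-inj} = \{\text{stalkwise epi}\},
\]
so in particular $I\text{-inj} = J\text{-inj}\cap W$, where $W$ denotes the class of quasi-isomorphisms. Smallness of the (co)domains of maps in $I$ and $J$ with respect to the relevant transfinite compositions follows from finite generation of $\const_{G\cdot Y}(N)$ and the compactness of evaluation at a compact open $Y$ followed by fixed points under an open subgroup, which commutes with filtered colimits in $\Shv_{G,\Q}(X)$.

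It then remains to verify that $J\text{-cell}\subseteq W$, which together with the preceding identification is the input needed for Kan's recognition theorem \cite[Theorem 2.1.19]{hov99} to deliver the cofibrantly generated model structure with generating (trivial) cofibrations $I$ and $J$. The elements of $J$ are degreewise split monomorphisms with contractible cokernel, so each pushout of a map in $J$ is a stalkwise monomorphism whose stalkwise cokernel is contractible; applying exactness and compatibility with filtered colimits of the stalk functors, transfinite compositions of such pushouts remain stalkwise quasi-isomorphisms and therefore quasi-isomorphisms by \cref{prop:sectionsandstalks}. This verification is the step where rationality enters in an essential way, since only in characteristic zero do we know the fixed-point functors used to detect quasi-isomorphisms are exact; in my view it is the main technical point of the argument, the rest being a mechanical unwinding of the adjunctions. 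Finally, the identification of fibrations with stalkwise epimorphisms (equivalently the maps in $J\text{-inj}$) follows from step two, which closes the proof.
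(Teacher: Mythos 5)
Your proposal is essentially the same argument as the paper's proof: both invoke Hovey's recognition theorem \cite[Theorem 2.1.19]{hov99}, both get smallness from the finite generation established in \cref{lem:grothengen}, both use \cref{lem:freesheafadjunction} to translate the right-lifting properties against $I$ and $J$ into conditions on the chain complexes $E(Y)^{G_Y\cap N}$, both convert those conditions into stalkwise ones via \cref{lem:epicompact} and \cref{prop:sectionsandstalks}, and both verify $J\text{-cell}\subseteq W$ by observing that stalks commute with pushouts and filtered colimits. Two small remarks: your opening observation that the generators $\const_{G\cdot Y}(N)$ are projective is correct but is not needed for this recognition argument (it is established separately as \cref{prop:sheafprojective}); and you locate the essential use of rationality in the $J\text{-cell}\subseteq W$ step, whereas it actually enters one step earlier, in the equivalences of \cref{lem:epicompact}(4) and \cref{prop:sectionsandstalks}(4) that are needed to identify $I\text{-inj}$ and $J\text{-inj}$ with stalkwise conditions in the first place — the $J\text{-cell}$ argument itself only uses stalk functors, which commute with colimits over any ring.
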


\begin{proof}
We follow Hovey's recognition theorem \cite[Theorem 2.1.19]{hov99}. 
The smallness conditions follow from the finiteness statement of \cref{lem:grothengen} as the generating cofibrations
and acyclic cofibrations are degreewise monomorphisms.
The two-out-of-three and retraction conditions for weak equivalences hold as they are quasi-isomorphisms. 

By direct examination, one observes that the maps with the right-lifting property with respect to the set $I$ are exactly those $f \colon E \to F$ such that
\[
f(Y)^N \colon E(Y)^{G_Y \cap N} \to F(Y)^{G_Y \cap N}
\]
are epimorphisms and homology isomorphisms for all $G$-invariant compact open $Y$ and all open normal subgroups $N$ of $G$.

Similarly, the maps with the right-lifting property
with respect to $J$ are exactly those $f \colon E \to F$ such that
\[
f(Y)^N \colon E(Y)^{G_Y \cap N} \to F(Y)^{G_Y \cap N}
\]
are epimorphisms.  By \cref{lem:epicompact} and \cref{prop:sectionsandstalks}, 
this gives the required conditions on the fibrations and acyclic fibrations. 

As the elements of $J$ are cofibrations, all that is left to show is
that transfinite compositions of pushouts of elements of $J$ are quasi-isomorphisms. Via \cref{prop:sectionsandstalks}, this follows from the fact that taking pushouts and filtered colimits commute with taking stalks. 
\end{proof}

The next result tells us that the model structure of \cref{prop:equivariantsheafmodel} 
is simply the usual projective model structure in disguise.

\begin{proposition}\label{prop:sheafprojective}
    Let $G$ be a profinite group and $X$ a profinite $G$-space. Then the category $\Shv_{G,\Q}(X)$ has enough projectives and the model structure of \cref{prop:equivariantsheafmodel} is the projective model structure on rational $G$-sheaves. Hence, a map is a cofibration if and only if it is a degreewise split monomorphism with cofibrant cokernel. 
    Moreover, a bounded below complex of projectives is cofibrant.     
\end{proposition}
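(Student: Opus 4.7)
The plan is to first establish that each generating sheaf $\const_{G\cdot Y}(N)$ from \cref{def:freeconstantsheaf} is projective in $\Shv_{G,\Q}(X)$. By \cref{lem:freesheafadjunction}, the functor $\Hom(\const_{G\cdot Y}(N),-)$ identifies with the composite of the sections functor $E \mapsto E(Y)$ on the compact open $Y$ with the $G_Y\cap N$-fixed points functor. The equivalence of items (1) and (3) in \cref{lem:epicompact} says that $\Gamma(Y,-)$ preserves epimorphisms; it also preserves finite limits as a right adjoint, so it is exact. As $G_Y/(G_Y\cap N)$ is a finite group, rationally its fixed-point functor is exact as well. Hence $\Hom(\const_{G\cdot Y}(N),-)$ is exact and $\const_{G\cdot Y}(N)$ is projective. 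Combined with \cref{lem:grothengen}, this exhibits $\Shv_{G,\Q}(X)$ as a Grothendieck abelian category with a set of finitely generated projective generators, and in particular with enough projectives.

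Next I would identify the model structure of \cref{prop:equivariantsheafmodel} with the projective model structure on chain complexes. By \cref{lem:epicompact}, a map of sheaves is a stalkwise epimorphism precisely when it is an epimorphism in $\Shv_{G,\Q}(X)$; hence the fibrations in \cref{prop:equivariantsheafmodel} are exactly the degreewise epimorphisms in $\Ch(\Shv_{G,\Q}(X))$. The generating cofibrations and acyclic cofibrations listed there are built by tensoring the projective generators with the standard $(S^{n-1}\Q\to D^n\Q)$ and $(0\to D^n\Q)$, which are precisely the generators one uses to produce the projective model structure from a set of small projective generators. Thus the two model structures coincide.

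The cofibration characterization now follows from standard arguments for the projective model structure. Every cofibration is a retract of a transfinite composition of pushouts of generating cofibrations $(S^{n-1}\Q\to D^n\Q)\otimes \const_{G\cdot Y}(N)$; each such map is a degreewise split monomorphism with projective (and hence cofibrant) cokernel, and both properties are stable under pushouts, transfinite compositions, and retracts. Conversely, given a degreewise split monomorphism $f\colon A\to B$ with cofibrant cokernel $C$, one writes $f$ as $A\to A\oplus C \xrightarrow{\sim} B$ and then expresses $C$ cellularly from the generating cofibrations to exhibit $f$ in the required form. Finally, a bounded below complex $P$ of projectives is cofibrant by the standard cell induction: one builds $0\to P$ by successively attaching the cells of each $P_n$ via pushouts of generating cofibrations, after writing $P_n$ as a retract of a sum of generators $\const_{G\cdot Y}(N)$.

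The main obstacle is the exactness of $\Gamma(Y,-)$ for $Y$ compact open, which is where the profinite hypothesis on $X$ is essential via the non-trivial direction of \cref{lem:epicompact}; once projectivity of the generators is established, the remaining claims are routine unwindings of the projective model structure on chain complexes of a Grothendieck abelian category with a generating set of compact projectives.
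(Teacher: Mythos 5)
Your overall strategy matches the paper's: show the generators $\const_{G\cdot Y}(N)$ are projective, combine with \cref{lem:grothengen} to get a set of enough small projectives, and then conclude the model structure of \cref{prop:equivariantsheafmodel} is the projective model structure. Your justification of projectivity (exactness of $\Gamma(Y,-)$ via \cref{lem:epicompact}, plus rational exactness of fixed points for the open subgroup $G_Y\cap N$) is exactly the argument the paper alludes to when it says projectivity follows ``by the same arguments as for \cref{lem:epicompact} and \cref{prop:sectionsandstalks}''.

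Where you diverge is in deriving the cofibration characterization and the bounded-below cofibrancy statement yourself, rather than invoking a reference: the paper simply cites Christensen--Hovey \cite[Lemma 2.7 and Theorem 5.7]{ch02}, for which the hypotheses (enough small projectives) are what one actually has to verify. Your inline derivation has two soft spots. First, the stability of ``cofibrant cokernel'' under transfinite composition is not automatic from stability of the other properties; what is actually true is that the cokernel of a relative cell complex is itself a cell complex, hence cofibrant, which is a slightly different argument. Second, and more seriously, the converse direction is not correct as written: if $f\colon A\to B$ is a degreewise split monomorphism with cokernel $C$, then $B$ is only isomorphic to $A\oplus C$ as a \emph{graded} object, not as a chain complex --- the differential on $B$ generally twists, so the factorization $A\to A\oplus C\xrightarrow{\sim} B$ does not exist in $\Ch(\Shv_{G,\Q}(X))$. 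Recovering the cofibration property from this data requires the more careful filtration/twisted-differential argument that is the content of the Christensen--Hovey result. In short: the substantive input (projectivity of the generators, smallness, and the identification of epimorphisms via \cref{lem:epicompact}) is correct and is exactly what the paper checks; the remaining claims should be obtained by citation rather than the sketched argument, which would need repair.
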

\begin{proof}
    We want to apply the work of Christensen--Hovey, specifically \cite[Lemma 2.7 and Theorem 5.7]{ch02}.  
    In the language of that paper, we have to show that the set of equivariant sheaves 
    $\const_{G \cdot Y}(N)$, as $N$ runs over the open normal subgroups of $G$ and $Y$ runs
    over the compact open subsets of $X$, is a set of enough small projectives.

    The sheaves $\const_{G \cdot Y}(N)$ are projective by the same arguments as for \cref{lem:epicompact} and  \cref{prop:sectionsandstalks}.  
    \cref{lem:grothengen} shows there are enough such objects and that they are finitely generated, which implies they are small in the required sense.
\end{proof}

\begin{remark}
Non-equivariantly, the model structure we have constructed is exactly the 
flat model structure of \cite[Theorem 3.2]{hov01sheaves}. The generating sets of the reference are 
strictly larger than the sets we give. The additional maps are constructed from the following set of monomorphisms,
where $C(Y,\Q)$ is the constant sheaf at $\Q$ supported over $Y$: 
\[
\{ C(V,\Q) \to C(Y,\Q) \mid \textrm{ $V \subseteq Y$ clopen} \}.
\]
Using that the complement of a clopen subset in a profinite space is clopen, these maps can be identified as the direct sum of an identity map and an initial map
\[
\left( C(V,\Q) \to C(V,\Q) \right) 
\oplus
\left( 0 \to C(Y \setminus V,\Q) \right).
\]
It follows that, in the case of a profinite base space and working non-equivariantly, the flat model structure and the projective 
model structure agree.
\end{remark}

We will now establish some properties of the model structure of \cref{prop:equivariantsheafmodel}.

Recall that the category of sheaves of $\Q$-modules over $X$ has a tensor product given by 
the sheafification of the presheaf tensor product over the constant sheaf at $\Q$.
That is, for sheaves $E$ and $F$, we have the presheaf which at $Y$ takes value $E(Y) \otimes_{C(Y,\Q)} F(Y)$. The sheafification of this presheaf is denoted $E \otimes F$. The unit for this monoidal structure is the constant sheaf at $\Q$. 
 
This is a closed monoidal product, with hom-object $\underline{\hom}(-,-)$ given by 
\[
\underline{\hom}(E,F)(Y) = \Hom_{C(U,\Q)}(E_{|Y},F_{|Y}), 
\]
where the last term denotes the set of not necessarily equivariant maps of 
$C(Y,\Q)$-modules over $Y$ between the restrictions of $E$ and $F$.

This closed monoidal product extends to chain complexes in the usual manner, and is moreover homotopically meaningful. To prove this we require an auxiliary lemma:

\begin{lemma}\label{lem:tensorprojective}
Let $N,M$ be open normal subgroups of $G$, and $Y,V \subseteq X$ compact open subsets.
Then $\const_{G \cdot Y}(N) \otimes \const_{G \cdot V}(M)$ is projective.
\end{lemma}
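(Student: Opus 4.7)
The plan is to show that $\const_{G\cdot Y}(N)\otimes\const_{G\cdot V}(M)$ is a finite direct sum of generators from \cref{def:freeconstantsheaf} (or at least a direct summand of such a finite sum). Each such generator is projective by \cref{prop:sheafprojective}, and (finite direct summands of) finite direct sums of projectives are projective, so this suffices.

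The key technical tool is a projection formula for the induction functor $\ind\colon\Shv_{G_Y,\Q}(Y)\to\Shv_{G,\Q}(X)$ from \cref{def:sheafinduction}: for any $G_Y$-sheaf $A$ on $Y$ and any $G$-sheaf $E$ on $X$, I claim there is a natural isomorphism
\begin{equation*}
\ind(A)\otimes E \xrightarrow{\ \sim\ } \ind(A\otimes \res E),
\end{equation*}
where $\res$ is the restriction functor right adjoint to $\ind$. Since $\ind$ is the composite of extension-by-zero (a geometric operation) with the finite induction $G\times_{G_Y}(-)$ (indexed by the finite coset space $G/G_Y$, using that the open subgroup $G_Y$ of the profinite group $G$ has finite index), this can be verified stalkwise using \cref{lem:sheafabelianbasics} and the fact that stalks commute with tensor products. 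Applied with $A=\const_Y(N)$ and $E=\const_{G\cdot V}(M)$, this yields
\begin{equation*}
\const_{G\cdot Y}(N)\otimes\const_{G\cdot V}(M)\cong\ind\bigl(\const_Y(N)\otimes_Y\res\const_{G\cdot V}(M)\bigr).
\end{equation*}

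It remains to analyse the $G_Y$-sheaf on $Y$ inside the brackets. The restriction $\res\const_{G\cdot V}(M)$ is supported on the compact open subset $Y\cap(G\cdot V)$, which decomposes as a finite disjoint union of compact open $G_Y$-orbits (using that $G_V$ is an open subgroup of finite index, so $G\cdot V$ is a finite union of compact open translates, and that compact opens in a profinite space form a Boolean algebra). On each such $G_Y$-orbit $G_Y\cdot W_i$, the sheaf is the extension-by-zero of a constant sheaf with value a finite-dimensional rational representation of the stabilizer $G_{W_i}$; this representation factors through a finite quotient of $G_{W_i}$ and so is a direct summand of a finite sum of regular representations of the form $\Q[G_{W_i}/(G_{W_i}\cap L)]$ for open normal $L\trianglelefteqslant G$. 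Tensoring with the constant sheaf $\const_Y(N)$ preserves this structure (simply tensoring the value representations with $\Q[G_Y/(G_Y\cap N)]$), and then applying $\ind$ returns, by the unwinding of \cref{def:sheafinduction,def:freeconstantsheaf}, a direct summand of a finite direct sum of generators $\const_{G\cdot W_i}(L_i)$. The main obstacle is the careful verification of the projection formula above and the tracking of the $G_Y$-equivariant structures through the orbit decomposition; once these are in place, projectivity is immediate.
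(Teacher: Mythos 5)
Your approach is genuinely different from the paper's and the two are worth comparing. The paper avoids any explicit decomposition: using the adjunction $\ind\dashv\res$ and \cref{lem:freesheafadjunction} it identifies $\Hom(\const_{G\cdot Y}(N)\otimes B, C)$ with $\Hom(B_{|Y},C_{|Y})$, reduces projectivity of $\const_{G\cdot Y}(N)\otimes B$ to showing $\Hom(B_{|Y},(-)_{|Y})$ preserves epimorphisms, and closes by observing that restriction preserves both epimorphisms (stalkwise criterion) and projectives (since restriction is a two-sided adjoint of induction when the subgroup/subspace is of finite index/clopen). Crucially, this proves the stronger statement that $\const_{G\cdot Y}(N)\otimes B$ is projective for \emph{any} projective $B$, with the lemma then being the special case $B=\const_{G\cdot V}(M)$. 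Your route instead aims for an explicit structural result: $\const_{G\cdot Y}(N)\otimes\const_{G\cdot V}(M)$ should be a direct summand of a finite sum of generators. The projection formula step is sound --- both constituents of $\ind$ (extension by zero along a clopen, and $\Q[G]\otimes_{\Q[G_Y]}(-)$ for the finite-index open subgroup $G_Y$) satisfy projection formulas, and it composes.

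The gap you flag yourself is real and nontrivial. Decomposing the $G_Y$-sheaf $\res\const_{G\cdot V}(M)=\res_{G_Y}^{G}\ind_{G_V}^{G}(\cdots)$ into pieces attached to $G_Y$-translates is, in disguise, a Mackey double-coset formula for the composite $\res\circ\ind$ in the category of equivariant sheaves over a profinite base, mediated by the decomposition of $Y\cap(G\cdot V)$ into $G_Y$-invariant compact-open pieces. Nothing of this sort is established in the paper, and the interaction between disjointifying compact opens (to obtain a genuine disjoint union) and keeping the $G_Y$-action coherent is delicate: a naive disjointification of the finitely many $Y\cap gV$ is not $G_Y$-equivariant, so one needs to disjointify $G_Y$-equivariantly, respecting orbits of the finite $G_Y$-set $\{Y\cap gV\}_{g\in G/G_V}$. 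After that one still needs to identify each piece, tensored with $\res\const_Y(N)$, as a summand of a sum of $\const_W(L)$'s, which requires a further Maschke-type splitting using $\Q$-coefficients. All of this is plausible but is considerably more infrastructure than the paper invokes, and it is the kind of thing that would need its own lemma. If your goal were to show the stronger assertion that the generators $\const_{G\cdot Y}(N)$ are closed under tensor product up to summands --- which has independent interest, for instance in establishing rigidity of the compact objects --- then your route is the natural one; but to prove the lemma as stated, the paper's argument is both shorter and proves more.
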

\begin{proof}
We first reduce the problem to the behaviour of the restriction functor (restricting the space and the group) and projective objects. 
Consider the following sequence of isomorphisms of $\Q$-modules, for any $G$-sheaves $B$ and $C$
\begin{align*}
\Hom(\const_{G \cdot Y}(N) \otimes B, C) 
& \cong \Hom(\const_{G \cdot Y}(N) , \underline{\hom} (B, C) ) \\
& \cong \underline{\hom} (B, C)(Y)^{G_Y \cap N} \\
& \cong \Hom (B_{|Y}, C_{|Y}).
\end{align*}
The last term is the $\Q$-module of maps of $G_Y \cap N$-sheaves over $Y$ from 
$B_{|Y}$ to $C_{|Y}$.

The $G$-sheaf $\const_{G \cdot Y}(N) \otimes B$ is projective if and only if $\Hom(\const_{G \cdot Y}(N) \otimes B, -)$ preserves epimorphisms. 
By the above, this is equivalent to the functor 
\[
\Hom (B_{|Y}, (-)_{|Y})
\]
from $G_Y \cap N$-sheaves over $Y$ to $\Q$-modules  
preserving epimorphisms for all open compact $Y \subseteq X$ and all open normal subgroups $N$.
The restriction functor from $G$-sheaves over $X$ to $G_Y \cap N$-sheaves over $Y$ preserves epimorphisms 
as these are characterized as the stalkwise surjections by \cref{lem:epicompact}.
Thus, it suffices to prove that
the restriction functor from $G$-sheaves over $X$ to $G_Y \cap N$-sheaves over $Y$ preserves projective objects (so that $B_{|Y}$ will be projective if $B$ is). 

This follows from noting that the restriction functor is in fact both left and right adjoint to induction,
so both are exact functors. We have seen that restriction and extension-by-zero functors are both left and 
right adjoint to each other (when restricting to a compact open subset)  in 
 \cref{rmk:extensionexact}. That the restriction of groups is \emph{left adjoint} to the induction functor uses the fact that 
$G \times_{G_Y \cap N} M$ is a finite direct sum of copies of $M$, with the action of $G$ permuting the factors.
Since the category is additive, the direct sum is both a product and a coproduct.
\end{proof}

\begin{proposition}\label{prop:monoidalsheafmodel}
The model structure of  \cref{prop:equivariantsheafmodel}  is a  monoidal model structure
and the unit is cofibrant.
\end{proposition}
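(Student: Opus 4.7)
The plan is to verify the two axioms of a monoidal model category in the sense of Hovey: the pushout-product axiom and cofibrancy of the unit (which makes the unit axiom automatic). Both reduce to checks on the generating (acyclic) cofibrations, together with the characterizations of cofibrations and projectivity already established in \cref{prop:sheafprojective} and \cref{lem:tensorprojective}.

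For cofibrancy of the unit, the monoidal unit of $\Ch(\Shv_{G,\Q}(X))$ is the constant sheaf $C(X,\Q)$ placed in chain degree zero. Since $X$ is profinite, it is itself a $G$-invariant compact open subset of itself with $G_X = G$, and inspecting \cref{def:freeconstantsheaf} identifies $C(X,\Q)$ with $\const_{G \cdot X}(G)$. This is one of the generating projective sheaves from the proof of \cref{prop:sheafprojective}, so viewed in degree zero it is a bounded below complex of projectives, which is cofibrant by \cref{prop:sheafprojective}.

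The pushout-product axiom is cofibrantly generated, so it suffices to check it on the generating sets $I$ and $J$. Each generator is of the form $\sigma \otimes P$ with $\sigma$ a generating (acyclic) cofibration in the standard projective model structure on $\Ch(\Q)$ and $P = \const_{G \cdot Y}(N)$. Using that the tensor product on $\Ch(\Shv_{G,\Q}(X))$ is closed, symmetric monoidal, and bilinear, a direct computation using associativity, symmetry, and cocontinuity of $- \otimes (P_1 \otimes P_2)$ yields a canonical identification
\[
(\sigma_1 \otimes P_1) \mathbin{\square} (\sigma_2 \otimes P_2) \;\cong\; (\sigma_1 \mathbin{\square} \sigma_2) \otimes (P_1 \otimes P_2).
\]
The chain-level pushout-product $\sigma_1 \mathbin{\square} \sigma_2$ is a degreewise split monomorphism of $\Q$-vector spaces with bounded below free cokernel, as one checks directly on the sphere-to-disc and zero-to-disc generators. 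Tensoring with the sheaf $P_1 \otimes P_2$ is additive, hence preserves split monomorphisms; its cokernel is the tensor product of the cokernel of $\sigma_1 \mathbin{\square} \sigma_2$ with $P_1 \otimes P_2$, which is projective by \cref{lem:tensorprojective}, giving a bounded below complex of projective sheaves. By \cref{prop:sheafprojective}, $(\sigma_1 \mathbin{\square} \sigma_2) \otimes (P_1 \otimes P_2)$ is therefore a cofibration.

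The main step will be to verify the acyclicity of the pushout-product when one of the $\sigma_i$ is a generating acyclic cofibration. In this case $\sigma_1 \mathbin{\square} \sigma_2$ is an acyclic cofibration in $\Ch(\Q)$, i.e., a quasi-isomorphism of chain complexes of $\Q$-vector spaces. The sheaf tensor product commutes with stalks: indeed, the tensor product of sheaves is the sheafification of the presheaf tensor product over the constant sheaf $C(-,\Q)$, stalks commute with sheafification and with filtered colimits of sections, and $C(-,\Q)_x = \Q$. Hence at every $x \in X$, the stalk of $(\sigma_1 \mathbin{\square} \sigma_2) \otimes (P_1 \otimes P_2)$ is $(\sigma_1 \mathbin{\square} \sigma_2) \otimes_{\Q} (P_1 \otimes P_2)_x$. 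Since $(P_1 \otimes P_2)_x$ is a $\Q$-vector space, tensoring with it is exact and preserves quasi-isomorphisms, so the pushout-product is a stalkwise quasi-isomorphism, which by \cref{prop:sectionsandstalks} is a weak equivalence in $\Ch(\Shv_{G,\Q}(X))$. This last step is where the rationality assumption is used in an essential way, and completes the verification of the monoidal model structure.
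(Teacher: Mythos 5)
Your proposal is correct and takes essentially the same route as the paper: reduce to the generating sets, exploit the identity $(\sigma_1\otimes P_1)\mathbin{\square}(\sigma_2\otimes P_2)\cong(\sigma_1\mathbin{\square}\sigma_2)\otimes(P_1\otimes P_2)$, and conclude cofibrancy from \cref{lem:tensorprojective} together with \cref{prop:sheafprojective}. The one divergence is the acyclicity check: the paper simply observes that for $j=(0\to D^m\Q)\otimes\const_{G\cdot V}(M)$ the pushout product $i\mathbin{\square} j$ carries a tensor factor of $D^m\Q$, so source and target are both contractible and the map is trivially a quasi-isomorphism, whereas you argue stalkwise using flatness of $\Q$-vector spaces; both are valid, but the paper's version avoids the stalk computation and does not actually use rationality at this step — the essential use of rationality in the whole proof is in the projectivity of the generators $\const_{G\cdot Y}(N)$, not in the acyclicity of the pushout product.
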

\begin{proof}
The unit is the constant sheaf at $\Q$, $C(X, \Q)$, which is a bounded below complex of 
projective objects and hence is cofibrant by \cref{prop:sheafprojective}.

To prove that the model structure is monoidal, it suffices to prove that: 
\begin{enumerate}
\item the pushout product $i \square i'$ is a cofibration for any generating cofibrations $i, i'$;
\item the pushout product $i \square j$ is a weak equivalence 
    for any generating cofibration $i$ and any generating acyclic cofibration $j$. 
\end{enumerate}

Write $i = \const_{G \cdot Y}(N) \otimes i_n$ for $i_n \colon S^{n-1}\Q \to D^n\Q$, some $n \in \Z$, 
some open normal subgroup $N$ and some compact open $Y \subseteq X$.
Similarly, write $i' = \const_{G \cdot V}(M) \otimes i_m$. Then 
\[
i \square i' = \const_{G \cdot Y}(N) \otimes \const_{G \cdot V}(M) \otimes (i_n \square i_m).
\]
By \cref{lem:tensorprojective}, $\const_{G \cdot Y}(N) \otimes \const_{G \cdot V}(M)$ is projective, 
hence the map $i \square i'$ is a cofibration 
by \cref{prop:sheafprojective}. 
Similarly, let $j = \const_{G \cdot V}(M) \otimes j_m$ for $j_m \colon 0 \to D^m\Q$. Then
\[
i \square j = \const_{G \cdot Y}(N) \otimes \const_{G \cdot V}(M) \otimes D^m\Q \otimes i_n
\]
which is a cofibration by the preceding argument. It is a quasi-isomorphism as all terms are tensored with $D^m \Q$.  
\end{proof}

\begin{definition}\label{def:derivedeqsheaves}
    Let $G$ be a profinite group and $X$ a profinite $G$-space. The \emph{derived category of $G$-equivariant sheaves on $X$}, denoted $\sfD(\Shv_{G,\Q}(X))$ is the symmetric monoidal $\infty$-category associated to the model structure of \cref{prop:monoidalsheafmodel}.
\end{definition}

By construction we have that $\sfD(\Shv_{G,\Q}(X))$ is stable. In the remainder of this subsection we will prove that the derived category is compactly generated in the $\infty$-category sense. 

\begin{proposition}\label{prop:gsheafhocompact}
The projective model structure on  $\Ch (\Shv_{G,\Q}(X))$ is finitely generated in the sense of  \cite[Section 7.4]{hov99}. As such, the $\infty$-category associated to this model structure is compactly generated. 
A set of generators is given by (suspensions of) the $G$-sheaves  $\const_{G \cdot Y}(N)$ from \cref{def:freeconstantsheaf} as $N$ runs over the open normal subgroups of $G$ and $Y$ runs over the compact open subsets of $X$. 
\end{proposition}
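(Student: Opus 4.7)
The plan is to apply Hovey's criteria \cite[Section 7.4]{hov99}, which require verifying that the domains and codomains of the generating sets $I$ and $J$ from \cref{prop:equivariantsheafmodel} are $\omega$-small (and in fact ``finite'') relative to $I$, and then to translate this finiteness into compact generation of the associated $\infty$-category. The strategy divides into three steps: first, verify the smallness hypothesis for the model structure; second, identify the $\const_{G\cdot Y}(N)$ and their shifts as compact objects in $\sfD(\Shv_{G,\Q}(X))$; and third, verify that this set of compacts jointly detects zero objects.

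For the smallness step, the key input is \cref{lem:grothengen}, which establishes that each $\const_{G\cdot Y}(N)$ is finitely generated in the underlying abelian category, in the sense that $\Hom(\const_{G\cdot Y}(N),-)$ preserves filtered colimits of monomorphisms. By \cref{prop:sheafprojective}, the cofibrations of our model structure are degreewise split monomorphisms with cofibrant cokernel, so that transfinite compositions of pushouts of maps in $I$ reduce, in each chain degree, to filtered colimits of monomorphisms of sheaves. Since the (co)domains of $I$ and $J$ are bounded complexes supported in one or two adjacent degrees on a single $\const_{G\cdot Y}(N)$, the finite-generation property on the sheaf level upgrades to the required $\omega$-smallness relative to $I$-cells. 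The evident finite-cell shape of these (co)domains then verifies the hypotheses of \cite[Section 7.4]{hov99}.

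For the compact generation of the underlying stable $\infty$-category, I would invoke the standard principle that a finitely generated stable model category in Hovey's sense gives rise to a compactly generated $\infty$-category, with compact generators provided by (cofibrant replacements of) the (co)domains of $I$. In our situation, each $\const_{G\cdot Y}(N)$, viewed as a chain complex concentrated in degree zero, is already cofibrant by \cref{prop:sheafprojective} (being a bounded-below complex of projective sheaves), and the (co)domains of $I$ are, up to suspension, built from these via finite cell attachments. It follows that the set $\{\Sigma^n \const_{G\cdot Y}(N)\}$ lies in the subcategory of compact objects of $\sfD(\Shv_{G,\Q}(X))$.

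Finally, to see these compacts generate, we identify the derived mapping space out of $\const_{G\cdot Y}(N)$ by combining \cref{lem:freesheafadjunction} with the exactness of fixed points of open subgroups on rational modules: for any $E \in \Ch(\Shv_{G,\Q}(X))$, the $n$-th homotopy group of $\Map(\const_{G\cdot Y}(N), E)$ is $H_n(E(Y)^{G_Y\cap N})$. By the characterization of quasi-isomorphisms in \cref{prop:sectionsandstalks}, the vanishing of all such groups for varying $(Y,N,n)$ is equivalent to $E$ being zero in the derived category. This joint conservativity together with compactness gives the desired compact generation. I expect the main obstacle to lie in the first step, namely cleanly extracting the required $\omega$-smallness at the chain-complex level from the abelian-level finite generation of \cref{lem:grothengen}, without implicit appeal to conditions (such as exactness of the forgetful functor or splitting of cofibrations) that are only available because we are working rationally.
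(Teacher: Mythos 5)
Your proof is correct and follows essentially the same approach as the paper: both rest on \cref{lem:grothengen} to establish smallness (finite generation in Hovey's sense), observing that cofibrations are degreewise monomorphisms, and both conclude compact generation by the standard translation to the associated $\infty$-category with generators the (co)domains/cofibers of $I$. The only cosmetic difference is that you unpack the generation step via \cref{lem:freesheafadjunction} and \cref{prop:sectionsandstalks}, whereas the paper simply cites \cite[Corollary 7.4.4]{hov99}.
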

\begin{proof}
As the generating cofibrations are degreewise monomorphisms, \cref{lem:grothengen} implies that this
model structure is finitely generated. 
By \cite[Corollary 7.4.4]{hov99}, a set of compact generators for the 
homotopy category, and hence the associated $\infty$-category, can be obtained by considering the cofibers of the generating cofibrations
of \cref{prop:equivariantsheafmodel}. 
Thus, the set (of suspensions) of the objects $\const_{G \cdot Y}(N)$ as $N$ runs over the open normal subgroups of $G$ and 
$Y$ runs over the compact open subsets of $X$ forms a set of compact generators, as claimed. 
\end{proof}

We summarise the results of this section in the following theorem.

\begin{theorem}\label{cor:sheavescompgen}
    Let $G$ be a profinite group and $X$ a profinite $G$-space. Then the derived category of rational $G$-equivariant sheaves over $X$, $\sfD(\Shv_{G,\Q}(X))$, is a compactly generated symmetric monoidal stable $\infty$-category.
\end{theorem}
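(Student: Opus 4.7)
The plan is simply to assemble the results established earlier in this section, since the statement is a summary theorem. First I would invoke \cref{prop:equivariantsheafmodel}, which produces a cofibrantly generated model structure on $\Ch(\Shv_{G,\Q}(X))$ whose weak equivalences are the quasi-isomorphisms and whose fibrations are the stalkwise epimorphisms, with explicit generating (acyclic) cofibrations built from the $\const_{G\cdot Y}(N)$ of \cref{def:freeconstantsheaf} tensored with the standard sphere-to-disc and zero-to-disc maps.

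Next I would apply \cref{prop:monoidalsheafmodel} to conclude that this model structure is monoidal for the sheaf tensor product and that the unit $C(X,\Q)$ is cofibrant. Passing to the underlying $\infty$-category $\sfD(\Shv_{G,\Q}(X)) \coloneqq \mathsf{N}(\Ch(\Shv_{G,\Q}(X)))$, this endows the derived category with a symmetric monoidal structure for which the tensor product is exact in each variable, so $\sfD(\Shv_{G,\Q}(X))$ is a symmetric monoidal $\infty$-category.

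Stability is automatic: the underlying $\infty$-category of a monoidal model category on chain complexes of a Grothendieck abelian category (here \cref{thm:gshvgrothendieck}) inverting the quasi-isomorphisms is stable, since the usual shift $[1]$ is an equivalence and cofiber sequences coincide with fiber sequences up to shift.

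Finally, for compact generation, I would invoke \cref{prop:gsheafhocompact}: the model structure is finitely generated, so by \cite[Corollary 7.4.4]{hov99} the objects $\const_{G\cdot Y}(N)$ for $N \trianglelefteqslant_o G$ and $Y \subseteq X$ compact open, together with their suspensions, provide a set of compact generators for the triangulated homotopy category; this lifts to a set of compact generators at the $\infty$-categorical level. There is no real obstacle here since all the heavy lifting has been carried out in the preceding propositions; the theorem is a bookkeeping statement confirming that the three structures (symmetric monoidal, stable, compactly generated) coexist coherently on $\sfD(\Shv_{G,\Q}(X))$.
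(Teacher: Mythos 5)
Your proposal is correct and takes exactly the approach the paper intends: as the text preceding the theorem notes ("We summarise the results of this section in the following theorem"), this is a bookkeeping result assembling \cref{prop:equivariantsheafmodel}, \cref{prop:monoidalsheafmodel}, and \cref{prop:gsheafhocompact}, which is precisely what you do. Your brief justification of stability fills in a step the paper only asserts ("By construction we have that $\sfD(\Shv_{G,\Q}(X))$ is stable"), but the reasoning is standard and sound.
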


\subsection{Continuity for \texorpdfstring{$G$}{G}-equivariant sheaves}\label{subsec:basechanged}
In the previous section we have constructed the derived category of $G$-equivariant sheaves on $X$, where $G$ is a profinite group and $X$ is a profinite $G$-space. In this section we will prove an analogue of \cref{thm:gsp_contfausk} for this derived category. That is, we will prove that $\sfD(\Shv_{G,\Q}(X))$ is equivalent to the filtered colimit of the categories $\sfD(\Shv_{G_i,\Q}(X_i))$ where $G_i$ is a finite group such that $\lim_i G_i = G$ and $X_i$ is a finite $G_i$-space such that $\lim_i X_i \cong X$.

We will prove this at a pointset level, i.e., at the level of model structures, improving
on \cite[Theorem 11.6]{barnessugrue_weylsheaves} which considers the abelian level. 
Once we have shown that the construction is homotopically well-behaved this will provide us with the desired result for the derived category. We begin by defining the systems that we will be interested in. 

\begin{remark}\label{rem:modelcategoricalbugbears}
    We alert the reader that we will define a \emph{cofiltered} system as opposed to a filtered system. This is due to the fact that we will, instead of taking the filtered colimit over the left adjoints, form the cofiltered limit over the corresponding right adjoints. This allows us to use the well-understood theory of limits of model categories, such as that of Bergner  \cite{bergner}. When passing to the $\infty$-categorical realm, by \eqref{eq:prlprr}, we see that the limit constructions that we consider are nothing more than formulas for the appropriate colimits over the adjoint maps, which is what we are interested in.
\end{remark}

\begin{definition}\label{defn:compatible_system}
Let $I$ be a cofiltered category and suppose that we have an $I$-diagram of finite discrete groups and surjective group homomorphisms
\[
G_\bullet = (G_i, \phi_{ij}^G \colon G_i \to G_j )
\] 
and an $I$-diagram of finite discrete spaces and surjective maps
\[
X_\bullet = (X_i, \phi_{ij} \colon X_i \to X_j ).
\]
We say that \emph{$G_\bullet$ acts on $X_\bullet$} if each $X_i$ is a $G_i$-space and the 
following diagram commutes
\begin{equation}\label{eq:compatiblesquare}
\begin{gathered}
\xymatrix{
G_i \times X_i \ar[r] \ar[d]_{\phi_{ij}^G \times \phi_{ij}} &
X_i \ar[d]^{\phi_{ij}} \\
G_j \times X_j \ar[r] &
X_j \rlap{.}
}
\end{gathered}
\end{equation}
\end{definition}

The following lemma is fundamental to the theory.

\begin{lemma}
Let $G_\bullet$ act on $X_\bullet$ as in \cref{defn:compatible_system}. 
If $G=\lim_i G_i$ and $X=\lim_i X_i$, then $X$ has a continuous $G$-action.  
Moreover, the projection maps $p_i^G \colon G \to G_i$ and 
$p_i \colon X \to X_i$ fit into a commutative square
\begin{equation}\label{eq:compatiblesquare2}
\begin{gathered}
\xymatrix{
G \times X \ar[r] \ar[d]_{p_i^G \times p_i} &
X \ar[d]^{p_i} \\
G_i \times X_i \ar[r] &
X_i \rlap{.}
}
\end{gathered}
\end{equation}
\end{lemma}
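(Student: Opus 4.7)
The plan is to construct the $G$-action on $X$ via the universal property of the limit, exploiting the compatibility encoded in \eqref{eq:compatiblesquare}. The key preliminary observation is that in $\Top$ finite products commute with cofiltered limits, yielding a natural homeomorphism $G \times X \cong \lim_i (G_i \times X_i)$ compatible with the projections $p_i^G \times p_i$. The hypothesis \eqref{eq:compatiblesquare} then translates precisely into the statement that the collection of finite action maps $\alpha_i \colon G_i \times X_i \to X_i$ assembles into a morphism between the cofiltered systems $(G_\bullet \times X_\bullet)$ and $X_\bullet$ in $\Top$. Taking limits produces a continuous map
\[
\alpha \colon G \times X \longrightarrow X
\]
satisfying $p_i \circ \alpha = \alpha_i \circ (p_i^G \times p_i)$ for every $i \in I$, which is exactly the commutative square \eqref{eq:compatiblesquare2}. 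On elements, $\alpha$ sends $((g_i)_i, (x_i)_i)$ to $(g_i \cdot x_i)_i$, and the compatibility of this formula with the transition maps $\phi_{ij}$ is again guaranteed by \eqref{eq:compatiblesquare}.

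Next I would verify componentwise that $\alpha$ satisfies the axioms of a group action. For unitality, both sides of $\alpha(e, x) = x$ project identically into each $X_i$, since $p_i(\alpha(e, x)) = \alpha_i(e, p_i(x)) = p_i(x)$, and the universal property of $X = \lim_i X_i$ upgrades this to equality in $X$. Associativity follows from a parallel componentwise argument comparing the two candidate composites $G \times G \times X \rightrightarrows X$: after composing with each $p_i$ they reduce to the associativity of $\alpha_i$, so they agree globally.

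Continuity of $\alpha$ is automatic, as it is constructed as a limit of continuous maps (trivially continuous, since each $G_i \times X_i$ is finite discrete). The only ingredients of substance are therefore the commutation of finite products with cofiltered limits in $\Top$ and the componentwise verification of the action axioms, and I do not anticipate any serious obstacle: the lemma is essentially a formal consequence of the universal property of limits together with the compatibility hypothesis \eqref{eq:compatiblesquare}.
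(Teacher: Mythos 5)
Your proposal is correct, and it takes a somewhat different route from the paper. The paper's argument is a direct point-set topological verification of continuity: it takes a basic open subset $p_i^{-1}(A_i)$ of $X$, traces it back through the compatibility square \eqref{eq:compatiblesquare}, and observes that the preimage under the action map is the preimage of the (open) set $B_i \subseteq G_i \times X_i$ under $p_i^G \times p_i$, hence open. The existence of the action map on underlying sets is taken for granted from the universal property, and the action axioms and commutativity of \eqref{eq:compatiblesquare2} are treated as automatic. You instead make the construction of $\alpha$ explicit via the universal property: you observe that finite products commute with cofiltered limits in $\Top$ (so $G \times X \cong \lim_i(G_i \times X_i)$), that \eqref{eq:compatiblesquare} packages the finite actions $\alpha_i$ as a morphism of cofiltered systems, and that the limit of this morphism is $\alpha$, continuous by construction. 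This has the advantage of simultaneously producing the map, its continuity, the commutative square \eqref{eq:compatiblesquare2} (which falls out of the universal property), and a clean componentwise verification of the action axioms, which the paper's terse argument leaves implicit. The two arguments are essentially the same content — one is the unpacked version of the other — but your packaging via the universal property of the limit is arguably cleaner and more self-contained, at the modest cost of invoking the commutation of finite products with cofiltered limits in $\Top$ (a standard but not entirely formal fact).
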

\begin{proof}
    The topologies on $X$ and $G$ are the limit topologies. 
    Thus, a basic open set of $X$ is given by $p_i^{-1}(A_i)$ for 
    some open set $A_i \subseteq X_i$. We must show that the preimage
    of $p_i^{-1}(A_i)$ in $G \times X$ is open. Consider the preimage $B_i$ of $A_i$ in $G_i \times X_i$. 
    As the action of $G_i$ on $X_i$ is continuous, $B_i$ is open. 
    The maps $p_i^G$ and $p_i$ are continuous, so the preimage
    of $B_i$ in $G \times X$ is open. This is precisely  
    $p_i^{-1}(A_i)$.   
\end{proof}

An example of a system as in \cref{defn:compatible_system} is given by a profinite group $G=\lim_i G_i$ acting by conjugation on its space of subgroups $\Sub (G)$,  which is homeomorphic to $\lim_i \Sub (G_i)$ by 
 \cref{prop:profinitesubg}.

Given an action of $G_\bullet$ on $X_\bullet$, we would like to have a corresponding diagram of categories, $\Shv_{G_i,R}(X_i)$, indexed by $I$, where $G_i$ is the finite group $G/U_i$, for $U_i$ open normal in $G$. 
To do so, we will need to construct the maps in the system. In \cref{def:sheafinduction}, we introduced induction and restriction functors along subgroup inclusions; in this subsection we will need analogous functors for projections onto quotient groups.

To introduce the relevant functors for quotient groups, first recall that 
given a quotient map $\varepsilon \colon G \to G/U$ for $U$ open normal in $G$
and a $G$-space $X$, we denote the $U$-fixed points of $X$ by $X^{U}$.
This defines a functor from $G$-spaces to $G/U$-spaces.  
The left adjoint, inflation, equips a $G/U$-space $Y$ with the $G$-action
given by restriction along $\varepsilon$, denoted $\varepsilon^* Y$.

\begin{definition}\label{def:pushfixed}
Let $G$ be a profinite group with open normal subgroup $U$.
Let $X$ be a profinite $G$-space, $Y$ a profinite $G/U$-space, and 
$\phi \colon X \to \varepsilon^* Y$ a $G$-equivariant map to the inflation of $Y$.
We define 
\[
(\phi_U)_* \colon \Shv_{G,\Q}(X) \to \Shv_{G/U,\Q} (Y)
\]
as the functor which sends a $G$-sheaf $p \colon E \to X$ to the $G/U$-sheaf whose sheaf space is the
(stalkwise) $U$-fixed points of the pushforward sheaf $\phi_* E$.
\end{definition}

\begin{remark}
While the pushforward construction is standard (the sheafification of the presheaf 
pushforward), the fixed point functor 
is somewhat new in this context. We justify that this construction gives a functor as claimed.
Consider a $G$-sheaf $q \colon F \to \varepsilon^* Y$, for $Y$ a $G/U$-space. 
Any $y \in \varepsilon^* Y$ is $U$-fixed, so the stalk $F_y$ has an action of $U$.
Hence, one can consider the subspace $F^U$ of $F$ consisting of those 
points fixed under the action of $U$.  

The main step in proving that $F^U$ is a $G/U$-sheaf
is that $F^U \to F \to Y$ is a local homeomorphism.
Consider some $f \in F^U_y$, which is represented by a section
$s \colon V \to F$, for $V$ a compact open set (which exists as a profinite space has a compact open basis). By the local sub-equivariance condition
(see \cite[Corollary A]{barnessugrue_gsheaves}) we may assume that $s$ is $N$-equivariant
for some open subgroup $N$ of $G$, with $N \leqslant U$. 
Viewing $s$ as a function, for $u \in U$ we define a function $u \cdot s$ from $u V$ to $F$ by 
$v \mapsto u s(u^{-1}v)$. 
With that notation, for each $n \in N$, we have
\[
n \cdot s =s.
\]
For each $u \in U$, $u \cdot s$ and $s$ take value $f$ at $y$. 
Hence, these two sections agree on some neighbourhood of $y$.
As $U/N$ is finite, we can perform a finite sequence of restrictions and see that 
$s$ is $U$-equivariant on some $V' \subseteq V$.
As each point of $Y$ is fixed by $U$, 
\[
s= u \cdot s = u  s
\]
that is, for each $v \in V'$, $s(v)$ is $U$-fixed and so $s$ factors
as a map $s \colon V' \to F^U$. This gives a local inverse to the 
composite map $F^U \to F \to Y$.
\end{remark}

We now construct the adjoint functor to $(\phi_U)_\ast$.

\begin{definition}\label{def:pullinflation}
Let $G$ be a profinite group with open normal subgroup $U$.
Let $X$ be a profinite $G$-space, $Y$ a profinite $G/U$-space and 
$\phi \colon X \to \varepsilon^* Y$ be a $G$-equivariant map to the inflation of $Y$.
Define a functor 
\[
\phi_U^*  \colon \Shv_{G/U,\Q} (Y) \to \Shv_{G,\Q}(X)
\]
which sends a $G/U$-sheaf $q \colon F \to Y$ to the $G$-sheaf 
given by the pullback to $X$ along $\phi$ of the inflation 
\[
\varepsilon^* q  \colon \varepsilon^* F \to \varepsilon^* Y.
\]
\end{definition}

As the two functors we just defined are essentially composites of existing adjunctions
(with the context of the base spaces carefully chosen), it follows that they form an adjunction:

\begin{lemma}\label{lem:infresadjont}
Let $G$ be a profinite group with open subgroup normal $U$.
Let $X$ be a profinite $G$-space, $Y$ a profinite $G/U$-space, and 
$\phi \colon X \to \varepsilon^* Y$ be a $G$-equivariant map to the inflation of $Y$. Then there is an adjunction
\[
\phi^*_U
\colon 
\Shv_{G/U, \Q} (Y)
\rightleftarrows 
\Shv_{G, \Q} (X)
\noloc 
(\phi_U)_*.
\] 
Moreover, the left adjoint admits a simple formula for stalks: for any $x \in X$, we have
\begin{equation}\label{eq:simplestalks}
(\phi^*_U F)_x \cong \varepsilon^* F_{\phi(x)}.
\end{equation}
\end{lemma}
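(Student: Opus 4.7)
The plan is to recognize both $\phi^*_U$ and $(\phi_U)_*$ as composites of two standard adjunctions, and then to combine them. Concretely, I would factor the situation through the category $\Shv_{G,\Q}(\varepsilon^* Y)$, namely, the $G$-equivariant sheaves on $Y$ regarded as a $G$-space via $\varepsilon\colon G\twoheadrightarrow G/U$. Unravelling the definitions, $\phi^*_U = \phi^*\circ \varepsilon^*$ and $(\phi_U)_* = (-)^U\circ \phi_*$, where $\phi^*$ and $\phi_*$ are the usual pullback and pushforward of $G$-equivariant sheaves along the $G$-equivariant map $\phi\colon X\to \varepsilon^* Y$. The desired adjunction will then follow by composing adjunctions, reducing the lemma to the two preliminary adjunctions described below.

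The first adjunction to verify is the inflation/fixed points adjunction
\[
\varepsilon^* \colon \Shv_{G/U,\Q}(Y) \rightleftarrows \Shv_{G,\Q}(\varepsilon^* Y) \noloc (-)^U.
\]
This uses crucially that $U$ acts trivially on $\varepsilon^* Y$, so that for any $G$-sheaf $F\to \varepsilon^*Y$ the stalkwise $U$-fixed points assemble into a $G/U$-sheaf (which was precisely the content verified just after \cref{def:pushfixed}). The unit and counit are constructed from the stalkwise inclusions $F^U\hookrightarrow F$ and the identity maps $E=\varepsilon^*E$ viewed at the level of underlying sets; the triangle identities are then straightforward to verify stalkwise. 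The second, simpler, adjunction is the $G$-equivariant pullback/pushforward adjunction along $\phi$, which is classical and extends verbatim from the non-equivariant case treated in \cite{tenn}.

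Composing these two adjunctions produces the natural bijection
\[
\Hom_{\Shv_G(X)}(\phi^*\varepsilon^* F, E) \cong \Hom_{\Shv_G(\varepsilon^* Y)}(\varepsilon^* F, \phi_* E) \cong \Hom_{\Shv_{G/U}(Y)}(F, (\phi_*E)^U),
\]
giving the stated adjunction. For the stalk formula, one uses that the pullback $\phi^*$ preserves stalks: for a local homeomorphism $q\colon \varepsilon^* F\to \varepsilon^* Y$, the pullback sheaf space $\phi^*(\varepsilon^*F)\to X$ has stalk over $x\in X$ canonically identified with $(\varepsilon^*F)_{\phi(x)}$. Since $\varepsilon^*$ does not change the underlying sheaf space (only the action), this yields $(\phi^*_U F)_x \cong \varepsilon^* F_{\phi(x)}$ as required.

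The only mild subtlety to check is that all constructions are compatible with the $G$-action, which is ensured at each step by the $G$-equivariance of $\phi$ and the triviality of the $U$-action on $\varepsilon^* Y$. I do not anticipate this to be the main obstacle; rather, the bookkeeping of the two nested adjunctions is the part that requires the most care to state cleanly.
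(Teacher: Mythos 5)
Your decomposition of $\phi^*_U$ as $\phi^*\circ\varepsilon^*$ and $(\phi_U)_*$ as $(-)^U\circ\phi_*$ through the intermediate category $\Shv_{G,\Q}(\varepsilon^*Y)$, followed by composing the inflation/fixed-points and pullback/pushforward adjunctions, is precisely what the paper does: the text simply notes that the two functors are ``essentially composites of existing adjunctions (with the context of the base spaces carefully chosen)'' and deduces the adjunction. Your stalk argument (pullback along a map of spaces preserves stalks of sheaf spaces, and inflation leaves the underlying sheaf space unchanged) is likewise the intended reading of \eqref{eq:simplestalks}.
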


We now return to the situation at hand. We suppose that we have an action of $G_\bullet$ on $X_\bullet$. For $i \to j$ in $I$, we will abuse notation and write the adjoint pair of \cref{lem:infresadjont} induced by $\phi_{ij}$ and $\phi_{ij}^G$ as
\begin{equation}\label{eq:inducedonsheaves}
{\phi_{ij}^\ast} \colon 
\Shv_{G_j, \Q} (X_j)
\rightleftarrows 
\Shv_{G_i, \Q} (X_i)
\noloc 
({\phi_{ij}})_*.
\end{equation}
The adjunction of \eqref{eq:inducedonsheaves} then allows us to build the desired cofiltered system of equivariant sheaf categories:

\begin{definition}\label{defn:sysofsheaves}
Let $G_\bullet$ act on $X_\bullet$. Then a \emph{$G_\bullet$-sheaf of $\Q$-modules over $X_\bullet$} is 
a $G_i$-equivariant sheaf $D_i$ on $X_i$ of $\Q$-modules for each $i \in I$, 
with maps of sheaves of $\Q$-modules over $X_i$
\[
\alpha_{ij} \colon ({\phi}_{ij}^\ast) D_j \lra D_i
\]
for each $i \to j$ in $I$.  
A \emph{map $f_\bullet \colon D_\bullet \to D'_\bullet$ of $G_\bullet$-sheaves of $\Q$-modules over $X_\bullet$} consists of maps 
$f_i \colon D_i \to D_i'$ of $G_i$-sheaves of $\Q$-modules over $X_i$ that commute with the structure maps
\[
\xymatrix{
({\phi}_{ij}^\ast) D_j \ar[r]^-{\alpha_{ij}} \ar[d]_-{(\phi_{ij}^*) f_j} & 
D_i \ar[d]^-{f_i} \\
({\phi}_{ij}^\ast) D'_j \ar[r]_-{\alpha'_{ij}} & 
D'_i. \\
}
\]
We denote the resulting category as $\Shv_{G_\bullet,\Q} (X_\bullet)$. 
\end{definition}

\begin{remark}\label{rmk:sectionformula}
Note the contravariance introduced in \cref{defn:sysofsheaves}, the maps
of sheaves reverse the maps in the diagram $I$. 
It can be useful to write out how the functor ${{\phi}_{ij}}_\ast$ acts. 
For $V$ an open subset of $X_j$, 
\[
({{\phi}_{ij}}_\ast) D_i (V) =  D_i (\phi_{ij}^{-1} V)^{U_j/U_i}.
\]
Moreover, $U_j/U_i$ is a subgroup of $G/U_i=G_i$ which is
in the stabiliser of the set $\phi_{ij}^{-1} V$. 
\end{remark}

We now have all of the required setup to prove the desired continuity result. We suppose that we have the category $\sfD(\Shv_{G_i,\Q} (X_i))$ where $G = \lim G_i$ and $X = \lim X_i$. We wish to prove that the colimit of the categories $\sfD(\Shv_{G_i,\Q} (X_i))$ over the left adjoints is equivalent to the category $\sfD(\Shv_{G,\Q}(X))$. To be precise, we compute the colimit by passing to the corresponding right adjoints and then constructing the limit in the $\infty$-category of $\infty$-categories, which coincides with the limit taken in the $\infty$-category of presentable categories and right adjoints between them.

We will begin with an abelian level statement. Write $\lim \Shv_{G_\bullet,\Q} (X_\bullet)$ for the  full subcategory of $\Shv_{G_\bullet,\Q} (X_\bullet)$ consisting of those objects $D_\bullet$ whose adjoint structure maps
\begin{equation}\label{eq:adjointstruct}
\bar{\alpha}_{ij} \colon D_j \lra ({\phi}_{ij})_* D_i
\end{equation}
are isomorphisms. The following is a result of Barnes--Sugrue.

\begin{theorem}[{\cite[Theorem 11.6]{barnessugrue_weylsheaves}}]\label{thm:sheaves_continuous}
    Let $G_\bullet$ act on $X_\bullet$. Then there is a canonical equivalence of categories
    \[
    \lim \Shv_{G_\bullet,\Q} (X_\bullet) \xrightarrow[\quad]{\sim} \Shv_{G, \Q}(X),
    \]
    where the limit is taken over the right adjoints as in \eqref{eq:adjointstruct}.
\end{theorem}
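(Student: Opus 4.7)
The plan is to construct explicit mutually inverse functors between the two categories and then verify they are equivalences by a stalkwise argument, exploiting the profiniteness of $X$.

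For the forward functor $F\colon \Shv_{G,\Q}(X) \to \lim \Shv_{G_\bullet,\Q}(X_\bullet)$, observe that each projection $p_i\colon X \to X_i$ together with $p_i^G\colon G \to G_i$ satisfies the hypotheses of \cref{lem:infresadjont}, producing an adjoint pair $(p_i)_{U_i}^* \dashv ((p_i)_{U_i})_*$ where $U_i=\ker(p_i^G)$. Given $E\in\Shv_{G,\Q}(X)$, define $F(E)_i \coloneqq ((p_i)_{U_i})_*E$. Since for any $i\to j$ in $I$ we have $p_j = \phi_{ij}\circ p_i$ and $U_j/U_i \leqslant G_i$, the pushforward--fixed-point machinery yields a canonical isomorphism $(\phi_{ij})_*\circ ((p_i)_{U_i})_* \cong ((p_j)_{U_j})_*$ (fixed points of fixed points), showing that the adjoint structure maps are \emph{iso}morphisms, so $F(E)_\bullet$ lies in $\lim\Shv_{G_\bullet,\Q}(X_\bullet)$.

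For the inverse $\Psi\colon \lim\Shv_{G_\bullet,\Q}(X_\bullet) \to \Shv_{G,\Q}(X)$, given a compatible system $D_\bullet$, I would define $\Psi(D_\bullet)$ by its sheaf space: as a set, take the filtered colimit (over $i\in I^{\op}$) of the total spaces of $(p_i)_{U_i}^* D_i$, and topologise it using the compact-open basis of $X$. Concretely, on compact-open sections I use \cref{rmk:sectionformula} and set $\Psi(D_\bullet)(V) \coloneqq \colim_{i,\,V = p_i^{-1}(V_i)} D_i(V_i)$; this is well defined because any compact-open $V\subseteq X$ is pulled back from some $V_i\subseteq X_i$ (as $X$ is profinite, $X = \lim X_i$, cofinally), and cofinality makes the colimit independent of choices. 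The $G$-action comes from the compatible $G_i$-actions.

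To verify that $F$ and $\Psi$ are quasi-inverse, I would check this stalkwise, invoking \cref{lem:epicompact} which says stalkwise isomorphism is sufficient. For $x\in X$ with images $x_i=p_i(x)\in X_i$, a stalk computation using \eqref{eq:simplestalks} and the fact that filtered colimits commute with the stalk construction gives $\Psi(D_\bullet)_x \cong \colim_i (D_i)_{x_i}$, and since the structure maps of $D_\bullet$ are adjoint isomorphisms, the transition maps in this colimit are isomorphisms on stalks; so each $(D_i)_{x_i}$ recovers the same abelian group, and the action of $G = \lim G_i$ assembles from the $G_i$-actions. The analogous computation for $F(E)_i$ at $x_i$ gives $E_x^{U_i \cap G_{x_i}}$, and taking the colimit over $i$ recovers $E_x$ using the continuity of the $G$-action (every element in a stalk is fixed by some open subgroup, by \cite[Corollary A]{barnessugrue_gsheaves}).

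The main obstacle is the bookkeeping for the $G$-equivariance across the limit: one must verify that the resulting sheaf spaces carry \emph{continuous} $G$-actions, and that adjoint structure maps being isomorphisms (as opposed to merely compatible) is the precise condition needed for $\Psi\circ F$ to return the original $E$ rather than some proper sub-sheaf. In particular, the local sub-equivariance property of profinite group actions on sheaves, which guarantees that sections over compact opens are discrete as $G_Y$-modules, is what allows the colimit defining $\Psi$ to recover all sections and not just a dense subset. Everything else---functoriality, naturality, compatibility with the monoidal structure---follows formally once the stalk-level equivalence is established.
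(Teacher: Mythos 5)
Your overall strategy---construct $F = (\phi_i)_*$ (pushforward followed by stalkwise $U_i$-fixed points) and $\Psi = \colim_i \phi_i^*$ (filtered colimit of inflation-pullbacks), then verify they are quasi-inverse by inspecting stalks---is precisely the method used in the cited Barnes--Sugrue reference, as the paper itself confirms when it describes the adjunction $(\colim_i\phi_i^*, (\phi_i)_*)$ in the proof of the derived version, \cref{thm:eqsheaves_continuity}. So the route is the right one, and the forward direction checks out: the iterated-fixed-points identity $(\phi_{ij})_*\circ((p_i)_{U_i})_* \cong ((p_j)_{U_j})_*$ is exactly how one sees that the adjoint structure maps of $(\phi_i)_*E$ are isomorphisms.

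However, there is a genuine error in the stalk computation for $\Psi$. You claim that because the adjoint structure maps $\bar\alpha_{ij}\colon D_j \to (\phi_{ij})_*D_i$ are isomorphisms, the transition maps $(D_j)_{x_j}\to(D_i)_{x_i}$ in the colimit $\Psi(D_\bullet)_x \cong \colim_i (D_i)_{x_i}$ are isomorphisms, and hence all $(D_i)_{x_i}$ are ``the same abelian group.'' This is false. Unwinding $\bar\alpha_{ij}$ on stalks gives $(D_j)_{x_j}\cong\bigl(\prod_{z\in\phi_{ij}^{-1}(x_j)}(D_i)_z\bigr)^{U_j/U_i}$, and the stalk transition map is the composite of this isomorphism, the inclusion of $U_j/U_i$-fixed points, and the projection onto the $x_i$-factor---which in general kills both the other stalks in the fiber and the non-fixed part. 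Already for $G$ trivial, $X_j=\ast$, $X_i=\{a,b\}$, the adjoint structure map being iso just says $(D_j)_\ast \cong (D_i)_a\times(D_i)_b$, while the transition map to $(D_i)_a$ is a projection. Similarly, $F(E)_i$ at $x_i$ is $E(p_i^{-1}(x_i))^{U_i}$ (sections over the clopen fiber, then $U_i$-fixed points), not $E_x^{U_i\cap G_{x_i}}$ as you wrote. Fortunately neither error is fatal to the strategy: the filtered colimit $\colim_i E(p_i^{-1}(x_i))^{U_i}$ does recover $E_x$ because $(p_i^{-1}(x_i))_i$ is a neighbourhood basis of $x$, $\bigcap_i U_i=\{e\}$, and rational fixed points for open subgroups commute with filtered colimits (this is where rationality enters)---and the same bookkeeping, done carefully on the $F\circ\Psi$ side, gives $D_j$ back. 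But the justification has to replace the claim about transition-maps-being-isos with an argument that the colimit stabilizes to the correct answer; as written, the step is wrong.
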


\begin{remark}
    We warn the reader that in \cite{barnessugrue_weylsheaves}, the category $ \lim \Shv_{G_\bullet,\Q} (X_\bullet)$ is instead denoted by $\colim \Shv_{G_\bullet,\Q} (X_\bullet)$. This notation is not unreasonable given that in the derived version of this result (\cref{thm:eqsheaves_continuity}), we will use that the limit is equivalent to the colimit in presentable $\infty$-categories over the left adjoints. 
\end{remark}

We wish to make \cref{thm:sheaves_continuous} homotopical, giving us the desired result regarding the continuity of the derived category of rational $G$-equivariant sheaves. We begin by discussing that the relevant adjoint pairs are homotopical.

By \cref{lem:infresadjont} we have an adjunction $({\phi}_{i}^*,({\phi}_{i})_*)$  between $\Shv_{G_i,\Q} (X_i)$ and $\Shv_{G, \Q}(X)$ for each $i$. That result also gives the adjunction 
$({\phi}_{ij}^*,({\phi}_{ij})_*)$  between the 
categories $\Shv_{G_i,\Q} (X_i)$ and $\Shv_{G_j,\Q} (X_j)$ for varying $i$ and $j$.  We equip chain complexes over these categories with the model structure of \cref{prop:equivariantsheafmodel}, and we again can consider the adjoint pair $({\phi}_{ij}^*,({\phi}_{ij})_*)$, now between the categories of chain complexes.

\begin{lemma}\label{lem:canderivefunctors}
    The adjunction $({\phi}_{ij}^*,({\phi}_{ij})_*)$  between the model
categories $\Ch(\Shv_{G_i,\Q} (X_i))$ and $\Ch(\Shv_{G_j,\Q} (X_j))$ is a Quillen pair.
Moreover, the left adjoints ${\phi}_{ij}^*$ preserve compact objects.  
\end{lemma}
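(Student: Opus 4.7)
The plan is to verify that the right adjoint $(\phi_{ij})_*$ is in fact exact at the abelian level, from which both Quillen functoriality and compact-preservation of the left adjoint will follow formally.

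First I would describe the relevant sheaf-theoretic model structures. By \cref{prop:equivariantsheafmodel}, fibrations are precisely the stalkwise epimorphisms, and combined with \cref{prop:sectionsandstalks} the weak equivalences are precisely the stalkwise quasi-isomorphisms. So for the Quillen-pair assertion it suffices to show that $(\phi_{ij})_*\colon \Ch(\Shv_{G_i,\Q}(X_i)) \to \Ch(\Shv_{G_j,\Q}(X_j))$ preserves fibrations and acyclic fibrations. Both would follow at once from exactness of $(\phi_{ij})_*$ on the abelian category, applied degreewise.

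Next I would analyze the stalk formula for $(\phi_{ij})_*$. Unwinding \cref{def:pushfixed}, for $D_i \in \Shv_{G_i,\Q}(X_i)$ and $y \in X_j$ one has
\[
((\phi_{ij})_* D_i)_y \;\cong\; \Bigl(\prod_{x \in \phi_{ij}^{-1}(y)} (D_i)_x\Bigr)^{U_j/U_i},
\]
since $X_i$ and $X_j$ are finite discrete and $U_j/U_i = \ker(\phi_{ij}^G)$ is a finite group acting on the finite product of stalks. The preimage $\phi_{ij}^{-1}(y)$ is finite (as $X_i$ is), so the product is a finite biproduct and hence an exact functor of $D_i$. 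Since $U_j/U_i$ is a finite group and we work over $\Q$, taking $U_j/U_i$-fixed points is exact by Maschke-style averaging. Composing, $(\phi_{ij})_*$ is exact on $\Shv_{G_i,\Q}(X_i)$, hence preserves stalkwise epimorphisms and stalkwise quasi-isomorphisms on chain complexes. This establishes the Quillen-pair claim.

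For the compact-preservation statement, I would use that, by \cref{prop:gsheafhocompact} and \cite[Proposition 5.5.7.2]{htt}, a left-adjoint between compactly generated stable $\infty$-categories preserves compact objects if and only if its right adjoint preserves filtered colimits. Working at the abelian level, the same explicit stalk formula displayed above shows that $(\phi_{ij})_*$ commutes with filtered colimits: the finite product over $\phi_{ij}^{-1}(y)$ commutes with filtered colimits (finite biproducts = finite limits), and $U_j/U_i$-fixed points commute with filtered colimits of $\Q[U_j/U_i]$-modules (as the fixed-point functor is a finite limit, namely an equalizer, and filtered colimits commute with finite limits). Hence $(\phi_{ij})_*$ preserves filtered colimits in $\Shv_{G_i,\Q}(X_i)$. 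Since it is also exact, it preserves quasi-isomorphisms, so the functor on $\Ch$ descends to the derived category without further derivation and still preserves filtered homotopy colimits (these being computable on the nose in Grothendieck abelian categories).

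The only potential obstacle is that the formula in \cref{def:pushfixed} must be correctly interpreted through the intermediate inflation step, but this amounts to a direct unwinding: the pullback/inflation description of $\phi_{ij}^*$ from \cref{def:pullinflation} and the stalk formula \eqref{eq:simplestalks} make the right adjoint $(\phi_{ij})_*$ transparently a composite of a finite pushforward and a finite-group fixed-point functor, both of whose exactness and filtered-colimit compatibility are standard.
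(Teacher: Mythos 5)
Your argument is correct and arrives at the same conclusion by a slightly different route. The paper's proof works with sections over compact opens rather than stalks: it invokes the section formula $({\phi}_{ij})_* D_i(V) = D_i(\phi_{ij}^{-1}V)^{U_j/U_i}$ of \cref{rmk:sectionformula} together with the characterizations of fibrations and weak equivalences in terms of sections at compact opens (\cref{lem:epicompact}, \cref{prop:sectionsandstalks}), concluding directly that $(\phi_{ij})_*$ is right Quillen; your stalk formula is exactly the specialization of that section formula to the singleton open $\{y\}$, which is legitimate because $X_j$ is finite discrete. For compact preservation the paper shows that the right adjoint preserves arbitrary coproducts (visible from the section formula) as well as all quasi-isomorphisms, so the derived right adjoint preserves coproducts in the homotopy category, forcing $\phi_{ij}^*$ to preserve compacts. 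You instead show preservation of filtered colimits; the two criteria are equivalent in the compactly generated stable setting, but the coproducts version is a bit lighter-weight --- filtered homotopy colimits are identified with underived filtered colimits only because $\Shv_{G_i,\Q}(X_i)$ is Grothendieck with exact filtered colimits (a point worth being explicit about, as you are), whereas coproducts of complexes are always degreewise. Both routes reduce to the same essential input, namely that the right adjoint is exact over $\Q$ via Maschke-type averaging over the finite group $U_j/U_i$ together with the finiteness of the fibers $\phi_{ij}^{-1}(y)$.
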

\begin{proof}
We note that all objects in the model categories are fibrant. 
By the formula of \cref{rmk:sectionformula}, we understand the behaviour of the right adjoints on sections at compact open sets. 
The weak equivalences and fibrations of the model structures can be phrased in  terms of quasi-isomorphisms and epimorphisms of chain complexes of sections at compact open sets by \cref{lem:epicompact} and \cref{prop:sectionsandstalks}. Combining these ideas, we see the right adjoints are right Quillen functors and hence the adjunctions are all Quillen adjunctions.   

The right adjoints commute with arbitrary coproducts by \cref{rmk:sectionformula}. 
As they also preserve all quasi-isomorphisms, the derived right adjoints commute with arbitrary coproducts 
at the level of homotopy categories. This implies that ${\phi}_{ij}^*$ preserves compacts.
\end{proof}

\begin{remark}\label{rem:limitmodelstructure}
    We will require the theory of limits of model categories in the proof of  \cref{thm:eqsheaves_continuity}. We recall the formalism from \cite{barwick_leftright,bergner} for the convenience of the reader. Given a collection $(\mathcal{C}_i)_i$ of (left proper, combinatorial) model categories and right Quillen functors $g_{ij} \colon \mathcal{C}_i \to \mathcal{C}_j$, we can form the category of right sections. We may then equip this section category with the projective model structure and take a left Bousfield localization so that fibrant objects $C$ in the localized model structure 
    are characterized by having (adjoint) structure maps $\overline{\alpha}_{ij} \colon C_i \to (g_{ij})_\ast C_j$ be weak equivalences. 
    For concreteness we denote this model structure $\overline{\lim}_i \mathcal{C}_i$. The notation for this model structure is not unwarranted. We can consider the $\infty$-category associated to this model structure, $\mathsf{N}(\overline{\lim}_i \mathcal{C}_i)$. This admits a comparison map to the limit of the associated $\infty$-categories, that is, $\lim_i \mathsf{N}(\mathcal{C}_i)$. This comparison map is an equivalence of $\infty$-categories, as proved in \cite[Theorem 4.1]{bergner}, so the model categorical limit as defined here models the $\infty$-categorical limit.
\end{remark}

\begin{theorem}\label{thm:eqsheaves_continuity}
     Let $G_\bullet$ act on $X_\bullet$. Then there is a canonical geometric equivalence
         \[
        \colim_{i}^\omega\sfD(\Shv_{G_i,\Q} (X_i)) \xrightarrow[\quad]{\sim} \sfD(\Shv_{G, \Q}(X)). 
    \]
\end{theorem}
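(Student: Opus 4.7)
The plan is to identify both sides as compactly generated $\infty$-categories and to match their compact objects, thereby reducing the derived continuity statement to the abelian continuity theorem of Barnes--Sugrue (\cref{thm:sheaves_continuous}). Since the filtered colimit $\sfT \coloneqq \colim_{i}^{\omega}\sfD(\Shv_{G_i,\Q}(X_i))$ is by definition the ind-completion of $\colim_{i}\sfD(\Shv_{G_i,\Q}(X_i))^{\omega}$ (\cref{rem:compactcolim}), and the target is compactly generated by \cref{cor:sheavescompgen}, it suffices to prove that the canonical tt-functor
\[
F\colon \colim_{i}\sfD(\Shv_{G_i,\Q}(X_i))^{\omega} \longrightarrow \sfD(\Shv_{G,\Q}(X))^{\omega}
\]
induced by the symmetric monoidal left adjoints $\phi_i^*$ is an equivalence; note that $F$ is well-defined because each $\phi_i^*$ preserves compact objects by \cref{lem:canderivefunctors}, and each $\phi_{ij}^*$ is symmetric monoidal.

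The first main step is essential surjectivity on compact generators. By \cref{prop:gsheafhocompact}, a set of compact generators for $\sfD(\Shv_{G,\Q}(X))$ is given by $\const_{G\cdot Y}(N)$ as $Y$ ranges over compact open subsets of $X$ and $N$ over open normal subgroups of $G$. Using the limit topology on $X=\lim_i X_i$, any such $Y$ is the preimage $\phi_i^{-1}(Y_i)$ of some compact open $Y_i \subseteq X_i$ for some $i \in I$ (cf.~\cref{lem:gencontinuity}), and any such $N$ contains $U_i$ for $i$ sufficiently large; by cofinality we may pick a common $i$. Unwinding \cref{def:sheafinduction} and \cref{def:freeconstantsheaf}, and using the compatibility of inflation, restriction, and extension-by-zero with pullback along $\phi_i$, one identifies $\phi_i^*\const_{G_i\cdot Y_i}(N/U_i) \simeq \const_{G\cdot Y}(N)$. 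Thus every generator of the target lies in the essential image of $F$, and since $F$ is exact and commutes with coproducts on compacts (being a filtered colimit of such functors), it is essentially surjective onto compact objects after idempotent completion.

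The second main step is fully faithfulness. By \cref{lem:colimitofmaps}, the morphism spaces in the colimit are given by
\[
\Map_{\sfT}(c,d) \simeq \colim_{i}\Map_{\sfD(\Shv_{G_i,\Q}(X_i))}(c_i,d_i)
\]
for compatible representatives $(c_i),(d_i)$. It suffices to check that $F$ induces an equivalence on mapping spectra between pairs of the compact generators exhibited in Step 1. By \cref{lem:freesheafadjunction} applied to a cofibrant replacement, the mapping spectrum out of $\const_{G\cdot Y}(N)$ into an arbitrary object $E$ computes the derived sections $\R\Gamma(Y,E)^{G_Y \cap N}$; since we work rationally, the fixed-point functor $(-)^{G_Y \cap N}$ is exact (cf.~the proof of \cref{prop:sectionsandstalks}), so this agrees with $\Gamma(Y,E)^{G_Y\cap N}$ applied degreewise. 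The compatibility of sections at compact opens and open-subgroup fixed points with the cofiltered limit $(G_\bullet,X_\bullet) \to (G,X)$ is precisely the abelian continuity theorem \cref{thm:sheaves_continuous} (underpinned by \cite[Corollary A]{barnessugrue_gsheaves}). Degreewise this gives the identification of mapping groups on the nose for pairs of generators sitting at a finite level, and taking homology (which commutes with the filtered colimit in $I$) yields the desired isomorphism on homotopy groups of mapping spectra.

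The main obstacle lies in Step 3: one must reconcile the model-categorical computation of derived mapping spectra in $\sfD(\Shv_{G,\Q}(X))$ with the filtered colimit of such computations at finite stages, and the only place this can go wrong is if fixed points failed to commute with the relevant colimits or with homology. The rationality hypothesis is indispensable here, mirroring its role in \cref{lem:epicompact}(4) and \cref{prop:sectionsandstalks}: it is exactly what guarantees exactness of $(-)^{G_Y \cap N}$ and hence lifts the abelian equivalence \cref{thm:sheaves_continuous} to the derived level. The symmetric monoidal character of $F$ is automatic from the monoidality of each $\phi_i^*$, upgrading the resulting equivalence to a geometric equivalence as claimed.
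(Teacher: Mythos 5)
Your plan is correct in substance but follows a genuinely different route from the paper. The paper's proof exploits the $\Pr^L$/$\Pr^R$ duality to rewrite the colimit as a limit over the right adjoints, then realizes that limit via Bergner's limit model structure $\overline{\lim}_i \Ch(\Shv_{G_i,\Q}(X_i))$, and finally exhibits a Quillen equivalence with $\Ch(\Shv_{G,\Q}(X))$ by verifying that the unit and counit of the $(\colim_i\phi_i^*, (\phi_i)_*)$ adjunction are weak equivalences, invoking the abelian continuity theorem of Barnes--Sugrue and the rational exactness of fixed points. Your approach stays purely $\infty$-categorical: reduce to compact objects (legitimate by \cref{rem:compactcolim} and \cref{cor:sheavescompgen}), check essential surjectivity by lifting each generator $\const_{G\cdot Y}(N)$ to a finite stage and identifying $\phi_i^*\const_{G_i\cdot Y_i}(N/U_i)\simeq\const_{G\cdot Y}(N)$ via the stalk formula, and check fully faithfulness through the filtered-colimit description of mapping spaces (\cref{lem:colimitofmaps}) plus the identification of $\Map(\const_{G\cdot Y}(N),-)$ as sections-then-fixed-points (\cref{lem:freesheafadjunction}, \cref{prop:sheafprojective}). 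Both proofs rest on the same two pillars — the abelian continuity theorem and exactness of open-subgroup fixed points in characteristic zero — but your generator-by-generator argument bypasses the limit-model-structure machinery entirely, which is arguably cleaner. Two small caveats: (i) your text refers to a ``Step 3'' that is never explicitly labelled, and the reader has to infer it means the reconciliation inside the fully faithfulness step; (ii) in that step, the claim that the comparison on mapping spectra ``is precisely the abelian continuity theorem'' is a bit compressed — what you actually need is the unit/counit compatibility extracted from \cite[Theorem 11.6]{barnessugrue_weylsheaves} applied to the section functors $E\mapsto E(Y)^{G_Y\cap N}$, which is exactly the identification the paper makes explicit inside its Quillen-equivalence verification. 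Spelling this out would close the remaining gap between a plan and a proof.
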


\begin{proof}
Consider the filtered diagram of $\infty$-categories consisting of the terms $\sfD(\Shv_{G_i,\Q} (X_i))$ and the derived functors of the left adjoints relating them from \cref{lem:canderivefunctors}.  As the left Quillen functors of \cref{lem:canderivefunctors} are strong monoidal by the stalk formula \eqref{eq:simplestalks} of 
\cref{lem:infresadjont}, this promotes to a diagram in the $\infty$-category $\CAlg(\Pr^L)$ of presentably symmetric monoidal $\infty$-categories and symmetric monoidal left adjoints. 

We can consider the colimit of this diagram in $\CAlg(\Pr^L)$, which for clarity we denote by $\colim^{\CAlg(\Pr^L)}_i \sfD(\Shv_{G_i,\Q} (X_i))$. By the universal property of the colimit, we thus obtain a symmetric monoidal functor of presentable $\infty$-categories
    \[
    \phi \colon \colim_{i}^{\CAlg(\Pr^L)}\sfD(\Shv_{G_i,\Q} (X_i)) \xrightarrow[\quad]{} \sfD(\Shv_{G, \Q}(X)).
    \]
On the one hand, as each category appearing in the diagram is compactly generated, and as the functors are compact object preserving by \cref{lem:canderivefunctors}, our diagram in fact lives in $\CAlg(\Pr^{L, \omega})$. By \eqref{eq:colimitcomparisons} we therefore conclude that 
    \[
        \colim^{\omega}_i \sfD(\Shv_{G_i,\Q} (X_i)) \simeq \colim^{\CAlg(\Pr^L)}_i \sfD(\Shv_{G_i,\Q} (X_i))
    \]
symmetric monoidally. On the other hand, since colimits in $\CAlg(\Pr^L)$ may be computed in $\Pr^L$, it suffices to show that $\phi$ is an equivalence of presentable $\infty$-categories to prove the desired result.

We now appeal to the duality between $\Pr^L$ and $\Pr^R$ as discussed in \eqref{eq:prlprr}. That is, we observe that $\colim_{i}^{\Pr^{L}}\sfD(\Shv_{G_i,\Q} (X_i))$ is equivalent to the limit over the corresponding adjoint structure maps in $\Pr^R$. Finally by \eqref{eq:colimitcomparisons} again, we conclude that this limit is equivalent to taking the limit in the $\infty$-category $\widehat{\Cat}_\infty$ of $\infty$-categories. In summary, we have reduced to showing that the adjoint functors $(\phi_i)_*$ induce an equivalence
    \[
    \sfD(\Shv_{G, \Q}(X)) \xrightarrow{\psi} \lim_{i } \sfD(\Shv_{G_i,\Q} (X_i)), 
    \]
where the limit is taken in $\widehat{\Cat}_\infty$. This functor factors as in the following commutative diagram:
    \[
    \xymatrix{
    \mathsf{N}(\overline{\lim}_i \Ch(\Shv_{G_i,\Q} (X_i)))
     \ar[d]_{\gamma_2} & 
    \sfD(\Shv_{G, \Q}(X)) \ar[dl]^{\psi} \ar[l]_-{\gamma_1} \\
    \lim_{i } \sfD(\Shv_{G_i,\Q} (X_i)). 
    }  
    \]
Our strategy is then to prove that $\gamma_1$ and $\gamma_2$ are equivalences.

Following \cref{rem:limitmodelstructure}, we have the limit model structure $\overline{\lim}_i \Ch(\Shv_{G_i,\Q} (X_i))$. The result of Bergner (\cite[Theorem 4.1]{bergner}) provides us with the fact that $\gamma_2$ is an equivalence. It remains to prove that $\gamma_1$ is an equivalence. We prove the existence of the following Quillen equivalence
and take $\gamma_1$ to be the functor induced by $(\phi_i)_*$,
\[
\colim_i \phi_{i}^* 
\colon 
\overline{\lim}_i \Ch (\Shv_{G_i,\Q} (X_i))
\rightleftarrows 
\Ch (\Shv_{G, \Q} (X))
\noloc 
(\phi_i)_*.
\] 
The adjunction is that of \cref{thm:sheaves_continuous}. Explicitly, given an object $D$ in $\overline{\lim}_i \Ch (\Shv_{G_i,\Q} (X_i))$, the functor $\colim_i \phi_{i}^* $ forms the colimit of $D$ over the diagram of the \emph{left} adjoints.

For each $i$, we have the Quillen adjunction $({\phi}_{i}^*,({\phi}_{i})_*)$, 
hence $(\phi_i)_*$ gives a right Quillen functor into the projective model structure.
It passes to a right Quillen functor to the limit model structure as
for any $A \in \Ch(\Shv_{G, \Q} (X))$, 
the adjoint structure maps of $(\phi_i)_*(A)$ are isomorphisms 
by \cite[Construction 11.4]{barnessugrue_weylsheaves}. 

We show that the derived counit and unit are equivalences. 
Note that every object in $\Ch (\Shv_{G, \Q} (X))$ is fibrant, so that we do not need to 
take fibrant replacements before applying  $(\phi_i)_*$.
The formula for stalks of \cref{lem:infresadjont} implies that 
the functor $\colim_i \phi_{i}^*$ sends projective equivalences (quasi-isomorphisms at each $i$) to quasi-isomorphisms. 
As the functor preserves weak equivalences, we do not need to take cofibrant replacements before applying  $\colim_i \phi_{i}^*$ to compute the derived functor.

The counit of the adjunction is an isomorphism, as stated in \cite[Theorem 11.6]{barnessugrue_weylsheaves}.
The proof of the cited theorem explicitly shows that the unit of the adjunction
is an isomorphism at $C$ when the (adjoint) structure maps of $C$ are isomorphisms. 
We follow that proof line by line for an object $C$ in $\Ch (\Shv_{G_i,\Q} (X_i))$ whose 
(adjoint) structure maps $\overline{\alpha}_{ij} \colon C_i \to (\phi_{ij})_\ast C_j$ are quasi-isomorphims.
Using the facts that homology commutes with filtered colimits and 
fixed points are exact by  \cref{rem:fixedorbitequivalence}, the proof extends to 
show that the unit, and hence derived unit, is a quasi-isomorphism at the chosen object $C$. 
Since fibrant objects in the limit model structure on $\overline{\lim}_i \Ch (\Shv_{G_i,\Q} (X_i))$
are precisely those whose adjoint structure maps are quasi-isomorphims, the unit of the adjunction
is a quasi-isomorphism.
\end{proof}

\section{The algebraic model}\label{sec:qgsp_algmodel}

In \cref{sec:equivariantsheaves} we introduced the derived category of $G$-sheaves over a profinite space, and proved that its construction is continuous in the appropriate manner. In this section, we will build upon these results to construct an algebraic model for rational $G$-spectra for $G$ a profinite group at the level of symmetric monoidal $\infty$-categories, extending results of Barnes--Sugrue~\cite{barnessugrue_spectra}, and thereby establishing a strong form of the profinite counterpart of Greenlees's conjecture for compact Lie groups \cite{greenleesreport}. The starting point is the following result of Wimmer:

\begin{theorem}[{\cite[Theorem 1.1]{wimmer_model}}]\label{thm:finitegroups_algmodel}
    Let $G$ be a finite group. Then the rational geometric fixed point functors $\Phi^H$ (\cref{rem:qgsp_spc}) assemble into an equivalence 
        \[
        \xymatrix{\Phi = \Phi_G \coloneqq (\Phi^H)\colon \Sp_{G,\Q} \ar[r]^-{\sim} & \prod_{(H) \in \Sub(G)/G} \mathsf{D}(\Mod_{\Q}(W_G(H)))}
        \]
    of symmetric monoidal stable $\infty$-categories. If $\sfP(H) = (\Phi^H)^{-1}(0) \in \Spc(\Sp_{G,\Q}^{\omega})$ is the prime tt-ideal corresponding to $H \in \Sub(G)/G$, then this equivalence restricts to a geometric equivalence
        \begin{equation}\label{eq:finitegroup_stalks}
            \xymatrix{\Phi^H\colon (\Sp_{G,\Q})_{\sfP(H)} \ar[r]^-{\sim} & \mathsf{D}(\Mod_{\Q}(W_G(H)) )}
        \end{equation}
    on tt-stalks.
\end{theorem}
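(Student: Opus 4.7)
The plan is to deduce this theorem from an idempotent decomposition of $\Sp_{G,\Q}$ together with an identification of each local factor via geometric fixed points, the main work being the lift from a triangulated to a symmetric monoidal $\infty$-categorical equivalence. The starting point is the discreteness of the Balmer spectrum: for finite $G$, specializing \cref{prop:gsp_spectrumset} to the rational setting shows that only the primes $\sfP(H) = \sfP_G(H,p,1)$ survive (since rationalization kills all higher chromatic information), and by \cref{thm:prism} these are pairwise incomparable, so $\Spc(\Sp_{G,\Q}^{\omega})$ is homeomorphic to $\Sub(G)/G$ with the discrete topology. This is the classical computation of Greenlees--May.

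Given the discreteness, \cref{lem:zerodim_basicclosed} provides a family of pairwise orthogonal $\otimes$-idempotents $(e_{(H)})_{(H) \in \Sub(G)/G}$ in $\Sp_{G,\Q}$ whose sum is the unit $S_{G,\Q}^{0}$; concretely, these can be realized from Balmer--May filtrations by families of subgroups. A standard argument then produces a symmetric monoidal equivalence of stable $\infty$-categories
\[
\Sp_{G,\Q} \xrightarrow{\;\sim\;} \prod_{(H) \in \Sub(G)/G} (\Sp_{G,\Q})_{\sfP(H)},
\]
in which, by \cref{prop:stalkcostalk} and zero-dimensionality, each factor is the tt-stalk $(\Sp_{G,\Q})_{\sfP(H)} = \Gamma_{\sfP(H)}\Sp_{G,\Q}$. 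To identify each factor with $\mathsf{D}(\Mod_{\Q}(W_G(H)))$, one uses that, when restricted to the idempotent summand $e_{(H)}$, the geometric fixed point functor $\Phi^H$ factors through naive $W_G(H)$-spectra and, rationally, the symmetric monoidal stable $\infty$-category of naive $\Q$-linear $W_G(H)$-spectra is equivalent to $\mathsf{D}(\Mod_{\Q}(W_G(H)))$ via the Eilenberg--MacLane functor. This yields \eqref{eq:finitegroup_stalks}, and assembling over all conjugacy classes produces the global equivalence.

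The main obstacle is the symmetric monoidal $\infty$-categorical upgrade beyond the triangulated statement. On the level of homotopy categories, all of this is essentially formal once the spectrum is known to be discrete; however, refining it to an equivalence of symmetric monoidal stable $\infty$-categories requires carrying out every step---the idempotent splitting, the passage through naive $W_G(H)$-spectra, and the Dold--Kan-type equivalence from naive $\Q$-spectra to $\Q$-chain complexes---coherently with respect to all the symmetric monoidal structures in play. This is Wimmer's contribution, which he effects by constructing symmetric monoidal Quillen equivalences at the model-categorical level (using genuine orthogonal $G$-spectra localized at the rational sphere on one side and a product of projective model structures on chain complexes of $\Q[W_G(H)]$-modules on the other); the claimed $\infty$-categorical equivalence is then extracted by passing to underlying $\infty$-categories, and the statement about tt-stalks is immediate since each factor of the product is by construction the stalk at the corresponding discrete point of $\Spc(\Sp_{G,\Q}^{\omega})$.
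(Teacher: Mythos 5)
This theorem is stated in the paper as a citation of Wimmer's result and is not proved there, so there is no in-paper argument to compare against. Your sketch is a correct account of the shape of Wimmer's proof: discreteness of $\Spc(\Sp_{G,\Q}^{\omega})$ (which is most directly the Greenlees--May computation that $A_{\Q}(G)$ is a finite product of copies of $\Q$, though your route through the integral Balmer--Sanders spectrum followed by rationalization also works), a complete set of orthogonal idempotents $e_{(H)}$ giving a symmetric monoidal product decomposition, identification of the $e_{(H)}$-summand under $\Phi^H$ with naive $\Q$-linear $W_G(H)$-spectra, and then Shipley's theorem to pass to $\mathsf{D}(\Q[W_G(H)])$. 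You are also correct that the real content of Wimmer's theorem---beyond the triangulated splitting which is classical---is carrying these steps out coherently at the monoidal model-category level so as to yield a symmetric monoidal $\infty$-categorical equivalence.

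Two small points worth flagging. First, ``Balmer--May filtrations'' is not standard terminology; the idempotents you want come from the mark-homomorphism description of $A_{\Q}(G)$ and the associated families of subgroups, going back to tom Dieck and Greenlees--May. Second, your citations of \cref{prop:gsp_spectrumset} and \cref{thm:prism} are to the paper's profinite-group results, which are themselves proved by reduction to the finite case of Balmer--Sanders; so for a finite $G$ one should simply cite Balmer--Sanders (or Greenlees--May for the rational statement) rather than the profinite generalizations, both to avoid the appearance of circularity and to locate the argument correctly in the literature.
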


We will generalize \cref{thm:finitegroups_algmodel} to profinite groups by using a continuity argument. By  \cref{thm:gsp_contfausk} we can describe the model of $\Sp_G$ for $G$ profinite as a colimit of the categories
\[
\prod_{(H) \in \Sub(G_i)/G_i} \mathsf{D}(\Mod_{\Q}(W_{G_i} (H)))
\]
as $G_i$ runs over the finite quotient groups of $G$. It is reasonable to ask for a more direct description of this category with a more algebraic nature, and this is where our exploration of $G$-equivariant sheaves from \cref{sec:equivariantsheaves} is required.

We shall see that a direct description is obtained as a subcategory of $\sfD(\Shv_{G,\Q}(\Sub(G)))$ on those sheaves which satisfy a certain Weyl condition. In \cref{thm:sheafalgebraicmodel} we will prove that these \emph{Weyl-$G$-sheaves}, which we will explicitly introduce in \cref{ssec:algmodel_weylsheaves}, provide a canonical equivalence of  symmetric monoidal stable $\infty$-categories
        \[
        \xymatrix{\Sp_{G,\Q} \ar[r]^-{\sim} & \mathsf{D} (\WeylSheaves_{G, \Q} (\Sub (G))).}
        \]

\subsection{Preliminaries on rationalization}\label{ssec:rationalization}

We begin by collecting some basic facts about the rationalization of the category of $G$-equivariant spectra, for $G$ a profinite group. We continue to take a presentation of $G$ as an inverse limit of finite quotients groups $(G_i)_{i \in I}$. The category of \emph{rational $G$-equivariant spectra} is then defined as the Verdier localization of $\Sp_G$ away from the thick subcategory generated by the equivariant Moore spectra $M_G(p) \coloneqq S_G/p$ for all primes $p$:
    \[
        \Sp_{G,\Q} \coloneqq \Sp_{G}/\langle \{M_G(p) \mid p \text{ prime}\} \rangle.
    \]
Note that each $M_G(p)$ is inflated from the non-equivariant Moore spectrum $M(p) = S/p$. We are thus in a position to apply \cref{prop:finitelocalizations_continuity}, showing that the inflation functors yield a geometric equivalence
\begin{equation}\label{eq:qgsp_continuity}
\xymatrix{\Sp_{G,\Q} \ar[r]^-{\simeq} & \colim_i^{\omega}\Sp_{G_i,\Q}.}
\end{equation}
Continuity of the spectrum (\cref{prop:spccontinuity}) thus supplies a homeomorphism
    \begin{equation}\label{eq:qgsp_spcccontinuity}
        \xymatrix{\iota_{G,\Q}\colon\Spc(\Sp_{G,\Q}^{\omega}) \ar[r]^-{\cong} & \lim_{i}\Spc(\Sp_{G_i,\Q}^{\omega}),}
    \end{equation}
and our goal for the remainder of this subsection is to identify this spectrum with $\Sub(G)/G$. To this end, let us review the rational Burnside ring for profinite groups, following Dress \cite{dress_notes}. We write $S_{G,\Q}$ for the rational $G$-equivariant sphere spectrum, which is equivalent to the inflation of its non-equivariant version, $S_{G,\Q} \simeq \infl S_{\Q}$. Since inflation commutes with colimits, we have $\Q \otimes S_{G} \simeq S_{G,\Q}$.

\begin{definition}\label{def:qburnsidering}
For $G$ a profinite group, we write $A_{\Q}(G) \coloneqq \Q \otimes A(G) \cong \pi_0\End_{\Sp_{G,\Q}}(S_{G,\Q})$ for the \emph{rational Burnside ring}, where the integral Burnside ring $A(G)$ was defined in \cref{rem:burnsidering}.
\end{definition}

The rational Burnside ring of $G$ is determined by $\Sub(G)/G$ in a way that we now discuss. Write $C(X,R)$ for the set of continuous functions from a space $X$ with values in a topological commutative ring $R$. Pointwise addition and multiplication combined with the diagonal map of $X$ endow $C(X,R)$ with the structure of a commutative ring. Let $k$ be a field equipped with the discrete topology, so that continuous functions $X \to k$ are locally constant. If $X$ is a profinite space, then there is a canonical homeomorphism
\[
\Spec(C(X,k)) \cong X,
\]
see for example \cite[Example 12.1.13(v)]{book_spectralspaces}.

\begin{proposition}[{\cite[Appendix B, Theorem 2.3(a)]{dress_notes}}]\label{prop:ratburnsidering}
If $G$ is a profinite group, then the projection maps $G \to G_i$ induce natural isomorphisms
    \[
    A_{\Q}(G) \cong \colim_iA_{\Q}(G_i) \cong \colim_iC(\Sub(G_i)/G_i,\Q) \cong C(\Sub(G)/G,\Q),
    \]
where $\Q$ carries the discrete topology. Consequently, we have an induced homeomorphism of profinite spaces
    \begin{equation}\label{eq:rationalburnsidering}
        \Spec(A_{\Q}(G)) \cong \Sub(G)/G.
    \end{equation}
\end{proposition}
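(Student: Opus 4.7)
The plan is to establish the chain of isomorphisms term by term, reducing everything to the classical finite-group case and then exploiting the continuity results already proven in the paper. For the first isomorphism $A_{\Q}(G) \cong \colim_i A_{\Q}(G_i)$, I would start from the continuity equivalence \eqref{eq:qgsp_continuity} of rational equivariant spectra, or equivalently from the corresponding statement at the integral level. The endomorphism ring functor $\End_{(-)}(\unit)$ commutes with filtered colimits of tt-categories (this is the point observed in the proof of \cref{cor:continuous_comparisonmaps}), so $A(G) = \pi_0\End_{\Sp_G}(S_G) \cong \colim_i A(G_i)$; since $-\otimes_{\Z}\Q$ also preserves filtered colimits, rationalizing gives the first isomorphism.

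For the second isomorphism, i.e.~the identification $A_{\Q}(F) \cong C(\Sub(F)/F,\Q)$ for a finite group $F$, I would invoke the classical theorem of Dress. A short way to phrase it is that the mark homomorphism from $A_{\Q}(F)$ into the product $\prod_{(H) \in \Sub(F)/F}\Q$ sending a virtual $F$-set $X$ to the tuple $(|X^H|)_H$ is an isomorphism of commutative rings; since $\Sub(F)/F$ is finite discrete, the target is canonically $C(\Sub(F)/F,\Q)$. This isomorphism is natural with respect to the inflation maps induced by surjections $F_1 \twoheadrightarrow F_2$, which on the function-ring side correspond to restriction along the projection $\Sub(F_1)/F_1 \to \Sub(F_2)/F_2$.

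The third isomorphism is a purely point-set topological input, and is where the main care is required. By \cref{prop:profinitesubg}, $\Sub(G)/G \cong \lim_i \Sub(G_i)/G_i$ as profinite spaces. Since $\Q$ is discrete, a continuous map from a profinite space $X$ to $\Q$ is the same as a locally constant function, and for $X = \lim_i X_i$ a cofiltered limit of finite discrete spaces any such function factors through some $X_i$ by compactness. This exactly expresses $C(X,\Q)$ as the filtered colimit $\colim_i C(X_i,\Q)$ along the pullback maps, applied to $X_i = \Sub(G_i)/G_i$. The key compatibility to check is that these pullback maps match the inflation-induced transition maps on rational Burnside rings under the Dress isomorphism of the previous step; this is a direct unwinding of definitions, using that inflation multiplies fixed-point marks by the order of the kernel, which is a unit rationally.

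Finally, the consequence about spectra is automatic: applying $\Spec$ to the composite isomorphism $A_{\Q}(G) \cong C(\Sub(G)/G,\Q)$ and using the homeomorphism $\Spec(C(X,k)) \cong X$ for $X$ profinite and $k$ a discrete field (recalled immediately before the statement) yields $\Spec(A_{\Q}(G)) \cong \Sub(G)/G$. The main obstacle in this plan is not any single step but rather keeping the naturality of the finite-group Dress isomorphism under inflation precise enough to commute the two filtered colimits, which is why I would invest the bulk of the write-up there and treat the remaining pieces as formal.
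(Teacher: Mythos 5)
Your plan is the standard one: reduce to the finite case via continuity, invoke Dress's mark isomorphism for finite groups, and use compactness of the profinite space to commute the filtered colimit past $C(-,\Q)$. The paper itself simply cites Dress for this statement, so there is nothing to compare against directly, but your reconstruction is the natural route and is correct in outline.

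One detail in your naturality check is off, though in a way that makes your life easier rather than harder. You assert that ``inflation multiplies fixed-point marks by the order of the kernel, which is a unit rationally.'' This is not what happens. If $\pi \colon F_1 \twoheadrightarrow F_2$ is a surjection of finite groups and $X$ is a finite $F_2$-set, then for any $H \leqslant F_1$ one has the exact equality $(\infl X)^H = X^{\pi(H)}$, hence $|(\infl X)^H| = |X^{\pi(H)}|$ with no scalar correction. So under the mark isomorphisms, inflation $A_{\Q}(F_2) \to A_{\Q}(F_1)$ corresponds \emph{precisely} to pullback of functions along $\Sub(F_1)/F_1 \to \Sub(F_2)/F_2$, $H \mapsto \pi(H)$, even integrally. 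Rationality is needed only to make the mark homomorphism itself an isomorphism (over $\Z$ it is injective with finite cokernel); it plays no role in the naturality of the transition maps. You should correct this remark, but since the true statement is stronger than the one you used, the argument goes through unchanged.
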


The next result gives the desired identification of the spectrum of $\Sp_{G,\Q}^{\omega}$ via Balmer's comparison map (\cref{rem:comparisonmaps}).

\begin{proposition}\label{prop:qgsp_spc}
    For any profinite group $G$, the comparison map induces a homeomorphism
    \[
        \xymatrix{\Spc(\Sp_{G,\Q}^{\omega}) \ar[r]_-{\rho_G}^-{\cong} & \Spec(A_{\Q}(G)) \cong \Sub(G)/G.}
    \]
\end{proposition}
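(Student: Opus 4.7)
The plan is to leverage the continuity statement \eqref{eq:qgsp_spcccontinuity} together with the analogous continuity result for the Zariski spectrum of the rational Burnside ring (\cref{prop:ratburnsidering}) and use naturality of Balmer's comparison map to reduce to the finite group case. By \cref{cor:continuous_comparisonmaps} applied to the presentation \eqref{eq:qgsp_continuity}, we obtain a commutative square
\[
\xymatrix@C=4em{\Spc(\Sp_{G,\Q}^{\omega}) \ar[r]^-{\iota_{G,\Q}}_-{\cong} \ar[d]_{\rho_G} & \lim_{i}\Spc(\Sp_{G_i,\Q}^{\omega}) \ar[d]^{\lim_{i}\rho_{G_i}} \\
\Spec(A_{\Q}(G)) \ar[r]^-{\cong} & \lim_{i}\Spec(A_{\Q}(G_i))}
\]
in which the horizontal maps are homeomorphisms. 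It therefore suffices to show that $\rho_{G_i}$ is a homeomorphism for each finite quotient group $G_i$ of $G$, since the limit of homeomorphisms in $\widehat{\Cat}_\infty$ is a homeomorphism.

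For the finite case, I would invoke Wimmer's theorem (\cref{thm:finitegroups_algmodel}), which provides a symmetric monoidal equivalence $\Sp_{G_i,\Q} \simeq \prod_{(H) \in \Sub(G_i)/G_i}\mathsf{D}(\Mod_{\Q}(W_{G_i}(H)))$. Since $\Q[W_{G_i}(H)]$ is a finite-dimensional semisimple $\Q$-algebra by Maschke's theorem, each factor $\mathsf{D}(\Mod_{\Q}(W_{G_i}(H)))$ is a tt-field with spectrum a single point. Because the spectrum of a finite product of rigidly-compactly generated tt-categories decomposes as the corresponding disjoint union of spectra, we obtain a natural homeomorphism $\Spc(\Sp_{G_i,\Q}^{\omega}) \cong \Sub(G_i)/G_i$, endowed with the discrete topology. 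On the algebraic side, $A_{\Q}(G_i) \cong C(\Sub(G_i)/G_i, \Q)$ by \cref{prop:ratburnsidering} is a finite product of copies of $\Q$ indexed on $\Sub(G_i)/G_i$, so $\Spec(A_{\Q}(G_i))$ is also the discrete space $\Sub(G_i)/G_i$. A direct verification, tracing the prime tt-ideal $\sfP(H)$ through Balmer's construction and using the description \eqref{eq:finitegroup_stalks} of its stalk, then identifies the comparison map $\rho_{G_i}$ with the identity map on $\Sub(G_i)/G_i$, hence a homeomorphism.

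Combining these two steps completes the proof, with the final identification $\Spec(A_{\Q}(G)) \cong \Sub(G)/G$ following from \cref{prop:ratburnsidering}. The main obstacle I anticipate is not conceptual but bookkeeping: one must carefully track the naturality of $\rho_{G_i}$ with respect to the inflation maps in the system $(G_i)_{i \in I}$ to ensure the square of limits indeed commutes, and confirm that the identifications made in the finite group case are compatible with the transition maps induced by projection $\Sub(G_i)/G_i \to \Sub(G_j)/G_j$. This compatibility is a straightforward consequence of the fact that inflation of rational $G_j$-spectra intertwines the geometric fixed point decomposition of Wimmer's theorem with the analogous decomposition for $G_i$, which in turn corresponds on Burnside rings to restriction along the quotient map.
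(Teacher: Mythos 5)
Your proof is correct and follows essentially the same route as the paper's: apply \cref{cor:continuous_comparisonmaps} to the presentation \eqref{eq:qgsp_continuity} to get the commutative square, deduce $\rho_G$ is a homeomorphism from the finite-group case, and conclude via \cref{prop:ratburnsidering}. The only difference is that the paper simply cites \cite{greenlees_may,wimmer_model} for the finite-group homeomorphism, whereas you re-derive it from Wimmer's theorem; one small slip is your phrase ``the limit of homeomorphisms in $\widehat{\Cat}_\infty$'' — the limit in question is taken in topological (or spectral) spaces, not in $\widehat{\Cat}_\infty$.
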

\begin{proof}
    Applying \Cref{cor:continuous_comparisonmaps} to \eqref{eq:qgsp_continuity} supplies a commutative square
    \[
        \xymatrixcolsep{4pc}{
        \xymatrix{\Spc(\Sp_{G,\Q}^{\omega}) \ar[r]^-{\rho_G} \ar[d]_{\cong} & \Spec(A_{\Q}(G)) \ar[d]^{\cong} \\
        \lim_i\Spc(\Sp_{G_i,\Q}^{\omega}) \ar[r]_-{\lim_i \rho_{G_i}} & \lim_i\Spec(A_{G_i,\Q}).}}
    \]
    For each $i\in I$, the comparison map $\rho_{G_i}$ is a homeomorphism between finite discrete spaces by \cite{greenlees_may,wimmer_model}, so it follows that $\rho_G$ is a homeomorphism as well. Finally, the identification of $\Spec(A_{\Q}(G))$ with $\Sub(G)/G$ has already been observed in \eqref{eq:rationalburnsidering}. 
\end{proof} 

\begin{remark}\label{rem:qgsp_spc}
    Following the discussion of \cref{ssec:gfp}, the geometric fixed point functors on $\Sp_G$ induce $\Q$-linear geometric fixed points
        \[
            \xymatrix{\Phi_{G,\Q}^H\colon \Sp_{G,\Q} \ar[r] & \Sp_{\Q}}
        \]
    for any closed subgroup $H$ of $G$. These functors inherit the same convenient properties as their integral counterparts; in particular, they naturally take values in (naive) $W_G(H)$-equivariant rational spectra. By \cref{thm:finitegroups_algmodel} and the continuity of rational $G$-spectra \eqref{eq:qgsp_continuity} as well as \cref{prop:profinitesubg}, the rational geometric fixed points then induce a continuous bijection
    \[
        \xymatrix{\varphi_{G,\Q} = (\Spc(\Phi_{G,\Q}^H))_H \colon \Sub(G)/G \ar[r]^-{\cong} & \Spc(\Sp_{G,\Q}^{\omega}), \quad H \mapsto (\Phi_{\Q}^{G,H})^{-1}(0),}
    \]
    which is necessarily a homeomorphism, because both sides are profinite spaces. One can check that the composite $\rho_{G} \circ \varphi_{G,\Q}$ is the identity.

    When the context is clear, we will keep the rationalization or ambient group implicit and write $\Phi_{\Q}^H$ or just $\Phi^H$ for the functor $\Phi_{G,\Q}^H$, respectively.
\end{remark}

\begin{corollary}\label{cor:ttideal_classification}
    For any profinite group $G$, support induces a bijection
        \[
            \begin{Bmatrix}
                \text{Thick ideals} \\ \text{of } \Sp_{G,\Q}^\otimes
            \end{Bmatrix} 
                \xymatrix@C=2pc{ \ar@<0.75ex>[r]^{\supp} &  \ar@<0.75ex>[l]^{\supp^{-1}}}
            \begin{Bmatrix}
                \text{Open subsets} \\ \text{of } \Spec(A_{\Q}(G))
            \end{Bmatrix}.
        \]
\end{corollary}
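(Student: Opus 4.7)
The proof will be a short combination of Balmer's classification theorem with two results established earlier in the paper. The plan is to first invoke Balmer's classification (the bijection \eqref{eq:suppttclassification}) for the essentially small tt-category $\Sp_{G,\Q}^{\omega}$, which gives a bijection between thick ideals and Thomason subsets of $\Spc(\Sp_{G,\Q}^{\omega})$. Second, I would use \cref{prop:qgsp_spc} to identify $\Spc(\Sp_{G,\Q}^{\omega})$ homeomorphically with $\Spec(A_{\Q}(G))$ via the comparison map $\rho_G$; under this homeomorphism, Thomason subsets correspond to Thomason subsets.

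The only remaining step is to identify Thomason subsets of $\Spec(A_{\Q}(G))$ with arbitrary open subsets. By \cref{prop:ratburnsidering}, $\Spec(A_{\Q}(G)) \cong \Sub(G)/G$ is a profinite space. Since profinite spaces coincide with their inverse (\cref{lem:charofpro}(4)), \cref{rem:openisThom} tells us that a subset of a profinite space is Thomason if and only if it is open. Combining these observations yields the claimed bijection, with the assignment on the left-hand side being precisely $\supp$.

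I do not anticipate any genuine obstacle here: the statement is an immediate packaging of \cref{prop:qgsp_spc} together with Balmer's theorem and the point-set topological observation that Thomason and open subsets agree in a profinite space. The only minor care needed is to check that the composite bijection is indeed induced by $\supp$ in the stated sense, which follows from the naturality of support under the comparison map (so that the inverse is given by $\supp^{-1}(U) = \{x \in \Sp_{G,\Q}^{\omega} \mid \supp(x) \subseteq U\}$).
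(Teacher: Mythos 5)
Your proof is essentially identical to the paper's: both apply Balmer's classification \eqref{eq:suppttclassification}, transport along the homeomorphism of \cref{prop:qgsp_spc}, and then use that $\Spec(A_{\Q}(G))$ is profinite so that Thomason subsets coincide with opens by \cref{rem:openisThom}. The only cosmetic difference is that you cite \cref{prop:ratburnsidering} and \cref{lem:charofpro} explicitly to justify profiniteness, where the paper records this more briefly.
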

\begin{proof}
    By \cref{prop:qgsp_spc} and the defining property of the spectrum \eqref{eq:suppttclassification}, support composed with the comparison map $\rho$ provides a bijection between thick ideals of $\Sp_{G,\Q}^\omega$ and Thomason subsets of $\Spec(A_{\Q}(G))$. Since the latter space is zero-dimensional and thus profinite, the Thomason subsets coincide with the opens, see \cref{rem:openisThom}.
\end{proof}

\subsection{Weyl-equivariant sheaves}\label{ssec:algmodel_weylsheaves}

Now that we have discussed the category $\Sp_{G,\Q}$ for $G$ a profinite group, we will, in this subsection, introduce the relevant pieces needed for construction of the aforementioned algebraic model.

Recall from \cref{rem:qgsp_spc} that the geometric fixed point functor $\Phi^H_\Q$ for $H \leqslant G$ naturally takes values in naive $W_G(H)$-equivariant spectra.  We will need to record this Weyl action on the algebraic side, and to this end, we will require the notion of \emph{Weyl-$G$-sheaves} as introduced in \cite{barnessugrue_weylsheaves} and \cite{barnessugrue_spectra}. These sheaves are a particular subclass of rational $G$-equivariant sheaves over the profinite $G$-space $\Sub(G)$. In  \cite{barnessugrue_weylsheaves}, the category of Weyl-$G$-sheaves is shown to be equivalent to the category of rational $G$-Mackey functors and hence one concludes that chain complexes of Weyl-$G$-sheaves are Quillen equivalent to Fausk's model category of rational $G$-spectra. 

In this subsection, we introduce the category of Weyl-$G$-sheaves, and similar to $\Shv_{G,\Q}(X)$, show that it is a Grothendieck abelian category with enough injectives and projectives, and equip it with a monoidal model structure by using the techniques developed in \cref{sec:equivariantsheaves}. Let us begin with the definition.

\begin{definition}\label{defn:weylsheaf}
Suppose $G$ is a profinite group. A \emph{rational Weyl-$G$-sheaf} $E$ is a $G$-sheaf of $\Q$-modules over the $G$-space $\Sub (G)$ such that the stalk $E_K$ is $K$-fixed for all $K \in \Sub(G)$. 
  A map of Weyl-$G$-sheaves is a map of $G$-sheaves of $\Q$-modules over $\Sub (G)$.
We denote this category $\WeylSheaves_{G,\Q}(\Sub(G))$. 
\end{definition}

\begin{definition}
    Let $E$ be a rational $G$-sheaf over $\Sub (G)$. 
A \emph{Weyl section} of $E$
is a section $s \colon U \to E$ such that for each $K \in U$, $s_K$ is $K$-fixed.
\end{definition}

The definitions directly imply that every section of a Weyl-$G$-sheaf is a Weyl section. Moreover, by \cite[Corollary 10.9]{barnessugrue_gsheaves}, for any $G$-equivariant sheaf $E$, 
a $K$-fixed germ in $E_K$ can be represented by a Weyl section. 
This fact implies that the subspace of a $G$-sheaf $E$ 
given by the union of $E_K^K$ over all closed subgroups $K$, satisfies 
the local homeomorphism condition and is a Weyl-$G$-sheaf. 
This construction is part of an adjunction:

\begin{lemma}[{\cite[Lemma 10.11]{barnessugrue_gsheaves}}]\label{lem:weyladjunction}
Let $G$ be a profinite group. There is an adjunction
\[
\incfunctor
\colon 
\WeylSheaves_{G, \Q} (\Sub(G))
\rightleftarrows 
\Shv_{G, \Q} (\Sub(G))
\noloc 
{\weylfunctor},
\] 
where we have followed the usual convention of displaying the left adjoint on the top. Moreover, the counit of this adjunction
\[
\incfunctor (\weylfunctor(E)) \lra E
\]
is a monomorphism of rational $G$-sheaves,
and the unit map 
\[
F \lra \weylfunctor(\incfunctor F) 
\]
is a natural isomorphism. 
\end{lemma}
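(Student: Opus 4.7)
The plan is to construct the right adjoint $\weylfunctor$ explicitly and then verify the adjunction together with the stated properties of the unit and counit by stalkwise arguments. First, I would define $\weylfunctor(E) \subseteq E$ as the subspace $\bigsqcup_{K \in \Sub(G)} E_K^K$, with the subspace topology and the $G$-action inherited from $E$. The subspace is preserved by the $G$-action: if $e \in E_K^K$ and $g \in G$, then for any $h = g k g^{-1} \in gKg^{-1}$ one has $h(ge) = gke = ge$, so $ge \in E_{gKg^{-1}}^{gKg^{-1}}$. The key technical point is that $\weylfunctor(E) \to \Sub(G)$ is a local homeomorphism; this is precisely the content of the remark preceding the lemma, which ensures that every $K$-fixed germ is realised by a Weyl section, providing local trivialisations. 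Thus $\weylfunctor(E)$ is a $G$-sheaf, and its stalk at $K$ is $E_K^K$, which is tautologically $K$-fixed, so $\weylfunctor(E) \in \WeylSheaves_{G,\Q}(\Sub(G))$. Functoriality is immediate: a $G$-equivariant map $f \colon E \to E'$ restricts on stalks to $E_K^K \to (E'_K)^K$ since each $f_K$ is $K$-equivariant.

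Next, I would establish the adjunction by exhibiting natural bijections between $\Hom_{\Shv_{G,\Q}}(\incfunctor F, E)$ and $\Hom_{\WeylSheaves_{G,\Q}}(F, \weylfunctor E)$. Given a morphism $\phi \colon \incfunctor F \to E$ in $\Shv_{G,\Q}(\Sub(G))$, the stalk map $\phi_K \colon F_K \to E_K$ is $K$-equivariant; since $F_K$ is $K$-fixed by the Weyl condition on $F$, the image of $\phi_K$ lies in $E_K^K$, so $\phi$ factors uniquely through $\weylfunctor(E)$. The inverse operation is postcomposition with the inclusion $\weylfunctor(E) \hookrightarrow E$. Naturality in both variables is straightforward from the stalkwise description.

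For the counit, the map $\incfunctor(\weylfunctor(E)) \to E$ is by construction a subspace inclusion, so at each $K$ it is the inclusion $E_K^K \hookrightarrow E_K$, which is injective. Monomorphisms of $G$-equivariant sheaves over $\Sub(G)$ are detected on stalks (as used in the proof of \cref{lem:epicompact}, cf.~also the characterisation of isomorphisms there), so the counit is a monomorphism. For the unit, if $F$ is already a Weyl-$G$-sheaf, then $F_K^K = F_K$ for every $K \in \Sub(G)$, so the map $F \to \weylfunctor(\incfunctor F)$ is the identity on each stalk, hence an isomorphism by the stalkwise criterion of \cref{lem:epicompact}.

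The main subtlety is verifying that $\weylfunctor(E)$ is indeed a sheaf space, i.e.\ that $\bigsqcup_K E_K^K$ is open in $E$ (so that the projection is a local homeomorphism). This rests entirely on the Weyl-section lifting property reviewed just before the lemma, which was itself established in \cite[Corollary~10.9]{barnessugrue_gsheaves}; without it, $K$-fixed germs need not admit an open neighbourhood consisting of fixed germs, and the construction would fail. Once that input is granted, the rest of the argument is a formal exercise in stalkwise bookkeeping.
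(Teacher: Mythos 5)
Your proposal is correct and follows exactly the construction the paper sketches in the remark preceding the lemma (the paper itself cites Barnes--Sugrue \cite[Lemma 10.11]{barnessugrue_gsheaves} for the details): you take $\weylfunctor(E) = \bigsqcup_K E_K^K$ as an open subspace of the sheaf space, invoke \cite[Corollary 10.9]{barnessugrue_gsheaves} for the local-homeomorphism condition, and verify the adjunction and unit/counit properties stalkwise using the monomorphism/isomorphism criteria of \cref{lem:epicompact}. You correctly identified the single non-formal input (Weyl-section lifting) and everything else is indeed the formal bookkeeping you describe.
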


\begin{remark}
By \cite[Theorem 10.13]{barnessugrue_gsheaves}, the category of Weyl-$G$-sheaves is an abelian category with all small colimits and limits. Since the constant sheaf at $\Q$ is Weyl, and the tensor product of two Weyl-$G$-sheaves 
is also a Weyl-$G$-sheaf, the monoidal structure on equivariant sheaves 
descends to the category of Weyl-$G$-sheaves. The inclusion functor $\incfunctor$ of \cref{lem:weyladjunction} is a strong monoidal functor
and is fully faithful by the last statement of the lemma. 
Moreover, $\WeylSheaves_{G, \Q} (\Sub(G))$ is a closed monoidal category, with 
internal function object given by
\[
\weylfunctor (\underline{\hom} (\incfunctor -, \incfunctor - )). 
\]
\end{remark}

We prove that the functor $\weylfunctor$ is also left adjoint to
$\incfunctor$. The key result is a sheaf-level analogue of 
 \cref{rem:fixedorbitequivalence} below that allows us to construct 
Weyl-$G$-sheaves in terms of orbits. 

\begin{lemma}\label{lem:quotientweylsheaf}
Let $E$ be a rational $G$-sheaf over $\Sub(G)$. 
Let $\weylfunctor_Q E$ be the space given by 
\[
\coprod_{K \in \Sub(G)} (E_K)/K
\]
equipped with the quotient topology and with the induced map to $\Sub(G)$. Then ${\weylfunctor_Q} {E}$ is a Weyl-$G$-sheaf over $\Sub(G)$ with a natural map $E \to \weylfunctor_Q E$. Furthermore, the composite $\weylfunctor E \to E \to \weylfunctor_Q E$ is an isomorphism. Hence, $\weylfunctor E$ is a retract of $E$.
\end{lemma}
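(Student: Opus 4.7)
The plan is to construct $\weylfunctor_Q E$ explicitly as a Weyl-$G$-sheaf and then exhibit the desired isomorphism via rational averaging.

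First I would set up the structure: take $\weylfunctor_Q E = \coprod_{K \in \Sub(G)} E_K/K$ set-theoretically and equip it with the quotient topology induced by the natural stalkwise projection $\pi \colon E \to \weylfunctor_Q E$. The $G$-action on $E$ descends to a well-defined action on $\weylfunctor_Q E$ via $g \colon [e] \in E_K/K \mapsto [ge] \in E_{gKg^{-1}}/(gKg^{-1})$, and the projection to $\Sub(G)$ is $G$-equivariant by construction. By definition of the quotient topology $\pi$ is continuous and $G$-equivariant, and the stalk of $\weylfunctor_Q E$ at $K$ is $K$-fixed because we have quotiented by the $K$-action.

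The key step is to verify that $\weylfunctor_Q E \to \Sub(G)$ is a local homeomorphism. Given $[e] \in E_K/K$, I would use rationality to produce a $K$-fixed representative: the stalk $E_K$ is a discrete $K$-module, so the stabilizer $K_e \leqslant K$ of $e$ has finite index, and the averaged element $\bar e \coloneqq [K:K_e]^{-1}\sum_{[k] \in K/K_e} ke \in E_K^K$ satisfies $[\bar e] = [e]$. By \cite[Corollary 10.9]{barnessugrue_gsheaves}, such a $K$-fixed germ is represented by a Weyl section $\bar s \colon U \to E$. Then $\pi \circ \bar s$ is a continuous local section of $\weylfunctor_Q E$ through $[e]$, and its image is open in $\weylfunctor_Q E$: since $\bar s(L) \in E_L^L$ has trivial $L$-orbit for each $L \in U$, we obtain $\pi^{-1}(\pi(\bar s(U))) = \bar s(U)$, which is already open in $E$. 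Such sections cover $\weylfunctor_Q E$, establishing the local homeomorphism property and hence that $\weylfunctor_Q E$ is a Weyl-$G$-sheaf.

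For the final claim, the composite $\weylfunctor E \to E \to \weylfunctor_Q E$ is stalkwise equal to the canonical map $E_K^K \hookrightarrow E_K \twoheadrightarrow E_K/K$. Rationally this is an isomorphism: the $K$-action on any given element factors through a finite quotient $K/K_e$, and for a finite group acting on a rational vector space, the canonical map from invariants to coinvariants is an isomorphism, with inverse the averaging projector. Invoking the stalkwise isomorphism criterion of \cref{lem:epicompact}, the composite is an isomorphism of rational $G$-sheaves. Composing its inverse with $\pi \colon E \to \weylfunctor_Q E$ then yields the desired retraction $E \to \weylfunctor E$ splitting the inclusion $\weylfunctor E \hookrightarrow E$.

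The main obstacle will be the topological verification of the local homeomorphism property, which requires translating the abstract quotient topology into concrete statements about sections. The crucial observation, only valid rationally, is that every germ in $\weylfunctor_Q E$ admits an $L$-fixed representative; the triviality of the $L$-orbit then makes $\pi^{-1}\pi$ act as the identity on the corresponding Weyl section, which is precisely what is needed for the openness of the image.
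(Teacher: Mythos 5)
Your overall structure mirrors the paper's (very terse) proof: directly verify that $\weylfunctor_Q E$ is a Weyl-$G$-sheaf, then prove the isomorphism stalkwise by the rational fixed-points-equals-orbits equivalence. The stalkwise isomorphism step is correct and is precisely the content of \cref{rem:fixedorbitequivalence}. However, there is a genuine gap in your verification of the local homeomorphism property.

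The issue is your claim that $\pi^{-1}(\pi(\bar s(U))) = \bar s(U)$ because ``$\bar s(L) \in E_L^L$ has trivial $L$-orbit.'' This treats $E_L/L$ as the \emph{set-theoretic orbit space}, but the lemma requires $E_L/L$ to be the $\Q$-module of \emph{coinvariants} $(E_L)_L = E_L / I_L E_L$ --- indeed you yourself use the coinvariant interpretation in the final paragraph, and the orbit-space interpretation would break the isomorphism claim (for $K=\Z/2$ acting on $\Q^2$ by swapping coordinates, the orbit of $(1,0)$ contains no fixed point, so $E_K^K \to E_K/K$ fails to be surjective onto the orbit space). For the coinvariant quotient, the preimage $\pi^{-1}([\bar s(L)])$ is the entire affine coset $\bar s(L) + I_L E_L \subseteq E_L$, not just $\{\bar s(L)\}$, so $\pi^{-1}(\pi(\bar s(U))) = \bigcup_{L \in U}(\bar s(L) + I_L E_L)$ is strictly larger than $\bar s(U)$ whenever some $L$ acts nontrivially on its stalk. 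Your openness argument therefore does not go through as written.

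The openness is nevertheless true, but requires showing instead that the fiberwise averaging map $\operatorname{avg}\colon E \to E$ (sending $e \in E_L$ to its $L$-average, landing in $\weylfunctor E$) is continuous, after which $\pi^{-1}(\pi(\bar s(U))) = \operatorname{avg}^{-1}(\bar s(U))$ is open since $\bar s(U)$ is open in $E$. Continuity of $\operatorname{avg}$ is not immediate: one must use the local sub-equivariance of sections (\cite[Corollary A]{barnessugrue_gsheaves}) to choose an $N$-equivariant section $t$ through a given germ, restrict to the basic neighbourhood $\cB(L,N)$, and observe that on this neighbourhood
\[
\operatorname{avg}(t(L')) = \frac{1}{|Q|}\sum_{q \in Q}\tilde q\, t(\tilde q^{-1} L'),
\]
where $Q = NL/N$ and $\tilde q \in G$ is any lift of $q$; the right-hand side is manifestly a continuous function of $L'$. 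This is the step that your argument elides.
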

\begin{proof}
One can verify directly that $\weylfunctor_Q E \to \Sub(G)$ is a 
rational $G$-equivariant sheaf and that the claimed map exists. 

For the isomorphism statement one looks stalkwise and applies 
 \cref{rem:fixedorbitequivalence} to see that rationally, 
fixed points and orbits are isomorphic. The retraction statement follows. 
\end{proof}

Looking at stalks gives the first statement of the next lemma.
\begin{lemma}
Let $F$ be a Weyl-$G$-sheaf and $E$ any rational $G$-sheaf. 
A map of sheaves $E \to \incfunctor F$ must factor uniquely through the map from $E$ to
$\weylfunctor_Q E \cong \weylfunctor E$.

Hence, there is an adjunction
\[
\weylfunctor
\colon 
\Shv_{G, \Q} (\Sub(G))
\rightleftarrows 
\WeylSheaves_{G, \Q} (\Sub(G))
\noloc 
\incfunctor.
\] 
\end{lemma}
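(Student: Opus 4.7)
The plan is to establish the factorization statement stalkwise and then promote it to a statement about sheaves via the quotient topology on $\weylfunctor_Q E$, after which the advertised adjunction follows formally from \cref{lem:quotientweylsheaf}.

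First I would fix a map $\varphi\colon E \to \incfunctor F$ of rational $G$-sheaves over $\Sub(G)$ and examine its stalks. For each $K \in \Sub(G)$, the stalk $(\incfunctor F)_K = F_K$ carries the trivial $K$-action because $F$ is a Weyl-$G$-sheaf. Thus $\varphi_K\colon E_K \to F_K$ is automatically $K$-invariant, so there is a unique factorization of $\varphi_K$ through the $K$-orbit quotient $E_K \twoheadrightarrow E_K/K = (\weylfunctor_Q E)_K$. Uniqueness follows immediately from the stalkwise surjectivity of the canonical map $E \to \weylfunctor_Q E$.

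Next I would assemble these stalkwise maps into a morphism of $G$-sheaves $\weylfunctor_Q E \to \incfunctor F$ over $\Sub(G)$. Continuity of this map is precisely the universal property of the quotient topology with which $\weylfunctor_Q E$ is defined in \cref{lem:quotientweylsheaf}: the composite $E \to \weylfunctor_Q E \to \incfunctor F$ agrees with $\varphi$ and is therefore continuous, so the factorization is continuous. Equivariance and $\Q$-linearity are inherited from $\varphi$ on stalks, and the factorization is unique as a map of sheaves because it is determined on stalks. This yields the first assertion of the lemma.

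For the adjunction, I would use the natural isomorphism $\weylfunctor E \cong \weylfunctor_Q E$ provided by \cref{lem:quotientweylsheaf} (which in turn rests on the key rational input of \cref{rem:fixedorbitequivalence} that fixed points and orbits agree rationally). Together with the factorization result this yields a natural bijection
\[
\Hom_{\Shv_{G,\Q}(\Sub(G))}(E, \incfunctor F) \cong \Hom_{\WeylSheaves_{G,\Q}(\Sub(G))}(\weylfunctor E, F),
\]
witnessing $\weylfunctor$ as a left adjoint to $\incfunctor$. Naturality in both variables is automatic from the construction. The main obstacle, such as it is, is bookkeeping: verifying that the stalkwise factorization is genuinely continuous with respect to the declared quotient topology, but this is essentially built into the definition of $\weylfunctor_Q E$, so the argument is largely formal once the stalkwise statement and \cref{lem:quotientweylsheaf} are in hand.
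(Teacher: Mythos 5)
Your proposal is correct and takes essentially the same approach as the paper: the paper's proof consists of the single sentence ``Looking at stalks gives the first statement of the next lemma,'' and your argument is precisely the careful expansion of that — factoring $\varphi_K \colon E_K \to F_K$ through $E_K/K$ using that $F_K$ is $K$-fixed, promoting this to a continuous sheaf map via the universal property of the quotient topology on $\weylfunctor_Q E$, and then deducing the adjunction bijection from \cref{lem:quotientweylsheaf}.
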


With our ambidextrous adjunction in hand, we can begin to prove the results that we require. The following proposition collects several important facts about the category $\WeylSheaves_{G, \Q} (\Sub(G))$. In particular, it provides analogues of the results of \cref{ssec:eqsheaves_abelian} and \cref{ssec:eqsheaves_derived} that we proved for the category of all equivariant sheaves.

\begin{proposition}\label{prop:weylsheaves_properties}
For $G$ a profinite group, the category of Weyl-$G$-sheaves:
$\WeylSheaves_{G, \Q} (\Sub(G))$, 
\begin{enumerate}
    \item has enough injectives, given by taking the equivariant skyscraper sheaves 
which are Weyl; 
    \item has enough projectives, given by applying the functor $\weylfunctor$ to the 
sheaves $\const_{G \cdot Y}(N)$ from  \cref{def:freeconstantsheaf};
    \item is a Grothendieck abelian category;
    \item has a monoidal projective model structure on its category of chain complexes, which is finitely generated, has a 
cofibrant unit, and whose weak equivalences are the quasi-isomorphisms, which may be characterized as the stalkwise homology isomorphisms;
    \item has an associated derived category $\sfD(\WeylSheaves_{G, \Q} (\Sub(G)))$, which is a compactly generated symmetric monoidal stable $\infty$-category.
\end{enumerate}
\end{proposition}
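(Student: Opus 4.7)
The strategy is to mirror the proofs for $\Shv_{G,\Q}(\Sub(G))$ carried out in \cref{sec:equivariantsheaves}, transferring them through the ambidextrous adjunction $(\weylfunctor, \incfunctor, \weylfunctor)$ recorded in \cref{lem:weyladjunction} and the discussion that follows it. The key formal consequences of this double adjunction are that $\incfunctor$ preserves both limits and colimits (hence is exact) and fully faithful, and that $\weylfunctor$ similarly preserves both limits and colimits. In particular, projectivity and injectivity transport cleanly between the two categories, and so do the formation of stalks and sections since these are computed in the underlying sheaf category.

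For~(1), one checks directly that the equivariant skyscraper sheaf at $K \in \Sub(G)$ built from an injective discrete $\Q[W_G(K)]$-module is automatically a Weyl sheaf, since its nonzero stalks at $K$ and at $G$-conjugates $gKg^{-1}$ of $K$ are $K$-fixed, respectively $gKg^{-1}$-fixed, by construction. Given any $F \in \WeylSheaves_{G,\Q}(\Sub(G))$, embedding $\incfunctor F$ into a product of injective skyscrapers provides the required embedding in $\WeylSheaves_{G,\Q}(\Sub(G))$; this uses that $\incfunctor$ is fully faithful and reflects monomorphisms. For~(2), since $\weylfunctor$ is a left adjoint whose right adjoint $\incfunctor$ is exact, the objects $\weylfunctor(\const_{G \cdot Y}(N))$ are projective in $\WeylSheaves_{G,\Q}(\Sub(G))$. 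That they are enough follows from the fact that every $K$-fixed germ of a Weyl sheaf is represented by a Weyl section together with the image under $\weylfunctor$ of the generating argument of \cref{lem:grothengen}; finite generation passes through $\weylfunctor$ because $\incfunctor$ preserves filtered colimits of monomorphisms. Part~(3) is then immediate: we have a set of finitely generated generators, and filtered colimits in $\WeylSheaves_{G,\Q}(\Sub(G))$ are exact since they agree with those in $\Shv_{G,\Q}(\Sub(G))$ under the exact colimit-preserving functor $\incfunctor$.

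For~(4), the projective model structure is built from the generating sets
\[
I_W = \{(S^{n-1}\Q \to D^n\Q) \otimes \weylfunctor(\const_{G \cdot Y}(N))\}, \quad J_W = \{(0 \to D^n\Q) \otimes \weylfunctor(\const_{G \cdot Y}(N))\},
\]
following verbatim the arguments of \cref{prop:equivariantsheafmodel,prop:sheafprojective,prop:monoidalsheafmodel}; the equivalent characterisations of quasi-isomorphisms from \cref{prop:sectionsandstalks} descend through $\incfunctor$ because the latter commutes with stalks and with sections on compact open subsets. Finite generation and the monoidal pushout-product axiom reduce to the corresponding statements for equivariant sheaves, using that the tensor product of two Weyl sheaves is again Weyl and that $\incfunctor$ is strong monoidal. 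Finally, (5) follows from finite generation exactly as in \cref{prop:gsheafhocompact}. The main obstacle I anticipate is~(2): verifying simultaneously that $\weylfunctor(\const_{G \cdot Y}(N))$ is projective, generating, finitely generated, and interacts correctly with the monoidal product requires careful unwinding of the stalkwise description $(\weylfunctor E)_K \cong E_K^K$ of \cref{lem:quotientweylsheaf}, combined with the rational identification of $K$-fixed points with $K$-orbits, which is what ultimately makes the transfer of projectivity and the control of $\otimes$ with Weylification viable.
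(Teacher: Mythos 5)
Your proof follows the same plan as the paper's: identify the Weylified generators $\weylfunctor(\const_{G\cdot Y}(N))$, transfer the abelian, model-categorical, and monoidal properties from $\Shv_{G,\Q}(\Sub(G))$ through the ambidextrous $(\weylfunctor,\incfunctor)$ adjunction, and reduce the pushout-product axiom via the retract identification $\weylfunctor E \to E \to \weylfunctor E$ from \cref{lem:quotientweylsheaf}. The one local difference is in (2): you argue projectivity abstractly (a left adjoint to the exact $\incfunctor$ preserves projectives, and $\weylfunctor\incfunctor\cong\id$ transports surjectivity), whereas the paper instead observes that $\weylfunctor(\const_{G\cdot Y}(N))$ represents the exact functor $E \mapsto E(Y)^{N\cap G_Y}$; both are correct, and yours has the small advantage of making the transport of projectivity, generation, and finite generation through the adjunction uniformly transparent, at the cost of losing the explicit formula the paper reuses when checking smallness and the interaction with $\otimes$.
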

\begin{proof}
The first point is Corollary \cite[10.15]{barnessugrue_gsheaves}.
For the second, we note that $\weylfunctor (\const_{G \cdot Y}(N))$ represents the functor 
$E \to E(U)^{N \cap G_U}$ (where $G_U$ is the stabiliser of $U$ in $G$) 
from Weyl-$G$-sheaves to $\Q$-modules with a discrete action of $G_U / N \cap G_U$. 

The third point follows from  \cref{thm:gshvgrothendieck} using 
the objects $\weylfunctor (\const_{G \cdot Y}(N))$ to construct a family of generators. 
For the fourth point, we construct the projective model structure using 
the generating sets 
\begin{align*}
\weylfunctor I &= \{ (S^{n-1}\Q \to D^n \Q) \otimes \weylfunctor (\const_{G \cdot Y}(N)) \mid n \in \Z, \ Y \subseteq X 
\textnormal{ open compact}, N \trianglelefteqslant G \textnormal{ open}   \}, \\
\weylfunctor J &= \{ (0 \to D^n \Q) \otimes \weylfunctor (\const_{G \cdot Y}(N)) \mid n \in \Z, \ Y \subseteq X 
\textnormal{ open compact}, N \trianglelefteqslant G \textnormal{ open}   \}, 
\end{align*}
and apply the same argument as in  \cref{ssec:eqsheaves_derived}. In particular the weak equivalences are again the 
quasi-isomorphisms and the fibrations are the stalkwise epimorphisms. Moreover, 
quasi-isomorphisms are characterized as being stalkwise homology isomorphisms.

To see that the model structure is monoidal, consider the pushout product $\weylfunctor i \square \weylfunctor i'$
for $i, i' \in I$, which is a Weyl-$G$-sheaf as well. As $\weylfunctor i$ is a retract of $i$ by \cref{lem:quotientweylsheaf}, it is a cofibration in 
the projective model structure on rational $G$-sheaves over $\Sub(G)$. 
As that model structure is monoidal, $\weylfunctor i \square \weylfunctor i'$ 
is a cofibration in the projective 
model structure on rational $G$-sheaves $\Sub(G)$. The functor 
\[
\weylfunctor \colon 
\Ch (\Shv_{G, \Q} (\Sub(G)))
\longrightarrow
\Ch (\WeylSheaves_{G, \Q} (\Sub(G)))
\]
is a left Quillen functor, as witnessed by the choice of generating sets, hence
\[
\weylfunctor (\weylfunctor i \square \weylfunctor i') \cong
\weylfunctor i \square \weylfunctor i'
\]
is a cofibration in the projective model structure on Weyl-$G$-sheaves. 
Finally, the unit is the constant sheaf at $\Q$, which is cofibrant.

The final point of the proposition follows from the properties of the model structure proved in the fourth point using the observations of \cref{prop:gsheafhocompact}.
\end{proof}

One may observe that the $(\weylfunctor, \incfunctor)$ adjoint pair interacts well with the homotopy theory: indeed, in the proof of \cref{prop:weylsheaves_properties} we have already used the fact that $\Weyl$ is left Quillen. We obtain the following corollary.

\begin{corollary}
There are adjoint pairs $(\weylfunctor, \incfunctor)$ and 
$(\incfunctor,\weylfunctor)$ between $\sfD( \Shv_{G, \Q} (\Sub(G)))$ and 
$\sfD( \WeylSheaves_{G, \Q} (\Sub(G)))$. Moreover the inclusion functor $\incfunctor$ is strong monoidal.
\end{corollary}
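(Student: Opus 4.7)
Both adjunctions and the strong monoidality of $\incfunctor$ have been established at the abelian (and hence chain-complex) level in \cref{lem:weyladjunction} and the discussion thereafter. My strategy is to promote these to the $\infty$-categorical setting by verifying that the relevant functors are homotopical with respect to the model structures of \cref{prop:monoidalsheafmodel} and \cref{prop:weylsheaves_properties}, and that each adjunction is actually a Quillen adjunction.

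The first step is to show that both $\incfunctor$ and $\weylfunctor$ preserve quasi-isomorphisms. By \cref{prop:sectionsandstalks} together with item (4) of \cref{prop:weylsheaves_properties}, weak equivalences are detected stalkwise in both model structures. On stalks, $\incfunctor$ is the identity, while \cref{lem:quotientweylsheaf} computes $\weylfunctor$ as the orbit functor $E_K \mapsto (E_K)/K$, which is exact on rational discrete $K$-modules (being a filtered colimit of finite-group orbit functors in characteristic zero). Consequently, both functors descend to functors of the associated $\infty$-categories without derived replacement, and the $1$-categorical adjunctions lift automatically to $\infty$-categorical adjunctions.

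Next I would verify the underlying Quillen structure. The adjunction $(\weylfunctor,\incfunctor)$ is Quillen by construction, as observed in the proof of \cref{prop:weylsheaves_properties}. For the reverse pair, $\incfunctor$ preserves colimits (as a left adjoint at the abelian level by \cref{lem:weyladjunction}) and sends the generating (acyclic) cofibrations $\weylfunctor I$ (resp.~$\weylfunctor J$) of Weyl-$G$-sheaves to (acyclic) cofibrations in $G$-sheaves: by \cref{lem:quotientweylsheaf}, the natural transformations $\incfunctor\weylfunctor \to \mathrm{Id} \to \incfunctor\weylfunctor$ compose to the identity, so each $\incfunctor\weylfunctor i$ is a retract of $i \in I$ in the arrow category, hence is itself a (acyclic) cofibration. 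This shows that $\incfunctor$ is left Quillen, producing the desired Quillen adjunction $(\incfunctor,\weylfunctor)$. I expect this retraction argument at the level of arrows to be the main technical point, since it requires naturality of the retraction and compatibility with the tensoring by the algebraic cells $S^{n-1}\Q \to D^n\Q$.

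Finally, since $\incfunctor$ is a strong symmetric monoidal left Quillen functor between monoidal model categories, standard machinery promotes it to a strong symmetric monoidal functor between the underlying symmetric monoidal $\infty$-categories: preservation of cofibrations ensures that $\incfunctor$ carries cofibrant resolutions on the Weyl side to cofibrant $G$-sheaves, so the $1$-categorical identification $\incfunctor(F \otimes F') \cong \incfunctor F \otimes \incfunctor F'$ already computes both derived tensor products and assembles into the required coherent monoidal equivalence.
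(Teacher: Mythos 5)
Your proof is correct and is consistent with the paper's (implicit) approach: the paper simply remarks that $\weylfunctor$ was shown to be left Quillen in the proof of \cref{prop:weylsheaves_properties} and then states the corollary without further argument. You fill in what is left implicit — the stalkwise characterization of weak equivalences (using \cref{prop:sectionsandstalks} and \cref{prop:weylsheaves_properties}(4)), rational exactness of orbits via \cref{lem:quotientweylsheaf} to see both functors are homotopical, the retract argument identifying $\incfunctor\weylfunctor i$ as a retract of $i$ to get the reverse Quillen structure, and the standard promotion of strong monoidality of a left Quillen functor between monoidal model categories to the underlying symmetric monoidal $\infty$-categories. All of these steps check out; in particular the retraction $\incfunctor\weylfunctor \to \mathrm{Id} \to \incfunctor\weylfunctor$ is natural by \cref{lem:quotientweylsheaf}, and tensoring with the algebraic cells preserves this retraction, so the argument is sound.
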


\subsection{Constructing the equivalence}\label{ssec:algmodel_equivalence}

In this section we will construct the desired equivalence to provide an algebraic model for rational $G$-spectra via Weyl-$G$-sheaves.

\begin{remark}\label{rem:basechangeWeyl}
The base change functors of \cref{subsec:basechanged} can be extended to the Weyl setting by \cite[Construction 11.2]{barnessugrue_weylsheaves}. That is, whenever we have a surjection $\phi_{ij} \colon G_i \to G_j$ of finite quotient groups of $G$, we have an adjoint pair
\begin{equation}\label{eq:inducedonsheaves2}
{\phi_{ij}^\ast} \colon 
\WeylSheaves_{G_j, \Q} (\Sub(G_j))
\rightleftarrows 
\WeylSheaves_{G_i, \Q} (\Sub(G_i))
\noloc 
({\phi_{ij}})_*.
\end{equation}
This functor changes the base space via a pullback $\Sub(G_i) \to \Sub(G_j)$ and acts as an inflation functor on the stalks. 
\end{remark}

In preparation for proving the existence of the algebraic model, we highlight that there is a version of \cref{thm:eqsheaves_continuity} restricted to Weyl sheaves.

\begin{proposition}\label{thm:weyl_sheaves_continuous}
    Let $G = \lim_i G_i$ be a profinite group. Then there is a geometric equivalence
        \[
            \colim_{i}^\omega\sfD(\WeylSheaves_{G_i,\Q} (\Sub(G_i))) \xrightarrow[\quad]{\sim} \sfD(\WeylSheaves_{G, \Q}(\Sub(G))),
        \]
    where the transition maps are the inflation maps of \cref{rem:basechangeWeyl}.
\end{proposition}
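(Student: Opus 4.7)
The proof proceeds in direct parallel to that of \cref{thm:eqsheaves_continuity}, working with the monoidal projective model structure on $\Ch(\WeylSheaves_{G_i,\Q}(\Sub(G_i)))$ established in \cref{prop:weylsheaves_properties}(4) and the transition adjunctions \eqref{eq:inducedonsheaves2} on Weyl sheaves. The key observation is that these adjunctions, together with the Weyl counterpart of \cref{thm:sheaves_continuous} at the abelian level (as established by Barnes and Sugrue in \cite{barnessugrue_weylsheaves}), satisfy all the properties used in the proof of \cref{thm:eqsheaves_continuity}.

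First I would verify the Weyl analogue of \cref{lem:canderivefunctors}: each pair $(\phi_{ij}^*,(\phi_{ij})_*)$ is a Quillen adjunction between the projective model structures on $\Ch(\WeylSheaves_{G_i,\Q}(\Sub(G_i)))$ and $\Ch(\WeylSheaves_{G_j,\Q}(\Sub(G_j)))$, and the left adjoint $\phi_{ij}^*$ preserves compact objects. Both facts follow from the stalkwise characterization of fibrations and weak equivalences on Weyl sheaves, together with the explicit stalk formula for $\phi_{ij}^*$, which is inherited from the formula \eqref{eq:simplestalks} for the corresponding functor on all equivariant sheaves and manifestly preserves the Weyl fixed-point condition.

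Next, I would construct the Quillen adjunction
\[
\colim_i \phi_i^*\colon \overline{\lim}_i \Ch(\WeylSheaves_{G_i,\Q}(\Sub(G_i))) \rightleftarrows \Ch(\WeylSheaves_{G,\Q}(\Sub(G))) \noloc (\phi_i)_*
\]
between the model-categorical limit (in the sense of \cref{rem:limitmodelstructure}) and the projective model structure, and verify that the derived unit and counit are quasi-isomorphisms. The counit is already an isomorphism at the abelian level by the Weyl analogue of \cref{thm:sheaves_continuous}; the unit is a stalkwise quasi-isomorphism by the same reasoning as in the proof of \cref{thm:eqsheaves_continuity}, which uses that stalks commute with filtered colimits and that rational fixed points for open subgroups are exact on discrete modules (see \cref{rem:fixedorbitequivalence}).

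Finally, as in the concluding step of the proof of \cref{thm:eqsheaves_continuity}, I would invoke \cite[Theorem 4.1]{bergner} to pass from the model-categorical limit to the $\infty$-categorical limit, and then apply the duality $\Pr^L \simeq (\Pr^R)^{\op}$ of \eqref{eq:prlprr} combined with the preservation of compact objects by each $\phi_{ij}^*$ to rewrite this limit in $\Pr^R$ as the desired filtered colimit in $\Pr^{L,\omega}$. The main obstacle is confirming the compatibility of the Weyl transition maps with their counterparts for all equivariant sheaves at the level of the stalk formula, so that the stalkwise reasoning of \cref{thm:eqsheaves_continuity} transfers without essential modification; this compatibility is encoded in \cite[Construction 11.2]{barnessugrue_weylsheaves}, as reviewed in \cref{rem:basechangeWeyl}.
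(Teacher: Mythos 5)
Your proposal is correct and matches the paper's approach: the paper likewise proves this by running the argument of \cref{thm:eqsheaves_continuity} verbatim in the Weyl-sheaf setting, with \cite[Corollary 11.8]{barnessugrue_weylsheaves} supplying the abelian-level input in place of \cref{thm:sheaves_continuous}. You have simply spelled out the steps (Quillen adjunction, derived unit/counit, Bergner's theorem, $\Pr^L/\Pr^R$ duality) that the paper leaves implicit when it says the result ``follows from the theory there.''
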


\begin{proof}
The diagram of groups $G_\bullet$ acts on $\Sub(G_\bullet)$ in the appropriate sense so that we may form the required system. The result is then obtained using a Weyl sheaf version of \cref{thm:eqsheaves_continuity} which follows from the theory there, keeping in mind \cite[Corollary 11.8]{barnessugrue_weylsheaves} which provides the abelian version of the statement.
\end{proof}

In the case that $G$ is a finite group, we can relate \cref{thm:finitegroups_algmodel} to Weyl sheaves by choosing one representative of each conjugacy class
of $H$ in $\Sub(G)$:
\[
\sfD(\WeylSheaves_{G,\Q}(\Sub(G))) 
\simeq 
\prod_{(H) \in \Sub(G)/G} \sfD(\Mod_{\Q}(W_{G} (H))).
\]

We extend this to profinite groups and obtain an algebraic model for rational $G$-spectra in terms of derived categories of Weyl-$G$-sheaves by using the continuity result of \cref{thm:weyl_sheaves_continuous}. 

\begin{theorem}\label{thm:sheafalgebraicmodel}
Let $G$ be a profinite group. There is a geometric equivalence of $\infty$-categories
\[
\xymatrix{\Phi\colon \Sp_{G, \Q} \ar[r]^-{\sim} & \mathsf{D} (\WeylSheaves_{G,\Q}(\Sub(G))).}
\]
\end{theorem}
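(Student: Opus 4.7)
My plan is to bootstrap the algebraic model for profinite $G$ from the finite group case by taking a filtered colimit and exploiting the two continuity results already established, namely \eqref{eq:qgsp_continuity} for rational $G$-spectra and \cref{thm:weyl_sheaves_continuous} for derived Weyl sheaves. Concretely, for each finite quotient $G_i$ of $G$, \cref{thm:finitegroups_algmodel} provides a symmetric monoidal equivalence $\Phi_{G_i}\colon \Sp_{G_i,\Q} \xrightarrow{\sim} \prod_{(H)\in \Sub(G_i)/G_i} \sfD(\Mod_{\Q}(W_{G_i}(H)))$; since $\Sub(G_i)$ is a finite discrete $G_i$-space, a Weyl-$G_i$-sheaf is determined by its stalks at conjugacy class representatives (with the stalk at $H$ being a $W_{G_i}(H)$-module), so this equivalence promotes to a geometric equivalence $\Phi_{G_i}\colon \Sp_{G_i,\Q} \xrightarrow{\sim} \sfD(\WeylSheaves_{G_i,\Q}(\Sub(G_i)))$ sending a $G_i$-spectrum $X$ to the Weyl-$G_i$-sheaf whose stalk at $H$ is $\Phi_{G_i}^H(X)$ with its natural $W_{G_i}(H)$-action.

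The key step is then to verify that the collection $(\Phi_{G_i})_{i \in I}$ is natural with respect to the structure maps of the two filtered systems. On the spectral side these are the inflation functors $\infl_{G_j}^{G_i}\colon \Sp_{G_j,\Q} \to \Sp_{G_i,\Q}$, and on the sheaf side they are the base-change functors $\phi_{ij}^{\ast}$ of \cref{rem:basechangeWeyl}. The required compatibility amounts to the identity $\Phi_{G_i}^{H_i} \circ \infl_{G_j}^{G_i} \simeq \infl_{W_{G_j}(H_j)}^{W_{G_i}(H_i)} \circ \Phi_{G_j}^{H_j}$, which is precisely the compatibility of geometric fixed points with inflation (\cref{prop:gfp_properties}(2)) combined with the naturality of the Weyl group construction (\cref{prop:profiniteweyl}). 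Once this naturality square is in place, taking the colimit in $\CAlg(\Pr^{L,\omega})$ yields a symmetric monoidal equivalence
\[
\Phi \colon \Sp_{G,\Q} \simeq \colim_i^{\omega} \Sp_{G_i,\Q} \xrightarrow{\;\sim\;} \colim_i^{\omega} \sfD(\WeylSheaves_{G_i,\Q}(\Sub(G_i))) \simeq \sfD(\WeylSheaves_{G,\Q}(\Sub(G))).
\]
By construction, the composite functor sends a $G$-spectrum to the Weyl-$G$-sheaf whose stalks are given by the (rational) geometric fixed points, as expected.

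The main obstacle is to make the naturality of $(\Phi_{G_i})_{i \in I}$ coherent at the level of symmetric monoidal $\infty$-categories rather than merely at the level of homotopy categories. Wimmer's equivalence in \cite{wimmer_model} is constructed via a zig-zag of symmetric monoidal Quillen equivalences, so one approach is to verify that each constituent Quillen pair is natural in inflation up to a specified natural transformation of left Quillen functors; applying the $\infty$-categorical localization then produces the required homotopy-coherent compatibility. Alternatively, one can sidestep the point-set bookkeeping by packaging both filtered diagrams as functors $I \to \CAlg(\Pr^{L,\omega})$ using the parametrized framework of Bachmann--Hoyois~\cite{bachmannhoyois_norms} (cf.~\cref{rem:gfp_coherence} and \cref{app:equivariantmodels}), where the analogous compatibilities hold on the nose by construction. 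Either route reduces the problem to checking the equivalence objectwise at the finite stages, where \cref{thm:finitegroups_algmodel} applies directly, thereby completing the proof.
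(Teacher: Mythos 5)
Your proposal follows essentially the same route as the paper: chain the equivalences $\Sp_{G,\Q} \simeq \colim_i^\omega \Sp_{G_i,\Q}$ (continuity of rational $G$-spectra), the objectwise Wimmer equivalences $\Phi_{G_i}$ reinterpreted as Weyl-sheaf equivalences using that $\Sub(G_i)$ is finite discrete, and $\colim_i^\omega \sfD(\WeylSheaves_{G_i,\Q}(\Sub(G_i))) \simeq \sfD(\WeylSheaves_{G,\Q}(\Sub(G)))$; and verify compatibility of the two filtered systems via the stalkwise identity expressing compatibility of geometric fixed points with inflation, which is precisely the square the paper checks in eq:showingsquarecommutes using \cref{prop:gfp_properties}(2). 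The one difference is one of emphasis rather than substance: you foreground the homotopy-coherence of the naturality of $(\Phi_{G_i})_i$, whereas the paper checks the square on stalks and leaves the coherence to \cref{rem:gfp_coherence} and the model-categorical machinery of \cref{rem:limitmodelstructure}; your two suggested routes (Quillen-level naturality or the Bachmann--Hoyois parametrized packaging) are exactly the resolutions the paper gestures at there.
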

\begin{proof}
We prove that there are geometric equivalences
\begin{align}\label{eq:sheafalgebraicmodel}
    \begin{split}
\Sp_{G, \Q} 
& \simeq \colim_{i}^\omega \Sp_{G_i, \Q} \\
& \simeq \colim_{i}^\omega \mathsf{D} (\WeylSheaves_{G_i,\Q}(\Sub(G_i))) \\ 
& \simeq \mathsf{D} (\WeylSheaves_{G,\Q} (\Sub(G))). 
    \end{split}
\end{align}

The first equivalence of \eqref{eq:sheafalgebraicmodel} is \eqref{eq:qgsp_continuity}. The second step requires us to show that the two colimit systems are compatible. Suppose that we have finite quotients $G_i$ and $G_j$ of $G$ with a surjection $\phi_{ij} \colon G_i \to G_j$. Write $f_{ij}^*$ for the inflation functor on equivariant spectra from \eqref{eq:gsp_restriction}, and ${\phi}_{ij}^*$ for the functor of Barnes--Sugrue as discussed in \cref{rem:basechangeWeyl}. Consider the following square, in which $\Phi_{G_i}$ and $\Phi_{G_j}$ are the geometric equivalences of \cref{thm:finitegroups_algmodel}: 
\begin{equation}\label{dia:algmodsquare}
\begin{gathered}
\xymatrix@C=4em{
\Sp_{G_j, \Q}  \ar[r]^-{\Phi_{G_j}} \ar[d]_{f_{ij}^*} &
\mathsf{D} (\WeylSheaves_{G_j,\Q}(\Sub(G_j))) \ar[d]^{\phi_{ij}^*} \\
\Sp_{G_i, \Q}  \ar[r]_-{\Phi_{G_i}} &
\mathsf{D} (\WeylSheaves_{G_i,\Q}(\Sub(G_i)))
.}
\end{gathered}
\end{equation}
Write $G_i = G/U_i$ and let $K$ be an open normal subgroup of $G$ containing $U_i$, so that $K_i = KU_i/U_i = K/U_i$. The functor $\Phi_{G_i}$ sends a $G_i$-spectrum $X$ to the sheaf $\Phi_{G_i}(X)$ that at $K_i$ takes value $\Phi^{K_i} (X)$. As the category of (non-equivariant) rational spectra is equivalent to $\mathsf{D} (\Q)$, we may interpret the sheaf $\Phi^{K_i} (X)$ as an object of $\mathsf{D} (\Q)$ with Weyl group action as in \cref{rem:qgsp_spc}. 

We wish to prove that \eqref{dia:algmodsquare} commutes, and hence conclude that the two colimits coincide as the $\Phi_{G_i}$ are geometric equivalences by \cref{thm:finitegroups_algmodel}. To do so we can check equivalence of stalks: indeed, $\Sub(G_i)$ is a finite discrete $G_i$-space, and hence a sheaf over it is determined by its stalks. We obtain the following equivalences (omitting an algebraic inflation functor on the third and fourth terms)
    \begin{align}\label{eq:showingsquarecommutes}
        \begin{split}
            \left( \Phi_{G_i} ( f_{ij}^* X) \right)_{K_i}
            &\cong \Phi^{K_i} ( f_{ij}^* X) \\
            &\cong \Phi^{KU_j/U_j} (X) \\
            &\cong \left( \Phi_{G_j} (X) \right)_{KU_j/U_j} \\
            &\cong \left( \phi_{ij}^* (\Phi_{G_j} X) \right)_{K_i}.
        \end{split}
    \end{align}
The last equivalence of \eqref{eq:showingsquarecommutes} is an instance of \eqref{eq:simplestalks}. As such, the diagram commutes, and we obtain the second equivalence of \eqref{eq:sheafalgebraicmodel}. 

The final equivalence of \eqref{eq:sheafalgebraicmodel} is  given by \cref{thm:weyl_sheaves_continuous}.
\end{proof}

\section{The local structure of rational \texorpdfstring{$G$}{G}-spectra}\label{sec:min}

In this section we will describe the local structure of the category $\Sp_{G,\Q}$. For this, we first need to discuss the category of discrete modules for a profinite group, exhibiting them as examples of tt-fields when the coefficients are rational. We then identify the (co)stalk categories of rational $G$-spectra in terms of discrete modules and deduce that they are minimal and cominimal. This implies that $\Sp_{G,\Q}$ is von Neumann regular, and thus stratified and costratified if and only if the local-to-global principle holds for $\Sp_{G,\Q}$. Finally, we leverage our understanding of the local structure to prove that the telescope conjecture holds for rational $G$-spectra for any profinite group. 

While the algebraic part of this section can be simplified by appealing to the algebraic model of $\Sp_{G,\Q}$ constructed in the previous section directly, see for example \cref{rem:prop:qgsp_ttstalk_altproof}, we have chosen to give a self-contained treatment here.

\subsection{The category of discrete modules for a profinite group}\label{ssec:discretemodules}

We begin with a rapid review of the basic properties of discrete $\Gamma$-modules for a profinite group\footnote{Since we wish to specialize to the case $\Gamma= W_G(H)$ later, in this section we opted to write $\Gamma$ for a generic profinite group, reserving $G$ for the profinite group in the context of equivariant homotopy theory.} $\Gamma$, following \cite{ribes_zalesskii,barnessugrue_gsheaves}. Specializing to rational discrete modules in the next subsection, we then prove that the corresponding derived category forms a tt-field.  

\begin{definition}\label{def:discretemodules_general}
Let $\Gamma$ be a profinite group and $R$ a commutative ring. An $R$-linear $\Gamma$-module $M$ is called \emph{discrete} if the action map
\[
\Gamma \times M \to M
\]
is continuous; equivalently, $M$ is discrete if the stabilizers of all elements in $M$ are open in $\Gamma$. We write $\Mod_{R}^{\delta}(\Gamma)$ for the category of $R$-linear discrete rational $\Gamma$-modules and $R$-linear $\Gamma$-equivariant maps between them.
\end{definition}

Let $R[\Gamma]$ be the group algebra and let $\Mod_{R}(\Gamma)$ be the abelian category of $R[\Gamma]$-modules. For a module $M \in \Mod_{R}(\Gamma)$ and a closed subgroup $H \leqslant \Gamma$, write $M^H$ for the $H$-fixed points of $M$ with its residual $\Gamma/H$-action. By a mild abuse of notation, we may also view $M^H$ as a $\Gamma$-module with $\Gamma$ acting through the projection map. The canonical inclusion $\iota\colon \Mod_{R}^{\delta}(\Gamma) \to \Mod_{R}(\Gamma)$ admits a right adjoint $(-)^{\delta}$, given explicitly by the formula
\[
M^{\delta} = \colim_{U \leqslant_o \Gamma} M^{U}
\]
for any $M \in \Mod_{R}(\Gamma)$. The pair $(\iota, (-)^{\delta})$ exhibits $\Mod_{R}^{\delta}(\Gamma)$ as the colocalization of $\Mod_{R}(\Gamma)$ with respect to the collection $\{R[\Gamma/U]\mid U \leqslant_o \Gamma\}$; in particular, a module $M \in \Mod_{R}(\Gamma)$ is discrete if and only if $M \cong M^{\delta}$. Moreover, it follows that $\Mod_{R}^{\delta}(\Gamma)$ inherits the structure of a symmetric monoidal abelian subcategory from $\Mod_{R}(\Gamma)$ and that it contains enough injective objects. We denote by $\mathsf{D}\Mod_{R}^{\delta}(\Gamma)$ the corresponding derived category, which has the structure of a rigidly-compactly generated tt-category. Note that, when $\Gamma$ is finite, then every module is discrete, so $\mathsf{D}\Mod_{R}^{\delta}(\Gamma)$ coincides with the usual derived category of $R[\Gamma]$-modules. 

Consider a map $f\colon \Gamma \to \Gamma'$ of profinite groups, i.e., a continuous group homomorphism. Precomposing along $f$ induces a symmetric monoidal functor
\[
\xymatrix{f^*\colon \Mod_{R}^{\delta}(\Gamma') \ar[r] & \Mod_{R}^{\delta}(\Gamma).}
\]
This functor is symmetric monoidal, exact, and preserves all colimits, and hence gives rise to a geometric functor on the associated derived categories, which we will denote by the same symbol. If $f$ is given by the inclusion of a closed subgroup $i\colon H \to \Gamma$, then $\res_{H}^{\Gamma}\coloneqq i^*$ is referred to as \emph{restriction}. In this case, the right adjoint $f_*$ of $f^*$ is \emph{coinduction}. Moreover, if $i\colon U \leqslant \Gamma$ is open, then $i^*$ admits a left adjoint, namely \emph{induction} $\ind_{U}^{\Gamma}$. If $f$ instead arises as the projection $p\colon \Gamma \to \Gamma/H$ for some closed normal subgroup $H \leqslant \Gamma$, we will refer to $p^*$ as \emph{inflation} and write $\infl_{\Gamma/H}^{\Gamma} \coloneqq p^*$ in this case. The right adjoint of $p^*$ is $(-)^{H} \cong p_*$, the \emph{fixed point functor}, and the corresponding right derived functor $(-)^{hH}$ on the derived category, respectively. Finally, $p^*$ also admits a left adjoint given by the (derived) functor of $H$-\emph{orbits} $(-)_H$.

\begin{lemma}\label{lem:discretemodules_inflation}
The inflation functor $\infl_{\Gamma/U}^{\Gamma}\colon \mathsf{D}\Mod_{R}^{\delta}(\Gamma/U) \to \mathsf{D}\Mod_{R}^{\delta}(\Gamma)$ is fully faithful for any open normal subgroup $U \leqslant \Gamma$.
\end{lemma}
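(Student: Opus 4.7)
The plan is to establish fully faithfulness via the adjunction $\infl_{\Gamma/U}^{\Gamma} \dashv R(-)^U$, the derived version of the abelian-level adjunction $\infl \dashv (-)^U$ recalled above. Fully faithfulness of the left adjoint in this adjunction is equivalent to the unit map $M \to R(-)^U(\infl_{\Gamma/U}^{\Gamma} M)$ being an equivalence in $\mathsf{D}\Mod_R^\delta(\Gamma/U)$ for every $M$; this is what we aim to verify.

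The main step, and the place where the real work lies, is to show that the fixed point functor $(-)^U \colon \Mod_R^\delta(\Gamma) \to \Mod_R^\delta(\Gamma/U)$ is in fact exact in our setting, so that its right derived functor agrees with the underived one. The obstructions to exactness are the continuous group cohomology groups $H^i_{\cont}(U; -)$ for $i > 0$, which form the main obstacle. The plan is to compute these as filtered colimits
\[
H^i_{\cont}(U; N) \cong \colim_{U' \leqslant_o U} H^i(U/U'; N^{U'})
\]
indexed by the open normal subgroups $U'$ of $U$, exploiting that every discrete module is the filtered colimit of its fixed submodules under open normal subgroups, and that continuous cohomology of a profinite group commutes with such colimits. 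This reduces the vanishing question to that of ordinary cohomology $H^i(U/U'; -)$ for the finite groups $U/U'$ in positive degrees, which holds in the rational setting of interest because $|U/U'|$ is invertible in $R$ and Maschke's theorem applies.

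With exactness of $(-)^U$ established, the derived unit map reduces to the abelian-level unit $M \to (\infl_{\Gamma/U}^{\Gamma} M)^U$. Since $U$ acts trivially on any inflated module, this is the canonical isomorphism arising from the identification between discrete $\Gamma/U$-modules and discrete $\Gamma$-modules with trivial $U$-action. Consequently the unit is an equivalence, so $\infl_{\Gamma/U}^{\Gamma}$ is fully faithful.
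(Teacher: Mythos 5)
You take a genuinely different route from the paper's. The paper reduces the check of the unit $M \to (\infl M)^{hU}$ to the compact generator $M = R[\Gamma/U]$ (using that $p^*$ and its right adjoint $p_*$ both preserve colimits) and then resolves that one object via a chain of identifications through (co)induction:
\[
(\infl_{\Gamma/U}^{\Gamma} R[\Gamma/U])^{hU} \simeq (\coind_U^\Gamma R)^{hU} \simeq \coind_e^{\Gamma/U}R \simeq R[\Gamma/U],
\]
where the middle equivalence is obtained by passing to right adjoints across the triviality of $U \hookrightarrow \Gamma \twoheadrightarrow \Gamma/U$. You instead establish exactness of the abelian fixed-point functor $(-)^U$, which reduces the derived unit to the abelian unit, a visible isomorphism. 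Your argument is a bit more conceptual, avoids singling out a generator, and --- importantly --- makes explicit a hypothesis that the paper's argument leaves buried.

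That hypothesis deserves a comment. You invoke Maschke's theorem, so you need the orders $|U/U'|$ to be invertible in $R$; but the lemma is stated for an arbitrary commutative ring $R$. This is not a defect of your proof alone, since the paper's chain of equivalences tacitly depends on the same input. Passing to right adjoints across $U \hookrightarrow \Gamma \twoheadrightarrow \Gamma/U$ actually yields $(\coind_U^\Gamma -)^{hU} \simeq \coind_e^{\Gamma/U}\bigl((-)^{hU}\bigr)$, so the middle step above is really the identification $R^{hU} \simeq R$, i.e., the vanishing $H^{>0}_{\cont}(U;R) = 0$ --- exactly what you prove via Maschke, and precisely what fails for general $R$. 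For instance, $H^1_{\cont}(\Z_p;\F_p) \cong \F_p$, so inflation $\sfD\Mod_{\F_p}(\Z/p) \to \sfD\Mod_{\F_p}^{\delta}(\Z_p)$ is not fully faithful. The lemma therefore genuinely requires an order-invertibility hypothesis on $R$ (as in \cref{rem:discretemodules_nonmodular}); your proof has the virtue of surfacing exactly where it enters.
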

\begin{proof}
Let $p\colon \Gamma \to \Gamma/U$ be the projection map. We need to show that the unit of the adjunction, denoted $\eta\colon \id \to p_*p^*$, is a natural quasi-isomorphism, i.e., that the map
\[
M \to (\infl_{\Gamma/U}^{\Gamma}(M))^{hU}
\]
is a quasi-isomorphism for any $M \in \mathsf{D}\Mod_{R}(\Gamma/U)$. Since $p^*$ is a geometric functor, both $p^*$ and $p_*$ preserve colimits. Therefore, it suffices to prove the claim for a compact generator of $\mathsf{D}\Mod_{R}(\Gamma/U)$, say $M = R[\Gamma/U]$. We get $\infl_{\Gamma/U}^{\Gamma}(R[\Gamma/U]) \simeq  \ind_{U}^{\Gamma}R \simeq  \coind_{U}^{\Gamma}R$ as $\Gamma$-modules, where the second equivalence uses that $U$ is open and hence of finite index in $\Gamma$. The composite $U \hookrightarrow \Gamma \twoheadrightarrow \Gamma/U$ is trivial, so we obtain equivalences
\[
(\infl_{\Gamma/U}^{\Gamma}R[\Gamma/U])^{hU} \simeq (\coind_{U}^{\Gamma}R)^{hU} \simeq \coind_e^{\Gamma/U}R \simeq R[\Gamma/U].
\]
Unwinding the constructions, this is the unit evaluated on $M = R[\Gamma/U]$, as desired.
\end{proof}

\begin{proposition}\label{prop:discretemodules_continuity}
Let $\Gamma$ be a profinite group and consider the cofiltered system $(U_i \leqslant_o \Gamma)$ of open normal subgroups of $\Gamma$. Then the inflation functors induce a geometric equivalence
\[
\xymatrix{\colim_{U_i\leqslant_o \Gamma}^{\omega} \mathsf{D}\Mod_{R}(\Gamma/U_i) \ar[r]^-{\sim} & \mathsf{D}\Mod_{R}^{\delta}(\Gamma),}
\]
where the colimit is computed in the $\infty$-category $\Pr^{L,\omega}$ of compactly generated symmetric monoidal stable $\infty$-categories. 
\end{proposition}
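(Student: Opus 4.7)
The plan is to recognise the claimed comparison functor as an equivalence of compactly generated symmetric monoidal stable $\infty$-categories by verifying the two standard conditions: (i) it is fully faithful on a set of compact generators, and (ii) its essential image generates the target as a localising subcategory. Both $\colim_{U_i}^{\omega} \mathsf{D}\Mod_R(\Gamma/U_i)$ and $\mathsf{D}\Mod_R^{\delta}(\Gamma)$ are compactly generated (the former by construction, as the colimit is taken in $\Pr^{L,\omega}$; the latter by the colocalisation description $\iota \dashv (-)^{\delta}$), so these two conditions will suffice.

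First I would construct the functor. Each inflation $\infl_{\Gamma/U_i}^{\Gamma}$ is symmetric monoidal, exact, cocontinuous, and compact-preserving: the compact generators $R[(\Gamma/U_i)/H]$ are sent to $R[\Gamma/K]$ (with $K \leqslant_o \Gamma$ the preimage of $H$), which represents $(-)^{K}$ and is compact in $\mathsf{D}\Mod_R^{\delta}(\Gamma)$ because fixed points for open subgroups commute with filtered colimits of discrete modules. These inflations are coherent along the system, so by the equivalence $\Pr^{L,\omega} \simeq \Cat_\infty^{\natural}$ from \cref{hyp:hyp} and the machinery recalled in \cref{ssec:limcoliminfty}, they assemble into a canonical symmetric monoidal geometric functor
\[
\xymatrix{\psi\colon \colim_{U_i\leqslant_o \Gamma}^{\omega} \mathsf{D}\Mod_R(\Gamma/U_i) \ar[r] & \mathsf{D}\Mod_R^{\delta}(\Gamma).}
\]

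Next I would verify full faithfulness on compacts. Fixing $x, y \in \mathsf{D}\Mod_R(\Gamma/U_i)^{\omega}$ for some $i$, \cref{lem:colimitofmaps} gives
\[
\Map_{\colim}(x,y) \simeq \colim_{j \geqslant i}\Map_{\mathsf{D}\Mod_R(\Gamma/U_j)}\bigl(\infl_{\Gamma/U_i}^{\Gamma/U_j}x,\ \infl_{\Gamma/U_i}^{\Gamma/U_j}y\bigr).
\]
Each transition in this filtered colimit is an equivalence by \cref{lem:discretemodules_inflation} applied to $\Gamma/U_j$ with its normal subgroup $U_i/U_j$, so the colimit collapses to $\Map_{\mathsf{D}\Mod_R(\Gamma/U_i)}(x,y)$. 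The same lemma applied directly to the pair $(\Gamma, U_i)$ identifies $\Map_{\mathsf{D}\Mod_R^{\delta}(\Gamma)}(\psi x,\psi y)$ with the same object, compatibly with $\psi$, yielding full faithfulness.

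Finally I would check generation. Every discrete $\Gamma$-module is a filtered colimit of its $U$-fixed submodules as $U$ ranges over open subgroups of $\Gamma$, so the permutation modules $R[\Gamma/K]$ with $K \leqslant_o \Gamma$ form a set of compact generators for $\mathsf{D}\Mod_R^{\delta}(\Gamma)$. Any such $K$ contains some $U_i$ by cofinality of the open normal subgroups, and then $R[\Gamma/K] \simeq \psi\bigl(R[(\Gamma/U_i)/(K/U_i)]\bigr)$ lies in the image of $\psi$. Combined with full faithfulness on compacts and the fact that $\psi$ preserves all colimits, this forces $\psi$ to be an equivalence. The one step that is not entirely formal is the coherent upgrade of the pointwise full faithfulness of \cref{lem:discretemodules_inflation} to full faithfulness of $\psi$, and this is precisely what \cref{lem:colimitofmaps} delivers.
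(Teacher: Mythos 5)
Your proof is correct and follows essentially the same route as the paper's: both construct the comparison functor via the universal property of the colimit in $\Pr^{L,\omega}$, establish full faithfulness by reducing to \cref{lem:discretemodules_inflation} via the filtered-colimit description of mapping spaces in colimits of $\infty$-categories, and conclude by checking that the permutation modules $R[\Gamma/U]$ (for $U$ open) are hit. The only cosmetic differences are that you unwind the ``colimit of fully faithful functors is fully faithful'' step explicitly through \cref{lem:colimitofmaps} where the paper cites \cite[Proposition 5.3.5.11]{htt} for the ind-extension, and that you conclude via the standard ``fully faithful, colimit-preserving, hits compact generators'' criterion rather than phrasing the second condition as conservativity of the right adjoint --- both are equivalent formulations.
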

\begin{proof}
By the universal property of the colimit in compactly generated $\infty$-categories, the family of inflation functors $(\infl_{\Gamma/U_i}^{\Gamma})$ induces a geometric comparison functor 
\[
\xymatrix{\psi^{*}\colon \colim_{U_i\leqslant_o \Gamma}^{\omega} \mathsf{D}\Mod_{R}(\Gamma/U_i) \ar[r] & \mathsf{D}\Mod_{R}^{\delta}(\Gamma),}
\]
whose right adjoint we denote as usual by $\psi_*$. In order to prove the proposition, it suffices to show that 
    \begin{enumerate}
        \item[(a)] $\psi^*$ is fully faithful; 
        \item[(b)] $\psi_*$ is conservative.
    \end{enumerate}
Indeed, (a) says that the unit of the adjunction is a natural isomorphism, while (b) together with the triangle identity for the adjunction then implies that the counit is also a natural isomorphism.

To see (a), we first observe that the restriction of $\psi^*$ to compact objects is fully faithful as a colimit of fully faithful functors, using \cref{lem:discretemodules_inflation}. Since $\psi^*$ is given by the ind-extension of a fully faithful functor, it is itself fully faithful, see \cite[Proposition 5.3.5.11]{htt}. Claim (b) is equivalent to the statement $\psi^*$ sends a set of generators to a set of generators. For this, we may take the objects $R[\Gamma/U]$ for varying $U \leqslant_o \Gamma$, thereby completing the proof. 
\end{proof}

\begin{remark}\label{rem:discretemodules_eqsheaves}
   As a tt-category, $\mathsf{D}\Mod_{R}^{\delta}(\Gamma)$ is equivalent to the derived category of $\Gamma$-equivariant sheaves on a point with coefficients in $\Mod(R)$. The continuity of equivariant sheaves proven in \cref{thm:eqsheaves_continuity} gives an alternative proof of \cref{prop:discretemodules_continuity}.
\end{remark}

\subsection{Rational discrete modules as a tt-field}\label{ssec:discretemodules_rational}

For the remainder of this section, we specialize to the case that $R = \Q$. Our next goal is to prove that the derived category of rational discrete $\Gamma$-modules is a tt-field in the sense of \cref{defn:ttfield}.

\begin{proposition}\label{prop:discretemodules_ttfields}
Let $\Gamma$ be a profinite group. The derived category of discrete rational $\Gamma$-modules $\mathsf{D}\Mod_{\Q}^{\delta}(\Gamma)$ is an abelian tt-field. 
\end{proposition}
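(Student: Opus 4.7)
The plan is to reduce both defining properties of a tt-field to semisimplicity of the underlying abelian category $\Mod_{\Q}^{\delta}(\Gamma)$. Given $M \in \Mod_{\Q}^{\delta}(\Gamma)$, the key observation is that $M = \bigcup_{U} M^{U}$ over open normal subgroups $U \leqslant \Gamma$, and each $M^{U}$ is a $\Q[\Gamma/U]$-module that decomposes into simples by Maschke's theorem. A Zorn's lemma argument then produces a maximal family $\Sigma = \{S_i\}$ of simple $\Gamma$-submodules of $M$ whose sum is internally direct: if $N = \bigoplus_{i} S_i \subsetneq M$, pick $x \in M^{U} \setminus N$ and use $\Q[\Gamma/U]$-semisimplicity to write $M^{U} = (N \cap M^{U}) \oplus P$, then extract a simple $\Q[\Gamma/U]$-submodule $T \subseteq P$. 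Since $T \subseteq M^{U}$ is automatically $\Gamma$-stable, $T$ is a simple discrete rational $\Gamma$-module disjoint from $N$, contradicting maximality. Thus $M = \bigoplus_i S_i$, and every simple object of $\Mod_{\Q}^{\delta}(\Gamma)$ factors through some finite quotient $\Gamma/U$ and is finite-dimensional over $\Q$.

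Semisimplicity of the heart implies $\mathsf{D}\Mod_{\Q}^{\delta}(\Gamma)$ is equivalent to the category of $\Z$-graded discrete rational $\Gamma$-modules; in particular, any object $X$ splits canonically as $X \simeq \bigoplus_{n} H_n(X)[n]$ and each $H_n(X)$ is a coproduct of simples. Each such simple $S$ is compact and rigid: being a direct summand of $\Q[\Gamma/U]$ in $\mathsf{D}(\Q[\Gamma/U])$ it is compact and rigid there, and both properties are preserved by the strong symmetric monoidal functor $\infl_{\Gamma/U}^{\Gamma}$ (compactness because the right adjoint $(-)^{U}$ preserves filtered colimits of discrete modules). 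Combining these observations, every object of $\mathsf{D}\Mod_{\Q}^{\delta}(\Gamma)$ is a coproduct of rigid compact objects, verifying (F1) and the rigid-compact generation hypothesis; the latter also follows formally from \cref{prop:discretemodules_continuity}, which realizes $\mathsf{D}\Mod_{\Q}^{\delta}(\Gamma)$ as a filtered colimit in $\Pr^{L,\omega}$ of rigidly-compactly generated tt-categories.

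For (F2), suppose $0 \neq X \in \mathsf{D}\Mod_{\Q}^{\delta}(\Gamma)$ and $\alpha\colon A \to B$ satisfies $X \otimes \alpha = 0$. The above decomposition reduces to the case where $X = S$ is a simple in the heart and $\alpha$ is a morphism in the heart, since $\Ext^{>0}$ vanishes in a semisimple abelian category and so morphisms between formal sums of shifts split degree-by-degree. The tensor structure on $\Mod_{\Q}^{\delta}(\Gamma)$ is $\otimes_{\Q}$ with diagonal $\Gamma$-action, and $S$ is a non-zero finite-dimensional $\Q$-vector space, so the functor $S \otimes_{\Q} -$ is faithful on underlying $\Q$-vector spaces by a dimension count. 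Hence $S \otimes \alpha = 0$ forces $\alpha = 0$, establishing (F2). The main obstacle is the semisimplicity step: the decompositions $M^{U} = \bigoplus T_{U,j}$ into simple $\Q[\Gamma/U]$-modules are not compatible under the inclusions $M^{U} \hookrightarrow M^{V}$ for $V \leqslant U$, so one cannot simply take a colimit of finite-group decompositions but must execute the Zorn argument carefully, using $\Q[\Gamma/U]$-semisimplicity only to extract one new simple summand at each stage.
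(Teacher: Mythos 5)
Your proof is correct and follows essentially the same route as the paper's: reduce (F1) to semisimplicity of the abelian heart $\Mod_{\Q}^{\delta}(\Gamma)$ and (F2) to the $\otimes$-faithfulness of nonzero finite-dimensional modules. Two differences in execution are worth flagging. First, you prove semisimplicity of the heart from scratch via the Zorn argument (extracting at each stage one new simple $\Gamma$-submodule that factors through a finite quotient), whereas the paper cites Castellano--Weigel for the fact that every discrete rational $\Gamma$-module is injective and projective. Your direct argument is close in spirit to the paper's own \cref{rem:discretemodules_semisimple}, which notes an alternative proof via Maschke's theorem and the Hovey--Palmieri--Strickland pure-semisimplicity criterion. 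Second, for (F2) you run a $\Q$-linear dimension count, which forces you to first reduce the morphism $\alpha$ to one concentrated in the heart using split-exactness and then invoke conservativity of the forgetful functor to $\mathsf{D}(\Q)$; the paper instead observes that $\dim(M) \in \End(\Q) = \Q$ is invertible whenever $M \neq 0$ and uses the coevaluation/evaluation retract diagram, which applies immediately to an arbitrary morphism $f$ without reducing it. Both arguments rest on the same underlying fact---finite $\Q$-dimensionality of the relevant building blocks---and both are valid; the trace argument is slightly more uniform, while your version is more self-contained.
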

\begin{proof}
We begin with a preliminary observation about the abelian category of rational discrete $\Gamma$-modules. By \cite[Proposition 3.1]{CastellanoWeigel2016}, every object in $\Mod_{\Q}^{\delta}(\Gamma)$ is both injective and projective. This implies that any short exact sequence of rational discrete modules splits, hence its derived category is abelian and split-exact as well, i.e., every object in $\Mod_{\Q}^{\delta}(\Gamma)$ decomposes into a direct sum of its homology groups. It follows that $\mathsf{D}\Mod_{\Q}^{\delta}(\Gamma)$ is pure-semisimple and satisfies Condition (F1), see for example \cite[Theorem 9.3]{Beligiannis2000} or \cite[Theorem 2.10]{Krause2000}.

In order to verify Condition (F2), first observe that it suffices to consider nonzero finite dimensional modules concentrated in a single degree by what we have already shown. Recall that the dimension $\dim(M)$ of such an $M \in \Mod_{\Q}^{\delta}(\Gamma)^{\omega}$ is defined as the composite of the coevaluation and evaluation maps:
\[
\xymatrix{\dim(M)\colon \Q \ar[r]^-{\text{coev}} & M \otimes M^{\vee} \ar[r]^-{\text{ev}} & \Q.}
\]
Since $\End(\Q) \cong \Q$, the dimension $\dim(M)$ is invertible as long as $M\neq 0$. If $f$ is any map in $\mathsf{D}\Mod_{\Q}^{\delta}(\Gamma)$ with $M \otimes f =0$, this retract diagram thus shows that $f = 0$; in other words, $M$ is $\otimes$-faithful, as desired.
\end{proof}

\begin{remark}\label{rem:discretemodules_semisimple}
    In fact, the tt-category $\mathsf{D}\Mod_{\Q}^{\delta}(\Gamma)$ is semisimple in the sense of Hovey, Palmieri, and Strickland \cite[\S8]{HoveyPalmieriStrickland1997}. Indeed, by \cref{lem:discretemodules_inflation} and \cref{prop:discretemodules_continuity}, this statement can be checked for the categories $\mathsf{D}\Mod_{\Q}^{\delta}(\Gamma/U)$ for $U \leqslant \Gamma$ open normal, where it holds by Maschke's theorem and Schur's lemma. It then follows from  \cite[Proposition 8.1.3]{HoveyPalmieriStrickland1997} that every object in $\mathsf{D}\Mod_{\Q}^{\delta}(\Gamma)$ is equivalent to a coproduct of suspensions of finite-dimensional simple $\Q[\Gamma/U]$-modules for varying $U\leqslant_o \Gamma$, thereby giving an alternative proof of \cref{prop:discretemodules_ttfields}. 
\end{remark}

\begin{remark}\label{rem:fixedorbitequivalence}
    As a special case of \cref{prop:discretemodules_ttfields}, the inclusion of fixed points $M^H \to M$ splits as a map of rational discrete $\Gamma$-modules for any closed subgroup $H \leqslant \Gamma$. If $H$ is also open, then one can see this directly: Indeed, in this case, the composite of the canonical maps
    \[
    \xymatrix{M^{H} \ar[r] & M \ar[r] & M_{H}}
    \]
    is an isomorphism for any $M \in \mathsf{D}\Mod_{\Q}^{\delta}(\Gamma)$. To see this, note first that if two $H$-fixed points are sent to the same orbit class, then they were equal. Hence, the composite is injective. Surjectivity comes from the fact that any $H$-orbit is finite. Taking the average of an orbit gives an $H$-fixed point, thus establishing surjectivity of the composite.
\end{remark}

\begin{corollary}\label{cor:discretemodules_biminimality}
    Let $\Gamma$ be a profinite group. Any nonzero object $M \in \mathsf{D}\Mod_{\Q}^{\delta}(\Gamma)$ generates the entire category both as a localizing ideal and as a colocalizing coideal. 
\end{corollary}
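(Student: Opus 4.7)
The plan is to deduce the corollary as a direct formal consequence of the tt-field structure established in Proposition \ref{prop:discretemodules_ttfields}, combined with the general minimality properties of tt-fields recorded in Proposition \ref{prop:ttfieldstrat}. Since both of these inputs have already been proven, the corollary is essentially a two-line unwrapping; there is no substantial obstacle to overcome.

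Explicitly, the first step is to invoke Proposition \ref{prop:discretemodules_ttfields} to declare that $\sfT \coloneqq \sfD\Mod_{\Q}^{\delta}(\Gamma)$ is a tt-field. The second step is to apply Proposition \ref{prop:ttfieldstrat}, which asserts that any tt-field is stratified and costratified and has one-point Balmer spectrum $\Spc(\sfT^{\omega}) = \{(0)\}$. Since stratification provides a bijection between localizing ideals of $\sfT$ and subsets of $\Spc(\sfT^{\omega})$ via the support map, there are exactly two localizing ideals, namely the zero ideal and $\sfT$ itself. Dually, costratification through the cosupport map yields exactly two colocalizing coideals, $\{0\}$ and $\sfT$.

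The final step is to conclude: if $M \in \sfT$ is nonzero, then the localizing ideal $\langle M \rangle$ is nonzero and therefore must coincide with $\sfT$; the same argument applies to the colocalizing coideal generated by $M$. As an alternative (and more direct) route for the localizing case, one could also appeal to property (F2) of a tt-field: if $M \neq 0$, then $M$ is $\otimes$-faithful, so in particular $M \otimes \unit \neq 0$, and then using property (F1) that every object is a coproduct of compacts, together with the fact that the unit is compact in a tt-field, one readily checks that the unit and hence every object lies in $\langle M \rangle$. Either argument suffices, but the appeal to Proposition \ref{prop:ttfieldstrat} is cleaner and already handles both the localizing and colocalizing statements simultaneously.
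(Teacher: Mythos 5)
Your proof is correct and takes essentially the same route as the paper: both deduce the statement from \cref{prop:discretemodules_ttfields} together with \cref{prop:ttfieldstrat} (i.e.\ \cite[Corollary 8.9]{BCHS_cosupport}), observing that a tt-field has one-point spectrum, so stratification and costratification force any nonzero object to generate the whole category as a localizing ideal and as a colocalizing coideal. The alternative direct argument you sketch via (F1)/(F2) is a nice sanity check but is not what the paper does.
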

\begin{proof}
By \cite[Corollary 8.9]{BCHS_cosupport}, any tt-field $\sfF$ is stratified and costratified with spectrum $\Spc(\sfF^{\omega}) = \{(0)\}$. In particular, any nonzero object in $\sfF$ generates $\sfF$ both as a localizing ideal and as a colocalizing coideal. Therefore, the claim follows from \cref{prop:discretemodules_ttfields}.
\end{proof}

\begin{remark}\label{rem:discretemodules_nonmodular}
    Let $\Gamma$ be a profinite group and $m(\Gamma)$ its order, as defined in \eqref{eq:order}. Then the results of this subsection hold true for $\Q$ replaced by any field $k$ in which every prime $p$ dividing $m$ is invertible.
\end{remark}

\subsection{The tt-stalks of $\Sp_{G,\Q}$ and stratification}

We can now compute the tt-stalks of the category of rational $G$-spectra and show that they are tt-fields, thereby proving that $\Sp_{G,\Q}$ is von Neumann regular. The structural implications are then collected at the end of this section.

\begin{proposition}\label{prop:qgsp_ttstalk}
Let $G$ be a profinite group $H \in \Sub(G)/G$ a closed subgroup with profinite Weyl group $W_G(H)$ and with associated prime tt-ideal $\sfP(H) \in \Spc(\Sp_{G,\Q}^{\omega})$. The rational geometric fixed point functors (\cref{rem:qgsp_spc}) then induce an equivalence
\[
\xymatrix{(\Sp_{G,\Q})_{\sfP(H)} \ar[r]^-{\simeq} & \sfD\Mod_{\Q}^{\delta}(W_G(H))}
\]
of rigidly-compactly generated tt-categories. 
\end{proposition}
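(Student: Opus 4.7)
The plan is to chain together three continuity results, using Wimmer's finite-group algebraic model as the bridge between the topological and algebraic sides. By applying \cref{cor:continuousttstalk} to the presentation $\Sp_{G,\Q} \simeq \colim_i^{\omega}\Sp_{G_i,\Q}$ from \eqref{eq:qgsp_continuity}, the tt-stalk at $\sfP(H)$ is computed as
\[
(\Sp_{G,\Q})_{\sfP(H)} \;\simeq\; \colim_i^{\omega}(\Sp_{G_i,\Q})_{\sfP_{G_i}(H_i)},
\]
where $H_i = HU_i/U_i$ is the image of $H$ in $G_i = G/U_i$. To identify the image of $\sfP(H)$ at each finite stage, one uses the compatibility of the parametrizations of the rational equivariant Balmer spectra recorded in \cref{rem:qgsp_spc} together with the continuity of $\Sub(G)/G$ from \cref{prop:profinitesubg}.

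Next, I would invoke \cref{thm:finitegroups_algmodel}, specifically the geometric equivalence \eqref{eq:finitegroup_stalks} at each finite stage, to identify
\[
\Phi^{H_i}\colon (\Sp_{G_i,\Q})_{\sfP_{G_i}(H_i)} \xrightarrow{\;\sim\;} \sfD\Mod_{\Q}(W_{G_i}(H_i)).
\]
The essential point is that these finite-stage equivalences intertwine the transition functors in the two colimit systems, i.e., inflation along a surjection $G_i \twoheadrightarrow G_j$ of finite quotients corresponds under $\Phi$ to algebraic inflation along the induced surjection $W_{G_i}(H_i) \twoheadrightarrow W_{G_j}(H_j)$. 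This has in effect already been verified in the proof of \cref{thm:sheafalgebraicmodel} (see the stalk computation in \eqref{eq:showingsquarecommutes}), and ultimately reduces to Property (2) of geometric fixed points from \cref{prop:gfp_properties}. Passing to the filtered colimit then yields
\[
(\Sp_{G,\Q})_{\sfP(H)} \;\simeq\; \colim_i^{\omega}\sfD\Mod_{\Q}(W_{G_i}(H_i)).
\]

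Finally, by \cref{prop:profiniteweyl} we have $W_G(H) \cong \lim_i W_{G_i}(H_i)$ via the maps induced from $G \twoheadrightarrow G_i$, so the kernels of the projections $W_G(H) \twoheadrightarrow W_{G_i}(H_i)$ form a cofinal system of open normal subgroups of the profinite group $W_G(H)$. Applying \cref{prop:discretemodules_continuity} (with a standard cofinality argument) then identifies the colimit above with $\sfD\Mod_{\Q}^{\delta}(W_G(H))$, and the resulting composite equivalence is realized by construction by $\Phi^H$.

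The main point of care is the inflation-compatibility square in the second step; once this coherence is in place, the result is a formal combination of the three continuity theorems available to us (for stalks, Weyl groups, and discrete modules). As an immediate consequence, \cref{prop:discretemodules_ttfields} shows that every tt-stalk of $\Sp_{G,\Q}$ is a tt-field, so $\Sp_{G,\Q}$ is von Neumann regular in the sense of \cref{def:vNr_ttcats}.
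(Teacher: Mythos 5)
Your proposal is correct and follows essentially the same route as the paper's own proof: identify $(\Sp_{G,\Q})_{\sfP(H)}$ as a filtered colimit of finite-group stalks via \cref{cor:continuousttstalk}, apply Wimmer's equivalence \eqref{eq:finitegroup_stalks} at each finite stage, check that these equivalences intertwine the transition maps (the paper does this by assembling a two-column commutative diagram using naturality of stalks and \cref{prop:gfp_properties}(2), which is the same content as your appeal to \eqref{eq:showingsquarecommutes}), and then invoke \cref{prop:profiniteweyl} and \cref{prop:discretemodules_continuity} to identify the target colimit. Your explicit remark that the kernels of $W_G(H) \twoheadrightarrow W_{G_i}(H_i)$ form a cofinal system of open normal subgroups is a detail the paper leaves implicit when applying \cref{prop:discretemodules_continuity}, and it is a correct and worthwhile point to make.
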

\begin{proof}
    Write $G = \lim_{i\in I}G_i$ as a cofiltered limit of finite groups $G_i$. Let $H_i \in \Sub(G_i)/G_i$ be (a representative of) the image of $H$ under the projection map $\Sub(G) \to \Sub(G_i)$. By virtue of \cref{prop:profiniteweyl}, there is a canonical isomorphism $W_G(H) \cong \lim_{i \in I}W_{G_i}(H_i)$ of profinite Weyl groups. For any group homomorphism $f_{ij}\colon G_i \to G_j$ in the given system, by naturality of stalks (\cref{def:ttstalkmap}) and the equivalences of \eqref{eq:finitegroup_stalks}, there is a commutative diagram of geometric functors
    \[
    \xymatrix{\Sp_{G_i,\Q} \ar[r] & (\Sp_{G_i,\Q})_{\sfP(H_i)} \ar[r]^-{\simeq} & \sfD\Mod_{\Q}(W_{G_i}(H_i)) \\
    \Sp_{G_j,\Q} \ar[r] \ar[u]^{f_{ij}^*} & (\Sp_{G_j,\Q})_{\sfP(H_j)} \ar[r]^-{\simeq} \ar[u]_{(f_{ij}^*)_{\sfP(H_i)}} & \sfD\Mod_{\Q}(W_{G_j}(H_j)) \ar[u]_{\infl_{G_j}^{G_i}}.}
    \]
    By \cref{thm:finitegroups_algmodel}, the horizontal composites in this diagram identify with the rational geometric fixed point functor $\Phi_{G_i,\Q}^{H_i}$ and $\Phi_{G_i,\Q}^{H_i}$, respectively. Passing to the colimit over $I$, we thus obtain a commutative diagram of geometric functors
    \[
    \xymatrix{\Sp_{G,\Q} \ar[r] & (\Sp_{G,\Q})_{\sfP(H)} \ar[r] & \sfD\Mod_{\Q}^{\delta}(W_{G}(H)) \\ 
    \colim_{i\in I}^{\omega}\Sp_{G_i,\Q} \ar[r] \ar[u]^{\simeq} & \colim_{i\in I}^{\omega}(\Sp_{G_i,\Q})_{\sfP(H_i)} \ar[r]^-{\simeq} \ar[u]_{\simeq}  & \colim_{i\in I}^{\omega}\sfD\Mod_{\Q}(W_{G_i}(H_i)), \ar[u]_{\simeq}}
    \]
    where the three vertical equivalences use, from left to right: \eqref{eq:qgsp_continuity}, \cref{cor:continuousttstalk}, and \cref{prop:discretemodules_continuity}, respectively. It follows that the right top horizontal functor is also an equivalence. Finally, the continuity of geometric fixed points (\cref{prop:gfp_properties}) implies that the top horizontal composite identifies with $\Phi_{G,\Q}^H$, as desired.
\end{proof}

\begin{remark}\label{rem:prop:qgsp_ttstalk_altproof}
    There is a different path to \cref{prop:qgsp_ttstalk}, which proceeds via the algebraic model for rational $G$-spectra (\cref{thm:sheafalgebraicmodel}). Recall the notion of equivariant skyscraper sheaf in the profinite setting from \cref{ex:skycraper}. From this, one can deduce that the equivariant tt-stalk $\sfD\Shv_{G,\Q}^{\Weyl}(\Sub(G))_{(H)}$ of the category of Weyl $G$-sheaves at a closed subgroup $H$ is equivalent to the category of Weyl $G$-sheaves on the $G$-orbit of $H$. The latter category in turn identifies with the derived category of rational discrete $W_G(H)$-modules, resulting in geometric equivalences: 
    \[
    (\Sp_{G,\Q})_{\sfP(H)} \simeq \sfD\Shv_{G,\Q}^{\Weyl}(\Sub(G))_{(H)} \simeq \sfD\Shv_{G,\Q}^{\Weyl}((H)_G) \simeq \sfD\Mod_{\Q}^{\delta}(W_G(H)).
    \]
    Here, $\sfD\Shv_{G,\Q}^{\Weyl}((H)_G)$ denotes the derived category of Weyl-$G$-sheaves on the orbit $(H)_{G}$ of $H$ in $\Sub(G)$ under the conjugation action by $G$. The composite equivalence is functorial in the group and hence compatible with reduction to finite groups as in the proof of \cref{prop:qgsp_ttstalk}.
\end{remark}

We can now deduce the local-global compatibility of the algebraic model of $\Sp_{G,\Q}$:

\begin{corollary}\label{cor:algmodel_localglobal}
    For any closed subgroup $H \leqslant G$, the following diagram commutes:
        \[
            \xymatrix{\Sp_{G,\Q} \ar[r]_-{\sim}^-{\Phi} \ar[d]_{\Phi^H} & \sfD\Shv_{G,\Q}^{\Weyl}(\Sub(G)) \ar[d]^{(-)_H} \\
            \sfD\Mod_{\Q}^{\delta}(W_G(H)) \ar[r]_-{\sim} & \sfD\Shv_{G,\Q}^{\Weyl}((H)_G).}
        \]
\end{corollary}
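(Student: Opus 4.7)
The plan is to reduce the statement, via continuity, to the analogous commutative square for finite groups, where it reduces to Wimmer's theorem. Both the top equivalence $\Phi$ (established in \cref{thm:sheafalgebraicmodel}) and the geometric fixed point functor $\Phi^H$ (introduced in \cref{def:gfp} and rationalized in \cref{rem:qgsp_spc}) are constructed as filtered colimits of their finite-quotient analogues, indexed by the cofiltered system of open normal subgroups of $G$. The right vertical functor $(-)_H$, interpreted as restriction of a Weyl-$G$-sheaf to the orbit $(H)_G$, is a finite localization realizing the tt-stalk of the Weyl-sheaf category at $\sfP(H)$; by \cref{cor:continuousttstalk}, this stalk commutes with filtered colimits of rigidly-compactly generated tt-categories.

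Consequently, the plan is to establish the analogous square at each finite stage $G_i$ of the system $G = \lim_i G_i$ and then assemble these into the desired diagram by passage to the filtered colimit. For the finite-group version, \cref{thm:finitegroups_algmodel} supplies a product decomposition $\Sp_{G_i,\Q} \simeq \prod_{(K) \in \Sub(G_i)/G_i} \sfD\Mod_{\Q}(W_{G_i}(K))$ under which $\Phi^{H_i}$ is the projection onto the $(H_i)$-factor. Identifying this product with $\sfD\Shv_{G_i,\Q}^{\Weyl}(\Sub(G_i))$ using that $\Sub(G_i)$ is a finite discrete $G_i$-space (so that a $G_i$-sheaf is determined by its stalks as a module over the stabilizers), the projection corresponds to restricting to the orbit $(H_i)_{G_i}$. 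This yields the finite-level commutative square and is essentially the content of the geometric equivalence \eqref{eq:finitegroup_stalks}.

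The main task is then to verify that the finite-level squares are compatible with the structural maps in the colimit diagram, so that they assemble into a single cube whose top and bottom faces are the finite-level squares. The horizontal compatibility (between the algebraic model equivalences at different stages) is already established in the proof of \cref{thm:sheafalgebraicmodel}, see diagram \eqref{dia:algmodsquare}. The compatibility of geometric fixed points with inflation is part of \cref{prop:gfp_properties}(2), and the compatibility of the orbit restriction with inflation of Weyl sheaves follows from the stalk formula \eqref{eq:simplestalks} in \cref{lem:infresadjont}. With these ingredients in place, the colimit of the finite-level squares produces the desired diagram. The anticipated obstacle is bookkeeping of coherence at the level of symmetric monoidal $\infty$-categories, but all the relevant structural data has already been set up in \cref{ssec:algmodel_equivalence} and \cref{sec:equivariantspectra}, so the passage to the colimit should be formal.
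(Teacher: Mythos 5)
Your proposal follows essentially the same route as the paper: reduce to finite groups, where the statement is Wimmer's theorem (\cref{thm:finitegroups_algmodel}), and then propagate via continuity of all constructions (the geometric fixed points, the algebraic model equivalence, and the tt-stalk functor). The paper's own proof is terser, citing \cref{prop:qgsp_ttstalk} and \cref{rem:prop:qgsp_ttstalk_altproof} to cover the continuity step, but the content is the same; your unpacking of the compatibility squares at each finite stage (via \eqref{dia:algmodsquare}, \cref{prop:gfp_properties}(2), and \eqref{eq:simplestalks}) is exactly what the cited results encode.
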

\begin{proof}
    Suppose first that $G$ is a finite group. In this case, $\Sub(G)$ is discrete and the result is the content of Wimmer's theorem \cref{thm:finitegroups_algmodel}. Keeping in mind \cref{prop:qgsp_ttstalk} and \cref{rem:prop:qgsp_ttstalk_altproof} the general claim then follows from the continuity of all constructions and functors in play.
\end{proof}

\begin{remark}
    As a consequence of this corollary, one can show that the induced equivalence on homotopy categories
        \[
            \xymatrix{\Ho(\Phi)\colon \Ho(\Sp_{G,\Q}) \ar[r] & \Ho(\sfD\Shv_{G,\Q}^{\Weyl}(\Sub(G)))}
        \]
    agrees with the (triangulated) equivalence constructed by Barnes and Sugrue~\cite{sugrue_thesis,barnessugrue_spectra}. We omit the details of this argument. 
\end{remark}

Recall the definition of von Neumann regular tt-category from \cref{def:vNr_ttcats}. Combining \cref{prop:discretemodules_ttfields} and \cref{prop:qgsp_ttstalk}, we obtain:

\begin{theorem}\label{thm:qgsp_vNr}
    The tt-category $\Sp_{G,\Q}$ is von Neumann regular for any profinite group $G$.
\end{theorem}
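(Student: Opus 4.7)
The plan is to combine the two propositions cited just before the statement. By \cref{prop:qgsp_spc} and \cref{rem:qgsp_spc}, every prime tt-ideal $\sfP \in \Spc(\Sp_{G,\Q}^{\omega})$ is of the form $\sfP(H)$ for a unique conjugacy class $(H) \in \Sub(G)/G$. Hence to show that $\Sp_{G,\Q}$ is von Neumann regular, it suffices to verify that the tt-stalk $(\Sp_{G,\Q})_{\sfP(H)}$ is a tt-field for every closed subgroup $H \leqslant G$.

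First I would invoke \cref{prop:qgsp_ttstalk}, which identifies the stalk at $\sfP(H)$ with the derived category of rational discrete $W_G(H)$-modules,
\[
(\Sp_{G,\Q})_{\sfP(H)} \simeq \sfD\Mod_{\Q}^{\delta}(W_G(H)),
\]
via the rational geometric fixed point functor $\Phi_{G,\Q}^{H}$. Then I would apply \cref{prop:discretemodules_ttfields} to the profinite Weyl group $W_G(H)$, which shows that $\sfD\Mod_{\Q}^{\delta}(W_G(H))$ is a tt-field in the sense of \cref{defn:ttfield}. Combining these two equivalences gives that every stalk of $\Sp_{G,\Q}$ is a tt-field, which is precisely the defining condition (\cref{def:vNr_ttcats}) for $\Sp_{G,\Q}$ to be von Neumann regular.

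There is no real obstacle: all the hard work has already been done in establishing the two input propositions. The only small point to double-check is that the profinite Weyl group $W_G(H)$ is indeed a profinite group (so that \cref{prop:discretemodules_ttfields} applies), but this is immediate from \cref{prop:profiniteweyl}, which presents $W_G(H)$ as a cofiltered limit $\lim_i W_{G_i}(H_i)$ of finite groups.
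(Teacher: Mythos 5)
Your argument is exactly the paper's: the result is stated as an immediate combination of \cref{prop:qgsp_ttstalk} (identifying the tt-stalk at $\sfP(H)$ with $\sfD\Mod_{\Q}^{\delta}(W_G(H))$) and \cref{prop:discretemodules_ttfields} (showing that category is a tt-field). Your extra checks -- that all primes arise as $\sfP(H)$ and that $W_G(H)$ is profinite -- are correct and implicit in the paper's references.
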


We are now ready to reap the harvest.

\begin{corollary}\label{cor:mainstrat}
    For any profinite group $G$, the following statements are equivalent:  
        \begin{enumerate}
            \item $\Sp_{G,\Q}$ is stratified;
            \item $\Sp_{G,\Q}$ is costratified;
            \item $\Sp_{G,\Q}$ satisfies the local-to-global principle.
        \end{enumerate}
    If these equivalent conditions are satisfied, then support and cosupport induce bijections 
        \[
            \begin{Bmatrix}
                \text{Localizing ideals} \\ \text{of } \Sp_{G,\Q}
            \end{Bmatrix} 
                \xymatrix@C=2pc{ \ar@<0.75ex>[r]^{\Supp} &  \ar@<0.75ex>[l]^{\Supp^{-1}}}
            \begin{Bmatrix}
                \text{Subsets} \\ \text{of } \Spec(A_{\Q}(G))
            \end{Bmatrix}
                \xymatrix@C=2pc{ \ar@<0.75ex>[r]^{\Cosupp} &  \ar@<0.75ex>[l]^{\Cosupp^{-1}}}
            \begin{Bmatrix}
                \text{Colocalizing coideals} \\ \text{of } \Sp_{G,\Q}
            \end{Bmatrix}.
        \]
\end{corollary}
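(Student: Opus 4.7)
The plan is to assemble the corollary from the abstract machinery already in hand; little additional work is needed. By Theorem~\ref{thm:qgsp_vNr}, the tt-category $\Sp_{G,\Q}$ is von Neumann regular, i.e., each of its tt-stalks $(\Sp_{G,\Q})_{\sfP(H)}$ is a tt-field (via the identification with $\sfD\Mod_{\Q}^{\delta}(W_G(H))$ from Proposition~\ref{prop:qgsp_ttstalk} together with Proposition~\ref{prop:discretemodules_ttfields}). The equivalence of conditions (1), (2), (3) then follows by a direct application of Proposition~\ref{prop:vNr_stratification}, which packages exactly the statement that for a von Neumann regular tt-category, stratification, costratification, and the local-to-global principle are interchangeable.

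For the bijections displayed in the second half, I would first note that Lemma~\ref{lem:vNr_zerodim} ensures $\Spc(\Sp_{G,\Q}^{\omega})$ is zero-dimensional, hence profinite, hence weakly Noetherian by Lemma~\ref{lem:weaklynoeth}; this is the standing hypothesis needed for the support and cosupport surjections \eqref{eq:bigsupp} and \eqref{eq:bigcosupp} to even be defined. Unwinding Definition~\ref{defn:minimal} and its costratification analogue, stratification and costratification of $\Sp_{G,\Q}$ respectively assert that $\Supp$ and $\Cosupp$ give bijections to the power set of $\Spc(\Sp_{G,\Q}^{\omega})$. Substituting the comparison homeomorphism $\Spc(\Sp_{G,\Q}^{\omega}) \cong \Spec(A_{\Q}(G))$ from Proposition~\ref{prop:qgsp_spc} then yields the two displayed bijections.

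The only step requiring any real content has already been performed, namely the identification of the local structure of $\Sp_{G,\Q}$ through Proposition~\ref{prop:qgsp_ttstalk} and the verification (Proposition~\ref{prop:discretemodules_ttfields}) that $\sfD\Mod_{\Q}^{\delta}(\Gamma)$ is a tt-field for every profinite $\Gamma$. Granted those, the present corollary is a formal consequence of the general framework of Section~\ref{sec:zerodimtt}, and I do not anticipate any obstruction in writing it out.
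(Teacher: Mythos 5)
Your proof is correct and follows the same route as the paper: both deduce the three-way equivalence from von Neumann regularity (\cref{thm:qgsp_vNr}) via \cref{prop:vNr_stratification}, and obtain the displayed bijections by unwinding the definitions of (co)stratification together with the identification $\Spc(\Sp_{G,\Q}^\omega) \cong \Spec(A_\Q(G))$ from \cref{prop:qgsp_spc}. You fill in a few more details (the weak Noetherianity hypothesis needed for the support/cosupport surjections to be defined) than the paper, but the argument is the same.
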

\begin{proof}
    We have proven in \cref{thm:qgsp_vNr} that $\Sp_{G,\Q}$ is von Neumann regular. The stipulated equivalences are then a consequence of \cref{prop:vNr_stratification}, which in turn imply the classification of localizing ideals and colocalizing ideals by the theory of stratification, see \cref{ssec:ttstratification}. 
\end{proof}

\begin{corollary}\label{cor:telescope}
    Let $G$ be a profinite group satisfying the equivalent conditions of \cref{cor:mainstrat}.  Then $\Sp_{G,\Q}$ satisfies the telescope conjecture.
\end{corollary}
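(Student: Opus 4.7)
The plan is to derive this as a direct consequence of the abstract criterion \cref{prop:tc}, which says that any stratified rigidly-compactly generated tt-category with generically Noetherian Balmer spectrum satisfies the telescope conjecture. The stratification hypothesis on $\Sp_{G,\Q}$ is supplied by the assumption of the corollary together with \cref{cor:mainstrat}, so the only thing that remains is to verify the topological condition on the spectrum.

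For the topological input, recall from \cref{prop:qgsp_spc} that the Balmer spectrum of $\Sp_{G,\Q}^{\omega}$ is homeomorphic to $\Sub(G)/G$, which is a profinite space by \cref{prop:profinitesubg}. Now appeal to \cref{lem:weaklynoeth} (or equivalently \cref{rem:telescope}), which records that every profinite space is generically Noetherian: for any point $x \in X$ profinite, $\gen(x) = \{x\}$, which is trivially Noetherian. This verifies the hypothesis of \cref{prop:tc}, yielding the telescope conjecture.

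There is no real obstacle here, as everything is reduced to a direct application of results established earlier in the paper. We note that the statement will be superseded by \cref{thm:qgsp_tc}, which removes the stratification hypothesis and establishes the telescope conjecture unconditionally for $\Sp_{G,\Q}$ by invoking the stronger criterion \cref{prop:zerodim_telescopeconjecture}; however, under the hypotheses of the present corollary, the more elementary argument through \cref{prop:tc} suffices and does not require the bijectivity hypothesis or the detection property of homological support.
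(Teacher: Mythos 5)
Your proof is correct and follows the paper's own argument exactly: stratification is supplied by the hypothesis via \cref{cor:mainstrat}, the spectrum is generically Noetherian by \cref{lem:weaklynoeth} since it is profinite, and \cref{prop:tc} then yields the telescope conjecture. Your additional remarks about \cref{prop:qgsp_spc}, \cref{prop:profinitesubg}, and the relation to \cref{thm:qgsp_tc} are accurate but not needed.
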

\begin{proof}
The spectrum of $\Sp_{G,\Q}$ is generically Noetherian by \cref{lem:weaklynoeth}, so the claim follows from \cref{prop:tc}.
\end{proof}

In the next subsection, we will establish an unconditional generalization of this result.

\subsection{The telescope conjecture}\label{ssec:qgsp_telescopeconjecture}

Our next goal is to prove the telescope conjecture for rational $G$-spectra for all profinite groups $G$, thereby removing the equivalent hypotheses from \cref{cor:telescope}. We note that this is a significant improvement because, as explained in the next section, these hypotheses do not hold for arbitrary profinite groups. We begin with a couple of preliminary observations.

\begin{lemma}\label{lem:qgsp_bijhyp}
    $\Sp_{G,\Q}$ satisfies the bijectivity hypothesis, i.e., the comparison map
        \[
        \xymatrix{\phi_{G,\Q}\colon \Spch(\Sp_{G,\Q}^{\omega}) \ar[r]^-{\cong} & \Spc(\Sp_{G,\Q}^{\omega})}
        \]
    is a homeomorphism.
\end{lemma}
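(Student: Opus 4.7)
The plan is to combine the continuity of the homological spectrum established in \cref{thm:spchcontinuity} (equivalently, its corollary \cref{cor:bijhyp_continuity}) with the presentation of $\Sp_{G,\Q}$ as a filtered colimit over its finite quotient groups given in \eqref{eq:qgsp_continuity}. More precisely, write $G = \lim_{i\in I}G_i$ as a cofiltered limit of finite groups, so that $\Sp_{G,\Q}^{\omega} \simeq \colim_{i\in I}\Sp_{G_i,\Q}^{\omega}$ as essentially small tt-categories. By \cref{cor:bijhyp_continuity}, it suffices to verify the bijectivity hypothesis for each of the finite-group categories $\Sp_{G_i,\Q}$.

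To handle the finite-group case, I would invoke Wimmer's equivalence (\cref{thm:finitegroups_algmodel}), which identifies
\[
\Sp_{G_i,\Q} \simeq \prod_{(H)\in \Sub(G_i)/G_i}\sfD\Mod_{\Q}(W_{G_i}(H)).
\]
Each factor is the derived category of a semisimple $\Q$-algebra (by Maschke's theorem), hence a tt-field by the argument of \cref{prop:discretemodules_ttfields}. By \cref{prop:ttfieldstrat}, each factor is therefore stratified. A finite product of stratified tt-categories is stratified (with spectrum the disjoint union), so $\Sp_{G_i,\Q}$ is stratified. Then \cite[Theorem B]{BarthelHeardSanders2023a} implies that the bijectivity hypothesis holds for $\Sp_{G_i,\Q}$ for every $i \in I$.

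Applying \cref{cor:bijhyp_continuity} to the filtered system $(\Sp_{G_i,\Q}^{\omega})_{i \in I}$ then gives the bijectivity hypothesis for $\Sp_{G,\Q}^{\omega} \simeq \colim_i \Sp_{G_i,\Q}^{\omega}$, i.e., $\phi_{G,\Q}$ is a bijection. To promote bijectivity to the stated homeomorphism, I would invoke the remark following \cref{def:bijhyp}, which notes that by \cite[Theorem A]{BarthelHeardSanders2023a} the comparison map is automatically a homeomorphism as soon as it is bijective. Alternatively, one can trace through the continuity squares: $\Spch(\Sp_{G,\Q}^{\omega}) \cong \lim_i \Spch(\Sp_{G_i,\Q}^{\omega})$ by \cref{thm:spchcontinuity} and $\Spc(\Sp_{G,\Q}^{\omega}) \cong \lim_i \Spc(\Sp_{G_i,\Q}^{\omega})$ by \cref{cor:gsp_contspectrum} applied rationally, and the $\phi_{G_i,\Q}$ are homeomorphisms for finite groups, so a limit of homeomorphisms of spectral spaces is a homeomorphism. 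There is no substantial obstacle in this argument; the only thing to be careful about is ensuring that the stratification (hence bijectivity) for a finite product follows formally from stratification of each factor, which is routine given that both support and primes split along the product decomposition.
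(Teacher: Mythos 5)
Your proposal is correct, and the overall strategy — reduce via continuity of the two spectra to the finite-group case — is the same as the paper's. The paper dispatches this lemma with a terse pointer to the proof of the integral statement (\cref{prop:bijectivityhyp}), which rests on continuity of $\Spc$ and $\Spch$ over the filtered system $\colim_i^\omega \Sp_{G_i,\Q}$ together with the known bijectivity hypothesis for finite groups; in the integral case that base fact is Balmer's \cite[Corollary~5.10]{balmer_nilpotence}, and the rational case is left implicit. You instead supply the finite-group base case explicitly: Wimmer's equivalence $\Sp_{G_i,\Q} \simeq \prod_{(H)}\sfD\Mod_{\Q}(W_{G_i}(H))$, the observation that each factor is a tt-field (Maschke, so the module category is semisimple, then run the argument of \cref{prop:discretemodules_ttfields}), hence stratified by \cref{prop:ttfieldstrat}, hence the finite product is stratified, and finally \cite[Theorem~B]{BarthelHeardSanders2023a} gives the bijectivity hypothesis. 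You then feed this into \cref{cor:bijhyp_continuity}, which packages the same diagram argument the paper draws out by hand. The extra detail in your version is a genuine improvement in self-containedness, since the paper's ``same argument'' cue requires the reader to supply the rational finite-group base case themselves. One small remark: rather than going through stratification, you could note directly that for a finite product of tt-fields both $\Spc$ and $\Spch$ decompose as disjoint unions of singletons, so $\phi$ is trivially a bijection; this avoids appealing to \cite[Theorem~B]{BarthelHeardSanders2023a}, but either way is fine.
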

\begin{proof}
    This holds by the same argument as for the integral case, see the proof of \cref{prop:bijectivityhyp}, using \cref{prop:spccontinuity,thm:spchcontinuity}, which are applicable thanks to \eqref{eq:qgsp_continuity}.
\end{proof}

\begin{lemma}\label{lem:qgsp_detection}
    The collection of geometric fixed point functors $(\Phi_{\Q}^H)_{H \in \Sub(G)/G}$ is jointly conservative on $\Sp_{G,\Q}$, i.e., they detect equivalences.
\end{lemma}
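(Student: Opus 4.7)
The plan is to reduce joint conservativity of $(\Phi^H_\Q)_{H \in \Sub(G)/G}$ to the stalkwise detection of quasi-isomorphisms in the sheaf model via the algebraic model of \cref{thm:sheafalgebraicmodel}. Let $\Phi\colon \Sp_{G,\Q} \xrightarrow{\sim} \sfD(\Shv_{G,\Q}^{\Weyl}(\Sub(G)))$ denote the monoidal geometric equivalence of that theorem. The local-to-global compatibility recorded in \cref{cor:algmodel_localglobal} shows that, under the identification $\sfD\Mod_{\Q}^{\delta}(W_G(H)) \simeq \sfD(\Shv_{G,\Q}^{\Weyl}((H)_G))$ from \cref{rem:prop:qgsp_ttstalk_altproof}, the functor $\Phi^H_{\Q}$ corresponds to taking the stalk of the Weyl sheaf $\Phi(X)$ at (the orbit of) $H$. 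Hence the assumption that $\Phi^H_{\Q}(X) \simeq 0$ for every $H \in \Sub(G)/G$ translates into the vanishing of all stalks of $\Phi(X)$ in $\sfD(\Shv_{G,\Q}^{\Weyl}(\Sub(G)))$.

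To conclude $\Phi(X) \simeq 0$, and therefore $X \simeq 0$ since $\Phi$ is an equivalence, I would transfer the stalkwise detection of quasi-isomorphisms in $\Ch(\Shv_{G,\Q}(\Sub(G)))$ from \cref{prop:sectionsandstalks} to the Weyl subcategory through the fully faithful inclusion $\incfunctor\colon \WeylSheaves_{G,\Q}(\Sub(G)) \hookrightarrow \Shv_{G,\Q}(\Sub(G))$ of \cref{lem:weyladjunction}. Since $\incfunctor$ does not alter the underlying sheaf space, it is stalk-preserving, so the vanishing of all stalks of $\Phi(X)$ as a Weyl sheaf is equivalent to the vanishing of all stalks of $\incfunctor \Phi(X)$ as an equivariant sheaf. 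Applying \cref{prop:sectionsandstalks} then gives that $\incfunctor \Phi(X)$ is quasi-isomorphic to zero, and the unit $F \to \weylfunctor \incfunctor F$ of \cref{lem:weyladjunction} being a natural isomorphism forces $\Phi(X) \simeq 0$ in the derived category of Weyl-$G$-sheaves.

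Given the groundwork laid in Sections~\ref{sec:equivariantsheaves} and~\ref{sec:qgsp_algmodel}, there is no serious obstacle; the only point that requires care is tracing through the identifications in \cref{cor:algmodel_localglobal} and \cref{rem:prop:qgsp_ttstalk_altproof} so that the hypothesis $\Phi^H_{\Q}(X)\simeq 0$ really does correspond to vanishing of the stalk of $\Phi(X)$ at the $G$-orbit of $H$ for every closed subgroup $H \leqslant G$. As an alternative, one may avoid the algebraic model entirely and argue via the continuity \eqref{eq:qgsp_continuity} together with the finite-group case of joint conservativity supplied by \cref{thm:finitegroups_algmodel}, reducing any $X \in \Sp_{G,\Q}$ to a filtered presentation of compacts coming from some $\Sp_{G_i,\Q}$ and invoking compatibility of $\Phi^H_\Q$ with inflation from \cref{prop:gfp_properties}(2).
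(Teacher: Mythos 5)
Your proof takes essentially the same approach as the paper: invoke the algebraic model $\Phi$ of \cref{thm:sheafalgebraicmodel}, use the local-global compatibility of \cref{cor:algmodel_localglobal} to translate the hypothesis into vanishing of all stalks of the Weyl sheaf $\Phi(X)$, and then conclude by the fact that quasi-isomorphisms in the sheaf model are detected stalkwise. The only minor difference is procedural: you route the stalkwise detection through the full sheaf category via the inclusion $\incfunctor$ and \cref{prop:sectionsandstalks}, while the paper cites \cref{prop:weylsheaves_properties}(4) directly (which already records that the weak equivalences of the Weyl-sheaf model structure are the stalkwise homology isomorphisms); both are correct, and the paper's route is marginally shorter.
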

\begin{proof}
    By construction of $\sfD (\Shv_{G,\Q}^{\Weyl}((H)_G))$ via the model structure of \cref{prop:weylsheaves_properties}(4), the stalk functors 
    \[
        ((-)_H\colon \sfD\Shv_{G,\Q}^{\Weyl}(\Sub(G)) \to \sfD\Shv_{G,\Q}^{\Weyl}((H)_G))_H
    \]
    are jointly conservative for $H$ ranging through the (conjugacy classes of) closed subgroups of $G$. The claim then follows from \cref{cor:algmodel_localglobal}.
\end{proof}

The next lemma shows that, in $\Sp_{G,\Q}$, homological support and tensor-triangular support coincide and can be computed in terms of geometric isotropy. 

\begin{lemma}\label{lem:qgsp_supportidentifications}
    For any $t \in \Sp_{G,\Q}$, we have identifications
        \[
            \Supp(t) = \Supph(t) = \{H \in \Sub(G)/G\mid \Phi_{\Q}^H(t) \neq 0\}
        \]
    and all three notions of support have the detection property. Here, in the first equality we have implicitly used the homeomorphism $\phi_{G,\Q}$ to identify the homological with the triangular spectrum.
\end{lemma}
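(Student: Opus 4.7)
The plan is to identify the homological and triangular supports separately with the geometric isotropy set, and then invoke joint conservativity of the geometric fixed point functors for the detection property.

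First I would compute $\Supp(t)$. Since $\Sp_{G,\Q}$ is zero-dimensional by \cref{prop:qgsp_spc} (and in fact von Neumann regular by \cref{thm:qgsp_vNr}), \cref{prop:stalkcostalk} gives $g({\sfP}) \otimes t \simeq t_{\sfP}$ for any $\sfP \in \Spc(\Sp_{G,\Q}^{\omega})$, and hence
\[
\Supp(t) = \{\sfP \in \Spc(\Sp_{G,\Q}^{\omega}) \mid t_{\sfP} \neq 0\}.
\]
Under the parametrization $\sfP = \sfP(H)$ for $H \in \Sub(G)/G$ from \cref{rem:qgsp_spc}, \cref{prop:qgsp_ttstalk} identifies the stalk functor with $\Phi_{\Q}^{H}$, landing in the tt-field $\sfD\Mod_{\Q}^{\delta}(W_G(H))$. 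The underlying non-equivariant object of $\Phi_{\Q}^H(t)$ is nonzero precisely when $\Phi_{\Q}^{H}(t)$ is, so $t_{\sfP(H)} \neq 0$ iff $\Phi_{\Q}^H(t) \neq 0$, yielding the equality of $\Supp(t)$ with the geometric isotropy set.

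Next I would identify $\Supph(t)$ with the same set. Fixing $H$, consider the homological functor $\Phi_{\Q}^{H}\colon \Sp_{G,\Q} \to \sfD(\Q)$ (forgetting the Weyl-group structure on the target). Because $\sfD(\Q)$ is a tt-field, the Freyd-envelope argument used in the proof of \cref{thm:nilpotence} extends $\Phi_{\Q}^H$ uniquely to an exact symmetric monoidal functor on $\cA(\Sp_{G,\Q}^{\omega})$ whose kernel on finitely presented objects is a homological prime $\sfB$, and the induced functor on the Gabriel quotient $\overline{F}\colon \cA(\Sp_{G,\Q}^{\omega})/\langle \sfB\rangle \to \sfD(\Q)$ is faithful (hence conservative) by \cite[Lemma 2.2]{BalmerCameron2021}. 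Tracing through the comparison map of \cref{prop:qgsp_spc}, one sees that $\phi_{G,\Q}(\sfB) = (\Phi_{\Q}^H)^{-1}(0) = \sfP(H)$, so $\sfB$ is the homological prime corresponding to $H$ under the bijection $\phi_{G,\Q}$ of \cref{lem:qgsp_bijhyp}. Conservativity of $\overline{F}$ then gives $\overline{h}_{\sfB}(t) = 0 \iff \Phi_{\Q}^H(t) = 0$, and varying $H$ yields the second identification.

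With both supports computed in terms of geometric isotropy, the detection property follows immediately from \cref{lem:qgsp_detection}: if $\Phi_{\Q}^H(t) = 0$ for every $H \in \Sub(G)/G$, then $t \simeq 0$. The main obstacle is the identification $\phi_{G,\Q}(\sfB) = \sfP(H)$ in the second step, which amounts to a careful diagram chase through the identifications of \cref{prop:qgsp_spc,rem:qgsp_spc,prop:qgsp_ttstalk}, ensuring that the homological prime abstractly produced by the Freyd envelope of $\Phi_{\Q}^H$ is indeed the one that $\phi_{G,\Q}$ sends to $\sfP(H)$. Alternatively, one could bypass the direct identification of $\Supp$ in the first paragraph and deduce the equality $\Supp(t) = \Supph(t)$ from the detection property of $\Supph$ via \cite[Corollary 3.12]{BarthelHeardSanders2023a}, but the direct approach above is both shorter and self-contained given the structural results already established for $\Sp_{G,\Q}$.
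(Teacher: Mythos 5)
Your proof is correct, and the key observation---that in the von Neumann regular setting one can compute $\Supp(t)$ directly as the stalkwise non-vanishing locus via \cref{prop:stalkcostalk} and \cref{prop:qgsp_ttstalk}---is a genuine shortcut the paper does not take. The paper instead only computes $\Supph(t)$ (essentially following your second paragraph, by appeal to the rational analogue of \cref{cor:gsp_hsupport}), establishes detection from \cref{lem:qgsp_detection}, and then \emph{deduces} $\Supp(t) = \Supph(t)$ abstractly from the detection property of $\Supph$, via the citation of \cite[Corollary 3.12]{BarthelHeardSanders2023a} buried in the first paragraph of the proof of \cref{prop:zerodim_telescopeconjecture}. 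You flag this alternative yourself at the end. What your direct route buys is independence from that external reference and a more transparent use of the structural results (stalk identification, tt-fields) already at hand; what the paper's route buys is economy, since it only needs one explicit support computation. One small cleanup: in the $\Supph$ step you implicitly use the description $\Supph(t) = \{\sfB : \overline{h}_{\sfB}(t) \neq 0\}$ in terms of homological residue fields; you should cite \cite[Proposition 4.2]{balmer_homological} (as the paper's proof of \cref{cor:gsp_hsupport} does) rather than relying on \cref{def:homological_support}, whose formulation via pure-injectives does not make that reformulation immediate. Also, the identification $\phi_{G,\Q}(\sfB) = \sfP(H)$ you single out as the ``main obstacle'' is in fact immediate from the definition of the comparison map $\phi(\sfB) = h^{-1}(\sfB) = \ker(\hat F)\cap \Sp_{G,\Q}^{\omega} = (\Phi_{\Q}^H)^{-1}(0)$; no diagram chase through \cref{prop:qgsp_spc} is required.
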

\begin{proof}
    The bijectivity hypothesis holds by \cref{lem:qgsp_bijhyp}, so we can use the map $\phi_{G,\Q}$ to identify the homological spectrum of $\Sp_{G,\Q}^{\omega}$ with its triangular spectrum. \Cref{cor:gsp_hsupport} implies that the homological support for $\Sp_{G,\Q}$ can be computed in terms of (rational) geometric isotropy, while \cref{lem:qgsp_detection} establishes the detection property in this context. Finally, the proof of \cref{prop:zerodim_telescopeconjecture} gives the identification of the triangular with the homological support.   
\end{proof}

\begin{theorem}\label{thm:qgsp_tc}
    $\Sp_{G,\Q}$ satisfies the telescope conjecture for any profinite group $G$.
\end{theorem}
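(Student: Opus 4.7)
The plan is to verify the hypotheses of the abstract criterion \cref{prop:zerodim_telescopeconjecture} for the tt-category $\sfT = \Sp_{G,\Q}$, from which the telescope conjecture follows immediately. Three conditions need to be checked: zero-dimensionality of the Balmer spectrum, the bijectivity hypothesis, and the detection property for the homological support.

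First, $\Sp_{G,\Q}$ is zero-dimensional. Indeed, by \cref{prop:qgsp_spc} the Balmer spectrum is homeomorphic to $\Sub(G)/G$, which is a profinite space by \cref{prop:profinitesubg}, and hence zero-dimensional by \cref{lem:charofpro}. (In fact, by \cref{thm:qgsp_vNr}, $\Sp_{G,\Q}$ is even von Neumann regular, but we will not need this stronger property here.) Second, the bijectivity hypothesis for $\Sp_{G,\Q}$ is established in \cref{lem:qgsp_bijhyp} by the same continuity argument used for integral $G$-spectra, upgraded via \cref{prop:spccontinuity} and \cref{thm:spchcontinuity} from the finite-group case, where the homological and triangular spectra coincide classically.

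Third, the detection property for $\Supph$ is the content of \cref{lem:qgsp_supportidentifications}: the identification $\Supph(t) = \{H \in \Sub(G)/G \mid \Phi_\Q^H(t) \neq 0\}$ from \cref{cor:gsp_hsupport} combined with the joint conservativity of the geometric fixed points $(\Phi_{\Q}^H)_{H \in \Sub(G)/G}$ proved in \cref{lem:qgsp_detection} (via the algebraic model and stalkwise detection of equivalences of Weyl-$G$-sheaves) shows that $\Supph(t) = \varnothing$ forces $t = 0$.

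With all three conditions verified, \cref{prop:zerodim_telescopeconjecture} applies directly and yields the telescope conjecture for $\Sp_{G,\Q}$. There is no genuine obstacle here, since the hard work has already been carried out: the main inputs are the zero-dimensionality from the algebraic-model computation of the spectrum, the bijectivity hypothesis that is obtained by continuity from finite groups, and the joint conservativity of geometric fixed points, itself a consequence of \cref{thm:sheafalgebraicmodel} and the stalkwise nature of equivalences in the sheaf model. It is worth emphasizing that this argument does \emph{not} require the local-to-global principle, so the conclusion holds for \emph{all} profinite groups $G$, strictly strengthening \cref{cor:telescope}.
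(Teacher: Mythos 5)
Your proof is correct and follows exactly the same route as the paper: zero-dimensionality of the spectrum is noted, and then the two hypotheses of \cref{prop:zerodim_telescopeconjecture} are discharged by citing \cref{lem:qgsp_bijhyp} (bijectivity hypothesis) and \cref{lem:qgsp_supportidentifications} (detection property). The only difference is that you unpack the statements of those lemmas slightly, whereas the paper's proof is a one-line pointer to the same references.
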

\begin{proof}
    Since $\Sp_{G,\Q}$ is zero-dimensional, it suffices to verify the two conditions of our criterion \cref{prop:zerodim_telescopeconjecture}. This is the content of \cref{lem:qgsp_bijhyp} and \cref{lem:qgsp_supportidentifications}, respectively.
\end{proof}

\begin{remark}\label{rem:hrbek}
    The conclusion of \cref{thm:qgsp_tc} can also be obtained via recent work of Hrbek \cite{hrbek2023telescope} in place of our \cref{prop:zerodim_telescopeconjecture}. He establishes a stalkwise criterion for the telescope conjecture, provided that the tt-category under consideration satisfies the bijectivity hypothesis and (a weak form of) the detection property. Using that $\Sp_{G,\Q}$ is von Neumann regular, the telescope conjecture holds for each of its tt-stalks, so his result applies. 
\end{remark}

\section{The local-to-global principle and spaces of subgroups}\label{sec:qgsp_lgp}

In \cref{cor:mainstrat} we saw that the property of stratification for $\Sp_{G,\Q}$ reduces to the question of the local-to-global principle. In this last section we will describe when $\Sp_{G,\Q}$ satisfies the local-to-global principle in terms of the cardinality of $\Sub(G)/G$. To do so, we will begin in \cref{ssec:profiniteLGP} by linking the local-to-global principle of $\Sp_{G,\Q}$ to the local-to-global principle of the derived category of the rational Burnside ring, which will then feed into our main results in \cref{ssec:spacesofconj}. We will conclude in \cref{ssec:lgp_examples} with a plethora of examples.

\subsection{Profinite Burnside rings and the local-to-global principle}\label{ssec:profiniteLGP}

In this subsection we will tackle the question of when the local-to-global principle holds for the category $\Sp_{G,\Q}$, giving both a topological and an algebraic characterization. To this end, we will employ a descent type argument by comparing $\Sp_{G, \Q}$ to its `cellular' subcategory, which can then be understood via work of Stevenson. 

Let $\mathsf{D}(A_{\Q}(G))$ be the derived category of the rational Burnside ring of $G$. As suggested, $\mathsf{D}(A_{\Q}(G))$ should be thought of as a monogenic version of $\Sp_{G,\Q}$, indeed, it is the localizing subcategory generated by the unit:

\begin{lemma}\label{lem:morita}
There is a canonical fully faithful geometric functor
\[
\xymatrix{\iota\colon \mathsf{D}(A_{\Q}(G)) \ar[r] & \Sp_{G,\Q}}
\]
with essential image the localizing subcategory generated by $S_G^0$.
\end{lemma}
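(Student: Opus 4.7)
The plan is to apply a Morita-theoretic argument based on the Schwede--Shipley recognition principle, for which the key input is the computation of the graded endomorphism ring of $S_{G,\Q}$ in $\Sp_{G,\Q}$.

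First, I would show that the endomorphism ring spectrum $\underline{\End}_{\Sp_{G,\Q}}(S_{G,\Q})$ is equivalent to the Eilenberg--MacLane spectrum $HA_{\Q}(G)$. For this, it suffices to verify that $\pi_n\Map_{\Sp_{G,\Q}}(S_{G,\Q}, S_{G,\Q})$ vanishes for $n \neq 0$ and recovers $A_{\Q}(G)$ in degree zero. Using the continuity equivalence \eqref{eq:qgsp_continuity} together with the description of mapping spaces in filtered colimits of $\infty$-categories (\cref{lem:colimitofmaps}), this reduces to the analogous claim for each finite quotient $G_i$ of $G$. For a finite group, tom Dieck's splitting theorem rationally writes $\pi_n^{G_i}(S_{G_i,\Q})$ as a sum over conjugacy classes $(H)$ of terms $\pi_n^s(BW_{G_i}(H)_+) \otimes \Q$; since each Weyl group is finite, this vanishes for $n \geqslant 1$, and in degree $0$ it recovers $A_{\Q}(G_i)$. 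Passing to the colimit and invoking \cref{prop:ratburnsidering} then produces the desired identification.

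Next, I would invoke Schwede--Shipley-style Morita theory in the $\infty$-categorical formulation (\cite[Theorem 7.1.2.1]{ha}): since $S_{G,\Q}$ is a compact object in the presentable stable $\infty$-category $\Sp_{G,\Q}$, the localizing subcategory $\langle S_{G,\Q}\rangle$ it generates is equivalent to the $\infty$-category of modules over its endomorphism ring spectrum. Combined with the identification from the previous paragraph, this yields a symmetric monoidal equivalence
\[
\langle S_{G,\Q}\rangle \simeq \Mod_{HA_{\Q}(G)}(\Sp_{\Q}) \simeq \sfD(A_{\Q}(G)),
\]
where the last step is the standard identification of modules over a rational Eilenberg--MacLane spectrum with the derived category of the underlying discrete commutative ring. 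Composing with the inclusion $\langle S_{G,\Q}\rangle \hookrightarrow \Sp_{G,\Q}$ then produces the desired functor $\iota$, which is fully faithful by construction and has the claimed essential image. Symmetric monoidality of $\iota$ follows from the observation that the localizing subcategory generated by the unit is automatically closed under $\otimes$: indeed $S_{G,\Q}\otimes S_{G,\Q} \simeq S_{G,\Q}$, and the bifunctor $\otimes$ commutes with colimits in each variable.

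The main technical obstacle is the first step, namely the verification that the graded equivariant stable stems of $S_{G,\Q}$ are rationally concentrated in degree zero. The rest of the argument is formal, and the canonicity of $\iota$ is automatic from the Morita equivalence being determined by the unit.
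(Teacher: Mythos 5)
Your approach is essentially the same as the paper's: identify the endomorphism ring spectrum of the unit with $HA_{\Q}(G)$, then invoke Morita theory together with the identification of $HA_{\Q}(G)$-modules with $\sfD(A_{\Q}(G))$ (Shipley's theorem). The paper treats the discreteness of $\End_{\Sp_{G,\Q}}(S_G^0)$ simply as a consequence of ``working rationally,'' whereas you supply the underlying verification via tom~Dieck splitting for each finite quotient $G_i$ plus continuity along \eqref{eq:qgsp_continuity} --- a useful elaboration of a step the paper leaves implicit. One caveat: your argument for the symmetric monoidality of $\iota$, namely that $\langle S_{G,\Q}\rangle$ is a $\otimes$-subcategory because $S_{G,\Q}^{\otimes 2}\simeq S_{G,\Q}$, only shows that the \emph{inclusion} $\langle S_{G,\Q}\rangle \hookrightarrow \Sp_{G,\Q}$ is symmetric monoidal; it does not by itself establish that the Morita equivalence $\sfD(A_{\Q}(G))\simeq \langle S_{G,\Q}\rangle$ respects the monoidal structure. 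The paper handles this by invoking the \emph{symmetric monoidal} form of Morita theory from Mathew--Naumann--Noel, which exhibits $-\otimes_R \unit$ as a symmetric monoidal left adjoint to the lax monoidal functor $\Hom_{\sfT}(\unit,-)$; citing that (or arguing that the oplax structure on the left adjoint is strong because the essential image is a tensor subcategory generated by the unit) would close the loop.
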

\begin{proof}
Let $\End_{\Sp_{G,\Q}}(S_G^0)$ be the rational commutative ring spectra of endomorphisms of $S_G^0$ and recall that the rational Burnside ring of $G$ is defined as $A_{\Q}(G) = \pi_0\End_{\Sp_{G,\Q}}(S_G^0)$. Since we are working rationally, $\End_{\Sp_{G,\Q}}(S_G^0) \cong HA_{\Q}(G)$ as rational commutative ring spectra or, equivalently, as rational commutative dg-algebras. The desired result is obtained as a composite of two geometric functors which we describe below:
\[
\xymatrix{\iota\colon \mathsf{D}(A_{\Q}(G)) \ar[r]^-{\sim} &  
\Mod(\End_{\Sp_{G,\Q}}(S_G^0))  \ar[r] & \Sp_{G,\Q}.}
\]
The first geometric equivalence follows from Shipley's theorem \cite{shipley_hz}. The second equivalence is an instance of symmetric monoidal Morita theory, in the form of \cite[\S7.1]{MathewNaumannNoel2017}: For any presentably symmetric monoidal stable $\infty$-category $\sfT = (\sfT,\otimes,\unit)$ and $R\coloneqq \End_{\sfT}(\unit)$, there is an adjunction
\[
-\otimes_R \unit \colon \Mod_{R} \rightleftarrows \sfT \noloc \Hom_{\sfT}(\unit,-).
\]
The left adjoint is geometric and promotes canonically to a symmetric monoidal functor. This adjunction induces a symmetric monoidal equivalence between $\Mod_R$ and the localizing subcategory of $\sfT$ generated by $\unit$. Specializing this to $\sfT = \Sp_{G,\Q}$ gives the second fully faithful and geometric functor above.
\end{proof}

The following result tells us that the local-to-global principle is completely determined by the topology of $\Sub(G)/G$ as well as by algebraic properties of the rational Burnside ring of $G$.

\begin{proposition}\label{prop:LGPiffscattered}    
Let $G$ be a profinite group. The following statements are equivalent:
        \begin{enumerate}
            \item $\Sp_{G,\Q}$ satisfies the local-to-global principle;
            \item $\mathsf{D}(A_{\Q}(G))$ satisfies the local-to-global principle;
            \item $\Sub(G)/G$ is scattered;
            \item $A_{\Q}(G)$ is semi-Artinian.
        \end{enumerate}
\end{proposition}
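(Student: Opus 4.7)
The plan is to establish the four-way equivalence by splitting it into three parts: $(2) \Leftrightarrow (3) \Leftrightarrow (4)$, then $(3) \Rightarrow (1)$, and finally $(1) \Rightarrow (2)$ to close the loop.

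For $(2) \Leftrightarrow (3) \Leftrightarrow (4)$: by \cref{prop:ratburnsidering} the rational Burnside ring satisfies $A_\Q(G) \cong C(\Sub(G)/G,\Q)$. This ring is commutative von Neumann regular, since each of its prime localizations is a copy of $\Q$, and its Zariski spectrum is homeomorphic to $\Sub(G)/G$. Applying Stevenson's theorem, as recorded in \cref{ex:absoluteflat}, to $R = A_\Q(G)$ directly identifies conditions (2), (3), and (4). The implication $(3) \Rightarrow (1)$ then follows immediately from \cref{prop:scatteredimpl2g} combined with the homeomorphism $\Spc(\Sp_{G,\Q}^\omega) \cong \Sub(G)/G$ of \cref{prop:qgsp_spc}.

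The substantive step is $(1) \Rightarrow (2)$, which I would carry out via the fully faithful geometric Morita embedding $\iota\colon \sfD(A_\Q(G)) \hookrightarrow \Sp_{G,\Q}$ of \cref{lem:morita}. As a symmetric monoidal left adjoint preserving the unit, $\iota$ is compatible with finite localizations and hence with the Balmer $\otimes$-idempotents $g(\mathfrak{p})$, and it preserves support under the comparison homeomorphism $\Spc(\Sp_{G,\Q}^\omega) \cong \Spec(A_\Q(G))$. Given LGP in $\Sp_{G,\Q}$, for $X \in \sfD(A_\Q(G))$ the image $\iota(X)$ lies in the localizing ideal $\mathsf{J}$ of $\Sp_{G,\Q}$ generated by the family $\{\iota(X \otimes g(\mathfrak{p})) : \mathfrak{p} \in \Supp(X)\}$. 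Since $\iota$ is monoidal, the preimage $\iota^{-1}(\mathsf{J})$ is a localizing ideal of $\sfD(A_\Q(G))$ containing these stalks together with the object $X$ itself. Using the adjunction identity $\iota_*\iota \cong \id$ coming from full faithfulness, one moreover has the identification
\[
\iota_*\,\underline{\hom}_{\Sp_{G,\Q}}(g(\sfP), \iota X) \;\cong\; \underline{\hom}_{\sfD(A_\Q(G))}(g(\mathfrak{p}), X),
\]
which combined with the cosupport reformulation of LGP (\cref{prop:l2giffcol2g}) provides an alternative route through cosupport detection.

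The principal obstacle is to identify the preimage ideal $\iota^{-1}(\mathsf{J})$ with $\langle X \otimes g(\mathfrak{p})\rangle_{\sfD(A_\Q(G))}$ rather than with the a priori larger localizing ideal $\{Y \in \sfD(A_\Q(G)) : \Supp(Y) \subseteq \Supp(X)\}$; equivalently, on the cosupport side, the difficulty is that the right adjoint $\iota_*$ is not conservative on the whole of $\Sp_{G,\Q}$, its kernel being the right-orthogonal of the essential image of $\iota$. To break this apparent circularity, I would combine the stratification of $\Sp_{G,\Q}$ granted by hypothesis (1) via \cref{thm:qgsp_vNr} and \cref{prop:vNr_stratification} (yielding a support-theoretic classification of localizing ideals) with the continuity of tt-stalks from \cref{cor:continuousttstalk} and the identification of stalks from \cref{prop:qgsp_ttstalk}, reducing the desired identification to a stalkwise statement among tt-fields where it becomes automatic via \cref{cor:discretemodules_biminimality}.
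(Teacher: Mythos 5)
Your overall strategy is the same as the paper's: $(2)\Leftrightarrow(3)\Leftrightarrow(4)$ via Stevenson's result for the von Neumann regular ring $A_\Q(G)$ (\cref{ex:absoluteflat}), $(3)\Rightarrow(1)$ via \cref{prop:scatteredimpl2g} together with \cref{prop:qgsp_spc}, and $(1)\Rightarrow(2)$ by descending the local-to-global principle along the Morita embedding $\iota$ of \cref{lem:morita}. The first two of these match the paper and are correct.

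The gap is in $(1)\Rightarrow(2)$. The paper disposes of this step by citing a descent result from the cosupport paper ([BCHS\_cosupport, Example 15.5]); you attempt to reconstruct that argument and honestly flag where it gets stuck, but your proposed resolution does not actually close the hole. The obstruction you identify is real: $\iota_*$ is not conservative, so $\iota_*\underline{\hom}(g(\sfP),\iota X)=0$ does not force $\underline{\hom}(g(\sfP),\iota X)=0$, and dually the preimage $\iota^{-1}(\mathsf J)$ is a priori strictly larger than $\langle X\otimes g(\mathfrak p)\rangle_{\sfD(A_\Q(G))}$. But the final paragraph is circular as stated: ``reducing the desired identification to a stalkwise statement'' for localizing ideals of $\sfS=\sfD(A_\Q(G))$ is precisely what the local-to-global principle for $\sfS$ would license — you would be assuming what you set out to prove. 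The ingredients you cite (\cref{cor:continuousttstalk}, \cref{prop:qgsp_ttstalk}, \cref{cor:discretemodules_biminimality}) all concern the stalks of $\Sp_{G,\Q}$, not of $\sfD(A_\Q(G))$, and do not help compare two localizing ideals of the latter. Moreover, there is a concrete reason the naive cosupport transfer cannot be saved by a stalkwise inspection in $\Sp_{G,\Q}$: the kernel of $\iota_*$ meets the tt-stalk $\Gamma_\sfP\Sp_{G,\Q}$ nontrivially in general. Under the identification $\Gamma_{\sfP(H)}\Sp_{G,\Q}\simeq\sfD\Mod_\Q^\delta(W_G(H))$ of \cref{prop:qgsp_ttstalk}, the restriction of $\iota_*\simeq\Hom(\unit,-)$ to the stalk is $\Hom(\Q,-)\simeq(-)^{hW_G(H)}$, and a nontrivial simple discrete rational $W_G(H)$-module has vanishing fixed points whenever $W_G(H)\neq 1$. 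So the objects $\underline{\hom}(g(\sfP),\iota s)$ can live in $\ker(\iota_*)$ without vanishing, and the citation in the paper is doing genuine work that your sketch does not replace. As written, the implication $(1)\Rightarrow(2)$ is not established.
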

\begin{proof}\leavevmode
By \cref{prop:ratburnsidering}, the rational Burnside ring $A_{\Q}(G)$ of $G$ is von Neumann regular, so the equivalence of Conditions $(2)$, $(3)$, and $(4)$ follows from \cref{ex:absoluteflat}. 

For the implication $(1) {\implies} (2)$, we use \cref{lem:morita}: there is a fully faithful geometric functor $\mathsf{D}(A_{\Q}(G)) \to \Sp_{G,\Q}$. It then follows from \cite[Example 15.5]{BCHS_cosupport} that the local-to-global principle descends along this inclusion. Conversely, suppose that $(3)$ holds. Recall from \cref{prop:qgsp_spc} that $\Spc(\Sp_{G,\Q}^{\omega}) \cong \Sub(G)/G$ is Hausdorff. By \cref{rem:BDscattered}, if this space is scattered, the Cantor--Bendixson rank defines a convergent dimension function on it. This implies that the local-to-global principle holds for $\Sp_{G,\Q}$, see \cref{prop:scatteredimpl2g}.
\end{proof}

\begin{remark}\label{rem:maybetelescopes}
    In \cref{thm:qgsp_tc} we saw that $\Sp_{G,\Q}$ satisfies the telescope conjecture for any profinite $G$. Bazzoni--\v{S}t'ov\'{\i}\v{c}ek prove that the telescope conjecture holds for all $\sfD(R)$ for $R$ any absolutely flat ring \cite{BS_telescope}, generalizing an earlier result by Stevenson \cite[Theorem 4.25]{stevenson_absolutelyflatrings}. In particular, for any profinite group $G$, the category $\sfD(A_\Q(G))$ satisfies the telescope conjecture. Our result can thus be viewed as a multi-generator version of the one proven by Bazzoni--\v{S}t'ov\'{\i}\v{c}ek.
\end{remark}

\subsection{Spaces of conjugacy classes of closed subgroups}\label{ssec:spacesofconj}

In light of \cref{cor:mainstrat} and \cref{prop:LGPiffscattered}, it remains to determine when $\Sub(G)/G$ is scattered to resolve when $\Sp_{G,\Q}$ is stratified. 

\begin{theorem}\label{thm:scatterediffcountable}
    Let $G$ be a profinite group. Then $\Sub(G)/G$ is scattered if and only if $\Sub(G)/G$ is countable.
\end{theorem}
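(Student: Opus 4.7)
The plan is to leverage \cref{prop:countbased}, which characterizes countable infinite compact spaces as precisely those that are simultaneously scattered and countably based. Since $\Sub(G)/G$ is a profinite space by \cref{prop:profinitesubg}, this criterion applies.

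For one direction, suppose $\Sub(G)/G$ is countable. If it is finite, it is trivially scattered. In the infinite case, \cref{prop:countbased} applied in the direction $(1) \Rightarrow (2)$ immediately gives that $\Sub(G)/G$ is scattered. Alternatively, one can argue directly: any non-empty compact Hausdorff perfect space has cardinality at least $2^{\aleph_0}$ (by a standard Baire category argument exhibiting a continuous injection from the Cantor set), so countability of $\Sub(G)/G$ precludes the existence of a non-empty perfect subset, which is exactly the condition that $\Sub(G)/G$ be scattered.

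For the converse, the plan is to prove the contrapositive: if $\Sub(G)/G$ is uncountable, then it fails to be scattered. By \cref{prop:countbased} applied to the scattered case $(2) \Rightarrow (1)$, it would suffice to show that a scattered $\Sub(G)/G$ is countably based, but we find it more convenient to directly exhibit a non-empty perfect subset of $\Sub(G)/G$ under the assumption of uncountability. To do so, I plan to appeal to the cardinality dichotomy results of Gartside--Smith for spaces of closed subgroups of profinite groups: these results establish that $|\Sub(G)|$ is either at most countable or exactly $2^{\aleph_0}$, with the continuum case witnessed by a Cantor subspace sitting inside $\Sub(G)$. Combined with the structure of the conjugation action, the plan is to descend such a Cantor subspace (or construct a perfect subspace directly in $\Sub(G)/G$ by a tree construction adapted to the $G$-invariant clopen decomposition $\Sub(G)/G \cong \lim_i \Sub(G_i)/G_i$) to produce a non-empty perfect subset of $\Sub(G)/G$, contradicting scatteredness.

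The main obstacle will be carefully performing the descent through the continuous conjugation quotient $\Sub(G) \twoheadrightarrow \Sub(G)/G$: in principle, this quotient could collapse a Cantor subspace, so the construction needs to track orbit classes carefully. I would handle this by performing the tree construction directly in $\Sub(G)/G$, using that at each finite stage $\Sub(G_i)/G_i$ is finite discrete and the defining maps are $G$-equivariant surjections onto such finite spaces. At each stage of the tree, one selects two distinct image classes in some $\Sub(G_i)/G_i$ each of whose fibre in $\Sub(G)/G$ is again uncountable; the ability to do so at every stage of refinement is ensured by the König-style cardinality argument that uncountability of a cofiltered limit of finite sets propagates through the system. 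Iterating yields the required continuous injection $\{0,1\}^{\N} \hookrightarrow \Sub(G)/G$ with perfect image.
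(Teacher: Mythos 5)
The forward direction of your argument (countable $\Rightarrow$ scattered) is fine and matches the paper's use of \cref{prop:countbased}; the alternative Baire category argument you sketch is also correct.

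The converse, however, has a genuine gap. Your plan rests on the claim that ``uncountability of a cofiltered limit of finite sets propagates through the system'' in a way that allows a binary tree construction, i.e., that any uncountable profinite space contains a Cantor subspace (equivalently, a nonempty perfect subset). This is false for general profinite spaces: the ordinal space $[0,\omega_1]$ with the order topology is compact, Hausdorff, and totally disconnected, hence profinite, it is scattered (every nonempty subset has a least element, which is isolated in it), yet it has cardinality $\aleph_1 > \aleph_0$. Concretely, your tree construction fails there: the only uncountable clopen pieces are tails $[\alpha,\omega_1]$, and splitting such a tail into two clopen pieces always leaves exactly one of them countable, so you can never branch into two uncountable fibers. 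The paper itself flags this phenomenon in the remark following the theorem (via the Tychonoff plank), emphasizing that the result ``strongly depends on the structure of subgroups of profinite groups'' and is not a purely point-set fact about profinite spaces.

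What is missing is precisely the group-theoretic input. The paper's route is: if $\Sub(G)/G$ is scattered, it has an isolated point, so it is not perfect; by the contrapositive of \cref{lem:perfection} (whose proof uses \cref{lem:isothenopen}, that an isolated conjugacy class comes from an open subgroup), $\Sub(G)$ is also not perfect; then by Gartside--Smith's Theorem~5.6, a profinite group $G$ with $\Sub(G)$ not perfect must be finitely generated, hence countably based, hence $\Sub(G)$ and its open quotient $\Sub(G)/G$ are countably based (\cref{lem:imperfection}). Combined with scatteredness, \cref{prop:countbased} then yields countability. Your dichotomy claim about $|\Sub(G)|$, even if available, would not substitute for this: having a Cantor subspace upstairs in $\Sub(G)$ does not by itself produce a perfect subset in the quotient $\Sub(G)/G$, and your proposed workaround of building the tree directly in the quotient runs into the same obstruction. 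You should replace the König-style argument with the appeal to Gartside--Smith's finiteness-of-generation theorem and the ``isolated conjugacy class $\Rightarrow$ open subgroup'' lemma, which is the mechanism that rules out the pathological scattered-but-uncountable case.
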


\begin{remark}
    It is not true in general for a profinite space $X$ that $X$ is scattered if and only if it is countable. In particular, one can consider the Tychonoff plank: this is a profinite space which is not countably based, and as such, not countable, but it is scattered \cite[\S 86]{counterexamples}. As such, the statement of \cref{thm:scatterediffcountable} strongly depends on the structure of subgroups of profinite groups. In particular we can conclude that there is no profinite group $G$ such that $\Sub(G)/G$ is the Tychonoff plank, for example.
\end{remark}

\begin{remark}
    By \cite[Proposition 2.12]{gartsidesmith1} we have a full characterization of the homeomorphism type of those $X = \Sub(G)/G$ which are countable. Consider the first limit ordinal $\omega$. Then for $\alpha$ any ordinal, and $n$ a positive integer, we can construct a space $\omega^\alpha n + 1$, which is formed by taking $n$ disjoint copies of the space $\omega^\alpha$, adding a disjoint point $\{\infty\}$ and then equipping the set with the order topology. For example, $\omega+1$ is simply the usual one-point compactification of $\N$. Then
    \[
    X \cong \omega^{(\cbrank(X)-1)} |X^{(\cbrank(X)-1)}| +1,
    \]
    where $\cbrank(X)$ is the Cantor--Bendixson rank of $X$ as introduced in \cref{defn:cbrank} and $|X^{(\cbrank(X)-1)}|$ denotes the cardinality of the finite space $X^{(\cbrank(X)-1)}$.
\end{remark}

To prove \cref{thm:scatterediffcountable} we will first of all prove some lemmas regarding the topology of $\Sub(G)/G$. Let $q \colon \Sub(G) \to \Sub(G)/G$ be the open quotient map, see \cref{rem:quotientisopen}. 

For a subgroup $H \leqslant G$ we write $(H)_G$ for the conjugacy class of $H$ viewed inside $\Sub(G)$, and $q(H)$ for the image of $H$ in $\Sub(G)/G$. The proof of the next lemma is similar to the argument used by Gartside and Smith for the corresponding statement in $\Sub(G)$, \cite[Lemma 5.4]{gartsidesmith1}.

\begin{lemma}\label{lem:isothenopen}
    Let $H$ be a closed subgroup of $G$ such that $q(H) \in \Sub(G)/G$ is isolated. Then $H$ is an open subgroup of $G$.
\end{lemma}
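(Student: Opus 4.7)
The plan is to leverage the neighborhood basis description \eqref{eq:nbhdbasis} for the topology on $\Sub(G)$. Since $q$ is continuous and $\{q(H)\}$ is open in $\Sub(G)/G$ by hypothesis, the conjugacy class $(H)_G = q^{-1}(q(H))$ is open in $\Sub(G)$. Hence there exists an open normal subgroup $N$ of $G$ such that the basic neighborhood $\cB(H,N) = \{K \in \Sub(G) \mid NK = NH\}$ is contained entirely in $(H)_G$.

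Next I would fix any open normal subgroup $M$ of $G$ with $M \leqslant N$ and consider the closed subgroup $HM \leqslant G$. Because $M \leqslant N$ implies $MN = N$, we have $(HM)\cdot N = H \cdot MN = HN$, so $HM \in \cB(H,N) \subseteq (H)_G$. Consequently there exists $g \in G$ with $HM = gHg^{-1}$, yielding the key containment $H \leqslant gHg^{-1}$.

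The main step is then to upgrade this containment to an equality $H = gHg^{-1}$: combined with the previous paragraph this forces $H = HM$, hence $M \leqslant H$, and since $M$ is open this makes $H$ open. For the equality I would run a standard profinite argument: for any open normal subgroup $U$ of $G$, the inclusion $H \leqslant gHg^{-1}$ descends to an inclusion of finite subgroups $HU/U \leqslant (gHg^{-1})U/U = gHUg^{-1}/U$ of $G/U$, which must be an equality by a cardinality comparison. Hence $HU = gHg^{-1}U$ for every such $U$; intersecting over all open normal $U$ and using that $H = \bigcap_{U} HU$ for any closed subgroup of a profinite group, we deduce $H = gHg^{-1}$. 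The only delicate bookkeeping is the identity $(gHg^{-1})U = gHUg^{-1}$, which is immediate from the normality of $U$ in $G$, so the argument is essentially free of obstacles beyond this routine manipulation.
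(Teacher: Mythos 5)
Your proof is correct and takes essentially the same approach as the paper's: both translate the isolation of $q(H)$ into the statement that $\cB(H,N)$ consists of conjugates of $H$, then test this against a subgroup of the form $NH$ (the paper) or $HM$ with $M\leqslant N$ open normal (yours, where taking $M=N$ is sufficient), obtaining $H \leqslant H^g$ and hence $H = H^g$. The only real difference is that you spell out the cardinality argument on finite quotients justifying the step $H \leqslant H^g \Rightarrow H = H^g$, while the paper treats this as an implicit standard fact; the substance is identical.
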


\begin{proof}
    Recall from \eqref{eq:nbhdbasis} that the collection of $q(\cB(H,N)) = \{q(K) \mid K \in \Sub(G)\colon NK = NH\}$ for $N$ ranging through the open normal subgroups of $G$ forms a neighbourhood basis for $q(H)$ in $\Sub(G)/G$. Now, if $q(H)$ is isolated, then there exists an open normal subgroup $N \leqslant G$ such that $\{ q(H)\} = q( \cB(H,N))$. Unpacking the definition, that means that any $K \in \Sub(G)$ which satisfies $NK = NH$ has the property that $q(H) = q(K)$, i.e., $K = H^g$ for some $g \in G$.  
    
    Consider $K = NH$. Because this is a closed subgroup of $G$ which satisfies $NK = NNH = NH$, there must be some $g \in G$ with $NH = K = H^g$. But $H \leqslant NH = H^g$, so in fact $NH = H$. This implies that $N \leqslant H$, which shows that $H$ must be open in $G$, as desired.
\end{proof}

\begin{lemma}\label{lem:perfection}
If $\Sub(G)$ is perfect (in the sense of \cref{rem:BDscattered}) then so is $\Sub(G)/G$.
\end{lemma}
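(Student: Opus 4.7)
The plan is to argue by contrapositive: suppose $\Sub(G)/G$ has an isolated point $q(H)$, and deduce that $\Sub(G)$ is not perfect by exhibiting an isolated point inside the conjugacy class of $H$.

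First, I would invoke \cref{lem:isothenopen} to conclude that the closed subgroup $H$ representing the isolated class $q(H)$ is in fact open in $G$. Because open subgroups of a profinite group have finite index, the conjugacy class $(H)_G \subseteq \Sub(G)$ (which is in bijection with $G/N_G(H)$) is a \emph{finite} subset of $\Sub(G)$.

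Next, I would use the continuity of the quotient map $q \colon \Sub(G) \to \Sub(G)/G$: since $\{q(H)\}$ is open in $\Sub(G)/G$ by the isolation hypothesis, its preimage $(H)_G = q^{-1}(q(H))$ is open in $\Sub(G)$. Combining with the previous step, $(H)_G$ is a finite open subset of the Hausdorff profinite space $\Sub(G)$. Any finite subspace of a Hausdorff space is discrete, so each singleton $\{H^g\}$ is open in $(H)_G$, and being open in the open set $(H)_G$ it is open in $\Sub(G)$ itself. Hence $H$ is an isolated point of $\Sub(G)$, contradicting perfectness. This chain of implications gives the statement.

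I expect no real obstacle here: once \cref{lem:isothenopen} is in hand, the rest is a short point-set argument combining the finite-index property of open subgroups with the continuity of $q$.
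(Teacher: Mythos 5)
Your proof is correct and takes essentially the same approach as the paper: both argue by contrapositive, both use \cref{lem:isothenopen} to get openness of $H$, both use finiteness of the conjugacy class of an open subgroup, and both observe that $(H)_G = q^{-1}(q(H))$ is a finite non-empty open subset of $\Sub(G)$. The only cosmetic difference is that you invoke the Hausdorff property of $\Sub(G)$ to isolate a singleton, while the paper simply notes that a non-empty open subset of a perfect space must be infinite.
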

\begin{proof}
    We prove the contrapositive: Assume that $\Sub(G)/G$ is not perfect, i.e., there exists some closed subgroup $H$ in $G$ such that $q(H)$ is isolated in $\Sub(G)/G$. On the one hand, it follows that the preimage $(H)_G = q^{-1}q(H) \subseteq \Sub(G)$, given by all closed subgroups of $G$ conjugate to $H$, is open in $\Sub(G)$. On the other hand, by \cref{lem:isothenopen}, $H$ is also open in $G$. Since open subgroups have only finitely many conjugates in $G$, the subset $(H)_G \subseteq \Sub(G)$ must be finite and non-empty. In summary, we have shown that $\Sub(G)$ contains a finite non-empty open subset, hence it cannot be perfect as a non-empty open subset of a perfect space is necessarily infinite.
\end{proof}

\begin{lemma}\label{lem:imperfection}
    If $\Sub(G)/G$ contains an isolated point, then $\Sub(G)/G$ is countably based.
\end{lemma}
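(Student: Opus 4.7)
The plan is to reduce the statement to the claim that $G$ is topologically finitely generated, and then combine \cref{prop:profinitesubg} with a standard counting argument.

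Starting from an isolated point $q(H_0) \in \Sub(G)/G$, I will first invoke \cref{lem:isothenopen} to conclude that $H_0$ is an open subgroup of $G$, so that its normal core $N_0 = \bigcap_{g \in G}H_0^g$ is open normal of finite index in $G$. Using the openness of the quotient map $q$ from \cref{rem:quotientisopen} together with the neighbourhood basis \eqref{eq:nbhdbasis}, one extracts an open normal subgroup $N \leqslant G$ witnessing the isolation, and after replacing $N$ by $N \cap N_0$ one may assume $N \leqslant H_0$.

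The crux of the argument is to show that such $N$ must lie inside the Frattini subgroup $\Phi(H_0)$. Given any closed subgroup $K \leqslant H_0$ with $KN = H_0 = H_0 N$, the isolation hypothesis forces $K = H_0^g$ for some $g \in G$; but then $K \leqslant H_0$ and $[G:K] = [G:H_0]<\infty$ imply $K = H_0$. Recalling the description of $\Phi(H_0)$ as the intersection of all maximal open subgroups of $H_0$ (\cref{rec:frattini}), this precisely says $N \leqslant \Phi(H_0)$. Since $N$ is open in $H_0$, the Frattini subgroup $\Phi(H_0)$ is then open, so the quotient $H_0/\Phi(H_0)$ is finite; a standard Frattini argument (lifting generators from the quotient) then shows that $H_0$ is topologically finitely generated, and appending a finite set of coset representatives for $H_0$ in $G$ upgrades this to the topological finite generation of $G$ itself. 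Translating the point-set topological isolation hypothesis into the algebraic statement $N \leqslant \Phi(H_0)$ is the conceptual heart of the argument; everything else flows from it.

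To finish, I will invoke the classical fact that a topologically finitely generated profinite group has only countably many open normal subgroups: for each finite group $F$, any continuous surjection $G \twoheadrightarrow F$ is determined by the images of a fixed finite generating set and hence there are only finitely many such, while there are only countably many isomorphism classes of finite groups. Choosing a cofinal countable subsystem $(U_i)_{i \in \N}$ of open normal subgroups and applying \cref{prop:profinitesubg} then realizes $\Sub(G)/G \cong \lim_{i \in \N}\Sub(G/U_i)/(G/U_i)$ as the inverse limit of a countable cofiltered system of finite discrete spaces, which is automatically second countable and thus countably based, as required.
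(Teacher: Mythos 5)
Your proof is correct, and it takes a genuinely different route from the paper's. The paper argues indirectly: it first shows that if $\Sub(G)/G$ has an isolated point then $\Sub(G)$ is not perfect (\cref{lem:perfection}), cites Gartside--Smith \cite[Theorem 5.6]{gartsidesmith1} to deduce that $G$ is topologically finitely generated (hence countably based), cites \cite[Corollary 2.4]{gartsidesmith1} to conclude $\Sub(G)$ is countably based, and finally passes to the quotient. You bypass both Gartside--Smith citations. Starting from $\{q(H_0)\} = q(\cB(H_0,N))$ with $H_0$ open (\cref{lem:isothenopen}) and $N \leqslant H_0$ open normal in $G$, you observe that any maximal open subgroup $M < H_0$ with $N \nleqslant M$ would satisfy $MN = H_0 = NH_0$ and hence lie in $\cB(H_0,N)$, forcing $M$ to be conjugate to $H_0$ and thus $M = H_0$ by finite index — a contradiction; so $N \leqslant \Phi(H_0)$, the Frattini subgroup is open, $H_0$ is topologically finitely generated by the standard Frattini argument, and appending coset representatives gives the same for $G$. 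You then go directly through \cref{prop:profinitesubg}, realizing $\Sub(G)/G$ as a countable inverse limit of finite discrete spaces. The paper's proof is shorter on the page because it outsources the group-theoretic input, whereas yours is self-contained and exposes the precise mechanism (the isolation hypothesis literally says that $N$ consists of nongenerators of $H_0$), at the cost of a somewhat longer argument. Note that your reduction to $N \leqslant H_0$ via the normal core is actually already established inside the proof of \cref{lem:isothenopen}, so that step could be skipped.
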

\begin{proof}
    By hypothesis, $\Sub(G)/G$ is not perfect, so \cref{lem:perfection} implies that $\Sub(G)$ cannot be perfect either. Gartside and Smith prove that $\Sub(G)$ must then be countably based: Indeed, \cite[Theorem 5.6]{gartsidesmith1} shows in particular that $G$ is finitely generated, so countably based, see the discussion surrounding \cite[Proposition 4.1.3]{wilson}. It then follows from \cite[Corollary 2.4]{gartsidesmith1} that $\Sub(G)$ is countably based. We conclude that the quotient $\Sub(G)/G$ is countably based as well.
\end{proof}

Armed with the above lemmas we are now in a position to provide a proof of \cref{thm:scatterediffcountable}.

\begin{proof}[Proof of \cref{thm:scatterediffcountable}]
    As an application of the Cantor--Bendixson theorem (see \cref{prop:countbased}), we have that $\Sub(G)/G$ is countable if and only if it is scattered and countably based. Therefore, it suffices to prove that, for  $\Sub(G)/G$, being scattered implies being countably based. But this follows directly from \cref{lem:imperfection}: If $\Sub(G)/G$ is scattered, it contains an isolated point, so it has to be countably based. 
\end{proof}

\begin{remark}\label{rem:neccessaryGS}
    The proof of the theorem combined with \cite[Theorem 5.6]{gartsidesmith1} shows that a necessary condition for $\Sub(G)/G$ to be scattered is that $G$ is finite generated, virtually pronilpotent, and only finitely many primes divide $|G|$.
\end{remark}

\subsection{Examples and counterexamples}\label{ssec:lgp_examples}

Now that we have a full understanding of when $\Sub(G)/G$ is scattered, it remains to provide (counter-)examples. In the abelian case, we obtain the following full characterization.

\begin{proposition}\label{cor:abeliangroups}
    Let $G$ be an abelian group. Then $\Sub(G)/G$ is countable if and only if $G = A \times \prod_{i=1}^n \Z_{p_i}$ with $p_i \neq p_j$ for $i \neq j$ and $A$ a finite abelian group.
\end{proposition}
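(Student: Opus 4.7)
Since $G$ is abelian, the conjugation action is trivial, so $\Sub(G)/G = \Sub(G)$, and by \cref{thm:scatterediffcountable} it suffices to characterize when $\Sub(G)$ is countable. The plan is a two-way reduction to the Sylow decomposition of $G$ together with a concrete count of rank-$1$ closed subgroups of $\Z_p^r$.

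First I would establish the necessity. Suppose $\Sub(G)$ is countable. By \cref{rem:neccessaryGS}, $G$ is finitely generated and only finitely many primes divide the order $m(G)$. For an abelian profinite group, the canonical map $G \to \prod_p G_p$ onto the product of its pro-$p$ Sylow subgroups is an isomorphism, and under the constraint that only finitely many primes divide $m(G)$ this is a finite product $G \cong G_{p_1}\times \cdots \times G_{p_n}$. Because the orders $m(G_{p_i})$ are pairwise coprime (as supernatural numbers), every closed subgroup $H\leqslant G$ decomposes as $H = H_1\times\cdots\times H_n$ with $H_i\leqslant G_{p_i}$ (this is immediate at each finite quotient and then continuous by \cref{prop:profinitesubg}), giving a homeomorphism $\Sub(G) \cong \prod_i\Sub(G_{p_i})$. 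Each $\Sub(G_{p_i})$ is therefore countable. Each $G_{p_i}$ is a finitely generated abelian pro-$p_i$ group and hence, by the structure theorem for such groups, topologically isomorphic to $F_i\times \Z_{p_i}^{r_i}$ for a finite abelian $p_i$-group $F_i$ and $r_i\in\N$.

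The key step is then to show $r_i\leqslant 1$ whenever $\Sub(G_{p_i})$ is countable. The obstacle (and main content) is producing uncountably many distinct closed subgroups of $\Z_p^2$: for each line $\ell = \Q_p\cdot(1,a) \subset \Q_p^2$ with $a\in\Z_p$, the intersection $\ell\cap\Z_p^2 = \Z_p\cdot(1,a)$ is a closed subgroup, and distinct $a\in\Z_p$ give distinct subgroups (one recovers $a$ from the second coordinate of the generator with first coordinate $1$). This gives an injection $\Z_p\hookrightarrow \Sub(\Z_p^2)$, and $\Z_p$ is uncountable. Pushing these subgroups forward along the projection $F_i\times\Z_{p_i}^{r_i}\twoheadrightarrow \Z_{p_i}^{r_i}$ when $r_i\geqslant 2$ yields uncountably many subgroups of $G_{p_i}$, contradicting countability. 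Hence $r_i\in\{0,1\}$, so gathering the indices with $r_i=1$ and collecting all the finite factors $F_i$ together with the Sylow components for primes not in $\{p_1,\ldots,p_n\}$ into a single finite abelian group $A$ yields $G \cong A\times \Z_{p_{i_1}}\times\cdots\times \Z_{p_{i_s}}$ with the primes $p_{i_j}$ pairwise distinct.

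For sufficiency, assume $G \cong A\times \Z_{p_1}\times\cdots\times\Z_{p_r}$ with the $p_i$ pairwise distinct. Using the Sylow decomposition of $G$ as above and the resulting homeomorphism $\Sub(G)\cong\prod_q\Sub(G_q)$, it suffices to show each $\Sub(G_q)$ is at most countable. For primes $q\notin\{p_1,\ldots,p_r\}$, $G_q$ is a finite abelian group so $\Sub(G_q)$ is finite. For $q=p_i$, $G_q = A_{p_i}\times \Z_{p_i}$ is a finitely generated abelian pro-$p_i$ group of pro-$p_i$-rank at most that of $A_{p_i}/pA_{p_i}$ plus $1$, and its closed subgroups are parametrized by pairs $(B,k)$ with $B\leqslant A_{p_i}$ and a closed subgroup of $\Z_{p_i}/B'$ for appropriate finite data; explicitly every closed subgroup is a finite extension determined by a choice of subgroup of the finite group $A_{p_i}$ together with a subgroup $p_i^k\Z_{p_i}$ (or the trivial subgroup) of $\Z_{p_i}$, of which there are countably many. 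Taking the finite product over the $p_i$ together with the finite contribution from other primes concludes that $\Sub(G)$ is countable.
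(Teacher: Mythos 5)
Your proof is correct but takes a genuinely different route from the paper. The paper disposes of the result in one line by citing Gartside--Smith's Theorem 3.7(ii),(iii), which already characterizes the profinite groups with $\Sub(G)$ countable as the finite central extensions of $\prod_{i=1}^n \Z_{p_i}$ with distinct primes $p_i$; for abelian $G$ one then observes $Z(G)=G$ and that such an extension splits. Your argument rederives the abelian case from scratch: reduce to the pro-$p$ Sylow factors via $\Sub(G)\cong\prod_i\Sub(G_{p_i})$, apply the structure theorem for finitely generated abelian pro-$p$ groups, and rule out rank $\geqslant 2$ by the explicit uncountable family of closed subgroups $\Z_p\cdot(1,a)\leqslant\Z_p^2$ for $a\in\Z_p$ (this is the same family the paper exhibits in its discussion of $\Z_p\times\Z_p$ immediately after the proposition). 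Your approach is more self-contained and exposes the mechanism behind the Gartside--Smith classification, at the cost of more work; both ultimately lean on some Gartside--Smith input, since \cref{rem:neccessaryGS}, which you invoke, rests on their Theorem 5.6. Two small wording points. First, one should pull back, not push forward, the rank-two subgroups along the projection $F_i\times\Z_{p_i}^{r_i}\twoheadrightarrow\Z_{p_i}^{r_i}$ (or, equivalently, include them along its splitting). Second, the closed subgroups of $A_{p_i}\times\Z_{p_i}$ are not literally products of a subgroup of $A_{p_i}$ with one of $\Z_{p_i}$ (twisted subgroups exist), though your finite-extension reasoning does correctly yield countability: once $H$ surjects onto $p_i^k\Z_{p_i}$, multiplying by the exponent $p_i^m$ of $A_{p_i}$ forces $H\supseteq\{0\}\times p_i^{k+m}\Z_{p_i}$, so $H$ is determined by its finite image in $A_{p_i}\times(\Z_{p_i}/p_i^{k+m}\Z_{p_i})$, leaving finitely many choices for each $k$.
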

\begin{proof}
    This can be deduced from \cite[Theorem 3.7(ii),(iii)]{gartsidesmith1} with the usual observation that for $G$ abelian we have $Z(G)=G$ (see also \cite[Lemma 3.4]{gartsidesmith1}).
\end{proof}

\begin{example}\label{ex:zp}
    Let $G = \Z_p$. Then $G$ satisfies the conditions of \cref{cor:abeliangroups} and as such $\Sub(G)/G$ is countable. In fact, as observed before, $\Sub(G)/G \cong \N^*$ is the one-point compactification of the natural numbers, where the point at $\infty$ corresponds to the identity element of $G$. More generally if $G = \prod_{i=1}^n \Z_{p_i}$ for distinct primes $p_i$, then $\Sub(G)/G$ is homeomorphic to the countable space $(\N^*)^{\times n}$.
\end{example}

\begin{example}
    Let $G = \Z_p \times \Z_p$. Then $G$ does not satisfy the conditions of \cref{cor:abeliangroups} and hence $\Sub(G)/G$  is not countable. 

    Let us explore this example in some more detail as it is illustrative of what can happen. Consider lines $(a,b) \in \Q_p \times \Q_p$ (i.e., pairs $(a,b)$ up to scalar multiplication by $\Q_p^\ast$). We can use these to classify subgroups of $\Z_p \times \Z_p$ by intersecting. There are three cases to consider:
    \begin{enumerate}
        \item $a/b \in \Q$;
        \item $a/b \in (\Q_p \setminus \Q) \cup \{\infty\}$ where we say $a/b = \infty$ if $a \neq 0$ and $b = 0$;
        \item $a/b = 0/0$.
    \end{enumerate}
    The first collection of lines classify the countable family of finite index subgroups of $\Z_p \times \Z_p$. Indeed, these are the subgroups of the form $p^n \Z_p \times p^m \Z_p$. The third case corresponds to the unique finite subgroup of $\Z_p \times \Z_p$, namely the identity. Finally, the second class of subgroups constitutes the problematic uncountable family of subgroups of $\Z_p \times \Z_p$.

    We can moreover say something about the homeomorphism type of $\Sub(G)/G$ in this setting. It follows from \cite[Proposition 4.1]{gartsidesmith2} that $\Sub(G)/G$ has a countable collection of isolated points, but $\delta (\Sub(G)/G)$ contains no isolated points. Up to homeomorphism, there is only one countable profinite space with this property, namely the \emph{Pełczyński space}.

    A construction of this space can be obtained by the middle-thirds interval construction of the Cantor space where at each stage we reinsert the midpoints of the deleted intervals (see \cref{fig:pelspace}). 
    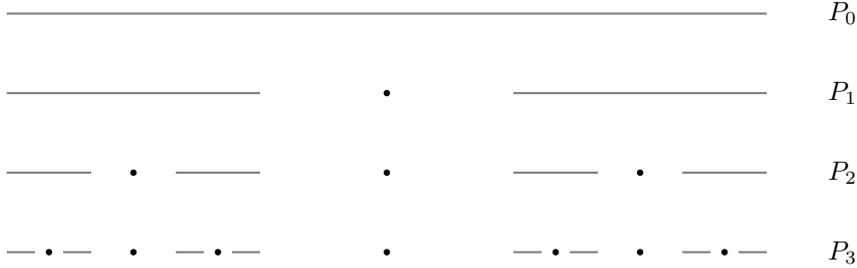
\begin{figure}[h]
    \begin{tikzpicture}
    \node at (11,0) {$P_0$};
    \draw[gray, thick] (0,0) -- (10,0);

    \begin{scope}[yshift=-30]
    \node at (11,0) {$P_1$};
        \draw[gray, thick] (0,0) -- (10,0);
        \draw[white, ultra thick] (10/3,0) -- (20/3,0);
        \filldraw[black] (10/2,0) circle (1pt);
    \end{scope}

    \begin{scope}[yshift=-60]
    \node at (11,0) {$P_2$};
        \draw[gray, thick] (0,0) -- (10,0);
        \draw[white, ultra thick] (10/3,0) -- (20/3,0);
        \draw[white, ultra thick] (10/9,0) -- (20/9,0);
        \draw[white, ultra thick] (70/9,0) -- (80/9,0);
        \filldraw[black] (10/2,0) circle (1pt);
        \filldraw[black] (10/6,0) circle (1pt);
        \filldraw[black] (50/6,0) circle (1pt);
    \end{scope}

    \begin{scope}[yshift=-90]
    \node at (11,0) {$P_3$};
        \draw[gray, thick] (0,0) -- (10,0);
        \draw[white, ultra thick] (10/3,0) -- (20/3,0);
        \draw[white, ultra thick] (10/9,0) -- (20/9,0);
        \draw[white, ultra thick] (70/9,0) -- (80/9,0);
        
        \draw[white, ultra thick] (10/27,0) -- (20/27,0);
        \draw[white, ultra thick] (70/27,0) -- (80/27,0);
        \draw[white, ultra thick] (190/27,0) -- (200/27,0);
        \draw[white, ultra thick] (250/27,0) -- (260/27,0);
        
        \filldraw[black] (10/2,0) circle (1pt);
        \filldraw[black] (10/6,0) circle (1pt);
        \filldraw[black] (50/6,0) circle (1pt);

        \filldraw[black] (10/18,0) circle (1pt);
        \filldraw[black] (50/18,0) circle (1pt);
        \filldraw[black] (130/18,0) circle (1pt);
        \filldraw[black] (170/18,0) circle (1pt);
    \end{scope}
    \end{tikzpicture}
    \caption{The first 3 steps of the construction of Pełczyński space via a middle-thirds method.}\label{fig:pelspace}
    \end{figure}
    In particular, the space should be thought of as being the Cantor set with a fuzz of isolated points surrounding it. By \cite[Theorem 5.6]{gartsidesmith1}, the isolated points correspond exactly to the open subgroups $p^n \Z_p \times p^m \Z_p$. Moreover, for any $n\geqslant 2$, a similar argument shows that $\Sub(\Z_p^{\times n})/\Z_p^{\times n}$ is abstractly homeomorphic to Pełczyński space, as is $\Sub(G)/G$ for $G$ a free $p$-group of rank $\geqslant 2$.
\end{example}

\begin{example}
    Recall the profinite group $G = \prod_{
    \N }\Z/p $ from \cref{ex:cantorspec}. As we observed there, this group has $\Sub(G) \cong \{0,1\}^{\N}$ the Cantor set. This space contains no isolated points and as such is not scattered. 
\end{example}

Now that we have addressed the abelian case, we turn our attention to the non-abelian case where the situation is more delicate. There appears to be no classification of when a profinite group $G$ has countably many conjugacy classes of closed subgroups in the literature.

We do know, however, that there exists exotic examples where $\Sub(G)$ is uncountable but  $\Sub(G)/G$ is countable. We provide such an example below which was communicated to us in a \texttt{MathOverflow} answer of Yves Cornulier \cite{mof_question}. We write $\operatorname{SL}_N(\Z_p)$ for the profinite group defined as $\lim_i \operatorname{SL}_N(\Z/p^i\Z)$. We note that $\operatorname{SL}_N(\Z_p)$ has the property that $\Sub(G)$ is uncountable as soon as $N > 1$. Indeed, \cite[Theorem 3.7]{gartsidesmith1} states that $\Sub(G)$ is countable if and only if $G$ is a central finite extension of $\prod_{i=1}^n \Z_{p_i}$ for $p_i$ distinct primes. We can deduce $\operatorname{SL}_N(\Z_p)$ is not of this form for $N > 1$ via a dimension argument using the observation that the center is a finite group. In light of this, the result of \cref{prop:slNzp} is somewhat surprising.

\begin{proposition}[Cornulier]\label{prop:slNzp}
    Let $G = \operatorname{SL}_N(\Z_p)$. Then $\Sub(G)/G$ is countable if and only if $N=1$ or $N=2$. 
\end{proposition}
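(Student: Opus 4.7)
The plan is to handle the three cases $N = 1$, $N = 2$, and $N \geqslant 3$ separately. The case $N = 1$ is immediate since $\operatorname{SL}_1(\Z_p)$ is the trivial group, so $\Sub(G)/G$ is a singleton.

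For $N \geqslant 3$, the strategy is to exhibit a continuum of pairwise non-conjugate closed rank-one subgroups inside the diagonal maximal torus $T \leqslant \operatorname{SL}_N(\Z_p)$. The principal congruence subgroup $T^{(1)}$ of $T$ is isomorphic, via the $p$-adic logarithm, to $\Z_p^{N-1}$ (taking level $2$ when $p = 2$), which by \cref{cor:abeliangroups} has an uncountable space of closed subgroups whenever $N - 1 \geqslant 2$. Concretely, for each $\lambda$ in the traceless hyperplane $V \subset \Z_p^N$ I would set $H_\lambda \coloneqq \overline{\langle \exp(p\lambda) \rangle} \leqslant T^{(1)}$. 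The crucial input is a standard centralizer computation: when $\lambda$ is regular (all coordinates distinct), the element $\exp(p\lambda)$ is regular semisimple and $Z_{\operatorname{SL}_N(\Z_p)}(H_\lambda) = T$. Any $g \in \operatorname{SL}_N(\Z_p)$ with $g H_\lambda g^{-1} = H_{\lambda'}$ must then normalize $T$, so $G$-conjugacy of the $H_\lambda$'s reduces to the action of the Weyl group $W \cong S_N$. Since the regular locus in $\mathbb{P}(V \otimes \Q_p)$ is uncountable and $W$ is finite, we obtain uncountably many $G$-conjugacy classes of closed subgroups.

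For $N = 2$, the plan is a case analysis of closed subgroups $H \leqslant \operatorname{SL}_2(\Z_p)$ organized by the dimension $d$ of the associated $\Q_p$-Lie subalgebra $\mathfrak{h} \subseteq \mathfrak{sl}_2(\Q_p)$ provided by Lazard's theorem. When $d = 3$, $H$ is open and there are only countably many open subgroups of the countably based group $\operatorname{SL}_2(\Z_p)$. When $d = 0$, $H$ is finite, and a $p$-adic Jordan-type bound (or direct analysis via reduction modulo $p$) leaves only countably many conjugacy classes. When $d \in \{1,2\}$, one lists the finitely many $\operatorname{SL}_2(\Q_p)$-conjugacy classes of Lie subalgebras (the unipotent line, the split Cartan, the non-split Cartans indexed by quadratic extensions of $\Q_p$, and the Borel); each refines to countably many $\operatorname{SL}_2(\Z_p)$-conjugacy classes of closed analytic subgroups, each of which is pro-cyclic or a solvable extension with pro-cyclic layers and hence admits countably many closed subgroups. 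Assembling the cases yields a countable classification.

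The main obstacle is the $N = 2$ case: one must argue carefully that the refinement from $\operatorname{SL}_2(\Q_p)$-conjugacy to $\operatorname{SL}_2(\Z_p)$-conjugacy at each stage remains countable, amounting to a finiteness statement for certain double cosets $\operatorname{SL}_2(\Z_p) \backslash \operatorname{SL}_2(\Q_p) / N$. A cleaner alternative would be to study closed subgroups via the action of $\operatorname{SL}_2(\Q_p)$ on its Bruhat--Tits tree, where $\operatorname{SL}_2(\Z_p)$ is a maximal vertex stabilizer and closed subgroups correspond to stabilizers of well-behaved subtrees. In contrast, the uncountability argument for $N \geqslant 3$ is comparatively robust, hinging only on the centralizer computation for regular semisimple elements and the finiteness of the Weyl group.
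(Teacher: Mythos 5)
Your overall structure matches the paper's (itself recording an argument of Cornulier): trivial for $N = 1$, uncountably many conjugacy classes for $N \geqslant 3$ via abelian subgroups, and a dimension-stratified classification for $N = 2$. The execution differs in both directions, however.

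For $N \geqslant 3$ your argument is actually \emph{tighter} than the paper's. The paper observes that $\Z_p \times \Z_p$ embeds in $\operatorname{SL}_3(\Z_p)$ with uncountably many closed subgroups and then asserts this forces $\Sub(G)/G$ uncountable, leaving implicit why uncountably many of those subgroups remain pairwise non-conjugate under $\operatorname{SL}_3(\Z_p)$. Your centralizer computation --- for regular $\lambda$ one has $Z_G(H_\lambda) = T$, so any conjugating element lies in $N_G(T)$, and $N_G(T)/T \cong S_N$ is finite --- supplies precisely the finite-to-one bound needed to conclude. This is a genuine improvement in rigour, and the mechanism (regular semisimple elements have maximal-torus centralizer, Weyl group is finite) is the right one.

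For $N = 2$ you correctly diagnose the difficulty --- bounding the refinement from $\operatorname{SL}_2(\Q_p)$-conjugacy to $\operatorname{SL}_2(\Z_p)$-conjugacy --- but leave it unresolved and propose a Bruhat--Tits detour. The paper's resolution is much simpler and you already have all the ingredients: $\operatorname{SL}_2(\Z_p)$ has \emph{countable index} in $H = \operatorname{SL}_2(\Q_p)$, so for a compact $K \leqslant G$ the $G$-conjugacy classes inside a single $H$-conjugacy class inject into $G \backslash H / N_H(K)$, which has at most $|G \backslash H| = \aleph_0$ elements. Hence it suffices to count $H$-conjugacy classes of compact subgroups of $H$, which is exactly what your stratification by Lie-subalgebra dimension is set up to do. Your case list ($d = 3$: open; $d = 2$: Borel; $d = 1$: split Cartan, non-split Cartans, unipotent line; $d = 0$: finite) matches the paper's, and each type does admit at most countably many compact subgroups, as the paper verifies. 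So: your $N \geqslant 3$ argument is complete and sharper than the paper's, but the $N = 2$ case has a real gap whose fix (pass to $\operatorname{SL}_2(\Q_p)$ and use countable index) you should state explicitly before the sketch becomes a proof.
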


\begin{proof}
    The result for $N=1$ is covered by \cref{ex:zp}. We reproduce the answer of \cite{mof_question} here for preservation reasons, which settle the remaining cases.
    
    Suppose that $N = 3$, then $\operatorname{SL}_3(\Z_p)$ contains the closed subgroup $\Z_p \times \Z_p$ in which the subgroups $L \cap (\Z_p \times \Z_p)$ as $L$ ranges over lines in $\Q_p \times \Q_p$ are distinct. This implies $\Sub(\Z_p \times \Z_p) \subseteq \Sub(G)/G$ and therefore $\Sub(G)/G$ is not countable. A similar argument, or the observation that there are inclusions $\operatorname{SL}_N(\Z_p) \hookrightarrow \operatorname{SL}_{N+1}(\Z_p) $, proves that $\operatorname{SL}_N(\Z_p)$ is not countable for any $N \geqslant 3$.

    We are left to consider the case of $N=2$.  We begin by observing that $G = \operatorname{SL}_2(\Z_p)$ is of countable index in $H = \operatorname{SL}_2(\Q_p)$. As such, it suffices to prove that $H$ has countably many compact subgroups up to conjugacy. We note that the Lie algebra of $\operatorname{SL}_2(\Q_p)$ is three-dimensional.
    
    Every closed subgroup of $H$ is $p$-analytic, and is therefore locally determined by its Lie algebra. The action of $H$ on its Lie algebra has countably many orbits on the Grassmannian of $d$ dimensional subspaces for $0 \leqslant d \leqslant 3$, and therefore it is sufficient to check that for a given Lie subalgebra $\mathfrak{K}$, there are only countably many closed subgroups $K \leqslant H$ with Lie algebra $\mathfrak{K}$.

    Write $d = \operatorname{dim}(\mathfrak{K}) = \operatorname{dim}(K)$. We shall consider all $\mathfrak{K}$ such that $\operatorname{dim}(\mathfrak{K}) = d$ for $0 \leqslant d \leqslant 3$ and verify that each one has only countably many closed subgroups associated to it.
    
    \begin{itemize}
    \item[$d=3$:] In this case $K$ is a compact open subgroup of $H$, of which there are only countably many.

    \item[$d=2$:] In this case the Lie algebra $\mathfrak{K}$ is a line stabilizer which is isomorphic to $\Q^\ast_p \ltimes \Q_p$ with the action given by $t \cdot x = t^2 x$. A compact open subgroup has to lie in $\Z^\ast_p \ltimes \Q_p$. It intersects $\Q_p$ in $p^n \Z_p$ for some $n \in \Z$. With the intersection being given, the subgroup $K$ is determined by some compact open subgroup of the quotient $\Z_p^\ast \ltimes \Q_p/p^n\Z_p$ and is necessarily contained in $\Z^\ast_p \ltimes p^m\Z_p/p^n \Z_p$ for some $m \leqslant n$. This is a semidirect product $\Z_p^\ast \ltimes \Z / p^k \Z$ for some $k$. In turn, this has to contain some congruence subgroup in $\Z_p$, and there are only finitely many possibilities.

    \item[$d=1$:] If $d=1$ and the Lie algebra is diagonalizable over an extension, then the normalizer of the Lie algebra is a one-dimensional group $M$ with a compact open normal subgroup isomorphic to $\Z_p$. The intersection with $\Z_p$ is some $p^n\Z_p$, the quotient being finite or virtually isomorphic to $\Z$, and hence has countably many subgroups. If $d=1$ and the Lie algebra is nilpotent, it is conjugate to the Lie algebra of strictly upper triangular matrices. So it is a compact  subgroup of its normalizer, which is the group of upper triangular matrices of determinant 1 (as in the case where $d=2$ above). So, as in the case of $d=2$, $K$ is some subgroup of $\Z_p^\ast \ltimes \Q_p$. Then $K$ contains the matrix
    \[
    \begin{pmatrix}
    1 & p^n \Z_p \\
    0 & 1
    \end{pmatrix} = 
    \begin{pmatrix}
    1 & 0 \\
    0 & 1
    \end{pmatrix}
    +
    \begin{pmatrix}
    0 & p^n \Z_p \\
    0 & 0
    \end{pmatrix}
    \]
    for some $n$. As such, $K$ is determined by its image in the quotient, a finite subgroup of $\Z_p^\ast \ltimes p^m \Z_p/ p^n \Z_p$ for some $m \leqslant n$. Since $\Z^\ast_p \cong \Z_p \times F$ with $F$ finite and $\Z_p$ is torsion free, the subgroup has to lie in the finite group $F \ltimes p^m \Z_p/ p^n \Z_p$. So for a given $m$ and $n$, this only leaves finitely many possibilities.
    
    \item[$d=0$:] In this case $K$ is finite. As every finite group has only finitely many representations on $\Q_p \times \Q_p$ modulo $\operatorname{SL}_2(\Q_p)$-conjugation, there are only finitely many conjugacy classes of finite subgroups. 
    \end{itemize}

    As such, collecting the above information for all $d$, we see that $\Sub(H)/H$ is countable as desired. This allows us to conclude that $\Sub(G)/G$ for $G = \operatorname{SL}_2(\Z_p)$ is countable.
\end{proof}

\begin{remark}
    \cref{prop:slNzp} highlights an interesting phenomenon. Consider $\operatorname{SL}_2(\Z_p)$, which we have just seen has countably many closed subgroups up to conjugacy but uncountably many closed subgroups. That is, $\Sub(G)$ is not scattered while $\Sub(G)/G$ is.
    The algebraic model of Barnes--Sugrue from \cref{sec:qgsp_algmodel} is described as Weyl-$G$-sheaves on the space $\Sub(G)$. The injective dimension of this algebraic category is given by the Cantor--Bendixson rank of the space $\Sub(G)$. In particular, for $\operatorname{SL}_2(\Z_p)$, the algebraic model is of infinite injective dimension. An approach to an algebraic model instead built from the Balmer spectrum, that is, $\Sub(G)/G$, would potentially have finite injective dimension due to the space having Cantor--Bendixson rank.
\end{remark}

\newpage

\addcontentsline{toc}{part}{Appendices}
\appendix

\bookmarksetupnext{level=part}
\section{Comparison of models for equivariant \texorpdfstring{$G$}{G}-spectra}\label{app:equivariantmodels}

There are (at least) three approaches to the genuine equivariant stable homotopy theory of a finite group $G$, to be recalled below: 
    \begin{itemize}
        \item orthogonal;
        \item Mackey;
        \item parametrized.
    \end{itemize}
These closely related perspectives have historically been developed in parallel, and later shown to be equivalent as symmetric monoidal homotopy theories. After a brief review of the extension of each of these models to the setting of profinite groups in \cref{ssec:threemodels}, we then provide the comparison between them. This leads to an essentially unique model of genuine equivariant $G$-spectra for a profinite group $G$, formed as an appropriate colimit of the corresponding categories for the finite quotient groups.

\subsection*{The three models...}\label{ssec:threemodels}
\subsubsection*{Orthogonal (Fausk)}\label{sssec:orthogonal}

The first model is the one of Fausk \cite{Fausk2008}, which is stated model categorically, and is a generalization of the usual construction of (equivariant) orthogonal spectra from \cite{mmss, mandell_may} to  profinite groups. The key property that allows for the extra topological complexity of profinite groups is that a continuous action of a profinite group $G$ on a finite dimensional $G$-inner product space $V$ factors through a finite quotient, see \cite[Lemma A.1]{Fausk2008}. In particular, the (conjugation) action of $G$ on the orthogonal group $O(V)$ factors through a finite quotient. 

As with equivariant orthogonal spectra for a compact Lie group, one indexes $G$-spectra on a complete $G$-universe $\cU$: a countably infinite  direct sum of a real $G$-inner product space $\cU'$, where $\cU'$ contains (up to isomorphism) a copy of each finite dimensional irreducible $G$-inner product space. The topology of $\cU$ arises from considering $\cU$ as the union of all finite dimensional $G$–subspaces of $\cU$ (each equipped with the norm topology). A finite dimensional sub-inner $G$-product space $V$ of $\cU$ is then called an indexing space of $\cU$.

\begin{definition}
    Let $G$ be a profinite group, a \emph{$G$-equivariant orthogonal spectrum} $X$ consists of a $G \ltimes O(V)$ space $X(V)$ for any indexing space $V$  of $\cU$ and a $G \ltimes \left( O(V) \times O(W) \right)$-equivariant structure map
    \[
    X(V) \wedge S^W \longrightarrow X(V \oplus W)    
    \]
    for all indexing spaces $V, W$ of $\cU$, where the domain is equipped with a $O(V) \times O(W)$ action via the block-sum inclusion to $O(V \oplus W)$.

    A map of \emph{$G$-equivariant orthogonal spectra} $f \colon X \to Y$ is a collection of $G \ltimes O(V)$-equivariant maps 
    \[
    f \colon X(V) \longrightarrow Y(V)
    \]
    that commute with the structure maps of $X$ and $Y$.
\end{definition}

As with (equivariant) orthogonal spectra, the weak equivalences of the model structure are defined in terms of homotopy groups. 

\begin{definition}
Let $X$ be a $G$-spectrum and $n$ a non-negative integer. We define the $H$-\emph{homotopy groups} for $H$ a closed subgroup of $G$ as 
\begin{align*}
\pi_n^H(X)  & = \colim_{V} \pi_n^H(\Omega^V X(V)) \\
\pi_{-n}^H(X)  & = \colim_{V \supset \R^n} \pi_0^H(\Omega^{V-\R^n} X(V)).
\end{align*}
In the first case the colimit runs over the indexing spaces of $\cU$, in the second case 
over the indexing spaces of $\cU$ that contain $\R^n$. 
\end{definition}

We state Fausk's main result:

\begin{theorem}[{\cite[Theorem 4.4]{Fausk2008}}]\label{thm:fausk}
Let $G$ be a profinite group. Then there is a model category of $G$-equivariant orthogonal spectra that is cofibrantly generated, 
stable and (closed) symmetric monoidal under the smash product of orthogonal $G$-spectra. 
The weak equivalences are defined in terms of isomorphisms of equivariant homotopy groups $\pi_*^H$ for each \emph{open} subgroup $H$ of $G$.
The associated homotopy category has a compact set of generators, given by the suspension spectra $\Sigma^\infty G/H_+$ of the
spaces $G/H_+$ for $H$ an open subgroup of $G$. 
\end{theorem}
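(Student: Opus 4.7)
My plan is to closely mirror the construction of equivariant orthogonal spectra due to Mandell--May \cite{mandell_may} and Mandell--May--Schwede--Shipley \cite{mmss} for compact Lie groups, while exploiting the decisive structural feature of the profinite setting: any continuous action of a profinite group $G$ on a finite-dimensional real inner product space $V$ factors through a finite quotient $G \twoheadrightarrow G/U$, and hence so does the conjugation action on $O(V)$. First I would set up the domain category $\mathcal{J}_G$ whose objects are indexing spaces $V \subseteq \cU$ and whose morphism spaces are the Thom spaces of the complement bundles, now equipped with the $G$-action coming from the $G$-action on orthogonal groups (which factors through a finite quotient, so no new point-set topological difficulties arise). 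A $G$-equivariant orthogonal spectrum is then a $G$-equivariant enriched functor from $\mathcal{J}_G$ to based $G$-spaces. This category is complete, cocomplete, and inherits a closed symmetric monoidal structure from the Day convolution, with the smash product of orthogonal spectra and an internal hom as usual.

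Next I would construct the level model structure. For a profinite group $G$ and an \emph{open} subgroup $H$, the orbit $G/H$ is finite and the usual equivariant fixed-point functors $(-)^H$ are well-behaved. Declare a map $f\colon X \to Y$ to be a \emph{level equivalence} (resp.~\emph{level fibration}) if $f(V)^H\colon X(V)^H \to Y(V)^H$ is a weak equivalence (resp.~Serre fibration) of spaces for every indexing $V$ and every open subgroup $H \leqslant G$ (one can further reduce to $H$ containing the kernel of the action on $V$). The generating cofibrations and generating acyclic cofibrations are obtained as the free maps $\mathcal{J}_G(V,-)\wedge (G/H)_+ \wedge i$ with $i$ a standard sphere-to-disc inclusion and $H$ open normal; cofibrant generation and the small object argument go through because the relevant cells are compact in the $G$-equivariant sense (again relying on openness of $H$). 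The monoidal axiom at the level structure follows from the standard pushout-product analysis for Day convolution, using that smashing with $G/H_+$ for $H$ open preserves cofibrations.

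The third and most substantial step is the passage to the stable model structure by Bousfield localization at the family of structure maps $\lambda_{V,W}\colon \mathcal{J}_G(V\oplus W,-) \wedge S^V \to \mathcal{J}_G(W,-)$, or equivalently by enlarging the class of acyclic cofibrations by the mapping cylinders of the $\lambda_{V,W}$. One shows, as in \cite[\S III.4]{mandell_may}, that the resulting stable equivalences are precisely the $\pi_*^H$-isomorphisms for all open $H$; the hard part is identifying stable fibrations in concrete terms and verifying that stable acyclic cofibrations are closed under pushout and transfinite composition. The monoidal model category axioms (in particular, the pushout-product axiom and the unit axiom) then reduce, via the level statement, to checking that smashing with $S^V$ preserves stable equivalences, which is immediate from the definition of $\pi_*^H$.

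The main obstacle is precisely this identification of stable equivalences with $\pi_*^H$-isomorphisms for open $H$, and the concurrent verification of the monoidal axioms at the stable level. In the compact Lie case this uses the cofinality of finite-dimensional sub-representations and the homotopy-theoretic flexibility of $G$-CW complexes built from orbits $G/H$ with $H$ closed. For profinite $G$ one needs the analogous cofinality result for open subgroups, which is afforded by the fact that every finite-dimensional $G$-representation factors through some finite quotient $G/U$: the computation of $\pi_n^H(\Omega^V X(V))$ for $H$ open can then be performed inside the already-understood model category of $G/U$-equivariant orthogonal spectra whenever $U \leqslant H$ and $U$ acts trivially on $V$. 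Stability, compactness of $\Sigma^\infty G/H_+$, and the generation statement follow formally from the stable model structure: compactness reduces to the cellular presentation of the generators and the compatibility of $\pi_*^H$ with filtered colimits, which again uses openness of $H$.
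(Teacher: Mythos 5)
This theorem is stated in the paper purely as a citation of Fausk's result (\cite[Theorem 4.4]{Fausk2008}); the paper gives no proof of its own, so there is no internal argument to compare against. Your outline faithfully reconstructs the strategy Fausk actually uses, which is indeed the Mandell--May--Schwede--Shipley template (level model structure followed by stable Bousfield localization) with the decisive profinite-specific input being that continuous $G$-actions on finite-dimensional inner product spaces factor through finite quotients (Fausk's \cite[Lemma A.1]{Fausk2008}, as the paper notes just before the theorem statement). One small inaccuracy worth flagging: the generating (acyclic) cofibrations should be indexed over \emph{all} open subgroups $H \leqslant G$, not just the open normal ones as you wrote; the set of generators $\{\Sigma^\infty G/H_+ \mid H \leqslant_o G\}$ in the theorem statement likewise runs over arbitrary open subgroups. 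This matters because finite localization arguments elsewhere in the paper use $G/H_+$ for non-normal open $H$.
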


We will write $\Spfausk_G$ for the $\infty$-category realised by the model structure in \cref{thm:fausk}.

\subsubsection*{Mackey (Barwick)}\label{sssec:mackey}
The second model is that of Barwick which generalizes the usual Mackey functor point of view on equivariant spectra. We require  a preliminary definition of the category of spans. For $G$ a profinite group, the category of spans $\mathsf{Span}(G)$ has objects the continuous $G$-sets with finitely many open orbits, and morphisms are given by spans, that is, a morphism $X \to Y$ is a diagram 
\[
\xymatrix{&Z \ar[dl] \ar[dr] \\ X && Y}
\]
for $Z$ some object of the category. Composition in this category is given by taking the pullback of the resulting diagram. A description of $\mathsf{Span}(G)$ as a complete Segal space is given by Barwick \cite[\S 5]{Barwick}. Relevant for the construction is to highlight that $\mathsf{Span}(G)$ is a monoidal category with respect to disjoint union.

\begin{definition}
    Let $G$ be a profinite group, then the $\infty$-category of \emph{$G$-Mackey functors} is the category $\SpBarwick_G$ of additive functors $\Fun^{\mathrm{add}}(\mathsf{Span}(G),\Sp)$.
\end{definition}

\subsubsection*{Parametrized (Bachmann--Hoyois)}\label{sssec:parametrized}

Our third model of interest is the one of Bachmann--Hoyois \cite{bachmannhoyois_norms} which starts with the same basic set-up as the model of Barwick. In particular we begin by considering the additive functors $\Fun^\mathrm{add}(\mathsf{Span}(G), \mathsf{Top})$ now to the $\infty$-category of spaces. The idea is to then form a certain localization of this category (essentially, one inverts the representation spheres).

In particular, let  $G$ be a profinite group, and $H \leqslant G$ a subgroup giving rise to a finite cover $p \colon BH \to BG$, then there is a unique functor
\[
p_\otimes \colon \Fun^\mathrm{add}(\mathsf{Span}(G), \mathsf{Top}) \to \Fun^\mathrm{add}(\mathsf{Span}(G), \mathsf{Top})
\]
whose construction is implicit in \cite[\S 9.1]{bachmannhoyois_norms}. When $G$ is a finite group, $p_\otimes$ retrieves the Hill--Hopkins--Ravenel norm $N_H^G$ \cite{HHR}.

\begin{definition}
    Let $G$ be a profinite group, then the $\infty$-category of \emph{parametrized $G$-spectra} is the category obtained from $\Fun^{\mathrm{add}}(\mathsf{Span}(G),\Top)$ by inverting $p_\otimes(S^1)$ as $p$ runs through the finite coverings $p \colon BH \to BG$ for $H < G$. We denote this $\infty$-category as $\SpBH$.
\end{definition}

\begin{remark}
    The way that $\SpBH$ is constructed in \cite{bachmannhoyois_norms} naturally gives rise to  geometric fixed point functors which are coherently compatible with restriction and inflation. In light of \cref{prop:equiv_of_models_profinite}, it follows that the geometric fixed point functors that we have constructed in \cref{def:gfp} also enjoy these properties.
\end{remark}

\subsection*{...and their comparison}\label{ssec:comparison}

Now that we have introduced the three models of interest, we now show that all of these models are equivalent. We begin with that case that $G$ is a finite group.

\begin{proposition}\label{prop:equiv_of_models_finite}
    Let $G$ be a finite group. Then there are geometric equivalences
    \[
    \Spfausk_G \simeq \SpBarwick_G \simeq \SpBH_G.
    \]
\end{proposition}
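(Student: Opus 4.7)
The plan is to assemble the three-way equivalence from known identifications in the literature, one pair at a time, and then worry about compatibility with the symmetric monoidal structure.

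First I would compare Fausk's model $\Spfausk_G$ with the classical Mandell--May category $\Sp_G^{\text{MM}}$ of genuine equivariant orthogonal $G$-spectra from \cite{mandell_may}. For finite $G$ the continuity hypotheses built into Fausk's construction are vacuous: every continuous action on a finite dimensional inner product space factors through a finite quotient automatically. In particular, Fausk's category of $G \ltimes O(V)$-spaces and his complete $G$-universe coincide on the nose with the Mandell--May data, and the generating (acyclic) cofibrations match. On the level of weak equivalences, Fausk asks for isomorphisms on $\pi_*^H$ for all open $H \leqslant G$, and for a finite group every closed subgroup is open. Hence the two cofibrantly generated stable symmetric monoidal model categories agree, and so do their underlying $\infty$-categories, giving $\Spfausk_G \simeq \Sp_G^{\text{MM}}$ as symmetric monoidal stable $\infty$-categories.

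Next I would invoke the Guillou--May theorem \cite[}]{}, as reformulated and re-proven by Barwick in the context of spectral Mackey functors, to obtain a symmetric monoidal equivalence
\[
\Sp_G^{\text{MM}} \;\simeq\; \SpBarwick_G \;=\; \Fun^{\mathrm{add}}(\mathsf{Span}(G),\Sp).
\]
Here the identification of the symmetric monoidal structures (smash product on the left, Day convolution along disjoint union on the right) is the content of subsequent refinements due to Bohmann--Osorno and Nardin, which I would cite directly rather than reprove.

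For the third leg $\SpBarwick_G \simeq \SpBH_G$, I would appeal to Bachmann--Hoyois's identification in \cite[Section 9]{bachmannhoyois_norms} of their parametrized $G$-spectra with spectral Mackey functors. Unpacking their construction, $\SpBH_G$ is obtained from $\Fun^{\mathrm{add}}(\mathsf{Span}(G),\Top)$ by inverting the norms $p_\otimes(S^1)$ along finite covers $p\colon BH \to BG$, and for a finite group the collection of spheres that get inverted is exactly the collection of representation spheres $S^V$ for finite dimensional orthogonal $G$-representations $V$. Inverting representation spheres in an additive presheaf category with values in $\Top$ produces the corresponding additive presheaf category with values in $\Sp$, i.e.~$\SpBarwick_G$. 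This identification is compatible with Day convolution, yielding a symmetric monoidal geometric equivalence.

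The main obstacle is not the existence of each of these three equivalences, all of which are in the literature, but keeping the symmetric monoidal and coherence data aligned across the three different formalisms (model-categorical orthogonal spectra, spectral Mackey functors, and parametrized $\infty$-categorical norms). In practice, once the underlying equivalences are established and one observes that each preserves the canonical set of compact generators $\{\Sigma^\infty_+ G/H\}_{H \leqslant G}$ together with the unit, a standard argument with the universal property of $\Sp_G^{\text{MM}}$ as the free presentably symmetric monoidal stable $\infty$-category on the orbit category of $G$ upgrades each to an equivalence in $\CAlg(\Pr^{L,\omega})$.
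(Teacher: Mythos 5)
Your overall strategy matches the paper's: the proof in the paper is a direct citation to Guillou--May for $\Spfausk_G\simeq\SpBarwick_G$ and to \S 9.2 of Bachmann--Hoyois for the comparison with $\SpBH_G$; your decomposition of the first leg through the Mandell--May model is a sensible but cosmetic refinement, since Fausk's construction reduces verbatim to Mandell--May's when $G$ is finite, as you observe.

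One caveat worth flagging is the final paragraph. You describe $\Sp_G^{\mathrm{MM}}$ as ``the free presentably symmetric monoidal stable $\infty$-category on the orbit category of $G$''; this is not the right universal property. The free presentable stable $\infty$-category on $\cO_G^{\op}$ (i.e.\ presheaves of spectra on orbits) is the category of \emph{naive} $G$-spectra, not genuine ones; genuine $G$-spectra are characterized rather by the Mackey description (additive presheaves of spectra on the span category $\mathsf{Span}(G)$) or by inverting the representation spheres in pointed $G$-spaces, and these universal properties are exactly what Barwick and Bachmann--Hoyois exploit. Relatedly, an equivalence of underlying stable $\infty$-categories that happens to match generators and unit does not automatically upgrade to an equivalence in $\CAlg(\Pr^{L,\omega})$; one needs the comparison functor to already carry a symmetric monoidal structure, which is precisely the content of the Nardin/Bohmann--Osorno/Bachmann--Hoyois refinements you cite earlier. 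Since those citations already supply the monoidal equivalences, the final ``upgrade'' step is unnecessary and, as stated, does not work; omit it and the argument is complete.
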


\begin{proof}
    The equivalence of $\Spfausk_G$ and $\SpBarwick_G$ can be found, for instance, in \cite{GuillouMay}, while the equivalence with $\SpBH_G$ is in \S9.2 of \cite{bachmannhoyois_norms}.
\end{proof}

Encouraged by \cref{prop:equiv_of_models_finite}, the following result tells us that the equivalences remain true when $G$ is not finite, but profinite.

\begin{proposition}\label{prop:equiv_of_models_profinite}
       Let $G$ be a profinite group. Then there are equivalences
    \[
    \Sp_G^{\mathrm{cont}} \simeq \Spfausk_G \simeq \SpBarwick_G \simeq \SpBH_G.
    \]
    Moreover these equivalences are symmetric monoidal.
\end{proposition}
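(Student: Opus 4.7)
The equivalence $\Sp_G^{\cont} \simeq \Spfausk_G$ is precisely \cref{thm:gsp_contfausk}, and it is already constructed as a symmetric monoidal equivalence. The plan is therefore to reduce the remaining two comparisons to the known equivalences for finite groups (\cref{prop:equiv_of_models_finite}) by exhibiting each of $\SpBarwick_{(-)}$ and $\SpBH_{(-)}$ as a \emph{continuous} assignment, i.e., by showing that for $G = \lim_i G_i$ the inflation functors induce symmetric monoidal geometric equivalences
\[
\colim_{i}^{\omega}\SpBarwick_{G_i} \xrightarrow{\sim} \SpBarwick_G
\quad\text{and}\quad
\colim_{i}^{\omega}\SpBH_{G_i} \xrightarrow{\sim} \SpBH_G.
\]
Combining these with the finite-group equivalences of \cref{prop:equiv_of_models_finite}, and with $\Sp_G^{\cont} = \colim_i^{\omega}\Sp_{G_i}$ together with \cref{thm:gsp_contfausk}, yields the four-way chain of equivalences, each of which is symmetric monoidal because all of the comparison functors in sight are.

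For the Mackey model, the key step is a ``continuity of spans'': any continuous $G$-set with finitely many open orbits is a disjoint union of orbits $G/U$ for $U\leqslant_o G$, hence factors through some finite quotient $G_i$. This should yield a symmetric monoidal equivalence
\[
\mathsf{Span}(G) \simeq \colim_i \mathsf{Span}(G_i)
\]
in semiadditive symmetric monoidal $\infty$-categories, with transition maps given by restricting the $G_j$-action along $G_i \twoheadrightarrow G_j$. Applying $\Fun^{\mathrm{add}}(-,\Sp)$ turns this filtered colimit into a cofiltered limit in $\Pr^R$, which by \eqref{eq:prlprr} and \eqref{eq:colimitcomparisons} equals the desired filtered colimit in $\Pr^{L,\omega}$ via passage to left adjoints (these left adjoints are the categorical inflations already used on the orthogonal side). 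The symmetric monoidal enhancement propagates because Day convolution is functorial in the source semiadditive category.

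For the Bachmann--Hoyois model, the same overall strategy works: their construction of $\SpBH_G$ as a Dwyer--Kan localization of $\Fun^{\mathrm{add}}(\mathsf{Span}(G),\mathsf{Top})$ at the norms of $S^1$ is functorial in $G$ and, thanks to the continuity of $\mathsf{Span}(G)$ established in the previous paragraph, commutes with filtered colimits along surjective homomorphisms. One then verifies that the classes being inverted in the profinite case are generated (under the monoidal structure and the transition functors) by those at the finite stages.

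The main obstacle I anticipate is not the underlying $\infty$-categorical equivalences, which are essentially formal given the finite-group input and the continuity of the Burnside category, but rather the coherent promotion of all identifications to \emph{symmetric monoidal} equivalences simultaneously compatible with the transition functors. To handle this, the plan is to work throughout in the $\infty$-category $\CAlg(\Pr^{L,\omega})$ of compactly generated presentably symmetric monoidal $\infty$-categories and symmetric monoidal compact-preserving left adjoints, where colimits are computed in $\CAlg(\Pr^L)$ and in turn in $\Pr^L$, using the diagrammatic compatibility of inflation with the three finite-group comparisons from \cref{prop:equiv_of_models_finite} (which is the content of, e.g., \cite{GuillouMay} and \cite[\S9.2]{bachmannhoyois_norms}). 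This reduces the symmetric monoidal statement to a statement about a natural transformation between two filtered colimit diagrams of symmetric monoidal equivalences, which is automatically an equivalence.
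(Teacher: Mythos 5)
Your plan is correct and follows essentially the same strategy as the paper: realize all four models as filtered colimits along inflation of the corresponding categories for the finite quotient groups $G_i$, and then invoke the finite-group comparisons of \cref{prop:equiv_of_models_finite}. The one thing to flag is that the continuity of $\SpBarwick_{G}$ and $\SpBH_{G}$ — including the symmetric monoidal compatibility — is already established in \cite[Proposition 9.11, Example 9.12, Lemma 9.6]{bachmannhoyois_norms}, so the paper simply cites these results rather than re-deriving them via continuity of the span category as you sketch; your outline of that derivation is consistent with what those references prove, so this is a difference of citation rather than of mathematical content.
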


\begin{proof}
    By writing $G = \lim_i G_i$, we can deduce the desired result from \cref{prop:equiv_of_models_finite} by showing that all of the models are continuous, that is, $\Sp_G^{(-)} \simeq \colim_i^{\omega} \Sp_{G_i}^{(-)}$. The continuity of Fausk's model is given by \cref{thm:gsp_contfausk}. The continuity of  the Barwick and the Bachmann--Hoyois models is the subject of \cite[Proposition 9.11, Example 9.12]{bachmannhoyois_norms}, keeping in mind \cref{rem:gsp_limitmodel}.

    For the claim regarding the symmetric monoidal equivalences, we appeal to \cref{thm:gsp_contfausk}, \cite[Proposition 9.11]{bachmannhoyois_norms}, and \cite[Lemma 9.6]{bachmannhoyois_norms} for Fausk's model, Barwick's model, and the model of Bachmann--Hoyois, respectively.
\end{proof}

In conclusion, we have shown that when $G$ is a profinite group the three models appearing in previous literature are all equivalent to the continuous model $\Sp_G^\mathrm{cont}$ that we have constructed in \cref{def:gsp_contmodel}.

\newpage

\addcontentsline{toc}{part}{References}

\let\oldaddcontentsline\addcontentsline
\renewcommand{\addcontentsline}[3]{}
\bibliographystyle{alpha}\bibliography{bibliography}
\let\addcontentsline\oldaddcontentsline

\end{document}